\DeclareMathAlphabet{\mathdutchcal}{U}{dutchcal}{m}{n}
\SetMathAlphabet{\mathdutchcal}{bold}{U}{dutchcal}{b}{n}
\DeclareMathAlphabet{\mathdutchbcal}{U}{dutchcal}{b}{n}
\theoremstyle{plain}
\newtheorem{thm}{Theorem}[section]
\newtheorem{cor}[thm]{Corollary}
\newtheorem{lem}[thm]{Lemma}
\newtheorem{prop}[thm]{Proposition}
\theoremstyle{definition}
\newtheorem{defn}[thm]{Definition}
\newtheorem{ex}[thm]{Example}
\newtheorem{ques}[thm]{Question}
\newtheorem{rmk}[thm]{Remark}
\numberwithin{thm}{subsection}
\numberwithin{equation}{subsection}
\newcommand{\Ker}{\operatorname{Ker}} 
\newcommand{\Irr}{\operatorname{Irr}}
\newcommand{\Tr}{\operatorname{Tr}} 
\newcommand{\id}{\operatorname{id}}
\newcommand{\ad}{\operatorname{ad}}
\newcommand{\Ad}{\operatorname{Ad}}
\newcommand{\Span}{\operatorname{Span}}
\newcommand{\End}{\operatorname{End}}
\newcommand{\inv}{{}_{\operatorname{inv}}}
\newcommand{\weights}{\mathbf P}
\newcommand{\ev}{\operatorname{ev}}
\newcommand{\diag}{\operatorname{diag}}
\newcommand{\Nbb}{\mathbb N}
\newcommand{\Cbb}{\mathbb C}
\newcommand{\Zbb}{\mathbb Z}
\newcommand{\Rbb}{\mathbb R}
\newcommand{\Lbb}{\mathbb{L}}
\newcommand{\Qbb}{\mathbb{Q}}
\newcommand{\Pbf}{\mathbf P}
\newcommand{\Qbf}{\mathbf Q}
\newcommand{\qbf}{\mathbf q}
\newcommand{\Vbf}{\mathbf V}
\newcommand{\Ebf}{\mathbf E}
\newcommand{\Fbf}{\mathbf F}
\newcommand{\Kbf}{\mathbf K}
\newcommand{\Ubf}{\mathbf U}
\newcommand{\vbf}{\mathbf v}
\newcommand{\wbf}{\mathbf w}
\newcommand{\pibf}{\boldsymbol{\pi}}
\newcommand{\Xbf}{\mathbf X}
\newcommand{\gbf}{\mathbf g}
\newcommand{\abf}{\boldsymbol{a}}
\newcommand{\Acal}{\mathcal{A}}
\newcommand{\Bcal}{\mathcal{B}}
\newcommand{\Ccal}{\mathcal{C}}
\newcommand{\Dcal}{\mathcal{D}}
\newcommand{\Hcal}{\mathcal{H}}
\newcommand{\Rcal}{\mathcal{R}}
\newcommand{\Kcal}{\mathcal{K}}
\newcommand{\Ocal}{\mathcal{O}}
\newcommand{\Xcal}{\mathcal{X}}
\newcommand{\Ucal}{\mathcal{U}}
\newcommand{\Tcal}{\mathcal{T}}
\newcommand{\Zcal}{\mathcal{Z}}
\newcommand{\hcal}{\mathdutchcal{h}}
\newcommand{\gf}{\mathfrak{g}}
\newcommand{\hf}{\mathfrak{h}}
\newcommand{\kf}{\mathfrak{k}}
\newcommand{\Cf}{\mathfrak{C}}
\newcommand{\tf}{\mathfrak{t}}
\newcommand{\nf}{\mathfrak{n}}
\newcommand{\bfrak}{\mathfrak{b}}
\newcommand{\nfrak}{\mathfrak{n}}
\newcommand{\eps}{\varepsilon}
\newcommand{\la}{\langle}
\newcommand{\ra}{\rangle}
\newcommand{\La}{\boldsymbol{\la}}
\newcommand{\Ra}{\boldsymbol{\ra}}
\title[Laplacians on \lowercase{q}-deformations]{Laplacians on $q$-deformations of compact semisimple Lie groups}
\begin{document}

\author{Heon Lee}
\address{Heon Lee, Institute for Advanced Study in Mathematics, Harbin Institute of Technology, Harbin 150001, China}
\email{heonlee@hit.edu.cn}

\begin{abstract}
The problem of formulating a correct notion of Laplacian on compact quantum groups (CQGs) has long been recognized as both fundamental and nontrivial. Existing constructions typically rely on selecting a specific first-order differential calculus (FODC), but the absence of a canonical choice in the noncommutative setting renders these approaches inherently non-canonical. In this work, we propose a simple set of conditions under which a linear operator on a CQG can be recognized as a Laplacian---specifically, as the formal modulus square of the differential associated with a bicovariant FODC. A key feature of our framework is its generality: it applies to arbitrary finite-dimensional bicovariant $*$-FODCs on \( K_q \), the \( q \)-deformation of a compact semisimple Lie group \( K \). To each such calculus, we associate a Laplacian defined via the formal modulus square of its differential. Under mild additional assumptions, we demonstrate that these operators converge to classical Laplacians on \( K \) in the classical limit \( q \to 1 \), thereby justifying their interpretation as ``$q$-deformed Laplacians." Furthermore, we prove that the spectra of the $q$-deformed Laplacians are discrete, real, bounded from below, and diverge to infinity, much like those of their classical counterparts. However, in contrast to the classical case, the associated heat semigroups do not define quantum Markov semigroups.
\end{abstract}

\maketitle

\section{Introduction}\label{sec:Introduction}

\renewcommand{\theequation}{\thesection.\arabic{equation}}
\renewcommand{\thethm}{\thesection.\arabic{thm}}

The Laplacian $\square : C^\infty(M) \rightarrow C^\infty(M) $ on a $d$-dimensional Riemannian manifold $(M,g)$ gives rise to the partial differential equation
\[
\frac{\partial}{\partial t} u + \square u = 0 , \quad u \in C^\infty ( \Rbb_{\geq 0} \times M ),
\]
called \textit{the heat equation on $M$}, which describes the diffusion of heat across the manifold. As such, solutions to this equation are deeply influenced by the geometry of $M$, and consequently, they can provide significant insight into its structure \cite{Gauduchon}.

Motivated by the classical picture, considerable effort has gone into defining a suitable notion of Laplacian on noncommutative spaces. Since compact quantum groups (CQGs), as noncommutative generalizations of compact groups \cite{Woronowicz1987b}, offer a rich source of noncommutative examples, they provide a natural framework in which to explore such operators. Accordingly, this question has been actively pursued in the setting of CQGs \cite{Heckenberger_Laplace-Beltrami_2000, Heckenberger_Spin_2003, KustermanMurphyTuset_Laplacian_2005, Landi2009, Landi2011, Zampini_Hodge-duality_2012, Majid_Hodge-Fourier_2017}.

When \( K \) is a compact Lie group, the Laplacian is defined as the unique linear map \( \square : C^\infty(K) \rightarrow C^\infty(K) \) satisfying
\begin{equation}\label{eq:classical Laplacian-introduction}
\int_K \overline{f(x)}\, (\square g)(x)\, dx = \int_K \langle df_x , dg_x \rangle\, dx, \quad f, g \in C^\infty(K),
\end{equation}
where \( \int_K dx \) denotes integration with respect to the Haar measure on \( K \) and \( \langle \cdot, \cdot \rangle \) is a positive definite \( C^\infty(K) \)-sesquilinear form on the space of differential 1-forms \( \Omega^1(K) \). This form, together with the exterior derivative \( d: C^\infty(K) \rightarrow \Omega^1(K) \) provides the necessary structure to define the ``modulus square'' of \( d \), which yields \( \square \).

Following this classical principle, the aforementioned works considered a \textit{fixed} first-order differential calculus (FODC) on a CQG, and introduced additional structures on it---such as nondegenerate bilinear forms with specific properties, Hodge operators, and others---that yield suitable analogues of the right-hand side of \eqref{eq:classical Laplacian-introduction}. The ``Laplacian'' is then defined as the unique operator satisfying an analogue of the left-hand side of \eqref{eq:classical Laplacian-introduction}. Laplacians constructed in this manner have been employed to uncover deep noncommutative geometric structures of certain CQGs.

Despite these successes, all of these approaches share a common drawback: the choice of a particular FODC, which may need to satisfy certain conditions, must be made before the Laplacian can even be defined. However, any such choice, no matter how natural it may appear (even Woronowicz's \( 4D_\pm \)-calculus on \( SU_q(2) \)), is inherently non-canonical. This lack of canonicity was already remarked in Woronowicz’s observations in \cite{Woronowicz1989}, where he emphasized the absence of a universally preferred differential calculus for general CQGs. Theorem~\ref{thm:the q->1 behavior of FODCs}, one of the main results of this paper, provides a conceptual explanation for this phenomenon in the case of the \( q \)-deformation \( K_q \) of a compact semisimple Lie group \( K \). Specifically, it shows that for each matrix realization of \( K \) that does not have any multiple irreducible components (i.e., each matrix realization that is multiplicity-free), there exists a finite-dimensional bicovariant FODC on \( K_q \) that converges to the classical FODC on \( K \) as \( q \to 1 \) (see Remark~\ref{rmk:q->1 limit of FODCs}), with the resulting FODCs being inequivalent for distinct matrix realizations.

This motivates the need for an alternative definition of the Laplacian on CQGs---one that (1) encompasses all finite-dimensional bicovariant $*$-FODCs on \( K_q \), (2) recovers the classical Laplacians in the case of compact Lie groups, and (3) yields well-behaved Laplacians to which standard operator-theoretic tools can be applied for investigating the noncommutative geometry of \( K_q \).

The purpose of this paper is to introduce and study a construction that satisfies these requirements. Our approach contrasts with previous studies in that we first select a linear operator on a CQG, which is intended to serve as the Laplacian, and then use it to induce an FODC equipped with a nondegenerate sesquilinear form, with respect to which the chosen linear operator becomes the unique operator that satisfies the quantum analogue of \eqref{eq:classical Laplacian-introduction}.

More precisely, let \( \Cf^\infty(\Kcal) \) be a CQG equipped with its Haar state \( \hcal : \Cf^\infty(\Kcal) \rightarrow \Cbb \), and let \( L : \Cf^\infty(\Kcal) \rightarrow \Cf^\infty(\Kcal) \) be a linear operator. Then, Theorem~\ref{thm:linear functionals as Laplacians} asserts that if \( \square \) diagonalizes over the Peter-Weyl decomposition of \( \Cf^\infty(\Kcal) \) with real eigenvalues, commutes with the antipode of $\Kcal$, and vanishes at the unit \( 1_{\Cf^\infty(\Kcal)} \), it induces a bicovariant $*$-FODC \( (\Omega, d) \) along with a nondegenerate right \( \Cf^\infty(\Kcal) \)-sesquilinear form
\(
\langle \cdot, \cdot \rangle : \Omega \times \Omega \rightarrow \Cf^\infty(\Kcal),
\)
with respect to which the operator \( \square \) serves as a \emph{Laplacian} in the sense that it satisfies
\begin{equation}\label{eq:quantum Laplacian-introduction}
\hcal ( f^* \square g) = \hcal \big( \langle df, dg \rangle \big), \quad f,g \in \Cf^\infty(\Kcal),
\end{equation}
which is the quantum analogue of~\eqref{eq:classical Laplacian-introduction}. It is worth noting, however, that although the induced sesquilinear form
\[
\hcal\big( \langle \cdot , \cdot \rangle \big) : \Omega \times \overline{\Omega} \rightarrow \Cbb
\]
is nondegenerate, it need not be positive definite, which marks a key difference from the classical case.

Moreover, if \( \square \) is taken to be a classical Laplacian on a compact Lie group \( K \), then the induced FODC coincide with the classical FODC on \( K \), and \eqref{eq:quantum Laplacian-introduction} reduces to \eqref{eq:classical Laplacian-introduction}.

However, the greatest advantages of this construction become most evident when applied to the $q$-deformation $K_q$ of a compact semisimplie Lie group $K$. In this setting, we classify the linear operators on \( \Cf^\infty(K_q) \) that satisfy the three assumptions of the main construction and additionally induce finite-dimensional FODCs in the construction. A necessary step in this classification yields a new algebraic result:
\begin{itemize}
    \item The first explicit description of the center of the dual Hopf algebra \( \Cf^\infty(K_q)^\circ \), which complements Joseph's description of \( \Cf^\infty(K_q)^\circ \) \cite[Proposition~9.4.9]{Joseph}.
\end{itemize}
Building on this classification, we establish the following:
\begin{itemize}
    \item Every finite-dimensional bicovariant $*$-FODC on \( K_q \) arises from this construction. Thus, the framework developed here allows us to study Laplacians on \( K_q \) associated with any such FODC.

    \item For each such FODC satisfying a mild additional condition, we construct a corresponding Laplacian, called a \emph{\( q \)-deformed Laplacian}, which converges to a classical Laplacian on \( K \) as \( q \to 1 \).

    \item These \( q \)-deformed Laplacians have a simple and explicit form: they are certain positive linear combinations of the \textit{quantum Casimir elements} \( \{ z_\mu \mid \mu \in \weights^+ \} \) \cite[Section~3.13]{VoigtYuncken} acting on \( \Cf^\infty(K_q) \) via convolution.

    \item The eigenvalues of \( q \)-deformed Laplacians can be expressed in terms of numerically computable algebraic invariants of \( K \).
\end{itemize}

The second result shows that \( q \)-deformed Laplacians are not merely formal analogues of classical Laplacians---both of which satisfy~\eqref{eq:quantum Laplacian-introduction}---but genuine \( q \)-deformations, converging to classical Laplacians in the classical limit.

Together with Theorem~\ref{thm:the q->1 behavior of FODCs}, which asserts that
\begin{itemize}
    \item As \( q \to 1 \), all finite-dimensional bicovariant FODCs on \( K_q \) that admit \( q \)-deformed Laplacians converge to the classical FODC on \( K \),
\end{itemize}
this can be rephrased heuristically as:
\begin{center}
\emph{``The \( q \)-deformation lifts the infinite degeneracy of the classical first-order differential calculus and classical Laplacians on \( K \)."}
\end{center}

We also establish that, when considered as unbounded operators on the GNS Hilbert space \( L^2(K_q) \),
\begin{itemize}
    \item The spectra of the closures of \( q \)-deformed Laplacians are discrete, real, lower-semibounded, and diverge to infinity,
\end{itemize}
just like the spectra of their classical counterparts \cite{Jost}.
This similarity enables us to explore the noncommutative geometry of \( K_q \) through their spectral properties, much as in classical spectral geometry.

These properties also ensure that for any \( q \)-deformed Laplacian \( \square \), the family \( (e^{-t \square})_{t \geq 0} \), defined via functional calculus, forms a well-defined semigroup of bounded operators on \( L^2(K_q) \), called \textit{the heat semigroup generated by \( \square \)}. These semigroups restrict to semigroups of operators on \( C(K_q) \), the universal \( C^* \)-algebra completion of \( \Cf^\infty(K_q) \). However, unlike the classical case,
\begin{itemize}
    \item The heat semigroups generated by \( q \)-deformed Laplacians do not form quantum Markov semigroups on $C(K_q)$---the most extensively studied class of stochastic processes on CQGs \cite{Franz2014}.
\end{itemize}

Thus, in addition to the immediate contribution to the noncommutative geometry of \( K_q \) through spectral methods, the discovery of \( q \)-deformed Laplacians also enriches the ongoing study of stochastic processes on CQGs by providing a wealth of previously unexplored stochastic processes that are deeply connected to the noncommutative geometry of \( K_q \).

We now provide a brief outline of the paper. Sections~\ref{sec:Compact quantum groups (CQGs)} and \ref{sec:FODC} recall basic facts about CQGs and FODCs, respectively, which will be used throughout the paper. In Section~\ref{sec:Compact Lie groups}, we apply these preliminaries to analyze a classical example—namely, a compact Lie group \( K \), which will serve to motivate the main construction of this paper.

Section~\ref{sec:Main construction} presents the main construction of the paper, through which we define the notion of a \textit{Laplacian on a CQG}.

The remainder of the paper focuses on the \( q \)-deformation \( K_q \) of a compact semisimple Lie group \( K \). Section~\ref{sec:The q-deformation} recalls basic facts about \( K_q \). In Section~\ref{sec:FODCs on Kq}, we provide an explicit description of all finite-dimensional bicovariant FODCs on \( K_q \), building on the classification result of \cite{Baumann1998}, and present the first classification of finite-dimensional bicovariant \( * \)-FODCs on \( K_q \). In Section~\ref{sec:Laplacians on Kq}, we apply the general construction from Section~\ref{sec:Main construction} to \( K_q \), classify all Laplacians on \( K_q \) arising from this construction, and show that all finite-dimensional bicovariant \( * \)-FODCs arise within this framework. Along the way, we provide the first explicit description of the center of \( \Cf^\infty(K_q)^\circ \). Moreover, we compute the \( q \to 1 \) limits of certain Laplacians on \( K_q \), leading to the definition of \( q \)-deformed Laplacians. Section~\ref{sec:The q-deformed Laplacians} explores several properties of \( q \)-deformed Laplacians, demonstrating that while their spectra resemble those of the classical Laplacians, their heat semigroups do not form quantum Markov semigroups.

Having established that \( q \)-deformed Laplacians on \( K_q \) converge to classical Laplacians on \( K \) as \( q \to 1 \), it is natural to ask what happens to the FODCs associated with these Laplacians in the classical limit. Section~\ref{sec:the q->1 limit of FODCs} addresses this question and shows that to each multiplicity-free matrix realization of \( K \), which is equivalent to the condition that it admits a $q$-deformed Laplacian, there corresponds a distinct finite-dimensional bicovariant FODC on \( K_q \) that converges to the classical FODC on \( K \) as \( q \to 1 \). Appendix~\ref{sec:proof of collecting all q} contains the proof of Theorem~\ref{thm:collecting all q}.

We do not attempt in this paper to extend the definition of \( q \)-deformed Laplacians to higher-order differential calculi. This is because, first, as emphasized in this Introduction, formulating \( q \)-deformed Laplacians via FODCs is already a nontrivial and important problem in the noncommutative geometry of \( K_q \). Second, preliminary explorations into such extensions revealed subtle difficulties that deserve a separate, more focused treatment, and incorporating these complexities would considerably increase the size of the paper, which is already quite long. To keep the scope and size reasonable, we defer the study of higher-order extensions to a sequel to this work.

We conclude this section with a few remarks on notation. All vector spaces and algebras in this paper are assumed to be over the field of complex numbers, denoted by \( \Cbb \), unless otherwise specified. The complex linear span of a subset \( S \) of a vector space will always be denoted by \( \Span_\Cbb S \). Given a vector space \( V \), we denote its dual space by \( V^* \) and the algebra of linear operators on \( V \) by \( \End(V) \). Notations such as \( \End_{\Qbb(s)} (V) \) will also be used, with their meaning being clear from the context. All tensor products are taken with respect to \( \Cbb \), unless otherwise indicated by a subscript, e.g., \( \otimes_\Rbb \).

The symbol \( \id \) denotes the identity map on any set. When multiple identity maps appear in a single expression, we distinguish them by subscripts, e.g., \( \id_V \), \( \id_W \), etc. Similarly, the symbol \( 1 \) denotes the identity element in any algebra.

Any sesquilinear pairing between two complex vector spaces (e.g., an inner product on a Hilbert space) is denoted by \( \langle \cdot, \cdot \rangle \), with the first argument being conjugate linear. Bilinear pairings between vector spaces will often be denoted by \( (\cdot, \cdot) \) when the context makes the pairing clear (e.g., the canonical pairing between a vector space and its dual).

If \( \Hcal \) is a Hilbert space, the algebra of bounded operators on \( \Hcal \) is denoted by \( \Lbb(\Hcal) \). When \( \Hcal \) is finite-dimensional and we wish to ignore the \( * \)-structure of \( \Lbb(\Hcal) \), we will often write it as \( \End(\Hcal) \). The algebra of \( n \times n \) matrices with entries in an algebra \( \Acal \) is denoted by \( M_n(\Acal) \).

\renewcommand{\theequation}{\thesubsection.\arabic{equation}}
\renewcommand{\thethm}{\thesubsection.\arabic{thm}}

\section{Compact quantum groups (CQGs)}\label{sec:Compact quantum groups (CQGs)}

In this section, we review basic facts about compact quantum groups, following, for example, \cite[Chapters~1--3]{Timmermann}. Any results not covered by this reference will be proved in full. Special emphasis is placed on linear functionals and operators on compact quantum groups with distinguished properties.

\subsection{Hopf \texorpdfstring{$*$}{TEXT}-algebras}\label{subsec:Hopf *-algebras}

Throughout this subsection, \((\Acal , \Delta, \epsilon , S)\) denotes a Hopf \((*)-\)algebra, where \(\Delta\) is the comultiplication, \(\epsilon\) is the counit, and \(S\) is an antipode, which is assumed to be invertible. All formulas that explicitly involve involutions pertain to Hopf \(*\)-algebras, while those that do not also hold for Hopf algebras. Statements for Hopf algebras that involve (\(*\)-) become the corresponding statements for Hopf \(*\)-algebras when we remove the parentheses. We will continue to follow this practice throughout the paper, whenever appropriate.

We also adopt the Sweedler notation for comultiplication. That is, for \( a \in \Acal \), we write \( \Delta(a) = a_{(1)} \otimes a_{(2)} \), with summation over a certain index set understood.

For $a \in \Acal$, define
    \[
    a \rightarrow b = a_{(1)} b S(a_{(2)}) \;\; \text{and} \;\; b \leftarrow a = S(a_{(1)}) b a_{(2)}, \quad b \in \Acal.
    \]
These define left and right $\Acal$-module actions on $\Acal$, called \textit{the left and right adjoint actions on $\Acal$}, respectively. Note that
\begin{equation}\label{eq:antipodes and adjoint actions}
    S^{\pm 1} ( a \rightarrow b) = S^{\pm 1} (b) \leftarrow S^{\pm 1}(a), \quad a, b \in \Acal.
\end{equation}

Let \( V \) be a vector space. A linear map \( \Phi_V: V \rightarrow \Acal \otimes V \) is called a \emph{left coaction of \( \Acal \) on \( V \)} if it satisfies
\[
(\Delta \otimes \id)\, \Phi_V = (\id \otimes \Phi_V)\, \Phi_V \quad \text{and} \quad (\epsilon \otimes \id)\, \Phi_V = \id.
\]
Similarly, a linear map \( {}_V\Phi: V \rightarrow V \otimes \Acal \) is called a \emph{right coaction} if it satisfies
\[
(\id \otimes \Delta)\, {}_V\Phi = ({}_V\Phi \otimes \id)\, {}_V\Phi \quad \text{and} \quad (\id \otimes \epsilon)\, {}_V\Phi = \id.
\]

The map \( \ad: \Acal \rightarrow \Acal \otimes \Acal \) defined by
\begin{equation}\label{eq:right adjoint coaction}
    \ad(a) = a_{(2)} \otimes S(a_{(1)}) a_{(3)}
\end{equation}
defines a right \( \Acal \)-coaction on \( \Acal \), known as the \emph{right adjoint coaction on $\Acal$}.

The following convention for skew-pairing is taken from \cite{VoigtYuncken}.

\begin{defn}\label{defn:skew-pairing}
Let \( (\Ucal , \hat{\Delta} , \hat{\epsilon}, \hat{S} ) \) and \( (\Acal , \Delta, \epsilon, S) \) be Hopf algebras. A bilinear map \( ( \cdot, \cdot): \Ucal \times \Acal \to \mathbb{C} \) is called a \textbf{skew-pairing of \( \Ucal \) and \( \Acal \)} if, for \( X, Y, Z \in \Ucal \) and \( f, g, h \in \Acal \), the following conditions hold:
\begin{enumerate}[label=P\arabic*., series=P]
\item \( ( XY , f ) = ( X \otimes Y, \Delta(f) ) , \quad ( X, fg ) = ( \hat{\Delta} (X) , g \otimes f ) \)
\item \( ( X , 1_\Acal ) = \hat{\epsilon} (X), \quad ( 1_\Ucal , f ) = \epsilon (f) \)
\item \( ( \hat{S} (X) , f ) = ( X , S^{-1} (f) ), \quad ( X , S(f) ) = ( \hat{S}^{-1} (X), f ) \)
\end{enumerate}
If \( \Ucal \) and \( \Acal \) are Hopf \( * \)-algebras, we also require that
\begin{enumerate}[label=P\arabic*., resume=P]
\item \( ( X^* , f ) = \overline{( X, S (f)^* )}, \quad ( X, f^* ) = \overline{( \hat{S}^{-1} (X)^* , f )} \).
\end{enumerate}
The pairing is called \textbf{nondegenerate} if \( ( X, f ) = 0 \) for all \( f \in \Acal \) implies \( X = 0 \) and \( ( X, f ) = 0 \) for all \( X \in \Ucal \) implies \( f = 0 \).
\end{defn}

Here is an example of a skew-pairing. Let \( \phi \in \Acal^* \) and define \( \hat{\Delta}(\phi) \in (\Acal \otimes \Acal)^* \) by
\begin{equation}\label{eq:comultiplication of dual Hopf algebra}
    ( \hat{\Delta} (\phi), a \otimes b ) = ( \phi , ba ), \quad a, b \in \Acal.
\end{equation}
Recall that there is a natural embedding \( \Acal^* \otimes \Acal^* \subseteq (\Acal \otimes \Acal)^* \). Define
\begin{equation*}
    \Acal^\circ = \{ \phi \in \Acal^* \mid \hat{\Delta}(\phi) \in \Acal^* \otimes \Acal^* \}.
\end{equation*}
Then, \( \Acal^\circ \) can be endowed with a unique Hopf (\(*\)-)algebra structure \( (\Acal^\circ, \hat{\Delta}, \hat{\epsilon}, \hat{S} ) \) that makes the canonical pairing
\[
    \Acal^\circ \times \Acal \ni (\phi, a ) \longmapsto \phi(a) \in \Cbb
\]
a skew-pairing of Hopf (\(*\)-)algebras, called \textit{the dual Hopf (\(*\)-)algebra of \( \Acal \)}, compare \cite[Section~1.2.8]{Klimyk}. If \( \Acal^\circ \) separates \( \Acal \), i.e., \( ( \phi, a ) = 0 \) for all \( \phi \in \Acal^\circ \) implies \( a = 0 \), then this skew-pairing is nondegenerate.

The unital (\(*\)-)algebra structure of \( \Acal^\circ \) can be extended to the dual space \( \Acal^* \). Specifically, if we define the multiplication (and the involution) on \( \Acal^* \) by, for \( \phi, \psi \in \Acal^* \),
\begin{align*}
    (\phi \psi, a) = (\phi, a_{(1)}) (\psi, a_{(2)}), \,\, \Big(\, \text{and } (\phi^* , a) = \overline{(\phi, S(a)^*)}, \, \Big) \quad a \in \Acal,
\end{align*}
then \( \Acal^* \) becomes a (\(*\)-)algebra with unit \( \epsilon \in \Acal^* \), containing \( \Acal^\circ \) as a unital (\(*\)-)subalgebra.

Note that \( \Acal \) becomes an \( \Acal^* \)-bimodule with respect to the following left and right multiplications:
\begin{equation}\label{eq:Acirc action on A}
    \phi \triangleright a = a_{(1)} ( \phi , a_{(2)} ), \quad a \triangleleft \phi = ( \phi, a_{(1)} ) a_{(2)}, \quad a \in \Acal, \, \phi \in \Acal^*.
\end{equation}

\begin{rmk}\label{rmk:embedding via skew-pairing}
If \( (\cdot, \cdot ) : \Ucal \times \Acal \rightarrow \Cbb \) is a nondegenerate skew-pairing, then the map
    \[
    \Ucal \ni X \longmapsto (X, \,\cdot\,) \in \Acal^\circ
    \]
is a well-defined injective algebra homomorphism, and \( \Acal \) becomes a left/right \( \Ucal \)-module via the pull-back of \eqref{eq:Acirc action on A}.
\end{rmk}

On the other hand, \( \Acal^* \) is an \( \Acal \)-bimodule with the multiplications given by, for \( \phi \in \Acal^* \) and \( a \in \Acal \),
\begin{equation*}
    (a \phi, b) = (\phi, ba) , \quad (\phi a , b) = (\phi, ab), \quad b \in \Acal.
\end{equation*}

\subsection{CQGs}\label{subsec:CQGs}

\begin{defn}\label{defn:CQG}
A Hopf \( * \)-algebra \( \Acal \) is called a \textbf{compact quantum group (CQG)} if there exists a non-zero linear map \( \hcal : \Acal \rightarrow \Cbb \) satisfying, for all \( a \in \Acal \),
\begin{enumerate}[label=A\arabic*., series=A]
    \item (Positivity) \( \hcal(a^*a) \geq 0 \)
    \item (Invariance) \( (\hcal \otimes \id)\Delta(a) = \hcal(a) 1 = (\id \otimes \hcal)\Delta(a) \).
\end{enumerate}
If \( \Acal \) is a CQG, then the linear map \( \hcal \) satisfying \( \hcal(1) = 1 \) and the above conditions is unique, and will be called \textbf{the Haar state of \( \Acal \)}. The Haar state always satisfies
\begin{enumerate}[label=A\arabic*., resume=A]
    \item (Faithfulness) For \( a \in \Acal \), \( \hcal(a^* a) = 0 \) implies \( a = 0 \).
\end{enumerate}
\end{defn}

Following the convention of \cite[Section~4.2.3]{VoigtYuncken}, we shall write \( \Acal = \Cf^\infty(\Kcal) \) when \( \Acal \) is a CQG, and we refer to \( \Kcal \) as a CQG. Throughout the rest of Section~\ref{sec:Compact quantum groups (CQGs)}, \( (\Cf^\infty(\Kcal), \Delta, \epsilon, S) \) is a CQG, \( \hcal \) is its Haar state, and \( (\Cf^\infty(\Kcal)^\circ, \hat{\Delta}, \hat{\epsilon}, \hat{S}) \) is its dual Hopf \( * \)-algebra.

An element \( u \in M_n(\Cf^\infty(\Kcal)) \cong M_n (\Cbb) \otimes \Cf^\infty(\Kcal) \) is called an \emph{\( n \)-dimensional corepresentation of \( \Cf^\infty(\Kcal) \)} if \( u \) is unitary and satisfies
\[
\Delta(u_{ij}) = \sum_{k=1}^n u_{ik} \otimes u_{kj}, \quad \text{for all } 1 \leq i,j \leq n.
\]
In addition, we have \( \epsilon(u_{ij}) = \delta_{ij} \) and \( S(u_{ij}) = u_{ji}^* \) for all \( 1 \leq i,j \leq n \).

Two corepresentations \( u, v \in M_n(\Cf^\infty(\Kcal)) \) are said to be \textit{equivalent} if there exists an invertible matrix \( T \in M_n(\Cbb) \) such that
\[
TuT^{-1} = v.
\]
A corepresentation \( u \) is called \textit{irreducible} if
\[
\left\{ T \in M_n(\Cbb) \mid Tu = uT \right\} = \Cbb \cdot 1_{M_n(\Cbb)}.
\]

Let \( \Irr(\Kcal) \) denote the set of equivalence classes of irreducible corepresentations of \( \Cf^\infty(\Kcal) \), and let \( \{ u^\mu \}_{\mu \in \Irr(\Kcal)} \) be a complete set of representatives. For each \( \mu \in \Irr(\Kcal) \), we denote the dimension of the corepresentation by \( n_\mu \in \mathbb{N} \), and write its matrix elements as \( u^\mu_{ij} \) for \( 1 \leq i,j \leq n_\mu \).

\begin{prop}\label{prop:Peter-Weyl}
The set \( \{ u^\mu_{ij} \mid \mu \in \Irr(\Kcal),\ 1 \leq i,j \leq n_\mu \} \) forms a linear basis of \( \Cf^\infty(\Kcal) \). In particular,
\[
\Cf^\infty(\Kcal) = \bigoplus_{\mu \in \Irr(\Kcal)} \Span_\Cbb \{ u^\mu_{ij} \mid 1 \leq i,j \leq n_\mu \}.
\]
\end{prop}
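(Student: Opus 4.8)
The plan is to follow the classical route to the Peter--Weyl decomposition, using that $\Cf^\infty(\Kcal)$ is by definition a Hopf $*$-algebra equipped with a state $\hcal$ that is positive, invariant, and (automatically) faithful. \emph{Spanning.} First I would show every $a \in \Cf^\infty(\Kcal)$ is a linear combination of matrix coefficients of finite-dimensional corepresentations. By the fundamental theorem of coalgebras, $a$ lies in a finite-dimensional subcoalgebra $C$; choosing a basis of $C$ and writing $\Delta|_C$ in coordinates realizes $C$ as a finite-dimensional right comodule, so $a$ is a matrix coefficient of that comodule. \emph{Unitarizability.} Given a finite-dimensional right comodule, say realized by a (not a priori unitary) matrix $w \in M_n(\Cf^\infty(\Kcal))$ satisfying the comultiplication identity, the matrix $Q = \big(\hcal(w_{ki}^{\,*}w_{kj})\big)_{i,j}$ (sum over $k$) is positive, and invertible by faithfulness of $\hcal$; a short computation using invariance of $\hcal$ shows that $Q^{1/2} w\, Q^{-1/2}$ is unitary and satisfies the comultiplication identity, i.e.\ is a corepresentation in the sense of the excerpt, equivalent to $w$. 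Hence every such comodule comes from a corepresentation.

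\emph{Complete reducibility and spanning conclusion.} A corepresentation $u$ is unitary, so any invariant subspace of $\Cbb^{n}$ has invariant orthogonal complement; inducting on dimension, $u$ is an orthogonal direct sum of irreducible corepresentations, each equivalent to some $u^\mu$. Combining the three steps gives $\Cf^\infty(\Kcal) = \sum_{\mu \in \Irr(\Kcal)} \Span_\Cbb\{u^\mu_{ij} : 1 \leq i,j \leq n_\mu\}$. \emph{Directness and linear independence.} It remains to see the sum is direct and each block's matrix elements are linearly independent; this follows from the Schur orthogonality relations. For irreducible unitary corepresentations $u^\mu$ on $\Cbb^{n_\mu}$, $u^\nu$ on $\Cbb^{n_\nu}$ and a linear map $T \colon \Cbb^{n_\nu} \to \Cbb^{n_\mu}$, I would form $\widehat{T} = (\id \otimes \hcal)\big(u^\mu (T \otimes 1)(u^\nu)^*\big)$ and check, using invariance of $\hcal$ and the corepresentation identities, that $\widehat{T}$ intertwines $u^\nu$ and $u^\mu$. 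Schur's lemma then forces $\widehat{T} = 0$ when $\mu \neq \nu$, giving $\hcal\big((u^\mu_{ij})^* u^\nu_{kl}\big) = 0$; when $\mu = \nu$, $\widehat{T}$ is scalar, and running the same argument with the conjugate corepresentation (whose intertwiner space is again controlled by Schur, with faithfulness of $\hcal$ ensuring an auxiliary matrix $F_\mu$ is positive and invertible) pins down $\hcal\big((u^\mu_{ij})^* u^\mu_{kl}\big) = \delta_{ik}(F_\mu)_{lj}$ up to normalization. Since positivity and faithfulness make $(a,b) \mapsto \hcal(a^* b)$ an inner product on $\Cf^\infty(\Kcal)$, and its Gram matrix on $\{u^\mu_{ij}\}$ is block diagonal in $\mu$ with $\mu$-block $\id \otimes F_\mu$ nonsingular, the family $\{u^\mu_{ij}\}$ is linearly independent and the sum is direct.

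I expect the orthogonality relations to be the technical heart of the argument: constructing the intertwiners $\widehat{T}$, invoking Schur in both the original and conjugate settings, and identifying the scalar and the $F_\mu$-matrix. The spanning half is comparatively routine coalgebra bookkeeping together with the Haar-averaging trick for unitarizability, and the reducibility step is the usual induction for unitary objects. All of this is the content recalled in, e.g., \cite[Chapters~1--3]{Timmermann}, so in the paper it may suffice to cite that reference and indicate the averaging and orthogonality steps.
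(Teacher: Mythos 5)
Your proposal is correct and is essentially the standard argument (coalgebra spanning, Haar-averaging unitarization, complete reducibility, and Schur orthogonality) that the paper defers to \cite[Chapters~1--3]{Timmermann} rather than reproving; the paper gives no independent proof of this proposition. As you note yourself, citing that reference is exactly what suffices here.
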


This decomposition is called the \textit{Peter–Weyl decomposition of \( \Cf^\infty(\Kcal) \)}, and any basis of the form \( \{ u^\mu_{ij} \mid 1 \leq i,j \leq n_\mu \} \) is referred to as a \textit{Peter–Weyl basis} for \( \Cf^\infty(\Kcal) \).

A linear operator \( L : \Cf^\infty(\Kcal) \to \Cf^\infty(\Kcal) \) is said to \textit{diagonalize over the Peter–Weyl decomposition} if, for each \( \mu \in \Irr(\Kcal) \), there exists a scalar \( C_L(\mu) \in \Cbb \) such that
\[
L u^\mu_{ij} = C_L(\mu) \, u^\mu_{ij}, \quad \text{for all } 1 \leq i,j \leq n_\mu,
\]
for any Peter–Weyl basis \( \{ u^\mu_{ij} \mid \mu \in \Irr(\Kcal),\ 1 \leq i,j \leq n_\mu \} \). In this case, \( C_L(\mu) \) is called the \textit{eigenvalue of \( L \) at \( \mu \in \Irr(\Kcal) \)}.

Thanks to Proposition~\ref{prop:Peter-Weyl}, we have an isomorphism of $*$-algebras
\begin{equation}\label{eq:dual of CQG}
\Cf^\infty(\Kcal)^* \cong \prod_{\mu \in \Irr(\Kcal)} M_{n_\mu}(\Cbb),
\end{equation}
given by
\(
\phi \mapsto \Big( (\phi, u^\mu_{ij})_{1 \leq i,j \leq n_\mu} \Big)_{\mu \in \Irr(\Kcal)}.
\)
Throughout this paper, we will identify \( \Cf^\infty(\Kcal)^* \) with this direct product of matrix algebras.

The $*$-subalgebra
\[
\Dcal(\Kcal) = \bigoplus_{\mu \in \Irr(\Kcal)} M_{n_\mu}(\Cbb) \subseteq \Cf^\infty(\Kcal)^*
\]
is called the \textbf{dual quantum group of \( \Cf^\infty(\Kcal) \)}. In fact, \( \Dcal(\Kcal) \) carries the structure of an \emph{algebraic discrete quantum group}, which is in duality with the compact quantum group \( \Cf^\infty(\Kcal) \) in a precise sense, see \cite[Chapter~2]{Timmermann}. In particular, this duality implies that the map
\begin{equation}\label{eq:dual quantum group and antipode}
\Dcal(\Kcal) \ni X \longmapsto X \circ S^{\pm 1} \in \Dcal(\Kcal)
\end{equation}
is well-defined and bijective.

For each \( \mu \in \Irr(\Kcal) \), there exists a positive invertible matrix \( F_\mu \in M_{n_\mu}(\Cbb) \) such that
\begin{equation}\label{eq:Woronowicz character}
    S^2(u^\mu) = F_\mu u^\mu F_\mu^{-1} \quad \text{in } M_{n_\mu}(\Cf^\infty(\Kcal)).
\end{equation}
The following Schur orthogonality relations then hold:
\begin{equation*}
    \hcal\big( (u^\mu_{ij})^* u^\nu_{kl} \big) = \delta_{\mu\nu} \, \frac{\delta_{jl}}{\Tr(F_\mu)} (F_\mu^{-1})_{ki}, \quad 
    \hcal\big( u^\mu_{ij} (u^\nu_{kl})^* \big) = \delta_{\mu\nu} \, \frac{\delta_{ik}}{\Tr(F_\mu)} (F_\mu)_{lj}
\end{equation*}
for all \( \mu, \nu \in \Irr(\Kcal) \) and respective indices.

\subsection{Linear functionals on a Hopf \texorpdfstring{$*$}{TEXT}-algebra and a CQG}\label{subsec:Linear functionals on a CQG}

In this subsection, \( \Acal \) and \( \Cf^\infty(\Kcal) \) denote a Hopf ($*$-)algebra and a CQG, respectively. We continue to follow the notations and conventions established in the preceding two subsections.

\begin{defn}\label{defn:special linear functionals}
Let \( \phi : \Acal \rightarrow \Cbb \) be a linear functional. We say that \( \phi \) is:
\begin{itemize}
    \item \textbf{\( \boldsymbol{\ad} \)-invariant} if
    \( (\phi \otimes \id)\, \ad(a) = \phi(a) \) for all $a \in \Acal$;
    \item \textbf{self-adjoint} if \( \phi = \phi^* \) in the $*$-algebra \( \Acal^* \);
    \item \textbf{Hermitian} if \( (\phi, a^*) = \overline{(\phi, a)} \) for all \(a \in \Acal\);
    \item \textbf{conditionally positive} if \( (\phi, a^* a) \geq 0 \) for all \(a \in \Ker \epsilon \).
\end{itemize}
\end{defn}

Recall that for any \( \phi \in \Acal^* \), the left multiplication by $\phi$ is given by
\begin{equation*}
    \phi \triangleright = (\id \otimes \phi)\, \Delta \in \End(\Acal).
\end{equation*}

\begin{prop}\label{prop:semigroups generated by linear functionals}
Let \( \phi \in \Acal^* \). Then, for any \( a \in \Acal \) and \( t \geq 0 \), the series
\begin{equation*}
    T_t^\phi(a) = \sum_{n=0}^\infty \frac{(-t \phi \triangleright)^n a}{n!} = \sum_{n=0}^\infty \frac{(-t \phi)^n \triangleright a}{n!}
\end{equation*}
converges in \( \Acal \) with respect to the weak topology induced by its dual space \( \Acal^* \).

Hence, \( (T_t^\phi)_{t \geq 0} \subseteq \End(\Acal) \) defines a well-defined semigroup of linear operators on \( \Acal \), referred to as \textbf{the semigroup generated by \( \phi \triangleright \)}. We also write \( T_t^\phi = e^{-t \phi \triangleright} \).
\end{prop}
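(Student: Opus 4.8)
The plan is to reduce convergence of the exponential series to a statement about a single fixed element $a\in\Acal$, using the fundamental finiteness property of Hopf algebras: the comultiplication $\Delta(a)$ lies in a finite-dimensional subspace $V\otimes V$, where $V$ is a finite-dimensional subcoalgebra containing $a$. Concretely, since $a$ generates a finite-dimensional subcoalgebra $C\subseteq\Acal$ (the coefficient space of $a$ under $\Delta$), the iterated coproducts $\Delta^{(n)}(a)$ all live in $C^{\otimes(n+1)}$, and hence $(\phi\triangleright)^n a = (\id\otimes\phi^{\otimes n})\Delta^{(n)}(a)$ stays inside the fixed finite-dimensional space $C$ for all $n$. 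So the entire orbit $\{(\phi\triangleright)^n a : n\geq 0\}$ is contained in a finite-dimensional subspace of $\Acal$.

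Once the orbit lies in a finite-dimensional space $C$, convergence in the weak topology $\sigma(\Acal,\Acal^*)$ is equivalent to convergence in any norm on $C$ (all Hausdorff vector-space topologies on a finite-dimensional space coincide). Thus it suffices to check that $\phi\triangleright$ restricts to a bounded operator on $C$ — which is automatic, since any linear operator on a finite-dimensional normed space is bounded — and then the standard estimate $\bigl\|\sum_{n=N}^{M}\tfrac{(-t\phi\triangleright)^n a}{n!}\bigr\|\leq \sum_{n=N}^{M}\tfrac{t^n\|\phi\triangleright|_C\|^n\|a\|}{n!}$ shows the partial sums form a Cauchy sequence in $C$, hence converge. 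The two expressions $\sum_n \tfrac{(-t\phi\triangleright)^n a}{n!}$ and $\sum_n \tfrac{(-t\phi)^n\triangleright a}{n!}$ agree termwise because $(\phi\triangleright)^n = (\phi^{\ast n})\triangleright$ in $\End(\Acal)$, where $\phi^{\ast n}$ is the $n$-fold product in $\Acal^*$; this is just associativity of the $\Acal^*$-module action on $\Acal$ together with the formula $\phi\triangleright = (\id\otimes\phi)\Delta$.

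For the semigroup property, I would argue again inside the finite-dimensional subcoalgebra generated by a given $a$. On such a subspace the map $t\mapsto T_t^\phi(a)$ is literally the matrix exponential $e^{-t\,\phi\triangleright|_C}$ applied to $a$, so $T_s^\phi T_t^\phi a = T_{s+t}^\phi a$ follows from the elementary identity $e^{-sA}e^{-tA}=e^{-(s+t)A}$ for a fixed finite-dimensional operator $A$; alternatively, one can prove it directly at the Hopf-algebra level by observing $T_t^\phi = e_t\triangleright$ where $e_t = \sum_n \tfrac{(-t\phi)^n}{n!}$ is a well-defined element of $\Acal^*$ (the series converges pointwise on each $a$ by the above), and $e_s e_t = e_{s+t}$ in $\Acal^*$ because $\Acal^*$ is an algebra in which $\phi$ commutes with itself. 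That $T_t^\phi$ maps $\Acal$ into $\Acal$ (rather than some completion) is exactly the content of the orbit-finiteness observation.

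The only mild subtlety — and the step I'd flag as requiring a sentence of care rather than being genuinely hard — is justifying that weak convergence of the series in $\Acal$ is the right notion and that it coincides with the honest convergence we get inside $C$. The point is that the inclusion $C\hookrightarrow\Acal$ is weak-to-weak continuous (each $\psi\in\Acal^*$ restricts to a functional on $C$), so a sequence converging in $C$ for its unique vector-space topology converges weakly in $\Acal$; conversely the weak limit, if it exists in $\Acal$, must lie in the weak closure of $C$, which is $C$ itself since $C$ is finite-dimensional hence weakly closed. Everything else is routine bookkeeping with Sweedler notation.
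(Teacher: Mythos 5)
Your proof is correct. The underlying mechanism is the same one the paper relies on -- the finiteness of the coefficient coalgebra of a fixed element $a$ -- but the routes differ in presentation. The paper outsources the convergence to a citation: it invokes a lemma of Franz asserting that $\exp(-t\phi)=\sum_n \frac{(-t\phi)^n}{n!}$ converges in $\Acal^*$ for the weak$*$ topology, and then writes $T_t^\phi(a)=a_{(1)}\sum_n \frac{(-t\phi)^n(a_{(2)})}{n!}$, using the finiteness of the Sweedler sum to transfer pointwise convergence of scalars into weak convergence in $\Acal$. You instead prove the needed finiteness from scratch via the fundamental theorem of coalgebras, observe that the whole orbit $\{(\phi\triangleright)^n a\}_n$ lies in the finite-dimensional subcoalgebra $C$ generated by $a$, and run the ordinary matrix-exponential estimate there. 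Your version is self-contained and actually yields a slightly stronger conclusion -- convergence in the finite-dimensional subspace $C$ for its unique Hausdorff vector topology, hence in any locally convex topology on $\Acal$, not merely the weak one -- at the cost of explicitly invoking the fundamental theorem of coalgebras, which the paper's citation implicitly contains. The identification $(\phi\triangleright)^n=(\phi^n)\triangleright$ via coassociativity and the semigroup property via $e_s e_t=e_{s+t}$ in the convolution algebra $\Acal^*$ match the paper's ``immediate from the exponential form'' remark. No gaps.
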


\begin{proof}
By \cite[Lemma~1.6(a)]{Franz2006}, the exponential series
\[
\exp(-t \phi) = \sum_{n=0}^\infty \frac{(-t \phi)^n}{n!}
\]
converges in \( \Acal^* \) with respect to the weak* topology. Therefore, for any \( a \in \Acal \) and \( t \geq 0 \), the series
\begin{align*}
    T_t^\phi(a) = \sum_{n=0}^\infty \frac{(-t \phi)^n \triangleright a}{n!}
    = \sum_{n=0}^\infty \frac{(\id \otimes (-t \phi)^n)\, \Delta(a)}{n!} = a_{(1)} \sum_{n=0}^\infty \frac{(-t \phi)^n(a_{(2)})}{n!}
\end{align*}
converges in \( \Acal \) with respect to the weak topology defined by \( \Acal^* \).

The semigroup property \( T_{t+s}^\phi = T_t^\phi T_s^\phi \) for all \( t, s \geq 0 \) is immediate from the exponential form.
\end{proof}

\begin{rmk}\label{rmk:linear functional and operator correspondence}
Let \( \phi \in \Acal^* \). Since \( \epsilon \circ (\phi \triangleright) = \phi \), we may regard the linear operator \( \phi \triangleright \) and the functional \( \phi \) as representing the same object. We will adopt this convention whenever it is convenient and causes no confusion. For instance, we will refer to the semigroup \( (T_t^\phi)_{t \geq 0} \) as the semigroup generated by \( \phi \).
\end{rmk}

Recall that a linear map \( T : \Bcal \rightarrow \Ccal \) between two \( * \)-algebras is called \emph{completely positive} if
\[
\sum_{1 \leq i, j \leq n } a_i^* T(b_i^* b_j) a_j \in \Ccal_+ := \{ c^* c \mid c \in \Ccal \}
\]
for all choices of \( a_j, b_j \in \Bcal \) ($1 \leq j \leq n$).

\begin{defn}\label{defn:generating functional of QMS}
Let \( \phi \in \Acal^* \). The semigroup \( (T_t^\phi)_{t \geq 0} \) is called a \textbf{quantum Markov semigroup on \( \Acal \)} if each \( T_t^\phi \) is unital and completely positive.
\end{defn}

Note, however, that in \cite{Franz2014}, the term \emph{quantum Markov semigroup} refers to the continuous extension of \( (T_t^\phi)_{t \geq 0} \) to a suitable topological completion of \( \Acal \).

\begin{prop}\label{prop:complete positivity of semigroup}
Let \( \phi \in \Acal^* \) be a Hermitian linear functional such that \( \phi(1) = 0 \). Then the semigroup \( (T_t^\phi)_{t \geq 0} = (e^{-t \phi \triangleright})_{t \geq 0} \) is a quantum Markov semigroup if and only if \( -\phi \) is conditionally positive.
\end{prop}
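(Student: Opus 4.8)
The plan is to reduce the claim to the Schoenberg correspondence for Hopf $*$-algebras. Write $\psi_t := \exp(-t\phi) \in \Acal^*$ for the functional appearing in the proof of Proposition~\ref{prop:semigroups generated by linear functionals}, so that $T_t^\phi = \psi_t \triangleright = (\id \otimes \psi_t)\,\Delta$. First I would settle unitality: since $\Delta(1) = 1 \otimes 1$, the convolution power $\phi^n \in \Acal^*$ satisfies $\phi^n(1) = \phi(1)^n$, which vanishes for $n \geq 1$ because $\phi(1) = 0$; hence $\psi_t(1) = \epsilon(1) = 1$ and $T_t^\phi(1) = 1 \cdot \psi_t(1) = 1$ for every $t \geq 0$. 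Consequently $(T_t^\phi)_{t \geq 0}$ is a quantum Markov semigroup if and only if each $T_t^\phi$ is completely positive.

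Next I would show that $T_t^\phi$ is completely positive for all $t \geq 0$ precisely when $\psi_t$ is a positive functional for all $t \geq 0$ (equivalently, since $\psi_t(1) = 1$, a state). For the direction ``$\Leftarrow$'': $\Delta$ is a unital $*$-homomorphism, hence completely positive, and the slice map $\id \otimes \psi_t : \Acal \otimes \Acal \to \Acal$ is completely positive whenever $\psi_t$ is positive --- a purely algebraic fact, obtained by factoring the Gram matrix $\big[\psi_t(d_i^* d_j)\big]_{i,j}$ as $N^*N$ over $\Cbb$ and regrouping the resulting expression into a sum of terms $c^*c$ --- so the composite $T_t^\phi = (\id \otimes \psi_t)\,\Delta$ is completely positive. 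For the direction ``$\Rightarrow$'': applying the counit gives $\epsilon \circ T_t^\phi = (\epsilon \otimes \psi_t)\,\Delta = \psi_t$, and since $\epsilon$ is a $*$-character (hence positive), positivity of $T_t^\phi$ forces positivity of $\psi_t$.

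It remains to prove that $\psi_t = \exp(-t\phi)$ is positive for every $t \geq 0$ if and only if $-\phi$ is conditionally positive; this is the Schoenberg correspondence and is the heart of the matter. The easy implication is differentiation at $t = 0$: for $a \in \Ker\epsilon$ one has $\psi_0(a^*a) = \epsilon(a^*)\epsilon(a) = 0$ (as $\epsilon(a) = 0$), the scalar function $t \mapsto \psi_t(a^*a) = \sum_{n \geq 0} \frac{(-t)^n}{n!}\phi^n(a^*a)$ is real-analytic near $0$ by \cite[Lemma~1.6(a)]{Franz2006}, and it is nonnegative for $t \geq 0$ while vanishing at $0$; hence its right derivative at $0$, which equals the coefficient $-\phi(a^*a)$ of $t$, is nonnegative, i.e.\ $-\phi$ is conditionally positive. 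For the converse I would invoke the Schoenberg correspondence as developed in the theory of Lévy processes on quantum groups \cite{Franz2006, Franz2014}: from the Hermitian, conditionally positive functional $-\phi$ with $(-\phi)(1) = 0$ one constructs a Schürmann triple --- a pre-Hilbert space $D$, a unital $*$-representation $\rho$ of $\Acal$ on $D$, and a $\rho$-$\epsilon$-cocycle $\eta : \Acal \to D$ whose associated coboundary recovers $-\phi$ --- realizes the convolution semigroup $(\exp(-t\phi))_{t \geq 0}$ as the vacuum expectation of a unital $*$-homomorphic quantum Lévy process $j_t$ on the symmetric Fock space over $L^2(\Rbb_{\geq 0}, D)$, and then reads off $\psi_t(b^*b) = \langle \Omega, j_t(b^*b)\,\Omega \rangle = \| j_t(b)\,\Omega \|^2 \geq 0$. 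I expect this converse to be the main obstacle; the rest is routine manipulation of the Hopf $*$-algebra and CQG axioms.
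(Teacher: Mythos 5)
Your proposal is correct and follows essentially the same route as the paper: unitality from $\phi(1)=0$, the Gram-matrix/slice-map argument to pass between complete positivity of $T_t^\phi=(\id\otimes\psi_t)\Delta$ and positivity of the functional $\psi_t=\exp(-t\phi)$, and the Schoenberg correspondence to relate positivity of the $\psi_t$ to conditional positivity of $-\phi$. The only difference is cosmetic: you prove the easy direction of Schoenberg by differentiating at $t=0$ and sketch the Schürmann-triple/Fock-space proof of the converse, whereas the paper cites \cite[Proposition~1.7]{Franz2006} for both.
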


\begin{proof}
The condition \( \phi(1) = 0 \) ensures that each \( T_t^\phi \) is unital.

If \( (T_t^\phi)_{t \geq 0} \) is a quantum Markov semigroup, then the functionals \( \varphi_t := \epsilon \circ T_t^\phi \) are completely positive for all \( t \geq 0 \). By the Schoenberg correspondence \cite[Proposition~1.7]{Franz2006}, this implies that \( -\phi \) is conditionally positive.

Conversely, suppose that \( -\phi \) is conditionally positive. Then the Schoenberg correspondence guarantees that the functionals \( \varphi_t = \epsilon \circ T_t^\phi \) are completely positive. Since \( T_t^\phi = (\id \otimes \varphi_t) \circ \Delta \), we have, for any \( a_j, b_j \in \Acal \) ($1 \leq j \leq n$),
\[
\sum_{i,j = 1}^n a_i^* T_t^\phi(b_i^* b_j) a_j 
= (\id \otimes \varphi_t) \left( \sum_{i,j = 1}^n (a_i^* \otimes 1) \Delta(b_i^* b_j) (a_j \otimes 1) \right).
\]
Letting \( \sum_{k=1}^m x_k \otimes y_k := \sum_{j=1}^n \Delta(b_j)(a_j \otimes 1) \), this expression becomes
\[
(\id \otimes \varphi_t) \left( \sum_{k,l = 1}^m x_k^* x_l \otimes y_k^* y_l \right)
= \sum_{k,l = 1}^m x_k^* \varphi_t(y_k^* y_l) x_l,
\]
which lies in \( \Acal_+ \), since \( (\varphi_t(y_k^* y_l))_{1 \leq k,l \leq m} \in M_m(\Cbb) \) is a positive matrix.
\end{proof}

Now, we focus on $\ad$-invariant linear functionals.

\begin{prop}\label{prop:ad-invariance criterion for a linear functional}
    Let $\Ucal \subseteq \Acal^*$ be a subset that separates $\Acal$. Then, for $\phi \in \Acal^*$,
    \begin{align*}
        \text{$\phi$ is $\ad$-invariant}
        \Longleftrightarrow X \phi = \phi X \quad \text{for all } X \in \Ucal.
    \end{align*}
\end{prop}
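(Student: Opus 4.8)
The plan is to eliminate $\Ucal$ from the statement, reducing the right-hand condition to an operator identity on $\Acal$ that does not mention $\Ucal$, and then to prove that identity is equivalent to $\ad$-invariance by a short computation with the antipode.

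\emph{Step 1: reducing the right-hand side.} Writing $\phi\triangleright$ for the operator $a\mapsto\phi\triangleright a = a_{(1)}(\phi,a_{(2)})$ and $\triangleleft\phi$ for $a\mapsto a\triangleleft\phi=(\phi,a_{(1)})a_{(2)}$ (so $\phi\triangleright=(\id\otimes\phi)\Delta$ and $\triangleleft\phi=(\phi\otimes\id)\Delta$ by \eqref{eq:Acirc action on A}), one unwinds the product in $\Acal^*$: for $X\in\Acal^*$ and $a\in\Acal$,
\[
(X\phi,a)=(X,a_{(1)})(\phi,a_{(2)})=(X,\phi\triangleright a),\qquad (\phi X,a)=(\phi,a_{(1)})(X,a_{(2)})=(X,a\triangleleft\phi).
\]
Hence $X\phi=\phi X$ is equivalent to $(X,\phi\triangleright a-a\triangleleft\phi)=0$ for all $a$. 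Since $\Ucal$ separates $\Acal$, ``$X\phi=\phi X$ for all $X\in\Ucal$'' is therefore equivalent to $\phi\triangleright a=a\triangleleft\phi$ for all $a\in\Acal$, i.e.\ to the operator identity $(\id\otimes\phi)\Delta=(\phi\otimes\id)\Delta$. It remains to show this identity is equivalent to $\ad$-invariance of $\phi$.

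\emph{Step 2: an auxiliary injective map.} For $\xi\in\End(\Acal)$ define $\Psi_\xi\in\End(\Acal)$ by $\Psi_\xi(a)=S(a_{(1)})\,\xi(a_{(2)})$. The assignment $\xi\mapsto\Psi_\xi$ is injective: using coassociativity and $a_{(1)}S(a_{(2)})=\epsilon(a)1$ together with the counit axiom one gets the recovery formula $a_{(1)}\,\Psi_\xi(a_{(2)})=\xi(a)$ for all $a$, so $\Psi_\xi=0$ forces $\xi=0$. Now two short Sweedler computations, using $\ad(a)=a_{(2)}\otimes S(a_{(1)})a_{(3)}$ from \eqref{eq:right adjoint coaction} and $S(a_{(1)})a_{(2)}=\epsilon(a)1$, give
\[
\Psi_{\phi\triangleright}(a)=S(a_{(1)})\bigl(\phi\triangleright a_{(2)}\bigr)=S(a_{(1)})a_{(2)}\,(\phi,a_{(3)})=\phi(a)1,
\]
\[
\Psi_{\triangleleft\phi}(a)=S(a_{(1)})\bigl(a_{(2)}\triangleleft\phi\bigr)=(\phi,a_{(2)})\,S(a_{(1)})a_{(3)}=(\phi\otimes\id)\ad(a).
\]

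\emph{Step 3: conclusion.} Subtracting the two displays above gives $(\phi\otimes\id)\ad(a)-\phi(a)1=\Psi_{\triangleleft\phi-\phi\triangleright}(a)$ for every $a$. By the injectivity in Step 2, the left-hand side vanishes identically — i.e.\ $\phi$ is $\ad$-invariant — if and only if $\triangleleft\phi-\phi\triangleright=0$ in $\End(\Acal)$, which is exactly the operator identity from Step 1. Combining the two steps proves the proposition.

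\emph{Main obstacle.} There is no deep obstacle here; the content is entirely in choosing the right auxiliary operator $\Psi_\xi$, after which one observes that $\phi(\cdot)1$ and $(\phi\otimes\id)\ad$ are the images under the injective map $\xi\mapsto\Psi_\xi$ of the two one-sided actions $\phi\triangleright$ and $\triangleleft\phi$, making the equivalence automatic. The remaining computations (the recovery formula for $\Psi_\xi$ and the two displayed identities) are routine manipulations with coassociativity and the antipode/counit axioms, and use neither positivity nor the Haar state — so the equivalence in fact holds for any Hopf algebra with invertible antipode, the CQG hypotheses on $\Acal$ being irrelevant to this statement.
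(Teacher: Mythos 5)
Your proof is correct and follows essentially the same route as the paper's: both reduce $\ad$-invariance to the operator identity $\phi\triangleright a = a\triangleleft\phi$ (the paper's intermediate form $(\phi,a_{(2)})S(a_{(1)})=(\phi,a_{(1)})S(a_{(2)})$ is just this identity with $S$ applied) by cancelling with the antipode, and then invoke the separation hypothesis to pass to $X\phi=\phi X$. Your packaging of the cancellation step as the injective map $\Psi_\xi$ with explicit recovery formula is a tidy way of organizing what the paper does by a chain of substitutions, but it is not a different argument.
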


\begin{proof}
    This follows from the equivalences:
    \begin{align*}
        \text{$\phi$ is $\ad$-invariant} &\Longleftrightarrow (\phi, a_{(2)}) S(a_{(1)}) a_{(3)} = (\phi, a) \cdot 1 ,\; \forall a \in \Acal \\
        &\Longleftrightarrow (\phi, a_{(2)}) S(a_{(1)}) = (\phi, a_{(1)}) S(a_{(2)}) ,\; \forall a \in \Acal \\
        &\Longleftrightarrow (\phi, a_{(2)}) (X, a_{(1)}) = (\phi, a_{(1)}) (X, a_{(2)}) ,\; \forall a \in \Acal,\; \forall X \in \Ucal \\
        &\Longleftrightarrow \phi X = X \phi ,\; \forall X \in \Ucal.
    \end{align*}
\end{proof}

For the remainder of this subsection, we restrict our attention to linear functionals on CQGs.

\begin{cor}\label{cor:ad-invariance criterion for a linear functional}
    Let $\phi \in \Cf^\infty(\Kcal)^*$. Then $\phi$ is $\ad$-invariant if and only if it lies in the center of the algebra $\Cf^\infty(\Kcal)^*$. Via the identification in \eqref{eq:dual of CQG}, this is equivalent to the condition that for each $\mu \in \Irr(\Kcal)$, there exists $C_\phi(\mu) \in \Cbb$, called the \textbf{eigenvalue of $\phi$ at $\mu$}, such that
    \[
        (\phi, u^\mu_{ij}) = C_\phi(\mu)\, \delta_{ij}, \quad 1 \leq i,j \leq n_\mu.
    \]
    Moreover, $\phi$ is self-adjoint if and only if $C_\phi(\mu) \in \Rbb$ for all $\mu \in \Irr(\Kcal)$.
\end{cor}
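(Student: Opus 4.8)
The plan is to deduce everything from two facts already in place: the $\ad$-invariance criterion of Proposition~\ref{prop:ad-invariance criterion for a linear functional}, and the $*$-algebra identification \eqref{eq:dual of CQG}. So the proof will essentially be a bookkeeping exercise rather than a new computation.

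First I would apply Proposition~\ref{prop:ad-invariance criterion for a linear functional} with the separating subset taken to be all of $\Cf^\infty(\Kcal)^*$. This set separates $\Cf^\infty(\Kcal)$ because, by Proposition~\ref{prop:Peter-Weyl}, the matrix coefficients $u^\mu_{ij}$ form a linear basis, so the corresponding coordinate functionals already distinguish elements. The criterion then gives: $\phi$ is $\ad$-invariant $\Longleftrightarrow$ $X\phi = \phi X$ for all $X \in \Cf^\infty(\Kcal)^*$ $\Longleftrightarrow$ $\phi$ lies in the center of the algebra $\Cf^\infty(\Kcal)^*$.

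Next I would transport the centrality condition through \eqref{eq:dual of CQG}. Since the center of a (possibly infinite) direct product of unital algebras is the direct product of the centers, and $Z(M_{n_\mu}(\Cbb)) = \Cbb\cdot 1_{M_{n_\mu}(\Cbb)}$ for each $\mu$, an element of $\Cf^\infty(\Kcal)^*$ is central precisely when, for every $\mu \in \Irr(\Kcal)$, its $\mu$-block $\big( (\phi,u^\mu_{ij}) \big)_{1\le i,j\le n_\mu}$ is a scalar matrix $C_\phi(\mu)\,1_{M_{n_\mu}(\Cbb)}$ for some $C_\phi(\mu)\in\Cbb$; this is exactly the asserted relation $(\phi,u^\mu_{ij}) = C_\phi(\mu)\,\delta_{ij}$.

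For the self-adjointness claim I would use that \eqref{eq:dual of CQG} is an isomorphism of $*$-algebras, so $\phi = \phi^*$ holds if and only if every $\mu$-block is a Hermitian matrix. Concretely, combining the definition $(\phi^*,a) = \overline{(\phi,S(a)^*)}$ with $S(u^\mu_{ij}) = (u^\mu_{ji})^*$ gives $(\phi^*,u^\mu_{ij}) = \overline{(\phi,u^\mu_{ji})}$, so the $\mu$-block of $\phi^*$ is the conjugate transpose of that of $\phi$; when $\phi$ is already central, its $\mu$-block is $C_\phi(\mu)\,1$, which is Hermitian if and only if $C_\phi(\mu)\in\Rbb$. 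I do not expect any genuine obstacle here: the argument merely assembles pieces already established, the only point deserving a line of care being the observation that the involution on $\Cf^\infty(\Kcal)^*$ corresponds to entrywise conjugate transpose under \eqref{eq:dual of CQG}, which is immediate from $S(u^\mu_{ij}) = (u^\mu_{ji})^*$.
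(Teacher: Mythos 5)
Your proof is correct and follows essentially the same route as the paper: both apply Proposition~\ref{prop:ad-invariance criterion for a linear functional} to reduce $\ad$-invariance to a commutation condition and then read off centrality, the scalar-block description, and the Hermiticity criterion from the $*$-isomorphism \eqref{eq:dual of CQG}. The only (immaterial) difference is that the paper takes the separating set $\Ucal$ to be the dual quantum group $\Dcal(\Kcal)$ rather than all of $\Cf^\infty(\Kcal)^*$; since commuting with the direct sum is equivalent to commuting with the direct product, the two choices yield the same conclusion.
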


\begin{proof}
    Consider the dual quantum group $\Dcal(\Kcal) \subseteq \Cf^\infty(\Kcal)^*$, which separates $\Cf^\infty(\Kcal)$. Applying Proposition~\ref{prop:ad-invariance criterion for a linear functional} with $\Ucal = \Dcal(\Kcal)$ and $\Acal = \Cf^\infty(\Kcal)$ yields
    \[
        \text{$\phi$ is $\ad$-invariant} \Longleftrightarrow X \phi = \phi X ,\; \forall X \in \Dcal(\Kcal).
    \]
    By the definition of $\Dcal(\Kcal)$ and \eqref{eq:dual of CQG}, this is equivalent to centrality of $\phi$ in $\Cf^\infty(\Kcal)^*$. The rest follow from \eqref{eq:dual of CQG}.
\end{proof}

\begin{cor}\label{cor:diagonalization of translation invariant linear operators}
    Let $\phi \in \Cf^\infty(\Kcal)^*$ be $\ad$-invariant. Then the operator $\phi \triangleright \in \End(\Cf^\infty(\Kcal))$ diagonalizes over the Peter–Weyl decomposition, with eigenvalues given by $(C_\phi(\mu))_{\mu \in \Irr(\Kcal)}$. In this case, we also have
    \begin{equation}\label{eq:right translation invariance}
        \phi \triangleright = \triangleleft \phi := (\phi \otimes \id) \circ \Delta.
    \end{equation}
    Conversely, if $L \in \End(\Cf^\infty(\Kcal))$ diagonalizes over the Peter–Weyl decomposition, then $\phi := \epsilon \circ L \in \Cf^\infty(\Kcal)^*$ is $\ad$-invariant and $L = \phi \triangleright$.
\end{cor}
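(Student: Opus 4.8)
The plan is to derive the entire statement from Corollary~\ref{cor:ad-invariance criterion for a linear functional} together with the defining coproduct formula $\Delta(u^\mu_{ij}) = \sum_{k=1}^{n_\mu} u^\mu_{ik} \otimes u^\mu_{kj}$ for matrix corepresentations and the identity $\epsilon(u^\mu_{ij}) = \delta_{ij}$.

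First, assume $\phi$ is $\ad$-invariant. By Corollary~\ref{cor:ad-invariance criterion for a linear functional} there are scalars $C_\phi(\mu)$ with $(\phi, u^\mu_{ij}) = C_\phi(\mu)\,\delta_{ij}$ for any Peter--Weyl basis, so
\[
\phi \triangleright u^\mu_{ij} = (\id \otimes \phi)\Delta(u^\mu_{ij}) = \sum_{k=1}^{n_\mu} u^\mu_{ik}\,(\phi, u^\mu_{kj}) = C_\phi(\mu)\, u^\mu_{ij}.
\]
Since the blocks $\Span_\Cbb\{u^\mu_{ij} \mid 1 \leq i,j \leq n_\mu\}$ span $\Cf^\infty(\Kcal)$ by Proposition~\ref{prop:Peter-Weyl}, this shows $\phi\triangleright$ diagonalizes over the Peter--Weyl decomposition with eigenvalue $C_\phi(\mu)$ at $\mu$. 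For the identity $\phi\triangleright = \triangleleft\phi$, I would reuse the chain of equivalences in the proof of Proposition~\ref{prop:ad-invariance criterion for a linear functional}: $\ad$-invariance of $\phi$ is equivalent to $(\phi, a_{(2)})\,S(a_{(1)}) = (\phi, a_{(1)})\,S(a_{(2)})$ for all $a \in \Cf^\infty(\Kcal)$; applying the inverse antipode $S^{-1}$ to both sides gives $(\phi, a_{(2)})\,a_{(1)} = (\phi, a_{(1)})\,a_{(2)}$, i.e. $(\id\otimes\phi)\Delta(a) = (\phi\otimes\id)\Delta(a)$, which is exactly $\phi\triangleright a = (\triangleleft\phi)(a)$. (Equivalently, one checks directly on a Peter--Weyl basis that $(\triangleleft\phi)(u^\mu_{ij}) = \sum_k (\phi,u^\mu_{ik})u^\mu_{kj} = C_\phi(\mu)u^\mu_{ij} = \phi\triangleright u^\mu_{ij}$ and concludes by Proposition~\ref{prop:Peter-Weyl}.)

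For the converse, let $L \in \End(\Cf^\infty(\Kcal))$ diagonalize over the Peter--Weyl decomposition with eigenvalues $C_L(\mu)$, and set $\phi := \epsilon \circ L \in \Cf^\infty(\Kcal)^*$. Then $(\phi, u^\mu_{ij}) = \epsilon(L u^\mu_{ij}) = C_L(\mu)\,\epsilon(u^\mu_{ij}) = C_L(\mu)\,\delta_{ij}$, so Corollary~\ref{cor:ad-invariance criterion for a linear functional} shows $\phi$ is $\ad$-invariant with $C_\phi(\mu) = C_L(\mu)$. Applying the first part, $\phi\triangleright u^\mu_{ij} = C_L(\mu)\, u^\mu_{ij} = L u^\mu_{ij}$ for all $\mu$ and all indices, and since a Peter--Weyl basis spans $\Cf^\infty(\Kcal)$ this forces $L = \phi\triangleright$.

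I do not expect a genuine obstacle here: the statement is essentially a bookkeeping consequence of the preceding corollary and the corepresentation coproduct formula. The only point requiring a moment's thought is the identity $\phi\triangleright = \triangleleft\phi$, which I would obtain by applying $S^{-1}$ to the intermediate identity appearing in the proof of Proposition~\ref{prop:ad-invariance criterion for a linear functional}, rather than seeking an independent argument.
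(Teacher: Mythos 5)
Your proof is correct and follows essentially the same route as the paper: compute $\phi \triangleright u^\mu_{ij}$ via the corepresentation coproduct and Corollary~\ref{cor:ad-invariance criterion for a linear functional}, and for the converse read off $(\phi, u^\mu_{ij}) = C_L(\mu)\delta_{ij}$ from $\phi = \epsilon \circ L$. You are in fact slightly more careful than the paper, which leaves the identity \eqref{eq:right translation invariance} unproved; either of your two arguments for it (applying $S^{-1}$ to the intermediate identity in Proposition~\ref{prop:ad-invariance criterion for a linear functional}, or checking directly on a Peter--Weyl basis) is valid.
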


\begin{proof}
    For the first part, let $\mu \in \Irr(\Kcal)$ and $1 \leq i,j \leq n_\mu$. Then
    \begin{align*}
        \phi \triangleright (u^\mu_{ij}) 
        &= (\id \otimes \phi) \Delta(u^\mu_{ij}) 
        = \sum_{k=1}^{n_\mu} u^\mu_{ik} \phi(u^\mu_{kj}) 
        = c_\phi(\mu) u^\mu_{ij}.
    \end{align*}

    For the converse, suppose $L$ diagonalizes with eigenvalues $(C_L(\mu))_{\mu \in \Irr(\Kcal)}$. Define $\phi := \epsilon \circ L$. Then for all $\mu$ and $i,j$,
    \[
        \phi(u^\mu_{ij}) = \epsilon(L(u^\mu_{ij})) = c_L(\mu) \epsilon(u^\mu_{ij}) = C_L(\mu) \delta_{ij}.
    \]
    Hence, by Corollary~\ref{cor:ad-invariance criterion for a linear functional}, $\phi$ is $\ad$-invariant, and a calculation like the above confirms that $L = \phi \triangleright$.
\end{proof}

Thus, for $\phi \in \Cf^\infty(\Kcal)^*$, $\ad$-invariance is equivalent to the operator $\phi \triangleright$ being diagonal with respect to the Peter–Weyl decomposition, with matching eigenvalues. These eigenvalues are real precisely when $\phi$ is self-adjoint.

We now seek a condition on a self-adjoint, $\ad$-invariant functional $\phi \in \Cf^\infty(\Kcal)^*$ under which the operator $\phi \triangleright$ commutes with the antipode.

\begin{prop}\label{prop:self-adjointness and ad-invariance are preserved under antipode}
Let $\phi$ be an $\ad$-invariant linear functional on $\Cf^\infty(\Kcal)$. Then $\phi \circ S^{\pm 1} \in \Cf^\infty(\Kcal)^*$ is also $\ad$-invariant. Moreover, if $\phi$ is also self-adjoint, then so is $\phi \circ S^{\pm 1}$.
\end{prop}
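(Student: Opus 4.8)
The plan is to reduce both assertions to the characterizations established in Corollary~\ref{cor:ad-invariance criterion for a linear functional}: a functional $\psi \in \Cf^\infty(\Kcal)^*$ is $\ad$-invariant precisely when it lies in the center of the convolution algebra $\Cf^\infty(\Kcal)^*$, equivalently when $(\psi, u^\mu_{ij}) = C_\psi(\mu)\,\delta_{ij}$ for scalars $C_\psi(\mu)\in\Cbb$, and such a $\psi$ is moreover self-adjoint precisely when all of these scalars are real. Thus it suffices to show that $\phi\circ S^{\pm 1}$ is again central, and that its eigenvalues are real whenever those of $\phi$ are.

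For $\ad$-invariance, the key observation is that the transpose map $S^\vee\colon \psi\mapsto \psi\circ S$ is an \emph{algebra anti-automorphism} of $(\Cf^\infty(\Kcal)^*, \cdot)$. Indeed, since $\Delta(S(a)) = S(a_{(2)})\otimes S(a_{(1)})$, one computes, for $\psi_1,\psi_2\in\Cf^\infty(\Kcal)^*$ and $a\in\Cf^\infty(\Kcal)$, that $(\psi_1\psi_2)(S(a)) = (\psi_1, S(a_{(2)}))(\psi_2, S(a_{(1)})) = (\psi_2\circ S, a_{(1)})(\psi_1\circ S, a_{(2)})$, i.e.\ $S^\vee(\psi_1\psi_2) = S^\vee(\psi_2)\,S^\vee(\psi_1)$; and $S^\vee$ is bijective with inverse $(S^{-1})^\vee$ because $S$ is invertible. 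Since an algebra anti-automorphism carries the center of an algebra onto itself, centrality of $\phi$ forces centrality of $\phi\circ S$ and of $\phi\circ S^{-1}$, so by Corollary~\ref{cor:ad-invariance criterion for a linear functional} both are $\ad$-invariant.

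For self-adjointness I would argue as follows. Using the Hopf $*$-algebra identity $S(S(a)^*) = a^*$ (valid in any Hopf $*$-algebra), one gets $(\psi\circ S)^*(a) = \overline{\psi\big(S(S(a)^*)\big)} = \overline{\psi(a^*)}$; that is, $(\psi\circ S)^* = \psi^\dagger$, where $\psi^\dagger(a) := \overline{\psi(a^*)}$. Since also $\phi^* = \phi^\dagger\circ S$, self-adjointness of $\phi$ (i.e.\ $\phi = \phi^*$) reads $\phi^\dagger = \phi\circ S^{-1}$, whence $(\phi\circ S)^* = \phi^\dagger = \phi\circ S^{-1}$. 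Therefore $\phi\circ S$ is self-adjoint if and only if $\phi\circ S^{-1} = \phi\circ S$, i.e.\ if and only if $\phi\circ S^2 = \phi$. This last identity is the one place where one must invoke genuine CQG structure rather than pure Hopf-algebraic formalism, and I expect it to be the main (though minor) obstacle: applying $\phi$ to $S^2(u^\mu) = F_\mu u^\mu F_\mu^{-1}$ from \eqref{eq:Woronowicz character} and using $(\phi, u^\mu_{kl}) = C_\phi(\mu)\delta_{kl}$ together with $F_\mu F_\mu^{-1} = 1$ gives $\phi(S^2(u^\mu_{ij})) = C_\phi(\mu)\delta_{ij} = \phi(u^\mu_{ij})$, which suffices since the $u^\mu_{ij}$ span $\Cf^\infty(\Kcal)$. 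Granting $\phi\circ S^2 = \phi$, it also follows that $\phi\circ S^{-1} = \phi\circ S$, so all statements for $S^{-1}$ are immediate consequences of those for $S$. As an alternative to the abstract route, one may compute the eigenvalues of $\phi\circ S^{\pm 1}$ directly from $S(u^\mu_{ij}) = (u^\mu_{ji})^*$: writing the (irreducible) conjugate corepresentation $\overline{u^\mu} = \big((u^\mu_{ij})^*\big)_{ij}$ as $T_\mu^{-1}u^{\bar\mu}T_\mu$ for a suitable $\bar\mu\in\Irr(\Kcal)$ and an invertible $T_\mu$, a short computation yields $C_{\phi\circ S^{\pm 1}}(\mu) = C_\phi(\bar\mu)$, from which $\ad$-invariance and (via reality of the $C_\phi$) self-adjointness both drop out at once.
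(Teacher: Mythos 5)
Your proposal is correct and follows essentially the same route as the paper: the $\ad$-invariance part is the paper's computation that $\phi\circ S^{\pm 1}$ commutes with $\Dcal(\Kcal)$ repackaged as the statement that $\psi\mapsto\psi\circ S$ is an anti-automorphism preserving the center, and your identity $(\phi\circ S)^*=\phi\circ S^{-1}$ is exactly the paper's intermediate equation $((\phi\circ S^{\pm1})^*,f)=(\phi\circ S^{\pm1},S^{\mp2}(f))$, closed in both cases by the Woronowicz character \eqref{eq:Woronowicz character} together with $(\phi,u^\mu_{kl})=C_\phi(\mu)\delta_{kl}$. All steps check out, including the reduction of the $S^{-1}$ case to the $S$ case via $\phi\circ S^2=\phi$.
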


\begin{proof}
By Corollary~\ref{cor:ad-invariance criterion for a linear functional}, we have $\phi X = X \phi$ for all $X \in \Dcal(\Kcal)$. Hence, for any $f \in \Cf^\infty(\Kcal)$,
\begin{align*}
((\phi \circ S^{\pm 1})(X \circ S^{\pm 1}), f) 
= (X \phi, S^{\pm 1}(f)) = (\phi X, S^{\pm 1}(f)) \\
= ((X \circ S^{\pm 1})(\phi \circ S^{\pm 1}), f).
\end{align*}
Thus, $\phi \circ S^{\pm 1}$ commutes with all $X \in \Dcal(\Kcal)$ (cf. \eqref{eq:dual quantum group and antipode}) and is therefore $\ad$-invariant.

Now suppose $\phi$ is self-adjoint. Then for all $f \in \Cf^\infty(\Kcal)$,
\begin{align*}
((\phi \circ S^{\pm 1})^*, f) 
= \overline{(\phi \circ S^{\pm 1}, S(f)^*)} = (\phi^*, S^{\mp 1}(f)) = (\phi, S^{\mp 1}(f)) \\
= (\phi \circ S^{\pm 1}, S^{\mp 2}(f)).
\end{align*}
Applying the Woronowicz character identity~\eqref{eq:Woronowicz character}, we obtain
\begin{align*}
((\phi \circ S^{\pm 1})^*, u^\mu_{ij}) 
= \sum_{k,l} (F_\mu^{\mp 1})_{ik} (\phi \circ S^{\pm 1}, u^\mu_{kl}) (F_\mu^{\pm 1})_{lj} = (\phi \circ S^{\pm 1}, u^\mu_{ij}),
\end{align*}
where the final equality follows from the fact that $(\phi \circ S^{\pm 1}, u^\mu_{kl}) = C_{\phi \circ S^{\pm 1}}(\mu) \delta_{kl}$. Hence, $\phi \circ S^{\pm 1}$ is self-adjoint.
\end{proof}

\begin{cor}\label{cor:self-adjointness and Hermiticity for ad-invariant functional}
Let $\phi$ be a self-adjoint, $\ad$-invariant linear functional on $\Cf^\infty(\Kcal)$. Then, for all $f \in \Cf^\infty(\Kcal)$,
\[
(\phi S, f) = \overline{(\phi, f^*)}.
\]
In particular, $\phi$ is Hermitian if and only if $\phi = \phi S$.
\end{cor}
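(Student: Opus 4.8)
The plan is to peel the statement down to two ingredients: the definition of the involution on \(\Cf^\infty(\Kcal)^*\) combined with self-adjointness, and the identity \(\phi\circ S^2=\phi\) for \(\ad\)-invariant \(\phi\). I would start from the definition \((\phi^*,a)=\overline{(\phi,S(a)^*)}\) of the \(*\)-operation on \(\Cf^\infty(\Kcal)^*\). Since \(\phi=\phi^*\), this reads \((\phi,a)=\overline{(\phi,S(a)^*)}\) for every \(a\in\Cf^\infty(\Kcal)\). Substituting \(a=S^{-1}(f)\) and using \(S(S^{-1}(f))^*=f^*\) immediately gives \((\phi\circ S^{-1},f)=\overline{(\phi,f^*)}\) for all \(f\). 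So at this point only self-adjointness has been used, and it already yields the desired formula but with \(S^{-1}\) in place of \(S\).

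Next I would upgrade \(S^{-1}\) to \(S\), and this is where \(\ad\)-invariance (rather than merely self-adjointness) enters. By Corollary~\ref{cor:ad-invariance criterion for a linear functional} we have \((\phi,u^\mu_{ij})=C_\phi(\mu)\,\delta_{ij}\), and feeding this into the Woronowicz character identity~\eqref{eq:Woronowicz character}, \(S^2(u^\mu)=F_\mu u^\mu F_\mu^{-1}\), gives \((\phi\circ S^2,u^\mu_{ij})=\sum_{k,l}(F_\mu)_{ik}\,C_\phi(\mu)\delta_{kl}\,(F_\mu^{-1})_{lj}=C_\phi(\mu)\delta_{ij}=(\phi,u^\mu_{ij})\); since this holds on a Peter--Weyl basis, \(\phi\circ S^2=\phi\) on all of \(\Cf^\infty(\Kcal)\). (This is precisely the computation that already appears inside the proof of Proposition~\ref{prop:self-adjointness and ad-invariance are preserved under antipode}, so one could alternatively extract it from there.) Consequently \(\phi\circ S=\phi\circ S^{-1}\circ S^2=\phi\circ S^{-1}\), and the formula from the first paragraph becomes \((\phi S,f)=\overline{(\phi,f^*)}\) for all \(f\), which is the claimed identity. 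For the ``in particular'' clause, recall from Definition~\ref{defn:special linear functionals} that \(\phi\) is Hermitian iff \((\phi,f^*)=\overline{(\phi,f)}\), equivalently \(\overline{(\phi,f^*)}=(\phi,f)\), for all \(f\); combined with \((\phi S,f)=\overline{(\phi,f^*)}\) this says exactly that \((\phi S,f)=(\phi,f)\) for all \(f\), i.e.\ \(\phi=\phi S\).

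I do not expect a genuine obstacle here; the proof is short and essentially formal. The only points that require care are bookkeeping of the complex conjugations and of the direction of \(S^{\pm1}\) in the substitution \(a=S^{-1}(f)\), and recognizing that the passage from \(\phi\circ S^{-1}\) to \(\phi\circ S\) is \emph{not} formal for a noncocommutative Hopf \(*\)-algebra — it genuinely needs \(\phi\circ S^2=\phi\), which in turn rests on the Woronowicz character and hence on \(\ad\)-invariance. That is the one place where one must resist the temptation to treat \(S\) and \(S^{-1}\) as interchangeable.
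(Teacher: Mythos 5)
Your proof is correct and rests on the same two ingredients as the paper's: the definition of the involution on $\Cf^\infty(\Kcal)^*$ applied with self-adjointness, and the identity $\phi\circ S^2=\phi$ extracted from the Woronowicz-character computation inside Proposition~\ref{prop:self-adjointness and ad-invariance are preserved under antipode}. The paper merely packages this differently, writing $(\phi S,f)=((\phi S)^*,f)=\overline{(\phi S,S(f)^*)}=\overline{(\phi,f^*)}$ using the self-adjointness of $\phi S$ rather than of $\phi$ followed by the $S^{-1}\!\to S$ upgrade, so the two arguments are essentially the same.
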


\begin{proof}
By Proposition~\ref{prop:self-adjointness and ad-invariance are preserved under antipode},
\(
(\phi S, f) = ((\phi S)^*, f) = \overline{(\phi S, S(f)^*)} = \overline{(\phi, f^*)}.
\)
\end{proof}

\begin{prop}\label{prop:left multiplication operator commutes with antipode}
Let $\phi \in \Cf^\infty(\Kcal)^*$ be self-adjoint and $\ad$-invariant. Then the operator $\phi \triangleright$ commutes with the antipode if and only if $\phi$ is Hermitian.
\end{prop}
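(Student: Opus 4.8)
The plan is to reduce the operator identity $S\circ(\phi\triangleright)=(\phi\triangleright)\circ S$ to the single functional identity $\phi=\phi S$, and then to quote Corollary~\ref{cor:self-adjointness and Hermiticity for ad-invariant functional}, which already records that, for a self-adjoint $\ad$-invariant $\phi$, this last identity is equivalent to Hermiticity of $\phi$.

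First I would evaluate both composites on an arbitrary $a\in\Cf^\infty(\Kcal)$ in Sweedler notation. Since $\phi\triangleright=(\id\otimes\phi)\Delta$, we have $S(\phi\triangleright a)=S(a_{(1)})\,(\phi,a_{(2)})$; invoking $\ad$-invariance in the form $(\phi,a_{(2)})\,S(a_{(1)})=(\phi,a_{(1)})\,S(a_{(2)})$ --- one of the equivalences established in the proof of Proposition~\ref{prop:ad-invariance criterion for a linear functional} --- this becomes $(\phi,a_{(1)})\,S(a_{(2)})$. On the other hand, the anti-comultiplicativity $\Delta S=(S\otimes S)\tau\Delta$ gives $\phi\triangleright S(a)=S(a)_{(1)}\,(\phi,S(a)_{(2)})=S(a_{(2)})\,(\phi,S(a_{(1)}))=(\phi S,a_{(1)})\,S(a_{(2)})$. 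Hence $\phi\triangleright$ commutes with $S$ if and only if
\[
\sum_{(a)}\big((\phi-\phi S),\,a_{(1)}\big)\,S(a_{(2)})=0\qquad\text{for every }a\in\Cf^\infty(\Kcal).
\]

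Writing $\psi:=\phi-\phi S$, the left-hand side equals $S\big((\psi\otimes\id)\Delta(a)\big)=S(\triangleleft\psi\,(a))$, so, $S$ being bijective, the displayed condition is equivalent to $\triangleleft\psi=0$. Applying the counit and using $(\id\otimes\epsilon)\Delta=\id$ (so that $\epsilon\circ(\triangleleft\psi)=\psi$) forces $\psi=0$, i.e.\ $\phi=\phi S$; conversely, if $\phi=\phi S$ then $\psi=0$ and each of the steps above reverses, yielding $S\circ(\phi\triangleright)=(\phi\triangleright)\circ S$. Combining this with Corollary~\ref{cor:self-adjointness and Hermiticity for ad-invariant functional} --- applicable because $\phi$ is self-adjoint and $\ad$-invariant --- shows that $\phi\triangleright$ commutes with $S$ if and only if $\phi=\phi S$, which in turn holds if and only if $\phi$ is Hermitian. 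The only delicate point is the bookkeeping of the Sweedler legs together with the order reversal produced by $S$; beyond that there is no substantive obstacle.
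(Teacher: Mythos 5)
Your proof is correct and follows essentially the same route as the paper's: both reduce the commutation of $\phi\triangleright$ with $S$ to the single identity $\phi=\phi S$ by exploiting $\ad$-invariance, and then invoke Corollary~\ref{cor:self-adjointness and Hermiticity for ad-invariant functional}. The only cosmetic difference is that the paper packages the $\ad$-invariance step as the operator identity $(\phi\triangleright)\circ S = S\circ(\phi S\triangleright)$ via Proposition~\ref{prop:self-adjointness and ad-invariance are preserved under antipode} and \eqref{eq:right translation invariance}, whereas you use the Sweedler-leg identity $(\phi,a_{(2)})S(a_{(1)})=(\phi,a_{(1)})S(a_{(2)})$ from the proof of Proposition~\ref{prop:ad-invariance criterion for a linear functional}.
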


\begin{proof}
We compute:
\[
(\phi \triangleright) \circ S = (\id \otimes \phi)\Delta S = S \circ (\phi S \otimes \id)\Delta = S \circ (\phi S \triangleright),
\]
where the final equality follows from Proposition~\ref{prop:self-adjointness and ad-invariance are preserved under antipode} and the identity~\eqref{eq:right translation invariance}. Thus, $(\phi \triangleright) \circ S = S \circ (\phi \triangleright)$ if and only if $\phi = \phi S$, which, by Corollary~\ref{cor:self-adjointness and Hermiticity for ad-invariant functional}, is equivalent to $\phi$ being Hermitian.
\end{proof}

In Section~\ref{subsec:Laplacians on a CQG}, we will see that those operators on $\Cf^\infty(\Kcal)$ which diagonalize with real eigenvalues over the Peter–Weyl decomposition, commute with the antipode, and vanish at the unit are precisely those we interpret as \emph{Laplacians} on a CQG. Proposition~\ref{prop:left multiplication operator commutes with antipode} shows that every such operator arises as $\phi \triangleright$, for a self-adjoint, $\ad$-invariant, Hermitian functional $\phi$ that vanishes at the unit.

\section{First-order differential calculi (FODCs) over a Hopf (\texorpdfstring{$*$}{TEXT}-)algebra}\label{sec:FODC}

In this section, we gather some results from the theory of first-order differential calculus (FODC) over Hopf ($*$-)algebras \cite{Woronowicz1989}. Throughout, we let \( (\Acal, \Delta, \epsilon, S) \) denote a Hopf ($*$-)algebra, and \( (\Acal^\circ, \hat{\Delta}, \hat{\epsilon}, \hat{S}) \) its dual Hopf ($*$-)algebra.

\subsection{Bicovariant (\texorpdfstring{$*$}{TEXT}-)Bimodules}\label{subsec:Bicovariant bimodule}

Differential calculi that reflect the Hopf algebra structure of $\Acal$ are referred to as \textit{bicovariant}. To introduce this notion, we first define the concept of a \textit{bicovariant bimodule}.

Let $\Omega$ be an $\Acal$-bimodule. We equip $\Acal \otimes \Omega$ with the $\Acal$-bimodule structure given by
\[
a \cdot ( b \otimes \omega) = a_{(1)} b \otimes a_{(2)} \omega, \quad (b \otimes \omega) \cdot a = b a_{(1)} \otimes \omega a_{(2)},
\]
for $a, b \in \Acal$ and $\omega \in \Omega$. Similarly, $\Omega \otimes \Acal$ is endowed with the analogous bimodule structure.

\begin{defn}\label{defn:bicovariant bimodule}
    Let $\Omega$ be an $\Acal$-bimodule. Then, $(\Omega , \Phi_\Omega, {}_\Omega \Phi)$ is called a \textbf{bicovariant bimodule over $\Acal$} if:
    \begin{enumerate}[label=B\arabic*., series=B]
        \item $\Phi_\Omega : \Omega \rightarrow  \Acal \otimes \Omega$ is a left $\Acal$-coaction and an $\Acal$-bilinear map;
        \item ${}_\Omega \Phi : \Omega \rightarrow \Omega \otimes \Acal$ is a right $\Acal$-coaction and an $\Acal$-bilinear map;
        \item The coactions are compatible: $(\Phi_\Omega \otimes \id) \circ {}_\Omega \Phi = (\id \otimes {}_\Omega \Phi) \circ \Phi_\Omega$.
    \end{enumerate}
\end{defn}

Given a bicovariant bimodule $(\Omega, \Phi_\Omega , {}_\Omega \Phi)$ over $\Acal$, define
\[
\inv \Omega = \{ \omega \in \Omega \mid \Phi_{\Omega} (\omega) = 1 \otimes \omega \},
\]
the subspace of \textit{left-invariant elements}. This space becomes a right $\Acal$-module under the multiplication
\[
\omega \cdot a = S(a_{(1)} ) \omega a_{(2)}, \quad \omega \in \inv \Omega, \, a \in \Acal,
\]
which is well-defined due to the $\Acal$-bilinearity of $\Phi_\Omega$. To distinguish this action from the original right $\Acal$-action on $\Omega$, we will always use the dot $\cdot$ notation.

For all $\omega \in \inv \Omega$, we compute
\[
(\Phi_{\Omega} \otimes \id) {}_\Omega \Phi ( \omega) = (\id \otimes {}_\Omega \Phi ) \Phi_\Omega (\omega) = 1 \otimes {}_\Omega \Phi (\omega),
\]
which shows that ${}_\Omega \Phi (\inv \Omega) \subseteq \inv \Omega \otimes \Acal$. Thus, ${}_\Omega \Phi$ restricts to a right $\Acal$-coaction on $\inv \Omega$, denoted by ${}_{\inv \Omega} \Phi$.

The following two results are \cite[Theorems~2.4--2.5]{Woronowicz1989}. Note that the first identity in \eqref{eq:properties of f_ij} follows from our convention \eqref{eq:comultiplication of dual Hopf algebra}.

\begin{prop}\label{prop:the structure representations of bicovariant bimodule}
    Let $(\Omega, \Phi_\Omega , {}_\Omega \Phi )$ be a bicovariant bimodule over $\Acal$. Then:
    \begin{enumerate}[series=STBCBM]
        \item $\Omega$ is a free left and right $\Acal$-module. Specifically, the multiplication maps restrict to isomorphisms
        \[
        \Acal \otimes \inv \Omega \cong \Omega ,\quad \inv \Omega \otimes \Acal \cong \Omega.
        \]
    \end{enumerate}
    Fix a $\Cbb$-linear basis $\{ \omega_i \mid i \in I \}$ of $\inv \Omega$.
    \begin{enumerate}[resume=STBCBM]
        \item There exists a unique family $(f_{ij})_{i,j \in I} \subseteq \Acal^\circ$ such that for each $i$, only finitely many $f_{ij}$ are nonzero, and
        \begin{equation}\label{eq:definition of f_ij}
            \omega_i a = \sum_{j \in I} (f_{ij} \triangleright a) \omega_j, \quad a \omega_i = \sum_{j \in I} \omega_j ( \hat{S}(f_{ij}) \triangleright a),
        \end{equation}
        for all $a \in \Acal$. These elements satisfy
        \begin{equation}\label{eq:properties of f_ij}
            \hat{\Delta}(f_{ij}) = \sum_{k \in I} f_{kj} \otimes f_{ik}, \quad \hat{\epsilon}(f_{ij}) = \delta_{ij}.
        \end{equation}

        \item There exists a unique family $(t_{ij})_{i,j \in I} \subseteq \Acal$ such that
        \begin{equation}\label{eq:definition of R_ij}
            {}_{\inv \Omega} \Phi (\omega_j) = \sum_{i \in I} \omega_i \otimes t_{ij}.
        \end{equation}
        These satisfy, for all $i,j \in I$:
        \[
        \Delta(t_{ij}) = \sum_{k \in I} t_{ik} \otimes t_{kj}, \quad \epsilon(t_{ij}) = \delta_{ij},
        \]
        and for all $a \in \Acal$:
        \begin{equation*}
            \sum_{k \in I} t_{ki} (a \triangleleft f_{kj}) = \sum_{k \in I} (f_{ik} \triangleright a) t_{jk}.
        \end{equation*}
    \end{enumerate}
    The families $(f_{ij})$ and $(t_{ij})$ are called the \textbf{structure representations of $\Omega$ with respect to the invariant basis $\{\omega_i\}$}.
\end{prop}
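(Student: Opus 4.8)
The plan is to run the standard structure theory of bicovariant bimodules: first recognise the Hopf-module structures hidden in B1--B3, use them to get the module-freeness in part (1), and then read off the families $(f_{ij})$ and $(t_{ij})$ by expanding $\Phi_\Omega$, the right action, and ${}_\Omega\Phi$ in the resulting free basis and comparing coefficients. For part (1), note that $\Omega$ with its left $\Acal$-module structure and the coaction $\Phi_\Omega$ is a left-left Hopf module over $\Acal$: condition B1 says precisely $\Phi_\Omega(a\omega)=\Delta(a)\Phi_\Omega(\omega)$. Hence the fundamental theorem of Hopf modules applies and gives the isomorphism $\Acal\otimes\inv\Omega\to\Omega$, $a\otimes\nu\mapsto a\nu$ (with inverse built from the projection $\omega\mapsto S(\omega_{(-1)})\omega_{(0)}$ onto $\inv\Omega$), so any $\Cbb$-basis $\{\omega_i\}$ of $\inv\Omega$ is a free left-module basis. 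I would defer the second isomorphism $\inv\Omega\otimes\Acal\cong\Omega$ to the end of part (2), where it drops out of the formulas for the $f_{ij}$.

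For part (2), fix the basis $\{\omega_i\}$; by (1) there are unique $c_{ij}(a)\in\Acal$ with $\omega_i a=\sum_j c_{ij}(a)\omega_j$. Since $\omega_i$ is left-invariant we have $\Phi_\Omega(\omega_i a)=a_{(1)}\otimes\omega_i a_{(2)}$; expanding the right-hand side in the basis and comparing with $\Phi_\Omega\bigl(\sum_j c_{ij}(a)\omega_j\bigr)$, freeness of $\Acal\otimes\Omega$ forces $\Delta(c_{ij}(a))=a_{(1)}\otimes c_{ij}(a_{(2)})$, hence $c_{ij}(a)=f_{ij}\triangleright a$ with $f_{ij}:=\epsilon\circ c_{ij}\in\Acal^*$; putting $a=1$ gives $\hat\epsilon(f_{ij})=f_{ij}(1)=\delta_{ij}$. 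From $\omega_i(ab)=(\omega_i a)b$ one gets $f_{ik}\triangleright(ab)=\sum_j(f_{ij}\triangleright a)(f_{jk}\triangleright b)$, and applying $\epsilon$, $(f_{ik},ab)=\sum_j(f_{ij},a)(f_{jk},b)$; by the convention \eqref{eq:comultiplication of dual Hopf algebra} this says $\hat\Delta(f_{ik})=\sum_j f_{jk}\otimes f_{ij}$, which simultaneously places $f_{ik}$ in $\Acal^\circ$ and gives the first identity of \eqref{eq:properties of f_ij}. The second formula in \eqref{eq:definition of f_ij} then follows by substituting the first into its right-hand side and using $\phi\triangleright(\psi\triangleright a)=(\phi\psi)\triangleright a$ together with the antipode identity $\sum_j f_{jl}\,\hat S(f_{ij})=\delta_{il}\,1$ in $\Acal^\circ$. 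Finally the right-module isomorphism $\inv\Omega\otimes\Acal\to\Omega$: surjectivity is immediate from the second formula (so $\Acal\omega_i\subseteq\sum_j\omega_j\Acal$), while injectivity follows from the dual antipode identity $\sum_j\hat S(f_{jk})f_{ij}=\delta_{ik}\,1$, since if $\sum_i\omega_i a_i=0$ then $\sum_i f_{ij}\triangleright a_i=0$ for all $j$ by left-freeness, and applying $\hat S(f_{jk})\triangleright(\,\cdot\,)$ and summing over $j$ gives $a_k=0$. Uniqueness of $(f_{ij})$ is clear because $f_{ij}=\epsilon\circ c_{ij}$ and $c_{ij}$ is determined by $\omega_i\,\cdot\,$.

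For part (3), condition B3 guarantees (as noted just before the statement) that ${}_\Omega\Phi$ restricts to a coaction ${}_{\inv\Omega}\Phi\colon\inv\Omega\to\inv\Omega\otimes\Acal$, so there are unique $t_{ij}\in\Acal$ with ${}_{\inv\Omega}\Phi(\omega_j)=\sum_i\omega_i\otimes t_{ij}$, only finitely many nonzero for each $j$ since this is a single element of $\inv\Omega\otimes\Acal$. Coassociativity of ${}_{\inv\Omega}\Phi$ with freeness of $\{\omega_i\}$ gives $\Delta(t_{ij})=\sum_k t_{ik}\otimes t_{kj}$, and the counit axiom gives $\epsilon(t_{ij})=\delta_{ij}$. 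The mixed identity comes from computing ${}_\Omega\Phi(\omega_i a)$ in two ways from the bimodule-map property of ${}_\Omega\Phi$: expanding ${}_\Omega\Phi(\omega_i)\cdot a=\sum_k\omega_k a_{(1)}\otimes t_{ki}a_{(2)}$ with the $f$-relation gives $\sum_{k,l}(f_{kl}\triangleright a_{(1)})\omega_l\otimes t_{ki}a_{(2)}$, while applying ${}_\Omega\Phi$ to $\sum_j(f_{ij}\triangleright a)\omega_j$ and using $\Delta(f_{ij}\triangleright a)=a_{(1)}\otimes(f_{ij}\triangleright a_{(2)})$ gives $\sum_{j,l}a_{(1)}\omega_l\otimes(f_{ij}\triangleright a_{(2)})t_{lj}$; comparing the $\omega_l$-coefficients in the free module $\Omega\otimes\Acal$ and applying $\epsilon\otimes\id$ yields $\sum_k t_{ki}(a\triangleleft f_{kj})=\sum_k(f_{ik}\triangleright a)t_{jk}$ after relabeling.

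The points I expect to require the most care are, first, the finiteness statement in (2) --- that for each $i$ only finitely many $f_{ij}$ are nonzero --- which is not automatic from a single expansion; I would obtain it from the local finiteness of comodules over $\Acal$, choosing $\{\omega_i\}$ compatible with a decomposition of $\inv\Omega$ into finite-dimensional ${}_{\inv\Omega}\Phi$-subcomodules so that both $(t_{ij})$ and $(f_{ij})$ become block-diagonal, and then passing to an arbitrary basis by a row-and-column-finite change of basis. Second, the Sweedler bookkeeping in the two-fold computation of ${}_\Omega\Phi(\omega_i a)$, where the two module structures and the two coactions all interact simultaneously, is the most error-prone step; everything else reduces to coefficient comparisons once the freeness of part (1) is in hand.
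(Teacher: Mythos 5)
The paper does not actually prove this proposition; it simply cites \cite[Theorems~2.4--2.5]{Woronowicz1989}. Your reconstruction follows the standard route, and its computational core is correct: the fundamental theorem of Hopf modules for part (1), the coefficient comparison $\Delta(c_{ij}(a))=a_{(1)}\otimes c_{ij}(a_{(2)})$ forcing $c_{ij}(a)=f_{ij}\triangleright a$, the derivation of $\hat\Delta(f_{ik})=\sum_j f_{jk}\otimes f_{ij}$ from $\omega_i(ab)=(\omega_i a)b$ under the paper's convention \eqref{eq:comultiplication of dual Hopf algebra}, the antipode identity yielding the second formula in \eqref{eq:definition of f_ij} and the right-module freeness, and the two-fold expansion of ${}_\Omega\Phi(\omega_i a)$ for the mixed relation. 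All of these check out, Sweedler bookkeeping included.

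The genuine gap is exactly where you suspected it: the uniform finiteness of $\{\,j : f_{ij}\neq 0\,\}$ for fixed $i$, which is also what is needed to place $f_{ij}$ in $\Acal^\circ$ (otherwise $\hat\Delta(f_{ik})=\sum_j f_{jk}\otimes f_{ij}$ is an infinite expression and does not exhibit membership in $\Acal^*\otimes\Acal^*$). Your proposed fix does not close it. Note that $\epsilon$ applied to the coefficients gives $\omega_i\cdot a=\sum_j f_{ij}(a)\,\omega_j$, so finiteness of $\{\,j:f_{ij}\neq0\,\}$ is equivalent to finite-dimensionality of the cyclic module $\omega_i\cdot\Acal$ under the right action $\omega\cdot a=S(a_{(1)})\omega a_{(2)}$. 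Local finiteness of the \emph{comodule} $({}_{\inv\Omega}\Phi,\inv\Omega)$ gives finite-dimensional subcomodules, but these need not be stable under $\cdot$ (that would require finite-dimensional Yetter--Drinfeld submodules), so $(f_{ij})$ does not become block-diagonal in a basis adapted to a comodule decomposition; the compatibility relation $\sum_k t_{ki}(a\triangleleft f_{kj})=\sum_k(f_{ik}\triangleright a)t_{jk}$ does not force it either, as applying $\hat\epsilon$ to it returns a tautology. In fact, for $\Acal=\Cbb[G]$ with $G$ a discrete group possessing an infinite conjugacy class $C$, the Yetter--Drinfeld module $\Span_\Cbb\{\delta_h\mid h\in C\}$ (graded by $C$, with conjugation action) yields a bicovariant bimodule in which $\omega_h\cdot\Acal$ is infinite-dimensional and the corresponding $f_{hh'}$ are characteristic functions of cosets of the (infinite-index) centralizer, hence not even in $\Acal^\circ$. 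So the uniform finiteness cannot be proved in the stated generality; it holds trivially when $\inv\Omega$ is finite-dimensional, which is the only case the paper ever uses (all FODCs on $K_q$ considered later are finite-dimensional), and the version one can prove for arbitrary $\inv\Omega$ is only the effective finiteness of the sum $\sum_j(f_{ij}\triangleright a)\omega_j$ for each fixed $a$. You should either restrict the finiteness and $\Acal^\circ$-membership claims to that setting or flag them as an additional hypothesis, rather than attempt to derive them from comodule local finiteness.
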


\begin{prop}\label{prop:the structure theorem of bicovariant bimodule}
    Suppose an index set $I$ and families $(f_{ij})_{i,j \in I} \subseteq \Acal^\circ$, $(t_{ij})_{i,j \in I} \subseteq \Acal$ satisfy:
    \begin{enumerate}[label=S\arabic*.]
        \item $\hat{\Delta}(f_{ij}) = \sum_{k \in I} f_{kj} \otimes f_{ik}$, \; $\hat{\epsilon}(f_{ij}) = \delta_{ij}$;
        \item $\Delta(t_{ij}) = \sum_{k \in I} t_{ik} \otimes t_{kj}$, \; $\epsilon(t_{ij}) = \delta_{ij}$;
        \item $\sum_{k \in I} t_{ki} (a \triangleleft f_{kj}) = \sum_{k \in I} (f_{ik} \triangleright a) t_{jk}$ for all $a \in \Acal$.
    \end{enumerate}
    Then the free left $\Acal$-module with basis $\{ \omega_i \}$, equipped with the right action from \eqref{eq:definition of f_ij} and coactions
    \[
    \Phi_\Omega \left( \sum_{j \in I} a_j \omega_j \right) = \sum_{j \in I} \Delta(a_j)(1 \otimes \omega_j), \quad {}_\Omega \Phi \left( \sum_{j \in I} a_j \omega_j \right) = \sum_{i,j \in I} \Delta(a_j)(\omega_i \otimes t_{ij}),
    \]
    is a bicovariant bimodule over $\Acal$ with structure representations $(f_{ij})_{i,j \in I}$, $(t_{ij})_{i,j \in I}$.
\end{prop}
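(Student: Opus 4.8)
The plan is to take $\Omega := \bigoplus_{i \in I} \Acal\,\omega_i$, the free left $\Acal$-module on the $\omega_i$, equip it with the prescribed right action $\omega_i a = \sum_{j} (f_{ij} \triangleright a)\omega_j$ and the two coactions $\Phi_\Omega$, ${}_\Omega\Phi$, and then verify, in order, that it is an $\Acal$-bimodule and that it satisfies B1--B3 of Definition~\ref{defn:bicovariant bimodule}; the last task is to identify its structure representations with respect to $\{\omega_i\}$. Every computation is a Sweedler manipulation built on the elementary facts $\phi \triangleright a = a_{(1)}(\phi, a_{(2)})$, $\Delta(\phi \triangleright a) = a_{(1)} \otimes (\phi \triangleright a_{(2)})$, $\epsilon(\phi \triangleright a) = (\phi, a)$, and $\phi \triangleright (ab) = \sum (\phi_{(2)} \triangleright a)(\phi_{(1)} \triangleright b)$, the last being a direct consequence of the skew-pairing convention \eqref{eq:comultiplication of dual Hopf algebra}. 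For the bimodule axioms: well-definedness of the right action needs that each row of $(f_{ij})$ has finite support (as in Proposition~\ref{prop:the structure representations of bicovariant bimodule}); $\omega_i \cdot 1 = \omega_i$ is $\hat\epsilon(f_{ij}) = \delta_{ij}$; associativity $(\omega_i a)b = \omega_i(ab)$ unwinds, after expanding both sides in the basis $\{\omega_k\}$, to $f_{ik} \triangleright (ab) = \sum_j (f_{ij} \triangleright a)(f_{jk} \triangleright b)$, which is precisely the first relation of S1 fed through the product formula above; left--right compatibility is automatic from freeness.

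For B1, the fact that $\Phi_\Omega$ is a left coaction and is left $\Acal$-linear is immediate because $\Delta$ is an algebra homomorphism; right $\Acal$-linearity need only be checked on the generators $\omega_j$ (and then extends using left-linearity together with the bimodule structure on $\Acal \otimes \Omega$), where $\Phi_\Omega(\omega_j a) = \sum_k \Delta(f_{jk} \triangleright a)(1 \otimes \omega_k) = a_{(1)} \otimes \omega_j a_{(2)} = (1 \otimes \omega_j) \cdot a$ using $\Delta(\phi \triangleright a) = a_{(1)} \otimes (\phi \triangleright a_{(2)})$. For B2, that ${}_\Omega\Phi$ is a right coaction comes from $\Delta(t_{ij}) = \sum_k t_{ik} \otimes t_{kj}$ and $\epsilon(t_{ij}) = \delta_{ij}$ (S2), and left $\Acal$-linearity is again automatic. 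The one substantive computation is the right $\Acal$-linearity of ${}_\Omega\Phi$, checked on $\omega_j$: the left side ${}_\Omega\Phi(\omega_j a)$ expands to $\sum_{i,k} a_{(1)}\omega_i \otimes (f_{jk} \triangleright a_{(2)})\,t_{ik}$ and the right side ${}_\Omega\Phi(\omega_j) \cdot a$ to $\sum_{i,l} (f_{il} \triangleright a_{(1)})\,\omega_l \otimes t_{ij}\,a_{(2)}$, and one passes from the former to the latter by applying S3 in the form $\sum_k (f_{jk} \triangleright b)\,t_{ik} = \sum_k t_{kj}\,(b \triangleleft f_{ki})$ to the second tensor leg, then writing $b \triangleleft f_{ki} = (f_{ki}, b_{(1)})\,b_{(2)}$ and re-associating the three comultiplication legs of $a$. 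I expect this to be the main obstacle --- not for any conceptual reason, but because it is where the index bookkeeping concentrates, and S3 is exactly the identity that turns the ``$t$-then-$f$'' ordering of the left side into the ``$f$-then-$t$'' ordering of the right side.

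The compatibility B3, $(\Phi_\Omega \otimes \id)\,{}_\Omega\Phi = (\id \otimes {}_\Omega\Phi)\,\Phi_\Omega$, is a direct Sweedler check on $a\omega_j$: both sides equal $\sum_i a_{(1)} \otimes a_{(2)}\omega_i \otimes a_{(3)}\,t_{ij}$. Finally, to pin down the structure representations: $\Phi_\Omega(\omega_i) = \Delta(1)(1 \otimes \omega_i) = 1 \otimes \omega_i$, so each $\omega_i$ is left-invariant, and if $\sum_i c_i\omega_i \in \inv\Omega$ then comparing the two expressions for its image under $\Phi_\Omega$ componentwise over $\Acal \otimes \Omega = \bigoplus_i \Acal \otimes \Acal\omega_i$ forces $\Delta(c_i) = 1 \otimes c_i$, hence $c_i \in \Cbb\,1$; thus $\{\omega_i\}$ is a basis of $\inv\Omega$. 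The $f$-structure representation is then $(f_{ij})$ by the uniqueness clause of Proposition~\ref{prop:the structure representations of bicovariant bimodule} --- the defining relation $\omega_i a = \sum_j (f'_{ij} \triangleright a)\omega_j$ together with the linear independence of the $\omega_j$ and the injectivity of $\phi \mapsto \phi \triangleright$ forces $f'_{ij} = f_{ij}$ --- which in particular shows that the second relation of \eqref{eq:definition of f_ij} holds automatically; and the $t$-structure representation is $(t_{ij})$ because ${}_{\inv\Omega}\Phi(\omega_j) = {}_\Omega\Phi(1 \cdot \omega_j) = \sum_i \omega_i \otimes t_{ij}$.
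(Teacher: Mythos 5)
Your proof is correct: the direct verification of the bimodule axioms and of B1--B3, with S1 driving associativity of the right action via the product formula $\phi\triangleright(ab)=(\phi_{(2)}\triangleright a)(\phi_{(1)}\triangleright b)$, S2 giving the right-coaction axioms, and S3 supplying exactly the exchange needed for right $\Acal$-linearity of ${}_\Omega\Phi$, is the standard argument, and the paper itself offers no proof here but only cites \cite[Theorems~2.4--2.5]{Woronowicz1989}, which proceeds the same way. Your two side remarks --- that the row-finiteness of $(f_{ij})$ must be assumed for the right action to land in the direct sum, and that the second relation of \eqref{eq:definition of f_ij} follows automatically from the first via the antipode axiom $\sum_j f_{jk}\hat{S}(f_{ij})=\delta_{ik}\hat{\epsilon}$ --- are both accurate.
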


\begin{prop}\label{prop:tensor product of structure maps}
    Let $I$ and $J$ be index sets. Suppose
    \[
        \big((f_{ij})_{i,j \in I}, (t_{ij})_{i,j \in I} \big) \text{ and } \big((g_{kl})_{k,l \in J}, (s_{kl})_{k,l \in J} \big)
    \]
    define bicovariant bimodules over $\Acal$ with respect to invariant bases as in Proposition~\ref{prop:the structure theorem of bicovariant bimodule}. Then the families
    \[
    ( f_{ij} g_{kl})_{(i,k), (j,l)} \subseteq \Acal^\circ, \quad (t_{ij} s_{kl})_{(i,k), (j,l)} \subseteq \Acal
    \]
    satisfy conditions S1--S3 from Proposition~\ref{prop:the structure theorem of bicovariant bimodule}. Therefore, the free left $\Acal$-module $\Omega$ with basis $\{ \omega_{ik} \mid (i,k) \in I \times J \}$, equipped with
    \[
    \omega_{ik} a = \sum_{j,l} (f_{ij} g_{kl} \triangleright a) \omega_{jl}, \quad
    \Phi_\Omega(\omega_{jl}) = 1 \otimes \omega_{jl}, \quad
    {}_\Omega \Phi(\omega_{jl}) = \sum_{i,k} \omega_{ik} \otimes t_{ij} s_{kl},
    \]
    defines a bicovariant bimodule over $\Acal$.
\end{prop}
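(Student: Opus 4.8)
The plan is to check that the two families $(f_{ij}g_{kl})_{(i,k),(j,l)} \subseteq \Acal^\circ$ and $(t_{ij}s_{kl})_{(i,k),(j,l)} \subseteq \Acal$, indexed by the product set $I \times J$, satisfy the three conditions S1--S3 of Proposition~\ref{prop:the structure theorem of bicovariant bimodule}; the final assertion is then immediate from that proposition applied to $I \times J$, the right action and coactions it produces being precisely the ones displayed in the statement. (The implicit finiteness---that for each $(i,k)$ only finitely many $f_{ij}g_{kl}$ are nonzero---follows at once from the corresponding properties of $(f_{ij})$ and $(g_{kl})$.) Conceptually, writing $\Omega_1, \Omega_2$ for the given bicovariant bimodules with invariant bases $\{\omega_i\}_{i \in I}$, $\{\eta_k\}_{k \in J}$, the families in question are exactly the structure representations of the tensor product $\Omega_1 \otimes_\Acal \Omega_2$ with respect to the invariant basis $\{\omega_i \otimes \eta_k\}$; since the excerpt has not recorded that $\Omega_1 \otimes_\Acal \Omega_2$ is again bicovariant, I will instead verify S1--S3 by hand.

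Conditions S1 and S2 are routine. Because $\hat\Delta$ and $\hat\epsilon$ are the structure maps of the Hopf algebra $\Acal^\circ$, hence algebra homomorphisms, we get $\hat\Delta(f_{ij}g_{kl}) = \hat\Delta(f_{ij})\hat\Delta(g_{kl})$; expanding each factor via \eqref{eq:properties of f_ij} and multiplying in $\Acal^\circ \otimes \Acal^\circ$ yields $\sum_{m,n}(f_{mj}g_{nl}) \otimes (f_{im}g_{kn})$, which is exactly S1, while $\hat\epsilon(f_{ij}g_{kl}) = \delta_{ij}\delta_{kl}$. Condition S2 is the same computation with $(\Acal, \Delta, \epsilon)$ in place of $(\Acal^\circ, \hat\Delta, \hat\epsilon)$.

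The substance of the proof is S3, namely
\[
\sum_{m,n}(t_{mi}s_{nk})\big(a \triangleleft (f_{mj}g_{nl})\big) = \sum_{m,n}\big((f_{im}g_{kn}) \triangleright a\big)(t_{jm}s_{ln}), \qquad a \in \Acal.
\]
Here the structural input is that $\Acal$ is an $\Acal^*$-bimodule for $\triangleright$ and $\triangleleft$, so these actions compose according to multiplication in $\Acal^*$ and commute with one another, together with associativity of multiplication in $\Acal$. Beginning with the left-hand side, I would: (i) rewrite $a \triangleleft (f_{mj}g_{nl}) = (a \triangleleft f_{mj}) \triangleleft g_{nl}$ and regroup the triple product; (ii) for fixed $m$, apply the S3-relation for $((g_{kl}),(s_{kl}))$ with $a \triangleleft f_{mj}$ in the role of $a$, converting $\sum_n s_{nk}\big((a\triangleleft f_{mj}) \triangleleft g_{nl}\big)$ into $\sum_n \big(g_{kn} \triangleright (a \triangleleft f_{mj})\big)s_{ln}$; (iii) move the $g_{kn}$-action past the $f_{mj}$-action, $g_{kn} \triangleright (a \triangleleft f_{mj}) = (g_{kn} \triangleright a) \triangleleft f_{mj}$; (iv) for fixed $n$, apply the S3-relation for $((f_{ij}),(t_{ij}))$ with $g_{kn} \triangleright a$ in the role of $a$, converting $\sum_m t_{mi}\big((g_{kn}\triangleright a) \triangleleft f_{mj}\big)$ into $\sum_m \big(f_{im} \triangleright (g_{kn} \triangleright a)\big)t_{jm}$; (v) recombine $f_{im} \triangleright (g_{kn} \triangleright a) = (f_{im}g_{kn}) \triangleright a$. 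This produces the right-hand side. I expect the only delicate point to be bookkeeping: tracking which slot each of $s$, $t$, $f$, $g$ occupies (the $t$'s and $s$'s sitting to the left before the two applications of S3 and to the right afterwards, and $f$ always acting before $g$) so that each invocation of a given S3-relation is made with the correct substitution and in the correct position of the product; no identity beyond the two supplied S3-relations and the $\Acal^*$-bimodule axioms enters. With S1--S3 established, Proposition~\ref{prop:the structure theorem of bicovariant bimodule} delivers the bicovariant bimodule $\Omega$ with the asserted right action and coactions.
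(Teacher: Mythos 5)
Your proposal is correct and follows essentially the same route as the paper: S1--S2 via multiplicativity of $\hat\Delta$ and $\hat\epsilon$, and S3 by splitting $a \triangleleft (f_{mj}g_{nl}) = (a \triangleleft f_{mj}) \triangleleft g_{nl}$, applying the $(g,s)$-relation, commuting the two $\Acal^*$-module actions, applying the $(f,t)$-relation, and recombining. The only difference is cosmetic --- the paper carries out your steps (iii)--(v) by expanding in Sweedler notation and applying the $(f,t)$-relation to $a_{(1)}$ with $(g_{kq}, a_{(2)})$ as a scalar coefficient, which is equivalent to your commutation of $\triangleright$ and $\triangleleft$.
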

\begin{proof}
For S1, observe that for $(i,k), (j,l) \in I \times J$,
\[
\hat{\epsilon}(f_{ij} g_{kl}) = \hat{\epsilon}(f_{ij}) \hat{\epsilon}(g_{kl}) = \delta_{ij} \delta_{kl} = \delta_{(i,k), (j,l)},
\]
and
\begin{align*}
\hat{\Delta}(f_{ij} g_{kl}) = \hat{\Delta}(f_{ij}) \hat{\Delta}(g_{kl}) = \left( \sum_{p \in I} f_{pj} \otimes f_{ip} \right) \left( \sum_{q \in J} g_{ql} \otimes g_{kq} \right) \\
= \sum_{(p,q) \in I \times J} (f_{pj} g_{ql}) \otimes (f_{ip} g_{kq}).
\end{align*}
The verification of S2 proceeds analogously.

For S3, note that for $a \in \Acal$ and $(i,k), (j,l) \in I \times J$,
\begin{align*}
\sum_{(p,q) \in I \times J} t_{pi} s_{qk} \big( a \triangleleft (f_{pj} g_{ql}) \big)
&= \sum_{(p,q) \in I \times J} t_{pi} s_{qk} \big( (a \triangleleft f_{pj}) \triangleleft g_{ql} \big) \\
&= \sum_{(p,q) \in I \times J} t_{pi} \big( g_{kq} \triangleright (a \triangleleft f_{pj}) \big) s_{lq} \\
&= \sum_{(p,q) \in I \times J} t_{pi} (a_{(1)} \triangleleft f_{pj}) (g_{kq}, a_{(2)}) s_{lq} \\
&= \sum_{(p,q) \in I \times J} (f_{ip} \triangleright a_{(1)}) t_{jp} (g_{kq}, a_{(2)}) s_{lq} \\
&= \sum_{(p,q) \in I \times J} \big( (f_{ip} g_{kq}) \triangleright a \big) t_{jp} s_{lq}.
\end{align*}
\end{proof}

We now incorporate $*$-structures into this framework.

\begin{defn}\label{defn:bicovariant *-bimodule}
    Let $(\Omega, \Phi_\Omega, {}_\Omega \Phi)$ be a bicovariant bimodule over $\Acal$, and let $* : \Omega \rightarrow \Omega$ be a conjugate linear involution, i.e., $*^2 = \id$. Equip $\Acal \otimes \Omega$ and $\Omega \otimes \Acal$ with the tensor product $*$-structures. Then, $(\Omega, *, \Phi_\Omega, {}_\Omega \Phi)$ is called a \textbf{bicovariant $*$-bimodule over $\Acal$} if:
    \begin{enumerate}[label=B\arabic*., resume=B]
        \item For all $a \in \Acal$ and $\omega \in \Omega$, one has $(a \omega)^* = \omega^* a^*$ and $(\omega a)^* = a^* \omega^*$;
        \item Both $\Phi_\Omega$ and ${}_\Omega \Phi$ are $*$-preserving.
    \end{enumerate}
\end{defn}

\subsection{Bicovariant (\texorpdfstring{$*$}{TEXT}-)FODCs}\label{subsec:Bicovariant FODC}

\begin{defn}\label{defn:FODC}
Let $\Omega$ be an $\Acal$-bimodule and let $d: \Acal \rightarrow \Omega$ be a complex-linear map. Then $(\Omega, d)$ is called a \textbf{first order differential calculus (FODC) over $\Acal$} if the following conditions hold:
    \begin{enumerate}[label=F\arabic*., series=F]
        \item \textbf{(Leibniz rule)} For all $a, b \in \Acal$, we have
        \(
        d(ab) = (da)b + a (db).
        \)
        \item \textbf{(Standard form)} Every element $\omega \in \Omega$ can be written as
        \(
        \omega = \sum_{j=1}^k a_j \, db_j.
        \)
    \end{enumerate}
If, in addition, $(\Omega, \Phi_\Omega, {}_\Omega \Phi)$ is a bicovariant bimodule such that
    \begin{enumerate}[label=F\arabic*., resume=F]
        \item $\Phi_{\Omega} \circ d = (\id \otimes d)\Delta$ and ${}_\Omega \Phi \circ d = (d \otimes \id)\Delta$,
    \end{enumerate}
then $(\Omega, d, \Phi_\Omega, {}_\Omega \Phi)$ is called a \textbf{bicovariant FODC over $\Acal$}.

Let $*: \Omega \rightarrow \Omega$ be an involution. Then, $(\Omega, *, d)$ is called a \textbf{$*$-FODC over $\Acal$} if
    \begin{enumerate}[label=F\arabic*., resume=F]
        \item $d(a^*) = (da)^*$ for all $a \in \Acal$.
    \end{enumerate}
If $(\Omega, *, \Phi_\Omega, {}_\Omega \Phi)$ is a bicovariant $*$-bimodule and $d : \Acal \rightarrow \Omega$ is any linear map satisfying F1--F4, then $(\Omega, *, d, \Phi_\Omega, {}_\Omega \Phi)$ is called a \textbf{bicovariant $*$-FODC over $\Acal$}.
\end{defn}

We will often write $(\Omega, d)$ to denote a ($*$-)bicovariant FODC when the other structure maps are understood. When $\Acal = \Cf^\infty(\Kcal)$ for a CQG $\Kcal$, we say that \textit{$(\Omega,d)$ is an FODC on $\Kcal$}, meaning that it is an FODC over $\Cf^\infty(\Kcal)$.

Let $(\Omega, d)$ be a bicovariant FODC over $\Acal$. Then, the \textit{dimension of $(\Omega,d)$} is defined as the dimension of the complex vector space $\inv \Omega$.

Let $(\Omega, d)$ and $(\Omega', d')$ be bicovariant ($*$-)FODCs over $\Acal$. We say that they are \textit{isomorphic} if there exists an ($*$-preserving) $\Acal$-bimodule isomorphism $\varphi: \Omega \rightarrow \Omega'$ that intertwines the left and right coactions and satisfies $\varphi \circ d = d'$.

\begin{rmk}\label{rmk:constant multiple of FODC}
Let $(\Omega, d)$ be a bicovariant FODC and let $0 \neq \lambda \in \Cbb$. Define $d': \Acal \rightarrow \Omega$ by $d'(a) := \lambda \, da$. Then it is easily checked that $(\Omega, d')$ is also a bicovariant FODC. In fact, $(\Omega,d)$ and $(\Omega,d')$ are isomorphic via the map
\[
\varphi: \Omega \ni \omega \longmapsto \lambda \omega \in \Omega.
\]
The same conclusion holds for $*$-FODCs when $\lambda \in \Rbb \setminus \{0\}$.
\end{rmk}

In \cite{Woronowicz1989}, Woronowicz introduced a construction that generates all bicovariant ($*$-)FODCs up to isomorphism. Recall that the right adjoint coaction \eqref{eq:right adjoint coaction} defines a right $\Acal$-coaction on $\Acal$. A subspace $R \subseteq \Acal$ is called \textit{$\ad$-invariant} if
\[
\ad(R) \subseteq R \otimes \Acal.
\]
For example, $\Ker \epsilon$ is $\ad$-invariant.

\begin{prop}[Model bicovariant ($*$-)FODC]\label{prop:model BCFODC}
Let $R$ be an $\ad$-invariant right ideal in $\Acal$ such that $R \subseteq \Ker \epsilon$. Define
\[
\Omega_R := \Acal \otimes (\Ker \epsilon / R),
\]
and let $\pi_R: \Ker \epsilon \rightarrow \Ker \epsilon / R$ be the canonical projection. Then $\Omega_R$ becomes a bicovariant FODC with the following structure maps:

\begin{enumerate}[label=M\arabic*., series=M]
    \item \textbf{($\Acal$-bimodule structure)} For $a, b \in \Acal$ and $c \in \Ker \epsilon$,
    \begin{equation}\label{eq:BCFODC-bimodule}
    a (b \otimes \pi_R(c)) = ab \otimes \pi_R(c), \quad (b \otimes \pi_R(c)) a = b a_{(1)} \otimes \pi_R(c a_{(2)}).
    \end{equation}
    
    \item \textbf{(Differential)} For $a \in \Acal$,
    \[
    d_R a = a_{(1)} \otimes \pi_R \big( a_{(2)} - \epsilon(a_{(2)})1 \big).
    \]
    
    \item \textbf{(Left coaction)}
    \[
    \Phi_{\Omega_R} := \Delta \otimes \id : \Omega_R = \Acal \otimes (\Ker \epsilon / R) \rightarrow \Acal \otimes \Omega_R.
    \]
    
    \item \textbf{(Right coaction)} For $a \in \Acal$ and $b \in \Ker \epsilon$,
    \[
    {}_{\Omega_R} \Phi(a \otimes \pi_R(b)) = a_{(1)} \otimes \pi_R(b_{(2)}) \otimes a_{(2)} S(b_{(1)}) b_{(3)}.
    \]
\end{enumerate}
We have $\inv \Omega_R = 1 \otimes (\Ker \epsilon / R)$.

If, in addition, $*S(R) \subseteq R$, then $(\Omega_R, d_R)$ becomes a bicovariant $*$-FODC with involution given by
\begin{enumerate}[label=M\arabic*., resume=M]
    \item \textbf{(Involution)} For $a \in \Acal$ and $b \in \Ker \epsilon$,
    \begin{equation}\label{eq:LCFODC-involution}
    (a \otimes \pi_R(b))^* = -a_{(1)}^* \otimes \pi_R \big(S(b)^* a_{(2)}^* \big).
    \end{equation}
\end{enumerate}
\end{prop}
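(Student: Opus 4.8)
The plan is to verify the axioms F1--F4 and B1--B5 directly from the explicit formulas M1--M5, invoking the three hypotheses on $R$ exactly where they are needed: that $R$ is a right ideal (for M1, M2, and the Leibniz rule), that $R$ is $\ad$-invariant (for M4), and that $*S(R)\subseteq R$ (for M5). It is convenient first to extend $\pi_R$ to a $\Cbb$-linear map $\bar\pi:\Acal\to\Ker\epsilon/R$ by $\bar\pi(a):=\pi_R(a-\epsilon(a)1)$, so that $\bar\pi(1)=0$, $\bar\pi$ restricts to $\pi_R$ on $\Ker\epsilon$, and $d_R a=\sum_{(a)}a_{(1)}\otimes\bar\pi(a_{(2)})$. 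Writing $\pi_R(c)\triangleleft a:=\pi_R(ca)$ for the right $\Acal$-action on $\Ker\epsilon/R$ induced by the right-ideal structure, the twisted Leibniz identity $\bar\pi(ab)=\bar\pi(a)\triangleleft b+\epsilon(a)\,\bar\pi(b)$ drops out essentially for free, while M4 is the diagonal right coaction assembled from $\Delta$ on the $\Acal$-leg and the quotient $\ad_R$ of the adjoint coaction \eqref{eq:right adjoint coaction} on the $\Ker\epsilon/R$-leg. This already takes care of well-definedness: $\bar\pi$ kills $R$ since $R\subseteq\Ker\epsilon$; $\ad_R$ is well defined since $R$ is $\ad$-invariant; the right action in M1 is well defined because $\Ker\epsilon$ and $R$ are right ideals; and the involution M5 because $S(b)^*\in *S(R)\subseteq R$ and $R$ is a right ideal.

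With these preliminaries the non-$*$ axioms are routine. The Leibniz rule F1 is the twisted Leibniz identity for $\bar\pi$ combined with the actions M1. The standard form F2 follows from the antipode identity $\sum_{(c)}S(c_{(1)})\,d_R c_{(2)}=1\otimes\pi_R(c)$ for $c\in\Ker\epsilon$, which displays $a\otimes\pi_R(c)=\sum_{(c)}aS(c_{(1)})\,d_R c_{(2)}$ in standard form. That $\Phi_{\Omega_R}=\Delta\otimes\id$ is a left coaction and $\Acal$-bilinear (B1), and that $\Phi_{\Omega_R}\circ d_R=(\id\otimes d_R)\Delta$ (half of F3), is just coassociativity, the counit axiom, and M1. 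For ${}_{\Omega_R}\Phi$ (B2), the counit axiom is a one-line contraction and coassociativity reduces to $\Delta$ and $\ad_R$ being coactions; the remaining half of F3 and the compatibility $(\Phi_{\Omega_R}\otimes\id)\,{}_{\Omega_R}\Phi=(\id\otimes{}_{\Omega_R}\Phi)\,\Phi_{\Omega_R}$ (B3) are bookkeeping with the triple coproduct of $a$ and the double coproduct of $b$. Finally $\inv\Omega_R=1\otimes(\Ker\epsilon/R)$ is obtained by expanding an invariant element in the left-module basis $\{1\otimes e_i\}$ and applying $\id\otimes\epsilon$ to the equation $\Phi_{\Omega_R}(\omega)=1\otimes\omega$, which forces each coefficient into $\Cbb 1$. (Alternatively one could read off structure representations from M1 and M4 and invoke Proposition~\ref{prop:the structure theorem of bicovariant bimodule}, but the direct route seems more transparent.)

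For the $*$-part, assuming $*S(R)\subseteq R$, I would check in turn that M5 is a conjugate-linear involution, i.e.\ $*^2=\id$; that it is compatible with the bimodule structure, $(a\omega)^*=\omega^*a^*$ and $(\omega a)^*=a^*\omega^*$ (B4); that $\Phi_{\Omega_R}$ and ${}_{\Omega_R}\Phi$ are $*$-preserving (B5); and that $d_R(a^*)=(d_R a)^*$ (F4). The compatibility and $*$-preservation checks are mechanical once one recalls $(xy)^*=y^*x^*$ and $\Delta(a^*)=a_{(1)}^*\otimes a_{(2)}^*$. The delicate points are $*^2=\id$ and F4: both use the Hopf $*$-algebra identities $S(xy)=S(y)S(x)$ and $S(x)^*=S^{-1}(x^*)$, and both collapse to the single coalgebra identity $\sum_{(a)}a_{(1)}\otimes S^{-1}(a_{(3)})a_{(2)}=a\otimes1$ (the antipode axiom for $\Acal^{\mathrm{cop}}$ applied to $a_{(2)}\otimes a_{(3)}$); for F4 one further uses $\bar\pi(1)=0$, so that the extra term, being $(\id\otimes\bar\pi)$ applied to $a^*\otimes1$, vanishes. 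I expect this last block to be the only real source of friction: one must track Sweedler indices through the order reversals caused by $*$ and by the anti-automorphism $S$, while keeping the $\pi_R$/$\bar\pi$ distinction straight. Everything else is forced by coassociativity and the definitions.
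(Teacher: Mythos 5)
Your proposal is correct, but note that the paper does not actually prove this proposition: its ``proof'' is a citation to Sontz (Sections~6.1--6.3 and 11.2), so what you have written is a self-contained reconstruction of the standard Woronowicz-style verification rather than a parallel to an argument in the text. Your reconstruction checks out at every point where it could plausibly fail. The auxiliary map $\bar\pi(a)=\pi_R(a-\epsilon(a)1)$ and the twisted Leibniz identity $\bar\pi(ab)=\bar\pi(a)\triangleleft b+\epsilon(a)\,\bar\pi(b)$ do give F1 immediately; the identity $S(c_{(1)})\,d_Rc_{(2)}=1\otimes\pi_R(c)$ for $c\in\Ker\epsilon$ (this is exactly the quantum germs map computed later in \eqref{eq:quantum germs map for model BCFODC}) gives F2; and reading ${}_{\Omega_R}\Phi$ as the tensor product of the right coactions $\Delta$ and $\ad_R$ makes both its coassociativity and the role of $\ad$-invariance in its well-definedness transparent. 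On the $*$-side, the two identities you isolate are indeed the whole content: $*^2=\id$ reduces, via $S(x^*)=(S^{-1}(x))^*$, to $a_{(1)}\otimes S^{-1}(a_{(3)})a_{(2)}=a\otimes 1$, and in F4 the term $-a_{(1)}^*\otimes\bar\pi\big(S(a_{(3)})^*a_{(2)}^*\big)$ collapses to $-a^*\otimes\bar\pi(1)=0$, leaving $a_{(1)}^*\otimes\bar\pi(a_{(2)}^*)=d_R(a^*)$. The one caution worth making explicit is the issue you already flag: $\pi_R$ is defined only on $\Ker\epsilon$, and in the F4 computation the summands $S(a_{(3)})^*a_{(2)}^*$ and $\overline{\epsilon(a_{(3)})}\,a_{(2)}^*$ lie in $\Ker\epsilon$ only jointly, so one must pass to $\bar\pi$ before splitting them; your $\pi_R$/$\bar\pi$ bookkeeping handles exactly this, and with it the argument goes through.
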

\begin{proof}
See \cite[Sections~6.1--6.3 and 11.2]{Sontz}.
\end{proof}

The next theorem asserts that the construction in Proposition~\ref{prop:model BCFODC} establishes a bijective correspondence between $\ad$-invariant right ideals in $\Ker \epsilon$ and isomorphism classes of bicovariant FODCs.

\begin{thm}\label{thm:BCFODC}
Every bicovariant FODC over $\Acal$ is isomorphic to $(\Omega_R, d_R)$ for some $\ad$-invariant right ideal $R \subseteq \Ker \epsilon$.

Moreover, $(\Omega_R, d_R)$ and $(\Omega_{R'}, d_{R'})$ are isomorphic if and only if $R = R'$.

The same holds for bicovariant $*$-FODCs, provided we require $*S(R) \subseteq R$.
\end{thm}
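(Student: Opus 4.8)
The plan is to reverse the model construction of Proposition~\ref{prop:model BCFODC}: from an abstract bicovariant FODC $(\Omega,d)$ over $\Acal$ I would extract a canonical $\ad$-invariant right ideal $R\subseteq\Ker\epsilon$, prove $(\Omega,d)\cong(\Omega_R,d_R)$, and then show that feeding $(\Omega_R,d_R)$ back into this extraction returns $R$, which forces injectivity of $R\mapsto(\Omega_R,d_R)$. The key device is the \emph{Maurer--Cartan form} $\omega\colon\Acal\to\Omega$, $\omega(a)=S(a_{(1)})\,d a_{(2)}$. From the Leibniz rule F1 one gets $d1=0$, hence $\omega(1)=0$; the antipode axiom gives $d a=a_{(1)}\,\omega(a_{(2)})$; and a second use of F1 gives the twisted Leibniz identity $\omega(ab)=\epsilon(a)\,\omega(b)+\omega(a)\cdot b$, where $\cdot$ denotes the right $\Acal$-action on $\inv\Omega$. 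Using coassociativity, the antipode identities, the covariance axiom F3, and the $\Acal$-bilinearity of $\Phi_\Omega$ and ${}_{\Omega}\Phi$ relative to the bimodule structures declared in Section~\ref{subsec:Bicovariant bimodule}, a Sweedler computation yields $\Phi_\Omega(\omega(a))=1\otimes\omega(a)$, so that $\omega$ lands in $\inv\Omega$, together with the equivariance identity ${}_{\Omega}\Phi(\omega(a))=(\omega\otimes\id)\,\ad(a)$, where $\ad$ is the right adjoint coaction \eqref{eq:right adjoint coaction}.

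I would then set $R:=\{a\in\Ker\epsilon\mid\omega(a)=0\}$. The twisted Leibniz identity immediately gives $R\,\Acal\subseteq R$; and since $(\epsilon\otimes\id)\circ\ad=\epsilon(\,\cdot\,)1$ sends $\Ker\epsilon$ into $\Ker\epsilon\otimes\Acal$, the equivariance identity together with exactness of $(-)\otimes_\Cbb\Acal$ gives $\ad(R)\subseteq R\otimes\Acal$; hence $R$ is an $\ad$-invariant right ideal inside $\Ker\epsilon$. For the isomorphism, recall from Proposition~\ref{prop:the structure representations of bicovariant bimodule} that $\Omega$ is free as a left $\Acal$-module on $\inv\Omega$, the multiplication map $\Acal\otimes\inv\Omega\to\Omega$ being an isomorphism. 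Combining this with $d a=a_{(1)}\omega(a_{(2)})$ and the standard-form axiom F2 forces $\Span_\Cbb\omega(\Acal)=\inv\Omega$, and since $\omega(1)=0$ already $\omega(\Ker\epsilon)=\inv\Omega$. Thus $\omega$ descends to an isomorphism $\bar\omega\colon\Ker\epsilon/R\xrightarrow{\ \sim\ }\inv\Omega$, and composing $\id_\Acal\otimes\bar\omega$ with the multiplication isomorphism $\Acal\otimes\inv\Omega\cong\Omega$ gives a left-module isomorphism $\Phi\colon\Omega_R=\Acal\otimes(\Ker\epsilon/R)\to\Omega$. One then checks that $\Phi$ is right $\Acal$-linear (from the twisted Leibniz identity against the right action \eqref{eq:BCFODC-bimodule}), intertwines both coactions (from the facts that $\Phi_\Omega$ acts as $\eta\mapsto1\otimes\eta$ on $\inv\Omega$ and ${}_{\Omega}\Phi\circ\omega=(\omega\otimes\id)\ad$, matched against the coaction formulas of Proposition~\ref{prop:model BCFODC}), and satisfies $\Phi\circ d_R=d$ (immediate from $d_R a=a_{(1)}\otimes\pi_R(a_{(2)}-\epsilon(a_{(2)})1)$ and $\omega(1)=0$). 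This settles the first assertion.

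For injectivity, a direct computation shows the Maurer--Cartan form of a model calculus is $\omega_R(a)=1\otimes\pi_R(a-\epsilon(a)1)$, so the ideal that the above procedure assigns to $(\Omega_R,d_R)$ is exactly $R$. Now any isomorphism $\Psi\colon(\Omega_R,d_R)\to(\Omega_{R'},d_{R'})$ of bicovariant FODCs is bimodule-linear and commutes with the differentials, hence satisfies $\Psi(\omega_R(a))=\omega_{R'}(a)$; comparing kernels of $\omega_R|_{\Ker\epsilon}$ and $\omega_{R'}|_{\Ker\epsilon}$ gives $R=R'$, and the converse implication is trivial. For the $*$-case, combining $d1=0$, F1, the axiom $(d a)^*=d(a^*)$, and the $*$-bimodule axiom B4 yields the identity $\omega(a)^*=-\omega(S(a)^*)$; since $\epsilon(S(a)^*)=\overline{\epsilon(a)}$, this shows $*S(R)\subseteq R$. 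The isomorphism $\Phi$ above is $*$-preserving because on left-invariant elements it carries $\omega_R(b)^*=-\omega_R(S(b)^*)$, read off from \eqref{eq:LCFODC-involution}, to $\omega(b)^*$, and the general case follows from B4; the injectivity argument is unchanged.

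I expect the main obstacle to be the first step: verifying that $\omega(a)$ is left-invariant and that ${}_{\Omega}\Phi\circ\omega=(\omega\otimes\id)\,\ad$. This is the place where one must carefully thread the two bimodule structures on $\Acal\otimes\Omega$ and $\Omega\otimes\Acal$ through several Sweedler-indexed applications of coassociativity and the antipode identities; once these two identities are in hand, everything downstream is bookkeeping. A secondary point requiring care is the surjectivity $\omega(\Ker\epsilon)=\inv\Omega$, which genuinely uses the freeness statement of Proposition~\ref{prop:the structure representations of bicovariant bimodule} and not merely the standard-form axiom F2.
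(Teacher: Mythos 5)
Your proof is correct and follows the same route as the paper: the paper simply cites \cite[Theorems~6.10 and 11.5]{Sontz} for the existence and classification (which is exactly the Woronowicz-style argument via the quantum germs map $\omega(a)=S(a_{(1)})\,da_{(2)}$ that you reconstruct), and its own uniqueness computation is precisely your step of applying an isomorphism to $\omega_R(a)=1\otimes\pi_R(a)$ and comparing kernels. Your identities for $\omega$ (left-invariance, equivariance with $\ad$, the twisted Leibniz rule, and $\omega(a)^*=-\omega(S(a)^*)$) all check out, and your observation that the surjectivity $\omega(\Ker\epsilon)=\inv\Omega$ needs the freeness of $\Omega$ over $\inv\Omega$, not just axiom F2, is exactly right.
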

\begin{proof}
See \cite[Theorems~6.10 and 11.5]{Sontz}. For the uniqueness, note that if $\varphi: \Omega_R \rightarrow \Omega_{R'}$ is an isomorphism, then for all $a \in \Ker \epsilon$,
\begin{align*}
\varphi(1 \otimes \pi_R(a)) = \varphi(S(a_{(1)}) d_R a_{(2)}) = S(a_{(1)}) \varphi(d_R a_{(2)}) = S(a_{(1)}) d_{R'} a_{(2)} \\
= 1 \otimes \pi_{R'}(a),
\end{align*}
which, in light of the fact that $\varphi$ is an isomorphism, implies $R = R'$.
\end{proof}

Accordingly, when a bicovariant FODC $(\Omega,d)$ is isomorphic to some $(\Omega_R, d_R)$ as above, we call $R$ \textit{the right ideal corresponding to $\Omega$}.

\subsection{Quantum germs map and left-invariant vector fields (LIVFs)}\label{subsec:Quantum germs map and left-invariant vector fields}

Let $R$ be an $\ad$-invariant right ideal contained in $\Ker \epsilon$ and consider the FODC $(\Omega_R, d_R)$ of Proposition~\ref{prop:model BCFODC}. Note that
\[
S(a_{(1)})\, d_R a_{(2)} = 1 \otimes (\epsilon \otimes \id)\, d_R(a) = 1 \otimes \pi_R ( a - \epsilon(a) )
\]
for $a \in \Acal$. We now generalize this map to arbitrary bicovariant FODCs.

\begin{prop}\label{prop:quantum germs map}
Let $(\Omega, d )$ be a bicovariant FODC over $\Acal$. Then,
\[
Q : \Acal \ni a \longmapsto S(a_{(1)})\, d a_{(2)} \in \inv \Omega
\]
is well-defined and surjective; this map is called \textbf{the quantum germs map of $(\Omega,d)$}. It satisfies
\begin{equation}\label{eq:basic identity of quantum germs map}
Q(ab) = Q(a) \cdot b + \epsilon(a) Q(b), \quad a, b \in \Acal.
\end{equation}
Moreover, if $(\Omega,d)$ is a bicovariant $*$-FODC, then
\begin{equation}\label{eq:involution and quantum germs map}
Q(a)^* = -Q(S(a)^*), \quad a \in \Acal.
\end{equation}
\end{prop}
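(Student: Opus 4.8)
The plan is to verify the three assertions---well-definedness, surjectivity, the twisted Leibniz identity~\eqref{eq:basic identity of quantum germs map}, and the $*$-compatibility~\eqref{eq:involution and quantum germs map}---by reducing everything to the model FODC $(\Omega_R, d_R)$ via Theorem~\ref{thm:BCFODC}, and then checking the claims there by direct Sweedler-notation manipulations. First I would fix an isomorphism $\varphi : \Omega \to \Omega_R$ of bicovariant ($*$-)FODCs, where $R \subseteq \Ker\epsilon$ is the $\ad$-invariant right ideal corresponding to $\Omega$. Since $\varphi$ is $\Acal$-bilinear, intertwines the coactions, and satisfies $\varphi \circ d = d_R$, it carries $\inv\Omega$ onto $\inv\Omega_R$ and commutes with the formula $a \mapsto S(a_{(1)}) \, d a_{(2)}$; hence it suffices to prove everything for $Q_R : a \mapsto S(a_{(1)})\, d_R a_{(2)}$. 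For that model, the displayed computation just before the proposition already shows $Q_R(a) = 1 \otimes \pi_R(a - \epsilon(a)1)$, which makes well-definedness and surjectivity immediate: the image is $1 \otimes (\Ker\epsilon/R) = \inv\Omega_R$ by Proposition~\ref{prop:model BCFODC}.

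For~\eqref{eq:basic identity of quantum germs map}, I would work intrinsically rather than through the model, since it is cleanest: using the Leibniz rule F1 and the coproduct formula $\Delta(ab) = a_{(1)} b_{(1)} \otimes a_{(2)} b_{(2)}$, expand
\[
Q(ab) = S(a_{(1)} b_{(1)})\, d(a_{(2)} b_{(2)}) = S(b_{(1)}) S(a_{(1)}) \big( (d a_{(2)})\, b_{(2)} + a_{(2)}\, d b_{(2)} \big).
\]
In the first summand, $S(b_{(1)}) S(a_{(1)}) (d a_{(2)}) b_{(2)} = S(b_{(1)}) \big( S(a_{(1)}) d a_{(2)} \big) b_{(2)} = S(b_{(1)}) Q(a) b_{(2)} = Q(a)\cdot b$ by the definition of the dotted right action on $\inv\Omega$; here one uses that $Q(a) \in \inv\Omega$ and that the $\cdot$-action is $\omega \cdot b = S(b_{(1)}) \omega b_{(2)}$. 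In the second summand, $S(a_{(1)}) a_{(2)} = \epsilon(a) 1$ collapses the $a$-part, leaving $\epsilon(a) S(b_{(1)}) d b_{(2)} = \epsilon(a) Q(b)$. This gives~\eqref{eq:basic identity of quantum germs map}.

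Finally, for~\eqref{eq:involution and quantum germs map} in the $*$-FODC case, I would reduce to the model and use the involution formula~\eqref{eq:LCFODC-involution}. With $Q_R(a) = 1 \otimes \pi_R(a - \epsilon(a)1)$, applying~\eqref{eq:LCFODC-involution} with the first tensor leg $= 1$ gives $Q_R(a)^* = -1 \otimes \pi_R\big( S(a - \epsilon(a)1)^* \big) = -1 \otimes \pi_R\big( S(a)^* - \overline{\epsilon(a)} 1 \big)$, and since $\overline{\epsilon(a)} = \epsilon(S(a)^*)$ this is exactly $-Q_R(S(a)^*)$. Alternatively one can argue intrinsically from F4 ($d(a^*) = (da)^*$) together with the bimodule $*$-identities B4, replacing $a$ by $S(a)^*$ and using $S(S(a)^*_{(1)}) = S(S(a_{(2)})^*) $, though keeping track of the antipode-versus-involution bookkeeping is fiddly. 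I expect the twisted Leibniz rule~\eqref{eq:basic identity of quantum germs map} to be the only place demanding genuine care---specifically, correctly recognizing the term $S(b_{(1)}) \big( S(a_{(1)}) d a_{(2)}\big) b_{(2)}$ as $Q(a) \cdot b$ rather than as the ordinary product $Q(a) b$---while the remaining assertions are essentially bookkeeping once the model-FODC reduction is in place.
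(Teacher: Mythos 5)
Your proposal is correct and follows essentially the same route as the paper: well-definedness and surjectivity are obtained by transporting to the model calculus $(\Omega_R, d_R)$ via Theorem~\ref{thm:BCFODC} and reading off $Q_R(a) = 1 \otimes \pi_R(a - \epsilon(a))$, while the identities \eqref{eq:basic identity of quantum germs map} and \eqref{eq:involution and quantum germs map} are the ``direct computations'' the paper outsources to Sontz, and your Sweedler manipulation for the twisted Leibniz rule and your use of \eqref{eq:LCFODC-involution} for the $*$-identity both check out. The one point you rightly flag --- reading $S(b_{(1)})\,Q(a)\,b_{(2)}$ as the dotted action $Q(a)\cdot b$ rather than the ordinary bimodule product --- is handled correctly.
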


\begin{proof}
In the case of $(\Omega_R,d_R)$ from Proposition~\ref{prop:model BCFODC}, we compute:
\begin{equation}\label{eq:quantum germs map for model BCFODC}
Q(a) = 1 \otimes \pi_R (a - \epsilon(a)) \in \inv (\Omega_R) \cong \Ker \epsilon / R, \quad a \in \Acal,
\end{equation}
which is clearly surjective. The general statement follows from Theorem~\ref{thm:BCFODC}.

The identities \eqref{eq:basic identity of quantum germs map} and \eqref{eq:involution and quantum germs map} can be verified by direct computation, see \cite[Proposition~6.7 and Proposition~11.6]{Sontz}.
\end{proof}

Let $(\Omega, d)$ be a bicovariant FODC with quantum germs map $Q$. Then,
\[
d a = a_{(1)} Q(a_{(2)}), \quad a \in \Acal,
\]
and by \eqref{eq:quantum germs map for model BCFODC}, the right ideal $R$ corresponding to $(\Omega,d)$ is given by
\begin{equation}\label{eq:right ideal corresponding to BCFODC}
R = \Ker \epsilon \cap \Ker Q.
\end{equation}

\begin{defn}\label{defn:LIVF}
Let $(\Omega, d)$ be a bicovariant FODC over $\Acal$ and let $R$ be the right ideal corresponding to it. Then, elements of the subspace
\begin{equation}\label{eq:LIVF}
\Xcal_R = \{ X \in \Acal^* \mid \forall a \in R + \Cbb 1, \, (X, a) = 0 \}
\end{equation}
are called \textbf{left-invariant vector fields (LIVFs) for the FODC $(\Omega, d)$}.
\end{defn}

Note that $\Xcal_R \subseteq \Acal^\circ$ by \cite[Proposition~6.10]{Sontz}. Also, since
\begin{equation}\label{eq:R from XcalR}
R = \Ker \epsilon \cap (R + \Cbb 1) = \{ a \in \Ker \epsilon \mid \forall X \in \Xcal_R, \, (X, a) = 0 \},
\end{equation}
we see that $\Xcal_R = \Xcal_{R'}$ implies $R = R'$.

\begin{prop}\label{prop:LIVF basis}
Let $(\Omega, d)$ be a bicovariant FODC over $\Acal$ and let $\{ \omega_i \mid i \in I \}$ be a linear basis of $\inv \Omega$. Then, the functionals $X_i \in \Acal^*$ defined by
\begin{equation}\label{eq:LIVF and quantum germs map}
Q(a) = \sum_i (X_i , a)\, \omega_i, \quad a \in \Acal,
\end{equation}
where $Q$ is the quantum germs map of $(\Omega,d)$, are LIVFs for $(\Omega, d)$. These are linearly independent and, if $I$ is finite, form a linear basis of the space of LIVFs.

Moreover, for all $a \in \Acal$, we have
\begin{equation}\label{eq:differential and LIVF}
d a = \sum_{i \in I} (X_i \triangleright a)\, \omega_i.
\end{equation}
\end{prop}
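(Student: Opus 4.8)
The plan is to construct the functionals $X_i$ explicitly from the decomposition of the quantum germs map $Q$ along the fixed basis $\{\omega_i \mid i \in I\}$ of ${}\inv\Omega$, and then verify the three asserted properties: that each $X_i$ is an LIVF, that they are linearly independent (and span when $I$ is finite), and that formula \eqref{eq:differential and LIVF} holds.

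First I would observe that since $Q(a) \in {}\inv\Omega$ for every $a \in \Acal$ by Proposition~\ref{prop:quantum germs map}, and $\{\omega_i\}$ is a basis of ${}\inv\Omega$, the expansion $Q(a) = \sum_i (X_i, a)\,\omega_i$ defines the coefficients $(X_i, a)$ uniquely, with only finitely many nonzero for each $a$; linearity of $Q$ makes each $X_i : \Acal \to \Cbb$ linear, so $X_i \in \Acal^*$. To see that $X_i$ is an LIVF, I need $(X_i, a) = 0$ for all $a \in R + \Cbb 1$, where $R = \Ker\epsilon \cap \Ker Q$ by \eqref{eq:right ideal corresponding to BCFODC}. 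For $a \in R$ this is immediate since $Q(a) = 0$ forces all coefficients to vanish; for $a = 1$ note $Q(1) = S(1)\,d1 = d1 = 0$ (as $d1 = d(1\cdot 1) = (d1)1 + 1(d1) = 2\,d1$), so again all $(X_i, 1) = 0$; linearity then gives vanishing on all of $R + \Cbb 1$. Hence $X_i \in \Xcal_R$.

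Next, linear independence: suppose $\sum_i c_i X_i = 0$ in $\Acal^*$ with finitely many $c_i$ nonzero. Then for every $a \in \Acal$ we have $\sum_i c_i (X_i, a) = 0$. Since $Q$ is surjective onto ${}\inv\Omega$ by Proposition~\ref{prop:quantum germs map}, for each fixed $j$ we may choose $a \in \Acal$ with $Q(a) = \omega_j$, i.e. $(X_i, a) = \delta_{ij}$; feeding this $a$ into the relation gives $c_j = 0$. So the $X_i$ are linearly independent. When $I$ is finite, to show they span $\Xcal_R$, take any $X \in \Xcal_R$; I would show $X$ is determined by its values on a set of representatives of $Q(\Acal) = {}\inv\Omega$, using that $X$ annihilates $R + \Cbb 1$ and that $\Acal = (R + \Cbb 1) \oplus (\text{complement mapping isomorphically under } Q$ onto ${}\inv\Omega$); then writing $X = \sum_i (X, a_i) X_i$ where $Q(a_i) = \omega_i$ recovers $X$ as a combination of the $X_i$. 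Concretely, for arbitrary $b \in \Acal$, $Q(b) = \sum_i (X_i, b)\omega_i$, and since $b - \sum_i (X_i,b) a_i \in \Ker Q \cap (\text{something})$... one checks $b - \epsilon(b)1 - \sum_i(X_i,b)a_i \in R$, on which $X$ vanishes, giving $(X,b) = \sum_i (X_i, b)(X, a_i)$, i.e. $X = \sum_i (X, a_i) X_i$.

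Finally, formula \eqref{eq:differential and LIVF}: recall from the discussion just before Definition~\ref{defn:LIVF} that $da = a_{(1)} Q(a_{(2)})$. Substituting the expansion of $Q$ gives $da = a_{(1)} \sum_i (X_i, a_{(2)})\,\omega_i = \sum_i a_{(1)}(X_i, a_{(2)})\,\omega_i$, and by the definition of the left $\Acal^*$-action $X_i \triangleright a = a_{(1)}(X_i, a_{(2)})$ from \eqref{eq:Acirc action on A}, this is exactly $\sum_i (X_i \triangleright a)\,\omega_i$. I expect the main obstacle to be the spanning argument in the finite case: one must correctly identify that $\Acal = \Cbb 1 \oplus R \oplus C$ where $Q$ restricts to an isomorphism $C \cong {}\inv\Omega$ (equivalently, $\Ker Q = R \oplus \Cbb 1$ when... actually $Q(1)=0$ and $\Ker\epsilon \cap \Ker Q = R$, so $\Ker Q = R + \Cbb 1$), and to leverage the LIVF annihilation condition carefully so that the coefficients $(X, a_i)$ are well-defined independent of the choice of preimages $a_i$. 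The other steps are essentially bookkeeping with Sweedler notation and the surjectivity of $Q$.
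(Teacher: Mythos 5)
Your proposal is correct and follows essentially the same route as the paper: define the $X_i$ by expanding $Q$ in the basis $\{\omega_i\}$, use $Q(1)=0$ and $R=\Ker\epsilon\cap\Ker Q$ to see each $X_i\in\Xcal_R$, use surjectivity of $Q$ for linear independence, and compute $da=a_{(1)}Q(a_{(2)})$ for \eqref{eq:differential and LIVF}. For the spanning step the paper argues more cleanly that $\bigcap_i\Ker X_i\subseteq\Ker X$ (since $(X_i,a)=0$ for all $i$ forces $a\in R+\Cbb 1$) and invokes the standard linear-algebra fact; your explicit decomposition works too, provided you normalize the preimages so that $a_i\in\Ker\epsilon$, ensuring $b-\epsilon(b)1-\sum_i(X_i,b)a_i$ actually lands in $R$.
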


\begin{proof}
Let $R$ be the right ideal corresponding to $(\Omega,d)$. Since $Q(1) = 0$, we have $(X_i, 1) = 0$ for all $i \in I$. By \eqref{eq:right ideal corresponding to BCFODC}, each $X_i$ annihilates $R$, so $X_i \in \Xcal_R$.

Because $Q$ is surjective, for each $j \in I$ we can find $a_j \in \Acal$ such that $Q(a_j) = \omega_j$, i.e.\ $(X_i, a_j) = \delta_{ij}$. Thus, $\{ X_i \mid i \in I \}$ is linearly independent.

Now, suppose $I$ is finite and let $X \in \Xcal_R$. If $a \in \Acal$ satisfies $(X_i, a) = 0$ for all $i \in I$, then $Q(a) = 0$, so $a = \big( a - \epsilon(a) \big) + \epsilon(a) \in R + \Cbb 1$ by \eqref{eq:right ideal corresponding to BCFODC}. Hence $\bigcap_{i \in I} \Ker X_i \subseteq \Ker X$, implying $X \in \Span_\Cbb \{ X_i \mid i \in I \}$. Therefore, $\Xcal_R = \Span_\Cbb \{ X_i \mid i \in I \}$.

To verify \eqref{eq:differential and LIVF}, let $a \in \Acal$ and compute:
\[
d a = a_{(1)} Q(a_{(2)}) = \sum_i a_{(1)} (X_i, a_{(2)})\, \omega_i = \sum_i (X_i \triangleright a)\, \omega_i.
\]
\end{proof}

\begin{cor}\label{cor:dimension encoded in the dual space}
Let $(\Omega, d)$ be a bicovariant FODC over $\Acal$ and let $\Xcal$ be the space of LIVFs for it. Then, $(\Omega,d)$ is finite-dimensional if and only if $\Xcal$ is a finite-dimensional $\Cbb$-vector space, in which case $\dim \Xcal = \dim \inv \Omega$.
\end{cor}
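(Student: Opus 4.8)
The plan is to deduce this corollary from Proposition~\ref{prop:LIVF basis} together with the surjectivity of the quantum germs map established in Proposition~\ref{prop:quantum germs map}. Recall that the dimension of $(\Omega,d)$ is by definition $\dim_\Cbb {}\inv\Omega$, so the statement amounts to the equivalence: $\inv\Omega$ is finite-dimensional $\iff$ $\Xcal$ is finite-dimensional, with equality of dimensions in that case.

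First I would handle the forward direction. Suppose $\inv\Omega$ is finite-dimensional and fix a linear basis $\{\omega_i \mid i \in I\}$ with $I$ finite. Then Proposition~\ref{prop:LIVF basis} produces functionals $X_i \in \Acal^*$, defined by $Q(a) = \sum_i (X_i,a)\,\omega_i$, which are LIVFs, are linearly independent, and---since $I$ is finite---form a linear basis of $\Xcal$. Hence $\dim_\Cbb \Xcal = |I| = \dim_\Cbb {}\inv\Omega < \infty$.

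For the converse, suppose $\Xcal$ is finite-dimensional; I want to conclude $\inv\Omega$ is finite-dimensional. The cleanest route is contrapositive on the already-proven direction, but more directly: pick any linear basis $\{\omega_i \mid i \in I\}$ of $\inv\Omega$ (possibly infinite for now) and again form the $X_i$ as in \eqref{eq:LIVF and quantum germs map}. By Proposition~\ref{prop:LIVF basis} the family $\{X_i \mid i \in I\}$ is linearly independent in $\Xcal$, so $|I| \le \dim_\Cbb \Xcal < \infty$; thus $I$ is finite and $\inv\Omega$ is finite-dimensional. Then the first part of the argument applies and gives $\dim_\Cbb \Xcal = |I| = \dim_\Cbb {}\inv\Omega$. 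This also shows the two finiteness conditions are genuinely equivalent.

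I do not anticipate a serious obstacle here: everything reduces to the bookkeeping in Proposition~\ref{prop:LIVF basis}, whose proof already did the real work (using surjectivity of $Q$ to find $a_j$ with $(X_i,a_j)=\delta_{ij}$, and using $R = \Ker\epsilon \cap \Ker Q$ to show the $X_i$ span $\Xcal$ when $I$ is finite). The one point requiring a little care is that Proposition~\ref{prop:LIVF basis} only asserts that the $X_i$ \emph{form a basis of $\Xcal$} under the hypothesis that $I$ is finite; for a possibly-infinite basis of $\inv\Omega$ one only gets linear independence of the $X_i$, which is exactly what is needed to bound $|I|$ by $\dim_\Cbb\Xcal$ and thereby force finiteness. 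So the logical order is: (i) extract linear independence of the $X_i$ for an arbitrary invariant basis to get $|I| \le \dim_\Cbb\Xcal$; (ii) once $I$ is known finite, invoke the basis statement to get the reverse inequality and the dimension equality.
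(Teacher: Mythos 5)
Your proposal is correct and follows exactly the route the paper intends: the corollary is stated without a separate proof precisely because it is the immediate consequence of Proposition~\ref{prop:LIVF basis} that you spell out, and your care in using only the linear independence of the $X_i$ (valid for arbitrary $I$) to force finiteness before invoking the basis statement is the right way to organize the deduction.
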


\begin{prop}\label{prop:*-structure encoded in the dual space}
Let $(\Omega,d)$ be a bicovariant FODC over $\Acal$ and let $\Xcal$ be the space of LIVFs for it. Then, $(\Omega, d)$ can be made a bicovariant $*$-FODC if and only if $*(\Xcal) \subseteq \Xcal$.
\end{prop}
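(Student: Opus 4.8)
The plan is to reduce the statement, via Woronowicz's classification of bicovariant FODCs (Theorem~\ref{thm:BCFODC}), to a condition on the right ideal corresponding to $(\Omega,d)$, and then to derive the desired equivalence by a short pairing computation. First I would recall that $R = \Ker\epsilon \cap \Ker Q$ is an $\ad$-invariant right ideal, that $(\Omega,d) \cong (\Omega_R, d_R)$ by Theorem~\ref{thm:BCFODC}, and that by Definition~\ref{defn:LIVF} the space $\Xcal = \Xcal_R$ of LIVFs is exactly the annihilator of $R + \Cbb 1$ inside $\Acal^*$. Combining Proposition~\ref{prop:model BCFODC} (which equips $(\Omega_R,d_R)$ with the model involution~\eqref{eq:LCFODC-involution} whenever $*S(R)\subseteq R$) with the uniqueness clause in the $*$-version of Theorem~\ref{thm:BCFODC} (which forces the right ideal underlying any bicovariant $*$-FODC isomorphic to $(\Omega,d)$ to be $R$ itself), one sees that $(\Omega,d)$ admits an involution turning it into a bicovariant $*$-FODC if and only if $*S(R) \subseteq R$. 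Thus the whole proposition reduces to the equivalence $*S(R)\subseteq R \Longleftrightarrow *(\Xcal)\subseteq\Xcal$.

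For that equivalence the key input is the defining formula $(\phi^*,a) = \overline{(\phi, S(a)^*)}$ for the involution on $\Acal^*$, together with the elementary facts that $a \mapsto S(a)^*$ fixes $\Cbb 1$ (since $S(1)=1=1^*$) and preserves $\Ker\epsilon$ (since $\epsilon(S(a)^*) = \overline{\epsilon(a)}$); consequently $a \mapsto S(a)^*$ maps $R + \Cbb 1$ into itself as soon as $*S(R)\subseteq R$. Assuming $*S(R)\subseteq R$, for $X \in \Xcal$ and $a \in R + \Cbb 1$ I would compute $(X^*,a) = \overline{(X, S(a)^*)} = 0$, since $S(a)^* \in R + \Cbb 1$ is annihilated by $X$; this gives $X^* \in \Xcal$, hence $*(\Xcal)\subseteq\Xcal$. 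Conversely, assuming $*(\Xcal)\subseteq\Xcal$, for $r \in R$ I would note $S(r)^* \in \Ker\epsilon$ and that $(X, S(r)^*) = \overline{(X^*,r)} = 0$ for every $X \in \Xcal$, because $X^* \in \Xcal$ and $r \in R \subseteq R + \Cbb 1$; since $S(r)^*$ lies in $\Ker\epsilon$ and is annihilated by all of $\Xcal_R$, equation~\eqref{eq:R from XcalR} forces $S(r)^* \in R$, i.e.\ $*S(R)\subseteq R$.

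I do not expect a serious computational obstacle: once one notices that $\Xcal$ is literally the annihilator of $R + \Cbb 1$ and that the $*$-operation on $\Acal^*$ is built from the very map $a \mapsto S(a)^*$, both implications fall out immediately. The only step that uses the structure theory rather than bare algebra is the ``$\Leftarrow$'' direction, where recovering membership in $R$ from vanishing against all LIVFs requires~\eqref{eq:R from XcalR} (equivalently, the fact that the LIVFs separate $\Ker\epsilon/R$); and on the soft side the reduction step leans on the uniqueness part of Theorem~\ref{thm:BCFODC} to rule out $*$-structures on $\Omega$ not coming from the model construction. If one preferred to avoid invoking that uniqueness clause, the reduction could instead be routed through the algebraic double-annihilator identity $(R+\Cbb 1)^{\perp\perp} = R + \Cbb 1$ in $\Acal^*$, but the argument above is shorter.
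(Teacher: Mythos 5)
Your proposal is correct and follows essentially the same route as the paper: reduce to the model calculus $(\Omega_R, d_R)$ via Theorem~\ref{thm:BCFODC}, identify the existence of a $*$-structure with the condition $*S(R) \subseteq R$, and translate that into $*(\Xcal) \subseteq \Xcal$ using the formula $(X^*,a) = \overline{(X,S(a)^*)}$ together with \eqref{eq:LIVF}--\eqref{eq:R from XcalR}. The paper states this equivalence in one line; you have merely written out the two implications in detail, which is fine.
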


\begin{proof}
Without loss of generality, assume $(\Omega,d) = (\Omega_R, d_R)$ where $R$ is the right ideal corresponding to $(\Omega, d)$.

By Proposition~\ref{prop:model BCFODC} and Theorem~\ref{thm:BCFODC}, the $*$-structure on $\Omega_R$ exists if and only if $*S(R) \subseteq R$. Since $(X^*, a) = \overline{(X, S(a)^*)}$ for all $X \in \Acal^*$ and $a \in \Acal$, this condition is equivalent to $*(\Xcal_R) \subseteq \Xcal_R$ by \eqref{eq:LIVF}--\eqref{eq:R from XcalR}.
\end{proof}

\subsection{Direct sum of bicovariant (\texorpdfstring{$*$}{TEXT}-)FODCs}\label{subsec:Direct sum of bicovariant FODCs}

Recall that, given left $\Acal$-coactions $\Phi_l : V_l \rightarrow \Acal \otimes V_l$ ($1\leq l \leq m$), the map
\[
\Phi : V_1 \oplus \cdots \oplus V_m \xrightarrow{\Phi_1 \oplus \cdots \oplus \Phi_m } (\Acal \otimes V_1) \oplus \cdots \oplus (\Acal \otimes V_m) \cong \Acal \otimes (V_1 \oplus \cdots \oplus V_m)
\]
is a left $\Acal$-coaction called the \emph{direct sum of $\Phi_1, \ldots, \Phi_m$}, and is denoted by $\Phi = \Phi_1 \oplus \cdots \oplus \Phi_m$. Direct sums of right $\Acal$-coactions are defined analogously.

\begin{prop}\label{prop:direct sum of structure maps}
    Let $\Omega_1 , \ldots , \Omega_m$ be bicovariant bimodules over $\Acal$. Then the product $\Acal$-bimodule
    \[
    \Omega = \Omega_1 \oplus \cdots \oplus \Omega_m
    \]
    equipped with the product $\Acal$-coactions
    \[
    \Phi_\Omega = \Phi_{\Omega_1} \oplus \cdots \oplus \Phi_{\Omega_m}, \quad {}_\Omega \Phi = {}_{\Omega_1} \Phi \oplus \cdots \oplus {}_{\Omega_m} \Phi
    \]
    is a bicovariant bimodule over $\Acal$, called the \textbf{direct sum of $\Omega_1, \ldots , \Omega_m$}. Moreover,
    \[
    \inv \Omega = \inv \Omega_1 \oplus \cdots \oplus \inv \Omega_m.
    \]

    When $\Omega_1 , \ldots, \Omega_m$ are bicovariant $*$-bimodules, then $\Omega$ equipped with the product $*$-structure becomes a bicovariant $*$-bimodule over $\Acal$.
\end{prop}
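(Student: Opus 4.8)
The plan is to verify axioms B1--B3 (and B4--B5 in the $*$-case) \emph{coordinatewise}, exploiting the fact that every structure on $\Omega = \Omega_1 \oplus \cdots \oplus \Omega_m$ --- the $\Acal$-bimodule structure, the two coactions, and (in the $*$-case) the involution --- is defined componentwise through the canonical isomorphism $\theta : \bigoplus_{l} (\Acal \otimes \Omega_l) \xrightarrow{\ \sim\ } \Acal \otimes \Omega$. The real content of the proof is that $\theta$ is natural: it intertwines the componentwise $\Acal$-bimodule structures with the diagonal one on $\Acal \otimes \Omega$, it is compatible with the maps $\Phi_{\bullet} \otimes \id$ and $\id \otimes {}_{\bullet}\Phi$ appearing in B3, and it carries the direct sum of the tensor-product $*$-structures to the tensor-product $*$-structure on $\Acal \otimes \Omega$. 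Granting this, each axiom for $\Omega$ follows by restricting to the $l$-th summand and invoking the corresponding axiom for $\Omega_l$.

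Concretely, I would proceed as follows. That $\Phi_\Omega = \Phi_{\Omega_1} \oplus \cdots \oplus \Phi_{\Omega_m}$ is a left $\Acal$-coaction is precisely the remark preceding the proposition; its $\Acal$-bilinearity is immediate from the componentwise definitions together with the $\Acal$-bilinearity of each $\Phi_{\Omega_l}$, giving B1, and B2 is identical for ${}_\Omega \Phi$. For B3, apply $(\Phi_\Omega \otimes \id) \circ {}_\Omega\Phi$ and $(\id \otimes {}_\Omega\Phi) \circ \Phi_\Omega$ to an element of the $l$-th summand; under the identifications both reduce to $(\Phi_{\Omega_l} \otimes \id) \circ {}_{\Omega_l}\Phi = (\id \otimes {}_{\Omega_l}\Phi) \circ \Phi_{\Omega_l}$, which holds by B3 for $\Omega_l$. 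For the statement on invariants, unwinding $\Phi_\Omega(\omega_1 + \cdots + \omega_m) = 1 \otimes (\omega_1 + \cdots + \omega_m)$ through $\theta$ yields $\Phi_{\Omega_l}(\omega_l) = 1 \otimes \omega_l$ for each $l$, hence $\inv \Omega = \inv \Omega_1 \oplus \cdots \oplus \inv \Omega_m$. Finally, in the $*$-case, B4 holds summandwise because both the bimodule structure and the involution are componentwise, and B5 holds because $\theta$ matches the relevant $*$-structures, so the $*$-preservation of each $\Phi_{\Omega_l}, {}_{\Omega_l}\Phi$ transfers to $\Phi_\Omega, {}_\Omega\Phi$.

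There is no serious obstacle here: the entire difficulty is the naturality bookkeeping around the isomorphism $\theta$, and the only temptation to resist is leaving it implicit. An alternative, possibly cleaner route is to appeal to Proposition~\ref{prop:the structure theorem of bicovariant bimodule}: fix invariant bases $\{\omega^{(l)}_i\}_{i \in I_l}$ of each $\inv \Omega_l$ with structure representations $(f^{(l)}_{ij}), (t^{(l)}_{ij})$, put $I = \bigsqcup_l I_l$, and define ``block-diagonal'' families by $f_{ij} := f^{(l)}_{ij}$, $t_{ij} := t^{(l)}_{ij}$ when $i,j$ lie in the same $I_l$ and $f_{ij} := 0$, $t_{ij} := 0$ otherwise; conditions S1--S3 for these follow at once from those for the individual families (S3 being nontrivial only within a single block), and the bicovariant bimodule they generate is readily identified with $\Omega$, while the $*$-case is handled by Proposition~\ref{prop:*-structure encoded in the dual space}. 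I would present the coordinatewise argument as the main proof and mention this second approach in passing.
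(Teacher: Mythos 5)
Your proof is correct and follows essentially the same route as the paper's: a componentwise verification of B1--B3 (and B4--B5 in the $*$-case), with B3 checked by evaluating both sides on elements supported in a single summand. The paper's own proof is just a terser version of your main argument, so the extra care you take with the naturality of the identification $\bigoplus_l(\Acal\otimes\Omega_l)\cong\Acal\otimes\Omega$ and the explicit derivation of $\inv\Omega=\inv\Omega_1\oplus\cdots\oplus\inv\Omega_m$ is welcome but not a different method.
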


\begin{proof}
    That $(\Omega, \Phi_\Omega, {}_\Omega \Phi)$ satisfies conditions B1--B2 of Definition~\ref{defn:bicovariant bimodule} is a straightforward verification. Condition B3 follows by evaluating both sides on elements of the form $( 0 , \ldots, 0, \omega_l , 0, \ldots , 0)$ with $\omega_l \in \Omega_l$.

    If $\Omega_1, \ldots , \Omega_m$ are bicovariant $*$-bimodules, then $\Omega$ satisfies conditions B4--B5 of Definition~\ref{defn:bicovariant *-bimodule} by direct inspection from the definitions of the structure maps.
\end{proof}

\begin{prop}\label{prop:direct sum of FODCs}
    Let $(\Omega_1 , d_1), \ldots , (\Omega_m , d_m)$ be bicovariant ($*$-)FODCs with corresponding quantum germs maps $Q_1, \ldots , Q_m$, respectively. Suppose that the map
    \[
    Q: \Acal \ni a \longmapsto \big( Q_1 (a) , \ldots , Q_m (a) \big) \in \inv \Omega_1 \oplus \cdots \oplus \inv \Omega_m
    \]
    is surjective. Let $\Omega = \Omega_1 \oplus \cdots \oplus \Omega_m$ be the direct sum of the bicovariant ($*$-)bimodules. Then, with the differential
    \begin{equation}\label{eq:direct sum of differentials}
        d: \Acal \ni a \longmapsto (d_1 a, \ldots , d_m a) \in \Omega,
    \end{equation}
    the pair $(\Omega,d)$ becomes a bicovariant ($*$-)FODC over $\Acal$, called the \textbf{direct sum of $(\Omega_1, d_1), \ldots, (\Omega_m, d_m)$}, and denoted
    \[
    (\Omega, d) = (\Omega_1, d_1) \oplus \cdots \oplus (\Omega_m , d_m).
    \]
    Its quantum germs map is given by $Q$, and the corresponding right ideal is
    \begin{equation}\label{eq:direct sum right ideal}
        R = \bigcap_{1\leq l \leq m} R_l,
    \end{equation}
    where $R_1, \ldots , R_m$ are the right ideals corresponding to $(\Omega_1 , d_1), \ldots , (\Omega_m, d_m)$, respectively.
    
    When $(\Omega_1, d_1), \ldots , (\Omega_m, d_m)$ are finite-dimensional and $\Xcal_1, \ldots , \Xcal_m$ are the corresponding spaces of left-invariant vector fields, then
    \begin{equation}\label{eq:direct sum LIVF}
        \Xcal_R = \Xcal_1 \oplus \cdots \oplus \Xcal_m.
    \end{equation}
\end{prop}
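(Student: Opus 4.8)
The plan is to observe first that, by Proposition~\ref{prop:direct sum of structure maps}, the underlying object $\Omega = \Omega_1 \oplus \cdots \oplus \Omega_m$ is already a bicovariant ($*$-)bimodule over $\Acal$ with $\inv\Omega = \inv\Omega_1 \oplus \cdots \oplus \inv\Omega_m$. Hence the whole statement reduces to three tasks: (i) verify that the map $d$ of \eqref{eq:direct sum of differentials} turns $\Omega$ into an FODC, i.e.\ that conditions F1--F4 of Definition~\ref{defn:FODC} hold; (ii) identify its quantum germs map with $Q$ and deduce \eqref{eq:direct sum right ideal} from \eqref{eq:right ideal corresponding to BCFODC}; (iii) in the finite-dimensional case, deduce \eqref{eq:direct sum LIVF} from Proposition~\ref{prop:LIVF basis}.

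For (i), conditions F1 (Leibniz), F3 (bicovariance of $d$), and --- in the $*$-case --- F4 are immediate, since the bimodule, coaction and $*$-structures on $\Omega$ are all componentwise and each $(\Omega_l, d_l)$ satisfies the respective condition; I would dispatch these in a line each. The substance is F2 (standard form), and this is exactly the point at which the surjectivity hypothesis on $Q$ is used. Treating $a \mapsto S(a_{(1)})\,d a_{(2)}$ for the moment as a bare linear map $\Acal \to \inv\Omega$ (to avoid invoking Proposition~\ref{prop:quantum germs map} before F1--F4 are known), the componentwise definition of $d$ and of the bimodule structure gives
\[
S(a_{(1)})\, d a_{(2)} = \big( S(a_{(1)})\, d_1 a_{(2)},\, \ldots,\, S(a_{(1)})\, d_m a_{(2)} \big) = \big( Q_1(a), \ldots, Q_m(a) \big) = Q(a),
\]
which is surjective onto $\inv\Omega$ by hypothesis. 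Then, fixing a $\Cbb$-basis $\{\omega_i\}$ of $\inv\Omega$ and using that $\Omega$ is free on it as a left $\Acal$-module (Proposition~\ref{prop:the structure representations of bicovariant bimodule}), I would write an arbitrary $\omega \in \Omega$ as $\omega = \sum_i a_i \omega_i$, choose $b_i$ with $S((b_i)_{(1)})\,d(b_i)_{(2)} = Q(b_i) = \omega_i$, and rewrite $a_i \omega_i = a_i S((b_i)_{(1)})\, d(b_i)_{(2)}$, which is a finite sum of terms $c\,(de)$ with $c, e \in \Acal$. This establishes F2.

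Once $(\Omega, d)$ is a bicovariant ($*$-)FODC, Proposition~\ref{prop:quantum germs map} identifies its quantum germs map as $a \mapsto S(a_{(1)})\,d a_{(2)}$, which by the display above is $Q$; then \eqref{eq:right ideal corresponding to BCFODC} gives the corresponding right ideal as $\Ker\epsilon \cap \Ker Q = \bigcap_l(\Ker\epsilon \cap \Ker Q_l) = \bigcap_l R_l$, which is \eqref{eq:direct sum right ideal}. For \eqref{eq:direct sum LIVF}, assuming each $(\Omega_l, d_l)$ finite-dimensional, I would choose a $\Cbb$-basis $\{\omega^{(l)}_i\}_i$ of each $\inv\Omega_l$ and regard their union as the basis of $\inv\Omega = \bigoplus_l \inv\Omega_l$; comparing the defining relation $Q_l(a) = \sum_i (X^{(l)}_i, a)\,\omega^{(l)}_i$ of Proposition~\ref{prop:LIVF basis} for each summand with $Q(a) = (Q_1(a), \ldots, Q_m(a))$ shows that the LIVFs of $(\Omega,d)$ produced by Proposition~\ref{prop:LIVF basis} from this basis are exactly the $\{X^{(l)}_i\}_{l,i}$. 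These are linearly independent and, since $\inv\Omega$ is finite-dimensional, form a basis of $\Xcal_R$; as $\{X^{(l)}_i\}_i$ is in turn a basis of $\Xcal_l$, this yields $\Xcal_R = \Xcal_1 \oplus \cdots \oplus \Xcal_m$, the sum being direct precisely by the linear independence of the full family.

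I expect the only real obstacle to be the standard-form condition F2: one must notice that surjectivity of $Q$ is a genuine extra hypothesis (it fails, e.g., as soon as two summands coincide, where the image of $Q$ is a proper ``diagonal'' subspace) and that it is precisely what is needed for the germs to span $\inv\Omega$, and one must order the argument so that Proposition~\ref{prop:quantum germs map} is not invoked before F1--F4 are in place. All remaining steps are bookkeeping with componentwise structure maps together with direct appeals to Propositions~\ref{prop:direct sum of structure maps}, \ref{prop:quantum germs map}, and \ref{prop:LIVF basis} and to \eqref{eq:right ideal corresponding to BCFODC}.
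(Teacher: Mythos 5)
Your proposal is correct and follows essentially the same route as the paper: compute $S(a_{(1)})\,d a_{(2)} = Q(a)$ componentwise, use the surjectivity hypothesis together with the freeness of $\Omega$ over $\inv\Omega$ to get the standard-form condition, verify bicovariance (and the $*$-condition) componentwise, read off the right ideal from $\Ker\epsilon \cap \Ker Q = \bigcap_l (\Ker\epsilon\cap\Ker Q_l)$, and obtain \eqref{eq:direct sum LIVF} from Proposition~\ref{prop:LIVF basis} applied to the union of bases. Your extra care in not invoking Proposition~\ref{prop:quantum germs map} before F1--F4 are established matches what the paper implicitly does by computing the germs map directly.
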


\begin{proof}
    First, we show that $(\Omega,d)$ defines an FODC. The Leibniz rule is immediate. To verify the standard form property, fix $a \in \Acal$ and write $\Delta(a) = \sum_{i} e_i \otimes f_i$. Then,
    \begin{align}\label{eq:quantum germs map for direct sum}
        S(a_{(1)})da_{(2)} &= \sum_i S(e_i) d f_i = \sum_i S(e_i) (d_1 f_i , \ldots , d_m f_i ) \nonumber \\
        &= \sum_i \big( S(e_i) d_1 f_i , \ldots , S(e_i) d_m f_i \big) \nonumber \\
        &= \big( \sum_i S(e_i) d_1 f_i , \ldots , \sum_i S(e_i) d_m f_i \big) = \big( Q_1 (a) , \ldots , Q_m (a) \big) = Q(a).
    \end{align}
    Since $Q$ is surjective, we conclude
    \[
    \Span_\Cbb \{ a \, db \mid a,b \in \Acal \} \supseteq \inv \Omega.
    \]
    The left-hand side is a left $\Acal$-submodule of $\Omega$, and the right-hand side is a left $\Acal$-basis of $\Omega$, so every element in $\Omega$ can be written in standard form. Hence $(\Omega,d)$ is indeed an FODC.

    For bicovariance, let $a \in \Acal$. Then,
    \begin{align*}
        \Phi_\Omega (da) 
        &= \Phi_\Omega \sum_{1 \leq l \leq m} ( 0, \ldots , 0, \rlap{$\overbrace{\phantom{ d_l a }}^{l\text{-th}} $}d_l a , 0, \ldots , 0 ) \\
        &\cong \sum_{1 \leq l \leq m} (0, \ldots, 0 , \Phi_{\Omega_l} (d_l a), 0 , \ldots , 0 ) \\
        &\cong \sum_{1 \leq l \leq m} a_{(1)} \otimes (0, \ldots , 0 , d_l a_{(2)}, 0, \ldots , 0) \\
        &= a_{(1)} \otimes (d_1 a_{(2)} , \ldots , d_m a_{(2)}) = (\id \otimes d) \Delta (a),
    \end{align*}
    verifying the first identity in F3 of Definition~\ref{defn:FODC}; the second follows similarly. Thus, $(\Omega, d)$ is bicovariant, and the quantum germs map is given by \eqref{eq:quantum germs map for direct sum}. If each $(\Omega_l, d_l)$ is a $*$-FODC, then each $d_l$ is $*$-preserving, so $d$ is also $*$-preserving.

    Equation~\eqref{eq:direct sum right ideal} follows from \eqref{eq:right ideal corresponding to BCFODC} and the identity
    \[
    \Ker \epsilon \cap \Ker Q = \Ker \epsilon \cap \bigcap_{1 \leq l \leq m} \Ker Q_l = \bigcap_{1 \leq l \leq m} \left( \Ker \epsilon \cap \Ker Q_l \right).
    \]

    Finally, if $(\Omega_1, d_1), \ldots, (\Omega_m, d_m)$ are finite-dimensional, then expanding $Q(a) = (Q_1(a), \ldots, Q_m(a))$ as in \eqref{eq:LIVF and quantum germs map}, it follows from Proposition~\ref{prop:LIVF basis} that the union of bases of $\Xcal_1, \ldots, \Xcal_m$ gives a basis of $\Xcal_R$, proving \eqref{eq:direct sum LIVF}.
\end{proof}

\section{Compact Lie groups}\label{sec:Compact Lie groups}

In this section, we apply the general framework developed in Sections~\ref{sec:Compact quantum groups (CQGs)}--\ref{sec:FODC} to a classical setting, namely that of a compact Lie group. While the results presented here are classical and well-known, we revisit them in detail to carefully illustrate how Laplacians on a compact Lie group naturally give rise to its classical FODC. This perspective serves to motivate the construction introduced in Section~\ref{sec:Main construction}.

Throughout this section, let $K$ be a compact Lie group, and let $\int_{K} dx$ denote the integral with respect to the normalized Haar measure on $K$. We write $\kf$ for its Lie algebra, $\gf = \Cbb \otimes_\Rbb \kf$ for its complexification, and $\exp: \kf \rightarrow K$ for the exponential map.

\subsection{Adjoint representations}\label{subsec:Adjoint representations}

In this paper, a representation $\pi : K \rightarrow \Lbb(V)$ is called a \textit{unitary representation of $K$} if $V$ is a finite-dimensional Hilbert space, $\pi$ is continuous, and each $\pi(x)$ is unitary. It is called \textit{irreducible} if $V$ contains no proper subspace invariant under all elements of $\pi(K)$. The \textit{induced Lie algebra representation} $\pi : \kf \rightarrow \Lbb(V)$ is defined by
\begin{equation*}
\pi(X) = \left. \frac{d}{dt} \right|_{t=0} \pi(\exp(tX)), \quad X \in \kf.
\end{equation*}
These operators act on $V$ as \textit{skew-adjoint operators}, i.e., for $X \in \kf$ and $v,w \in V$,
\begin{equation*}
\la \pi(X) v , w \ra = - \la v , \pi(X) w \ra.
\end{equation*}
We extend the induced Lie algebra representation complex linearly to $\pi : \gf \rightarrow \End(V)$.

Let $x \in K$ and denote by $c_x : K \rightarrow K$ the conjugation map $y \mapsto x y x^{-1}$. Then, the representation $\Ad: K \rightarrow \End(\kf)$ defined for $x \in K$ by
\[
\Ad(x) X = \left. \frac{d}{dt} \right|_{t=0} c_x (\exp(tX)), \quad X \in \kf,
\]
is called \textit{the adjoint representation of $K$}. Extending each $\Ad(x)$ complex linearly yields a map $\Ad: K \rightarrow \End(\gf)$.

Throught this section, we fix an inner product $\la \cdot , \cdot \ra$ on $\kf$ that is \textit{$\Ad$-invariant}, i.e., for any $x \in K$ and $X, Y \in \kf$,
\[
\la \Ad(x) X , \Ad(x) Y \ra = \la X , Y \ra.
\]
Choose an orthonormal basis $\{X_1, \cdots, X_d \} \subseteq \kf$ with respect to this inner product, and let $\{ \eps^1 , \cdots , \eps^d \} \subseteq \kf^*$ be the corresponding dual basis. We also extend $\la \cdot , \cdot \ra$ to $\gf$ by requiring it to be conjugate linear in the first argument and complex linear in the second. With respect to this, $\Ad : K \rightarrow \Lbb(\gf)$ becomes a unitary representation. Its induced Lie algebra representation is the map $\ad : \gf \rightarrow \End(\gf)$, called \textit{the adjoint representation of $\kf$}, and defined by
\[
\ad X (Y) = [X, Y], \quad X, Y \in \gf.
\]
For $X \in \kf$, we have
\begin{equation}\label{eq:exp ad is Ad}
e^{\ad X} = \Ad(\exp X) \in \End(\gf),
\end{equation}
where $e^{(\,\cdot\,)}$ denotes the matrix exponential. Moreover,
\begin{equation}\label{eq:ad invariance of inner product}
\la \ad X (Y), Z \ra = - \la Y, \ad X (Z) \ra, \quad Y, Z \in \gf.
\end{equation}
If $K$ is connected, this last identity is equivalent to $\Ad$-invariance of the inner product $\la \cdot , \cdot \ra$.

\subsection{Classical FODC on \texorpdfstring{$K$}{TEXT}}\label{subsec:Classical FODC on K}

\begin{defn}\label{defn:matrix coefficients of K}
Functions of the form
    \[
    K \ni x \longmapsto \la v , \pi(x) w \ra \in \Cbb,
    \]
    where $\pi: K \rightarrow \Lbb(V)$ is a finite-dimensional unitary representation of $K$ and $v, w \in V$, are called \textbf{matrix coefficients of $K$}. The set of matrix coefficients of $K$ will be denoted by $\Cf^\infty(K)$.
\end{defn}

The set $\Cf^\infty(K)$ should be distinguished from $C^\infty(K)$, the algebra of smooth functions on $K$, which properly contains $\Cf^\infty (K)$, see \cite[Problem~20-11]{JohnLee}. For each point $x \in K$, we denote the evaluation homomorphism at the point $x$ by $\ev_x$, i.e.,
\[
\ev_x (f) = f(x) \quad f \in \Cf^\infty(K).
\]

\begin{prop}\label{prop:matrix coefficients of K}
    The set $\Cf^\infty(K)$ equipped with the pointwise operations is a $*$-algebra, which becomes a CQG with the following:
    \begin{enumerate}[label=K\arabic*., series=K]
\item (Comultiplication) For $f \in \Cf^\infty(K)$ and $x,y \in K$,
    \[
    (\ev_x \otimes \ev_y) \Delta(f) = f(xy)
    \]
\item (Counit) $\epsilon = \ev_e$ where $e$ is the identity of $K$
\item (Antipode) For $f \in \Cf^\infty(K)$ and $x \in K$,
\[
S(f)(x) = f(x^{-1} )
\]
\item (Haar state) For $f \in \Cf^\infty(K)$,
\[
\hcal(f) = \int_K f(x) dx.
\]
\end{enumerate}
\end{prop}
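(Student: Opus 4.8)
The plan is to reduce every assertion to a property of the group $K$ or of its normalized Haar measure, proceeding in three stages: first that $\Cf^\infty(K)$ is a unital commutative $*$-algebra, then that the maps in K1--K3 endow it with the structure of a Hopf $*$-algebra with invertible antipode, and finally that the functional $\hcal$ of K4 satisfies the CQG axioms of Definition~\ref{defn:CQG}. For the first stage, fix for each finite-dimensional unitary representation $\pi : K \to \Lbb(V)$ an orthonormal basis of $V$ and put $\pi_{ij}(x) = \langle e_i, \pi(x) e_j \rangle$. Since a general matrix coefficient satisfies $\langle v, \pi(x) w \rangle = \sum_{i,j} \overline{v_i}\, w_j\, \pi_{ij}(x)$, the space $\Cf^\infty(K)$ is the linear span of all the $\pi_{ij}$. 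Closure under pointwise multiplication then follows from $\pi_{ij}\,\rho_{kl} = (\pi \otimes \rho)_{(i,k),(j,l)}$ (a matrix element of the again finite-dimensional unitary representation $\pi \otimes \rho$), closure under complex conjugation from $\overline{\pi_{ij}} = (\pi^c)_{ij}$, where $\pi^c(x) := \overline{\pi(x)}$ is the conjugate unitary representation, and $1$ is the unique matrix element of the trivial representation. Hence $\Cf^\infty(K)$ is a commutative unital $*$-algebra under the pointwise operations.

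For the second stage I would evaluate the structure maps of K1--K3 on the generators. From $\pi_{ij}(xy) = \sum_k \pi_{ik}(x)\pi_{kj}(y)$ one reads off $\Delta(\pi_{ij}) = \sum_k \pi_{ik} \otimes \pi_{kj}$, so $\Delta$ extends to a well-defined unital $*$-algebra homomorphism $\Cf^\infty(K) \to \Cf^\infty(K) \otimes \Cf^\infty(K)$ (here one uses linear independence of matrix coefficients to regard $\Cf^\infty(K) \otimes \Cf^\infty(K)$ as a space of functions on $K \times K$). Coassociativity is the identity $(xy)z = x(yz)$; the counit axiom is $xe = x = ex$ together with $\pi_{ij}(e) = \delta_{ij}$, which simultaneously shows $\epsilon = \ev_e$ is a $*$-homomorphism; and since $S(\pi_{ij}) = \overline{\pi_{ji}} = (\pi^c)_{ji} \in \Cf^\infty(K)$, the antipode is well-defined, with the identity $m(S \otimes \id)\Delta = \epsilon(\cdot)\,1 = m(\id \otimes S)\Delta$ amounting to $x^{-1}x = e = xx^{-1}$. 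Finally $(x^{-1})^{-1} = x$ gives $S^2 = \id$ (so $S$ is invertible), and, $S$ and $*$ both being pullbacks along continuous self-maps of $K$, they commute; this is precisely what makes $\Cf^\infty(K)$ a Hopf $*$-algebra.

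For the third stage, $\hcal(f) = \int_K f\,dx$ is nonzero because $\hcal(1) = 1$; positivity (A1) is immediate from $\hcal(f^*f) = \int_K |f(x)|^2\,dx \geq 0$; and the invariance axiom (A2) follows from the bi-invariance (unimodularity) of the normalized Haar measure: the substitution $u = xy$ gives $(\hcal \otimes \id)\Delta(f)(y) = \int_K f(xy)\,dx = \int_K f(u)\,du = \hcal(f)$, so $(\hcal \otimes \id)\Delta(f) = \hcal(f)\,1$, and symmetrically $(\id \otimes \hcal)\Delta(f) = \hcal(f)\,1$. By Definition~\ref{defn:CQG}, $\Cf^\infty(K)$ is therefore a CQG, and the structure maps just produced coincide with those displayed in K1--K4.

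I do not expect a genuine obstacle: the only step carrying real content is the bookkeeping in the first two stages, namely recognizing products, complex conjugates, and translates $f(xy)$ of matrix coefficients once more as matrix coefficients of the tensor product, conjugate, and original representations. Granting that, every remaining axiom is a direct transcription of a group axiom (associativity, neutral element, inverses) or of the bi-invariance of Haar measure, and no phenomenon particular to the noncommutative or infinite-dimensional setting intervenes, since $\Cf^\infty(K)$ consists exclusively of representative functions.
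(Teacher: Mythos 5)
Your proof is correct and is exactly the standard argument: the paper itself does not spell it out but simply cites \cite[Examples~1.2.5, 1.3.26, and 2.2.3]{Timmermann}, where the same three-stage verification (closure of representative functions under products, conjugates and the unit; Hopf $*$-structure from the group operations; CQG axioms from bi-invariance of Haar measure) is carried out. No discrepancy to report.
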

\begin{proof}
    See \cite[Examples~1.2.5, 1.3.26, and 2.2.3]{Timmermann}.
\end{proof}

Let $\pi : K \rightarrow \Lbb(V)$ be a unitary representation and $\{ e_1 , \cdots, e_n \}$ be an orthonormal basis of $V$. Then,
\[
\big( \la e_i , \pi ( \,\cdot\, ) e_j \ra \big)_{1 \leq i,j \leq n} \in M_n ( \Cf^\infty (K) )
\]
is a unitary corepresentation of $\Cf^\infty(K)$. According to \cite[Example~3.1.5]{Timmermann}, this sets up a one-to-one correspondence between the unitary representations of $K$ and the corepresentations of $\Cf^\infty(K)$. So, $\Irr(K)$, the set of all equivalence classes of irreducible corepresentations of $\Cf^\infty(K)$, can be identified with the set of all equivalence classes of irreducible unitary representations of $K$. For each $\mu \in \Irr(K)$, let $\pi_\mu : K \rightarrow \Lbb(V(\mu))$ be an irreducible unitary representation corresponding to $\mu$. Let $n_\mu = \dim V(\mu)$ and fix an orthonormal basis $\{ e_1 ^\mu , \cdots , e_{n_\mu} ^\mu \}$ for $V(\mu)$. For $ 1\leq i,j \leq n_\mu$, let
\begin{equation*}
u^\mu _{ij} = \la e_i ^\mu , \pi_\mu (\,\cdot\,) e_j ^\mu \ra \in M_{n_\mu} (\Cf^\infty (K)).
\end{equation*}
Then, $\{ u^\mu _{ij} \mid \mu \in \Irr(K), \, 1 \leq i,j \leq n_\mu \}$ is a Peter-Weyl basis of $\Cf^\infty(K)$ and the correspondence
\begin{equation}\label{eq:classical matrix coefficients identification}
\Cf^\infty(K) \ni u^\mu _{ij} \longmapsto \la e_i ^\mu , (\,\cdot\,) e_j ^\mu \ra \in \bigoplus_{\mu \in \Irr(K)} \Lbb(V(\mu))^*
\end{equation}
sets up an isomorphism between the two spaces.

For $X \in \kf$ and $f \in C^\infty(K)$, define a smooth function $Xf: K \rightarrow \Cbb$ by
\begin{equation}\label{eq:Lie algebra element differential}
Xf(x) = \left. \frac{d}{dt} \right|_{t=0} f( x \exp(tX) ) , \quad x \in K.
\end{equation}
It satisfies Leibniz's rule, i.e., $X(fg) = (Xf) g + f (Xg)$.

Recall that, since $K$ is a Lie group, the space of $1$-forms on $K$ can be identified with the space $C^\infty(K) \otimes_\Rbb \kf^* \cong C^\infty(K) \otimes \gf^*$, see \cite{JohnLee}. Denote temporarily the exterior derivative on $0$-forms of the manifold $K$ by $D: C^\infty(K) \rightarrow C^\infty(K) \otimes \gf^*$, which is defined by
\begin{equation}\label{eq:classical differential}
Df = X_1 f \otimes \varepsilon^1 + \cdots + X_d f \otimes \varepsilon^d.
\end{equation}
Write $Df_x = (\ev_x \otimes \id) Df $ for $x \in K$. Then, for $f \in \Cf^\infty(K)$ and $X \in \kf$
\[
Df_x (X) = Xf (x), \quad x \in K.
\]

\begin{prop}\label{prop:classical FODC}
    Let $R_K = \{ f \in \Ker \epsilon \mid Df_e = 0 \} \subseteq \Ker \epsilon$. Then, $R_K$ is an $\ad$-invariant ideal of $\Cf^\infty(K)$ satisfying $*S(R_K) \subseteq R_K$.
    
    Thus, the construction of Proposition~\ref{prop:model BCFODC} applies to the space
    \[
    \Omega_K = \Cf^\infty(K) \otimes \Ker \epsilon/R_K
    \]
    to yield a bicovariant $*$-FODC structure on it, which, under the identification $ \Omega_K = \Cf^\infty(K) \otimes (\Ker \epsilon/R_K) \cong \Cf^\infty(K) \otimes \gf^*$ via the isomorphism
    \begin{equation}\label{eq:cotangent space identification}
    \Ker \epsilon /R_K \ni \pi_{R_K}(f) \longmapsto Df_e \in \gf^*,
    \end{equation}
    has the following as its structure maps: 
    \begin{enumerate}[label=D\arabic*., series=D]
        \item (Differential) $d: \Cf^\infty(K) \rightarrow \Omega_K \cong \Cf^\infty(K) \otimes \gf^*$ is equal to $D|_{\Cf^\infty(K)}$
        \item ($\Cf^\infty(K)$-module actions) For $f ,g \in \Cf^\infty(K)$ and $\omega \in \gf^*$,
        \[
        f( g \otimes \omega) = fg \otimes \omega = (g \otimes \omega)f
        \]
        \item ($\Cf^\infty(K)$-coactions) For $f \in \Cf^\infty(K)$, $\omega \in \gf^*$, and $x \in K$,
        \begin{gather*}
        (\ev_x \otimes \id_{\Omega_K}) \Phi_{\Omega_K} (f \otimes \omega) = f\big( x (\,\cdot\,) \big) \otimes \omega \\
        (\id_{\Omega_K} \otimes \ev_x) {}_{\Omega_K} \Phi (f \otimes \omega) = f\big( (\,\cdot\,) x \big) \otimes \omega \circ \Ad(x)^{-1}
        \end{gather*}
        \item (Involution) $\Omega_K \cong \Cf^\infty (K) \otimes_\Rbb \kf^* \xrightarrow{\overline{(\cdot)} \otimes \id} \Cf^\infty (K) \otimes_\Rbb \kf^* \cong \Omega_K$.
    \end{enumerate}
    Therefore, we call $(\Omega_K, d)$ \textbf{the classical FODC on $K$}.
\end{prop}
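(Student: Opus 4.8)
The plan is to verify, in order: (i) that $R_K$ is an $\ad$-invariant (two-sided, hence right) ideal contained in $\Ker\epsilon$ satisfying $*S(R_K)\subseteq R_K$; (ii) that $f\mapsto Df_e$ induces a linear isomorphism $\theta\colon\Ker\epsilon/R_K\xrightarrow{\ \sim\ }\gf^*$, $\pi_{R_K}(f)\mapsto Df_e$; and (iii) that transporting the structure maps M1--M6 of Proposition~\ref{prop:model BCFODC} along $\id\otimes\theta$ yields precisely D1--D4. Once (i) holds, Proposition~\ref{prop:model BCFODC} already produces the bicovariant $*$-FODC on $\Omega_K$, so (iii) is merely a matter of rewriting its structure maps.

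For (i), the inclusion $R_K\subseteq\Ker\epsilon$ is part of the definition. The ideal property follows from the Leibniz identity $X(fg)(e)=Xf(e)\,g(e)+f(e)\,Xg(e)$, which gives $D(fg)_e=Df_e\,\epsilon(g)+\epsilon(f)\,Dg_e$; if $f\in R_K$, both summands vanish and $\epsilon(fg)=0$, so $fg\in R_K$, and commutativity of $\Cf^\infty(K)$ handles the other side. For $*S$-invariance, the formulas $S(f)(x)=f(x^{-1})$ and $f^*(x)=\overline{f(x)}$ give $\epsilon\big((Sf)^*\big)=\overline{f(e)}$ and $X\big((Sf)^*\big)(e)=\overline{-Xf(e)}$, so $f\in R_K$ forces $(Sf)^*\in R_K$. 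The substantive point is $\ad$-invariance. I would first record the identity $(\id\otimes\ev_x)\ad(f)=\big[\,y\mapsto f(x^{-1}yx)\,\big]$, a direct Sweedler computation from \eqref{eq:right adjoint coaction} and K1--K3. For $f\in R_K$, the conjugate $f\circ c_{x^{-1}}$ lies in $\Ker\epsilon$ and, using $x^{-1}\exp(tX)x=\exp\big(t\,\Ad(x^{-1})X\big)$, satisfies $X(f\circ c_{x^{-1}})(e)=Df_e\big(\Ad(x^{-1})X\big)=0$, hence $f\circ c_{x^{-1}}\in R_K$. Writing $R_K=\Ker\Phi$ for the linear map $\Phi\colon\Cf^\infty(K)\to\Cbb\oplus\gf^*$, $f\mapsto(\epsilon(f),Df_e)$, one then gets $(\Phi\otimes\id)\ad(f)=0$: pairing with any $\psi\otimes\ev_x$ yields $\psi\big(\Phi(f\circ c_{x^{-1}})\big)=0$, and such functionals separate $(\Cbb\oplus\gf^*)\otimes\Cf^\infty(K)$, so $\ad(f)\in R_K\otimes\Cf^\infty(K)$.

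For (ii), the map $\Ker\epsilon\ni f\mapsto Df_e\in\gf^*$ is linear with kernel $R_K$ by definition, so only surjectivity needs proof. Fix a faithful unitary representation $\pi$ of $K$ (available by Peter--Weyl); its matrix coefficients $u_{ij}$ satisfy $D(u_{ij})_e(X)=\la e_i,\pi(X)e_j\ra$ for $X\in\kf$, where $\pi$ also denotes the induced Lie algebra representation. Injectivity of $\pi\colon\kf\to\End(V)$ forces the functionals $X\mapsto\la e_i,\pi(X)e_j\ra$ to span $\gf^*$ over $\Cbb$; replacing each $u_{ij}$ by $u_{ij}-\epsilon(u_{ij})1$ keeps us in $\Ker\epsilon$ without altering $Df_e$, giving surjectivity and hence the isomorphism $\theta$.

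For (iii), one transports M1--M6 along $\id\otimes\theta\colon\Cf^\infty(K)\otimes(\Ker\epsilon/R_K)\to\Cf^\infty(K)\otimes\gf^*=\Omega_K$ and reads off D1--D4. For D1, $(\id\otimes\theta)d_{R_K}(f)=f_{(1)}\otimes D(f_{(2)})_e$, and the pointwise check $f_{(1)}(x)\,D(f_{(2)})_e(X)=\left.\tfrac{d}{dt}\right|_{t=0}f(x\exp(tX))=Xf(x)$ identifies this with $Df$; in particular this shows $D$ maps $\Cf^\infty(K)$ into $\Cf^\infty(K)\otimes\gf^*$. D2 follows since the transported right action of M1 sends $(g\otimes Dc_e)f$ to $gf_{(1)}\otimes D(cf_{(2)})_e$, and $D(cf_{(2)})_e=\epsilon(f_{(2)})\,Dc_e$ for $c\in\Ker\epsilon$ makes this $gf\otimes Dc_e$, which commutativity matches with the left action. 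For D3, the left coaction $\Delta\otimes\id$ of M3 gives $f(x(\,\cdot\,))\otimes\omega$ after evaluating the first leg; for the right coaction the key computation is on invariant elements, $(\id\otimes\ev_x)\,{}_{\inv\Omega_K}\Phi(Db_e)=\big[\,X\mapsto Db_e(\Ad(x^{-1})X)\,\big]=Db_e\circ\Ad(x)^{-1}$, again via $x^{-1}\exp(tX)x=\exp(t\Ad(x^{-1})X)$, the general formula then following from right $\Cf^\infty(K)$-linearity of ${}_{\Omega_K}\Phi$ together with $f_{(1)}(y)f_{(2)}(x)=f(yx)$. Finally D4 follows by restricting M6 to invariant elements: $\theta\pi_{R_K}(S(b)^*)=D(S(b)^*)_e=-\overline{Db_e}$ shows $*$ acts as complex conjugation on $\gf^*$, and $(f\eta)^*=\eta^*f^*$ extends this to $\Cf^\infty(K)\otimes_\Rbb\kf^*$. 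I expect the only genuinely delicate points to be the $\ad$-invariance in step (i) --- where the abstract adjoint coaction must be recognized as conjugation and a separation argument is needed to globalize --- and the bookkeeping producing the $\Ad(x)^{-1}$-twist in the right coaction of D3; everything else is routine Sweedler/calculus manipulation.
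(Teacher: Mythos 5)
Your proposal is correct and follows essentially the same route as the paper's proof: Leibniz's rule for the ideal property, the identity $(\id\otimes\ev_x)\ad(f)=f\circ c_{x^{-1}}$ combined with $\Ad$-equivariance of $Df_e$ for $\ad$-invariance, the computation $D(S(f)^*)_e=-\overline{Df_e}$ for $*S$-stability, a faithful (induced Lie algebra) representation for surjectivity of \eqref{eq:cotangent space identification}, and direct transport of the model structure maps for D1--D4. The only cosmetic differences are that you make the separation step in the $\ad$-invariance argument explicit via the map $\Phi=(\epsilon,D(\cdot)_e)$ and phrase surjectivity dually (spanning of the functionals $X\mapsto\la e_i,\pi(X)e_j\ra$ rather than surjectivity of a matrix map), neither of which changes the substance.
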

\begin{proof}
    Throughout the proof, we write $R = R_K$ and $\Omega = \Omega_K$.
    
    Leibniz's rule implies that, for $f \in R$ and $g \in \Cf^\infty(K)$,
\[
D(fg)_e = (Df_e) g(e) + f(e) Dg_e = 0 g(e) + 0 Dg_e = 0.
\]
Thus, $R$ is an ideal of $\Cf^\infty(K)$. Observe that, for all $f \in R$ and $x \in K$,
\[
(\id \otimes \ev_x) \ad(f) = f_{(2)} f_{(1)} (x^{-1}) f_{(3)} (x) = f \circ c_{x^{-1}}
\]
and hence
\[
(\ev_e \otimes \ev_x) (D \otimes \id) \ad(f) = D \big(f \circ c_{x^{-1}} \big)_e = Df_e \circ \Ad (x^{-1}) = 0,
\]
which shows that $\ad (R) \subseteq R \otimes \Cf^\infty(K)$, i.e., $R$ is $\ad$-invariant. Finally, note that, for all $ f \in \Cf^\infty(K)$ and $x \in K$,
\[
\ev_x S(f)^* = \overline{ f(x^{-1} )}.
\]
Therefore, if $f \in R$ and hence $X_j f(e) = 0$ for all $ 1 \leq j \leq d$, we have
\[
D ( S(f)^* )_e = - \overline{X_1 f (e)} \epsilon^1 - \cdots - \overline{X_d f(e)} \epsilon^d = 0,
\]
proving that $*S (R) \subseteq R$.

The map \eqref{eq:cotangent space identification} is by definition well-defined and injective. To see that it is surjective, we consider a unitary representation $\pi :K \rightarrow \Lbb(V)$ whose induced Lie algebra representation $\pi : \kf \rightarrow \Lbb(V)$ is faithful. Let $ \{ e_i \mid 1 \leq i \leq n \} $ be an orthonormal basis of $V$ and $\{ e_{ij} \mid 1 \leq i,j \leq n\}$ be the associated matrix units. Then, by definition, for each $X \in \kf$,
\[
\pi(X) = \sum_{1 \leq i,j \leq n} \left. \frac{d}{dt} \right|_{t=0} \la e_i , \pi(\exp(tX)) e_j \ra e_{ij} = \sum_{1 \leq i,j \leq n} (X u_{ij}) (e) e_{ij}
\]
where $u_{ij} = \la e_i , \pi (\cdot) e_j \ra \in \Cf^\infty(K)$ for $1 \leq i,j \leq n$. Since $\pi:\kf \rightarrow \Lbb(V)$ is faithful, the matrices $\{\pi(X_k) \mid 1 \leq k \leq d \}$ are linearly independent and hence the map
\[
M_n (\Cbb) \ni (a_{ij} )_{ 1 \leq i,j \leq n} \longmapsto \Big( \sum_{1 \leq i,j \leq n} a_{ij} (X_k u_{ij}) (e) \Big)_{ 1\leq k\leq d} \in \Cbb^d
\]
is surjective, which implies
\begin{align*}
\gf^* \subseteq \Span_{\Cbb} \Big\{ \sum_{1 \leq k \leq d} (X_k u_{ij})(e) \otimes \varepsilon^k \; \Big| \; 1\leq i,j \leq n \Big\} = \Span_{\Cbb} \{ D (u_{ij})_e \mid 1 \leq i,j \leq n \} \\
\subseteq \{ Df_e \mid f \in \Cf^\infty(K) \} = \{ Df_e \mid f \in \Ker \epsilon \},
\end{align*}
the last equality being a consequence of the identity $Df_e = D(f - f(e) )_e$.

Now, using this identification, we will check if the structure maps of Proposition~\ref{prop:model BCFODC} indeed translate into the formulae of the proposition.
    
    First, the differential. Let $f \in \Cf^\infty(K)$. Then,
        \begin{align*}
        df &\cong f_{(1)} \otimes D(f_{(2)} - \epsilon (f_{(2)}) 1 )_e = f_{(1)} \otimes (D f_{(2)}) _e \\
        &= f_{(1)} \otimes \Big( X_1 f_{(2)} (e) \varepsilon^1 + \cdots + X_d f_{(2)} (e) \varepsilon^d \Big) = Df \in \Cf^\infty(K) \otimes \gf^*
        \end{align*}
since, for any $X \in \kf$ and $x \in K$, we have
        \begin{align}\label{eq:differential as convolution operators}
        (Xf) (x) = \left. \frac{d}{dt} \right|_{t=0} f \big( x \exp (tX) \big) = \left. \frac{d}{dt}\right|_{t=0} f_{(1)} (x) f_{(2)} \big( \exp(tX) \big) \nonumber\\
        = f_{(1)} (x) (X f_{(2)})(e).
        \end{align}
Therefore, $d= D$. Hence, from now on, we will no longer use the notation $D$ to denote the exterior derivative on $0$-forms.

Next, the $\Cf^\infty(K)$-bimodule structure. Let $f, g \in \Cf^\infty(K)$ and $h \in \Ker \epsilon$, i.e., $h(e) = 0$. Then, by \eqref{eq:BCFODC-bimodule},
    \[
    f (g \otimes dh_e) = fg \otimes dh_e
    \]
and by Leibniz's rule,
    \[
    (g \otimes dh_e) f = g f_{(1)} \otimes d(h f_{(2)})_e = g f_{(1)} \otimes f_{(2)}(e) (dh_e) =  g f \otimes dh_e.
    \]

Now, let us look at the coactions. Note that $\inv \Omega = \gf^*$ under the identification \eqref{eq:cotangent space identification}. Thus,
the left coaction $\Phi_\Omega : \Omega \rightarrow \Cf^\infty(K) \otimes \Omega$ is given by, for $f \in \Cf^\infty(K)$, $\omega \in \gf^*$, and $x \in K$,
    \begin{equation*}
        (\ev_x \otimes \id_{\Omega} ) \Phi_\Omega (f \omega ) = (\ev_x \otimes \id_\Omega) \big(\Delta(f) (1 \otimes \omega)\big) = f\big(x (\,\cdot\,) \big) \otimes \omega.
    \end{equation*}
The restricted right coaction ${}_{\inv \Omega} \Phi$ is given by, for $g \in \Ker \epsilon$ and $x \in K$,
    \begin{align*}
        (\id_\Omega \otimes \ev_x) \inv &\Phi(1 \otimes dg_e) = \big( (\id_\Omega \otimes \ev_x) \big(1 \otimes (d g_{(2)})_e \otimes S(g_{(1)}) g_{(3)} \big) \\
        &= (1 \otimes g_{(1)} (x^{-1}) g_{(3)} (x) (dg_{(2)})_e ) = ( 1 \otimes d (g \circ c_{x^{-1}} )_e ) \\
        &= (1 \otimes dg_e \circ \Ad (x^{-1}) ).
    \end{align*}
By requiring $\Cf^\infty(K)$-linearity, one gets the formula for the right coaction in D3.

Finally, the involution. By \eqref{eq:LCFODC-involution}, for any $f \in \Cf^\infty(K)$ and $g \in \Ker \epsilon$, we have
\[
( f \otimes dg_e )^* = - \overline{f_{(1)}} \otimes d ( \overline{S(g)} \, \overline{ f_{(2)}} )_e = - \overline{f_{(1)}} \, \overline{f_{(2)} (e)} \otimes d ( S(\overline{g}) )_e = \overline{f} \otimes d \overline{g}_e
\]
since $X \big(S(g)\big) (e) = \left. \frac{d}{dt} g( \exp(-tX)) \right|_{t=0} = - Xg (e) $ for all $X \in \kf$. Thus, we see that the involution is given by the complex conjugation on the $\Cf^\infty(K)$-part of $\Omega = \Cf^\infty(K) \otimes_\Rbb \kf^*$.
\end{proof}

\begin{rmk}\label{rmk:classical FODC}
If we identify $\gf \cong \gf^*$ using the complex bilinear extension of the $\Ad$-invariant inner product fixed in Section~\ref{subsec:Adjoint representations}, then for all $X \in \gf \cong \gf^* = \inv \Omega_K$,
    \begin{equation}\label{eq:right coaction on Omega(K)-invariant part}
        (\id \otimes \ev_x) {}_{\inv \Omega_K} \Phi ( X) = \Ad (x) X, \quad x \in K
    \end{equation}
by the second identity in D3 of Proposition~\ref{prop:classical FODC}.

Now, suppose that the compact Lie group $K$ is embedded into $M_n (\Cbb)$ for some $n \in \Nbb$, which induces an embedding of $\kf$ into $M_n (\Cbb)$. Let $\{e_{ik} \mid 1 \leq i,k \leq n \}$ be the matrix units of $M_n (\Cbb)$ and $\{u_{ik} \mid 1 \leq i,k \leq n \} \subseteq \Cf^\infty(K)$ be the matrix coefficients of $K$ defined by, for $ 1 \leq i, k \leq n$,
\[
u_{ik} : K \ni \sum_{1 \leq j,l \leq n} X_{jl} e_{jl} \longmapsto X_{ik} \in \Cbb.
\]
Then, \eqref{eq:right coaction on Omega(K)-invariant part} becomes, for $X = \displaystyle \sum_{1 \leq j,l \leq n} X_{jl} e_{jl} \in \kf \leq M_n (\Cbb)$,
    \begin{equation}\label{eq:right coaction on Omega(K)-invariant part-matrix group}
        {}_{\inv \Omega_K} \Phi(X) = \sum_{1 \leq i,j,k,l \leq n} X_{jl} \Big( e_{ik} \otimes \big(u _{ij} S(u _{lk})\big) \Big).
    \end{equation}
\end{rmk}

\subsection{Classical Laplacians on \texorpdfstring{$K$}{TEXT}}\label{subsec:classical Laplacians on K}

Identify $\kf^* \cong \kf$ using the fixed $\Ad$-invariant inner product on $\kf$. Then, $\{\eps^1, \dots, \eps^d \}$ becomes an orthonormal basis in the inner product space $\kf^* \cong \kf$, and thus
\begin{equation}\label{eq:inner product on the invariant part-classical}
    \la df_e , dg_e \ra = \Big\la \sum_{i=1}^{d} X_i f(e) \eps^i , \sum_{j=1}^{d} X_j g(e) \eps^j \Big\ra = \sum_{j=1}^{d} X_j f(e) X_j g(e)
\end{equation}
on $\kf^*$ for all real-valued $f, g \in C^\infty(K)$.

Extend this inner product to a $\Cf^\infty(K)$-valued $\Cf^\infty(K)$-sesquilinear form on the classical FODC $\Omega_K$ by
\begin{equation}\label{eq:Cinfty(K)-valued sesquilinear form on GammaK}
    \la \cdot , \cdot \ra : \Omega_K \times \Omega_K \ni (f \omega , g \eta) \longmapsto \overline{f} g \la \omega , \eta \ra \in \Cf^\infty(K)
\end{equation}
for $f, g \in \Cf^\infty(K)$ and $\omega, \eta \in \kf^*$.

Recall that $\hcal(f) = \int_K f(x)\,dx$ for $f \in \Cf^\infty(K)$, and consider the form
\begin{equation*}
    \Omega_K \times \Omega_K \ni (\omega, \eta) \longmapsto \hcal\big( \la \omega , \eta \ra \big) \in \Cbb.
\end{equation*}
One can check that this defines an inner product on $\Omega_K$.

\begin{defn}\label{defn:classical Laplacian}
A linear operator $ \square : \Cf^\infty(K) \rightarrow \Cf^\infty(K) $ is called the \textbf{classical Laplacian on $K$ associated with the $\Ad$-invariant inner product $\la \cdot , \cdot \ra$} if, for all $f , g \in \Cf^\infty(K)$,
\begin{equation*}
    \hcal( \overline{f} \square g ) = \hcal\big( \la df , dg \ra \big).
\end{equation*}
\end{defn}

Note that if such an operator exists, then it is unique due to the faithfulness of $\hcal$. In this subsection, we will construct such an operator.

\begin{defn}\label{defn:the classical universal enveloping algebra}
The \textbf{universal enveloping algebra of $\gf$} is a unital algebra $U(\gf)$ equipped with a linear map $\iota : \gf \rightarrow U(\gf)$ satisfying the following universal property: Given any linear map $\varphi : \gf \rightarrow A$ into a unital algebra $A$ such that $\varphi([X,Y]) = \varphi(X)\varphi(Y) - \varphi(Y)\varphi(X)$ for all $X, Y \in \gf$, there exists a unique unital algebra homomorphism $\tilde{\varphi}: U(\gf) \rightarrow A$ such that $\tilde{\varphi} \circ \iota = \varphi$.
\end{defn}

The map $\iota : \gf \rightarrow U(\gf)$ is injective, allowing us to regard $\gf$ as a subspace of $U(\gf)$. The space $U(\gf)$ is linearly spanned by monomials of the form $Y_1 \cdots Y_n \in U(\gf)$ with $Y_1, \dots, Y_n \in \gf$, see \cite[Chapter~III]{Knapp}.

\begin{prop}\label{prop:the classical universal enveloping algebra}
When equipped with the following maps, $U(\gf)$ becomes a Hopf algebra:
\begin{enumerate}[label=k\arabic*., series=k]
    \item (Comultiplication) $\Delta : U(\gf) \rightarrow U(\gf) \otimes U(\gf)$ given by
    \[
    \Delta(X) = X \otimes 1 + 1 \otimes X, \quad X \in \gf;
    \]
    \item (Counit) $\epsilon: U(\gf) \rightarrow \Cbb$ given by $\epsilon (X)= 0$ for $X \in \gf$;
    \item (Antipode) $S: U(\gf) \rightarrow U(\gf)$ given by $S(X) = -X$ for $X \in \gf$.
\end{enumerate}
Moreover, when endowed with the following involution, it becomes a Hopf $*$-algebra, denoted by $U ^\Rbb (\kf)$ to emphasize its dependence on $\kf$.
\begin{enumerate}[label=k\arabic*., resume=k]
    \item (Involution) $* : U(\gf) \rightarrow U(\gf)$ given by $X^* = -X$ for $X \in \kf$.
\end{enumerate}
\end{prop}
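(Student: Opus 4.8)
The strategy is uniform: every structure map will be produced from the universal property of Definition~\ref{defn:the classical universal enveloping algebra}, and then each Hopf(-$*$-)algebra axiom will be verified only on the generating set $\gf \subseteq U(\gf)$. This suffices because the identities to be checked are equalities between unital algebra (anti-)homomorphisms of tensor powers of $U(\gf)$, and $U(\gf)$ is generated as a unital algebra by $\gf$ (it is spanned by monomials $Y_1\cdots Y_n$).

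First I would build the maps. For $\Delta$: the linear map $\gf \to U(\gf)\otimes U(\gf)$, $X \mapsto X\otimes 1 + 1\otimes X$, lands in the commutator Lie algebra of $U(\gf)\otimes U(\gf)$, since $X\otimes Y$ and $Y\otimes X$ commute and therefore $[X\otimes 1+1\otimes X,\, Y\otimes 1+1\otimes Y] = [X,Y]\otimes 1 + 1\otimes [X,Y]$; the universal property yields the unital algebra homomorphism $\Delta$. For $\epsilon$: the zero map $\gf \to \Cbb$ is trivially a Lie algebra homomorphism and extends to a unital algebra homomorphism. For $S$: the map $\gf \to U(\gf)^{\mathrm{op}}$, $X \mapsto -X$, satisfies $[-X,-Y]_{\mathrm{op}} = YX - XY = -[X,Y]$, so it is a Lie algebra homomorphism and extends to a unital algebra homomorphism $U(\gf) \to U(\gf)^{\mathrm{op}}$, i.e.\ to an algebra anti-homomorphism $S$ of $U(\gf)$ with $S(1)=1$.

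Next, the axioms. Coassociativity: $(\Delta\otimes\id)\circ\Delta$ and $(\id\otimes\Delta)\circ\Delta$ are unital algebra homomorphisms $U(\gf)\to U(\gf)^{\otimes 3}$ sending each $X\in\gf$ to $X\otimes 1\otimes 1 + 1\otimes X\otimes 1 + 1\otimes 1\otimes X$, hence coincide. The counit axioms $(\epsilon\otimes\id)\Delta = \id = (\id\otimes\epsilon)\Delta$ are checked the same way on generators. For the antipode axioms $m(S\otimes\id)\Delta(a) = \epsilon(a)1 = m(\id\otimes S)\Delta(a)$ one cannot argue directly on generators, since $m(S\otimes\id)\Delta$ need not be an algebra homomorphism; instead I would observe that $a \mapsto S(a_{(1)})a_{(2)} - \epsilon(a)1$ is linear, so its zero set is a linear subspace, that this subspace contains $1$ and every $X\in\gf$ (where $S(X)\cdot 1 + S(1)\cdot X = -X + X = 0$), and that it is closed under products because $\Delta$ is a homomorphism and $S$ an anti-homomorphism: $S((ab)_{(1)})(ab)_{(2)} = S(b_{(1)})\bigl(S(a_{(1)})a_{(2)}\bigr)b_{(2)} = \epsilon(a)\,S(b_{(1)})b_{(2)} = \epsilon(a)\epsilon(b)1 = \epsilon(ab)1$. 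Hence the subspace is all of $U(\gf)$; the other antipode identity is symmetric. This step — replacing the naive ``check on generators'' by the closure-under-products argument (equivalently, noting that $S$ is the convolution inverse of $\id$ in $\End(U(\gf))$, a property inherited from generators) — is the one point that requires genuine care; everything else is routine.

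Finally, the $*$-structure. Either by identifying $U(\gf)$ with the complexification $\Cbb\otimes_\Rbb U_\Rbb(\kf)$ of the real enveloping algebra of $\kf$ and complexifying conjugate-linearly the principal anti-automorphism of $U_\Rbb(\kf)$ (obtained from the $\Rbb$-universal property applied to $\kf \to U_\Rbb(\kf)^{\mathrm{op}}$, $X\mapsto -X$), or by applying the universal property of $U(\gf)$ to the conjugate-opposite algebra of $U(\gf)$, one obtains a conjugate-linear, involutive, unital algebra anti-automorphism $*$ of $U(\gf)$ with $X^* = -X$ for $X\in\kf$; involutivity $*^2 = \id$ follows because $*^2$ is a unital algebra homomorphism fixing $\gf$. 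To get a Hopf $*$-algebra I would verify $\Delta\circ * = (\ast\otimes\ast)\circ\Delta$ and $\epsilon\circ * = \overline{(\cdot)}\circ\epsilon$: for each identity both sides are conjugate-linear algebra anti-homomorphisms (using that $a\otimes b \mapsto a^*\otimes b^*$ is an anti-homomorphism of $U(\gf)\otimes U(\gf)$), and on $X\in\gf$ one has $\Delta(-X) = -X\otimes 1 - 1\otimes X = (\ast\otimes\ast)\Delta(X)$ and $\epsilon(-X) = 0 = \overline{\epsilon(X)}$, so equality holds throughout. The automatic relation $\ast\circ S\circ\ast\circ S = \id$ then follows from the Hopf $*$-algebra axioms.
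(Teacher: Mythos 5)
Your proposal is correct. The paper itself does not give an argument here; it simply cites \cite[Examples~I.6 and I.10]{Klimyk}, so your fully worked verification is a legitimate self-contained substitute and is, in substance, the standard textbook proof that such a reference contains. You construct each structure map from the universal property of Definition~\ref{defn:the classical universal enveloping algebra} (checking that the relevant map $\gf \to A$ respects brackets, with $A$ equal to $U(\gf)\otimes U(\gf)$, $\Cbb$, $U(\gf)^{\mathrm{op}}$, or the conjugate-opposite algebra as appropriate) and then verify each axiom on the generating set $\gf$. You correctly isolate the one genuinely delicate point: the antipode identity $m(S\otimes\id)\Delta = \epsilon(\cdot)1$ cannot be checked naively on generators because the left-hand side is not an algebra homomorphism, and your replacement argument --- that the zero set of $a\mapsto S(a_{(1)})a_{(2)}-\epsilon(a)1$ is a linear subspace containing $1$ and $\gf$ and closed under products --- is exactly right. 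One cosmetic remark: in the verification of $\Delta\circ\ast=(\ast\otimes\ast)\circ\Delta$ you write ``on $X\in\gf$ one has $\Delta(-X)=(\ast\otimes\ast)\Delta(X)$,'' but $X^*=-X$ only for $X\in\kf$ (for general $X\in\gf$ one has $X^*=-\overline{X}$); since both sides are conjugate-linear, checking on the real generating set $\kf$ suffices, so this is a slip of notation rather than a gap.
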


\begin{proof}
See \cite[Examples~I.6 and I.10]{Klimyk}.
\end{proof}

\begin{prop}\label{prop:classical skew-pairing}
The bilinear map $( \cdot , \cdot ) : \kf \times \Cf^\infty(K) \rightarrow \Cbb$ defined by
\begin{equation}\label{eq:classical-skew-pairing}
    ( X, f ) = \left. \frac{d}{dt} \right|_{t=0} f( \exp(tX)) = (X f)(e), \quad X \in \kf, \, f \in \Cf^\infty(K)
\end{equation}
extends to a nondegenerate skew-pairing $( \cdot , \cdot ) : U ^\Rbb (\kf ) \times \Cf^\infty (K) \rightarrow \Cbb$.
\end{prop}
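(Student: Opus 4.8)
The plan is to build the skew-pairing in two stages: first extend the given bilinear map on $\kf \times \Cf^\infty(K)$ to all of $U^\Rbb(\kf) \times \Cf^\infty(K)$ using the universal property of $U(\gf)$, and then verify axioms P1--P4 together with nondegeneracy. For the extension, fix $f \in \Cf^\infty(K)$ and consider the map $\gf \to \Cf^\infty(K)^*$ (or rather into $\End$ of the relevant space) that one gets from left-invariant differential operators. Concretely, I would define, for $X \in \kf$, the operator $X \triangleright = (\id \otimes (X,\cdot)) \circ \Delta$ acting on $\Cf^\infty(K)$, i.e.\ the left-invariant vector field of \eqref{eq:Lie algebra element differential}; the key fact from \eqref{eq:differential as convolution operators} is that $(Xf)(x) = f_{(1)}(x)(X, f_{(2)})$, so these operators really are the convolution operators by the functionals $(X, \cdot)$. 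Since $X \mapsto X \triangleright$ satisfies $[X,Y] \triangleright = (X \triangleright)(Y \triangleright) - (Y \triangleright)(X \triangleright)$ on $\Cf^\infty(K)$ (this is the standard commutator identity for left-invariant vector fields on a Lie group, applied to matrix coefficients), the universal property of $U(\gf)$ yields a unique unital algebra homomorphism $U(\gf) \to \End(\Cf^\infty(K))$, $Z \mapsto Z \triangleright$. Then I define $(Z, f) := \epsilon(Z \triangleright f) = (Z \triangleright f)(e)$ for $Z \in U(\gf)$ and $f \in \Cf^\infty(K)$; on $\gf$ this recovers \eqref{eq:classical-skew-pairing}, and the algebra-homomorphism property gives the first half of P1 essentially by construction.

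Next I would verify the skew-pairing axioms one at a time. For P1: the identity $(XY, f) = (X \otimes Y, \Delta(f))$ follows from $(XY)\triangleright = (X\triangleright)(Y\triangleright)$ and $\epsilon$-invariance; the identity $(X, fg) = (\hat\Delta(X), g \otimes f)$ is checked on generators $X \in \gf$, where $\hat\Delta(X) = X \otimes 1 + 1 \otimes X$ translates (via \eqref{eq:comultiplication of dual Hopf algebra}) into the Leibniz rule $X(fg)(e) = (Xf)(e)g(e) + f(e)(Xg)(e)$, and then extended to all of $U(\gf)$ multiplicatively using that both sides are algebra maps in the appropriate sense. Axiom P2 is immediate: $(X, 1) = \hat\epsilon(X)$ because $X \triangleright 1 = \hat\epsilon(X) 1$ for $X \in \gf$ (which is $0$) and both sides are multiplicative; $(1_{U(\gf)}, f) = f(e) = \epsilon(f)$ is clear. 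For P3, I would use that the antipode $\hat S$ of $U^\Rbb(\kf)$ sends $X \mapsto -X$ and that the antipode $S$ of $\Cf^\infty(K)$ is $f \mapsto f(\,\cdot\,^{-1})$; on generators, $(-X, f) = -(Xf)(e)$ and $(X, S^{-1}(f)) = (X, f(\,\cdot\,^{-1}))$ agree by differentiating $t \mapsto f(\exp(-tX))$ at $0$, and then one extends using that $\hat S$ and $S$ are anti-homomorphisms. Axiom P4 (the $*$-compatibility) is checked on $X \in \kf$: there $X^* = -X$, and $(X^*, f) = -(Xf)(e)$, while $\overline{(X, S(f)^*)} = \overline{(X \triangleright S(f)^*)(e)}$; using $\ev_x S(f)^* = \overline{f(x^{-1})}$ (displayed in the proof of Proposition~\ref{prop:classical FODC}) and differentiating, these match, and again one extends off the generators by the anti-$*$-homomorphism properties.

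Finally, nondegeneracy. One direction is easy: if $(Z, f) = 0$ for all $Z \in U^\Rbb(\kf)$, then in particular $(u^\mu_{ij} \triangleright)$-style computations---more precisely, since $(Z, f) = (Z \triangleright f)(e)$ and $\Cf^\infty(K)$ is spanned by matrix coefficients on which $U(\gf)$ acts through the full matrix algebras $\bigoplus_\mu \End(V(\mu))$ (because each $\pi_\mu : \gf \to \End(V(\mu))$ is surjective by irreducibility and the theorem of the highest weight / Jacobson density), one can separate the coordinates of $f$ and conclude $f = 0$. For the other direction, if $(Z, f) = 0$ for all $f \in \Cf^\infty(K)$, evaluate against matrix coefficients of a faithful representation $\pi : K \to \Lbb(V)$ (whose induced $\pi : \gf \to \End(V)$ is injective): the map $Z \mapsto \pi(Z) \in \End(V)$ is then recovered from the numbers $(Z, u_{ij})$, which are all zero, so $\pi(Z) = 0$; but since $\pi$ is faithful on $\gf$ it is faithful on $U(\gf)$ by the PBW theorem (a faithful Lie-algebra representation extends to a faithful $U(\gf)$-representation), giving $Z = 0$. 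I expect the main obstacle to be the careful bookkeeping in extending P1--P4 from the generating set $\gf$ to all of $U^\Rbb(\kf)$: each axiom is a bilinear (or sesquilinear) identity in which one argument ranges over $U(\gf)$, and one must argue that the set of $Z$ for which the identity holds is a subalgebra (or $*$-subalgebra) containing $\gf$, hence all of $U(\gf)$---this uses the compatibility of $\hat\Delta$, $\hat S$, $\hat\epsilon$, and $*$ on $U^\Rbb(\kf)$ with the corresponding structures on $\Cf^\infty(K)$ in a way that, while standard, needs to be spelled out cleanly.
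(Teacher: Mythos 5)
The paper itself disposes of this proposition by citing \cite[Example~1.4.7]{Timmermann}, so you are supplying details the paper omits. Your construction of the extension via the algebra homomorphism $U(\gf) \to \End(\Cf^\infty(K))$, $Z \mapsto Z \triangleright$, the verification of P1--P4 on the generating set $\kf$, and the extension of each axiom by multiplicativity/anti-multiplicativity are all sound (the cocommutativity of $\hat\Delta$ on $U(\gf)$ takes care of the flip in the convention \eqref{eq:comultiplication of dual Hopf algebra}). The first half of nondegeneracy is also essentially right, though note that separating the components $f_\mu$ for several distinct $\mu$ simultaneously requires joint surjectivity of $Z \mapsto (\pi_\mu(Z))_{\mu \in F}$ onto $\bigoplus_{\mu \in F}\End(V(\mu))$, i.e.\ Jacobson density applied to the semisimple module $\bigoplus_{\mu \in F} V(\mu)$ together with pairwise inequivalence of the $\pi_\mu$ as $\gf$-representations; surjectivity onto each factor separately is not enough.

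The genuine gap is in the second half of nondegeneracy. Your claim that ``a faithful Lie-algebra representation extends to a faithful $U(\gf)$-representation by PBW'' is false: for $\gf \neq 0$ the algebra $U(\gf)$ is infinite-dimensional, so no finite-dimensional representation of it can be injective. Concretely, for $\gf = \mathfrak{sl}_2$ and the defining $2$-dimensional representation $\pi$, the element $C - \tfrac{3}{2}\cdot 1$ (Casimir minus the scalar by which it acts) lies in $\ker \pi$ but is nonzero in $U(\gf)$ by PBW. So knowing $\pi(Z)=0$ for a single faithful $\pi$ does not give $Z=0$. What the statement actually requires is that the family of \emph{all} representations $\{\pi_\mu\}_{\mu \in \Irr(K)}$ jointly separates points of $U(\gf)$, i.e.\ $\bigcap_\mu \ker \pi_\mu = 0$; this is a real theorem (Harish-Chandra for semisimple $\gf$, combined with the torus case for general compact $K$). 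Alternatively one can argue as follows: by PBW, $U(\gf)$ is isomorphic to the algebra of left-invariant differential operators on $K$, and $(Z,f) = (Z\triangleright f)(e) = 0$ for every matrix coefficient $f$ forces the corresponding operator to vanish because matrix coefficients are dense in $C^\infty(K)$ in the $C^\infty$-topology (or because, on each isotypic block, $\pi_\mu(Z)=0$ and one then invokes the separation theorem). Either way, this direction needs an input beyond a single faithful representation, and as written your argument does not establish it.
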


\begin{proof}
See \cite[Example~1.4.7]{Timmermann}.
\end{proof}

Using this pairing, we embed the Hopf $*$-algebra $U ^\Rbb (\kf)$ into $\Cf^\infty(K)^\circ$, allowing us to regard elements of $U ^\Rbb (\kf)$ as linear functionals on $\Cf^\infty(K)$. This yields a $U ^\Rbb (\kf)$-bimodule structure on $\Cf^\infty(K)$ (see Remark~\ref{rmk:embedding via skew-pairing}).

\begin{prop}\label{prop:differential operators from linear functionals}
For $X \in \kf$ and $f \in \Cf^\infty(K)$, we have
\begin{equation*}
X \triangleright f = Xf.
\end{equation*}
Hence, for any $A = Y_1 \cdots Y_n \in U ^\Rbb (\kf)$ with $Y_1, \dots, Y_n \in \kf$, we obtain
\begin{equation}\label{eq:differential operators from linear functionals}
    A \triangleright f = Y_1 \cdots Y_n f, \quad f \in \Cf^\infty(K),
\end{equation}
where the right-hand side denotes successive applications of \eqref{eq:Lie algebra element differential}.
\end{prop}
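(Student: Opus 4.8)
The plan is to unwind the definitions and reduce everything to the already-established identity~\eqref{eq:differential as convolution operators}. Recall that, via the nondegenerate skew-pairing of Proposition~\ref{prop:classical skew-pairing} and Remark~\ref{rmk:embedding via skew-pairing}, the element $X \in \kf \subseteq U^\Rbb(\kf)$ acts on $\Cf^\infty(K)$ by the pull-back of the left multiplication~\eqref{eq:Acirc action on A}, that is,
\[
X \triangleright f = f_{(1)}\,(X, f_{(2)}), \quad f \in \Cf^\infty(K).
\]
By the definition~\eqref{eq:classical-skew-pairing} of the pairing, $(X, f_{(2)}) = (X f_{(2)})(e)$, so $X \triangleright f = f_{(1)}\,(X f_{(2)})(e)$. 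But~\eqref{eq:differential as convolution operators} asserts precisely that $(Xf)(x) = f_{(1)}(x)\,(X f_{(2)})(e)$ for all $x \in K$, i.e.\ $Xf = f_{(1)}\,(X f_{(2)})(e)$. Comparing the two expressions gives $X \triangleright f = Xf$. As a byproduct, this shows $Xf \in \Cf^\infty(K)$, since it lies in the linear span of the first legs of $\Delta(f)$.

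For the second assertion, I would use that the embedding $U^\Rbb(\kf) \hookrightarrow \Cf^\infty(K)^\circ$ of Remark~\ref{rmk:embedding via skew-pairing} is a unital algebra homomorphism, and that the action $\triangleright$ of $\Cf^\infty(K)^*$ on $\Cf^\infty(K)$ given by~\eqref{eq:Acirc action on A} is a genuine \emph{left} module action (this is the bimodule statement made right after~\eqref{eq:Acirc action on A}, and follows from coassociativity of $\Delta$ together with the definition of the product on $\Cf^\infty(K)^*$). Consequently, for $A = Y_1 \cdots Y_n$ with $Y_1, \dots, Y_n \in \kf$,
\[
A \triangleright f = (Y_1 \cdots Y_n) \triangleright f = Y_1 \triangleright \big( Y_2 \triangleright ( \cdots (Y_n \triangleright f) \cdots ) \big).
\]
Each intermediate expression again lies in $\Cf^\infty(K)$ by the first part, so applying $X \triangleright g = Xg$ repeatedly yields $A \triangleright f = Y_1(Y_2(\cdots(Y_n f)\cdots))$, which is exactly~\eqref{eq:differential operators from linear functionals}.

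There is no substantial obstacle here: the argument is a short chain of definitional reductions. The only points requiring care are (i) matching the left-multiplication convention in~\eqref{eq:Acirc action on A} with the pairing convention~\eqref{eq:classical-skew-pairing}, so that no antipode or tensor flip is introduced, and (ii) confirming that $\triangleright$ is a left—rather than right—module action, so that the factorization $A = Y_1\cdots Y_n$ translates into left-to-right composition of the operators $Y_i \triangleright$; both are immediate from the material set up in Section~\ref{subsec:Hopf *-algebras}.
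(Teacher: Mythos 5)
Your proof is correct and follows essentially the same route as the paper: the paper likewise reduces the first identity to the pairing formula~\eqref{eq:classical-skew-pairing} combined with~\eqref{eq:differential as convolution operators}, and derives the second assertion from the fact that $\triangleright$ is a left $U^\Rbb(\kf)$-module action. Your write-up merely makes explicit the definitional unwinding and the left-module verification that the paper leaves implicit.
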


\begin{proof}
By definition, $X \triangleright f = (\id \otimes X) \Delta(f)$. Then, \eqref{eq:classical-skew-pairing} and \eqref{eq:differential as convolution operators} verify the identity. The second statement follows since $\triangleright$ is a left $U^\Rbb(\kf)$-module action.
\end{proof}

\begin{defn}\label{defn:classical Casimir elements}
The element
\[
Z= -\left( X_1 ^2 + \cdots + X_d ^2 \right) \in U^\Rbb(\kf)
\]
is called the \textbf{classical Casimir element of $U^\Rbb (\kf)$ associated with the $\Ad$-invariant inner product $\la\cdot, \cdot\ra$}.
\end{defn}

The element $Z$ is independent of the orthonormal basis chosen and is central in $U^\Rbb (\kf)$, see \cite[Proposition~5.24]{Knapp}.

By \eqref{eq:differential operators from linear functionals}, we also have
\begin{equation}\label{eq:classical Casimir}
Z \triangleright f = - \left( X_1 ^2 f + \cdots + X_d ^2 f \right), \quad f \in \Cf^\infty(K).
\end{equation}

\begin{prop}\label{prop:classical Laplacian characterization}
The operator $Z \triangleright : \Cf^\infty(K) \rightarrow \Cf^\infty(K)$ is the classical Laplacian associated with the $\Ad$-invariant inner product $\la \cdot , \cdot \ra$, i.e.,
\begin{equation}\label{eq:classical Laplacian}
    \hcal\big( \overline{f} \, (Z \triangleright g) \big) = \hcal\big( \la df, dg \ra \big).
\end{equation}
\end{prop}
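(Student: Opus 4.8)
The plan is to unwind both sides of \eqref{eq:classical Laplacian} into integrals over $K$ of products of first-order derivatives, and then to bridge the two by an integration-by-parts argument resting entirely on the invariance of the Haar state $\hcal$. First I would compute the right-hand side: by D1 of Proposition~\ref{prop:classical FODC} and \eqref{eq:classical differential} we have $df = \sum_{i=1}^{d}(X_i f)\otimes\varepsilon^i$, and likewise for $dg$, so since $\{\varepsilon^1,\dots,\varepsilon^d\}$ is an orthonormal basis of $\kf^*\cong\kf$, the $\Cf^\infty(K)$-sesquilinear form \eqref{eq:Cinfty(K)-valued sesquilinear form on GammaK} gives
\[
\la df, dg\ra \;=\; \sum_{i,j=1}^{d}\overline{X_i f}\,(X_j g)\,\la\varepsilon^i,\varepsilon^j\ra \;=\; \sum_{j=1}^{d}\overline{X_j f}\,(X_j g).
\]
Applying $\hcal$ to this, and using \eqref{eq:classical Casimir} on the other side of \eqref{eq:classical Laplacian}, the claim reduces to proving, for each $1\le j\le d$,
\[
\int_K \overline{(X_j f)(x)}\,(X_j g)(x)\,dx \;=\; -\int_K \overline{f(x)}\,(X_j^2 g)(x)\,dx .
\]

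The engine for this is the single identity $\hcal(Xh)=0$ for all $X\in\kf$ and $h\in\Cf^\infty(K)$. Indeed, by Proposition~\ref{prop:differential operators from linear functionals} and \eqref{eq:Acirc action on A}, $Xh = X\triangleright h = h_{(1)}(X,h_{(2)})$, so that
\[
\hcal(Xh) \;=\; \big(X,\;(\hcal\otimes\id)\Delta(h)\big) \;=\; \big(X,\;\hcal(h)1\big) \;=\; \hcal(h)\,\epsilon(X) \;=\; 0,
\]
where I used the invariance axiom A2 for $\hcal$ and $\epsilon(X)=0$ in $U^\Rbb(\kf)$. Applying this to $h=\overline{f}\,(X_j g)$: the Leibniz rule for $X_j$ (recorded after \eqref{eq:Lie algebra element differential}), together with $X_j\overline{f}=\overline{X_j f}$ (immediate from \eqref{eq:Lie algebra element differential}, since $X_j\in\kf$ is real), gives $X_j h = \overline{X_j f}\,(X_j g) + \overline{f}\,(X_j^2 g)$; integrating and invoking $\hcal(X_j h)=0$ yields the displayed per-$j$ identity. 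Summing over $j$ and applying \eqref{eq:classical Casimir} gives $\hcal(\la df, dg\ra) = -\sum_j\hcal\big(\overline{f}\,(X_j^2 g)\big) = \hcal\big(\overline{f}\,(Z\triangleright g)\big)$, which is \eqref{eq:classical Laplacian}.

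The only step with genuine content is the identity $\hcal(Xh)=0$, i.e. the skew-adjointness of left-invariant vector fields on $L^2(K,\,dx)$, and even this is a two-line consequence of the bi-invariance of $\hcal$; the rest is bookkeeping. The one point that demands a little care is the placement of complex conjugates when expanding the right $\Cf^\infty(K)$-sesquilinear form and when commuting $X_j$ through $\overline{(\cdot)}$, but since each $X_j$ lies in the real Lie algebra $\kf$ this is harmless.
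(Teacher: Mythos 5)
Your proof is correct, and it runs on the same engine as the paper's: the vanishing $\hcal(Xh)=0$ for $X\in\kf$, combined with the expansion $\la df,dg\ra=\sum_{j}\overline{X_jf}\,(X_jg)$. The differences are organizational but worth noting. First, you derive $\hcal(Xh)=0$ algebraically from the invariance axiom A2 together with $\epsilon(X)=0$, whereas Lemma~\ref{lem:L2-adjoints of differential operators} obtains it analytically by differentiating $\int_K f(x\exp(tX))\,dx$ under the integral sign; your version is the same fact in Hopf-algebraic clothing and arguably cleaner. Second, you perform a single first-order integration by parts per generator, $\hcal\big(\overline{X_jf}\,(X_jg)\big)=-\hcal\big(\overline{f}\,(X_j^{2}g)\big)$, and sum over $j$, while the paper first establishes the pointwise second-order Leibniz identity \eqref{eq:classical Laplacian Leibniz rule} (Lemma~\ref{lem:classical Laplacian characterization}) and only then integrates, using both $\hcal\big(Z\triangleright(\overline{f}g)\big)=0$ and the symmetry $\hcal\big(\overline{(Z\triangleright f)}\,g\big)=\hcal\big(\overline{f}\,(Z\triangleright g)\big)$. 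Your route is shorter for this proposition in isolation, but the paper's detour is not wasted: identity \eqref{eq:classical Laplacian Leibniz rule}, evaluated at the counit instead of integrated, is exactly what Theorem~\ref{thm:classical Casimir element induces classical FODC} uses to recover $\la df_e,dg_e\ra$ from $Z$, and it is the classical prototype of Lemma~\ref{lem:linear functionals as Laplacians-Leibniz} in the general CQG construction. Both approaches prove the proposition; the paper's also sets up the later sections.
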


\begin{defn}\label{defn:classical heat semigroup}
    Therefore, every classical Laplacian on $K$ is given by a linear operator of the form $\phi \triangleright$ for some $\phi \in \Cf^\infty(K)^*$. Thus, any classical Laplacian $\square : \Cf^\infty(K) \rightarrow \Cf^\infty(K)$ on $K$ generates a semigroup $(e^{-t \square})_{t \geq 0}$ on $\Cf^\infty(K)$ (cf. Proposition~\ref{prop:semigroups generated by linear functionals}), called \textbf{the heat semigroup on $K$ generated by $\square$}.
\end{defn}

To prove Proposition~\ref{prop:classical Laplacian characterization}, we need two lemmas.

\begin{lem}\label{lem:classical Laplacian characterization}
    For $f, g \in \Cf^\infty(K)$, the following identity holds:
\begin{equation}\label{eq:classical Laplacian Leibniz rule}
    Z \triangleright (\overline{f} g) = \overline{ (Z \triangleright f )} g - 2 \la df, dg \ra + \overline{f} (Z \triangleright g)
\end{equation}
\end{lem}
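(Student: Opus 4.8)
The plan is to express all three ingredients in \eqref{eq:classical Laplacian Leibniz rule}---the operator $Z\triangleright$, the sesquilinear form $\langle\cdot,\cdot\rangle$, and the involution $\overline{(\cdot)}$---in terms of the first-order operators $X_j$ of \eqref{eq:Lie algebra element differential}, and then run a two-fold Leibniz computation, which is the algebraic analogue of the classical Bochner-type identity relating the Laplacian of a product to the product of Laplacians and the gradient term.

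First I would record two elementary facts. By \eqref{eq:classical Casimir}, $Z\triangleright h = -\sum_{j=1}^d X_j^2 h$ for $h\in\Cf^\infty(K)$, where each $X_j$ obeys Leibniz's rule $X_j(hk)=(X_jh)k+h(X_jk)$. Moreover, for $X\in\kf$ and $h\in\Cf^\infty(K)$ one has $\overline{Xh}=X\overline{h}$, since complex conjugation commutes with differentiation in the real parameter $t$:
\[
\overline{Xh(x)}=\overline{\left.\tfrac{d}{dt}\right|_{t=0}h(x\exp(tX))}=\left.\tfrac{d}{dt}\right|_{t=0}\overline{h}(x\exp(tX))=X\overline{h}(x);
\]
iterating gives $\overline{X_j^2 h}=X_j^2\overline{h}$, and summing over $j$ yields $\overline{Z\triangleright h}=Z\triangleright\overline{h}$. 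I would also rewrite the form: under the identification $\Omega_K\cong\Cf^\infty(K)\otimes\gf^*$, item D1 of Proposition~\ref{prop:classical FODC} gives $df=\sum_{j=1}^d (X_jf)\otimes\varepsilon^j$, and since $\{\varepsilon^1,\dots,\varepsilon^d\}$ is orthonormal, the definition \eqref{eq:Cinfty(K)-valued sesquilinear form on GammaK} of $\langle\cdot,\cdot\rangle$ yields
\[
\langle df,dg\rangle=\sum_{j=1}^d\overline{X_jf}\,(X_jg)=\sum_{j=1}^d (X_j\overline{f})(X_jg).
\]

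With these in hand the computation is immediate. Applying Leibniz's rule twice to $X_j$ gives
\[
X_j^2(\overline{f}g)=(X_j^2\overline{f})g+2(X_j\overline{f})(X_jg)+\overline{f}(X_j^2 g),
\]
and hence, summing over $j$ and multiplying by $-1$,
\[
Z\triangleright(\overline{f}g)=(Z\triangleright\overline{f})g-2\sum_{j=1}^d(X_j\overline{f})(X_jg)+\overline{f}(Z\triangleright g)=\overline{(Z\triangleright f)}g-2\langle df,dg\rangle+\overline{f}(Z\triangleright g),
\]
where the first term is rewritten via $Z\triangleright\overline{f}=\overline{Z\triangleright f}$ and the middle term via the formula for $\langle df,dg\rangle$ above. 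There is essentially no obstacle here; the only points needing care are the bookkeeping with the convention that $\langle\cdot,\cdot\rangle$ is conjugate-linear in the first slot, and the observation that complex conjugation commutes with the vector fields $X_j$ precisely because they lie in the \emph{real} form $\kf$ (this would fail for a general element of $\gf$). Everything else is routine.
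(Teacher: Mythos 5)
Your proof is correct and takes essentially the same route as the paper's: both reduce $\la df,dg\ra$ to $\sum_{j}\overline{X_jf}\,(X_jg)$ and then apply Leibniz's rule twice to $Z\triangleright(\overline{f}g)=-\sum_j X_j^2(\overline{f}g)$. Your explicit verification that $\overline{X_jh}=X_j\overline{h}$ for $X_j\in\kf$ is a point the paper leaves implicit here (it is stated only in the proof of the following lemma), so making it explicit is a reasonable addition rather than a deviation.
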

\begin{proof}
    By \eqref{eq:inner product on the invariant part-classical} and \eqref{eq:differential as convolution operators}, we have
    \begin{align*}
    \la df,dg \ra = \overline{f_{(1)}} g_{(1)} \la d(f_{(2)})_e , d(g_{(2)})_e \ra = \overline{f_{(1)}} g_{(1)} \sum_{1 \leq j \leq d} \overline{X_j f_{(2)} (e)} X_j g_{(2)} (e) \\
    = \sum_{1\leq j \leq d} \overline{ (X_j f)} (X_j g).
    \end{align*}
    Now, apply \eqref{eq:classical Casimir} on the function $\overline{f}g$, apply Leibniz's rule twice for each $X_j$ ($j=1, \cdots , d$), and use the preceding identity to arrive at \eqref{eq:classical Laplacian Leibniz rule}.
\end{proof}

\begin{lem}\label{lem:L2-adjoints of differential operators}
    For $f \in \Cf^\infty(K)$ and $X \in \kf$, we have
    \begin{equation*}
    \hcal(Xf) = 0.
    \end{equation*}
    Thus, for $f, g \in \Cf^\infty(K)$,
    \begin{equation*}
    \hcal\big( \overline{Xf} \, g \big) = - \hcal\big( \overline{f} \, Xg \big).
    \end{equation*}
\end{lem}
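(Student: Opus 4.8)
The plan is to reduce both statements to the invariance of the Haar state (condition~A2 of Definition~\ref{defn:CQG}). For the first identity I would use the convolution formula \eqref{eq:differential as convolution operators}, which says that, as an element of $\Cf^\infty(K)$,
\[
Xf = f_{(1)}\,(Xf_{(2)})(e) = f_{(1)}\,(X, f_{(2)}),
\]
where $(X, \cdot)$ is the linear functional coming from the skew-pairing of Proposition~\ref{prop:classical skew-pairing} and the second factor is a scalar. Applying $\hcal$ and then the invariance identity $\hcal(f_{(1)}) f_{(2)} = \hcal(f)\,1$, together with $(X, 1) = 0$ (which holds since $X$ annihilates constant functions, equivalently since $X \in \Ker \hat\epsilon$), gives
\[
\hcal(Xf) = \hcal(f_{(1)})\,(X, f_{(2)}) = \bigl(X,\ \hcal(f_{(1)}) f_{(2)}\bigr) = \hcal(f)\,(X, 1) = 0.
\]
Alternatively, one can argue directly from right-invariance of the Haar measure: $\hcal(Xf) = \int_K \left.\frac{d}{dt}\right|_{t=0} f(x\exp(tX))\,dx = \left.\frac{d}{dt}\right|_{t=0}\int_K f(x\exp(tX))\,dx = \left.\frac{d}{dt}\right|_{t=0} \hcal(f) = 0$, the interchange of derivative and integral being legitimate because $f$ is a matrix coefficient and $K$ is compact.

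For the second identity I would first record that $\Cf^\infty(K)$ is closed under complex conjugation and multiplication (Proposition~\ref{prop:matrix coefficients of K}), so that $\overline{f}$, $\overline{f}g$, and $Xf$ all lie in $\Cf^\infty(K)$, and that $X\overline{f} = \overline{Xf}$ --- this is because $X \in \kf$ is a \emph{real} Lie algebra element and $t \mapsto \exp(tX)$ is a real curve, so complex conjugation commutes with $\left.\frac{d}{dt}\right|_{t=0}$. Then, applying the first identity to $\overline{f}g \in \Cf^\infty(K)$ and expanding with the Leibniz rule for $X$,
\[
0 = \hcal\bigl(X(\overline{f}g)\bigr) = \hcal\bigl((X\overline{f})\,g\bigr) + \hcal\bigl(\overline{f}\,(Xg)\bigr) = \hcal(\overline{Xf}\,g) + \hcal(\overline{f}\,Xg),
\]
which is the asserted formula after rearranging.

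I do not expect a genuine obstacle here; the argument is a short manipulation within the Hopf algebra $\Cf^\infty(K)$. The only point worth flagging is the step $X\overline{f} = \overline{Xf}$, which uses that $X$ lies in the real Lie algebra $\kf$ rather than in its complexification $\gf$ (for a general element of $\gf$ conjugation would act nontrivially and this identity would fail). It is precisely this real structure that makes the Haar state implement an $L^2$-type adjoint relation for the differential operators $X$, as the name of the lemma suggests.
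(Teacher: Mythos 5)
Your proposal is correct and follows essentially the same route as the paper: the paper proves $\hcal(Xf)=0$ exactly by the right-invariance-of-the-Haar-measure computation you give as your ``alternative'' argument, and derives the second identity from the Leibniz rule together with $X\overline{f}=\overline{Xf}$, just as you do. Your primary Hopf-algebraic argument via $Xf = f_{(1)}(X,f_{(2)})$ and invariance of $\hcal$ is merely an algebraic repackaging of the same fact, so nothing essentially new is at stake; your flag about $X$ lying in the real form $\kf$ is well placed.
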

\begin{proof}
    By the right invariance of the Haar measure, we have
    \begin{align*}
\int_{K} (X f)(x) dx = \int_K \left. \frac{d}{dt} \right|_{t=0} f(x \exp(tX)) dx = \left. \frac{d}{dt} \right|_{t=0} \int_K f(x \exp(tX) ) dx \\
= \left. \frac{d}{dt} \right|_{t=0} \int_K f(x) dx = 0,
\end{align*}
    proving the first identity. The second identity follows from Leibniz's rule for $X$ and the fact that $X \overline{f} = \overline{Xf}$.
\end{proof}

\begin{proof}[Proof of Proposition~\ref{prop:classical Laplacian characterization}]
By \eqref{eq:classical Casimir} and Lemma~\ref{lem:L2-adjoints of differential operators}, the following two equalities hold for $f,g \in \Cf^\infty(K)$:
\begin{gather*}
    \hcal\big(Z \triangleright (\overline{f} g)\big) = 0 \\
    \hcal\big( \overline{(Z \triangleright f)} \, g \big) = \hcal\big( \overline{f} \, (Z \triangleright g) \big)
\end{gather*}

Taking $\hcal$ on both sides of \eqref{eq:classical Laplacian Leibniz rule} and using these equalities, we get
\[
0 = \hcal\big( \overline{f} \, (Z \triangleright g) \big) - 2 \hcal\big( \la df, dg \ra \big) + \hcal\big( \overline{f} \, (Z \triangleright g) \big),
\]
from which \eqref{eq:classical Laplacian} follows.
\end{proof}

The following proposition summarizes the properties of $Z$ that made the proof of Proposition~\ref{prop:classical Laplacian characterization} possible, see also Theorem~\ref{thm:linear functionals as Laplacians}.

\begin{prop}\label{prop:classical Casimir element is self-adjoint, ad-invariant, and Hermitian}
    When considered as a linear functional on $\Cf^\infty(K)$, $Z$ is self-adjoint, $\ad$-invariant, Hermitian, and vanishes at the unit, which is equivalent to the condition that the classical Laplacian $Z \triangleright : \Cf^\infty(K) \rightarrow \Cf^\infty(K)$ diagonalizes over the Peter-Weyl decomposition of $\Cf^\infty(K)$ with real eigenvalues, commutes with the antipode, and vanishes at the unit.
\end{prop}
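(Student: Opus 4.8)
The plan is to prove the proposition in two stages. First I would verify directly that $Z$, regarded as a functional on $\Cf^\infty(K)$ through the nondegenerate skew-pairing of Proposition~\ref{prop:classical skew-pairing}, is self-adjoint, $\ad$-invariant, Hermitian, and annihilates $1_{\Cf^\infty(K)}$. Second I would deduce the asserted equivalence with the operator-theoretic conditions on $Z\triangleright$ by stringing together the results of Section~\ref{subsec:Linear functionals on a CQG}. The proposition is precisely the conjunction of these two stages, so both the left-hand and the right-hand lists of properties will hold.

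For the first stage, the key observation is that the embedding $U^\Rbb(\kf) \hookrightarrow \Cf^\infty(K)^\circ \subseteq \Cf^\infty(K)^*$ of Remark~\ref{rmk:embedding via skew-pairing} is not just an algebra homomorphism but a unital $*$-homomorphism which also intertwines the antipode and counit of $U^\Rbb(\kf)$ with those of $\Cf^\infty(K)^\circ$; this is immediate from the skew-pairing axioms P2--P4 of Definition~\ref{defn:skew-pairing} together with the definitions of the product, involution, antipode, and counit on $\Acal^*$ and $\Acal^\circ$ recalled in Section~\ref{subsec:Hopf *-algebras}. It therefore suffices to compute inside $U^\Rbb(\kf)$. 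Since $*$ is a conjugate-linear anti-automorphism and $S$ an anti-automorphism, with $X_j^* = S(X_j) = -X_j$ for the fixed orthonormal basis $X_j \in \kf$, one gets $(X_j^2)^* = X_j^2$ and $S(X_j^2) = X_j^2$, whence $Z^* = Z$ and $S(Z) = Z$; the counit gives $\hat\epsilon(Z) = -\sum_j \epsilon(X_j)^2 = 0$; and $Z$ is central in $U^\Rbb(\kf)$ by the remark following Definition~\ref{defn:classical Casimir elements}. Transporting these facts along the embedding: $Z$ is self-adjoint in $\Cf^\infty(K)^*$, $(Z, 1_{\Cf^\infty(K)}) = \hat\epsilon(Z) = 0$, and, since $U^\Rbb(\kf)$ separates $\Cf^\infty(K)$ and $Z$ commutes with all of its elements, $Z$ is $\ad$-invariant by Proposition~\ref{prop:ad-invariance criterion for a linear functional}. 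Finally, using axiom P3 to identify the functional $Z\circ S$ with the image of $S(Z)$, we get $Z = Z\circ S$, so $Z$ is Hermitian by Corollary~\ref{cor:self-adjointness and Hermiticity for ad-invariant functional}.

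For the second stage, I would assemble the following chain. By Corollary~\ref{cor:diagonalization of translation invariant linear operators}, $\ad$-invariance of $Z$ is equivalent to $Z\triangleright$ diagonalizing over the Peter--Weyl decomposition — for the converse implication one uses $\epsilon\circ(Z\triangleright) = Z$, cf.\ Remark~\ref{rmk:linear functional and operator correspondence} — and, when this holds, the eigenvalue of $Z\triangleright$ at $\mu \in \Irr(K)$ equals $C_Z(\mu)$. Granting $\ad$-invariance, the last sentence of Corollary~\ref{cor:ad-invariance criterion for a linear functional} gives that self-adjointness of $Z$ is equivalent to $C_Z(\mu) \in \Rbb$ for every $\mu$, i.e.\ to the eigenvalues of $Z\triangleright$ being real. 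Granting self-adjointness and $\ad$-invariance, Proposition~\ref{prop:left multiplication operator commutes with antipode} gives that Hermiticity of $Z$ is equivalent to $Z\triangleright$ commuting with the antipode. And since $(Z\triangleright)(1_{\Cf^\infty(K)}) = (\id\otimes Z)\Delta(1_{\Cf^\infty(K)}) = (Z, 1_{\Cf^\infty(K)})\, 1_{\Cf^\infty(K)}$, the operator $Z\triangleright$ vanishes at the unit if and only if the functional $Z$ does. Forming conjunctions yields the stated biconditional.

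I do not anticipate a real obstacle: the statement is in essence a dictionary between $U^\Rbb(\kf)$, its image in $\Cf^\infty(K)^*$, and the operator $Z\triangleright$. The only point that needs a little care is the bookkeeping in the first stage — ensuring that the order-reversal built into $*$ and $S$ on $U^\Rbb(\kf)$ causes no trouble for $Z$ (which it does not, precisely because $Z$ is a sum of squares of elements of $\kf$), and that axioms P2--P4 faithfully transport $*$, $S$, and $\hat\epsilon$ of $U^\Rbb(\kf)$ across the embedding.
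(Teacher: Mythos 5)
Your proposal is correct and follows essentially the same route as the paper: the self-adjointness, Hermiticity ($\hat S(Z)=Z$), and vanishing at the unit are verified by the same elementary computations with $X_j^* = S(X_j) = -X_j$ and $\hat\epsilon(X_j)=0$, the $\ad$-invariance comes from centrality of $Z$ via Proposition~\ref{prop:ad-invariance criterion for a linear functional}, and the final equivalence is assembled from Corollary~\ref{cor:diagonalization of translation invariant linear operators}, Corollary~\ref{cor:ad-invariance criterion for a linear functional}, and Proposition~\ref{prop:left multiplication operator commutes with antipode} exactly as in the paper. The only difference is that you make explicit the (correct) bookkeeping that the skew-pairing axioms P2--P4 transport $*$, $\hat S$, and $\hat\epsilon$ across the embedding $U^\Rbb(\kf)\hookrightarrow \Cf^\infty(K)^\circ$, which the paper leaves implicit.
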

\begin{proof}
    The self-adjointness follows from
    \[
    Z^* = - \big( (X_1 ^*)^2 + \cdots + (X_d ^*)^2 \big) = - \big( (-X_1)^2 + \cdots + (-X_d)^2 \big) = Z.
    \]
    
    Being a central element of $U^\Rbb (\kf) \subseteq \Cf^\infty(K)^\circ$ that separates $\Cf^\infty(K)$, $Z$ is $\ad$-invariant by Proposition~\ref{prop:ad-invariance criterion for a linear functional}.

The Hermiticity follows from 
\[
Z S^{-1} = \hat{S} (Z) = - \big( \hat{S}(X_1)^2 + \cdots + \hat{S}(X_d)^2 \big) = - \big( (-X_1)^2 + \cdots + (-X_d)^2 \big) = Z
\]
and Corollary~\ref{cor:self-adjointness and Hermiticity for ad-invariant functional}.

Finally, note that $Z(1) = \hat{\epsilon}(Z) = - \big( \hat{\epsilon}(X_1)^2 + \cdots + \hat{\epsilon}(X_d)^2 \big) = 0$.

The final assertion follows from Corollary~\ref{cor:diagonalization of translation invariant linear operators} and Proposition~\ref{prop:left multiplication operator commutes with antipode}.
\end{proof}

In the case when $K$ is simply connected and semisimple, we will be able to calculate explicitly the eigenvalues of certain classical Laplacians, see Proposition~\ref{prop:eigenvalues of classical Casimir elements}.

\subsection{Classical Laplacians induce the classical FODC}\label{subsec:Classical Laplacians induce the classical FODC}

In Section~\ref{subsec:classical Laplacians on K}, we saw that the classical Laplacian $Z \triangleright$ was defined in terms of the classical FODC on $K$ and an $\Ad$-invariant inner product on $\kf$. In this subsection, we show that, conversely, the classical Laplacian $Z \triangleright$ can be used to recover both the classical FODC and the $\Ad$-invariant inner product on $\kf$ that were used to define $Z$.

\begin{thm}\label{thm:classical Casimir element induces classical FODC}
For all $f, g \in \Ker \epsilon$, we have
\begin{equation}\label{eq:classical Laplacian at the identity}
\la df_e , dg_e \ra = -\frac{1}{2} (Z, \overline{f} g).
\end{equation}
Thus,
\begin{equation*}
\{ f \in \Ker \epsilon \mid \forall g \in \Ker \epsilon,\, ( Z, f g ) = 0 \} = \{ f \in \Ker \epsilon \mid df_e = 0 \}.
\end{equation*}
Together with Proposition~\ref{prop:classical FODC}, this implies that the classical FODC on $K$ and the invariant inner product $\la \cdot, \cdot \ra$ on $\kf$ are determined by the classical Laplacian $Z \triangleright : \Cf^\infty(K) \rightarrow \Cf^\infty(K)$.
\end{thm}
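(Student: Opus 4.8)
The plan is to evaluate the Leibniz-type identity of Lemma~\ref{lem:classical Laplacian characterization} at the unit element $e \in K$ in order to obtain \eqref{eq:classical Laplacian at the identity}, and then to deduce the stated description of $R_K$ from it using nondegeneracy of $\la\cdot,\cdot\ra$ on $\gf^*$ together with the surjectivity of the map $g \mapsto dg_e$ on $\Ker\epsilon$ established inside the proof of Proposition~\ref{prop:classical FODC}. The consequence about recovering the classical FODC and the inner product then follows by combining this with Theorem~\ref{thm:BCFODC}.

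For the pointwise identity, I would apply $\epsilon = \ev_e$ to both sides of \eqref{eq:classical Laplacian Leibniz rule}. On the left, $\epsilon\bigl(Z \triangleright (\overline f g)\bigr) = (Z, \overline f g)$ since $\epsilon \circ (Z\triangleright) = Z$ (Remark~\ref{rmk:linear functional and operator correspondence}). On the right, the outer two terms carry the factors $g(e)$ and $\overline{f(e)}$, both of which vanish because $f, g \in \Ker\epsilon$; and $\epsilon\bigl(\la df, dg\ra\bigr) = \la df_e, dg_e\ra$, as one sees from the expression $\la df, dg\ra = \sum_{j=1}^{d} \overline{(X_j f)}\,(X_j g)$ recorded in the proof of Lemma~\ref{lem:classical Laplacian characterization} together with \eqref{eq:inner product on the invariant part-classical}. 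This yields $(Z, \overline f g) = -2\,\la df_e, dg_e\ra$, which is \eqref{eq:classical Laplacian at the identity}.

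For the set equality, I would first observe that $R_K = \{f \in \Ker\epsilon \mid df_e = 0\}$ is stable under complex conjugation: the vector fields $X_j$ are real, so $X_j \overline f = \overline{X_j f}$, whence $d\overline f_e$ is the coefficientwise complex conjugate of $df_e$ in the real basis $\{\eps^j\}$, and $\epsilon(\overline f) = \overline{\epsilon(f)}$. Consequently, for $f \in \Ker\epsilon$, substituting $\overline f$ for $f$ in \eqref{eq:classical Laplacian at the identity} gives $(Z, f g) = -2\,\la d\overline f_e, dg_e\ra$ for every $g \in \Ker\epsilon$. Since $\{dg_e \mid g \in \Ker\epsilon\} = \gf^*$ and $\la\cdot,\cdot\ra$ is nondegenerate on $\gf^*$, the left-hand side vanishes for all $g$ exactly when $d\overline f_e = 0$, i.e.\ (by conjugation-stability) when $df_e = 0$, i.e.\ $f \in R_K$; this is precisely the claimed identity of subsets of $\Ker\epsilon$.

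Finally, for the last assertion: from the operator $Z\triangleright$ one recovers the functional $Z = \epsilon\circ(Z\triangleright)$, hence — via the set equality just proved — the right ideal $R_K$, and by Theorem~\ref{thm:BCFODC} the bicovariant $*$-FODC $(\Omega_K,d)$ is determined up to isomorphism by $R_K$. Moreover, \eqref{eq:classical Laplacian at the identity} expresses $\la df_e, dg_e\ra$ in terms of $Z$ alone; using conjugation-stability of $R_K$ this descends to a well-defined form on $\Ker\epsilon/R_K \cong \inv\Omega_K \cong \gf^*$, which is the inner product there, and hence recovers the $\Ad$-invariant inner product on $\kf$ by duality (the $\eps^j$ being the dual basis of an orthonormal basis of $\kf$); its extension to all of $\Omega_K$ is then forced by \eqref{eq:Cinfty(K)-valued sesquilinear form on GammaK}. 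I do not anticipate a genuine obstacle — the argument is a short unwinding of Lemma~\ref{lem:classical Laplacian characterization}. The one point calling for care is the mismatch between the sesquilinear form (conjugate-linear in its first slot) and the ordinary product $fg$ appearing in the definition of the set, which is why I isolate the conjugation-stability of $R_K$ at the outset; the reduction also uses nondegeneracy of the form on $\gf^*$ rather than merely positive-definiteness on $\kf$, which the sesquilinear extension supplies automatically.
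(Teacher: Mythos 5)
Your proof is correct and follows essentially the same route as the paper: the identity \eqref{eq:classical Laplacian at the identity} is obtained exactly as in the paper by applying $\epsilon$ to \eqref{eq:classical Laplacian Leibniz rule} and using $f,g \in \Ker\epsilon$. The paper leaves the set equality and the final assertion implicit, so your careful treatment of the conjugation mismatch (replacing $f$ by $\overline f$, using conjugation-stability of $R_K$, surjectivity of $g \mapsto dg_e$, and nondegeneracy of the form) is a correct and welcome filling-in of details the paper omits.
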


\begin{proof}
Taking $\epsilon$ on both sides of \eqref{eq:classical Laplacian Leibniz rule}, we obtain
\begin{align*}
(Z, \overline{f} g) = (Z, \overline{f}) g(e) - 2 \la df_e , dg_e \ra + \overline{f}(e) (Z, g) = -2 \la df_e , dg_e \ra
\end{align*}
for all $f, g \in \Ker \epsilon$, which proves \eqref{eq:classical Laplacian at the identity}.
\end{proof}

\begin{cor}\label{cor:heat semigroups are Markov semigroups}
The heat semigroup $(e^{-t Z \triangleright})_{t \geq 0}$ is a quantum Markov semigroup.
\end{cor}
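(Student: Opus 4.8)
The plan is to reduce the statement to Proposition~\ref{prop:complete positivity of semigroup}. By Proposition~\ref{prop:classical Casimir element is self-adjoint, ad-invariant, and Hermitian}, the functional $Z \in \Cf^\infty(K)^*$ is Hermitian and satisfies $Z(1)=0$, so the hypotheses of Proposition~\ref{prop:complete positivity of semigroup} are met with $\phi = Z$. Consequently the heat semigroup $(e^{-tZ\triangleright})_{t\geq 0}$ is a quantum Markov semigroup if and only if $-Z$ is conditionally positive, i.e.\ $(-Z, a^*a) \geq 0$ for every $a \in \Ker\epsilon$. Thus everything comes down to verifying this last inequality.

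For that, I would take an arbitrary $a \in \Ker\epsilon$ and invoke the identity~\eqref{eq:classical Laplacian at the identity} of Theorem~\ref{thm:classical Casimir element induces classical FODC} with $f = g = a$. Since the involution on $\Cf^\infty(K)$ is pointwise complex conjugation (Proposition~\ref{prop:matrix coefficients of K}), one has $a^* = \overline{a}$, hence $(Z, a^*a) = -2\,\la da_e, da_e\ra$, that is, $(-Z, a^*a) = 2\,\la da_e, da_e\ra$. It remains to observe that $\la da_e, da_e \ra \geq 0$: the sesquilinear form $\la\cdot,\cdot\ra$ on $\gf^* \cong \gf$ is positive definite, being the conjugate-linear-in-the-first-argument extension of the fixed $\Ad$-invariant inner product on $\kf$ (Section~\ref{subsec:Adjoint representations}), transported to $\gf^*$ as in~\eqref{eq:inner product on the invariant part-classical}. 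Concretely, $\la da_e, da_e \ra = \sum_{j=1}^d |(X_j a)(e)|^2$, so that $(-Z, a^*a) = 2\sum_{j=1}^d |(X_j a)(e)|^2 \geq 0$, proving that $-Z$ is conditionally positive. Proposition~\ref{prop:complete positivity of semigroup} then yields the corollary.

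There is no real obstacle here; the corollary is essentially a repackaging of~\eqref{eq:classical Laplacian at the identity} together with the positivity of the inner product, and it may be viewed as recovering—via the Schoenberg correspondence used in the proof of Proposition~\ref{prop:complete positivity of semigroup}—the classical fact that the heat semigroup on $K$ is Markov. The only point one should not gloss over is the passage from the real inner product on $\kf$ to a positive definite Hermitian form on the complexification $\gf = \kf \oplus i\kf$, which justifies the inequality $\la da_e, da_e\ra \geq 0$ for the (a priori complex-valued) form $da_e \in \gf^*$.
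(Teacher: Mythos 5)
Your proof is correct and follows exactly the paper's argument: the paper likewise deduces conditional positivity of $-Z$ from the identity \eqref{eq:classical Laplacian at the identity} and then invokes Proposition~\ref{prop:complete positivity of semigroup}. Your version merely spells out the verification of the hypotheses and the positivity of the sesquilinear form, which the paper leaves implicit.
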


\begin{proof}
Equation~\eqref{eq:classical Laplacian at the identity} implies that $-Z$ is conditionally positive. The conclusion now follows from Proposition~\ref{prop:complete positivity of semigroup}.
\end{proof}

Theorem~\ref{thm:classical Casimir element induces classical FODC} motivates the construction that we now introduce.

\section{Main construction}\label{sec:Main construction}

\subsection{FODCs induced by linear functionals}\label{subsec:FODCs on a CQG induced by linear fucntionals}

Throughout this subsection, $(\Acal, \Delta, \epsilon, S)$ denotes a Hopf ($*$-)algebra and $(\Acal^\circ, \hat{\Delta}, \hat{\epsilon}, \hat{S})$ its dual Hopf ($*$-)algebra.

\begin{thm}\label{thm:FODC-linear-functional}
Let $\phi$ be a linear functional on $\Acal$. Then,
\[
R_\phi= \{ a \in \Ker \epsilon \mid \forall b \in \Ker \epsilon,\, ( \phi, a b ) = 0 \} \subseteq \Ker \epsilon
\]
is a right ideal in $\Acal$. Moreover, if $\phi$ is $\ad$-invariant (resp. self-adjoint), then $R_\phi$ is $\ad$-invariant (resp. $*S(R_\phi) \subseteq R_\phi$).
\end{thm}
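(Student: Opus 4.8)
I plan to treat the three assertions separately. The inclusion $R_\phi\subseteq\Ker\epsilon$ is built into the definition, and $R_\phi$ is visibly a linear subspace. That it is a right ideal is immediate: for $a\in R_\phi$ and $c\in\Acal$ one has $\epsilon(ac)=\epsilon(a)\epsilon(c)=0$, so $ac\in\Ker\epsilon$, and for every $b\in\Ker\epsilon$ the element $cb$ again lies in $\Ker\epsilon$ (as $\Ker\epsilon$ is a two-sided ideal), whence $(\phi,(ac)b)=(\phi,a(cb))=0$; thus $ac\in R_\phi$.

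For the $\ad$-invariance, assume $\phi$ is $\ad$-invariant and fix $a\in R_\phi$. Since $\Ker\epsilon$ is $\ad$-invariant and $R_\phi\subseteq\Ker\epsilon$, we already know $\ad(a)\in\Ker\epsilon\otimes\Acal$; writing $\ad(a)=\sum_i e_i\otimes g_i$ with $e_i\in\Ker\epsilon$ and the $g_i$ linearly independent, the membership $\ad(a)\in R_\phi\otimes\Acal$ is equivalent to $e_i\in R_\phi$ for all $i$, and hence — again using linear independence of the $g_i$ together with the defining condition for $R_\phi$ — to the single identity
\[
\phi(a_{(2)}b)\,S(a_{(1)})a_{(3)}=0\qquad\text{for all }b\in\Ker\epsilon .
\]
To establish this I would study the linear map $F\colon\Acal\to\Acal$, $F(b):=\phi(a_{(2)}b)\,S(a_{(1)})a_{(3)}$, and compute its ``conjugate'': a short Sweedler manipulation, using $S(a_{(1)}b_{(1)})=S(b_{(1)})S(a_{(1)})$, gives
\[
S(b_{(1)})\,F(b_{(2)})\,b_{(3)}=\phi\big((ab)_{(2)}\big)\,S\big((ab)_{(1)}\big)(ab)_{(3)}=(\phi\otimes\id)\,\ad(ab).
\]
Now $\ad$-invariance of $\phi$ turns the right-hand side into $\phi(ab)\,1$, and since $a\in R_\phi$ (so $\phi(ac)=0$ for $c\in\Ker\epsilon$) one gets $\phi(ab)=\phi(a)\epsilon(b)$; thus $S(b_{(1)})F(b_{(2)})b_{(3)}=\phi(a)\,\epsilon(b)\,1$ for all $b\in\Acal$. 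Finally I would invert the conjugation: the operator $H\mapsto\big(b\mapsto S(b_{(1)})H(b_{(2)})b_{(3)}\big)$ on $\End(\Acal)$ is a bijection with inverse $K\mapsto\big(b\mapsto b_{(1)}K(b_{(2)})S(b_{(3)})\big)$ (a routine check using only the antipode and counit axioms), so applying the inverse yields $F(b)=b_{(1)}\big(\phi(a)\epsilon(b_{(2)})1\big)S(b_{(3)})=\phi(a)\,\epsilon(b)\,1$, which vanishes on $\Ker\epsilon$. I expect this reduction-plus-conjugation-inversion step to be the main obstacle: everything before it is bookkeeping, and everything after it is formal.

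For the self-adjoint case I would first unwind $\phi=\phi^*$: by the definition of the involution on $\Acal^*$ this means $\phi(S(x)^*)=\overline{\phi(x)}$ for all $x\in\Acal$. Given $a\in R_\phi$, we have $S(a)^*\in\Ker\epsilon$ because $\epsilon(S(a)^*)=\overline{\epsilon(a)}=0$; and for $c\in\Ker\epsilon$, setting $y:=a\,S^{-1}(c^*)$ one has $S(y)=S(S^{-1}(c^*))S(a)=c^*S(a)$, hence $S(y)^*=S(a)^*c$, so that
\[
\phi\big(S(a)^*c\big)=\phi\big(S(y)^*\big)=\overline{\phi(y)}=\overline{\phi\big(a\,S^{-1}(c^*)\big)} .
\]
Since $\epsilon(S^{-1}(c^*))=\epsilon(c^*)=\overline{\epsilon(c)}=0$, the element $S^{-1}(c^*)$ lies in $\Ker\epsilon$, so $\phi(a\,S^{-1}(c^*))=0$ because $a\in R_\phi$; therefore $\phi(S(a)^*c)=0$ for all $c\in\Ker\epsilon$, i.e.\ $S(a)^*\in R_\phi$. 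This gives $*S(R_\phi)\subseteq R_\phi$, and in contrast to the $\ad$-invariance argument it is a purely formal computation with the antipode and the involution.
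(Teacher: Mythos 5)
Your proof is correct, and the right-ideal and self-adjointness parts coincide with the paper's (your auxiliary element $S^{-1}(c^*)$ is literally the paper's $S(b)^*$, since $(*S)^{-1}=*S$). The interesting divergence is in the $\ad$-invariance argument. The paper fixes $b\in\Ker\epsilon$, uses the bijectivity of the map $x\otimes y\otimes z\mapsto y_{(1)}x\otimes y_{(2)}\otimes y_{(3)}z$ on $\Acal^{\otimes 3}$ to write $1\otimes b\otimes 1=\sum_j y_{j,(1)}x_j\otimes y_{j,(2)}\otimes y_{j,(3)}z_j$, substitutes this into $(\phi,a_{(2)}b)S(a_{(1)})a_{(3)}$ so that $\ad$-invariance can be applied to each product $ay_j$, and then evaluates the residual sum $\sum_j S(x_j)\epsilon(y_j)z_j=\epsilon(b)1=0$. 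You instead package the obstruction as an operator $F\in\End(\Acal)$, identify its conjugate $b\mapsto S(b_{(1)})F(b_{(2)})b_{(3)}$ with $(\phi\otimes\id)\ad(ab)=\phi(a)\epsilon(b)1$, and then invert the conjugation $H\mapsto\big(b\mapsto S(b_{(1)})H(b_{(2)})b_{(3)}\big)$ on $\End(\Acal)$ — whose two-sided inverse $K\mapsto\big(b\mapsto b_{(1)}K(b_{(2)})S(b_{(3)})\big)$ is indeed a routine consequence of the antipode and counit axioms. The underlying mechanism is the same (apply $\ad$-invariance of $\phi$ to products $a\cdot(\,\cdot\,)$, then undo an adjoint-type twist), but your organization replaces the paper's somewhat opaque tensor-decomposition of $1\otimes b\otimes 1$ with a cleanly stated invertibility of a single operator on $\End(\Acal)$, and it treats all $b$ at once rather than one $b$ at a time; the paper's version, on the other hand, needs no discussion of $\End(\Acal)$ and keeps everything inside $\Acal^{\otimes 3}$. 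Your preliminary reduction (writing $\ad(a)=\sum_i e_i\otimes g_i$ with $g_i$ linearly independent and $e_i\in\Ker\epsilon$, the latter because $\Ker\epsilon$ is $\ad$-invariant) is also sound and mirrors the paper's passage through the map $\varphi\otimes\id_\Acal$.
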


\begin{proof}
Throughout the proof, we write $R = R_\phi$.

That $R$ is a right ideal follows directly from its definition and from the fact that $\Ker \epsilon$ is an ideal in $\Acal$.

Assume now that $\phi$ is $\ad$-invariant and define the map $\varphi : \Ker \epsilon \rightarrow (\Ker \epsilon)^*$ by
\[
( \varphi(a) , b ) = ( \phi , a b ) \quad \text{for all } b \in \Ker \epsilon.
\]
Then, $\Ker \varphi = R$. Consider the map $\varphi \otimes \id_\Acal : \Ker \epsilon \otimes \Acal \rightarrow (\Ker \epsilon)^* \otimes \Acal$. We claim that $(\varphi \otimes \id_\Acal)(\ad(a)) = 0$ for all $a \in R$ (this is well-defined since $\ad(R) \subseteq \ad(\Ker \epsilon) \subseteq \Ker \epsilon \otimes \Acal$), which would imply
\[
\ad (R) \subseteq \Ker (\varphi \otimes \id_\Acal) = R \otimes \Acal,
\]
establishing the $\ad$-invariance of $R$.

Fix $a \in R$ and pick an arbitrary $b \in \Ker \epsilon$. Since the map
\[
\Acal \otimes \Acal \otimes \Acal \ni x \otimes y \otimes z \mapsto y_{(1)} x \otimes y_{(2)} \otimes y_{(3)} z \in \Acal \otimes \Acal \otimes \Acal
\]
is a bijection, there exist elements $x_j, y_j, z_j \in \Acal$ ($j = 1, \dots, k$) for some $k \in \Nbb$ such that
\begin{equation}\label{eq:proof-FODC-linear-functional}
1 \otimes b \otimes 1 = \sum_{j=1}^k y_{j,(1)} x_j \otimes y_{j,(2)} \otimes y_{j,(3)} z_j.
\end{equation}

Denote by $m$ the multiplication on $\Acal$. Then:
\begin{align}\label{eq:proof-FODC-linear-functional-2}
\left( (\varphi \otimes \id)\big(\ad(a)\big),\, b \otimes \id_\Acal \right)
&= ( \phi, a_{(2)} b )\, S(a_{(1)}) a_{(3)} \nonumber \\
&= m \circ ( S \otimes \phi \otimes \id )(a_{(1)} \otimes a_{(2)} b \otimes a_{(3)} ) \nonumber \\
&= \sum_{j=1}^k m \circ (S \otimes \phi \otimes \id) \left( a_{(1)} y_{j,(1)} x_j \otimes a_{(2)} y_{j,(2)} \otimes a_{(3)} y_{j,(3)} z_j \right) \nonumber \\
&= \sum_{j=1}^k m \circ (S \otimes \phi \otimes \id) \left( (a y_j)_{(1)} x_j \otimes (a y_j)_{(2)} \otimes (a y_j)_{(3)} z_j \right) \nonumber \\
&= \sum_{j=1}^k S(x_j) \left[ ( \phi , (a y_j)_{(2)} )\, S( (a y_j)_{(1)} )\, (a y_j)_{(3)} \right] z_j \nonumber \\
&= \sum_{j=1}^k S(x_j) \left( \phi \otimes \id , \ad(a y_j) \right) z_j = \sum_{j=1}^k ( \phi, a y_j )\, S(x_j) z_j \\
&= \sum_{j=1}^k \left[ \left( \phi, a (y_j - \epsilon(y_j)1) \right) + \phi(a)\, \epsilon(y_j) \right] S(x_j) z_j \nonumber \\
&= \phi(a) \sum_{j=1}^k S(x_j)\, \epsilon(y_j)\, z_j, \nonumber
\end{align}
since $\varphi(a)$ vanishes on $\Ker \epsilon$.

It remains to evaluate the sum:
\begin{align*}
\sum_{j=1}^k S(x_j)\, \epsilon(y_j)\, z_j
&= \sum_{j=1}^k S(x_j) S(y_{j,(1)})\, \epsilon(y_{j,(2)})\, y_{j,(3)} z_j \\
&= \sum_{j=1}^k S(y_{j,(1)} x_j)\, \epsilon(y_{j,(2)})\, y_{j,(3)} z_j \\
&= m ( S \otimes \epsilon \otimes \id ) \left( \sum_{j=1}^k y_{j,(1)} x_j \otimes y_{j,(2)} \otimes y_{j,(3)} z_j \right) \\
&= m ( S \otimes \epsilon \otimes \id )(1 \otimes b \otimes 1) = \epsilon(b)\, 1 = 0,
\end{align*}
by \eqref{eq:proof-FODC-linear-functional}. Hence,
\[
\left( (\varphi \otimes \id)\big(\ad(a)\big),\, b \otimes \id_\Acal \right) = 0.
\]
Since $b \in \Ker \epsilon$ was arbitrary, we conclude $(\varphi \otimes \id)(\ad(a)) = 0$ in $(\Ker \epsilon)^* \otimes \Acal$, completing the proof of $\ad$-invariance.

Now suppose $\phi$ is self-adjoint. To show $*S(R) \subseteq R$, let $a \in R$. Since $*S : \Acal \rightarrow \Acal$ is an antilinear algebra isomorphism satisfying $(*S)^2 = \id_\Acal$ and $\Ker \epsilon$ is stable under $*$ and $S$, we compute:
\[
( \phi, S(a)^* b ) = ( \phi , S(a S(b)^*)^* ) = \overline{( \phi^*, a S(b)^* )} = \overline{( \phi, a S(b)^* )} = 0
\]
for all $b \in \Ker \epsilon$. Hence, $S(a)^* \in R$, and so $*S(R) \subseteq R$.
\end{proof}

By Proposition~\ref{prop:model BCFODC}, we conclude:

\begin{cor}\label{cor:BCFODC induced by linear functionals}
An $\ad$-invariant linear functional $\phi$ on $\Acal$ induces a bicovariant FODC $(\Omega_\phi, d_\phi)$ over $\Acal$ via the $\ad$-invariant right ideal
\begin{equation}\label{eq:right-ideal-linear-functional}
R_\phi = \{ a \in \Ker \epsilon \mid \forall b \in \Ker \epsilon,\, (\phi, ab) = 0 \}.
\end{equation}
If, in addition, $\phi$ is self-adjoint, then $(\Omega_\phi, d_\phi)$ is a bicovariant $*$-FODC.

We call $(\Omega_\phi, d_\phi)$ the \textbf{bicovariant ($*$-)FODC induced by $\phi$}.
\end{cor}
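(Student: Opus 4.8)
The plan is to read the statement as an immediate consequence of the two preceding results, so the proof reduces to checking that the hypotheses of Proposition~\ref{prop:model BCFODC} are satisfied by $R = R_\phi$ and then invoking it. First I would note that, directly from the defining formula \eqref{eq:right-ideal-linear-functional}, one has $R_\phi \subseteq \Ker\epsilon$. Next, Theorem~\ref{thm:FODC-linear-functional} provides exactly the remaining two ingredients: $R_\phi$ is a right ideal in $\Acal$, and under the hypothesis that $\phi$ is $\ad$-invariant it is moreover $\ad$-invariant. Thus $R_\phi$ is an $\ad$-invariant right ideal in $\Acal$ contained in $\Ker\epsilon$, which is precisely the standing hypothesis on $R$ in Proposition~\ref{prop:model BCFODC}.

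I would then set $(\Omega_\phi, d_\phi) := (\Omega_{R_\phi}, d_{R_\phi})$, equipped with the structure maps M1--M5, so that by Proposition~\ref{prop:model BCFODC} this is a bicovariant FODC over $\Acal$ with $\inv\Omega_\phi = 1 \otimes (\Ker\epsilon/R_\phi)$. To justify calling $R_\phi$ the right ideal corresponding to $(\Omega_\phi,d_\phi)$, I would combine the explicit form of its quantum germs map \eqref{eq:quantum germs map for model BCFODC} with \eqref{eq:right ideal corresponding to BCFODC}, which gives $\Ker\epsilon \cap \Ker Q = R_\phi$. For the $*$-case, when $\phi$ is in addition self-adjoint, Theorem~\ref{thm:FODC-linear-functional} supplies the extra condition $*S(R_\phi) \subseteq R_\phi$, which is exactly what the last part of Proposition~\ref{prop:model BCFODC} requires to endow $\Omega_{R_\phi}$ with the involution M6; hence $(\Omega_\phi, d_\phi)$ becomes a bicovariant $*$-FODC.

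I do not expect any genuine obstacle at this stage: all of the substantive work—in particular the verification that the annihilator-type condition defining $R_\phi$ is compatible with $\ad$ and with $*S$—has already been carried out in Theorem~\ref{thm:FODC-linear-functional}, and the construction of the calculus itself is entirely supplied by Proposition~\ref{prop:model BCFODC}. The only point requiring care is bookkeeping: one should record, via the uniqueness clause of Theorem~\ref{thm:BCFODC}, that $(\Omega_\phi,d_\phi)$ is the unique (up to isomorphism) bicovariant FODC whose corresponding right ideal is $R_\phi$, so that the name ``the bicovariant ($*$-)FODC induced by $\phi$'' is unambiguous; in the $*$-case the involution is then forced by M6, so no further choices enter.
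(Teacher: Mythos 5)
Your proposal is correct and follows exactly the paper's route: the paper derives this corollary in one line by combining Theorem~\ref{thm:FODC-linear-functional} (which supplies that $R_\phi$ is an $\ad$-invariant right ideal in $\Ker\epsilon$, with $*S(R_\phi)\subseteq R_\phi$ in the self-adjoint case) with Proposition~\ref{prop:model BCFODC}. Your additional remarks on the quantum germs map and the uniqueness clause of Theorem~\ref{thm:BCFODC} are accurate but not needed for the statement itself.
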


\begin{rmk}\label{rmk:allusion in the work of Majid}
An allusion to the construction \eqref{eq:right-ideal-linear-functional} appears in \cite{Majid1998} as a bridge between Woronowicz's construction (Proposition~\ref{prop:model BCFODC}) and the quantum tangent space approach to FODCs used therein, see \cite[Proposition~2.7 and Equation~(14)]{Majid1998}.
\end{rmk}

Note that, in Theorem~\ref{thm:classical Casimir element induces classical FODC}, the linear functional $Z : \Cf^\infty(K) \rightarrow \Cbb$ is both self-adjoint and $\ad$-invariant (Proposition~\ref{prop:classical Casimir element is self-adjoint, ad-invariant, and Hermitian}), thereby inducing the bicovariant $*$-FODC of Proposition~\ref{prop:classical FODC}.

The following proposition shows that, when restricting to finite-dimensional FODCs, it suffices to consider functionals in $\Acal^\circ$.

\begin{prop}\label{prop:finite-dimensional BCFODC induced by linear functionals}
    Let $\phi \in \Acal^*$ be $\ad$-invariant. Then, the induced FODC $(\Omega_\phi, d_\phi)$ is finite-dimensional if and only if $\phi \in \Acal^\circ$.
\end{prop}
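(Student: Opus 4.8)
The plan is to translate both sides of the claimed equivalence into statements about codimensions and then to match them. On the geometric side, Proposition~\ref{prop:model BCFODC} gives $\inv\Omega_\phi\cong\Ker\epsilon/R_\phi$, so $(\Omega_\phi,d_\phi)$ is finite-dimensional precisely when $R_\phi$ has finite codimension in $\Ker\epsilon$ (equivalently, by Corollary~\ref{cor:dimension encoded in the dual space}, when its space of LIVFs $\Xcal_{R_\phi}$ is finite-dimensional). On the functional side, I would use the standard characterization of $\Acal^\circ$: for $\phi\in\Acal^*$ one has $\phi\in\Acal^\circ$ if and only if the space of right translates $\phi\Acal=\{\phi a:a\in\Acal\}$ is finite-dimensional, where $(\phi a,b)=(\phi,ab)$. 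This is immediate from the definitions, since $\hat\Delta(\phi)=\sum_{k}\psi_k\otimes\chi_k\in\Acal^*\otimes\Acal^*$ amounts to $(\phi,ab)=\sum_k\chi_k(a)\psi_k(b)$, i.e.\ to $\phi\Acal\subseteq\Span_\Cbb\{\psi_k\}$, and conversely a finite basis of $\phi\Acal$ furnishes such an expansion. So the proposition reduces to the equivalence: $R_\phi$ is cofinite in $\Ker\epsilon$ if and only if $\phi\Acal$ is finite-dimensional.

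For the direction $\phi\in\Acal^\circ\Rightarrow(\Omega_\phi,d_\phi)$ finite-dimensional, I would examine the linear map $\Ker\epsilon\to\phi\Acal$, $a\mapsto\phi a$. Its kernel $N=\{a\in\Ker\epsilon:(\phi,ab)=0\ \text{for all}\ b\in\Acal\}$ is contained in $R_\phi$, because the definition of $R_\phi$ only tests $b\in\Ker\epsilon$. Hence $\Ker\epsilon/N$ embeds into $\phi\Acal$ and surjects onto $\Ker\epsilon/R_\phi$, giving $\dim(\Ker\epsilon/R_\phi)\le\dim(\Ker\epsilon/N)\le\dim\phi\Acal<\infty$.

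For the converse, suppose $\Ker\epsilon=R_\phi\oplus W$ with $\dim W<\infty$, so $\Acal=\Cbb 1\oplus R_\phi\oplus W$. The point is that for $a\in R_\phi$ and arbitrary $b\in\Acal$, the splitting $b=(b-\epsilon(b)1)+\epsilon(b)1$ (with $b-\epsilon(b)1\in\Ker\epsilon$) yields $(\phi,ab)=\epsilon(b)(\phi,a)$, i.e.\ $\phi a=(\phi,a)\,\epsilon$; thus $\{\phi a:a\in R_\phi\}\subseteq\Cbb\epsilon$. Since $a\mapsto\phi a$ is linear and $\phi\cdot 1=\phi$, it follows that $\phi\Acal\subseteq\Cbb\phi+\Cbb\epsilon+\{\phi a:a\in W\}$, a finite-dimensional space, so $\phi\in\Acal^\circ$.

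The only subtle point is the asymmetry in the definition of $R_\phi$: because the test elements there range over $\Ker\epsilon$ rather than over all of $\Acal$, the kernel $N$ of $a\mapsto\phi a$ is a priori strictly smaller than $R_\phi$, so the two sides are not literally the same codimension. This is repaired by the $\epsilon$-splitting above, which shows that the discrepancy $R_\phi/N$ collapses into the one-dimensional space $\Cbb\epsilon$ and is therefore harmless for questions of finiteness. Beyond this, the argument is bookkeeping, and $\ad$-invariance of $\phi$ is used only to guarantee (via Corollary~\ref{cor:BCFODC induced by linear functionals}) that $(\Omega_\phi,d_\phi)$ is a genuine bicovariant FODC in the first place.
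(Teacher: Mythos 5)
Your proof is correct, and it follows the same underlying strategy as the paper's: reduce finite-dimensionality of $(\Omega_\phi,d_\phi)$ to finite-dimensionality of a space of translates of $\phi$, and identify the latter condition with $\phi\in\Acal^\circ$. The differences are in execution. First, where you work with the right translates $\phi a$ (functionals in the second argument of $(\phi,ab)$), the paper works with the left translates $b\phi$ (functionals in the first argument); since both spaces have dimension equal to the rank of the bilinear form $(a,b)\mapsto(\phi,ab)$, this is immaterial. Second, for the converse direction the paper routes through the space of left-invariant vector fields, showing $\Xcal_{R_\phi}\supseteq\{b\phi-\phi(b)\epsilon\mid b\in\Ker\epsilon\}$ and invoking Corollary~\ref{cor:dimension encoded in the dual space}, whereas you choose a finite-dimensional complement $W$ of $R_\phi$ in $\Ker\epsilon$ and observe that $\phi a=(\phi,a)\,\epsilon$ for $a\in R_\phi$, so that $\phi\Acal\subseteq\Cbb\phi+\Cbb\epsilon+\{\phi a\mid a\in W\}$; this is more elementary and makes explicit exactly where the discrepancy between ``testing against $\Ker\epsilon$'' and ``testing against $\Acal$'' collapses (into $\Cbb\epsilon$), a point the paper handles by the computation of $R_\phi+\Cbb 1$. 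Third, the paper cites \cite[Corollary~1.4.5]{Joseph} for the equivalence between finite-dimensionality of the translate space and membership in $\Acal^\circ$, while you prove it directly from the definition of $\hat\Delta$ on $\Acal^*$ (correctly accounting for the skew convention $(\hat\Delta(\phi),a\otimes b)=(\phi,ba)$), which makes the argument self-contained. Your closing remark that $\ad$-invariance enters only to ensure $(\Omega_\phi,d_\phi)$ is a well-defined bicovariant FODC is also accurate and consistent with the paper.
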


\begin{proof}
    Note that
    \[
    R_\phi = \{ a \in \Acal \mid \forall b \in \Ker \epsilon,\, (\phi, ab) = 0 \} = \{ a \in \Acal \mid \forall b \in \Ker \epsilon,\, (b \phi, a) = 0 \}.
    \]
    Thus, if the space $\{ b \phi \mid b \in \Ker \epsilon \}$ is finite-dimensional, say with basis $\{ b_1 \phi, \dots, b_n \phi \}$, then the linear map
    \[
    \inv \Omega_\phi \ni a + R_\phi \longmapsto \big( b_1 \phi(a), \dots, b_n \phi(a) \big) \in \Cbb^n
    \]
    is well-defined and injective, which implies that $(\Omega_\phi, d_\phi)$ is finite-dimensional.

    Conversely, observe that
    \begin{align*}
        R_\phi + \Cbb 1 &= \left\{ a \in \Acal \mid \forall b \in \Ker \epsilon,\, \big( \phi, (a - \epsilon(a)) b \big) = 0 \right\} \\
        &= \left\{ a \in \Acal \mid \forall b \in \Ker \epsilon,\, \big( b \phi, a - \epsilon(a) \big) = 0 \right\} \\
        &= \left\{ a \in \Acal \mid \forall b \in \Ker \epsilon,\, \big( b \phi - \phi(b) \epsilon, a \big) = 0 \right\},
    \end{align*}
    which implies
    \[
    \Xcal_{R_\phi} = \left\{ X \in \Acal^* \mid \forall a \in R_\phi + \Cbb 1,\, (X, a) = 0 \right\} \supseteq \left\{ b \phi - \phi(b) \epsilon \mid b \in \Ker \epsilon \right\}.
    \]
    Therefore, if $(\Omega_\phi, d_\phi)$ is finite-dimensional, then $\Xcal_{R_\phi}$ is also finite-dimensional by Corollary~\ref{cor:dimension encoded in the dual space}, and hence the subspace
    \[
    \{ b \phi \mid b \in \Ker \epsilon \} \subseteq \{ b \phi - \phi(b) \epsilon \mid b \in \Ker \epsilon \} + \Cbb \phi
    \]
    must also be finite-dimensional.

    Consequently, we conclude that
    \[
    (\Omega_\phi, d_\phi) \text{ is finite-dimensional } \iff \{ b \phi \mid b \in \Ker \epsilon \} \text{ is finite-dimensional},
    \]
    which is equivalent to
    \[
    \Acal \phi = \{ b \phi \mid b \in \Ker \epsilon \} + \Cbb \phi
    \]
    being finite-dimensional. By \cite[Corollary~1.4.5]{Joseph}, this last condition holds if and only if $\phi \in \Acal^\circ$.
\end{proof}

\begin{rmk}\label{rmk:affine invariance of induced FODC}
For any $\ad$-invariant linear functional $\phi \in \Acal^*$ and scalars $a, b \in \Cbb$ with $a \neq 0$,
\[
R_\phi = R_{a \phi + b},
\]
and thus $\phi$ and $a \phi + b$ induce the same FODC.

Later, we will find an equivalent condition under which two $\ad$-invariant functionals in $\Cf^\infty(K_q)^\circ$ induce the same FODC, see Corollary~\ref{cor:when two ad-invariant functionals induce the same FODC}.
\end{rmk}

We close this subsection with some results that will be used to compute the center of $\Cf^\infty(K_q)^\circ$.

\begin{lem}\label{lem:a variation of the induction construction}
    Let $\phi \in \Acal^*$ be $\ad$-invariant. Then,
    \[
    R_\phi' = \{ a \in \Acal \mid \forall b \in \Acal, \, (\phi, ab) = 0 \}
    \]
    is an $\ad$-invariant right ideal in $\Acal$.
\end{lem}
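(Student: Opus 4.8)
The plan is to imitate --- and in fact streamline --- the proof of Theorem~\ref{thm:FODC-linear-functional}. First, that $R_\phi'$ is a right ideal is immediate: if $a\in R_\phi'$ and $c\in\Acal$, then for every $b\in\Acal$ we have $(\phi,(ac)b)=(\phi,a(cb))=0$, so $ac\in R_\phi'$. The substance of the lemma is therefore the $\ad$-invariance, i.e.\ the inclusion $\ad(R_\phi')\subseteq R_\phi'\otimes\Acal$, where $\ad$ is the right adjoint coaction \eqref{eq:right adjoint coaction}.

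To establish this, define $\varphi'\colon\Acal\to\Acal^*$ by $(\varphi'(a),b)=(\phi,ab)$ for all $b\in\Acal$, so that $\Ker\varphi'=R_\phi'$ and hence, $\Acal$ being a vector space over a field, $\Ker(\varphi'\otimes\id_\Acal)=R_\phi'\otimes\Acal$. It thus suffices to show $(\varphi'\otimes\id_\Acal)(\ad(a))=0$ for every $a\in R_\phi'$. Fix such an $a$ and an arbitrary $b\in\Acal$. As in the proof of Theorem~\ref{thm:FODC-linear-functional}, the $\Cbb$-linear map $x\otimes y\otimes z\mapsto y_{(1)}x\otimes y_{(2)}\otimes y_{(3)}z$ on $\Acal^{\otimes 3}$ is bijective, so we may write
\[
1\otimes b\otimes 1=\sum_{j=1}^k y_{j,(1)}x_j\otimes y_{j,(2)}\otimes y_{j,(3)}z_j
\]
for some $x_j,y_j,z_j\in\Acal$ and $k\in\Nbb$. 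Since $\Delta$ (hence $\Delta^{(2)}=(\Delta\otimes\id)\Delta$) is an algebra homomorphism, multiplying $\Delta^{(2)}(a)$ into this identity componentwise gives $a_{(1)}\otimes a_{(2)}b\otimes a_{(3)}=\sum_j (ay_j)_{(1)}x_j\otimes(ay_j)_{(2)}\otimes(ay_j)_{(3)}z_j$. Applying $m\circ(S\otimes\phi\otimes\id)$ to both sides, with $m$ the multiplication of $\Acal$, and then invoking the $\ad$-invariance of $\phi$ in the form $(\phi,c_{(2)})S(c_{(1)})c_{(3)}=\phi(c)1$, one obtains
\begin{align*}
\bigl((\varphi'\otimes\id_\Acal)(\ad(a)),\,b\otimes\id_\Acal\bigr)=(\phi,a_{(2)}b)\,S(a_{(1)})a_{(3)}
&=\sum_{j=1}^k m\circ(S\otimes\phi\otimes\id)\bigl((ay_j)_{(1)}x_j\otimes(ay_j)_{(2)}\otimes(ay_j)_{(3)}z_j\bigr)\\
&=\sum_{j=1}^k S(x_j)\,\bigl[(\phi,(ay_j)_{(2)})\,S((ay_j)_{(1)})\,(ay_j)_{(3)}\bigr]\,z_j=\sum_{j=1}^k \phi(ay_j)\,S(x_j)z_j=0,
\end{align*}
where the last equality holds because $ay_j\in\Acal$ and $a\in R_\phi'$. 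As $b\in\Acal$ was arbitrary, $(\varphi'\otimes\id_\Acal)(\ad(a))=0$, completing the proof.

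As for the main obstacle: there is essentially none beyond bookkeeping with Sweedler indices. The only non-formal ingredients are the bijectivity of the triple-tensor map and the $\ad$-invariance of $\phi$, both already in hand; and, unlike in Theorem~\ref{thm:FODC-linear-functional}, the final step $\sum_j\phi(ay_j)S(x_j)z_j=0$ is immediate from $a\in R_\phi'$, so one does not need the decomposition $y_j=(y_j-\epsilon(y_j)1)+\epsilon(y_j)1$ that was required there. In short, the present lemma is a strictly easier variant of Theorem~\ref{thm:FODC-linear-functional}, obtained by replacing $\Ker\epsilon$ with $\Acal$ throughout.
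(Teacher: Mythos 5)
Your proof is correct and follows essentially the same route as the paper: the paper's own proof of this lemma simply points back to the argument of Theorem~\ref{thm:FODC-linear-functional}, defines the same map $\varphi$ with $\Ker\varphi = R_\phi'$, and notes that the computation now terminates at the line $\sum_{j}(\phi, a y_j)S(x_j)z_j$, which vanishes directly because $a\in R_\phi'$ — exactly the simplification you identify. Nothing to add.
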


\begin{proof}
    The proof proceeds similarly to that of Theorem~\ref{thm:FODC-linear-functional}, and is in fact simpler, since we no longer need to assume $a, b \in \Ker \epsilon$. For instance, we define the map $\varphi : \Acal \to \Acal^*$ by
    \[
    ( \varphi(a), b ) = (\phi, ab), \quad b \in \Acal,
    \]
    so that $\Ker \varphi = R_\phi'$. Then, the proof concludes at \eqref{eq:proof-FODC-linear-functional-2}.
\end{proof}

\begin{lem}\label{lem:cofactorization of linear functionals}
    Let $\phi \in \Acal^\circ$ and suppose $\hat{\Delta}(\phi) = \sum_{i=1}^n X_i \otimes Y_i$,
    where $\{ X_i \}_{1 \leq i \leq n}$ is linearly independent and $Y_i \neq 0$ for all $1 \leq i \leq n$. Then,
    \[
    R_\phi' = \{ a \in \Acal \mid \forall b \in \Acal, \, (\phi, ab) = 0 \} = \bigcap_{i=1}^n \Ker Y_i,
    \]
    and hence $R_\phi'$ is of finite codimension in $\Acal$.
\end{lem}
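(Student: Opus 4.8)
The plan is a direct computation that unwinds the definition of the comultiplication on the dual Hopf algebra. First I would rewrite the defining condition of $R_\phi'$ in terms of $\hat{\Delta}(\phi)$. By the convention \eqref{eq:comultiplication of dual Hopf algebra}, for any $a,b \in \Acal$ we have $(\phi, ab) = (\hat{\Delta}(\phi), b \otimes a)$ --- note the order reversal, which is the one bookkeeping point to track throughout. Substituting $\hat{\Delta}(\phi) = \sum_{i=1}^n X_i \otimes Y_i$ then gives
\[
(\phi, ab) = \sum_{i=1}^n (X_i, b)\,(Y_i, a), \quad a, b \in \Acal.
\]

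Next, fix $a \in \Acal$. By definition, $a \in R_\phi'$ means $\sum_{i=1}^n (Y_i,a)(X_i,b) = 0$ for every $b \in \Acal$, i.e.\ the functional $\sum_{i=1}^n (Y_i, a)\, X_i \in \Acal^*$ vanishes identically on $\Acal$. Since $\{X_i\}_{1 \leq i \leq n}$ is linearly independent in $\Acal^*$, this forces $(Y_i, a) = 0$ for all $i$, that is, $a \in \bigcap_{i=1}^n \Ker Y_i$. The reverse inclusion is immediate from the same displayed formula: if $(Y_i, a) = 0$ for all $i$, then $(\phi, ab) = 0$ for every $b$, so $a \in R_\phi'$. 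This establishes $R_\phi' = \bigcap_{i=1}^n \Ker Y_i$.

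Finally, the finite-codimension claim follows at once: by hypothesis each $Y_i$ is a nonzero element of $\Acal^\circ \subseteq \Acal^*$, so $\Ker Y_i$ is a hyperplane (codimension $1$) in $\Acal$, and therefore $\bigcap_{i=1}^n \Ker Y_i$ has codimension at most $n$.

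I do not expect a genuine obstacle here. The only points requiring attention are the order reversal in \eqref{eq:comultiplication of dual Hopf algebra} when passing between $(\phi, ab)$ and $\hat{\Delta}(\phi)$, and the observation that the hypothesis ``$Y_i \neq 0$'' is precisely what makes each $\Ker Y_i$ a hyperplane, so that the intersection is of finite codimension. (One could additionally remark that, since $\{X_i\}$ is assumed linearly independent, the decomposition $\hat{\Delta}(\phi) = \sum_i X_i \otimes Y_i$ is essentially canonical, but this is not needed for the argument.)
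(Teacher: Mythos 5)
Your proof is correct and follows essentially the same route as the paper: both reduce to the identity $(\phi,ab)=\sum_i (X_i,b)(Y_i,a)$ and exploit the linear independence of the $X_i$ (you phrase it as vanishing of the functional $\sum_i (Y_i,a)X_i$, the paper as surjectivity of $b\mapsto (X_i(b))_i$, which is the same fact). The order reversal from \eqref{eq:comultiplication of dual Hopf algebra} and the codimension count are both handled correctly.
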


\begin{proof}
    By assumption, the linear map
    \[
    \Acal \ni b \longmapsto \big( X_i(b) \big)_{1 \leq i \leq n} \in \Cbb^n
    \]
    is surjective. The conclusion then follows from the observation that
    \[
    (\phi, ab) = \big( \hat{\Delta}(\phi), b \otimes a \big) = \left( \sum_{i=1}^n X_i(b) Y_i, a \right), \quad a, b \in \Acal.
    \]
\end{proof}

\subsection{Laplacian on CQGs}\label{subsec:Laplacians on a CQG}

Throughout this subsection, let $(\Cf^\infty(\Kcal), \Delta, \epsilon, S)$ be a CQG with Haar state $\hcal$. As before, $(\Cf^\infty(\Kcal)^\circ, \hat{\Delta}, \hat{\epsilon}, \hat{S})$ denotes its dual Hopf $*$-algebra.

\begin{defn}\label{defn:strong sesquilinear form}
Let $\Omega$ be a bicovariant bimodule. A sesquilinear map $\la \cdot , \cdot \ra : \Omega \times \Omega \rightarrow \Cf^\infty(\Kcal)$ is called a \textbf{strongly nondegenerate right $\Cf^\infty(\Kcal)$-sesquilinear form on $\Omega$} if it is right $\Cf^\infty(\Kcal)$-sesquilinear, i.e.,
\begin{equation*}
\la \omega f, \eta g \ra = f^* \la \omega, \eta \ra g, \quad f, g \in \Cf^\infty (\Kcal), \; \omega, \eta \in \Omega,
\end{equation*}
and restricts to a $\Cbb$-valued nondegenerate sesquilinear form
\[
\la \, \cdot \, , \, \cdot \, \ra : \inv \Omega \times \inv \Omega \rightarrow \Cbb.
\]
\end{defn}

Conversely, any nondegenerate sesquilinear form on $\inv \Omega$ extends to a unique strongly nondegenerate right $\Cf^\infty(\Kcal)$-sesquilinear form on $\Omega$ (cf. Proposition~\ref{prop:the structure representations of bicovariant bimodule}~(1)). One can easily check that this defines a one-to-one correspondence.

Note that \eqref{eq:Cinfty(K)-valued sesquilinear form on GammaK} provides an example of such a strongly nondegenerate right $\Cf^\infty(K)$-sesquilinear form on $\Omega_K$, arising via this correspondence.

The following proposition explains the term ``strong nondegeneracy.''

\begin{prop}\label{prop:strong nondegeneracy}
Let $(\Omega,d)$ be an FODC on $\Kcal$ equipped with a strongly nondegenerate right $\Cf^\infty(\Kcal)$-sesquilinear form. Then, the following sesquilinear form is nondegenerate:
\begin{equation}\label{eq:right sesquilinear extension integrated}
\Omega \times \Omega \ni (\omega , \eta ) \longmapsto \hcal\big(\la \omega, \eta \ra \big) \in \Cbb.
\end{equation}
\end{prop}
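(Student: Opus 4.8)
The plan is to exploit the free right $\Cf^\infty(\Kcal)$-module structure of $\Omega$ in order to reduce the claim to the faithfulness of $\hcal$ together with the nondegeneracy of the restricted $\Cbb$-valued form on $\inv\Omega$. First I would fix a $\Cbb$-linear basis $\{\omega_i \mid i \in I\}$ of $\inv\Omega$; by Proposition~\ref{prop:the structure representations of bicovariant bimodule}(1) this is simultaneously a right $\Cf^\infty(\Kcal)$-module basis of $\Omega$, so every $\omega \in \Omega$ has a unique expansion $\omega = \sum_{i\in I}\omega_i a_i$ with $a_i \in \Cf^\infty(\Kcal)$ and only finitely many $a_i$ nonzero. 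Writing $g_{ij} := \langle\omega_i,\omega_j\rangle \in \Cbb$ (a scalar, since $\omega_i,\omega_j \in \inv\Omega$), right $\Cf^\infty(\Kcal)$-sesquilinearity gives, for $\eta = \sum_j\omega_j b_j$,
\[
\hcal\big(\langle\omega,\eta\rangle\big) = \sum_{i,j} g_{ij}\,\hcal(a_i^* b_j).
\]

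Next I would check nondegeneracy in the first argument, the second being entirely analogous. Suppose $\hcal(\langle\omega,\eta\rangle) = 0$ for all $\eta\in\Omega$. Specializing to $\eta = \omega_j f$ for an arbitrary $j\in I$ and $f\in\Cf^\infty(\Kcal)$ yields $\hcal\big((\sum_i\overline{g_{ij}}a_i)^* f\big) = 0$ for all $f$; since $\hcal$ is faithful, the form $(f,g)\mapsto\hcal(f^*g)$ on $\Cf^\infty(\Kcal)$ is (positive-definite, hence) nondegenerate, so $\sum_{i}\overline{g_{ij}}\,a_i = 0$ in $\Cf^\infty(\Kcal)$ for every $j\in I$.

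The main obstacle is then to pass from these vanishing $\Cbb$-linear combinations of the $a_i$ to the vanishing of $\omega$ itself using nondegeneracy of $\langle\cdot,\cdot\rangle$ on $\inv\Omega$; the delicate point is that $\inv\Omega$ may be infinite-dimensional, so one cannot merely invert the ``matrix'' $(g_{ij})$. To handle this I would assume $\omega \neq 0$, let $S$ be the (finite) support of $(a_i)$, and set $V_0 = \Span_\Cbb\{a_i \mid i\in S\}$, a nonzero finite-dimensional subspace of $\Cf^\infty(\Kcal)$. Choosing a basis $e_1,\dots,e_m$ ($m\geq 1$) of $V_0$ and writing $a_i = \sum_k c_{ik}e_k$, the fact that the $a_i$ span $V_0$ forces the $m$ columns $(c_{ik})_{i\in S}$ to be $\Cbb$-linearly independent, in particular each nonzero. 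From $\sum_i\overline{g_{ij}}a_i = 0$ and linear independence of the $e_k$ one gets $\sum_{i\in S}\overline{g_{ij}}\,c_{ik} = 0$ for all $j\in I$ and all $k$; taking complex conjugates and setting $\xi_k := \sum_{i\in S}c_{ik}\,\omega_i\in\inv\Omega$, this reads $\langle\xi_k,\omega_j\rangle = 0$ for all $j\in I$, hence $\langle\xi_k,\cdot\rangle$ vanishes identically on $\inv\Omega$. Nondegeneracy then forces $\xi_k = 0$, i.e. $c_{ik} = 0$ for all $i\in S$, contradicting that the $k$-th column is nonzero. Therefore $\omega = 0$, and symmetrically $\hcal(\langle\omega,\eta\rangle) = 0$ for all $\omega$ forces $\eta = 0$. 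Throughout, the one subtlety to keep in mind is that the $\Cbb$-valued form on $\inv\Omega$ is only assumed nondegenerate — not Hermitian or positive — so no norm or Gram-positivity arguments are available, and positivity is supplied solely by the faithfulness of $\hcal$, which is invoked exactly once.
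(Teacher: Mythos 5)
Your proof is correct and takes essentially the same route as the paper: both decompose $\Omega \cong \inv \Omega \otimes \Cf^\infty(\Kcal)$ via Proposition~\ref{prop:the structure representations of bicovariant bimodule}~(1) and reduce the claim to the nondegeneracy of the GNS form $(f,g)\mapsto\hcal(f^*g)$ together with that of the restricted form on $\inv\Omega$. The only difference is that the paper observes that \eqref{eq:right sesquilinear extension integrated} is the tensor product of these two nondegenerate sesquilinear forms and invokes nondegeneracy of such a tensor product directly, whereas you verify that general fact by hand.
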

\begin{proof}
Let $f, g \in \Cf^\infty(\Kcal)$ and $\omega, \eta \in \inv \Omega$. Applying $\hcal$ to both sides of
\[
\la \omega f , \eta g \ra = f^* g \la \omega , \eta \ra,
\]
yields
\begin{align*}
\hcal\big( \la \omega f , \eta g \ra \big) = \hcal ( f^* g ) \la \omega , \eta \ra.
\end{align*}
Thus, under the identification $\Omega \cong \Cf^\infty(\Kcal) \otimes \inv \Omega$ (Proposition~\ref{prop:the structure representations of bicovariant bimodule}~(1)), the form \eqref{eq:right sesquilinear extension integrated} becomes the tensor product of two nondegenerate sesquilinear forms on $\Cf^\infty(\Kcal)$ and $\inv \Omega$, and is therefore nondegenerate.
\end{proof}

In order to define a classical Laplacian on a compact Lie group $K$, we first fixed an inner product on the invariant part $\inv \Omega_K \cong \gf^*$ of the classical FODC, and then extended it $\Cf^\infty(K)$-linearly to $\Omega_K$, see Definition~\ref{defn:classical Laplacian}. In this sense, the following definition of a Laplacian on a CQG---for which a canonical choice of FODC is unavailable \cite{Woronowicz1989}---may be viewed as a natural quantum analogue.

\begin{defn}\label{defn:quantum Laplacian}
Let $(\Omega,d)$ be an FODC on $\Kcal$ equipped with a strongly nondegenerate right $\Cf^\infty(\Kcal)$-sesquilinear form $\la \cdot , \cdot \ra$. A linear operator $\square: \Cf^\infty(\Kcal) \rightarrow \Cf^\infty(\Kcal)$ is called the \textbf{Laplacian on $\Kcal$ associated with $\big(\Omega, d, \la \cdot , \cdot \ra\big)$} if it satisfies, for all $f, g \in \Cf^\infty(\Kcal)$,
\begin{equation*}
\hcal( f^* \square g ) = \hcal\big( \la d f , d g \ra \big).
\end{equation*}
By the faithfulness of $\hcal$, the Laplacian is unique if it exists.
\end{defn}

Given an $\ad$-invariant linear functional $\phi$, denote by $Q_\phi$ the quantum germs map associated with the FODC $(\Omega_\phi, d_\phi)$.

\begin{thm}\label{thm:linear functionals as Laplacians}
    Let $L : \Cf^\infty(\Kcal) \rightarrow \Cf^\infty(\Kcal)$ be a linear operator that diagonalizes with real eigenvalues over the Peter–Weyl decomposition, commutes with the antipode, and vanishes at the unit, and let $\phi = \epsilon L$. Equivalently (cf. Corollary~\ref{cor:diagonalization of translation invariant linear operators} and Proposition~\ref{prop:left multiplication operator commutes with antipode}), let $L = \phi \triangleright$, where $\phi \in \Cf^\infty(\Kcal)^*$ is a self-adjoint, $\ad$-invariant, Hermitian linear functional vanishing at the unit. Then, the map $\la \cdot , \cdot \ra : \Omega_\phi \times \Omega_\phi \rightarrow \Cf^\infty(\Kcal)$ defined for $f,g \in \Cf^\infty(\Kcal)$ and $x,y \in \Ker \epsilon$ by
\begin{equation}\label{eq:nondegenerate pairing between two FODCs-Hermitian case}
    \big\la Q_{\phi} (x)f , Q_{\phi} (y)g \big\ra_\phi = - \frac{1}{2} f^* g \big( \phi , S(x)^* S(y) \big) 
\end{equation}
is a strongly nondegenerate right $\Cf^\infty(\Kcal)$-sesquilinear form on $\Omega_\phi$, called the \textbf{sesquilinear form induced by $\phi$}, such that the operator $\phi \triangleright : \Cf^\infty(\Kcal) \rightarrow \Cf^\infty(\Kcal)$ is the Laplacian associated with $\big(\Omega_\phi , d_\phi , \la\cdot,\cdot\ra_\phi \big)$. That is, for all $f,g \in \Cf^\infty(\Kcal)$,
\begin{equation}\label{eq:linear functionals as Laplacians-Hermitian case}
    \hcal\big( f^* (\phi \triangleright g) \big) = \hcal\big( \la d_\phi f, d_{\phi} g \ra_\phi \big).
\end{equation}
\end{thm}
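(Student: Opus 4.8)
The plan is to verify three things in turn: that formula \eqref{eq:nondegenerate pairing between two FODCs-Hermitian case} unambiguously defines a right $\Cf^\infty(\Kcal)$-sesquilinear form on $\Omega_\phi$, that this form is strongly nondegenerate, and that $\phi\triangleright$ satisfies \eqref{eq:linear functionals as Laplacians-Hermitian case}. Throughout I would use that, $\phi$ being $\ad$-invariant and self-adjoint, Theorem~\ref{thm:FODC-linear-functional} makes $R_\phi$ an $\ad$-invariant right ideal of $\Ker\epsilon$ with $*S(R_\phi)\subseteq R_\phi$; since $(*S)^2=\id$, in fact $*S(R_\phi)=R_\phi$. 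Recall also that $Q_\phi(x)=1\otimes\pi_{R_\phi}(x)$ for $x\in\Ker\epsilon$, so $\inv\Omega_\phi\cong\Ker\epsilon/R_\phi$ and $Q_\phi(x)=Q_\phi(x')$ iff $x-x'\in R_\phi$.

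For well-definedness, set $\tilde B(x,y):=-\tfrac12(\phi,S(x)^*S(y))$ for $x,y\in\Ker\epsilon$; this is conjugate-linear in $x$ and linear in $y$. If $r\in R_\phi$ then $S(r)^*\in *S(R_\phi)=R_\phi$ while $S(y)\in\Ker\epsilon$, so $\tilde B(r,y)=0$ straight from the definition of $R_\phi$; and Hermiticity of $\phi$ gives $\overline{\tilde B(x,y)}=-\tfrac12(\phi,(S(x)^*S(y))^*)=\tilde B(y,x)$, so $\tilde B$ is also independent of the representative in its second slot. Hence $\tilde B$ descends to a sesquilinear form on $\inv\Omega_\phi$, which by surjectivity of $Q_\phi$ and Proposition~\ref{prop:the structure representations of bicovariant bimodule}(1) extends uniquely to a right $\Cf^\infty(\Kcal)$-sesquilinear form $\la\cdot,\cdot\ra_\phi$ on $\Omega_\phi$, realized by \eqref{eq:nondegenerate pairing between two FODCs-Hermitian case}. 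For strong nondegeneracy it suffices that $\tilde B$ be nondegenerate on $\inv\Omega_\phi$: if $\tilde B(x,y)=0$ for all $y\in\Ker\epsilon$, then (as $S$ maps $\Ker\epsilon$ onto itself) $(\phi,S(x)^*z)=0$ for all $z\in\Ker\epsilon$, i.e.\ $S(x)^*\in R_\phi$, whence $x=*S(S(x)^*)\in *S(R_\phi)=R_\phi$, i.e.\ $Q_\phi(x)=0$; the other slot follows from the Hermitian symmetry. Proposition~\ref{prop:strong nondegeneracy} then also yields nondegeneracy of $\hcal(\la\cdot,\cdot\ra_\phi)$.

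The heart of the argument is \eqref{eq:linear functionals as Laplacians-Hermitian case}, and the main obstacle there is a bookkeeping step that rewrites $d_\phi f$ so that its function factor stands to the \emph{right} of its left-invariant factor---the form being only right-sesquilinear. Starting from $d_\phi f=f_{(1)}Q_\phi(f_{(2)})$, and using the bimodule identity $\omega a=a_{(1)}(\omega\cdot a_{(2)})$ for $\omega\in\inv\Omega_\phi$ (hence $a\omega=\sum(\omega\cdot S^{-1}(a_{(2)}))\,a_{(1)}$), the Leibniz rule \eqref{eq:basic identity of quantum germs map} for $Q_\phi$, and the antipode identities $a_{(2)}S^{-1}(a_{(1)})=\epsilon(a)1=S^{-1}(a_{(2)})a_{(1)}$, I would establish
\[
d_\phi f=-\sum Q_\phi\!\big(S^{-1}(f_{(2)})\big)\,f_{(1)}.
\]
Substituting this into $\la d_\phi f,d_\phi g\ra_\phi$, applying right-sesquilinearity, and then \eqref{eq:nondegenerate pairing between two FODCs-Hermitian case} (first replacing $S^{-1}(f_{(2)})$ by its $\Ker\epsilon$-projection $S^{-1}(f_{(2)}-\epsilon(f_{(2)})1)$, which does not change $Q_\phi$), gives
\[
\la d_\phi f,d_\phi g\ra_\phi=-\tfrac12\sum f_{(1)}^*g_{(1)}\,\big(\phi,\,(f_{(2)}-\epsilon(f_{(2)})1)^*(g_{(2)}-\epsilon(g_{(2)})1)\big).
\]

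Finally I would apply $\hcal$ and expand the product inside $\phi$. The term $\sum\hcal(f_{(1)}^*g_{(1)})(\phi,f_{(2)}^*g_{(2)})=(\hcal\otimes\phi)\Delta(f^*g)$ vanishes by invariance of $\hcal$ together with $\phi(1)=0$; the term carrying $\epsilon(f_{(2)})\epsilon(g_{(2)})$ vanishes since $\phi(1)=0$. The two remaining cross terms collapse, via $\sum g_{(1)}\epsilon(g_{(2)})=g$ and $\sum(f^*)_{(1)}\epsilon((f^*)_{(2)})=f^*$ together with $\sum(f^*)_{(1)}\phi((f^*)_{(2)})=\phi\triangleright(f^*)=(\phi\triangleright f)^*$ (the last equality using Hermiticity of $\phi$), to $\tfrac12\hcal((\phi\triangleright f)^*g)$ and $\tfrac12\hcal(f^*(\phi\triangleright g))$ respectively. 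It then remains to see that $\hcal((\phi\triangleright f)^*g)=\hcal(f^*(\phi\triangleright g))$: by Proposition~\ref{prop:Peter-Weyl} it suffices to check this for $f=u^\mu_{ij}$ and $g=u^\nu_{kl}$, where $\phi\triangleright$ acts on all matrix elements of $u^\mu$ by the \emph{real} scalar $C_\phi(\mu)$ (Corollary~\ref{cor:diagonalization of translation invariant linear operators}), and the Schur orthogonality relations force $\mu=\nu$ on the support of both pairings. Combining everything yields $\hcal(\la d_\phi f,d_\phi g\ra_\phi)=\hcal(f^*(\phi\triangleright g))$, i.e.\ \eqref{eq:linear functionals as Laplacians-Hermitian case}. (As a consistency check, for $\Kcal=K$ and $\phi=Z$ this recovers the classical computation of Section~\ref{subsec:classical Laplacians on K}.)
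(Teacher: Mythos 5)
Your proposal is correct and follows essentially the same route as the paper's proof: the paper likewise rewrites $d_\phi f = Q_\phi\big(\epsilon(f_{(2)}) - S^{-1}(f_{(2)})\big)f_{(1)}$ (Lemma~\ref{lem:differential in terms of quantum germs map-right multiplication form}), expands the pairing into the same four terms (Lemma~\ref{lem:linear functionals as Laplacians-Leibniz}), and eliminates the $\phi \triangleright (f^*g)$ contribution using invariance of $\hcal$ together with $\phi(1)=0$. The only minor deviations are that the paper proves well-definedness and nondegeneracy of the pairing without assuming Hermiticity, as a pairing $\inv\Omega_\phi \times \inv\Omega_{\phi S} \rightarrow \Cbb$ (Proposition~\ref{prop:nondegenerate pairing between two FODCs}), whereas you use the conjugate symmetry supplied by Hermiticity to handle the second slot, and the paper cites \cite[Lemma~2.3]{KustermanMurphyTuset_Laplacian_2005} for the identity $\hcal\big((\phi\triangleright f)^*g\big)=\hcal\big(f^*(\phi\triangleright g)\big)$ where you give a direct Peter--Weyl/Schur-orthogonality verification---both are valid.
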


\begin{ex}\label{ex:classical Laplacians in the light of quantum Laplacians}
Let $K$ be a compact Lie group with Lie algebra $\kf$, equipped with an $\Ad$-invariant inner product $\la \cdot , \cdot \ra$, and let $Z \in U^\Rbb (\kf)$ be the classical Casimir element associated with it.

Proposition~\ref{prop:classical Casimir element is self-adjoint, ad-invariant, and Hermitian} shows that the classical Laplacian $Z \triangleright :\Cf^\infty(K) \rightarrow \Cf^\infty(K)$ satisfies the assumptions of Theorem~\ref{thm:linear functionals as Laplacians}. Moreover, by Theorem~\ref{thm:classical Casimir element induces classical FODC}, the FODC and the strongly nondegenerate right $\Cf^\infty(K)$-sesquilinear form induced by $Z$ via \eqref{eq:right-ideal-linear-functional} and \eqref{eq:nondegenerate pairing between two FODCs-Hermitian case}, respectively, coincide with the classical FODC and the $\Cf^\infty(K)$-sesquilinear extension of the $\Ad$-invariant inner product $\la \cdot, \cdot \ra$ on $\Omega_K$. Indeed, using \eqref{eq:classical Laplacian at the identity}, we have, for $f, g \in \Ker \epsilon$,
\begin{equation}\label{eq:nondegenerate pairing between two invariant parts-comparison}
    - \frac{1}{2} \big(Z , \overline{S(f)} S(g) \big) = - \frac{1}{2} \big( Z, S (\overline{f} g) \big) = - \frac{1}{2} (Z, \overline{f} g) = \la df_e , dg_e \ra.
\end{equation}
\end{ex}

Before proceeding to the proof, we record an equivalent condition under which the sesquilinear form $\hcal \big(\la\cdot,\cdot\ra_\phi \big)$ is positive definite.

\begin{prop}\label{prop:positive definiteness and conditional positivity}
The sesquilinear form $\hcal\big(\la \cdot, \cdot \ra_\phi\big)$ on $\Omega_\phi$ is positive definite if and only if $-\phi$ is conditionally positive.
\end{prop}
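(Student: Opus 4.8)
The plan is to reduce the positive‑definiteness of $\hcal\big(\la\cdot,\cdot\ra_\phi\big)$ on the whole module $\Omega_\phi$ to the positive‑definiteness of its restriction to the invariant part $\inv\Omega_\phi$, and then to recognize that last condition as the conditional positivity of $-\phi$. For the first reduction I would invoke the tensor‑product description established in the proof of Proposition~\ref{prop:strong nondegeneracy}: under the identification $\Omega_\phi \cong \Cf^\infty(\Kcal)\otimes \inv\Omega_\phi$ coming from Proposition~\ref{prop:the structure representations of bicovariant bimodule}~(1), the form \eqref{eq:right sesquilinear extension integrated} is the algebraic tensor product of the GNS inner product $(f,g)\mapsto\hcal(f^*g)$ on $\Cf^\infty(\Kcal)$ with the $\Cbb$‑valued form $\la\cdot,\cdot\ra_\phi|_{\inv\Omega_\phi}$. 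Since $\hcal$ is positive and faithful (axioms A1 and A3), the GNS form is positive definite; performing Gram–Schmidt against it on the $\Cf^\infty(\Kcal)$‑factor shows that a tensor product of a positive definite form with a Hermitian form $B$ is positive (semi)definite exactly when $B$ is. Hence $\hcal\big(\la\cdot,\cdot\ra_\phi\big)$ is positive definite on $\Omega_\phi$ if and only if $\la\cdot,\cdot\ra_\phi|_{\inv\Omega_\phi}$ is. (The forward implication is in any case immediate, as $\inv\Omega_\phi$ is a linear subspace of $\Omega_\phi$ on which $\hcal\big(\la\cdot,\cdot\ra_\phi\big)$ restricts to $\la\cdot,\cdot\ra_\phi|_{\inv\Omega_\phi}$, the form being $\Cbb$‑valued there.)

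Next I would unwind the restricted form explicitly. Setting $f=g=1$ in \eqref{eq:nondegenerate pairing between two FODCs-Hermitian case}, and using that $Q_\phi$ maps $\Ker\epsilon$ onto $\inv\Omega_\phi$, every element of $\inv\Omega_\phi$ has the form $Q_\phi(x)$ with $x\in\Ker\epsilon$, and
\[
\big\la Q_\phi(x),\, Q_\phi(y)\big\ra_\phi \;=\; -\tfrac12\,\big(\phi,\; S(x)^* S(y)\big), \qquad x,y\in\Ker\epsilon.
\]
Because $\phi$ is Hermitian this form is conjugate‑symmetric, and it is nondegenerate by the strong nondegeneracy asserted in Theorem~\ref{thm:linear functionals as Laplacians}; a nondegenerate positive semidefinite Hermitian form is automatically positive definite by Cauchy–Schwarz. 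Therefore $\la\cdot,\cdot\ra_\phi|_{\inv\Omega_\phi}$ is positive definite precisely when $-\big(\phi,\,S(x)^*S(x)\big)\ge 0$ for every $x\in\Ker\epsilon$.

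Finally, since the antipode $S$ is invertible and $\epsilon\circ S=\epsilon$, it restricts to a bijection of $\Ker\epsilon$; substituting $a=S(x)$ turns the condition ``$-\big(\phi,\,S(x)^*S(x)\big)\ge 0$ for all $x\in\Ker\epsilon$'' into ``$(-\phi,\,a^*a)\ge 0$ for all $a\in\Ker\epsilon$'', which is exactly the statement that $-\phi$ is conditionally positive. Chaining these equivalences gives the proposition. I do not expect a genuine obstacle: the only two points demanding care are the passage from positive semidefiniteness to positive definiteness in the possibly infinite‑dimensional space $\inv\Omega_\phi$, which is handled by nondegeneracy, and the tensor‑product computation, which is handled by Gram–Schmidt against the positive definite GNS form.
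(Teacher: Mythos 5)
Your proof is correct and follows essentially the same route as the paper's: both directions rest on the identity $\hcal\big(\la Q_\phi(x)f,\,Q_\phi(y)g\ra_\phi\big)=-\tfrac12\,\hcal(f^*g)\,\big(\phi,S(x)^*S(y)\big)$, the bijectivity of $S$ on $\Ker\epsilon$ to translate between $S(x)^*S(x)$ and $a^*a$, and the Cauchy--Schwarz-plus-nondegeneracy upgrade from positive semidefinite to positive definite. The only organizational difference is that the paper checks semidefiniteness on a general element $\sum_i Q_\phi(x_i)f_i$ directly, using positivity of the scalar matrix $\big((-\phi,S(x_i)^*S(x_j))\big)_{i,j}$ together with positivity of $\hcal$ on $\sum_{i,j}f_i^*c_{ij}f_j$, whereas you Gram--Schmidt the $f_i$ against the GNS form and restrict to the invariant part first; the two computations are the same diagonalization performed on the other tensor factor.
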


\begin{proof}
    Suppose $\hcal\big(\la \cdot, \cdot \ra_\phi\big)$ is positive definite. Then, for all $f \in \Ker \epsilon$,
    \[
    \frac{1}{2} \big( - \phi, S(f)^* S(f) \big) = \hcal\Big( \big\la Q_\phi (f), Q_\phi (f) \big\ra_\phi \Big) \geq 0,
    \]
    proving that $-\phi$ is conditionally positive.

    Conversely, assume $- \phi$ is conditionally positive. Then for any $f_i \in \Cf^\infty(\Kcal)$ and $x_i \in \Ker \epsilon$ with $1 \leq i \leq n$, we compute:
    \begin{align*}
    \hcal\Bigg(\Big\la \sum_{1 \leq i \leq n} Q_\phi(x_i) f_i,\; \sum_{1 \leq j \leq n} Q_\phi (x_j) f_j \Big\ra_\phi \Bigg) 
    &= \frac{1}{2} \sum_{1 \leq i,j \leq n} \hcal(f_i ^* f_j)\, \big( - \phi, S(x_i)^* S(x_j) \big)\\
    &= \frac{1}{2} \sum_{1 \leq i,j \leq n} \hcal \Big( f_i ^* \big( - \phi, S(x_i)^* S(x_j) \big) f_j \Big) \geq 0,
    \end{align*}
    since the matrix
    \[
    \Big( - \phi, S(x_i)^* S(x_j) \Big)_{1 \leq i,j \leq n} \in M_{n} (\Cbb)
    \]
    is positive by assumption.

    Hence, $\hcal\big( \la \cdot, \cdot \ra_\phi \big)$ is positive semi-definite. Now, suppose $\omega \in \Omega_\phi$ satisfies $\hcal\big(\la \omega, \omega \ra_\phi \big) = 0$. Then, by the Cauchy–Schwarz inequality for the sesquilinear form $\hcal \big( \la \cdot , \cdot \ra_\phi \big)$, we have
    \[
    \hcal\big(\la \omega, \eta \ra_\phi \big) \leq \hcal\big(\la \omega, \omega \ra_\phi\big)^{\frac{1}{2}} \hcal\big(\la \eta, \eta \ra_\phi\big)^{\frac{1}{2}} = 0
    \]
    for all $\eta \in \Omega_\phi$, which implies $\omega = 0$ by the nondegeneracy of $\hcal\big(\la\cdot,\cdot\ra_\phi\big)$ (Proposition~\ref{prop:strong nondegeneracy}). Thus, $\hcal\big(\la \cdot, \cdot \ra_\phi\big)$ is positive definite.
\end{proof}

We now begin the proof of Theorem~\ref{thm:linear functionals as Laplacians}.

\begin{prop}\label{prop:nondegenerate pairing between two FODCs}
Let $\phi$ be a self-adjoint, $\ad$-invariant linear functional on $\Cf^\infty(\Kcal)$. Then the sesquilinear map
\[
\la \cdot, \cdot \ra_\phi : \inv \Omega_\phi \times \inv \Omega_{\phi S} \rightarrow \Cbb
\]
defined, for $x,y \in \Ker \epsilon$, by
\begin{equation}\label{eq:nondegenerate pairing between two invariant parts}
\big\la Q_{\phi} (x) , Q_{\phi S} (y) \big\ra_\phi = - \frac{1}{2} \big( \phi, S(x)^* S(y) \big)
\end{equation}
is well-defined and nondegenerate. In particular, if $\phi$ is Hermitian (and hence satisfies $\phi = \phi S$), then $\la \cdot, \cdot \ra_\phi$ extends uniquely to a strongly nondegenerate right $\Cf^\infty(\Kcal)$-sesquilinear form on $\Omega_\phi$, which we continue to denote by $\la \cdot,\cdot \ra_\phi$.
\end{prop}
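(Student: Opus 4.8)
The plan is to reduce everything to the structure of the right ideals $R_\phi$, $R_{\phi S}$ and of the antilinear map $a \mapsto S(a)^*$. Recall from Proposition~\ref{prop:quantum germs map} and \eqref{eq:right ideal corresponding to BCFODC} that $Q_\phi$ restricts to a surjection $\Ker \epsilon \to \inv \Omega_\phi$ with kernel $R_\phi = \{ a \in \Ker \epsilon \mid (\phi, ab) = 0 \text{ for all } b \in \Ker \epsilon \}$, and likewise for $\phi S$, which is again self-adjoint and $\ad$-invariant by Proposition~\ref{prop:self-adjointness and ad-invariance are preserved under antipode}, so that $(\Omega_{\phi S}, d_{\phi S})$ and $Q_{\phi S}$ make sense. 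Using $(\phi S, wb) = (\phi, S(b) S(w))$ and the bijectivity of $S$ on $\Ker \epsilon$, I first rewrite $R_{\phi S} = \{ w \in \Ker \epsilon \mid (\phi, a\, S(w)) = 0 \text{ for all } a \in \Ker \epsilon \}$. I also record that $a \mapsto S(a)^*$ is an antilinear algebra automorphism of $\Cf^\infty(\Kcal)$ squaring to the identity and preserving $\Ker \epsilon$ (as in the proof of Theorem~\ref{thm:FODC-linear-functional}), and that $*S(R_\phi) = R_\phi$ by Theorem~\ref{thm:FODC-linear-functional}, since $\phi$ is self-adjoint (the inclusion $*S(R_\phi) \subseteq R_\phi$ is an equality because $a \mapsto S(a)^*$ is an involution).

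Sesquilinearity of $(x, y) \mapsto -\frac{1}{2}(\phi, S(x)^* S(y))$ is immediate, as $x \mapsto S(x)^*$ is conjugate-linear and $(\phi, \cdot)$ linear. For well-definedness I check that this expression vanishes whenever $x \in R_\phi$ or $y \in R_{\phi S}$. If $x \in R_\phi$, then $S(x)^* \in R_\phi$, so $(\phi, S(x)^* b) = 0$ for every $b \in \Ker \epsilon$; taking $b = S(y)$ gives the claim. If $y \in R_{\phi S}$, then taking $a = S(x)^* \in \Ker \epsilon$ in the rewritten description of $R_{\phi S}$ gives $(\phi, S(x)^* S(y)) = 0$ directly. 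Hence the map descends to a sesquilinear form $\la \cdot, \cdot \ra_\phi$ on $\inv \Omega_\phi \times \inv \Omega_{\phi S}$.

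For nondegeneracy, suppose $Q_\phi(x)$, $x \in \Ker \epsilon$, pairs to zero with all of $\inv \Omega_{\phi S}$. By surjectivity of $Q_{\phi S}$, this means $(\phi, S(x)^* S(y)) = 0$ for all $y \in \Ker \epsilon$; since $y \mapsto S(y)$ is onto $\Ker \epsilon$, this says $a := S(x)^* \in R_\phi$, whence $x = S(a)^* \in R_\phi$ and so $Q_\phi(x) = 0$. Symmetrically, if $Q_{\phi S}(y)$ pairs to zero with all of $\inv \Omega_\phi$, then $(\phi, S(x)^* S(y)) = 0$ for all $x \in \Ker \epsilon$; since $x \mapsto S(x)^*$ is onto $\Ker \epsilon$, this is precisely the rewritten condition $y \in R_{\phi S}$, so $Q_{\phi S}(y) = 0$. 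Thus $\la \cdot, \cdot \ra_\phi$ is nondegenerate.

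Finally, in the Hermitian case Corollary~\ref{cor:self-adjointness and Hermiticity for ad-invariant functional} gives $\phi = \phi S$, so $R_\phi = R_{\phi S}$ and, by the model construction of Proposition~\ref{prop:model BCFODC}, $(\Omega_\phi, d_\phi) = (\Omega_{\phi S}, d_{\phi S})$; hence $\la \cdot, \cdot \ra_\phi$ is a $\Cbb$-valued nondegenerate sesquilinear form on $\inv \Omega_\phi$, which extends uniquely to a strongly nondegenerate right $\Cf^\infty(\Kcal)$-sesquilinear form on $\Omega_\phi$ by the one-to-one correspondence recorded after Definition~\ref{defn:strong sesquilinear form}. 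The only point needing a little care is arranging the defining conditions of $R_\phi$ and $R_{\phi S}$ so that the factors $S(x)^*$ and $S(y)$ appearing in the formula line up with them; once $*S(R_\phi) = R_\phi$ and the bijectivity of $S$ and $a \mapsto S(a)^*$ on $\Ker \epsilon$ are in place, every remaining step is a one-line substitution.
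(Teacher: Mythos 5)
Your proof is correct and follows essentially the same route as the paper's: both arguments reduce well-definedness and nondegeneracy to the two characterizations "the pairing vanishes for all $y$ iff $x \in R_\phi$" (via $*S(R_\phi) = R_\phi$ and the bijectivity of $S$ and $*S$ on $\Ker\epsilon$) and "the pairing vanishes for all $x$ iff $y \in R_{\phi S}$" (via rewriting the defining condition of $R_{\phi S}$ through the antipode). Your reformulation of $R_{\phi S}$ as $\{w \mid (\phi, a\,S(w)) = 0 \ \forall a \in \Ker\epsilon\}$ is just a repackaging of the paper's identity $(\phi, S(x)^*S(y)) = (\phi S, y\,S^2(x)^*)$, so no genuinely different idea is involved.
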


\begin{proof}
    Since $*S(R_\phi) = R_\phi$ and $(*S)^2 = \id$, it follows that for any fixed $x \in \Ker \epsilon$,
    \[
    \big(\phi, S(x)^* S(y) \big) = 0 \text{ for all } y \in \Ker \epsilon \quad \Longleftrightarrow \quad x \in R_\phi.
    \]
    Moreover, using the identity $\big(\phi, S(x)^* S(y) \big) = \big(\phi S , y \, S^2(x)^* \big)$, we deduce that for any fixed $y \in \Ker \epsilon$,
    \[
    \big(\phi, S(x)^* S(y) \big) = 0 \text{ for all } x \in \Ker \epsilon \quad \Longleftrightarrow \quad y \in R_{\phi S}.
    \]
    Together with \eqref{eq:right ideal corresponding to BCFODC}, these two characterizations establish both the well-definedness and the nondegeneracy of the pairing in \eqref{eq:nondegenerate pairing between two invariant parts}.
\end{proof}

\begin{rmk}\label{rmk:left ideal and right ideal complication}
    Some might wonder why we choose to use the seemingly more complicated expression \eqref{eq:nondegenerate pairing between two invariant parts} rather than, for example,
    \begin{equation}\label{eq:nondegenerate pairing-alternative}
    \la Q_{\phi S} (a) , Q_{\phi} (b) \ra = - \frac{1}{2} (\phi, b a^*),
    \end{equation}
    which is also well-defined and nondegenerate. The reason lies in a clash between two different conventions employed in this setting.

    If one adopts \eqref{eq:nondegenerate pairing-alternative} and attempts to prove Lemma~\ref{lem:linear functionals as Laplacians-Leibniz} below, one soon discovers that the more conventional GNS inner product $(a,b) \mapsto \hcal(a^* b)$ on $\Cf^\infty(\Kcal)$ must instead be replaced with
    \[
    \Cf^\infty(\Kcal) \times \Cf^\infty(\Kcal) \ni (a, b) \longmapsto \hcal(ba^*) \in \Cbb
    \]
    in order to proceed. However, in the theory of operator algebras, the use of the GNS inner product is a firmly established convention.

    Had we constructed bicovariant FODCs in Proposition~\ref{prop:model BCFODC} from left ideals rather than right ideals (cf. \cite[Section~6.7]{Sontz}), then for Corollary~\ref{cor:BCFODC induced by linear functionals}, we could have taken
    \[
    L_\phi = \{ b \in \Ker \epsilon \mid \forall a \in \Ker \epsilon,\, ( \phi, a b ) = 0 \} \subseteq \Ker \epsilon,
    \]
    which would have allowed the use of the simpler sesquilinear map
    \[
    \inv \Omega_{\phi S} \times \inv \Omega_{\phi} \ni (Q_{\phi S} (x) , Q_{\phi} (y) ) \longmapsto - ( \phi, x^* y ) \in \Cbb
    \]
    in place of \eqref{eq:nondegenerate pairing between two invariant parts}, all while retaining compatibility with the GNS inner product.

    Nevertheless, the use of right ideals in the bicovariant differential calculus literature is also a well-established convention. In the face of this conflict between two prevailing conventions, the choice of sesquilinear form given in \eqref{eq:nondegenerate pairing between two invariant parts} seems to be the only way that avoids altering either of them.

    Importantly, the use of \eqref{eq:nondegenerate pairing between two invariant parts} remains fully consistent with the classical case, as illustrated by \eqref{eq:nondegenerate pairing between two invariant parts-comparison}.
\end{rmk}

\begin{lem}\label{lem:differential in terms of quantum germs map-right multiplication form}
    Let $(\Omega,d)$ be a bicovariant FODC with quantum germs map $Q$. Then, for $f \in \Cf^\infty(\Kcal)$, we have
    \begin{equation}\label{eq:differential in terms of quantum germs map-right multiplication form}
    df = Q \big( \epsilon(f_{(2)}) - S^{-1} (f_{(2)}) \big) f_{(1)}.
    \end{equation}
\end{lem}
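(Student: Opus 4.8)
The plan is to reduce the claim to the equivalent identity $df = -\,Q\bigl(S^{-1}(f_{(2)})\bigr)\,f_{(1)}$ and then prove the latter by a short telescoping argument built from the Leibniz rule F1. First I would dispose of the $\epsilon$-term: since $Q(1)=0$ and, for each Sweedler summand, $\epsilon\bigl(\epsilon(f_{(2)})1 - S^{-1}(f_{(2)})\bigr)=0$, one has $Q\bigl(\epsilon(f_{(2)})1 - S^{-1}(f_{(2)})\bigr)\,f_{(1)} = -\,Q\bigl(S^{-1}(f_{(2)})\bigr)\,f_{(1)}$, so it remains to show this equals $df$. Here the products are the plain right $\Cf^\infty(\Kcal)$-module multiplications in $\Omega$, not the twisted action on $\inv\Omega$.

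Next I would make $Q\bigl(S^{-1}(f_{(2)})\bigr)$ explicit. Combining the anti-comultiplicativity of $S^{-1}$, that is $\Delta(S^{-1}(x)) = S^{-1}(x_{(2)})\otimes S^{-1}(x_{(1)})$, with the formula $Q(a)=S(a_{(1)})\,da_{(2)}$ from Proposition~\ref{prop:quantum germs map}, one obtains, writing $\Delta^{(2)}(f)=f_{(1)}\otimes f_{(2)}\otimes f_{(3)}$, the identity $Q\bigl(S^{-1}(f_{(2)})\bigr) = f_{(3)}\,d\bigl(S^{-1}(f_{(2)})\bigr)$. Thus the quantity to be computed is $-\,f_{(3)}\,d\bigl(S^{-1}(f_{(2)})\bigr)\,f_{(1)}$, an element of $\Omega$ in which $f_{(3)}$ acts on the left and $f_{(1)}$ on the right.

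Then comes the key coalgebra identity $\sum f_{(3)}\,S^{-1}(f_{(2)})\,f_{(1)} = f$, which follows from two applications of the antipode axioms $\sum b_{(2)}S^{-1}(b_{(1)}) = \epsilon(b)1 = \sum S^{-1}(b_{(2)})b_{(1)}$ (valid since $S$ is invertible), freezing the appropriate leg of the iterated coproduct each time. Applying $d$ to both sides and expanding the right-hand side by the iterated Leibniz rule gives $df = (\mathrm{I}) + (\mathrm{II}) + (\mathrm{III})$, where $(\mathrm{I}) = d(f_{(3)})\,S^{-1}(f_{(2)})\,f_{(1)}$, $(\mathrm{II}) = f_{(3)}\,d\bigl(S^{-1}(f_{(2)})\bigr)\,f_{(1)}$, and $(\mathrm{III}) = f_{(3)}\,S^{-1}(f_{(2)})\,d(f_{(1)})$; note that $(\mathrm{II})$ is exactly the negative of the quantity from the previous paragraph.

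Finally I would collapse $(\mathrm{I})$ and $(\mathrm{III})$. In $(\mathrm{I})$, with the $f_{(3)}$-leg frozen, $\sum S^{-1}(f_{(2)})f_{(1)} = \epsilon(\cdot)1$ by one antipode axiom, leaving $(\mathrm{I}) = df$; symmetrically, in $(\mathrm{III})$, with the $f_{(1)}$-leg frozen, $\sum f_{(3)}S^{-1}(f_{(2)}) = \epsilon(\cdot)1$, giving $(\mathrm{III}) = df$. Substituting into $df = (\mathrm{I})+(\mathrm{II})+(\mathrm{III})$ yields $(\mathrm{II}) = -\,df$, whence $-\,Q\bigl(S^{-1}(f_{(2)})\bigr)\,f_{(1)} = -(\mathrm{II}) = df$, completing the proof. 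The only delicate point is the Sweedler-index bookkeeping — keeping straight which leg of $\Delta^{(2)}$ is held fixed when each antipode identity is invoked, and consistently using the plain (not twisted) right module structure on $\Omega$ — but this is conceptually routine once the telescoping structure above is in place.
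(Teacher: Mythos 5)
Your argument is correct. Each step checks out: the reduction to $df=-Q\bigl(S^{-1}(f_{(2)})\bigr)f_{(1)}$ via $Q(1)=0$; the computation $Q(S^{-1}(x))=S\bigl(S^{-1}(x_{(2)})\bigr)\,d\bigl(S^{-1}(x_{(1)})\bigr)=x_{(2)}\,d\bigl(S^{-1}(x_{(1)})\bigr)$ from the anti-comultiplicativity of $S^{-1}$; the identity $f_{(3)}S^{-1}(f_{(2)})f_{(1)}=\epsilon(f_{(2)})f_{(1)}=f$; and the collapse of the outer two Leibniz terms to $df$ using $\sum S^{-1}(a_{(2)})a_{(1)}=\epsilon(a)1=\sum a_{(2)}S^{-1}(a_{(1)})$, which forces the middle term to equal $-df$.

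However, your route differs from the paper's. The paper starts from $df=f_{(1)}Q(f_{(2)})$, rewrites $f_{(2)}-\epsilon(f_{(2)})=\bigl(\epsilon(f_{(3)})-S^{-1}(f_{(3)})\bigr)f_{(2)}$, and then invokes the derivation-type identity \eqref{eq:basic identity of quantum germs map}, $Q(ab)=Q(a)\cdot b+\epsilon(a)Q(b)$ (here with $\epsilon(a)=0$), together with the twisted right action $\omega\cdot b=S(b_{(1)})\omega b_{(2)}$ on $\inv\Omega$; the leading $f_{(1)}$ then cancels against $S(f_{(2)})$ to produce the stated form. Your proof bypasses both the identity \eqref{eq:basic identity of quantum germs map} and the twisted action entirely, using only the definition $Q(a)=S(a_{(1)})\,da_{(2)}$, the Leibniz rule F1, and the antipode axioms, at the cost of a three-term telescoping computation. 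The paper's argument is a more direct algebraic manipulation of $Q$ exploiting its structural properties; yours is more elementary in its inputs but relies on careful Sweedler bookkeeping across the three legs of $\Delta^{(2)}$. Both are valid, and the comparison is a fair trade-off.
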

\begin{proof}
    Recall that for $\omega \in \inv \Omega$ and $f \in \Cf^\infty(\Kcal)$, we have $\omega \cdot f = S(f_{(1)}) \omega f_{(2)}$. Using \eqref{eq:basic identity of quantum germs map}, we compute:
    \begin{align*}
        df = f_{(1)} Q \big( f_{(2)} - \epsilon(f_{(2)}) \big) &= f_{(1)} Q \Big( \big( \epsilon(f_{(3)}) - S^{-1} (f_{(3)}) \big) f_{(2)} \Big) \\
           &= f_{(1)} \Big[ Q \big( \epsilon(f_{(3)}) - S^{-1} (f_{(3)}) \big) \cdot f_{(2)} \Big] \\
           &= f_{(1)} \Big[ S(f_{(2)}) Q \big( \epsilon(f_{(4)}) - S^{-1}(f_{(4)}) \big) f_{(3)} \Big] \\
           &= Q \big( \epsilon(f_{(2)}) - S^{-1} (f_{(2)}) \big) f_{(1)}.
    \end{align*}
\end{proof}

Compare the following with Lemma~\ref{lem:classical Laplacian characterization}.

\begin{lem}\label{lem:linear functionals as Laplacians-Leibniz}
    Let $\phi$ be a self-adjoint, $\ad$-invariant linear functional on $\Cf^\infty(\Kcal)$ that vanishes at the unit. Then, for all $f, g \in \Cf^\infty(\Kcal)$,
    \begin{equation*}
        \phi \triangleright (f^* g) = ( \phi S \triangleright f )^* g - 2 \la d_{\phi} f , d_{ ( \phi S ) } g \ra_\phi + f^* (\phi \triangleright g).
    \end{equation*}
    In particular, if $\phi$ is moreover Hermitian, then
    \begin{equation}\label{eq:linear functionals as Laplacians-Leibniz-Hermitian}
        \phi \triangleright (f^* g) = ( \phi \triangleright f )^* g - 2 \la d_{\phi} f , d_{ \phi } g \ra_\phi + f^* (\phi \triangleright g).
    \end{equation}
\end{lem}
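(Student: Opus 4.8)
The plan is to unfold both differentials using Lemma~\ref{lem:differential in terms of quantum germs map-right multiplication form} and then read the identity off from a direct Sweedler-notation computation. By that lemma,
\[
d_\phi f = Q_\phi\big(\epsilon(f_{(2)})1 - S^{-1}(f_{(2)})\big)\,f_{(1)}, \qquad d_{\phi S} g = Q_{\phi S}\big(\epsilon(g_{(2)})1 - S^{-1}(g_{(2)})\big)\,g_{(1)},
\]
and the arguments of $Q_\phi,Q_{\phi S}$ lie in $\Ker\epsilon$ (since $\epsilon\circ S^{-1}=\epsilon$). The vectors $Q_\phi(\cdot)$ and $Q_{\phi S}(\cdot)$ are left-invariant, hence pair to a scalar under \eqref{eq:nondegenerate pairing between two invariant parts}, so the right-sesquilinearity of $\la\cdot,\cdot\ra_\phi$ lets me pull the multipliers $f_{(1)},g_{(1)}$ out and, using $S\big(\epsilon(a)1-S^{-1}(a)\big)=\epsilon(a)1-a$, obtain
\[
\la d_\phi f, d_{\phi S} g\ra_\phi = -\tfrac12\,f_{(1)}^*g_{(1)}\,\big(\phi,\;(\epsilon(f_{(2)})1-f_{(2)})^*(\epsilon(g_{(2)})1-g_{(2)})\big).
\]
(Here, when $\phi\neq\phi S$, the symbol $\la\cdot,\cdot\ra_\phi$ is read as the right-sesquilinear extension of \eqref{eq:nondegenerate pairing between two invariant parts} to $\Omega_\phi\times\Omega_{\phi S}$, which is well-defined by the freeness in Proposition~\ref{prop:the structure representations of bicovariant bimodule}(1).)

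Next I would expand the product $(\epsilon(f_{(2)})1-f_{(2)})^*(\epsilon(g_{(2)})1-g_{(2)})$ into four terms, apply $(\phi,\cdot)$, and multiply by $f_{(1)}^*g_{(1)}$. The counit identities $f_{(1)}\epsilon(f_{(2)})=f$, $g_{(1)}\epsilon(g_{(2)})=g$ (together with $\epsilon(f_{(2)})^*=\epsilon(f_{(2)}^*)$, as $\epsilon$ is a $*$-homomorphism) collapse three of the four terms: the $\epsilon\otimes\epsilon$ term gives $f^*g\,(\phi,1)=0$ because $\phi$ vanishes at the unit; the term carrying $\epsilon(f_{(2)}^*)$ gives $-f^*(\phi\triangleright g)$; and the term carrying $\epsilon(g_{(2)})$ gives $-\big(f_{(1)}^*(\phi,f_{(2)}^*)\big)g$. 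The key point is $f_{(1)}^*(\phi,f_{(2)}^*)=(\phi S\triangleright f)^*$: indeed $\phi S\triangleright f=f_{(1)}(\phi,S(f_{(2)}))$ by definition of the functional $\phi S$, and Corollary~\ref{cor:self-adjointness and Hermiticity for ad-invariant functional} (which reads $(\phi S,a)=\overline{(\phi,a^*)}$, i.e.\ $\overline{(\phi,S(a))}=(\phi,a^*)$) rewrites $\overline{(\phi,S(f_{(2)}))}$ as $(\phi,f_{(2)}^*)$. The fourth term is $f_{(1)}^*g_{(1)}(\phi,f_{(2)}^*g_{(2)})=\phi\triangleright(f^*g)$, since $\Delta(f^*g)=f_{(1)}^*g_{(1)}\otimes f_{(2)}^*g_{(2)}$. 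Collecting,
\[
-2\,\la d_\phi f, d_{\phi S} g\ra_\phi = -f^*(\phi\triangleright g) - (\phi S\triangleright f)^*g + \phi\triangleright(f^*g),
\]
which rearranges to the asserted identity; the Hermitian case follows verbatim using $\phi=\phi S$, both in the first Leibniz term and in the observation that $\Omega_{\phi S}=\Omega_\phi$ so that $\la\cdot,\cdot\ra_\phi$ is already the form of Theorem~\ref{thm:linear functionals as Laplacians}.

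The proof is short, and the main difficulty is organizational rather than conceptual: one must track the Sweedler couplings carefully, since the factor $f_{(1)}^*$ coming from the right multiplier in $d_\phi f$ belongs to the same coproduct of $f$ that feeds $f_{(2)}^*$ into the pairing, and likewise for $g$; the partial contractions $f_{(1)}^*\epsilon(f_{(2)}^*)=f^*$ and $f_{(1)}^*(\phi,f_{(2)}^*)=(\phi S\triangleright f)^*$ are the crux. No structural input beyond Lemma~\ref{lem:differential in terms of quantum germs map-right multiplication form}, the defining formula \eqref{eq:nondegenerate pairing between two invariant parts}, the hypothesis $\phi(1)=0$, and Corollary~\ref{cor:self-adjointness and Hermiticity for ad-invariant functional} is required.
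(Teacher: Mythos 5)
Your proof is correct and follows essentially the same route as the paper's: unfold both differentials via Lemma~\ref{lem:differential in terms of quantum germs map-right multiplication form}, pull the scalars out by right sesquilinearity, expand the four terms of $\big(\epsilon(f_{(2)})1-f_{(2)}\big)^*\big(\epsilon(g_{(2)})1-g_{(2)}\big)$ using $\phi(1)=0$, and identify $f_{(1)}^*(\phi,f_{(2)}^*)$ with $(\phi S\triangleright f)^*$. The only cosmetic difference is that you justify this last identification directly from Corollary~\ref{cor:self-adjointness and Hermiticity for ad-invariant functional}, whereas the paper routes it through $(\phi S)^*=\phi S$ via Proposition~\ref{prop:self-adjointness and ad-invariance are preserved under antipode}; the two are equivalent.
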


\begin{proof}
    Extend $\la\cdot,\cdot\ra_\phi$ right $\Cf^\infty(\Kcal)$-sesquilinearly to $\Omega_\phi \times \Omega_{\phi S}$, and use \eqref{eq:nondegenerate pairing between two invariant parts} and \eqref{eq:differential in terms of quantum germs map-right multiplication form} to compute, for $f, g \in \Cf^\infty(\Kcal)$,
    \begin{align*}
        2 \la d_{\phi} f , d_{ ( \phi S )} g \ra_\phi 
        &= - f_{(1)}^* g_{(1)} \Big( \phi, \big( \epsilon(f_{(2)}) - f_{(2)} \big)^* \big( \epsilon(g_{(2)}) - g_{(2)} \big) \Big) \\
        &= f_{(1)}^* g \big( \phi, f_{(2)}^* \big) + f^* g_{(1)} \big( \phi, g_{(2)} \big) - f_{(1)}^* g_{(1)} \big( \phi, f_{(2)}^* g_{(2)} \big) \\
        &= f_{(1)}^* g \, \overline{\big( (\phi S)^*, f_{(2)} \big)} + f^* (\phi \triangleright g) - \phi \triangleright (f^* g) \\
        &= \big( (\phi S)^* \triangleright f \big)^* g + f^* (\phi \triangleright g) - \phi \triangleright (f^* g) \\
        &= ( \phi S \triangleright f )^* g + f^* (\phi \triangleright g) - \phi \triangleright (f^* g),
    \end{align*}
    where the last equality follows from Proposition~\ref{prop:self-adjointness and ad-invariance are preserved under antipode}.

    The final statement follows from Corollary~\ref{cor:self-adjointness and Hermiticity for ad-invariant functional}.
\end{proof}

\begin{proof}[Proof of Theorem~\ref{thm:linear functionals as Laplacians}]
    The fact that \eqref{eq:nondegenerate pairing between two FODCs-Hermitian case} extends to a well-defined, strongly nondegenerate, right $\Cf^\infty(\Kcal)$-sesquilinear form was already verified in Proposition~\ref{prop:nondegenerate pairing between two FODCs}. Thus, it remains only to establish \eqref{eq:linear functionals as Laplacians-Hermitian case}.

    Let $f, g \in \Cf^\infty(\Kcal)$. Applying $\hcal$ to both sides of \eqref{eq:linear functionals as Laplacians-Leibniz-Hermitian} and using the right invariance of $\hcal$, we obtain
    \begin{equation}\label{eq:Leibniz integrated}
        \hcal(f^* g)\, \phi(1) = \hcal\big( (\phi \triangleright f)^* g \big) - 2 \hcal\big( \la d_\phi f , d_{\phi} g \ra_\phi \big) + \hcal\big( f^* (\phi \triangleright g) \big).
    \end{equation}
    However, $\phi(1) = 0$ by assumption. Moreover, by \cite[Lemma~2.3]{KustermanMurphyTuset_Laplacian_2005}, we have
    \[
        \hcal\big( (\phi \triangleright f)^* g \big) = \hcal\big( f^* (\phi^* \triangleright g) \big) = \hcal\big( f^* (\phi \triangleright g) \big),
    \]
    due to the self-adjointness of $\phi$. Thus, \eqref{eq:Leibniz integrated} simplifies to
    \[
        0 = 2 \hcal\big( f^* ( \phi \triangleright g ) \big) - 2 \hcal\big( \la d_\phi f , d_{\phi} g \ra_\phi \big),
    \]
    which implies \eqref{eq:linear functionals as Laplacians-Hermitian case}.
\end{proof}

\begin{rmk}\label{rmk:linear functionals as Laplacians-non-Hermitian case}
    Note that the proof shows that, without assuming $\phi$ is Hermitian, one obtains
    \[
        0 = \hcal \big( f^* ( \phi S \triangleright g) \big) - 2 \hcal \big( \la d_\phi f , d_{ ( \phi S ) } g \ra_\phi \big) + \hcal \big( f^* ( \phi \triangleright g ) \big),
    \]
    which implies, with respect to the strongly nondegenerate right $\Cf^\infty(\Kcal)$-sesquilinear form $\la \cdot, \cdot \ra_\phi : \Omega_\phi \times \Omega_{\phi S} \rightarrow \Cf^\infty(\Kcal)$,
    \[
        \hcal \big( \la d_\phi f , d_{( \phi S )} g \ra_\phi \big) = \hcal \bigg( f^* \Big( \frac{\phi + \phi S}{2} \triangleright g \Big) \bigg).
    \]

    Therefore, even if the assumption that $\phi$ is Hermitian is dropped from Theorem~\ref{thm:linear functionals as Laplacians}, the resulting Laplacian comes from the functional $\frac{\phi + \phi S}{2}$, which is Hermitian. Hence, no further generality is gained by relaxing this condition.
\end{rmk}

\section{The \texorpdfstring{$q$}{TEXT}-deformations of compact semisimple Lie groups}\label{sec:The q-deformation}

For the remainder of the paper, we focus on compact quantum groups arising from $q$-deformations of compact semisimple Lie groups. This section summarizes the notations and results concerning these examples as presented in \cite{VoigtYuncken}, which will be used in Sections~\ref{sec:FODCs on Kq}--\ref{sec:Laplacians on Kq}. Any statements not explicitly stated in \cite{VoigtYuncken} will be proved.

Throughout, we fix $0 < q < 1$, let $h \in \Rbb$ be such that $q = e^h$, and set $\hbar = \frac{h}{2\pi}$.

\subsection{Semisimple Lie algebras}\label{subsec:Semisimple Lie algebras}

Let $K$ be a simply connected compact semisimple Lie group with Lie algebra $\kf$. Let $\gf = \Cbb \otimes_\Rbb \kf$, and denote its Killing form by $(\cdot,\cdot)$. This form allows us to identify $\gf \cong \gf^*$, and we transfer the Killing form to $\gf^*$, still denoting it by $(\cdot, \cdot): \gf^* \times \gf^* \rightarrow \Cbb$.

Fix a maximal torus $T \subseteq K$ with Lie algebra $\tf \subseteq \kf$, and let $\boldsymbol{\Delta}$ be the set of roots associated with the Cartan subalgebra $\hf = \Cbb \otimes_\Rbb \tf$. Fix a set of positive roots $\boldsymbol{\Delta}^+ \subseteq \boldsymbol{\Delta}$, and let $\{ \alpha_1 , \cdots, \alpha_N \}$ be the associated simple roots. Let $\{\varpi_1 , \cdots , \varpi_N \}$ be the corresponding fundamental weights, and denote the associated Cartan matrix by $a = (a_{ij})_{1 \leq i,j \leq N}$, i.e.,
\[
a_{ij} = \frac{2 (\alpha_i , \alpha_j)}{(\alpha_i, \alpha_i)}, \quad 1 \leq i,j \leq N.
\]

Let $\Pbf$ and $\Qbf$ be the abelian subgroups of $(i \tf)^*$ generated by $\{\varpi_1, \cdots, \varpi_N\}$ and $\{\alpha_1 , \cdots , \alpha_N\}$, respectively, referred to as \textit{the weight lattice and the root lattice}. Let $\Pbf^+ \subseteq \Pbf$ and $\Qbf^+ \subseteq \Qbf$ denote the subsets consisting of nonnegative integral linear combinations of the respective generators. Define $d_j = \frac{(\alpha_j, \alpha_j)}{2} \in \Qbb$ and set, for $1 \leq j \leq N$,
\begin{equation*}
    \alpha_j^\lor = d_j^{-1} \alpha_j, \quad \varpi_j^\lor = d_j^{-1} \varpi_j.
\end{equation*}
Then we have
\[
(\alpha_i^\lor, \varpi_j) = \delta_{ij} = (\alpha_i, \varpi_j^\lor), \quad 1 \leq i, j \leq N.
\]
Let $\Qbf^\lor$ and $\Pbf^\lor$ be the abelian subgroups of $(i \tf)^*$ generated by $\{\alpha_1^\lor , \cdots , \alpha_N^\lor \}$ and $\{\varpi_1^\lor , \cdots , \varpi_N^\lor \}$, respectively. Note that $\Qbf^\lor$ (resp. $\Pbf^\lor$) is the $\Zbb$-dual of $\Pbf$ (resp. $\Qbf$) with respect to the Killing form. In particular, we have $\Qbf \subseteq \Pbf$ and $\Qbf^\lor \subseteq \Pbf^\lor$.

The Weyl group associated with the root system $\boldsymbol{\Delta}$, denoted by $W$, is the finite subgroup of $GL(\hf^*)$ generated by the reflections
\[
s_j : \hf^* \ni \zeta \longmapsto \zeta - 2 \frac{(\zeta, \alpha_j)}{(\alpha_j, \alpha_j)} \alpha_j \in \hf^*, \quad 1 \leq j \leq N.
\]
Note that elements of $W$ preserve the Killing form by definition. Let $w_0 \in W$ denote the longest element of the Weyl group, i.e., the length of a reduced expression
\begin{equation}\label{eq:the longest Weyl group element}
    w_0 = s_{i_1} \cdots s_{i_t}, \quad 1 \leq i_1, \cdots, i_t \leq N
\end{equation}
is maximal among all elements of $W$. This element is unique and satisfies $w_0^2 = \id$. We fix the reduced expression \eqref{eq:the longest Weyl group element} for $w_0$. Then $t$ equals the cardinality of $\boldsymbol{\Delta}^+$, and
\begin{equation}\label{eq:enumeration of positive roots}
    \beta_{i_r} = s_{i_1} \cdots s_{i_{r-1}} \alpha_{i_r}, \quad 1 \leq r \leq t
\end{equation}
gives an enumeration of the elements of $\boldsymbol{\Delta}^+$, see \cite[Section~5.6]{Humphreys_reflection}. Moreover, the map $-w_0 : \hf^* \rightarrow \hf^*$ permutes the elements of $\boldsymbol{\Delta}^+$, see \cite[Section~II.6]{Knapp}.

Let $\pi : \gf \rightarrow \End(V)$ be a finite-dimensional complex Lie algebra representation. Then, the set $\{ \pi(H) \mid H \in \hf \}$ is simultaneously diagonalizable, and there exists a subset $\Pbf(\pi) \subseteq \Pbf$, whose elements are called the \textit{weights of $\pi$}, such that for each $H \in \hf$, the eigenvalues of the operator $\pi(H)$ are given by $\{ \nu(H) \mid \nu \in \Pbf(\pi) \}$. A nonzero vector $v \in V$ is called a \textit{weight vector} if there exists $\nu \in \Pbf(\pi)$ such that $\pi(H)v = \nu(H)v$ for all $H \in \hf$, in which case $\nu$ is called the \textit{weight of $v$}. The representation $\pi$ is said to be \textit{irreducible} if $V$ has no proper subspace invariant under all elements of $\pi(\gf)$.

Given an irreducible representation of $\gf$, there exists a unique \textit{highest weight} $\mu \in \Pbf(\pi)$ such that every $\nu \in \Pbf(\pi)$ can be written as
\[
\nu = \mu - \sum_{j=1}^N m_j \alpha_j, \quad m_j \in \Nbb.
\]
The Weyl group $W$ maps $\Pbf(\pi)$ into itself. Since $-w_0$ permutes $\boldsymbol{\Delta}^+$, every $\nu \in \Pbf(\pi)$ can also be expressed as
\[
\nu = w_0 \mu + \sum_{j=1}^N m_j \alpha_j, \quad m_j \in \Nbb,
\]
so that $w_0 \mu$ is called the \textit{lowest weight} of $\pi$.

Each highest weight lies in $\Pbf^+$, and the correspondence that assigns to each irreducible finite-dimensional complex Lie algebra representation its highest weight defines a one-to-one correspondence between the set of equivalence classes of irreducible finite-dimensional complex Lie algebra representations of $\gf$ and the set $\Pbf^+$. For each $\mu \in \Pbf^+$, we denote the corresponding irreducible representation by $(\pi_\mu , V(\mu))$, with set of weights $\Pbf(\mu)$. By the universal property of $U(\gf)$ (cf. Definition~\ref{defn:the classical universal enveloping algebra}), $\pi_\mu$ extends to an irreducible algebra representation of $U(\gf)$, which we also denote by $\pi_\mu$.

As $K$ is simply connected, there is a one-to-one correspondence between the unitary representations of $K$ and the finite-dimensional complex Lie algebra representations of $\gf = \Cbb \otimes_\Rbb \kf$, see \cite[Theorem~20.19]{JohnLee}. In particular, $\Irr(K)$, the set of irreducible unitary representations of $K$, can be identified with $\weights^+$. Thus, via \eqref{eq:classical matrix coefficients identification}, we have the following identification.
\begin{equation}\label{eq:classical matrix coefficients identification-semisimple case}
    \Cf^\infty(K) \cong \bigoplus_{\mu \in \weights^+} \Lbb(V(\mu))^*
\end{equation}

\subsection{Quantized universal enveloping algebra}\label{subsec:Quantized universal enveloping algebra}

For $z \in \Cbb$, define
\begin{equation*}
    [z]_q = \frac{q^z - q^{-z}}{q - q^{-1}}.
\end{equation*}
For $n \in \Nbb$, also define $[n]_q ! = \prod_{1 \leq k \leq n} [k]_q$ and
\[
\left[ \begin{array}{c} n \\ k \end{array} \right]_q = \frac{[n]_q!}{[k]_q ! [n-k]_q !}, \quad 0 \leq k \leq n.
\]
Let $q_j = q^{d_j}$ for $j = 1, \dots, N$.

\begin{defn}\label{defn:the quantized universal enveloping algebra}
Let $U_q(\gf)$ be the unital $\Cbb$-algebra generated by $\{K_\lambda, E_j, F_j \mid \lambda \in \Pbf,\, 1 \leq j \leq N\}$, subject to the following relations for $1 \leq i,j \leq N$ and $\lambda, \mu \in \Pbf^+$:
\begin{enumerate}[label=U\arabic*., series=U]
\item $K_0 = 1,\quad K_\lambda K_\mu = K_{\lambda + \mu}$
\item $K_\lambda E_j K_{-\lambda} = q^{(\lambda, \alpha_j)} E_j,\quad K_\lambda F_j K_{-\lambda} = q^{-(\lambda, \alpha_j)} F_j$
\item $[E_i, F_j] = \delta_{ij} \dfrac{K_i - K_i^{-1}}{q_i - q_i^{-1}}$
\item If $i \neq j$, then
\[
\sum_{k=0}^{1 - a_{ij}} \left[ \begin{array}{c} 1 - a_{ij} \\ k \end{array} \right]_{q_i} E_i^{1 - a_{ij} - k} E_j E_i^k 
= \sum_{k=0}^{1 - a_{ij}} \left[ \begin{array}{c} 1 - a_{ij} \\ k \end{array} \right]_{q_i} F_i^{1 - a_{ij} - k} F_j F_i^k = 0
\]
\end{enumerate}
where $K_i = K_{\alpha_i}$.

Then $U_q(\gf)$ becomes a Hopf algebra with the following structure maps:
\begin{enumerate}[label=U\arabic*., resume=U]
\item (Comultiplication) $\hat{\Delta}: U_q(\gf) \to U_q(\gf) \otimes U_q(\gf)$ given by
\begin{gather*}
\hat{\Delta}(K_\lambda) = K_\lambda \otimes K_\lambda, \\
\hat{\Delta}(E_j) = 1 \otimes E_j + E_j \otimes K_j,\quad 
\hat{\Delta}(F_j) = K_j^{-1} \otimes F_j + F_j \otimes 1
\end{gather*}
\item (Counit) $\hat{\epsilon}: U_q(\gf) \to \Cbb$ given by
\[
\hat{\epsilon}(K_\lambda) = 1,\quad \hat{\epsilon}(E_j) = \hat{\epsilon}(F_j) = 0
\]
\item (Antipode) $\hat{S}: U_q(\gf) \to U_q(\gf)$ given by
\[
\hat{S}(K_\lambda) = K_\lambda^{-1},\quad \hat{S}(E_j) = -E_j K_j^{-1},\quad \hat{S}(F_j) = -K_j F_j
\]
\end{enumerate}

This Hopf algebra is called \textbf{the quantized universal enveloping algebra of $\gf$}. It becomes a Hopf $*$-algebra with the following involution, denoted $U_q^\Rbb(\kf)$:
\begin{enumerate}[label=U\arabic*., resume=U]
\item (Involution)
\[
K_\lambda^* = K_\lambda,\quad E_j^* = K_j F_j,\quad F_j^* = E_j K_j^{-1}
\]
\end{enumerate}
\end{defn}

When referring to properties of $U_q^\Rbb(\kf)$ independent of the $*$-structure, we will write $U_q(\gf)$. In particular, all statements in this subsection formulated for $U_q(\gf)$ hold for the quantized universal enveloping algebra over any field, such as $\Qbb(s)$.

Let $\rho = \varpi_1 + \cdots + \varpi_N \in \Pbf^+$. Then
\[
\hat{S}^2(X) = K_{2\rho} X K_{-2\rho}, \quad X \in U_q(\gf).
\]

Let $\Tcal_1, \dots, \Tcal_N : U_q(\gf) \to U_q(\gf)$ denote the algebra automorphisms defined in \cite[Theorems~3.58--3.59]{VoigtYuncken}, inducing an action of the braid group $B_\gf$ on $U_q(\gf)$. Let $\beta_1, \dots, \beta_t$ (as in \eqref{eq:enumeration of positive roots}) enumerate $\boldsymbol{\Delta}^+$. Define
\[
E_{\beta_r} = \Tcal_{i_1} \cdots \Tcal_{i_{r-1}} E_{i_r}, \quad 
F_{\beta_r} = \Tcal_{i_1} \cdots \Tcal_{i_{r-1}} F_{i_r} \in U_q^\Rbb(\kf), \quad 1 \leq r \leq t.
\]
By \cite[Lemma~3.61]{VoigtYuncken}, if $\beta_r = \alpha_j$ for some $j$, then $E_{\beta_r} = E_j$. Moreover, by \cite[Theorem~3.58]{VoigtYuncken},
\begin{align*}
K_\lambda E_{\beta_r} K_{-\lambda} 
= \Tcal_{i_1} \cdots \Tcal_{i_{r-1}} \Big( K_{s_{i_{r-1}}^{-1} \cdots s_{i_1}^{-1} \lambda} E_{i_r} K_{s_{i_{r-1}}^{-1} \cdots s_{i_1}^{-1} \lambda}^{-1} \Big) = q^{(s_{i_{r-1}}^{-1} \cdots s_{i_1}^{-1} \lambda, \alpha_{i_r})} E_{\beta_r} \\ 
= q^{(\lambda, \beta_r)} E_{\beta_r}
\end{align*}
for all $\lambda \in \weights$. The analogous identity holds for $F_{\beta_r}$. Also,
\[
[E_{\beta_r}, F_{\beta_r}] = \Tcal_{i_1} \cdots \Tcal_{i_{r-1}} [E_{i_r}, F_{i_r}] 
= \frac{K_{\beta_r} - K_{\beta_r}^{-1}}{q_{\beta_r} - q_{\beta_r}^{-1}},
\]
where $q_{\beta_r} := q_{i_r}$ for $1 \leq r \leq t$. Thus, for all $\alpha \in \boldsymbol{\Delta}^+$ and $\lambda \in \weights$,
\begin{align}\label{eq:relations for the root vectors}
K_\lambda E_\alpha K_{-\lambda} = q^{(\lambda, \alpha)} E_\alpha, \quad 
K_\lambda F_\alpha K_{-\lambda} = q^{-(\lambda, \alpha)} F_\alpha, \quad 
[E_\alpha, F_\alpha] = \frac{K_\alpha - K_\alpha^{-1}}{q_\alpha - q_\alpha^{-1}}.
\end{align}

The following elements form a \textit{PBW-basis} of $U_q(\gf)$:
\begin{equation}\label{eq:PBW-basis}
F_{\beta_1}^{b_1} \cdots F_{\beta_t}^{b_t} K_\lambda E_{\beta_1}^{a_1} \cdots E_{\beta_t}^{a_t}, \quad a_j, b_j \in \Nbb, \ \lambda \in \weights.
\end{equation}

Let $\pi: U_q(\gf) \to \End(V)$ be a representation on a finite-dimensional vector space $V$. We say that $\pi$ is \textit{integrable} if the operators $\{\pi(K_\lambda) \mid \lambda \in \Pbf^+\}$ are simultaneously diagonalizable and there exists a subset $\Pbf(\pi) \subseteq \Pbf$ (the set of \textit{weights} of $\pi$) such that for each $\lambda \in \weights$, the eigenvalues of $\pi(K_\lambda)$ are given by $\{ q^{(\lambda, \nu)} \mid \nu \in \Pbf(\pi) \}$.

A nonzero vector $v \in V$ is a \textit{weight vector} if $\pi(K_\lambda)v = q^{(\lambda, \nu)}v$ for all $\lambda \in \Pbf$, for some $\nu \in \Pbf(\pi)$ (the \textit{weight} of $v$). The root vectors $E_\alpha, F_\alpha$ ($\alpha \in \boldsymbol{\Delta}^+$) act on $v$ by ``raising'' or ``lowering'' its weight:
\begin{equation}\label{eq:raising and lowering actions of root vectors}
\pi(K_\lambda)\pi(E_\alpha)v = q^{(\lambda, \nu + \alpha)} \pi(E_\alpha)v,\quad 
\pi(K_\lambda)\pi(F_\alpha)v = q^{(\lambda, \nu - \alpha)} \pi(F_\alpha)v.
\end{equation}

The representation $\pi$ is said to be \textit{irreducible} if $V$ has no proper $U_q(\gf)$-submodules. In this case, the \textit{highest and lowest weights} of the representation are defined in terms of its weights, just as in the case of $\gf$. Each highest weight lies in $\Pbf^+$, and assigning to each irreducible, integrable, finite-dimensional representation its highest weight yields a bijection between the set of isomorphism classes of such representations and $\Pbf^+$. As in the case of $\gf$, for $\mu \in \Pbf^+$, let $(\pi_\mu, V(\mu))$ denote the corresponding representation, with set of weights $\Pbf(\mu)$. When irreducible representations of both $\gf$ and $U_q(\gf)$ must be considered simultaneously, we denote those associated with the latter using superscript notation, e.g., $({}^q \pi_\mu, {}^q V(\mu))$. The space $V(\mu)$ admits an inner product that makes $\pi_\mu$ a $*$-representation of $U_q^\Rbb(\kf)$, and we fix such a Hilbert space structure on each $V(\mu)$.

The representation associated with $0 \in \Pbf^+$ is the counit $\hat{\epsilon}$, referred to as the \textit{trivial representation}.

Define $U_q(\hf) := \Span_\Cbb\{ K_\lambda \mid \lambda \in \weights^+ \}$, a Hopf subalgebra of $U_q(\gf)$. Let $U_q(\nf_\pm)$ be the subalgebras generated by $\{E_j\}$ and $\{F_j\}$, respectively. Note that
\[
U_q(\nf_\pm) U_q(\hf) = U_q(\hf) U_q(\nf_\pm),
\]
which we denote by $U_q(\bfrak_\pm)$. These are Hopf subalgebras of $U_q(\gf)$. The multiplication map yields a vector space isomorphism
\begin{equation*}
U_q(\nfrak_-) \otimes U_q(\hf) \otimes U_q(\nfrak_+) \xrightarrow{\;\; \cong \;\;} U_q(\gf).
\end{equation*}

\subsection{Quantized Algebra of Functions}\label{subsec:Quantized algebra of functions}

\begin{defn}\label{defn:the quantized algebra of functions}
The space
    \begin{equation}\label{eq:quantized algebra of functions}
    \Cf^\infty (K_q) = \bigoplus_{\mu \in \Pbf^+} \Lbb(V(\mu))^*
    \end{equation}
admits a unique Hopf $*$-algebra structure for which the following pairing becomes a nondegenerate skew-pairing between $U_q^\Rbb(\kf)$ and $\Cf^\infty(K_q)$:
\begin{equation}\label{eq:skew-pairing between Uq(K) and Cinfty(Kq)}
    (\,\cdot\,, \,\cdot\,) : U_q^\Rbb(\kf) \times \Cf^\infty(K_q) \ni (X, f) \longmapsto \sum_{\mu \in \Pbf^+} f_\mu( \pi_\mu(X) ) \in \Cbb.
\end{equation}

With the Haar state $\hcal : \Cf^\infty(K_q) \to \Cbb$ defined as the projection onto the component $\Lbb(V(0))^* \cong \Cbb$, the algebra $\Cf^\infty(K_q)$ becomes a CQG, referred to as \textbf{the quantized algebra of functions on $K$}.
\end{defn}

Via the skew-pairing \eqref{eq:skew-pairing between Uq(K) and Cinfty(Kq)}, we will often identify elements of $U_q^\Rbb(\kf)$ with linear functionals on $\Cf^\infty(K_q)$.

For each $\mu \in \weights^+$, fix an orthonormal basis $\{e_1^\mu, \dots, e_{n_\mu}^\mu\}$ of $V(\mu)$ consisting of weight vectors; that is, for each $1 \leq j \leq n_\mu$, there exists $\epsilon_j^\mu \in \Pbf (\mu)$ such that
\begin{equation*}
    \pi_\mu(K_\lambda) e_j^\mu = q^{(\lambda, \epsilon_j^\mu)} e_j^\mu, \quad \lambda \in \Pbf.
\end{equation*}
For $v, w \in V(\mu)$, define the linear functional $\la v \mid \cdot \mid w \ra \in \Lbb(V(\mu))^*$ by
\begin{equation*}
    \la v \mid \cdot \mid w \ra (T) = \la v, T w \ra, \quad T \in \Lbb(V(\mu)).
\end{equation*}
Then, the elements
\begin{equation*}
    u^\mu_{ij} = \la e_i^\mu \mid \cdot \mid e_j^\mu \ra \in \Lbb(V(\mu))^*, \quad 1 \leq i,j \leq n_\mu,
\end{equation*}
form a unitary corepresentation of $\Cf^\infty(K_q)$, and the family $\{ u^\mu_{ij} \mid \mu \in \Pbf^+, \, 1 \leq i,j \leq n_\mu \}$ constitutes a Peter–Weyl basis of $\Cf^\infty(K_q)$.

Transposing the inclusion maps $U_q(\bfrak_\pm) \hookrightarrow U_q(\gf)$ yields Hopf algebra homomorphisms
\[
U_q(\gf)^\circ \longrightarrow U_q(\bfrak_\pm)^\circ.
\]
Since $\Cf^\infty(K_q) \subseteq U_q(\gf)^\circ$ via the nondegenerate pairing \eqref{eq:skew-pairing between Uq(K) and Cinfty(Kq)}, its images under these maps are Hopf subalgebras of $U_q(\bfrak_\pm)^\circ$, which we denote by $\Ocal(B_q^\pm)$. Endowing $\Ocal(B_q^\pm)$ with the multiplication and comultiplication structures opposite to those of $U_q(\bfrak_\pm)^\circ$, the restricted maps
\[
\Psi^\pm : \Cf^\infty(K_q) \longrightarrow \Ocal(B_q^\pm)
\]
become surjective Hopf algebra homomorphisms.

Let $\tau : U_q(\bfrak_+) \times U_q(\bfrak_-) \rightarrow \Cbb$ denote the Drinfeld pairing (cf. \cite[Definition~3.74]{VoigtYuncken}), a skew-pairing characterized by
\[
\tau(K_\lambda, K_\mu) = q^{-(\lambda, \mu)}, \quad \tau(E_i, K_\mu) = 0 = \tau(K_\lambda, F_j), \quad \tau(E_i, F_j) = \frac{-\delta_{ij}}{q_i - q_i^{-1}}
\]
for $\lambda, \mu \in \weights$ and $1 \leq i,j \leq N$. Then, the maps
\begin{align*}
&\iota_+ : U_q(\bfrak_+) \ni X \longmapsto \tau( \hat{S}(X), \, \cdot \, ) \in \Ocal(B_q^-) \\
&\iota_- : U_q(\bfrak_-) \ni Y \longmapsto \tau( \, \cdot \, , Y ) \in \Ocal(B_q^+)
\end{align*}
are well-defined Hopf algebra isomorphisms. Observe also that the projections $U_q(\bfrak_\pm) \cong U_q(\nfrak_\pm) \otimes U_q(\hf) \xrightarrow{ \hat{\epsilon} \otimes \id } U_q(\hf)$ are Hopf algebra homomorphisms. Combining all these, we obtain a surjective Hopf algebra homomorphism
\begin{equation}\label{eq:Cfinfty(Kq) -> Uq(h)}
    \Phi: \Cf^\infty(K_q) \xrightarrow{\Psi^+} \Ocal(B_q^+) \xrightarrow[\cong]{\iota_-^{-1}} U_q(\bfrak_-) \xrightarrow{ \hat{\epsilon} \otimes \id} U_q(\hf).
\end{equation}
Let $\mu \in \weights^+$ and $1 \leq i,j \leq n_\mu$. If $i \neq j$, then $\Phi(u^\mu_{ij}) = 0$ due to the last map in \eqref{eq:Cfinfty(Kq) -> Uq(h)}. If $i = j$, then a straightforward computation shows that $\Psi^+(u^\mu_{jj}) = \iota_-(K_{-\epsilon_j^\mu})$. Hence, we deduce:
\begin{equation}\label{eq:cfinifty(Kq) -> Uq(h) calculation}
    \Phi(u^\mu_{ij}) = K_{-\epsilon_j^\mu} \delta_{ij}.
\end{equation}

\subsection{Dual Hopf \texorpdfstring{$*$}{TEXT}-algebra of \texorpdfstring{$\Cf^\infty(K_q)$}{TEXT}}\label{subsec:Dual Hopf algebra of the quantized algebra of functions}

Let $\zeta \in \hf^*$. As in \cite[Section~6.1.1]{VoigtYuncken}, we define an element $K_\zeta \in \Cf^\infty (K_q)^*$ by
\begin{equation*}
    (K_\zeta , u^\mu _{ij}) = q^{(\zeta , \epsilon_j ^\mu)} \delta_{ij}, \quad \mu \in \Pbf^+, \, 1 \leq i,j \leq n_\mu.
\end{equation*}
Note that if $\zeta \in \Pbf$, then this definition coincides with the generator $K_\zeta \in U_q (\gf)$ from Definition~\ref{defn:the quantized universal enveloping algebra}, embedded into $\Cf^\infty(K_q)^*$ via \eqref{eq:skew-pairing between Uq(K) and Cinfty(Kq)}. One can verify that $\hat{\Delta}(K_\zeta) = K_\zeta \otimes K_\zeta$, and hence $K_\zeta \in \Cf^\infty(K_q)^\circ$, with relations
\[
K_{\zeta} K_{\xi} = K_{\zeta + \xi}, \quad \hat{S}^{\pm 1} (K_\zeta) = K_{- \zeta}, \quad K_\zeta ^* = K_{-\overline{\zeta}}
\]
for all $\zeta, \xi \in \hf^*$, where $\overline{(\cdot)} : \hf^* \rightarrow \hf^*$ denotes the conjugation with respect to the real form $\tf^* \subseteq \hf^*$. Moreover, $K_\zeta = 1$ if and only if $\zeta \in i \hbar^{-1 } \Qbf^\lor$. Accordingly, by abuse of notation, we will often write $\zeta \in \hf^* / i \hbar^{-1} \Qbf^\lor$ when referring to the parameter $\zeta$ defining $K_\zeta$. Finally, note that
\begin{equation}\label{eq:commutation for characters on torus}
K_\zeta E_\alpha = q^{(\zeta , \alpha)} E_\alpha K_\zeta , \;\; K_\zeta F_\alpha = q^{-(\zeta, \alpha)} F_\alpha K_\zeta , \quad \alpha \in \boldsymbol{\Delta}^+, \,\zeta \in \hf^*,
\end{equation}
which can be verified from the definition of $K_\zeta$ and the actions of the elements $\{ E_\alpha , \, F_\alpha \mid \alpha \in \boldsymbol{\Delta}^+ \}$ on weight vectors, see \eqref{eq:raising and lowering actions of root vectors}.

Recall that since $\hf$ is abelian, every $\lambda \in \hf^*$ extends to an algebra homomorphism $U(\hf) \rightarrow \Cbb$ via
\begin{equation*}
    \lambda( H_1 \cdots H_n ) = \lambda(H_1) \cdots \lambda(H_n), \quad H_1, \cdots , H_n \in \hf.
\end{equation*}
Given $X \in U(\hf)$, define $D_X \in \Cf^\infty (K_q)^*$ by
\[
    ( D_X , u^\mu _{ij} ) = ( - \epsilon_j ^\mu ) (X) \delta_{ij} , \quad \mu \in \Pbf^+, \, 1 \leq i,j \leq n_\mu.
\]
Note that $(- \epsilon_j ^\mu)(X)$ may not be equal to $- \epsilon_j ^\mu (X)$, depending on the degree of the monomials in $X$. For $X, Y \in U(\hf)$, we have
\begin{equation}\label{eq:multiplicativity of DX}
( D_{XY} , u^\mu _{ij} ) = (- \epsilon_j ^\mu) (XY) \delta_{ij} = (- \epsilon_j ^\mu)(X)( - \epsilon_j ^\mu)(Y) \delta_{ij} = ( D_X D_Y , u^\mu _{ij} ).
\end{equation}
For $H \in \hf \subseteq U(\hf)$, $\lambda, \mu \in \weights^+$, and appropriate indices,
\begin{align*}
    ( D_H , u^\lambda _{ij} u^\mu _{kl} ) &= - (\epsilon_j ^\lambda + \epsilon_l ^\mu ) (H) \delta_{ij} \delta_{kl} \\
    &= (D_H , u^\lambda _{ij} ) ( 1 , u^\mu _{kl} ) + ( 1 , u^\lambda _{ij} ) ( D_H , u^\mu _{kl} ) \\
    &= \big(D_H \otimes 1 + 1 \otimes D_H , u^\mu _{kl} \otimes u^\lambda _{ij} \big),
\end{align*}
and hence $D_H \in \Cf^\infty(K_q)^\circ$ with $\hat{\Delta} (D_H) = D_H \otimes 1 + 1 \otimes D_H$, which implies $D_X \in \Cf^\infty(K_q)^\circ$ for all $X \in U(\hf)$ by \eqref{eq:multiplicativity of DX}.

Translating \cite[Proposition~9.4.9]{Joseph} into our conventions yields:

\begin{prop}\label{prop:the Hopf dual of Cinfty(Kq)}
The multiplication map of $\Cf^\infty(K_q)^\circ$ induces an isomorphism
\begin{equation}\label{eq:the Hopf dual of Cinfty(Kq)}
    U_q (\nf_-) \otimes \{ D_X \mid X \in U(\hf) \} \otimes \Span_\Cbb \{ K_\zeta \mid \zeta \in \hf^* \} \otimes U_q (\nf_+) \xrightarrow[\cong]{} \Cf^\infty(K_q)^\circ.
\end{equation}
Moreover, the map $U(\hf) \ni X \mapsto D_X \in \Cf^\infty(K_q)^\circ$ is injective, and the elements $\{K_\zeta \mid \zeta \in \hf^*/ i \hbar^{-1} \Qbf^\lor \}$ are linearly independent.
\end{prop}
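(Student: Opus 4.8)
The plan is to derive the statement by translating \cite[Proposition~9.4.9]{Joseph} into the conventions recalled above; the one substantive point not handled verbatim by Joseph is the identification of the ``Cartan part'' of $\Cf^\infty(K_q)^\circ$ with $\Span_\Cbb\{D_X K_\zeta \mid X\in U(\hf),\ \zeta\in\hf^*\}$, which I would establish first and from which the two supplementary assertions also follow.

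For the Cartan part, I would start from the surjective Hopf algebra homomorphism $\Phi:\Cf^\infty(K_q)\to U_q(\hf)$ of \eqref{eq:Cfinfty(Kq) -> Uq(h)}. The target $U_q(\hf)$ is the group Hopf algebra spanned by the $K_\lambda$, i.e.\ (since $K$ is simply connected) a copy of $\Cf^\infty(T)$ for the classical maximal torus $T$, so its Hopf dual $U_q(\hf)^\circ$ is, by the standard description of duals of group algebras of finitely generated free abelian groups, exactly the space of \emph{representative functions} on the weight lattice $\weights\cong\Zbb^N$ --- finite $\Cbb$-linear combinations of functions $\nu\mapsto P(\nu)\,c(\nu)$ with $P$ a polynomial and $c$ a character of $\weights$. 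Transposing $\Phi$ gives an injective Hopf algebra homomorphism $\Phi^*:U_q(\hf)^\circ\hookrightarrow\Cf^\infty(K_q)^\circ$, and by \eqref{eq:cfinifty(Kq) -> Uq(h) calculation} the functional $\Phi^*(g)$ acts on the Peter--Weyl basis by $u^\mu_{ij}\mapsto g(-\epsilon_j^\mu)\,\delta_{ij}$. Comparing with the definitions of $D_X$ and $K_\zeta$: every polynomial function on $\weights$ is $\nu\mapsto(-\nu)(X)$ for a unique $X\in U(\hf)=S(\hf)$ (since $S(\hf)$ is the algebra of polynomial functions on $\hf^*$ and a nonzero such polynomial cannot vanish on the full-rank lattice $\weights$), and every character of $\weights$ is $\nu\mapsto q^{(\zeta,\nu)}$ for some $\zeta\in\hf^*$ (because $\zeta\mapsto((\zeta,\varpi_i))_i$ is a linear isomorphism $\hf^*\to\Cbb^N$ and $t\mapsto e^{ht}$ is onto $\Cbb^\times$). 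Hence $\Phi^*(U_q(\hf)^\circ)=\Span_\Cbb\{D_X K_\zeta\}$, the multiplication map $\{D_X\}\otimes\Span_\Cbb\{K_\zeta\}\to\Cf^\infty(K_q)^\circ$ is injective, and the two uniqueness facts just used are precisely the supplementary claims: $D_X=D_Y$ forces $\nu\mapsto(-\nu)(X-Y)$ to vanish on $\weights$, hence $X=Y$; and $K_\zeta=K_\xi$ forces $q^{(\zeta-\xi,\nu)}=1$ for all $\nu\in\weights$, i.e.\ $(\zeta-\xi,\weights)\subseteq i\hbar^{-1}\Zbb$, which after splitting $\zeta-\xi$ along the real form $\tf^*$ and using that $\Qbf^\lor$ is the $\Zbb$-dual of $\weights$ under the Killing form gives $\zeta-\xi\in i\hbar^{-1}\Qbf^\lor$.

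Granting this, the four-fold decomposition is then read off from \cite[Proposition~9.4.9]{Joseph}: one notes that $U_q(\nf_\pm)$ embed into $\Cf^\infty(K_q)^\circ$ via the skew-pairing embedding of $U_q^\Rbb(\kf)$ (Remark~\ref{rmk:embedding via skew-pairing}), that the image of the fourfold multiplication map lies in $\Cf^\infty(K_q)^\circ$ because the latter is an algebra containing each factor, and that Joseph's triangular factorization of the Hopf dual of the quantized function algebra --- once its middle factor is identified with $\Phi^*(U_q(\hf)^\circ)=\Span_\Cbb\{D_X K_\zeta\}$ --- is exactly the asserted isomorphism.

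I expect the main difficulty to be bookkeeping rather than mathematics: reconciling the conventions of \cite{Joseph} with those of \cite{VoigtYuncken} used here --- the normalization of $q$ and of the pairing, the handedness of the (co)module structures, the choice of form of the quantized enveloping algebra, and especially the way Joseph parametrizes the enlarged Cartan, which has to be seen to coincide with $\Span_\Cbb\{D_X K_\zeta\}$. Extra care is needed because \cite{Joseph} works in a setting where $q$ is not a fixed complex number, whereas here the availability of $K_\zeta$ for $\zeta$ outside the weight lattice is an ``analytic'' feature of the specialization $q=e^h$; one must check that the version of Proposition~9.4.9 being translated already incorporates the full exponential-polynomial Cartan part, not merely $U_q(\hf)$. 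Once the dictionary is fixed, the remaining verifications are routine.
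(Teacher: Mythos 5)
Your proposal is correct and follows essentially the same route as the paper: reduce to Joseph's Proposition~9.4.9, and identify the Cartan factor by transposing the surjection $\Phi:\Cf^\infty(K_q)\to U_q(\hf)\cong\Cf^\infty(T)$ and matching polynomial parts with $D_X$ and characters with $K_\zeta$ via \eqref{eq:cfinifty(Kq) -> Uq(h) calculation}. The only cosmetic difference is that the paper obtains the description of $\Cf^\infty(T)^\circ$ from \cite[Theorem~2.1.8]{Joseph} together with Artin's linear independence of characters (which is also what upgrades your pairwise-distinctness observations to the asserted linear independence of $\{K_\zeta\}$), whereas you invoke the equivalent standard exponential-polynomial description of the restricted dual of $\Cbb[\Zbb^N]$ directly.
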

\begin{proof}
Note that $U_q (\hf)^\circ$ corresponds to $(\check{U}^0 )^\star$ in \cite{Joseph}. Thus, by the proof of \cite[Proposition~9.4.9]{Joseph}, it suffices to verify that the image of $U_q(\hf)^\circ$ under the transpose of the map \eqref{eq:Cfinfty(Kq) -> Uq(h)} is isomorphic to
\[
\{ D_X \mid X \in U(\hf) \} \otimes \Span_\Cbb \{ K_\zeta \mid \zeta \in \hf^* \} \subseteq \Cf^\infty(K_q)^\circ.
\]

First, recall that $U_q (\hf) \cong \Cf^\infty (T)$ as Hopf algebras via $K_\lambda \mapsto t^\lambda$ for $\lambda \in \weights$, where $t^\lambda :T \rightarrow \Cbb$ is defined by
\[
t^\lambda ( \exp (H) ) = e^{\lambda(H)}, \quad H \in \tf,
\]
which is well-defined by \cite[Theorem~5.107]{Knapp}, see also \cite[Proposition~4.58]{Knapp}. The Peter–Weyl decomposition of $\Cf^\infty(T)$ (cf.~\eqref{eq:classical matrix coefficients identification}) implies this is an isomorphism.

To compute $\Cf^\infty(T)^\circ$, recall that $U(\hf) \subseteq \Cf^\infty(T)^\circ$ via the non-degenerate pairing \eqref{eq:classical-skew-pairing}. Also, for each $\zeta \in \hf^*$, the map
\[
e_\zeta : \Cf^\infty(T) \ni t^\lambda \longmapsto q^{-(\zeta, \lambda)} = e^{( - h \zeta , \lambda)} \in \Cbb , \quad \lambda \in \weights,
\]
is an algebra homomorphism, so $e_\zeta \in \Cf^\infty(T)^\circ$. In fact, $\{ e_\zeta \mid \zeta \in \hf^* \} = \Cf^\infty(T)^\wedge$, the set of non-trivial algebra homomorphisms $\Cf^\infty(T) \to \Cbb$, see \cite[Section~III.8]{tomDieck}. Hence, by \cite[Theorem~2.1.8]{Joseph}, the multiplication map gives an isomorphism:
\[
U(\hf) \otimes \Span_\Cbb \{ e_\zeta \mid \zeta \in \hf^* \} \cong \Cf^\infty(T)^\circ.
\]

We now claim that under $\Phi^*$, the subsets $U(\hf)$ and $\Span_\Cbb \{ e_\zeta \}$ in $\Cf^\infty(T)^\circ \cong U_q(\hf)^\circ$ map to $\{ D_X \}$ and $\Span_\Cbb \{ K_\zeta \}$ in $\Cf^\infty(K_q)^\circ$, respectively. Let $\mu \in \weights^+$ and $1 \leq i,j \leq n_\mu$. Then, using \eqref{eq:cfinifty(Kq) -> Uq(h) calculation} and the identification $U_q (\hf) \cong \Cf^\infty(T)$:
\[
( \Phi^* ( H ) , u^\mu _{ij} ) = (H , t^{- \epsilon_j ^\mu} ) \delta_{ij} = \left. \frac{d}{dt} \right|_{t = 0} e^{- \epsilon_j ^\mu ( tH ) } \delta_{ij} = -\epsilon_j ^\mu (H) \delta_{ij} = ( D_{H} , u^\mu _{ij} )
\]
for all $H \in \hf$, so $\Phi^*(X) = D_X$ for all $X \in U(\hf)$. Similarly,
\[
( \Phi^* ( e_\zeta ) , u^\mu _{ij} ) = ( e_\zeta , t^{- \epsilon_j ^\mu} ) \delta_{ij} = q^{(\zeta , \epsilon_j ^\mu )} \delta_{ij} = (K_{\zeta} , u^\mu _{ij}),
\]
for all $\zeta \in \hf^*$, completing the claim.

The second statement follows from the injectivity of $\Phi^*$---a consequence of the surjectivity of $\Phi$---and from the linear independence of the characters $\{ e_\zeta \}$ on $\Cf^\infty(T)$, which follows from Artin's theorem on the linear independence of characters applied to the group $\Pbf$.
\end{proof}

The following corollary will be useful in subsequent arguments.

\begin{cor}\label{cor:Kzeta independence}
Let $\eta_1 , \cdots , \eta_m \in \tf^*/ i \hbar^{-1} \Qbf^\lor$ be distinct. Suppose $X_1, \cdots , X_m \in U_q (\gf)$ satisfy
\[
K_{\eta_1} X_1 + \cdots + K_{\eta_m} X_m =0 \quad \text{or} \quad X_1 K_{\eta_1} + \cdots + X_m K_{\eta_m} = 0.
\]
Then $X_1 = \cdots = X_m = 0$, i.e., the elements $K_{\eta_1}, \cdots , K_{\eta_m}$ are $U_q (\gf)$-independent.
\end{cor}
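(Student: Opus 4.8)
The plan is to transport the given identity into $\Cf^\infty(K_q)^\circ$ and exploit the explicit basis of $\Cf^\infty(K_q)^\circ$ furnished by Proposition~\ref{prop:the Hopf dual of Cinfty(Kq)}. First I would expand each $X_i$ in the PBW basis \eqref{eq:PBW-basis}, writing $X_i = \sum_{\mathbf{b},\lambda,\mathbf{a}} c^{(i)}_{\mathbf{b},\lambda,\mathbf{a}}\, F_{\beta_1}^{b_1}\cdots F_{\beta_t}^{b_t} K_\lambda E_{\beta_1}^{a_1}\cdots E_{\beta_t}^{a_t}$ with $\mathbf{a},\mathbf{b}\in\Nbb^t$ and $\lambda\in\weights$. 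The commutation relations \eqref{eq:commutation for characters on torus} together with $K_\zeta K_\xi = K_{\zeta+\xi}$ show that left multiplication by $K_{\eta_i}$ affects only the Cartan part: $K_{\eta_i}X_i = \sum_{\mathbf{b},\lambda,\mathbf{a}} c^{(i)}_{\mathbf{b},\lambda,\mathbf{a}}\, q^{-(\eta_i,\, \sum_r b_r\beta_r)}\, F_{\beta_1}^{b_1}\cdots F_{\beta_t}^{b_t} K_{\eta_i+\lambda} E_{\beta_1}^{a_1}\cdots E_{\beta_t}^{a_t}$, that is, the same PBW shape with $K_\lambda$ replaced by $K_{\eta_i+\lambda}$ and each coefficient rescaled by a nonzero power of $q$. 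For the second identity one instead pushes $K_{\eta_i}$ leftward past the $E$-part, producing $\sum c^{(i)}_{\mathbf{b},\lambda,\mathbf{a}}\, q^{-(\eta_i,\, \sum_r a_r\beta_r)}\, F_{\beta_1}^{b_1}\cdots F_{\beta_t}^{b_t} K_{\lambda+\eta_i} E_{\beta_1}^{a_1}\cdots E_{\beta_t}^{a_t}$; the two cases are then treated identically.

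Next I would record that, by Proposition~\ref{prop:the Hopf dual of Cinfty(Kq)} (taking the $\{D_X\}$-factor trivial, $D_1$ being the unit of $\Cf^\infty(K_q)^\circ$) together with the asserted linear independence of $\{K_\zeta\mid\zeta\in\hf^*/i\hbar^{-1}\Qbf^\lor\}$, the family $\{F_{\beta_1}^{b_1}\cdots F_{\beta_t}^{b_t}K_\zeta E_{\beta_1}^{a_1}\cdots E_{\beta_t}^{a_t}\}$, indexed by $\mathbf{a},\mathbf{b}\in\Nbb^t$ and $\zeta\in\hf^*/i\hbar^{-1}\Qbf^\lor$, is linearly independent in $\Cf^\infty(K_q)^\circ$; here one uses that the monomials $F_{\beta_1}^{b_1}\cdots F_{\beta_t}^{b_t}$ and $E_{\beta_1}^{a_1}\cdots E_{\beta_t}^{a_t}$ are bases of $U_q(\nf_-)$ and $U_q(\nf_+)$, which follows from \eqref{eq:PBW-basis} and the triangular decomposition $U_q(\nf_-)\otimes U_q(\hf)\otimes U_q(\nf_+)\cong U_q(\gf)$. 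Thus the whole claim will reduce to showing that the Cartan labels $\eta_i+\lambda$ occurring for different $i$ are genuinely distinct modulo $i\hbar^{-1}\Qbf^\lor$ (distinctness for a fixed $i$ being immediate from $\weights\cap i\hbar^{-1}\Qbf^\lor=\{0\}$).

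The heart of the argument — and the one place where the hypothesis $\eta_i\in\tf^*$, rather than an arbitrary element of $\hf^*$, is essential (indeed the statement is false for general $\eta_i$, e.g.\ $K_{\varpi_1}\cdot 1 + 1\cdot(-K_{\varpi_1})=0$) — is this separation statement: for $1\le i,j\le m$ and $\lambda,\mu\in\weights$, if $\eta_i+\lambda\equiv\eta_j+\mu\pmod{i\hbar^{-1}\Qbf^\lor}$ then $i=j$ and $\lambda=\mu$. I would prove it using the real-form decomposition $\hf^*=\tf^*\oplus(i\tf)^*$ fixed by the conjugation $\overline{(\cdot)}$: the weight lattice $\weights$ lies in the summand $(i\tf)^*$, whereas $\eta_i,\eta_j$ lie in $\tf^*$ and $i\hbar^{-1}\Qbf^\lor\subseteq\tf^*$ (since $\Qbf^\lor\subseteq(i\tf)^*$). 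Hence $(\eta_i-\eta_j)+(\lambda-\mu)\in\tf^*$ forces $\lambda-\mu\in\tf^*\cap(i\tf)^*=\{0\}$, and then $\eta_i-\eta_j\in i\hbar^{-1}\Qbf^\lor$ gives $i=j$ by the assumed distinctness of the $\eta$'s. Granting this, the elements $F_{\beta_1}^{b_1}\cdots K_{\eta_i+\lambda}E_{\beta_1}^{a_1}\cdots$ appearing across all $i$ are pairwise distinct members of the linearly independent family above, so $\sum_i K_{\eta_i}X_i=0$ forces every coefficient — a nonzero power of $q$ times $c^{(i)}_{\mathbf{b},\lambda,\mathbf{a}}$ — to vanish, hence $X_i=0$ for every $i$. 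I expect the only real friction to be bookkeeping: keeping straight where $\weights$, $\Qbf^\lor$, $\tf^*$, and $i\hbar^{-1}\Qbf^\lor$ sit inside $\hf^*$ and how $\overline{(\cdot)}$ acts on them; the rest is a direct application of the PBW basis and Proposition~\ref{prop:the Hopf dual of Cinfty(Kq)}.
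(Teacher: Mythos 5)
Your proposal is correct and follows essentially the same route as the paper: both expand in the PBW basis, use the commutation relations \eqref{eq:commutation for characters on torus} to move the $K_{\eta_i}$ into the Cartan slot at the cost of nonzero powers of $q$, and then invoke the linear independence of $\{F_{\beta_1}^{b_1}\cdots F_{\beta_t}^{b_t}K_\zeta E_{\beta_1}^{a_1}\cdots E_{\beta_t}^{a_t}\}$ coming from Proposition~\ref{prop:the Hopf dual of Cinfty(Kq)}, with the decomposition $\hf^*=\tf^*\oplus(i\tf)^*$ (which the paper phrases as a tensor-product splitting of $\Span_\Cbb\{K_\zeta\}$, and you phrase as the separation of the labels $\eta_i+\lambda$) doing the same work in both arguments. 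Your explicit counterexample showing the necessity of $\eta_i\in\tf^*$ is a nice addition but does not change the substance.
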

\begin{proof}
Via the multiplication map, we have
\begin{align*}
\Span_\Cbb \{ K_\zeta \mid \zeta \in \hf^*/ i \hbar^{-1} \Qbf^\lor \} \\
&\cong \Span_\Cbb \{ K_\xi \mid \xi \in (i \tf)^* \} \otimes \Span_\Cbb \{ K_\eta \mid \eta \in \tf^* / i \hbar^{-1} \Qbf^\lor \} \nonumber ,
\end{align*}
where the elements inside the span signs are linearly independent by Artin's theorem applied to the group $\weights$. Therefore, by \eqref{eq:PBW-basis} and Proposition~\ref{prop:the Hopf dual of Cinfty(Kq)}, the set
\[
\big \{ F_{\beta_1} ^{b_1} \cdots F_{\beta_t} ^{b_t} K_\xi K_\eta E_{\beta_1} ^{a_1} \cdots E_{\beta_t} ^{a_t} \mid a_i, b_j \in \Nbb, \, \xi \in (i \tf)^*, \, \eta \in \tf^* / i \hbar^{-1} \Qbf^\lor \big\}
\]
is linearly independent in $\Cf^\infty(K_q)^\circ$. But the commutation relations \eqref{eq:commutation for characters on torus} show that
\begin{align*}
q^{( b_1 \beta_1 + \cdots + b_t \beta_t , \eta) } K_\eta (F_{\beta_1} ^{b_1} \cdots F_{\beta_t} ^{b_t} K_\lambda E_{\beta_1} ^{a_1} \cdots E_{\beta_t} ^{a_t}) = F_{\beta_1} ^{b_1} \cdots F_{\beta_t} ^{b_t} K_\lambda K_\eta E_{\beta_1} ^{a_1} \cdots E_{\beta_t} ^{a_t} \\
= q^{( a_1 \beta_1 + \cdots + a_t \beta_t , \eta) } (F_{\beta_1} ^{b_1} \cdots F_{\beta_t} ^{b_t} K_\lambda E_{\beta_1} ^{a_1} \cdots E_{\beta_t} ^{a_t}) K_\eta,
\end{align*}
for all $a_i, b_j \in \Nbb$, $\lambda \in \weights$, and $\eta \in \tf^* / i \hbar^{-1} \Qbf^\lor$, which, together with \eqref{eq:PBW-basis}, completes the proof.
\end{proof}

\subsection{Universal \texorpdfstring{$R$}{TEXT}-matrix and related constructions}\label{subsec:Universal R-matrix}

The $*$-algebra $U_q^\Rbb(\kf)$ is embedded into the $*$-algebra $\prod_{\mu \in \Pbf^+} \Lbb(V(\mu))$ via
\begin{equation*}
    U_q^\Rbb(\kf) \ni X \longmapsto (\pi_\mu(X))_{\mu} \in \prod_{\mu \in \Pbf^+} \Lbb(V(\mu)).
\end{equation*}
With this identification, the skew-pairing \eqref{eq:skew-pairing between Uq(K) and Cinfty(Kq)} extends naturally to
\begin{equation*}
    \Big( \prod_{\mu \in \Pbf^+} \Lbb(V(\mu)) \Big) \times \Cf^\infty(K_q) \ni (x, f) \longmapsto \sum_{\mu \in \Pbf^+} f_\mu(x_\mu) \in \Cbb.
\end{equation*}
This extension also applies to the skew-pairing between the Hopf $*$-algebras $U_q^\Rbb(\kf) \otimes U_q^\Rbb(\kf)$ and $\Cf^\infty(K_q) \otimes \Cf^\infty(K_q)$, defined as the tensor product of \eqref{eq:skew-pairing between Uq(K) and Cinfty(Kq)}. It yields the canonical pairing between $ \prod_{\lambda, \mu \in \Pbf^+} \Lbb(V(\lambda)) \otimes \Lbb(V(\mu))$ and $\Cf^\infty(K_q) \otimes \Cf^\infty(K_q)$, which we also denote by $( \cdot , \cdot )$.

Let $\displaystyle \Rcal \in \prod_{\lambda, \mu \in \Pbf^+} \End(V(\lambda)) \otimes \End(V(\mu))$ be the universal $R$-matrix of $U_q(\gf)$ from \cite[Theorem~3.108]{VoigtYuncken}. Explicitly,
\begin{equation}\label{eq:the universal R-matrix}
    \Rcal = q^{\sum_{i,j=1}^N B_{ij}(H_i \otimes H_j)} \prod_{r=1}^t \exp_{q_{\beta_r}} \big( (q_{\beta_r} - q_{\beta_r}^{-1}) (E_{\beta_r} \otimes F_{\beta_r}) \big),
\end{equation}
where $q^{\sum_{i,j=1}^N B_{ij}(H_i \otimes H_j)}$ denotes a symbolic operator acting on $V(\lambda) \otimes V(\mu)$ via
\begin{equation*}
    q^{\sum_{i,j=1}^N B_{ij}(H_i \otimes H_j)} (e_k^\lambda \otimes e_l^\mu) = q^{(\epsilon_k^\lambda, \epsilon_l^\mu)} e_k^\lambda \otimes e_l^\mu
\end{equation*}
for $\lambda, \mu \in \weights^+$ and respective basis indices, and
\begin{equation*}
    \exp_{q_j}(X) = \sum_{n=0}^\infty \frac{q_j^{n(n-1)/2}}{[n]_{q_j}!} X^n
\end{equation*}
for $\displaystyle X \in \prod_{\lambda, \mu \in \Pbf^+} \End(V(\lambda)) \otimes \End(V(\mu))$ whenever the right-hand side converges in the product topology. Its inverse is given by
\begin{equation}\label{eq:inverse of q-exponential}
    \exp_{q_j^{-1}}(-X) = \sum_{n=0}^\infty \frac{q_j^{-n(n-1)/2}}{[n]_{q_j}!} (-X)^n.
\end{equation}

The element $\Rcal$ is invertible and satisfies
\begin{equation}\label{eq:antipode and universal R-matrix}
    (S \otimes \id)(\Rcal) = \Rcal^{-1} = (\id \otimes S^{-1})(\Rcal).
\end{equation}
Moreover, for $f, g \in \Cf^\infty(K_q)$,
\begin{align}   \label{eq:the universal R-matrix braiding}
    ( \Rcal, f_{(1)} \otimes g_{(1)} ) g_{(2)} f_{(2)} &= f_{(1)} g_{(1)} ( \Rcal, f_{(2)} \otimes g_{(2)} ), \\
    ( \Rcal^{-1}, f_{(1)} \otimes g_{(1)} ) f_{(2)} g_{(2)} &= g_{(1)} f_{(1)} ( \Rcal^{-1}, f_{(2)} \otimes g_{(2)} ). \nonumber
\end{align}

Define $l^\pm : \Cf^\infty(K_q) \rightarrow U_q(\gf)$ by
\begin{equation}\label{eq:l-functionals}
    (l^+(f), g) = (\Rcal, g \otimes f), \quad (l^-(f), g) = (\Rcal^{-1}, f \otimes g),
\end{equation}
for $f, g \in \Cf^\infty(K_q)$. These are Hopf algebra homomorphisms satisfying
\begin{equation*}
    l^\pm(f)^* = l^\mp(f^*).
\end{equation*}

The left and right adjoint actions on $U_q(\gf)$ are given by
\[
X \rightarrow Y = X_{(1)} Y \hat{S}(X_{(2)}), \quad Y \leftarrow X = \hat{S}(X_{(1)}) Y X_{(2)}, \quad X, Y \in U_q(\gf).
\]
These induce left and right $U_q(\gf)$-module structures on $\Cf^\infty(K_q)$ via transpose:
\[
(Y, X \rightarrow f) := (Y \leftarrow X, f), \quad (Y, f \leftarrow X) := (X \rightarrow Y, f), \quad Y \in U_q(\gf)
\]
for $X \in U_q(\gf)$ and $f \in \Cf^\infty(K_q)$.
These are the \emph{left and right coadjoint actions} of $U_q(\gf)$ on $\Cf^\infty(K_q)$. For $Y \in U_q(\gf)$, define:
\[
U_q(\gf) \rightarrow Y = \{ X \rightarrow Y \mid X \in U_q(\gf) \}, \quad Y \leftarrow U_q(\gf) = \{ Y \leftarrow X \mid X \in U_q(\gf) \}.
\]
Set
\begin{align*}
    F_l U_q(\gf) &= \{ Y \in U_q(\gf) \mid \text{$U_q(\gf) \rightarrow Y$ is finite-dimensional} \}, \\
    F_r U_q(\gf) &= \{ Y \in U_q(\gf) \mid \text{$Y \leftarrow U_q(\gf)$ is finite-dimensional} \},
\end{align*}
called the \emph{left and right locally finite parts} of $U_q(\gf)$, respectively. These are subalgebras of $U_q(\gf)$. By \eqref{eq:antipodes and adjoint actions},
\begin{equation}\label{eq:antipode and locally finite parts}
    \hat{S}^{\pm 1}(F_l U_q(\gf)) = F_r U_q(\gf).
\end{equation}

Define $I: \Cf^\infty(K_q) \rightarrow U_q(\gf)$ by
\begin{equation}\label{eq:I-isomorphism}
    I(f) = l^-(f_{(1)}) \hat{S}l^+(f_{(2)}).
\end{equation}
Then $I$ is a linear isomorphism onto $F_l U_q(\gf)$ satisfying
\begin{equation}\label{eq:I preservers adjoint actions}
    I(X \rightarrow f) = X \rightarrow I(f)
\end{equation}
for $X \in U_q(\gf)$. Define
\[
J(f) = \hat{S}^{-1} I(S(f)) = \hat{S}l^+(f_{(1)}) l^-(f_{(2)}) = \hat{S} I(S^{-1}(f)),
\]
which is a linear isomorphism onto $F_r U_q(\gf)$ and satisfies
\begin{equation}\label{eq:J preserves adjoint actions}
    J(f \leftarrow X) = J(f) \leftarrow X
\end{equation}
for $X \in U_q(\gf)$. Furthermore, using \eqref{eq:antipode and universal R-matrix}, one verifies that
\begin{equation}\label{eq:an identity regarding I}
    (I(f), g) = \big(I(S(g)), S(f)\big) = (J(g), f),
\end{equation}
for all $f, g \in \Cf^\infty(K_q)$.

Let $\mu \in \weights^+$ and $1 \leq i,j \leq n_\mu$. Then,
\begin{align}\label{eq:I and involution}
    I(u^\mu_{ij})^* &= \left( \sum_{k} l^-(u^\mu_{ik}) \hat{S}l^+(u^\mu_{kj}) \right)^* = \sum_k l^+(S(u^\mu_{kj}))^* l^-(u^\mu_{ik})^* \\
    &= \sum_k l^-(u^\mu_{jk}) l^+((u^\mu_{ik})^*) = \sum_k l^-(u^\mu_{jk}) \hat{S}l^+(u^\mu_{ki}) = I(u^\mu_{ji}) \nonumber.
\end{align}

Let $v_{w_0 \mu} \in V(\mu)$ be a unit vector of weight $w_0 \mu$. Then
\begin{equation}\label{eq:an evaluation of I}
    I\left( \langle v_{w_0 \mu} \mid \cdot \mid v_{w_0 \mu} \rangle \right) = K_{-2 w_0 \mu}.
\end{equation}

Let $\sigma$ denote the flip map on $\prod_{\lambda, \nu} \End(V(\lambda)) \otimes \End(V(\nu))$, and define $\Rcal_{21} = \sigma(\Rcal)$. Then
\[
\Rcal' = \Rcal_{21}^{-1} \in \prod_{\lambda, \nu} \End(V(\lambda)) \otimes \End(V(\nu))
\]
is also a universal $R$-matrix of $U_q(\gf)$. Noting that
\[
(\Rcal', g \otimes f) = (\Rcal^{-1}, f \otimes g) = (l^-(f), g), \quad ((\Rcal')^{-1}, f \otimes g) = (\Rcal, g \otimes f) = (l^+(f), g),
\]
the analogue of \eqref{eq:I-isomorphism} becomes
\[
I'(f) = l^+(f_{(1)}) \hat{S}l^-(f_{(2)}),
\]
and satisfies
\begin{equation}\label{eq:I' preserves adjoint actions}
    I'(X \rightarrow f) = X \rightarrow I'(f)
\end{equation}
for all $X \in U_q(\gf)$. Let $v_\mu \in V(\mu)$ be a unit vector of highest weight $\mu$. Then,
\begin{equation}\label{eq:an evaluation of I'}
    I'\left( \langle v_\mu \mid \cdot \mid v_\mu \rangle \right) = K_{2 \mu},
\end{equation}
see the first paragraph of \cite[Proposition~3.116]{VoigtYuncken} where this property is shown to be a consequence of the braiding property \eqref{eq:the universal R-matrix braiding} which is shared by all universal $R$-matrices.

Since $\la v_\mu \mid \cdot \mid v_\mu \ra \in \End(V(\mu))^*$ is cyclic in $\End(V(\mu))^*$ with respect to the left coadjoint action of $U_q(\gf)$ on $\Cf^\infty(K_q)$, \eqref{eq:I' preserves adjoint actions}--\eqref{eq:an evaluation of I'} imply that
\[
I' \big( \End(V(\mu))^* \big) = U_q (\gf) \rightarrow K_{2\mu}.
\]
However, the latter set is equal to $I\big( \End(V(-w_0\mu))^* \big)$ (cf.~the proof of \cite[Theorem~3.113]{VoigtYuncken}) and hence has dimension $n_{-w_0 \mu}^2 = n_\mu^2$. This implies that $I'$ is injective on $\End(V(\mu))^*$. Therefore, we conclude that
\[
I' : \Cf^\infty(K_q) = \bigoplus_{\mu \in \weights^+} \End(V(\mu))^* \longrightarrow \bigoplus_{\mu \in \weights^+} \big(U_q (\gf) \rightarrow K_{2\mu}\big) = F_l U_q (\gf)
\]
is an isomorphism.

Also, just as in the case of $I$, we can use \eqref{eq:antipode and universal R-matrix} to prove
\begin{equation}\label{eq:an identity regarding I'}
    (I'(f), g) = \big(I'(S(g)), S(f)\big), \quad f, g \in \Cf^\infty (K_q).
\end{equation}
Finally, we have $I'(u^\mu_{ij})^* = I'(u^\mu_{ji})$ for $\mu \in \weights^+$ and $1 \leq i,j \leq n_\mu$.

\section{Finite-dimensional bicovariant (\texorpdfstring{$*$}{TEXT}-)FODCs on \texorpdfstring{$K_q$}{TEXT}}\label{sec:FODCs on Kq}

In this section, we use the construction of \cite{Jurco1991} to describe a family of finite-dimensional bicovariant FODCs on $K_q$ and show that it yields all finite-dimensional bicovariant FODCs up to isomorphism. This classification was verified in the cases $K_q = SU_q(n+1)$ and $K_q = Sp_q(2n)$ in \cite{Heckenberger_classification}. Our proof relies on a result from \cite{Baumann1998}.

Throughout, we fix $0 < q < 1$.

\subsection{Construction of finite-dimensional bicovariant (\texorpdfstring{$*$}{TEXT}-)FODCs on \texorpdfstring{$K_q$}{TEXT}}\label{subsec:Construction of FODCs on Kq}

\begin{prop}\label{prop:one dimensional BCFODCs}
    Let $\zeta \in i \hbar^{-1} \Pbf^\lor / i \hbar^{-1} \Qbf^\lor$. Then, the two families
    \[
    \{ K_\zeta \} \subseteq \Cf^\infty (K_q)^\circ , \quad \{1 \} \subseteq \Cf^\infty(K_q)
    \]
    satisfy conditions S1--S3 of Proposition~\ref{prop:the structure theorem of bicovariant bimodule}, and thus define a one-dimensional bicovariant bimodule over $\Cf^\infty(K_q)$.
\end{prop}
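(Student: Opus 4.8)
The plan is to verify conditions S1--S3 of Proposition~\ref{prop:the structure theorem of bicovariant bimodule} for the singleton index set $I=\{1\}$, with $f_{11}=K_\zeta\in\Cf^\infty(K_q)^\circ$ and $t_{11}=1\in\Cf^\infty(K_q)$, disposing of S1 and S2 at once and then reducing the only nontrivial condition, S3, to the $\ad$-invariance of $K_\zeta$. As recalled in Section~\ref{subsec:Dual Hopf algebra of the quantized algebra of functions}, $K_\zeta$ depends only on the class of $\zeta$ modulo $i\hbar^{-1}\Qbf^\lor$ — so the parametrisation by $i\hbar^{-1}\Pbf^\lor/i\hbar^{-1}\Qbf^\lor$ is meaningful — and $K_\zeta$ is a grouplike element of $\Cf^\infty(K_q)^\circ$, that is, $\hat{\Delta}(K_\zeta)=K_\zeta\otimes K_\zeta$ and $\hat{\epsilon}(K_\zeta)=(K_\zeta,1)=q^{(\zeta,0)}=1$. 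With $I=\{1\}$ these are precisely S1, while $\Delta(1)=1\otimes 1$ and $\epsilon(1)=1$ give S2.

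For S3: with $I=\{1\}$ the condition reads $1\cdot(a\triangleleft K_\zeta)=(K_\zeta\triangleright a)\cdot 1$, that is, $a\triangleleft K_\zeta=K_\zeta\triangleright a$ for every $a\in\Cf^\infty(K_q)$. By Corollary~\ref{cor:diagonalization of translation invariant linear operators} together with \eqref{eq:right translation invariance}, this identity holds for any $\ad$-invariant functional, so it suffices to show that $K_\zeta$ is $\ad$-invariant; by Corollary~\ref{cor:ad-invariance criterion for a linear functional} this amounts to checking that $(K_\zeta,u^\mu_{ij})=q^{(\zeta,\epsilon^\mu_j)}\delta_{ij}$ has the form $C(\mu)\delta_{ij}$, i.e.\ that $q^{(\zeta,\epsilon^\mu_j)}$ is independent of $j$ for each fixed $\mu\in\weights^+$. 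This is exactly where the hypothesis $\zeta\in i\hbar^{-1}\Pbf^\lor$ is used: any two weights $\epsilon^\mu_i,\epsilon^\mu_j$ of the irreducible module $V(\mu)$ differ by an element of the root lattice $\Qbf$ (Section~\ref{subsec:Semisimple Lie algebras}), and since $\Pbf^\lor$ is the $\Zbb$-dual of $\Qbf$ one gets $(\zeta,\epsilon^\mu_i-\epsilon^\mu_j)\in i\hbar^{-1}\Zbb$, hence $q^{(\zeta,\epsilon^\mu_i-\epsilon^\mu_j)}=e^{h(\zeta,\epsilon^\mu_i-\epsilon^\mu_j)}=1$ because $h\hbar^{-1}=2\pi$. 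Thus $q^{(\zeta,\epsilon^\mu_j)}$ depends on $\mu$ alone, $K_\zeta$ is $\ad$-invariant, and S3 follows.

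I do not anticipate any genuine difficulty: once the results of Sections~\ref{sec:Compact quantum groups (CQGs)} and~\ref{sec:The q-deformation} are invoked the argument is pure bookkeeping, and its one substantive point is the observation that $\zeta\in i\hbar^{-1}\Pbf^\lor/i\hbar^{-1}\Qbf^\lor$ is precisely the condition placing $K_\zeta$ in the centre of $\Cf^\infty(K_q)^*$ (equivalently, making it $\ad$-invariant), so that its left and right translation actions on $\Cf^\infty(K_q)$ coincide — which is exactly what the one-dimensional bimodule requires for consistency. If one prefers to sidestep the centrality machinery, S3 can instead be verified directly on the Peter--Weyl basis, where both sides send $u^\mu_{ij}$ to $q^{(\zeta,\epsilon^\mu_j)}u^\mu_{ij}=q^{(\zeta,\epsilon^\mu_i)}u^\mu_{ij}$, the two scalars agreeing by the same root-lattice argument.
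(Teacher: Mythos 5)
Your proof is correct and rests on exactly the same key computation as the paper's: since every weight of $V(\mu)$ differs from $\mu$ by an element of the root lattice and $\zeta \in i\hbar^{-1}\Pbf^\lor$ pairs integrally (times $i\hbar^{-1}$) with $\Qbf$, the scalar $q^{(\zeta,\epsilon^\mu_j)}$ is independent of $j$, whence $K_\zeta \triangleright u^\mu_{ij} = u^\mu_{ij} \triangleleft K_\zeta$. The paper verifies S3 by this direct Peter--Weyl computation (which you give as your closing alternative), while your primary framing routes the same fact through the $\ad$-invariance corollaries of Section~\ref{subsec:Linear functionals on a CQG}; this is only a repackaging, and in fact the paper records the $\ad$-invariance of $K_\zeta$ as a consequence in Remark~\ref{rmk:center inside the maximal torus} rather than as an input.
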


\begin{proof}
The identities $\hat{\Delta}(K_\zeta) = K_\zeta \otimes K_\zeta$ and $(K_\zeta , 1 ) = q^{(\zeta , 0 )} = 1$ verify S1. Condition S2 is immediate.

Since $\zeta \in i \hbar^{-1} \Pbf^\lor$, we have $q^{(\zeta , \alpha_j)} = e^{\hbar (\zeta, \alpha_j)} = 1$ for all $1 \leq j \leq N$. As every weight of $V(\mu)$ is of the form $\mu - \sum_{j=1}^N n_j \alpha_j$ for $n_j \in \Nbb$, it follows that
\begin{equation*}
    \pi_\mu(K_\zeta) e_j^\mu = q^{(\zeta , \epsilon_j^\mu)} e_j^\mu = q^{(\zeta , \mu)} e_j^\mu.
\end{equation*}
Hence, for all $\mu \in \Pbf^+$ and $1 \leq i,j \leq n_\mu$,
\[
K_\zeta \triangleright u^\mu_{ij} = q^{(\zeta, \epsilon_j^\mu)} u^\mu_{ij} = q^{(\zeta, \mu)} u^\mu_{ij} = q^{(\zeta, \epsilon_i^\mu)} u^\mu_{ij} = u^\mu_{ij} \triangleleft K_\zeta,
\]
which implies
\begin{equation}\label{eq:module action of Kzeta}
    K_\zeta \triangleright f = f \triangleleft K_\zeta
\end{equation}
for all $f \in \Cf^\infty(K_q)$, verifying S3.
\end{proof}

\begin{rmk}\label{rmk:center inside the maximal torus}
Let $\zeta \in i \hbar^{-1} \Pbf^\lor$ and $X \in U_q^\Rbb(\kf)$. From \eqref{eq:module action of Kzeta}, we find that for all $f \in \Cf^\infty(K_q)$,
\begin{align*}
    (X K_\zeta , f) = \epsilon(X K_\zeta \triangleright f) = \epsilon \big( X \triangleright (f \triangleleft K_\zeta) \big) = (X , f \triangleleft K_\zeta) = (K_\zeta X , f).
\end{align*}
By nondegeneracy of the pairing, we conclude
\begin{equation}\label{eq:Kzeta centrality}
    X K_\zeta = K_\zeta X, \quad X \in U_q^\Rbb(\kf).
\end{equation}
Hence, by Proposition~\ref{prop:ad-invariance criterion for a linear functional} with $\Ucal = U_q^\Rbb(\kf)$, we see that $K_\zeta$ is $\ad$-invariant.

More conceptually, regarding $\{ K_\zeta \mid \zeta \in \tf^* / i \hbar^{-1} \Qbf^\lor \}$ as the maximal torus $T$ of $K$ embedded into $\Cf^\infty(K_q)^\circ$ (cf.~\cite[Section~6.1.1]{VoigtYuncken}), the subset $\{ K_\zeta \mid \zeta \in i \hbar^{-1} \Pbf^\lor / i \hbar^{-1} \Qbf^\lor \}$ corresponds to the center $Z$ of $K$, see the final paragraph of \cite[Section~4.3]{VoigtYuncken} and the second paragraph of \cite[Section~4.4.2]{VoigtYuncken}.

Accordingly, from now on, we denote
\[
\Zcal = i \hbar^{-1} \Pbf^\lor / i \hbar^{-1} \Qbf^\lor.
\]
\end{rmk}

Recall that the nondegenerate skew-pairing \eqref{eq:skew-pairing between Uq(K) and Cinfty(Kq)} gives an embedding of Hopf $*$-algebras
\(
    U_q^\Rbb(\kf) \hookrightarrow \Cf^\infty(K_q)^\circ.
\)

\begin{prop}\label{prop:construction of gamma and gamma^c}
Fix $\mu \in \mathbf{P}^+$. The two families of elements
\[
    \big(l^-(u^\mu_{ji})\big)_{1 \leq i,j \leq n_\mu} \subseteq U_q^\Rbb(\kf), \quad \big(u^\mu_{ij}\big)_{1 \leq i,j \leq n_\mu} \subseteq \Cf^\infty(K_q)
\]
satisfy the conditions S1--S3 of Proposition~\ref{prop:the structure theorem of bicovariant bimodule}.

The same holds for the families
\[
    \big(\hat{S}l^+(u^\mu_{ij})\big)_{1 \leq i,j \leq n_\mu} \subseteq U_q^\Rbb(\kf), \quad \big(S(u^\mu_{ji})\big)_{1 \leq i,j \leq n_\mu} \subseteq \Cf^\infty(K_q).
\]
\end{prop}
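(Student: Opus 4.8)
\emph{Plan.} The strategy is to deduce the second statement from the first by two reductions: first replace the universal $R$-matrix $\Rcal$ by $\Rcal' = \Rcal_{21}^{-1}$, and then apply a general ``transpose--antipode'' operation to structure representations. For the first reduction, observe that the functional playing the role of $l^-$ for $\Rcal'$ — namely $f \mapsto ((\Rcal')^{-1}, f \otimes \,\cdot\,)$ — is exactly $l^+$, since $((\Rcal')^{-1}, f \otimes g) = (\Rcal, g \otimes f) = (l^+(f), g)$. Because $\Rcal'$ is again a universal $R$-matrix of $U_q(\gf)$ and the argument establishing the first statement uses only properties common to all universal $R$-matrices — the braiding identity \eqref{eq:the universal R-matrix braiding} and $(S \otimes \id)(\Rcal) = \Rcal^{-1} = (\id \otimes S^{-1})(\Rcal)$ from \eqref{eq:antipode and universal R-matrix} — that argument applies verbatim with $\Rcal$ replaced by $\Rcal'$. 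This yields that the families $\big(l^+(u^\mu_{ji})\big)_{1 \leq i,j \leq n_\mu}$ and $\big(u^\mu_{ij}\big)_{1 \leq i,j \leq n_\mu}$ satisfy S1--S3 of Proposition~\ref{prop:the structure theorem of bicovariant bimodule}.

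It then suffices to prove the following elementary lemma: if $(f_{ij})_{i,j \in I}$ and $(t_{ij})_{i,j \in I}$ satisfy S1--S3 over a Hopf algebra with invertible antipode, then so do $\big(\hat{S}(f_{ji})\big)_{i,j \in I}$ and $\big(S(t_{ji})\big)_{i,j \in I}$. Indeed, applying this with $f_{ij} = l^+(u^\mu_{ji})$ and $t_{ij} = u^\mu_{ij}$ produces precisely $\hat{S}l^+(u^\mu_{ij})$ and $S(u^\mu_{ji})$, which is the claimed family. Conditions S1 and S2 for the transposed families are immediate from $\hat{\Delta}\hat{S} = (\hat{S} \otimes \hat{S})\,\sigma\,\hat{\Delta}$, $\Delta S = (S \otimes S)\,\sigma\,\Delta$ (with $\sigma$ the flip), together with $\hat{\epsilon}\hat{S} = \hat{\epsilon}$ and $\epsilon S = \epsilon$. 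The content is S3. To verify it, I would expand $\triangleright$ and $\triangleleft$ via $\phi \triangleright a = a_{(1)}(\phi, a_{(2)})$ and $a \triangleleft \phi = (\phi, a_{(1)})\,a_{(2)}$, test the desired identity against an arbitrary element written as $b = S(a)$, and use $\Delta S(a) = S(a_{(2)}) \otimes S(a_{(1)})$, the skew-pairing relation $(\hat{S}(\phi), a) = (\phi, S^{-1}(a))$, and the anti-multiplicativity of $S$. A short rearrangement then identifies S3 for the transposed families at the index pair $(i,j)$ with $S$ applied to S3 for the original families at the pair $(j,i)$, which holds by hypothesis.

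\emph{Main obstacle.} The delicate point is the bookkeeping in this last step: one must track exactly which antipode ($S$, $S^{-1}$, $\hat{S}$, or $\hat{S}^{-1}$) occurs where, as the wrong direction destroys the cancellation, and the skew-pairing conventions P1--P3 must be invoked with care. An alternative route that avoids both the meta-argument about $\Rcal'$ and the transpose--antipode lemma is to verify S1--S3 for $\big(\hat{S}l^+(u^\mu_{ij})\big)$ and $\big(S(u^\mu_{ji})\big)$ directly, following the proof of the first statement but using the second braiding identity of \eqref{eq:the universal R-matrix braiding} (the one for $\Rcal^{-1}$) in place of the first; the computation is of comparable length, and S3 is again the main hurdle.
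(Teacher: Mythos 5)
Your reduction of the second family to the first is valid and genuinely different from the paper's proof. The paper treats both families by direct computation: S1 and S2 follow from the corepresentation property of $u^\mu$ together with the fact that $l^\pm$ are Hopf algebra homomorphisms, and S3 is checked for the first family via the braiding identity for $\Rcal^{-1}$ and, separately, for the second family via the braiding identity for $\Rcal$ applied to $f_{(1)} \otimes S(u^\mu_{ij})_{(1)}$. Your route instead notes that $\Rcal' = \Rcal_{21}^{-1}$ is again a universal $R$-matrix whose ``$l^-$'' is $l^+$, so the first computation transfers to give S1--S3 for $\big(l^+(u^\mu_{ji})\big)$, $\big(u^\mu_{ij}\big)$, and then a transpose--antipode lemma converts these into the stated second family. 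I checked the lemma: setting $g_{ij} = \hat{S}(f_{ji})$ and $s_{ij} = S(t_{ji})$, conditions S1--S2 follow from $\hat{\Delta}\hat{S} = (\hat{S} \otimes \hat{S})\,\sigma\,\hat{\Delta}$ and its analogue for $S$, while S3 for $(g,s)$ evaluated at $a = S(b)$ reduces, after applying $S^{-1}$ and using $(\hat{S}(\phi), a) = (\phi, S^{-1}(a))$, to S3 for $(f,t)$ at $b$ with the indices $i$ and $j$ interchanged --- exactly as you predict. The substitution $f_{ij} = l^+(u^\mu_{ji})$, $t_{ij} = u^\mu_{ij}$ does yield $\hat{S}l^+(u^\mu_{ij})$ and $S(u^\mu_{ji})$. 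This buys a reusable structural fact (antipode-transpose of a bicovariant bimodule datum) at the cost of a meta-argument about which properties the first computation actually uses.

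Two caveats. First, your proposal presupposes the first statement: the proposition asserts S1--S3 for \emph{both} families, and you never supply the verification for $\big(l^-(u^\mu_{ji})\big)$, $\big(u^\mu_{ij}\big)$ --- in particular the S3 computation via the $\Rcal^{-1}$ braiding identity --- on which your entire reduction rests. As written, half the proposition is unproved. Second, in your ``alternative route'' the braiding identities are reversed: the direct proof of the first statement uses the identity for $\Rcal^{-1}$, and the direct proof of the second uses the identity for $\Rcal$, not the other way around; following your prescription literally would stall.
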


\begin{proof}
Although a proof can be found in \cite{Jurco1991}, we provide a detailed argument for the reader’s convenience.

In both cases, conditions S1 and S2 follow directly from the fact that $(u^\mu_{ij})_{1 \leq i,j \leq n_\mu}$ is a corepresentation and $l^\pm$ are Hopf algebra homomorphisms.

For S3, consider the following identities, which follow from \eqref{eq:the universal R-matrix braiding}: for any $f \in \Cf^\infty(K_q)$ and $1 \leq i,j \leq n_\mu$,
\begin{align*}
    \sum_{k} u^\mu_{ki} (f \triangleleft l^-(u^\mu_{jk}))
    &= \sum_{k} \big(l^-(u^\mu_{jk}), f_{(1)}\big) u^\mu_{ki} f_{(2)} \\
    &= \big(\Rcal^{-1}, (u^\mu_{ji})_{(1)} \otimes f_{(1)}\big) (u^\mu_{ji})_{(2)} f_{(2)} \\
    &= f_{(1)} (u^\mu_{ji})_{(1)} \big(\Rcal^{-1}, (u^\mu_{ji})_{(2)} \otimes f_{(2)}\big) \\
    &= \sum_{k} f_{(1)} u^\mu_{jk} \big(\Rcal^{-1}, u^\mu_{ki} \otimes f_{(2)}\big) \\
    &= \sum_{k} \big(l^-(u^\mu_{ki}) \triangleright f\big) u^\mu_{jk},
\end{align*}
and similarly,
\begin{align*}
    \sum_{k} S(u^\mu_{ik}) (f \triangleleft \hat{S}l^+(u^\mu_{kj}))
    &= \sum_{k} \big(l^+(S(u^\mu_{kj})), f_{(1)}\big) S(u^\mu_{ik}) f_{(2)} \\
    &= \big(\Rcal, f_{(1)} \otimes S(u^\mu_{ij})_{(1)}\big) S(u^\mu_{ij})_{(2)} f_{(2)} \\
    &= f_{(1)} S(u^\mu_{ij})_{(1)} \big(\Rcal, f_{(2)} \otimes S(u^\mu_{ij})_{(2)}\big) \\
    &= \sum_{k} f_{(1)} S(u^\mu_{kj}) \big(\Rcal, f_{(2)} \otimes S(u^\mu_{ik})\big) \\
    &= \sum_{k} \big(\hat{S}l^+(u^\mu_{ik}) \triangleright f\big) S(u^\mu_{kj}).
\end{align*}
\end{proof}

\begin{defn}\label{defn:definition of Omega zeta mu}
Let $(\zeta,\mu) \in \Zcal \times \Pbf^+$ with $(\zeta,\mu) \neq (0,0)$. By iteratively applying Proposition~\ref{prop:tensor product of structure maps} to the three bicovariant bimodules from Propositions~\ref{prop:one dimensional BCFODCs} and \ref{prop:construction of gamma and gamma^c}, we construct a bicovariant bimodule $\Omega_{\zeta\mu}$ with structure representations
\begin{equation}\label{eq:structure representation-Kq}
    \big(K_\zeta l^-(u^\mu_{ji}) \hat{S}l^+(u^\mu_{kl})\big)_{i,j,k,l} \subseteq \Cf^\infty(K_q)^\circ,\quad \big(u^\mu_{ij} S(u^\mu_{lk})\big)_{i,j,k,l} \subseteq \Cf^\infty(K_q)
\end{equation}
with respect to a fixed invariant basis $\{\omega^{\zeta\mu}_{ik} \mid 1 \leq i,k \leq n_\mu\}$. Note that $\dim \inv \Omega_{\zeta\mu} = n_\mu^2$. We define $\Omega_{00} = 0$, the zero bicovariant $*$-bimodule.
\end{defn}

Define a conjugate-linear map $*:\Omega_{\zeta\mu} \to \Omega_{\zeta\mu}$ by
\begin{equation}\label{eq:involution of Gamma^mu}
    \left(\sum_{i,k} f_{ik} \omega^{\zeta\mu}_{ik} \right)^* = - \sum_{i,k} \omega^{\zeta\mu}_{ki} f_{ik}^*, \quad f_{ik} \in \Cf^\infty(K_q).
\end{equation}

\begin{prop}\label{prop:Gamma^mu is a bicovariant *- bimodule}
Let $(\zeta, \mu) \in \Zcal \times \Pbf^+$ with $(\zeta, \mu) \neq (0,0)$. The bimodule $\Omega_{\zeta\mu}$ becomes a bicovariant $*$-bimodule under the map \eqref{eq:involution of Gamma^mu} if and only if $\zeta \in \frac{i}{2} \hbar^{-1} \Qbf^\lor$.
\end{prop}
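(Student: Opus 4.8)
The plan is to reduce the statement to a concrete condition on the invariant basis by applying the general criterion encoded in Definition~\ref{defn:bicovariant *-bimodule}, conditions B4--B5, to the explicit structure representations \eqref{eq:structure representation-Kq}. By Proposition~\ref{prop:*-structure encoded in the dual space}, whether $\Omega_{\zeta\mu}$ admits a bicovariant $*$-bimodule structure is controlled by whether the space of left-invariant vector fields is stable under $*$; but here we have an \emph{explicit} candidate involution \eqref{eq:involution of Gamma^mu}, so I would instead verify B4--B5 directly. The key point is that the map \eqref{eq:involution of Gamma^mu} is built from the assignment $\omega^{\zeta\mu}_{ik} \mapsto -\,(\text{something})\,\omega^{\zeta\mu}_{ki}$, and for this to be an involution compatible with the bimodule structure one needs the structure representations $(f_{(ik),(jl)})$ to interact correctly with the $*$-operation on $\Cf^\infty(K_q)^\circ$.

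The concrete computation I would carry out: first recall from Proposition~\ref{prop:the structure representations of bicovariant bimodule} and \eqref{eq:definition of f_ij} that $\omega_{ik} a = \sum_{j,l}(f_{(ik),(jl)} \triangleright a)\,\omega_{jl}$ with $f_{(ik),(jl)} = K_\zeta\, l^-(u^\mu_{ji})\,\hat{S}l^+(u^\mu_{kl})$. For \eqref{eq:involution of Gamma^mu} to satisfy $(a\omega)^* = \omega^* a^*$ and $(\omega a)^* = a^* \omega^*$ (condition B4), one computes $(\omega_{ik} a)^*$ two ways and matches coefficients; this forces an identity of the form $*$-applied-to-$f_{(ik),(jl)}$ equals (up to relabelling indices $i\leftrightarrow k$, $j\leftrightarrow l$) the same family, i.e. a self-adjointness-type constraint. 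Using $l^\pm(f)^* = l^\mp(f^*)$, $(u^\mu_{ij})^* = S(u^\mu_{ji})$, $K_\zeta^* = K_{-\overline\zeta}$, and $\hat S^2(X) = K_{2\rho} X K_{-2\rho}$, the $U_q^\Rbb(\kf)$-parts $l^-(u^\mu_{ji})\hat{S}l^+(u^\mu_{kl})$ will transform among themselves (this is essentially the computation behind \eqref{eq:I and involution}), and the entire obstruction collapses onto the $K_\zeta$ factor: one gets that $*$ is an involution of the required kind precisely when $K_\zeta^* = K_{2\rho} K_\zeta^{-1} K_{-2\rho} = K_\zeta^{-1}$ modulo the identifications, i.e. $K_{-\overline\zeta} = K_{-\zeta}$ as elements of $\Cf^\infty(K_q)^\circ$. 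Since $\zeta \in \Zcal = i\hbar^{-1}\Pbf^\lor/i\hbar^{-1}\Qbf^\lor$, we have $\overline\zeta = -\zeta$ (as $\zeta$ is purely imaginary on $\tf$), so $K_{-\overline\zeta} = K_\zeta$, and the condition becomes $K_\zeta = K_\zeta^{-1}$, i.e. $K_{2\zeta} = 1$, i.e. $2\zeta \in i\hbar^{-1}\Qbf^\lor$, which is exactly $\zeta \in \frac{i}{2}\hbar^{-1}\Qbf^\lor$. The $\mu=0$ degenerate factor contributes nothing since $\Omega_{00}=0$ and for $(\zeta,\mu)\neq(0,0)$ the $u^\mu$-part is nontrivial whenever $\mu\neq 0$; the case $\mu=0,\ \zeta\neq 0$ is the purely toral one where the analysis above is literally the computation on the one-dimensional bimodule of Proposition~\ref{prop:one dimensional BCFODCs}.

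The main obstacle I anticipate is the bookkeeping in verifying condition B5 (that both coactions $\Phi_{\Omega_{\zeta\mu}}$ and ${}_{\Omega_{\zeta\mu}}\Phi$ are $*$-preserving) together with the index-tracking needed to see that the $l^\pm$-contributions genuinely cancel, leaving only the $K_\zeta$-constraint. Concretely, I would need to show that applying $*$ and then ${}_{\Omega_{\zeta\mu}}\Phi$ agrees with applying ${}_{\Omega_{\zeta\mu}}\Phi$ and then the tensor-product $*$-structure; this uses $\Delta(u^\mu_{ij}S(u^\mu_{lk})) = \sum u^\mu_{ip}S(u^\mu_{qk}) \otimes u^\mu_{pj}S(u^\mu_{lq})$ and the identity $(u^\mu_{ij}S(u^\mu_{lk}))^* = u^\mu_{ki}S(u^\mu_{jl})$ (up to relabelling), which mirrors \eqref{eq:involution of Gamma^mu}. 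The delicate part is that $K_\zeta$ is \emph{central} in $\Cf^\infty(K_q)^\circ$ (Remark~\ref{rmk:center inside the maximal torus}, \eqref{eq:Kzeta centrality}) and $\hat S^{\pm1}(K_\zeta)=K_{-\zeta}$, so all the manipulations move $K_\zeta$ freely past the $l^\pm$-terms and the only surviving algebraic obligation is $K_\zeta^* = \hat S(K_\zeta)$ (equivalently $K_\zeta^*=K_\zeta^{-1}$); reading off when this holds for $\zeta\in\Zcal$ gives $\zeta\in\frac{i}{2}\hbar^{-1}\Qbf^\lor$. Once the cancellation is set up cleanly, the equivalence is immediate in both directions.
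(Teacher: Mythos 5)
Your overall strategy coincides with the paper's: check that \eqref{eq:involution of Gamma^mu} is an involution satisfying B4--B5 by pushing $*$ through the structure representations \eqref{eq:structure representation-Kq}, let the identities $l^\pm(f)^* = l^\mp(f^*)$ and \eqref{eq:left module action and involution} absorb the $l^\pm$-parts, and isolate the obstruction on the $K_\zeta$ factor. However, the step where you read off that obstruction contains two errors that cancel, and this is a genuine gap. First, the claim $\overline{\zeta} = -\zeta$ for $\zeta \in \Zcal$ is false: since $\Pbf^\lor \subseteq (i\tf)^*$, every $\zeta \in i\hbar^{-1}\Pbf^\lor$ lies in the real form $\tf^* \subseteq \hf^*$, so $\overline{\zeta} = \zeta$ and $K_\zeta^* = K_{-\overline{\zeta}} = K_{-\zeta} = K_\zeta^{-1}$ for \emph{every} $\zeta \in \Zcal$ (the paper uses exactly this fact in the proof of Corollary~\ref{cor:linear functional for BCFODC-Kq}). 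Consequently your stated surviving obligation, $K_\zeta^* = \hat{S}(K_\zeta)$, i.e.\ $K_\zeta^* = K_\zeta^{-1}$, is vacuous, and with the conjugation corrected your argument would conclude that $\Omega_{\zeta\mu}$ is \emph{always} a bicovariant $*$-bimodule, contradicting the statement being proved.

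The obligation you should extract is different. Writing $f_{(ik),(jl)} = K_\zeta\, l^-(u^\mu_{ji})\,\hat{S}l^+(u^\mu_{kl})$ for the structure representation, the requirement that \eqref{eq:involution of Gamma^mu} square to the identity (equivalently, that $(\omega a)^* = a^*\omega^*$ hold, since $(a\omega)^* = \omega^* a^*$ is built into the definition of the map) amounts to $\big(f_{(ki),(lj)}\big)^* = f_{(ik),(jl)}$. The $l^\pm$-parts of this identity match automatically (this is the computation behind \eqref{eq:I and involution}), and the surviving $K_\zeta$-constraint is $K_\zeta^* = K_\zeta$ --- not $K_\zeta^* = K_\zeta^{-1}$. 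With the correct $K_\zeta^* = K_{-\zeta}$ this reads $K_{-\zeta} = K_\zeta$, i.e.\ $K_{2\zeta} = 1$, i.e.\ $\zeta \in \frac{i}{2}\hbar^{-1}\Qbf^\lor$; the invertibility of the matrices $l^\pm(u^\mu)$ is what lets you cancel the $l^\pm$-factors and obtain the ``only if'' direction. So your two errors compensate to land on the correct final condition, but neither the conjugation formula nor the identified obligation is right as written, and the proof fails if either is corrected in isolation. The remainder of your plan (B5 via the coproduct of $u^\mu_{ij}S(u^\mu_{lk})$, which does not involve $K_\zeta$ at all, and the treatment of the degenerate cases) is sound.
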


\begin{proof}
We first show that $*$ is an involution if and only if $\zeta \in \frac{i}{2} \hbar^{-1} \Qbf^\lor$. Using the identity
\begin{equation}\label{eq:left module action and involution}
    (X \triangleright (f^*) )^* = f_{(1)} \overline{( X, f_{(2)} ^* )} = f_{(1)} \big( \hat{S}^{-1} (X)^* , f_{(2)} \big) = \hat{S}(X^*) \triangleright f,
\end{equation}
valid for all $X \in U_q^\Rbb(\kf)$ and $f \in \Cf^\infty(K_q)$, we compute for $f \in \Cf^\infty(K_q)$ and $1 \leq i,k \leq n_\mu$:
\begin{align*}
    \big((f \omega_{ik}^{\zeta\mu})^*\big)^* 
    &= (-\omega_{ki}^{\zeta\mu} f^*)^* \\
    &= \bigg(- \sum_{1 \leq j,l \leq n_\mu} \Big( \big(K_\zeta l^- (u^\mu_{lk}) \hat{S}l^+ (u^\mu_{ij})\big) \triangleright f^* \Big) \omega_{lj}^{\zeta\mu} \bigg)^* \\
    &= \sum_{1 \leq j,l \leq n_\mu} \omega_{jl}^{\zeta\mu} \bigg( \hat{S}\Big( \big(K_\zeta l^- (u^\mu_{lk}) \hat{S}l^+ (u^\mu_{ij})\big)^* \Big) \triangleright f \bigg) \\
    &= \sum_{1 \leq j,l \leq n_\mu} \omega_{jl}^{\zeta\mu} \bigg( \hat{S}\Big( \hat{S}^{-1} l^- \big((u^\mu_{ij})^*\big) l^+ \big((u^\mu_{lk})^*\big) K_{-\overline{\zeta}} \Big) \triangleright f \bigg) \\
    &= \sum_{1 \leq j,l \leq n_\mu} \omega_{jl}^{\zeta\mu} \bigg( \hat{S}\Big( \hat{S}^{-1} l^- \big( S(u^\mu_{ji}) \big) l^+ \big( S(u^\mu_{kl}) \big) K_{-\zeta} \Big) \triangleright f \bigg) \\
    &= \sum_{1 \leq j,l \leq n_\mu} \omega_{jl}^{\zeta\mu} \Big( \hat{S}\big( l^- (u^\mu_{ji}) \hat{S}l^+ (u^\mu_{kl}) K_{-\zeta} \big) \triangleright f \Big) \\
    &= \sum_{1 \leq j,l \leq n_\mu} \omega_{jl}^{\zeta\mu} \Big( \hat{S}\big( K_{-\zeta} l^- (u^\mu_{ji}) \hat{S}l^+ (u^\mu_{kl}) \big) \triangleright f \Big),
\end{align*}
by \eqref{eq:Kzeta centrality}, which, using \eqref{eq:definition of f_ij} and the nondegeneracy of \eqref{eq:skew-pairing between Uq(K) and Cinfty(Kq)}, equals $f \omega_{ik}^{\zeta\mu} = \sum_{1 \leq j,l \leq n_\mu} \omega_{jl}^{\zeta \mu} \big( \hat{S}( K_{\zeta} l^- (u^\mu_{ji}) \hat{S}l^+ (u^\mu_{kl}) ) \triangleright f \big)$ for all $f \in \Cf^\infty(K_q)$ if and only if
\[
    K_{-\zeta} l^- (u^\mu_{ji}) \hat{S} l^+ (u^\mu_{kl}) = K_\zeta l^- (u^\mu_{ji}) \hat{S} l^+ (u^\mu_{kl}), \quad 1 \leq i,j,k,l \leq n_\mu,
\]
which holds if and only if $K_\zeta = K_{-\zeta}$, i.e., $\zeta \in \frac{i}{2} \hbar^{-1} \Qbf^\lor$, since the matrices $l^\pm(u^\mu)$ are invertible.

Now assume $\zeta \in \frac{i}{2} \hbar^{-1} \Qbf^\lor$. We check that $\Omega_{\zeta\mu}$ becomes a bicovariant $*$-bimodule with the involution \eqref{eq:involution of Gamma^mu}. By definition,
\[
    (f \omega)^* = \omega^* f^*
\]
for all $f \in \Cf^\infty(K_q)$ and $\omega \in \Omega_\mu$, and hence
\[
    (\omega f)^* = \big( (f^* \omega^*)^{*} \big)^* = f^* \omega^*,
\]
proving condition B4 of Definition~\ref{defn:bicovariant *-bimodule}.

To verify condition B5, choose $f \in \Cf^\infty(K_q)$ and $1 \leq i,k \leq n_\mu$. Then:
\begin{align*}
    \Phi_\Omega \big( (f \omega_{ik}^{\zeta\mu})^* \big) 
    = \Phi_\Omega(-\omega_{ki}^{\zeta\mu} f^*) 
    = -(1 \otimes \omega_{ki}^{\zeta\mu}) \Delta(f)^* 
    = \big(\Delta(f)(1 \otimes \omega_{ik}^{\zeta\mu})\big)^* = \Phi_\Omega(f \omega_{ik}^{\zeta\mu})^*
\end{align*}
and
\begin{align*}
    {}_{\Omega} \Phi \big( (f \omega_{ik}^{\zeta\mu})^* \big) 
    = {}_{\Omega} \Phi(-\omega_{ki}^{\zeta\mu} f^*) 
    &= - \sum_{1 \leq j,l \leq n_\mu} \Big( \omega_{lj}^{\zeta\mu} \otimes \big(u^\mu_{lk} S(u^\mu_{ij}) \big) \Big) \Delta(f)^* \\
    &= \Big( \sum_{1 \leq j,l \leq n_\mu} (\omega_{jl}^{\zeta\mu})^* \otimes \big( u^\mu_{ji} S(u^\mu_{kl}) \big)^* \Big) \Delta(f)^* \\
    &= {}_\Omega \Phi \big( (\omega_{ik}^{\zeta\mu})^* \big) \Delta(f)^* 
    = {}_\Omega \Phi(f \omega_{ik}^{\zeta\mu})^*.
\end{align*}
\end{proof}

\begin{rmk}\label{rmk:index set for *-FODCs}
In general, neither $\Pbf^\lor \subseteq \frac{1}{2} \Qbf^\lor$ nor $\frac{1}{2} \Qbf^\lor \subseteq \Pbf^\lor$. 

For instance, in the case $K = E_6$, we have $\varpi_3^\lor = \cdots + \frac{10}{3} \alpha_3^\lor + \cdots$ (cf.~\cite[Appendix~C]{Knapp}), showing that $\Pbf^\lor \nsubseteq \frac{1}{2} \Qbf^\lor$. On the other hand, for $K = SU(3)$, we have $\alpha_1^\lor = 2\varpi_1^\lor - \varpi_2^\lor$, so $\frac{1}{2} \Qbf^\lor \nsubseteq \Pbf^\lor$.

Thus, not every bicovariant bimodule over $\Cf^\infty(K_q)$ admits the structure of a $*$-bimodule, nor can the set $\big( \frac{i}{2} \hbar^{-1} \Qbf^\lor / i \hbar^{-1} \Qbf^\lor \big) \times \Pbf^+$ be regarded as a natural index set for the finite-dimensional bicovariant $*$-bimodules over $\Cf^\infty(K_q)$.
\end{rmk}

\begin{prop}\label{prop:Gamma^mu is a *-FODC}
    Let $(0,0)\neq (\zeta, \mu) \in \Zcal \times \weights^+$. Define $\omega^{\zeta\mu}= \sum_{1 \leq i \leq n_\mu} \omega_{ii} ^{\zeta\mu} \in \inv \Omega_{\zeta\mu}$. Then, the linear map $ d_{\zeta\mu} : \Cf^\infty(K_q) \rightarrow \Omega_{\zeta\mu}$ defined by
    \begin{equation}\label{eq:differential of Gamma^mu}
        d_{\zeta\mu} f = \omega ^{\zeta\mu} f - f\omega^{\zeta\mu} = \sum_{1 \leq i,k \leq n_\mu} \Big( \big(K_\zeta I (u^\mu _{ik}) - \epsilon (u^\mu _{ik}) \big) \triangleright f \Big) \omega_{ik} ^{\zeta\mu}
    \end{equation}
    is a differential that makes $(\Omega_{\zeta\mu} , d_{\zeta\mu})$ a bicovariant FODC on $K_q$. Its Quantum germs map is given by
    \begin{equation}\label{eq:quantum germs map of Gamma^mu}
        Q_{\zeta \mu}(f) = \sum_{1 \leq i,k \leq n_\mu} \big(K_\zeta I(u^\mu _{ik}) - \epsilon(u^\mu _{ik}) , f \big) \omega_{ik} ^{\zeta \mu}.
    \end{equation}
    Thus, the right ideal corresponding to $(\Omega_{\zeta \mu}, d_{\zeta \mu} )$ is
    \begin{equation}\label{eq:the right ideal for Gamma^mu}
        R_{\zeta\mu} = \big\{ f \in \Ker \epsilon \;\big|\; 1 \leq {}^\forall i,k \leq n_\mu , \, \big( K_\zeta I(u^\mu _{ik} ) , f  \big) = 0 \big\}
    \end{equation}
    and the space of left-invariant vector fields for this FODC is
    \begin{equation}\label{eq:LIVF for Gamma^mu}
        \Xcal_{\zeta \mu} = \Span_\Cbb \big\{ K_{\zeta} I(u^{\mu} _{ik}) - \epsilon (u^{\mu} _{ik} ) \;\big|\; 1 \leq i,k \leq n_{\mu} \big\},
    \end{equation}
    for which the set inside the span sign is a linear basis.
    
    The FODC $(\Omega_{\zeta \mu} , d_{\zeta \mu})$ can be made a bicovariant $*$-FODC if and only if $\zeta \in \frac{i}{2} \hbar^{-1} \Qbf^\lor$, in which case the involution is given by \eqref{eq:involution of Gamma^mu}.
\end{prop}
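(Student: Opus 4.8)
\emph{Strategy.} The calculus $(\Omega_{\zeta\mu},d_{\zeta\mu})$ is of Jur\v{c}o type: $d_{\zeta\mu}=[\omega^{\zeta\mu},\,\cdot\,]$ is the inner derivation attached to the bi-invariant element $\omega^{\zeta\mu}=\sum_i\omega^{\zeta\mu}_{ii}$. The plan is to verify the axioms F1--F4 in turn, reading off the quantum germs map, the right ideal, and the space of LIVFs along the way. I will write $\omega=\omega^{\zeta\mu}$ and use freely that, by Proposition~\ref{prop:the structure representations of bicovariant bimodule}~(1), $\Omega_{\zeta\mu}$ is a free left $\Cf^\infty(K_q)$-module on $\{\omega^{\zeta\mu}_{ik}\}$.

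\emph{Leibniz rule, explicit forms, and bicovariance.} Axiom F1 is automatic for an inner derivation. Expanding $\omega f$ by the first formula in \eqref{eq:definition of f_ij} with the structure representation \eqref{eq:structure representation-Kq}, and using $\Delta(u^\mu_{jl})=\sum_i u^\mu_{ji}\otimes u^\mu_{il}$ together with $I(u^\mu_{jl})=\sum_i l^-(u^\mu_{ji})\hat S l^+(u^\mu_{il})$, gives $\omega f=\sum_{j,l}\bigl(K_\zeta I(u^\mu_{jl})\triangleright f\bigr)\omega^{\zeta\mu}_{jl}$. Since $\sum_j u^\mu_{ij}S(u^\mu_{jk})=\delta_{ik}1$, the element $\omega$ is left-invariant and also satisfies ${}_{\Omega_{\zeta\mu}}\Phi(\omega)=\omega\otimes 1$. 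Hence $Q_{\zeta\mu}(f)=S(f_{(1)})\,d_{\zeta\mu}f_{(2)}=\omega\cdot f-\epsilon(f)\omega$ (right $\Cf^\infty(K_q)$-action on $\inv\Omega_{\zeta\mu}$), and computing $\omega\cdot f=S(f_{(1)})\,\omega f_{(2)}$ with the displayed expansion (using $S(f_{(1)})f_{(2)}\otimes f_{(3)}=1\otimes f$) yields \eqref{eq:quantum germs map of Gamma^mu}; then $d_{\zeta\mu}f=f_{(1)}Q_{\zeta\mu}(f_{(2)})$ gives \eqref{eq:differential of Gamma^mu}. Axiom F3 follows at once from the $\Cf^\infty(K_q)$-bilinearity of the two coactions together with the bi-invariance of $\omega$: one gets $\Phi_{\Omega_{\zeta\mu}}(d_{\zeta\mu}f)=f_{(1)}\otimes d_{\zeta\mu}f_{(2)}$ and ${}_{\Omega_{\zeta\mu}}\Phi(d_{\zeta\mu}f)=d_{\zeta\mu}f_{(1)}\otimes f_{(2)}$.

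\emph{Standard form, right ideal, LIVFs.} Since $\Omega_{\zeta\mu}$ is free of rank $n_\mu^2$, axiom F2 holds iff $Q_{\zeta\mu}$ is surjective, iff the $n_\mu^2$ functionals $K_\zeta I(u^\mu_{ik})-\epsilon(u^\mu_{ik})$ are linearly independent in $\Cf^\infty(K_q)^\circ$; as $I$ is a linear isomorphism, this reduces to $1\notin K_\zeta I(\End(V(\mu))^*)=K_\zeta\cdot\bigl(U_q(\gf)\to K_{-2w_0\mu}\bigr)$, equivalently $K_{-\zeta}\notin U_q(\gf)\to K_{-2w_0\mu}$. If $\zeta\notin i\hbar^{-1}\Qbf^\lor$ then already $K_{-\zeta}\notin U_q(\gf)$ (the grouplikes $\{K_\nu\}$, $\nu\in\hf^*/i\hbar^{-1}\Qbf^\lor$, are linearly independent by Proposition~\ref{prop:the Hopf dual of Cinfty(Kq)}, and the ``imaginary'' $-\zeta$ cannot lie in the real lattice $\weights$ modulo $i\hbar^{-1}\Qbf^\lor$ unless $\zeta\in i\hbar^{-1}\Qbf^\lor$); if $\zeta\in i\hbar^{-1}\Qbf^\lor$, then $K_\zeta=1$ and $\mu\neq 0$ since $(\zeta,\mu)\neq(0,0)$, and then $1=I(1)\in I(\End(V(0))^*)$ forces $1\notin I(\End(V(\mu))^*)$ by the directness of $F_l U_q(\gf)=\bigoplus_{\nu\in\weights^+}I(\End(V(\nu))^*)$ recalled in Section~\ref{subsec:Universal R-matrix}. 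With F1--F3 established, \eqref{eq:the right ideal for Gamma^mu} is \eqref{eq:right ideal corresponding to BCFODC} applied to $Q_{\zeta\mu}$, and \eqref{eq:LIVF for Gamma^mu}, with the stated spanning set a basis, follows from Proposition~\ref{prop:LIVF basis} and the linear independence just proved.

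\emph{The $*$-structure.} By Proposition~\ref{prop:*-structure encoded in the dual space}, $(\Omega_{\zeta\mu},d_{\zeta\mu})$ can be made a bicovariant $*$-FODC iff $*(\Xcal_{\zeta\mu})\subseteq\Xcal_{\zeta\mu}$. From $I(u^\mu_{ik})^*=I(u^\mu_{ki})$ \eqref{eq:I and involution}, $K_\zeta^*=K_{-\zeta}$, the centrality of $K_\zeta$ \eqref{eq:Kzeta centrality}, and $\epsilon^*=\epsilon$, one gets $*\bigl(K_\zeta I(u^\mu_{ik})-\epsilon(u^\mu_{ik})\bigr)=K_{-\zeta}I(u^\mu_{ki})-\epsilon(u^\mu_{ki})$, so $*(\Xcal_{\zeta\mu})=\Xcal_{-\zeta\mu}$; as both have dimension $n_\mu^2$, the criterion is $\Xcal_{\zeta\mu}=\Xcal_{-\zeta\mu}$. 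This is trivial when $K_\zeta=K_{-\zeta}$, i.e.\ $\zeta\in\tfrac{i}{2}\hbar^{-1}\Qbf^\lor$. Conversely, $K_{\zeta-2w_0\mu}=K_\zeta K_{-2w_0\mu}\in\Span_\Cbb\{K_\zeta I(u^\mu_{ik})\}\subseteq\Xcal_{\zeta\mu}+\Cbb\epsilon$ by \eqref{eq:an evaluation of I}, so $\Xcal_{\zeta\mu}=\Xcal_{-\zeta\mu}$ forces $K_{2\zeta-2w_0\mu}-cK_\zeta\in I(\End(V(\mu))^*)\subseteq U_q(\gf)$ for some $c\in\Cbb$; comparing the components along the $K_\eta$, $\eta\in\tf^*/i\hbar^{-1}\Qbf^\lor\setminus\{0\}$, in the decomposition of Corollary~\ref{cor:Kzeta independence} then rules this out unless $2\zeta\in i\hbar^{-1}\Qbf^\lor$. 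Finally, when $\zeta\in\tfrac{i}{2}\hbar^{-1}\Qbf^\lor$, Proposition~\ref{prop:Gamma^mu is a bicovariant *- bimodule} shows \eqref{eq:involution of Gamma^mu} makes $\Omega_{\zeta\mu}$ a bicovariant $*$-bimodule, and since $(\omega^{\zeta\mu})^*=-\omega^{\zeta\mu}$ the inner derivation $d_{\zeta\mu}$ satisfies $(d_{\zeta\mu}f)^*=\omega^{\zeta\mu}f^*-f^*\omega^{\zeta\mu}=d_{\zeta\mu}(f^*)$, so \eqref{eq:involution of Gamma^mu} is the asserted $*$-FODC structure. The step I expect to be the main obstacle is the standard-form step, i.e.\ the assertion $1\notin K_\zeta I(\End(V(\mu))^*)$ for $(\zeta,\mu)\neq(0,0)$.
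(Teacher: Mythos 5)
Your proof is correct and follows essentially the same route as the paper's: the same inner-derivation verification of F1--F3, the same reduction of F2 to the linear independence of the functionals $K_\zeta I(u^\mu_{ik})-\epsilon(u^\mu_{ik})$ (which you justify more explicitly, via $1\notin K_\zeta I(\End(V(\mu))^*)$ and Corollary~\ref{cor:Kzeta independence}, than the paper does at this point), and the same use of Proposition~\ref{prop:*-structure encoded in the dual space} together with Corollary~\ref{cor:Kzeta independence} for the $*$-structure criterion, with your verification of F4 via $(\omega^{\zeta\mu})^*=-\omega^{\zeta\mu}$ being a pleasant shortcut of the paper's explicit computation. The one point you leave implicit is that the involution, when it exists, \emph{must} be \eqref{eq:involution of Gamma^mu}: the paper derives this from the identity $Q(f)^*=-Q(S(f)^*)$ of \eqref{eq:involution and quantum germs map}, but it also follows formally, since F2 and F4 force $\bigl(\sum_j a_j\,db_j\bigr)^*=\sum_j d(b_j^*)\,a_j^*$, so any compatible involution is determined by $d$ and the bimodule structure.
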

\begin{proof}
    Throughout the proof, we will suppress all the sub/super-scripts ``$\zeta\mu$" for simplicity.
    Leibniz's rule holds since, for $f, g \in \Cf^\infty(K_q)$,
    \begin{align*}
        d (fg) = \omega (fg) - (fg) \omega = (\omega f - f \omega )g + f(\omega g - g \omega ) = (d f)g + f d g.
    \end{align*}
    
    We need to prove the second equality of \eqref{eq:differential of Gamma^mu} and that every element of $\Omega$ can be written in a standard form. Observe, for $f \in \Cf^\infty (K_q)$,
    \begin{align*}
        \omega f = \sum_{1 \leq i \leq n_\mu} \omega_{ii}  f = \sum_{1 \leq i,j,l \leq n_\mu} \Big( \big( K_\zeta l^- (u^\mu _{ji}) \hat{S} l^+ (u^\mu _{il}) \big) \triangleright f \Big) \omega_{jl} \\
        = \sum_{1 \leq j,l \leq n_\mu} \big( K_\zeta I(u^\mu _{jl}) \triangleright f \big) \omega_{jl}
    \end{align*}
    and hence
        \[
        d f = \omega f - f\omega = \sum_{1 \leq i,k \leq n_\mu} \Big( \big( K_\zeta I(u^\mu _{ik}) - \epsilon (u^\mu _{ik}) \big) \triangleright f \Big) \omega_{ik} ,
        \]
    proving \eqref{eq:differential of Gamma^mu}.
    
    Define $Q : \Cf^\infty(K_q) \rightarrow \Omega$ by $Q(f) = S(f_{(1)}) df_{(2)}$ and let $R = \Ker \epsilon \cap \Ker Q$. Then, \eqref{eq:differential of Gamma^mu} shows that
        \begin{equation*}
        Q(f) = \sum_{1 \leq i,k \leq n_\mu} \big(K_\zeta I(u^\mu _{ik}) - \epsilon(u^\mu _{ik}) , f \big) \omega_{ik}
        \end{equation*}
    for all $f \in \Cf^\infty(K_q)$. Hence, $Q(\Cf^\infty(K_q)) \subseteq \inv \Omega$ and
        \[
        R = \big\{ f \in \Ker \epsilon \;\big|\; 1 \leq {}^\forall i,k \leq n_\mu, \, \big(K_\zeta I(u^\mu _{ik} ) -\epsilon(u^\mu _{ik}) , f  \big) = 0\big\}.
        \]
    Since $I$ is injective, $K_\zeta$ is invertible, and $(\zeta,\mu) \neq (0,0)$, the family $\{ \epsilon, \, K_\zeta I(u_{ik} ^\mu) - \epsilon(u^\mu _{ik}) \mid 1 \leq i,k \leq n_\mu \} \subseteq \Cf^\infty (K_q)^\circ$ is linearly independent, which implies $\dim \Ker \epsilon / R = n_\mu ^2 = \dim \inv \Omega$. Therefore, the injective linear map
        \[
        \Ker \epsilon / R \ni \pi_{R} (f) \longmapsto Q(f) = S(f_{(1)}) df_{(2)} \in \inv \Omega
        \]
    is an isomorphism, showing that every element of $\Omega$ can be written in a standard form. Thus, $(\Omega, d)$ is an FODC on $K_q$.
    
    By the left-invariance of $\omega$ and Lemma~\ref{lem:right invariance of omega} proved below, we have
    \begin{align*}
        &\Phi_\Omega d f = \Phi_\Omega (\omega f - f \omega) = (1 \otimes \omega ) \Delta(f) - \Delta(f) (1 \otimes \omega)
        = (\id \otimes d ) \Delta(f) \\
        &{}_\Omega \Phi d f = {}_\Omega \Phi ( \omega f - f \omega ) = (\omega \otimes 1 ) \Delta(f) - \Delta(f) (\omega \otimes 1)
        = (d \otimes \id) \Delta(f)
    \end{align*}
    for all $f \in \Cf^\infty(K_q)$, proving that $(\Omega, d)$ is a bicovariant FODC.
    
    Note that \eqref{eq:quantum germs map of Gamma^mu} is the quantum germs map for this FODC by \eqref{eq:differential of Gamma^mu}. Thus, by \eqref{eq:right ideal corresponding to BCFODC} and Proposition~\ref{prop:LIVF basis}, we see \eqref{eq:the right ideal for Gamma^mu} is the $\ad$-invariant right ideal corresponding to $(\Omega, d)$ and that \eqref{eq:LIVF for Gamma^mu} is the set of left invariant vector field for $(\Omega, d)$, for which the subset inside the span sign is a linear basis.
    
Now, assume $\zeta \in \frac{i}{2} \hbar^{-1} \Qbf^\lor$ so that $K_\zeta^* = K_\zeta$, and that $\Omega$ is a bicovariant $*$-bimodule with involution given by \eqref{eq:involution of Gamma^mu}. Using Eqs.~\eqref{eq:definition of f_ij}, \eqref{eq:I and involution}, and \eqref{eq:left module action and involution}, one calculates:
\begin{align*}
    \big(d (f^*) \big)^* &= - \sum_{1 \leq i, k \leq n_\mu} \omega_{ki} \Big( \big( K_\zeta I(u^\mu_{ik}) - \epsilon(u^\mu_{ik}) \big) \triangleright f^* \Big)^* \\
    &= - \sum_{1 \leq i, k \leq n_\mu} \omega_{ki} \bigg( \hat{S} \Big( \big( K_\zeta I(u^\mu_{ik}) - \epsilon(u^\mu_{ik}) \big)^* \Big) \triangleright f \bigg) \\
    &= - \sum_{1 \leq i, k \leq n_\mu} \omega_{ki} \Big( \hat{S} \big( K_\zeta I(u^\mu_{ki}) - \epsilon(u^\mu_{ki}) \big) \triangleright f \Big) \\
    &= - \sum_{1 \leq i \leq n_\mu} (f \omega_{ii} - \omega_{ii} f) = df,
\end{align*}
where in the second to the last equality we used the identity
\begin{align*}
    f \omega = \sum_{1 \leq i \leq n_\mu} f \omega_{ii} 
    &= \sum_{1 \leq i, j, l \leq n_\mu} \omega_{jl} \Big( \hat{S} \big( K_\zeta l^-(u^\mu_{ji}) \hat{S}l^+(u^\mu_{il}) \big) \triangleright f \Big) \\
    &= \sum_{1 \leq j, l \leq n_\mu} \omega_{jl} \Big( \hat{S} \big( K_\zeta I(u^\mu_{jl}) \big) \triangleright f \Big).
\end{align*}
Thus, we see that $(\Omega, d)$ defines a bicovariant $*$-FODC on $K_q$.

Conversely, assume that $(\Omega, d)$ is a bicovariant $*$-FODC with respect to some involution $*$. We must show that $\zeta \in \frac{i}{2} \hbar^{-1} \Qbf^\lor$ and that the involution is given by \eqref{eq:involution of Gamma^mu}. By Proposition~\ref{prop:*-structure encoded in the dual space} and Eq.~\eqref{eq:LIVF for Gamma^mu}, the existence of such a $*$-structure implies that
\[
K_{-\zeta} I(u^\mu_{ki}) - \delta_{ki} = \big( K_\zeta I(u^\mu_{ik}) - \epsilon(u^\mu_{ik}) \big)^* \in \Xcal
\]
for all $1 \leq i, k \leq n_\mu$. However, by Corollary~\ref{cor:Kzeta independence} and \eqref{eq:LIVF for Gamma^mu}, this holds if and only if $K_{-\zeta} = K_\zeta$, that is, $\zeta \in \frac{i}{2} \hbar^{-1} \Qbf^\lor$.

Now, since \eqref{eq:quantum germs map of Gamma^mu} defines the quantum germs map of $(\Omega, d)$, it follows from \eqref{eq:involution and quantum germs map} that for all $f \in \Cf^\infty(K_q)$,
\begin{align*}
    \sum_{1 \leq i, k \leq n_\mu} \overline{\big( K_\zeta I(u^\mu_{ik}) - \epsilon(u^\mu_{ik}), f \big)} \omega_{ik}^* 
    &= Q(f)^* = -Q(S(f)^*) \\
    &= - \sum_{1 \leq i, k \leq n_\mu} \big( K_\zeta I(u^\mu_{ik}) - \epsilon(u^\mu_{ik}), S(f)^* \big) \omega_{ik} \\
    &= - \sum_{1 \leq i, k \leq n_\mu} \overline{\big( K_\zeta^* I(u^\mu_{ki}) - \epsilon(u^\mu_{ki}), f \big)} \omega_{ik} \\
    &= - \sum_{1 \leq i, k \leq n_\mu} \overline{\big( K_\zeta I(u^\mu_{ik}) - \epsilon(u^\mu_{ik}), f \big)} \omega_{ki},
\end{align*}
which, by the linear independence of the functionals $\{K_\zeta I(u^\mu_{ik}) - \epsilon(u^\mu_{ik}) \mid 1 \leq i,k \leq n_\mu\}$ (since $(\zeta, \mu) \neq (0,0)$), implies that the involution must be given by \eqref{eq:involution of Gamma^mu}.
\end{proof}

\begin{lem}\label{lem:right invariance of omega}
    For any $0 \neq (\zeta, \mu) \in \Zcal \times \weights^+$, we have
    \[
    {}_\Omega \Phi (\omega^{\zeta\mu}) = \omega^{\zeta\mu} \otimes 1.
    \]
\end{lem}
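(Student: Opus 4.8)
The plan is to unwind the definition of the right coaction on the invariant part $\inv \Omega_{\zeta\mu}$ and then invoke a single antipode identity. Recall from Definition~\ref{defn:definition of Omega zeta mu} that $\Omega_{\zeta\mu}$ was built by iterated application of Proposition~\ref{prop:tensor product of structure maps}, so that with respect to the invariant basis $\{\omega^{\zeta\mu}_{ik}\}$ its structure representations are \eqref{eq:structure representation-Kq}; in particular the second family (the $t$-representation) is $\big(u^\mu_{ij} S(u^\mu_{lk})\big)_{(i,k),(j,l)}$. Hence, by \eqref{eq:definition of R_ij} in Proposition~\ref{prop:the structure representations of bicovariant bimodule}, the restricted right coaction is
\[
{}_{\inv\Omega_{\zeta\mu}}\Phi\big(\omega^{\zeta\mu}_{jl}\big) = \sum_{1 \leq i,k \leq n_\mu} \omega^{\zeta\mu}_{ik} \otimes u^\mu_{ij} S(u^\mu_{lk}).
\]

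First I would sum this over the diagonal. Writing $\omega^{\zeta\mu} = \sum_{1 \leq j \leq n_\mu} \omega^{\zeta\mu}_{jj}$ and using that ${}_\Omega\Phi$ restricts to ${}_{\inv\Omega_{\zeta\mu}}\Phi$ on $\inv\Omega_{\zeta\mu}$, I obtain
\[
{}_\Omega\Phi(\omega^{\zeta\mu}) = \sum_{1 \leq i,j,k \leq n_\mu} \omega^{\zeta\mu}_{ik} \otimes u^\mu_{ij} S(u^\mu_{jk}).
\]
Next I would apply the antipode axiom to the corepresentation $u^\mu$: since $m \circ (\id \otimes S) \circ \Delta = 1\,\epsilon(\cdot)$ and $\Delta(u^\mu_{ik}) = \sum_j u^\mu_{ij} \otimes u^\mu_{jk}$, we get $\sum_{j} u^\mu_{ij} S(u^\mu_{jk}) = \epsilon(u^\mu_{ik})\,1 = \delta_{ik}\,1$. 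Substituting this collapses the middle sum, leaving
\[
{}_\Omega\Phi(\omega^{\zeta\mu}) = \sum_{1 \leq i,k \leq n_\mu} \delta_{ik}\, \omega^{\zeta\mu}_{ik} \otimes 1 = \Big(\sum_{1 \leq i \leq n_\mu} \omega^{\zeta\mu}_{ii}\Big) \otimes 1 = \omega^{\zeta\mu} \otimes 1,
\]
as claimed.

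There is no real obstacle here: the statement is purely formal once the structure representation $(t_{ij}) = (u^\mu_{ij} S(u^\mu_{lk}))$ from Definition~\ref{defn:definition of Omega zeta mu} is in hand, and the only nontrivial input is the standard corepresentation--antipode identity $\sum_j u^\mu_{ij} S(u^\mu_{jk}) = \delta_{ik}1$ recalled in Section~\ref{subsec:CQGs}. The one point to be careful about is bookkeeping of indices when composing the three tensor factors of \eqref{eq:structure representation-Kq}, but the $K_\zeta$ and $l^\pm$ factors play no role in this particular computation since only the $t$-representation of $\Omega_{\zeta\mu}$ enters the right coaction, and that factor depends only on $\mu$.
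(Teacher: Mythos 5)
Your proof is correct and follows essentially the same route as the paper: expand ${}_\Omega\Phi(\omega^{\zeta\mu})$ via the structure representation $(u^\mu_{ij}S(u^\mu_{lk}))$ from Definition~\ref{defn:definition of Omega zeta mu} and collapse the sum with the antipode identity $\sum_j u^\mu_{ij}S(u^\mu_{jk}) = \delta_{ik}1$. The paper's proof is the same one-line computation, just with different index labels.
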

\begin{proof}
    \[
    \sum_{1 \leq i \leq n_\mu} {}_\Omega \Phi (\omega_{ii} ^{\zeta\mu}) 
    = \sum_{1 \leq i , j,l \leq n_\mu} \omega_{jl} ^{\zeta\mu} \otimes u^\mu _{ji} S (u^\mu _{il}) 
    = \sum_{1 \leq j,l \leq n_\mu} \omega_{jl} ^{\zeta\mu} \otimes \delta_{jl} 1 
    = \omega^{\zeta\mu} \otimes 1.
    \]
\end{proof}

For $(\zeta,\mu) = (0,0)$, we let $(\Omega_{00}, d_{00}) = 0$, the zero bicovariant $*$-FODC. The quantum germs map, the right ideal, and the space of left-invariant vector fields for this FODC are given by $Q_{00} = 0$, $R_{00} = \Ker\epsilon$, and $\Xcal_{00} = 0$, respectively. Hence, \eqref{eq:quantum germs map of Gamma^mu}--\eqref{eq:LIVF for Gamma^mu} still hold with $(\zeta,\mu) = (0,0)$.

\begin{prop}\label{prop:direct sum of FODCs-Kq}
    Let $(\zeta_1 , \mu_1) , \cdots , (\zeta_m, \mu_m) \in \Zcal \times \weights^+$ be mutually distinct pairs. Then, the map
    \[
    Q_{\boldsymbol{\zeta\mu}}: \Cf^\infty(K_q) \ni f \longmapsto \big( Q_{\zeta_1 \mu_1} (f), \cdots , Q_{\zeta_m \mu_m} (f) \big) \in \inv \Omega_{\zeta_1 \mu_1} \oplus \cdots \oplus \inv \Omega_{\zeta_m \mu_m}
    \]
    is surjective. Here, $\boldsymbol{\zeta} = (\zeta_1 , \cdots , \zeta_m)$ and $\boldsymbol{\mu} = (\mu_1 , \cdots , \mu_m)$. Hence, the direct sum
    \begin{equation*}
        (\Omega_{\boldsymbol{\zeta}\boldsymbol{\mu}} , d_{\boldsymbol{\zeta}\boldsymbol{\mu}}) = (\Omega_{\zeta_1 \mu_1} , d_{\zeta_1 \mu_1}) \oplus \cdots \oplus (\Omega_{\zeta_m \mu_m} , d_{\zeta_m \mu_m})
    \end{equation*}
    is a bicovariant FODC on $K_q$ whose quantum germs map is $Q_{\boldsymbol{\zeta\mu}}$. The right ideal corresponding to this FODC is
    \begin{equation}\label{eq:direct sum right ideal-Kq}
        R_{\boldsymbol{\zeta \mu}} = \bigcap_{1 \leq l \leq m} R_{\zeta_l \mu_l}
    \end{equation}
    and its space of left-invariant vector fields is
    \begin{equation}\label{eq:direct sum LIVF-Kq}
        \Xcal_{\boldsymbol{\zeta \mu}} = \bigoplus_{1 \leq l \leq m} \Xcal_{\zeta_l \mu_l}.
    \end{equation}
\end{prop}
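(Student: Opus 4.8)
The plan is to reduce the entire proposition to the surjectivity of $Q_{\boldsymbol{\zeta\mu}}$. Once that is in hand, the conclusions that $(\Omega_{\boldsymbol{\zeta\mu}}, d_{\boldsymbol{\zeta\mu}})$ is a bicovariant FODC with quantum germs map $Q_{\boldsymbol{\zeta\mu}}$, that its corresponding right ideal is $R_{\boldsymbol{\zeta\mu}} = \bigcap_l R_{\zeta_l\mu_l}$, and that its space of left-invariant vector fields is $\Xcal_{\boldsymbol{\zeta\mu}} = \bigoplus_l \Xcal_{\zeta_l\mu_l}$, all follow at once by invoking Proposition~\ref{prop:direct sum of FODCs} with $\Acal = \Cf^\infty(K_q)$, applied to the finite-dimensional bicovariant FODCs $(\Omega_{\zeta_l\mu_l}, d_{\zeta_l\mu_l})$ supplied by Proposition~\ref{prop:Gamma^mu is a *-FODC} (they are finite-dimensional since $\dim \inv\Omega_{\zeta_l\mu_l} = n_{\mu_l}^2$); the general identities \eqref{eq:direct sum right ideal} and \eqref{eq:direct sum LIVF} then specialise to the stated formulas. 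A pair $(\zeta_l,\mu_l) = (0,0)$, if it occurs, contributes a trivial summand and can be ignored throughout.

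To handle the surjectivity, I would first translate it into a statement about linear functionals. By \eqref{eq:quantum germs map of Gamma^mu}, each $Q_{\zeta_l\mu_l}$ records the values of the functionals $K_{\zeta_l}I(u^{\mu_l}_{ik}) - \delta_{ik}\epsilon \in \Cf^\infty(K_q)^*$ in the invariant basis $\{\omega^{\zeta_l\mu_l}_{ik}\}$; since these bases together form a basis of $\bigoplus_l \inv\Omega_{\zeta_l\mu_l}$, the map $Q_{\boldsymbol{\zeta\mu}}$ is surjective precisely when the $\sum_l n_{\mu_l}^2$ functionals $\{K_{\zeta_l}I(u^{\mu_l}_{ik}) - \delta_{ik}\epsilon\}_{l,i,k}$ are linearly independent. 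Absorbing the scalar terms into a single multiple of $\epsilon$, this reduces to the sharper claim that the family
\[
\{\epsilon\}\ \cup\ \big\{\, K_{\zeta_l}I(u^{\mu_l}_{ik}) \ \big|\ 1\le l\le m,\ (\zeta_l,\mu_l)\ne(0,0),\ 1\le i,k\le n_{\mu_l} \,\big\}
\]
is linearly independent in $\Cf^\infty(K_q)^*$. Here it is worth noting that $\epsilon = K_0$ and $I(u^0_{11}) = I(1) = 1 = \epsilon$, so $\epsilon$ is itself one more element of this shape, corresponding to the trivial representation $\mu = 0$.

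The crux is then to disentangle a vanishing linear combination of these elements by grouping according to its torus part. Letting $\eta_1,\dots,\eta_s$ be the distinct values occurring among $\{\zeta_1,\dots,\zeta_m\}\cup\{0\}$---these lie in $\tf^*/i\hbar^{-1}\Qbf^\lor$ because $\Zcal = i\hbar^{-1}\Pbf^\lor/i\hbar^{-1}\Qbf^\lor$ does---the relation can be rewritten as $\sum_{r=1}^s K_{\eta_r}Y_r = 0$, where each $Y_r \in F_l U_q(\gf)$ is a linear combination of basis elements $I(u^\mu_{ik})$ with $\mu$ ranging over $\{\mu_l : \zeta_l = \eta_r\}$, plus an extra multiple of $I(u^0_{11})$ when $\eta_r = 0$. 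Since the $\eta_r$ are distinct, Corollary~\ref{cor:Kzeta independence} forces $Y_r = 0$ for each $r$. Finally, the mutual distinctness of the pairs $(\zeta_l,\mu_l)$ means that for a fixed $\eta_r$ the weights $\mu_l$ with $\zeta_l = \eta_r$ are pairwise distinct---and nonzero when $\eta_r = 0$, hence still distinct from the extra weight $0$---so, because $I : \Cf^\infty(K_q) \to F_l U_q(\gf)$ is a linear isomorphism and thus carries the Peter--Weyl basis $\{u^\mu_{ik}\}$ to a basis $\{I(u^\mu_{ik})\}$ of $F_l U_q(\gf)$, the vanishing of $Y_r$ forces every coefficient to vanish. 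This proves the claim, hence the surjectivity.

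I expect the linear-independence claim to be the main obstacle. Its subtlety is that two different mechanisms could a priori create relations among the $K_{\zeta_l}I(u^{\mu_l}_{ik})$: distinct pairs sharing the same weight $\mu$, which is defeated by the $U_q(\gf)$-independence of the $K_\zeta$'s (Corollary~\ref{cor:Kzeta independence}), and distinct pairs sharing the same central element $\zeta$, which is defeated by the isomorphism $I$. One must also be careful with the trivial-representation contribution, since pairs $(\zeta,0)$ with $\zeta\ne 0$ do occur---their left-invariant vector fields are spanned by $K_\zeta - \epsilon$---and the cleanest way to keep this term from being spuriously cancelled against $\epsilon$ is, as above, to include $\epsilon$ itself (as the $\mu = 0$ member) in the family whose independence is to be verified.
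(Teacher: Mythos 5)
Your proposal is correct and follows essentially the same route as the paper: reduce everything to the surjectivity of $Q_{\boldsymbol{\zeta\mu}}$ via Proposition~\ref{prop:direct sum of FODCs}, translate surjectivity into linear independence of the functionals $K_{\zeta_l}I(u^{\mu_l}_{ik}) - \delta_{ik}\epsilon$, and establish that independence from Corollary~\ref{cor:Kzeta independence} together with the injectivity of $I$. Your grouping-by-torus-part argument merely spells out in detail the step the paper leaves implicit in its final sentence, and your handling of the $(\zeta,0)$ and $(0,0)$ cases matches the paper's reductions.
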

\begin{proof}
    Throughout the proof, we abbreviate $(\Omega_{\zeta_l \mu_l} , d_{\zeta_l \mu_l})$ by $(\Omega_l , d_l)$, $Q_{\zeta_l \mu_l}$ by $Q_l$ ($1 \leq l \leq m$), $(\Omega_{\boldsymbol{\zeta\mu}} , d_{\boldsymbol{\zeta\mu}})$ by $(\Omega,d)$, and $Q_{\boldsymbol{\zeta\mu}}$ by $Q$, respectively. By Proposition~\ref{prop:direct sum of FODCs}, we only need to show that the map
    \[
    Q: \Cf^\infty(K_q) \ni f \longmapsto \big( Q_1 (f), \cdots , Q_m (f) \big) \in \inv \Omega_1 \oplus \cdots \oplus \inv \Omega_m
    \]
    is surjective. Without loss of generality, we may assume that $(\zeta_l, \mu_l) \neq (0, 0)$ for all $1 \leq l \leq m$ since $\Omega_{00} = 0$ contributes nothing.

    Fix $1 \leq l \leq m$ and let $f \in \Cf^\infty(K_q)$. By \eqref{eq:quantum germs map of Gamma^mu},
    \[
    Q_l (f) = \sum_{1 \leq i,k \leq n_{\mu_l}} \big( K_{\zeta_l} I(u^{\mu_l} _{ik}) - \delta_{ik} 1 , f \big) \omega^{\zeta_l \mu_l} _{ik}.
    \]
    Hence, the map $Q$ is equivalent to the linear map
    \begin{equation*}
    \Cf^\infty(K_q) \ni f \longmapsto \Big( \big( K_{\zeta_l} I(u^{\mu_l} _{ik}) - \delta_{ik} 1 , f \big) \Big)_{1 \leq l \leq m, \, 1 \leq i,k \leq n_{\mu_l}} \in \Cbb^{\sum_{1 \leq l \leq m} n_{\mu_l} ^2}.
    \end{equation*}
    Therefore, if we can prove that the linear functionals
    \begin{equation}\label{eq:components of Q}
    \big\{ K_{\zeta_l} I(u^{\mu_l} _{ik}) - \delta_{ik}1 \;\big|\; 1 \leq l \leq m, \, 1 \leq i,k \leq n_{\mu_l} \big\} \subseteq \Cf^\infty(K_q)^\circ
    \end{equation}
    are linearly independent, then the surjectivity of $Q$ will follow.

    By the assumption that $(\zeta_l, \mu_l) \neq (0,0)$ for all $1\leq l \leq m$, the injectivity of the linear map $I$, and Corollary~\ref{cor:Kzeta independence}, we have
    \[
    \Span_{\Cbb}\big\{ K_{\zeta_l} I(u^{\mu_l} _{ik}) \;\big|\; 1 \leq l \leq m, \, 1 \leq i,k \leq n_{\mu_l} \big\} \cap \Cbb 1 = 0.
    \]
    Hence, the linear independence of \eqref{eq:components of Q} is equivalent to the linear independence of
    \[
    \big\{ K_{\zeta_l} I(u^{\mu_l} _{ik}) \;\big|\; 1 \leq l \leq m, \, 1 \leq i,k \leq n_{\mu_l} \big\},
    \]
    which follows from Corollary~\ref{cor:Kzeta independence} and the injectivity of $I$.
\end{proof}

As shown in the next subsection, the bicovariant FODCs found in the preceding proposition exhaust all finite-dimensional bicovariant FODCs on \( K_q \) up to isomorphism. However, to obtain a similar classification for \( * \)-FODCs, we still need the following two propositions.

\begin{prop}\label{prop:bicovariant *-FODC given by a pair}
    Let \( \zeta \in \Zcal \) be such that \( \zeta \notin \frac{i}{2} \hbar^{-1} \Qbf^\lor \). Then, for \( \mu \in \weights^+ \), the direct sum of FODCs
    \begin{equation}\label{eq:bicovariant *-FODC given by a pair}
    ( \boldsymbol{\Omega}_{\zeta \mu} , \boldsymbol{d}_{\zeta \mu} ) = (\Omega_{\zeta \mu} , d_{\zeta \mu}) \oplus (\Omega_{-\zeta \mu} , d_{-\zeta \mu} )
    \end{equation}
    is a bicovariant \( * \)-FODC on \( K_q \) whose involution is given by
    \begin{equation}\label{eq:involution given by a pair}
        \Big( \sum_{1 \leq i , k \leq n_\mu} f_{ik} \omega_{ik} ^{ \pm \zeta \mu}\Big)^* = - \sum_{1 \leq i, k \leq n_\mu} \omega_{ki} ^{\mp \zeta \mu} f_{ik} ^* , \quad f_{ik} \in \Cf^\infty(K_q).
    \end{equation}
    Its quantum germs map will be denoted by \( \boldsymbol{Q}_{\zeta\mu} \).
\end{prop}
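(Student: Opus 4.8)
The plan is to treat the three assertions of the proposition in turn: that $(\boldsymbol{\Omega}_{\zeta\mu},\boldsymbol{d}_{\zeta\mu})$ is a well-defined bicovariant FODC, that it carries a $*$-structure, and that this $*$-structure is given by \eqref{eq:involution given by a pair}. For the first point, I would check that the pairs $(\zeta,\mu)$ and $(-\zeta,\mu)$ are distinct and both different from $(0,0)$, so that Proposition~\ref{prop:direct sum of FODCs-Kq} applies and produces a bicovariant FODC with quantum germs map $\boldsymbol{Q}_{\zeta\mu}=(Q_{\zeta\mu},Q_{-\zeta\mu})$, corresponding right ideal $R_{\zeta\mu}\cap R_{-\zeta\mu}$, and space of left-invariant vector fields $\Xcal_{\zeta\mu}\oplus\Xcal_{-\zeta\mu}$. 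This is precisely where the hypothesis $\zeta\notin\tfrac{i}{2}\hbar^{-1}\Qbf^\lor$ is used: $\zeta=-\zeta$ in $\Zcal$ would mean $2\zeta\in i\hbar^{-1}\Qbf^\lor$, i.e.\ $\zeta\in\tfrac{i}{2}\hbar^{-1}\Qbf^\lor$; and since $0\in\tfrac{i}{2}\hbar^{-1}\Qbf^\lor$, the same hypothesis gives $\zeta\neq 0$, so $(\pm\zeta,\mu)\neq(0,0)$ even when $\mu=0$.

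For the existence of a $*$-structure, I would invoke Proposition~\ref{prop:*-structure encoded in the dual space}: it suffices to show that the canonical involution on $\Cf^\infty(K_q)^*$ maps $\Xcal_{\zeta\mu}\oplus\Xcal_{-\zeta\mu}$ into itself. Using $(XY)^*=Y^*X^*$, the identity $K_\zeta^*=K_{-\overline{\zeta}}=K_{-\zeta}$ (valid because $\zeta$ lies in the real form $\tf^*$), the relation $I(u^\mu_{ik})^*=I(u^\mu_{ki})$ from \eqref{eq:I and involution}, and the centrality $K_\zeta I(u^\mu_{ki})=I(u^\mu_{ki})K_\zeta$ from \eqref{eq:Kzeta centrality}, one computes
\[
\big(K_{\pm\zeta}I(u^\mu_{ik})-\epsilon(u^\mu_{ik})\big)^*=K_{\mp\zeta}I(u^\mu_{ki})-\epsilon(u^\mu_{ki}).
\]
By \eqref{eq:LIVF for Gamma^mu} this shows $*$ carries the displayed basis of $\Xcal_{\zeta\mu}$ onto that of $\Xcal_{-\zeta\mu}$ and vice versa, so $*(\Xcal_{\zeta\mu}\oplus\Xcal_{-\zeta\mu})\subseteq\Xcal_{-\zeta\mu}\oplus\Xcal_{\zeta\mu}=\Xcal_{\zeta\mu}\oplus\Xcal_{-\zeta\mu}$, and Proposition~\ref{prop:*-structure encoded in the dual space} gives the bicovariant $*$-FODC.

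To identify the involution explicitly, I would verify that the conjugate-linear map defined by \eqref{eq:involution given by a pair} satisfies the axioms of a bicovariant $*$-FODC --- namely $*^2=\id$, conditions B4--B5, and F4 --- and then appeal to the uniqueness statement in Theorem~\ref{thm:BCFODC}. The computations run parallel to those in the proof of Proposition~\ref{prop:Gamma^mu is a *-FODC}: F4 is checked by rewriting $\boldsymbol{d}_{\zeta\mu}f$ componentwise via \eqref{eq:differential of Gamma^mu} and applying the $*$-images of the structure functionals computed above, while B5 is the same flip-of-invariant-basis argument as for a single $\Omega_{\zeta\mu}$. The one genuinely different point is $*^2=\id$: in Proposition~\ref{prop:Gamma^mu is a *-FODC}, applying $*$ twice to $f\,\omega^{\zeta\mu}_{ik}$ reintroduced $K_{-\zeta}$ in place of $K_\zeta$ and thus forced $K_\zeta=K_{-\zeta}$, whereas here the first application of $*$ moves a term from the $(+\zeta)$-summand to the $(-\zeta)$-summand and the second sends it back with $K_{-(-\zeta)}=K_\zeta$, so the two character changes cancel and $*^2=\id$ holds unconditionally. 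Alternatively one can skip B4--B5 and F4 by pinning down $*$ on $\inv\boldsymbol{\Omega}_{\zeta\mu}$ directly from \eqref{eq:involution and quantum germs map} applied to $\boldsymbol{Q}_{\zeta\mu}$ together with \eqref{eq:quantum germs map of Gamma^mu}, and then extending by right $\Cf^\infty(K_q)$-sesquilinearity. The main obstacle is the bookkeeping in these verifications --- tracking into which summand each term lands after applying $*$ and transporting the centrality and antipode identities correctly through the structure representations \eqref{eq:structure representation-Kq} --- but conceptually it is a direct adaptation of Proposition~\ref{prop:Gamma^mu is a *-FODC}, with the obstruction there removed by the presence of the second summand.
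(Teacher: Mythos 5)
Your proposal is correct and its core is the same as the paper's: reduce the first assertion to Proposition~\ref{prop:direct sum of FODCs-Kq} by noting that $\zeta\notin\frac{i}{2}\hbar^{-1}\Qbf^\lor$ forces $\zeta\neq-\zeta$ (hence also $\zeta\neq 0$) in $\Zcal$, and then verify by direct computation that the explicit map \eqref{eq:involution given by a pair} satisfies $*^2=\id$, B4--B5, and F4, with the key observation---which you state accurately---that the two sign flips $\zeta\mapsto-\zeta\mapsto\zeta$ cancel, so no condition on $\zeta$ is needed, in contrast to Proposition~\ref{prop:Gamma^mu is a *-FODC}. The one genuine difference is that you first establish the \emph{existence} of a $*$-structure abstractly via Proposition~\ref{prop:*-structure encoded in the dual space}, by checking $\big(K_{\pm\zeta}I(u^\mu_{ik})-\epsilon(u^\mu_{ik})\big)^*=K_{\mp\zeta}I(u^\mu_{ki})-\epsilon(u^\mu_{ki})$ so that $*$ swaps $\Xcal_{\zeta\mu}$ and $\Xcal_{-\zeta\mu}$; the paper skips this and goes straight to the explicit formula, while your LIVF computation is essentially the one the paper deploys later, in the converse direction, in Proposition~\ref{prop:direct sum of FODCs-Kq-involution}. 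This extra step is valid but redundant once you verify the axioms for \eqref{eq:involution given by a pair} directly; conversely, your alternative of pinning down $*$ on $\inv\boldsymbol{\Omega}_{\zeta\mu}$ via \eqref{eq:involution and quantum germs map} after the abstract existence step would let you skip most of the axiom-checking, and is exactly how the paper identifies the involution in Proposition~\ref{prop:direct sum of FODCs-Kq-involution}. One small imprecision: the uniqueness clause of Theorem~\ref{thm:BCFODC} concerns the right ideal $R$, not the involution; what you actually need is that the involution on an FODC is determined by B4 and F4 because every element is a sum of terms $a\,db$, which is immediate but should be said rather than attributed to that theorem.
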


\begin{proof}
    Since \( \zeta \notin \frac{i}{2} \hbar^{-1} \Qbf^\lor \), we have \( \zeta \neq - \zeta \) in \( \Zcal \). So, by Proposition~\ref{prop:direct sum of FODCs-Kq}, \eqref{eq:bicovariant *-FODC given by a pair} is a bicovariant FODC.

    Since \( \{ \omega_{ik} ^{\pm \zeta \mu} \mid 1\leq i, k \leq n_\mu \} \) is a left \( \Cf^\infty(K_q) \)-basis of \( \Omega_{\pm \zeta \mu} \), respectively, we see \eqref{eq:involution given by a pair} is a well-defined conjugate linear map.

    Observe that by \eqref{eq:left module action and involution}, we have, for \( f \in \Cf^\infty(K_q) \) and \( 1 \leq i,k \leq n_\mu \),
    \begin{align*}
        \big((f \omega_{ik} ^{\pm\zeta\mu} )^*\big)^* &= \big(-\omega_{ki} ^{\mp\zeta\mu} f^* \big)^* \\
 &= \bigg(- \sum_{1 \leq j,l \leq n_\mu} \Big( \big(K_{\mp \zeta} l^- (u^\mu _{lk}) \hat{S}l^+ (u^\mu _{ij} )\big) \triangleright f^* \Big) \omega_{lj} ^{\mp\zeta\mu} \bigg) ^* \\
        &= \sum_{1 \leq j, l \leq n_\mu} \omega_{jl} ^{\pm\zeta\mu} \bigg(  \hat{S}\Big( \big( K_{\mp\zeta} l^- (u^\mu _{lk}) \hat{S}l^+ (u^\mu _{ij} )\big)^*\Big) \triangleright f \bigg) \\
        &= \sum_{1 \leq j, l \leq n_\mu} \omega_{jl} ^{\pm\zeta\mu} \bigg( \hat{S}\Big( \hat{S}^{-1} l^- \big((u^\mu _{ij}) ^* \big) l^+ \big ((u^\mu _{lk}) ^* \big) K_{\pm\overline{\zeta}}  \Big) \triangleright f \bigg) \\
        &= \sum_{1 \leq j, l \leq n_\mu} \omega_{jl} ^{\pm\zeta\mu} \bigg( \hat{S}\Big( \hat{S}^{-1} l^- \big( S (u^\mu _{ji}) \big) l^+ \big( S (u^\mu _{kl}) \big) K_{\pm\zeta} \Big) \triangleright f \bigg) \\
        &= \sum_{1 \leq j, l \leq n_\mu} \omega_{jl} ^{\pm\zeta\mu} \hat{S}\big( l^- ( u^\mu _{ji} ) \hat{S}l^+ ( u^\mu _{kl}) K_{\pm \zeta}  \big) \triangleright f \\
        &= \sum_{1 \leq j, l \leq n_\mu} \omega_{jl} ^{\pm\zeta\mu} \Big( \hat{S}\big( K_{\pm \zeta}  l^- ( u^\mu _{ji} ) \hat{S}l^+ ( u^\mu _{kl}) \big) \triangleright f \Big)
    \end{align*}
    by \eqref{eq:Kzeta centrality}, which is equal to \( f \omega_{ik} ^{\pm\zeta\mu} \) by \eqref{eq:definition of f_ij}. Thus, we see \eqref{eq:involution given by a pair} defines an involution on \( \boldsymbol{\Omega}_{\zeta \mu} \).

    Now, choose \( f \in \Cf^\infty (K_q) \) and \( 1 \leq i, k \leq n_\mu \). Observe that
        \begin{align*}
            \Phi_{\boldsymbol{\Omega}_{\zeta\mu}} \big( ( f \omega_{ik} ^{\pm\zeta\mu} )^* \big) = \Phi_{\boldsymbol{\Omega}_{\zeta\mu}} (- \omega_{ki} ^{\mp\zeta\mu} f^* ) = -(1 \otimes \omega_{ki} ^{\mp\zeta\mu} ) \Delta(f)^*\\
            = \big(\Delta(f) (1 \otimes \omega_{ik} ^{\pm\zeta\mu}) \big)^* 
            = \Phi_\Omega ( f \omega_{ik} ^{\pm\zeta\mu} ) ^*
        \end{align*}
        and
        \begin{align*}
            {}_{\boldsymbol{\Omega}_{\zeta \mu}} \Phi \big( (f \omega_{ik} ^{\pm\zeta\mu} )^* \big) = {}_{\boldsymbol{\Omega}_{\zeta \mu}} \Phi (- \omega_{ki} ^{\mp\zeta\mu} f^* ) = - \sum_{1 \leq j,l \leq n_\mu} \Big( \omega_{lj} ^{\mp\zeta\mu} \otimes \big(u^\mu _{lk} S(u^\mu _{ij}) \big) \Big) \Delta(f)^* \\
            = \Big( \sum_{1 \leq j,l \leq n_\mu} (\omega_{jl} ^{\pm\zeta\mu}) ^* \otimes \big( u^\mu _{ji} S(u^\mu _{kl} ) \big) ^* \Big) \Delta(f)^*\\
            = {}_{\boldsymbol{\Omega}_{\zeta \mu}} \Phi (\omega_{ik} ^{\pm\zeta\mu})^* \Delta(f)^* = {}_{\boldsymbol{\Omega}_{\zeta \mu}} \Phi ( f \omega_{ik} ^{\pm\zeta\mu} )^*,
        \end{align*}
        which proves that \( \boldsymbol{\Omega}_{\zeta \mu} \) becomes a bicovariant \( * \)-bimodule with the involution \eqref{eq:involution given by a pair}.

        Finally, since \( *( \Omega_{\pm\zeta \mu}) = \Omega_{\mp \zeta \mu} \) by definition, we have
        \begin{align*}
        (\boldsymbol{d}_{\zeta \mu} f)^* = \big(d_{\zeta\mu} f , d_{-\zeta \mu} f \big)^* = \Big( \omega^{\zeta \mu} f - f \omega^{\zeta \mu} , \omega^{-\zeta \mu} f - f \omega^{-\zeta \mu} \Big)^* \\
        =\Big( \big(\omega^{-\zeta \mu} f - f \omega^{-\zeta \mu} \big)^* , \big(\omega^{\zeta \mu} f - f \omega^{\zeta \mu} \big)^* \Big) \\
        =\Big( - f^* \omega^{\zeta \mu} + \omega^{\zeta \mu} f^* , - f^* \omega^{-\zeta \mu} + \omega^{-\zeta \mu} f^* \Big) \\
        =\big( d_{\zeta \mu} (f^*) , d_{-\zeta \mu} (f^*) \big) = \boldsymbol{d}_{\zeta \mu} (f^*)
        \end{align*}
        for $f \in \Cf^\infty(K_q)$, which completes the proof of the proposition.
\end{proof}

\begin{prop}\label{prop:direct sum of FODCs-Kq-involution}
A direct sum of FODCs given in Proposition~\ref{prop:direct sum of FODCs-Kq} can be made a bicovariant $*$-FODC on $K_q$ if and only if it is up to permutation given by
    \begin{align}\label{eq:direct sum of FODCs-Kq-involution}
        (\boldsymbol{\Omega_{\zeta\mu}}, \boldsymbol{d_{\zeta\mu}}) &= (\Omega_{\zeta_1 \mu_1}, d_{\zeta_1 \mu_1} ) \oplus \cdots \oplus (\Omega_{\zeta_p \mu_p}, d_{\zeta_p \mu_p}) \nonumber \\ &\oplus (\boldsymbol{\Omega}_{\zeta_{p+1} \mu_{p+1}} , \boldsymbol{d}_{\zeta_{p+1} \mu_{p+1}} ) \oplus \cdots \oplus (\boldsymbol{\Omega}_{\zeta_{m} \mu_{m}} , \boldsymbol{d}_{\zeta_{m} \mu_{m}} ),
    \end{align}
    where $(\zeta_1, \mu_1), \cdots , (\zeta_m, \mu_m) \in \Zcal \times \Pbf^+$ are distinct pairs with $\zeta_1 , \cdots , \zeta_p \in \frac{i}{2} \hbar^{-1} \Qbf^\lor$ and $\zeta_{p+1}, \cdots , \zeta_m \notin \frac{i}{2}\hbar^{-1} \Qbf^\lor$, in which case the involution is given by the product of the involutions on each direct summand in \eqref{eq:direct sum of FODCs-Kq-involution}.
\end{prop}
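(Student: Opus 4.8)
The plan is to reduce the statement to a computation with left-invariant vector fields via Proposition~\ref{prop:*-structure encoded in the dual space}. By Proposition~\ref{prop:direct sum of FODCs-Kq}, the direct sum $(\Omega_{\boldsymbol{\zeta}\boldsymbol{\mu}}, d_{\boldsymbol{\zeta}\boldsymbol{\mu}})$ is a bicovariant FODC on $K_q$ with $\Xcal_{\boldsymbol{\zeta}\boldsymbol{\mu}} = \bigoplus_{l=1}^m \Xcal_{\zeta_l\mu_l}$ by~\eqref{eq:direct sum LIVF-Kq}, and by Proposition~\ref{prop:*-structure encoded in the dual space} it admits a compatible $*$-structure if and only if $*(\Xcal_{\boldsymbol{\zeta}\boldsymbol{\mu}}) \subseteq \Xcal_{\boldsymbol{\zeta}\boldsymbol{\mu}}$. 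Since $\Omega_{00} = 0$ contributes nothing, I may assume all $(\zeta_l,\mu_l) \neq (0,0)$.

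First I would compute $*$ on a single $\Xcal_{\zeta\mu}$. For $\zeta \in \Zcal = i\hbar^{-1}\Pbf^\lor/i\hbar^{-1}\Qbf^\lor$ a representative lies in $\tf^*$, which is fixed by the conjugation $\overline{(\cdot)} : \hf^* \to \hf^*$, so $K_\zeta^* = K_{-\overline{\zeta}} = K_{-\zeta}$. Combining this with $I(u^\mu_{ik})^* = I(u^\mu_{ki})$ from~\eqref{eq:I and involution}, the centrality~\eqref{eq:Kzeta centrality} of the $K_\zeta$, and the anti-multiplicativity of $*$ on $\Cf^\infty(K_q)^*$, one obtains
\[
\big( K_\zeta I(u^\mu_{ik}) - \epsilon(u^\mu_{ik}) \big)^* = K_{-\zeta} I(u^\mu_{ki}) - \epsilon(u^\mu_{ki}),
\]
and hence, summing over $i,k$ and using~\eqref{eq:LIVF for Gamma^mu}, $*(\Xcal_{\zeta\mu}) = \Xcal_{-\zeta\mu}$. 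I would also recall the independence fact underlying~\eqref{eq:direct sum LIVF-Kq}: for pairwise distinct pairs in $\Zcal \times \weights^+ \setminus\{(0,0)\}$ the corresponding sum of the $\Xcal_{\zeta_l\mu_l}$ in $\Cf^\infty(K_q)^\circ$ is direct, ultimately by Corollary~\ref{cor:Kzeta independence} and injectivity of $I$, as in the proof of Proposition~\ref{prop:direct sum of FODCs-Kq}.

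Combining, $*(\Xcal_{\boldsymbol{\zeta}\boldsymbol{\mu}}) = \bigoplus_l \Xcal_{-\zeta_l\mu_l}$; since each $\Xcal_{-\zeta_l\mu_l}$ is nonzero and adjoining the pair $(-\zeta_l,\mu_l)$ to the list keeps the sum direct, the inclusion $*(\Xcal_{\boldsymbol{\zeta}\boldsymbol{\mu}}) \subseteq \Xcal_{\boldsymbol{\zeta}\boldsymbol{\mu}}$ forces each $(-\zeta_l,\mu_l)$ to occur among $(\zeta_1,\mu_1),\dots,(\zeta_m,\mu_m)$ --- i.e.\ $(\zeta,\mu)\mapsto(-\zeta,\mu)$ is an involution of the (distinct) index set. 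Its fixed points are the pairs with $K_\zeta = K_{-\zeta}$, equivalently $\zeta \in \tfrac{i}{2}\hbar^{-1}\Qbf^\lor$; the remaining pairs split into two-element orbits $\{(\zeta_l,\mu_l),(-\zeta_l,\mu_l)\}$ with $\zeta_l \notin \tfrac{i}{2}\hbar^{-1}\Qbf^\lor$. Identifying each such orbit with a $(\boldsymbol{\Omega}_{\zeta_l\mu_l},\boldsymbol{d}_{\zeta_l\mu_l})$ of Proposition~\ref{prop:bicovariant *-FODC given by a pair} and placing the fixed points first yields exactly~\eqref{eq:direct sum of FODCs-Kq-involution}. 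Conversely, if the FODC has (up to permutation) the form~\eqref{eq:direct sum of FODCs-Kq-involution}, its index set is stable under $(\zeta,\mu)\mapsto(-\zeta,\mu)$, so $*(\Xcal_{\boldsymbol{\zeta}\boldsymbol{\mu}}) = \Xcal_{\boldsymbol{\zeta}\boldsymbol{\mu}}$ and Proposition~\ref{prop:*-structure encoded in the dual space} provides a $*$-structure.

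Finally, to identify the involution: it is unique when it exists (uniqueness in Theorem~\ref{thm:BCFODC}, equivalently Proposition~\ref{prop:*-structure encoded in the dual space}), and the product over the blocks of~\eqref{eq:direct sum of FODCs-Kq-involution} of the involutions~\eqref{eq:involution of Gamma^mu} on the self-conjugate factors (available since $\zeta_l \in \tfrac{i}{2}\hbar^{-1}\Qbf^\lor$, Proposition~\ref{prop:Gamma^mu is a *-FODC}) and~\eqref{eq:involution given by a pair} on the paired factors (Proposition~\ref{prop:bicovariant *-FODC given by a pair}) is such a structure: Proposition~\ref{prop:direct sum of structure maps} makes the direct sum a bicovariant $*$-bimodule, and $d_{\boldsymbol{\zeta}\boldsymbol{\mu}}$, being the block-wise direct sum of $*$-preserving differentials, is $*$-preserving. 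The main obstacle I anticipate is the bookkeeping in the second and third steps --- propagating $*$ through $\Xcal_{\boldsymbol{\zeta}\boldsymbol{\mu}} = \bigoplus_l \Xcal_{\zeta_l\mu_l}$ and legitimately invoking the directness of that sum to pass from ``$*$ stabilizes $\Xcal_{\boldsymbol{\zeta}\boldsymbol{\mu}}$'' to ``the index set is stable under $(\zeta,\mu)\mapsto(-\zeta,\mu)$''; everything afterwards is a short combinatorial argument together with citations of the already-proved Propositions~\ref{prop:Gamma^mu is a *-FODC} and~\ref{prop:bicovariant *-FODC given by a pair}.
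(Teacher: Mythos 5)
Your proposal is correct, and for the main equivalence it follows the same route as the paper: reduce to the space of left-invariant vector fields via Proposition~\ref{prop:*-structure encoded in the dual space}, compute $\big(K_{\zeta}I(u^{\mu}_{ik})-\epsilon(u^{\mu}_{ik})\big)^{*}=K_{-\zeta}I(u^{\mu}_{ki})-\epsilon(u^{\mu}_{ki})$ so that $*(\Xcal_{\zeta\mu})=\Xcal_{-\zeta\,\mu}$, and invoke Corollary~\ref{cor:Kzeta independence} to force the index set to be stable under $(\zeta,\mu)\mapsto(-\zeta,\mu)$; the sorting into fixed points and two-element orbits is likewise identical. Where you genuinely diverge is in identifying the involution: the paper computes it explicitly by expanding $Q(f)^{*}=-Q(S(f)^{*})$ in the basis $\{K_{\pm\zeta_l}I(u^{\mu_l}_{ik})-\epsilon(u^{\mu_l}_{ik})\}$ and reading off $(\omega^{\zeta_l\mu_l}_{ik})^{*}$ from linear independence, whereas you appeal to uniqueness of the involution plus the observation that the product involution works. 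Your route is shorter, but the uniqueness you need is not actually what Theorem~\ref{thm:BCFODC} or Proposition~\ref{prop:*-structure encoded in the dual space} state (the former gives uniqueness of the right ideal within an isomorphism class, the latter is only an existence criterion); you should instead justify it directly, which is a one-line argument: by the standard form property F2 and axioms B4 and F4, any compatible involution must satisfy $\big(\sum_j a_j\,d b_j\big)^{*}=\sum_j d(b_j^{*})\,a_j^{*}$, so it is determined by $d$ and the bimodule structure. With that line added, your argument is complete and arguably cleaner than the paper's explicit computation.
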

\begin{proof}
    Note that, by Proposition~\ref{prop:Gamma^mu is a *-FODC} and Proposition~\ref{prop:bicovariant *-FODC given by a pair}, each summand in \eqref{eq:direct sum of FODCs-Kq-involution} is a bicovariant $*$-FODC. Thus, by Proposition~\ref{prop:direct sum of FODCs}, we see an FODC of the form \eqref{eq:direct sum of FODCs-Kq-involution} becomes a bicovariant $*$-FODC when equipped with the product $*$-structure.

    Now, choose distinct pairs $(\xi_1, \nu_1), \cdots , (\xi_n , \nu_n) \in \Zcal \times \Pbf^+$ and let
    \[
    (\Omega, d) = (\Omega_{\xi_1 \nu_1}, d_{\xi_1 \nu_1}) \oplus \cdots \oplus (\Omega_{\xi_n \nu_n} , d_{\xi_n, \nu_n} ).
    \]
    Suppose that $(\Omega, d)$ is a bicovariant $*$-FODC with a certain involution $*: \Omega \rightarrow \Omega$. Note that, by \eqref{eq:LIVF for Gamma^mu} and \eqref{eq:direct sum LIVF-Kq}, the space of left-invariant vector fields for this FODC is given by
    \begin{equation*}
    \Xcal = \Span_\Cbb \big\{ K_{\xi_r} I(u^{\nu_r} _{ik}) - \epsilon (u^{\nu_r} _{ik}) \mid 1 \leq r \leq n, \,  1 \leq i, k  \leq n_{\nu_r} \big\}
    \end{equation*}
    for which the elements inside the span sign form a linear basis. Thus, by Proposition~\ref{prop:*-structure encoded in the dual space}, we must have
    \[
    K_{- \xi_r} I(u^{\nu_r} _{ik}) - \delta_{ik} = \big(K_{\xi_r} I(u^{\nu_r} _{ki}) - \epsilon(u^{\nu_r} _{ki}) \big)^* \in \Xcal
    \]
    for all $1 \leq r \leq n$ and $1 \leq i,k \leq \nu_r$.
    But, by Corollary~\ref{cor:Kzeta independence}, this holds only if for each $1 \leq r \leq n$, either $\xi_r = - \xi_r$ or there exists $1 \leq s\leq n$ such that $\xi_r \neq - \xi_r = \xi_s $ in $\Zcal$ and $\nu_r = \nu_s$. This implies that, as a bicovariant FODC, $(\Omega, d)$ must be of the form \eqref{eq:direct sum of FODCs-Kq-involution}.

    Now, it only remains to show that the involution of $(\Omega,d)$ is given by the product of the involutions on each direct summand in \eqref{eq:direct sum of FODCs-Kq-involution}. If $(\zeta_l, \mu_l) = (0,0)$ for some $1 \leq l \leq m$, then $\Omega_{\zeta_l \mu_l} = 0$, which contributes nothing to $(\Omega,d)$. So, without loss of generality, we may assume that $(\zeta_l,\mu_l) \neq 0$ for all $1 \leq l \leq m$. Let $Q$ be the quantum germs map of this FODC. Then, by \eqref{eq:quantum germs map of Gamma^mu} and \eqref{eq:involution and quantum germs map}, we have, for any $f \in \Cf^\infty(K_q)$,
    \begin{align*}
    &\sum_{1 \leq l \leq p} \sum_{1 \leq i,k \leq n_{\mu_l}} \overline{\big(K_{\zeta_l} I(u^{\mu_l} _{ik}) - \epsilon(u^{\mu_l} _{ik}) , f \big)} (\omega_{ik} ^{\zeta_l \mu_l}) ^* +
    \sum_{p+1 \leq l \leq m} \sum_{1 \leq i,k \leq n_{\mu_l}} \\
    &\quad \quad \Big(\overline{\big(K_{\zeta_l} I(u^{\mu_l} _{ik}) - \epsilon(u^{\mu_l} _{ik}) , f \big)} (\omega_{ik} ^{\zeta_l \mu_l}) ^* + \overline{\big(K_{-\zeta_l} I(u^{\mu_l} _{ik}) - \epsilon(u^{\mu_l} _{ik}) , f \big)} (\omega_{ik} ^{-\zeta_l \mu_l}) ^* \Big) \\
    &\quad = Q(f)^* = - Q(S(f)^*) \\
    &\quad = - \sum_{1 \leq l \leq p} \sum_{1 \leq i,k \leq n_{\mu_l}} \big(K_{\zeta_l} I(u^{\mu_l} _{ik}) - \epsilon(u^{\mu_l} _{ik}) , S(f)^* \big) \omega_{ik} ^{\zeta_l \mu_l} -
    \sum_{p+1 \leq l \leq m} \sum_{1 \leq i,k \leq n_{\mu_l}} \\
    &\quad \quad \quad \Big(\big(K_{\zeta_l} I(u^{\mu_l} _{ik}) - \epsilon(u^{\mu_l} _{ik}) , S(f)^* \big) \omega_{ik} ^{\zeta_l \mu_l} + \big(K_{-\zeta_l} I(u^{\mu_l} _{ik}) - \epsilon(u^{\mu_l} _{ik}) , S(f)^* \big)\omega_{ik} ^{-\zeta_l \mu_l} \Big) \\
    &\quad = - \sum_{1 \leq l \leq p} \sum_{1 \leq i,k \leq n_{\mu_l}} \overline{\big(K_{-\overline{\zeta_l}} I(u^{\mu_l} _{ki}) - \epsilon(u^{\mu_l} _{ki}) , f \big)} \omega_{ik} ^{\zeta_l \mu_l} -
    \sum_{p+1 \leq l \leq m} \sum_{1 \leq i,k \leq n_{\mu_l}} \\
    &\quad \quad \quad \Big(\overline{\big(K_{-\overline{\zeta_l}} I(u^{\mu_l} _{ki}) - \epsilon(u^{\mu_l} _{ki}) , f \big)} \omega_{ik} ^{\zeta_l \mu_l} + \overline{\big(K_{\overline{\zeta_l}} I(u^{\mu_l} _{ki}) - \epsilon(u^{\mu_l} _{ki}) , f \big)} \omega_{ik} ^{-\zeta_l \mu_l} \Big) \\
    &\quad = - \sum_{1 \leq l \leq p} \sum_{1 \leq i,k \leq n_{\mu_l}} \overline{\big(K_{\zeta_l} I(u^{\mu_l} _{ik}) - \epsilon(u^{\mu_l} _{ik}) , f \big)} \omega_{ki} ^{\zeta_l \mu_l} -
    \sum_{p+1 \leq l \leq m} \sum_{1 \leq i,k \leq n_{\mu_l}} \\
    &\quad \quad \quad \Big(\overline{\big(K_{-\zeta_l} I(u^{\mu_l} _{ik}) - \epsilon(u^{\mu_l} _{ik}) , f \big)} \omega_{ki} ^{\zeta_l \mu_l} + \overline{\big(K_{\zeta_l} I(u^{\mu_l} _{ik}) - \epsilon(u^{\mu_l} _{ik}) , f \big)} \omega_{ki} ^{-\zeta_l \mu_l} \Big).
    \end{align*}
    But, Corollary~\ref{cor:Kzeta independence} implies the linear independence of the functionals $\{K_{\pm\zeta_l} I(u^{\mu_l} _{ik}) - \epsilon (u^{\mu_l} _{ik}) \mid 1 \leq l \leq m,\, 1 \leq i,k \leq n_{\mu_l} \}$ (recall $(\zeta_l, \mu_l) \neq (0,0)$ for all $1\leq l\leq m$). Thus, the preceding equality forces
    \[
    (\omega_{ik} ^{\zeta_l \mu_l})^* = - \omega_{ki} ^{\zeta_l \mu_l} \text{ for $ 1 \leq l \leq p$} \;\; \text{and}\;\; (\omega_{ik} ^{\pm \zeta_l \mu_l})^* = - \omega_{ki} ^{\mp \zeta_l \mu_l} \text{ for $p+1 \leq l \leq m$},
    \]
    which shows that the involution $*:\Omega \rightarrow \Omega$ is given by the product of the involutions on each direct summand in \eqref{eq:direct sum of FODCs-Kq-involution}.
\end{proof}

\subsection{Classification of finite-dimensional bicovariant (\texorpdfstring{$*$}{TEXT}-)FODCs on \texorpdfstring{$K_q$}{TEXT}}\label{subsec:Classification of bicovariant FODCs on Kq}

This subsection is devoted to the proof of the following theorem.

\begin{thm}\label{thm:The classification of bicovariant FODCs over Cinfty(Kq)}
    Every finite-dimensional bicovariant FODC on $K_q$ is isomorphic to $(\Omega_{\boldsymbol{\zeta \mu}}, d_{\boldsymbol{\zeta \mu}})$ for some $\boldsymbol{\zeta} = (\zeta_1 , \cdots, \zeta_m)$ and $\boldsymbol{\mu} = (\mu_1, \cdots , \mu_m)$ ($m \in \Nbb$), where $(0,0) \neq (\zeta_1, \mu_1), \cdots , (\zeta_m, \mu_m) \in \Zcal \times \Pbf^+$ are mutually distinct.

    If $\boldsymbol{\xi} = (\xi_1, \cdots , \xi_n)$ and $\boldsymbol{\nu} = (\nu_1 , \cdots , \nu_n)$ are another such pair of indices and
    \[
    (\Omega_{\boldsymbol{\zeta \mu}} , d_{\boldsymbol{\zeta \mu}}) \cong (\Omega_{\boldsymbol{\xi \nu}} , d_{\boldsymbol{\xi \nu}}),
    \]
    then the pair $(\boldsymbol{\zeta}, \boldsymbol{\mu})$ is equal to $(\boldsymbol{\xi}, \boldsymbol{\nu})$ up to a permutation of indices.

    The same conclusions hold for bicovariant $*$-FODCs if we replace $(\Omega_{\boldsymbol{\zeta\mu}} , d_{\boldsymbol{\zeta\mu}})$ with $(\boldsymbol{\Omega_{\zeta\mu}}, \boldsymbol{d_{\zeta\mu}})$ in the statements.
\end{thm}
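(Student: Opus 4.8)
The plan is to prove the non-$*$ statement first and then deduce the $*$-statement as an essentially formal consequence of it together with Proposition~\ref{prop:direct sum of FODCs-Kq-involution}. For the non-$*$ part, recall that by Theorem~\ref{thm:BCFODC} every bicovariant FODC on $K_q$ is isomorphic to $(\Omega_R, d_R)$ for a unique $\ad$-invariant right ideal $R \subseteq \Ker \epsilon$, and by Corollary~\ref{cor:dimension encoded in the dual space} it is finite-dimensional exactly when the space $\Xcal_R$ of LIVFs is finite-dimensional, i.e.\ $R$ has finite codimension in $\Ker \epsilon$. The crucial input is the classification of such ideals in \cite{Baumann1998}: a finite-codimensional $\ad$-invariant right ideal of $\Ker \epsilon$ is an intersection of ``indecomposable'' ones which, transported through the isomorphisms $I, I'$ of Section~\ref{subsec:Universal R-matrix} that identify the coadjoint-isotypical pieces of $\Cf^\infty(K_q)^\circ$ with the central $K_\zeta$-shifted blocks $K_\zeta I(\End(V(\mu))^*)$, $\zeta \in \Zcal$, are precisely the $R_{\zeta\mu}$ of \eqref{eq:the right ideal for Gamma^mu}. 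Hence $R = \bigcap_{l=1}^m R_{\zeta_l\mu_l}$ for distinct pairs $(\zeta_l,\mu_l) \neq (0,0)$, and Proposition~\ref{prop:direct sum of FODCs-Kq} gives $(\Omega_R, d_R) \cong (\Omega_{\boldsymbol{\zeta \mu}}, d_{\boldsymbol{\zeta \mu}})$.

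For the uniqueness clause of the non-$*$ statement, an isomorphism $(\Omega_{\boldsymbol{\zeta \mu}}, d_{\boldsymbol{\zeta \mu}}) \cong (\Omega_{\boldsymbol{\xi \nu}}, d_{\boldsymbol{\xi \nu}})$ forces $R_{\boldsymbol{\zeta \mu}} = R_{\boldsymbol{\xi \nu}}$ by Theorem~\ref{thm:BCFODC}, hence $\Xcal_{\boldsymbol{\zeta \mu}} = \Xcal_{\boldsymbol{\xi \nu}}$ via \eqref{eq:R from XcalR}. Decomposing this common space, together with $\Cbb \epsilon$, under the left coadjoint action and separating the $K_\zeta$-graded summands using Corollary~\ref{cor:Kzeta independence} and the injectivity of $I$ recovers the unordered set $\{(\zeta_l,\mu_l)\}$, so the two index families coincide up to permutation.

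Now let $(\Omega, *, d)$ be a finite-dimensional bicovariant $*$-FODC. Forgetting the involution and applying the non-$*$ case yields a plain FODC isomorphism $\varphi : \Omega \to \Omega_{\boldsymbol{\zeta \mu}}$ for a uniquely determined family of distinct nonzero pairs. Transporting the involution via $*' := \varphi \circ * \circ \varphi^{-1}$ turns $(\Omega_{\boldsymbol{\zeta \mu}}, *', d_{\boldsymbol{\zeta \mu}})$ into a bicovariant $*$-FODC for which $\varphi$ is a $*$-FODC isomorphism. By Proposition~\ref{prop:direct sum of FODCs-Kq-involution}, an FODC of the form $(\Omega_{\boldsymbol{\zeta \mu}}, d_{\boldsymbol{\zeta \mu}})$ can carry a $*$-structure only if, after permuting the summands, $\zeta_1, \dots, \zeta_p \in \frac{i}{2} \hbar^{-1} \Qbf^\lor$ while $\zeta_{p+1}, \dots, \zeta_m \notin \frac{i}{2} \hbar^{-1} \Qbf^\lor$ occur grouped into the $*$-summands $\Omega_{\zeta_l\mu_l} \oplus \Omega_{-\zeta_l\mu_l}$ of Proposition~\ref{prop:bicovariant *-FODC given by a pair}; that is, $(\Omega_{\boldsymbol{\zeta \mu}}, d_{\boldsymbol{\zeta \mu}})$ equals $(\boldsymbol{\Omega_{\zeta\mu}}, \boldsymbol{d_{\zeta\mu}})$ in the notation combining Propositions~\ref{prop:Gamma^mu is a *-FODC} and \ref{prop:bicovariant *-FODC given by a pair}, and the same proposition pins down $*'$ as the product of the involutions \eqref{eq:involution of Gamma^mu} and \eqref{eq:involution given by a pair}. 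Hence $(\Omega, *, d) \cong (\boldsymbol{\Omega_{\zeta\mu}}, \boldsymbol{d_{\zeta\mu}})$ as $*$-FODCs. Uniqueness up to permutation then follows because a $*$-FODC isomorphism is in particular a plain one, so the underlying plain direct sums agree up to permutation by the non-$*$ case; after normalizing the bold notation so that for each $*$-paired summand only one of $\pm \zeta_l$ is used (and the resulting unfolded list consists of distinct pairs, which is automatic since $\zeta_l \neq -\zeta_l$ there), this determines the bold index families up to permutation.

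The substantive difficulty is confined to the completeness half of the non-$*$ classification, which is exactly where \cite{Baumann1998} is indispensable: one must rule out finite-dimensional bicovariant FODCs not of the form $\Omega_{\boldsymbol{\zeta \mu}}$, equivalently show that the finite-codimensional $\ad$-invariant right ideals of $\Ker \epsilon$ are precisely the intersections $\bigcap_l R_{\zeta_l\mu_l}$. Granting that, the $*$-refinement poses no genuine obstacle --- it is a bookkeeping argument built on Proposition~\ref{prop:direct sum of FODCs-Kq-involution}, the only points requiring care being the transport of the involution along a plain isomorphism and the harmless $\zeta \leftrightarrow -\zeta$ redundancy in the bold index notation.
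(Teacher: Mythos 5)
Your plan is correct and follows essentially the same route as the paper: reduce via Theorem~\ref{thm:BCFODC} to classifying finite-codimensional $\ad$-invariant right ideals contained in $\Ker\epsilon$, invoke \cite{Baumann1998} through the $J$-correspondence of Lemma~\ref{lem:J-ideal correspondence} and the representation theory of the algebra $V$ for the completeness half, and derive the $*$-case from Proposition~\ref{prop:direct sum of FODCs-Kq-involution}. Your uniqueness step (recovering the index set from $\Xcal_{\boldsymbol{\zeta\mu}} + \Cbb\epsilon$ via Corollary~\ref{cor:Kzeta independence} and the injectivity of $I$) is a mild variant of the paper's, which instead traces the equality of right ideals back through $J$ and uses the mutual inequivalence of the irreducible representations of $F_r U_q(\gf)$ from Lemma~\ref{lem:irreducible representations of V}; both rest on the same linear-independence facts, so this does not constitute a genuinely different approach.
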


The following is \cite[Proposition~4]{Baumann1998} phrased in our convention.

\begin{lem}\label{lem:J-ideal correspondence}
    Let $R \subseteq \Cf^\infty(K_q)$ be a linear subspace. Then, $R$ is an $\ad$-invariant right ideal if and only if $J(R)$ is a two-sided ideal of $F_r U_q(\gf)$ that is $U_q(\gf)$-invariant with respect to the right adjoint action. The correspondence $R \mapsto J(R)$ preserves codimension.
\end{lem}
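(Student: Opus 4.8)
The plan is to transport the two ideal-theoretic conditions across the linear isomorphism $J \colon \Cf^\infty(K_q) \xrightarrow{\;\cong\;} F_r U_q(\gf)$, using its equivariance property \eqref{eq:J preserves adjoint actions}, namely $J(f \leftarrow X) = J(f) \leftarrow X$, together with the fact that $l^\pm$ are Hopf algebra homomorphisms. Since $J$ is a linear bijection it preserves codimension automatically, so the entire content is the stated equivalence. I would prove it by treating two pieces separately: (i) $\ad$-invariance of $R$ is equivalent to $U_q(\gf)$-invariance of $J(R)$ under the right adjoint action; and (ii), in the presence of (i), the right-ideal property of $R$ is equivalent to the two-sided-ideal property of $J(R)$.

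For part (i), I would first record that, for any subspace $R \subseteq \Cf^\infty(K_q)$, $\ad$-invariance (that is, $\ad(R) \subseteq R \otimes \Cf^\infty(K_q)$) is equivalent to $R \leftarrow X \subseteq R$ for all $X \in U_q(\gf)$. A short Sweedler computation using the pairing axioms P1--P3 gives $(\id \otimes X)\,\ad(f) = f \leftarrow \hat S^{-1}(X)$ for all $X \in U_q(\gf)$ and $f \in \Cf^\infty(K_q)$; since $U_q(\gf)$ separates $\Cf^\infty(K_q)$ by nondegeneracy of \eqref{eq:skew-pairing between Uq(K) and Cinfty(Kq)}, the inclusion $\ad(f) \in R \otimes \Cf^\infty(K_q)$ is equivalent to $(\id \otimes X)\,\ad(f) \in R$ for all $X$, and as $\hat S$ is bijective on $U_q(\gf)$ this in turn is the same as $R \leftarrow X \subseteq R$ for all $X$. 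Applying $J$, using \eqref{eq:J preserves adjoint actions} and the bijectivity of $J$, this is finally equivalent to $J(R) \leftarrow X \subseteq J(R)$ for all $X$, i.e.\ $J(R)$ being stable under the right adjoint action of $U_q(\gf)$.

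For part (ii), the computational input is the identity
\[
J(fg) = \hat S l^+(g_{(1)})\,J(f)\,l^-(g_{(2)}), \qquad f, g \in \Cf^\infty(K_q),
\]
which follows immediately from $J(h) = \hat S l^+(h_{(1)}) l^-(h_{(2)})$, from $\Delta(fg) = f_{(1)}g_{(1)} \otimes f_{(2)}g_{(2)}$, and from multiplicativity of $l^\pm$. If $R$ is an $\ad$-invariant right ideal, then for $f \in R$ and arbitrary $g$ we obtain $\hat S l^+(g_{(1)})\,J(f)\,l^-(g_{(2)}) = J(fg) \in J(R)$, and these ``mixed'' multiplications must then be promoted, using the adjoint-invariance of $J(R)$ established in (i), to genuine two-sided multiplication by $F_r U_q(\gf)$. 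Conversely, if $\mathcal{J} \subseteq F_r U_q(\gf)$ is an $\ad$-invariant two-sided ideal, the same identity run backwards shows that $R := J^{-1}(\mathcal{J})$ satisfies $J(fg) \in \mathcal{J}$ for every $f \in R$ and every $g$, hence $R$ is a right ideal. Combining (i) and (ii) yields the lemma.

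The main obstacle is exactly the promotion step in (ii): for a subspace of $F_r U_q(\gf)$ that is already stable under the right adjoint action of $U_q(\gf)$, one must show that absorbing all the operators $x \mapsto \hat S l^+(g_{(1)})\,x\,l^-(g_{(2)})$, $g \in \Cf^\infty(K_q)$, is equivalent to being a two-sided ideal of $F_r U_q(\gf)$; equivalently, that the two-sided ideal of $F_r U_q(\gf)$ generated by $J(f)$ coincides with the right-adjoint submodule generated by $\{J(fg) : g \in \Cf^\infty(K_q)\}$. This is the heart of \cite[Proposition~4]{Baumann1998}, and I would import that argument, adjusting only for the present conventions --- in particular the use of $l^\pm$, $I$, $J$ from Section~\ref{subsec:Universal R-matrix} and the identification of $F_l U_q(\gf)$ with $\bigoplus_{\mu \in \weights^+} \big(U_q(\gf) \rightarrow K_{2\mu}\big)$, so that the block structure of $F_r U_q(\gf)$ becomes visible. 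The remaining verifications --- the Sweedler identity in (i), the displayed identity for $J(fg)$, and the two backward implications --- are routine.
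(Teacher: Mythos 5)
Your overall architecture matches the paper's: part (i) is carried out the same way (the identity $(\id\otimes\hat S(X))\ad(f)=f\leftarrow X$, nondegeneracy of the pairing, and \eqref{eq:J preserves adjoint actions}), and part (ii) starts from the same identity $J(fg)=\hat S l^+(g_{(1)})\,J(f)\,l^-(g_{(2)})$. The codimension remark is also handled identically.

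However, the step you flag as ``the main obstacle'' and propose to import wholesale from \cite[Proposition~4]{Baumann1998} is precisely the content the paper supplies, and without it both directions of part (ii) are incomplete. The paper resolves the promotion by deriving, from the $J(fg)$ identity and the formula $Y\leftarrow X=\hat S(X_{(1)})YX_{(2)}$, the two identities
\[
J(f g_{(1)}) \leftarrow \hat{S}l^-(g_{(2)}) = \hat{S} I'(g)\, J(f), \qquad
J(f g_{(2)}) \leftarrow \hat{S}^{-1} l^+(g_{(1)}) = J(f)\, \hat{S}^{-1} I'(g).
\]
Since $\hat S^{\pm1}I'(\Cf^\infty(K_q))=\hat S^{\pm1}(F_l U_q(\gf))=F_r U_q(\gf)$, these express left and right multiplication by an arbitrary element of $F_r U_q(\gf)$ as a combination of $J$ applied to products and the right adjoint action; given (i), the forward implication is then immediate. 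Your converse (``the same identity run backwards'') also has a genuine gap: the factors $\hat S l^+(g_{(1)})$ and $l^-(g_{(2)})$ lie in $U_q(\gf)$, not in $F_r U_q(\gf)$, so two-sidedness of $\mathcal J$ inside $F_r U_q(\gf)$ does not by itself let you conclude $\hat S l^+(g_{(1)})\,J(f)\,l^-(g_{(2)})\in\mathcal J$. The correct route, as in the paper, is to note that the right-hand side $\hat S I'(g)J(f)$ of the first displayed identity lies in $\mathcal J$ by two-sidedness, then use the adjoint invariance of $\mathcal J$ (from (i)) to act with $\leftarrow l^-(g_{(3)})$ on the left-hand side and recover $J(fg)\in\mathcal J$. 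So the missing piece is not a convention adjustment but the two displayed identities, which are what make both implications in (ii) go through.
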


\begin{proof}
    Note that, for $f \in \Cf^\infty(K_q)$ and $X \in U_q(\gf)$,
    \begin{align*}
        f_{(2)}\big(\hat{S}(X), S(f_{(1)}) f_{(3)}\big) 
        &= f_{(2)}(X_{(1)}, f_{(1)}) \big(\hat{S}(X_{(2)}), f_{(3)}\big) 
        = f \leftarrow X.
    \end{align*}
    Hence,
    \begin{align*}
        \text{$R$ is $\ad$-invariant} 
        \Leftrightarrow (\id \otimes \hat{S}(X)) \ad(R) \subseteq R \text{ for all } X \in U_q(\gf) \\
        \Leftrightarrow R \leftarrow U_q(\gf) \subseteq R.
    \end{align*}
    Thus, by \eqref{eq:J preserves adjoint actions}, the $\ad$-invariance of $R$ is equivalent to the $U_q(\gf)$-invariance of $J(R)$ with respect to the right adjoint action on $U_q(\gf)$.
    
    For the remainder of the proof, assume that $R \subseteq \Cf^\infty(K_q)$ is an $\ad$-invariant subspace. Let $f, g \in \Cf^\infty(K_q)$. Then,
    \begin{equation*}
        J(fg) = \hat{S}l^+(f_{(1)} g_{(1)}) l^-(f_{(2)} g_{(2)}) 
        = \hat{S}l^+(g_{(1)}) J(f) l^-(g_{(2)}).
    \end{equation*}
    Therefore, we have
    \begin{align}\label{eq:right linearity of J-2}
        J(f g_{(1)}) \leftarrow \hat{S}l^-(g_{(2)}) 
        &= \hat{S}^2 l^-(g_{(4)}) \hat{S}l^+(g_{(1)}) J(f) l^-(g_{(2)}) \hat{S}l^-(g_{(3)}) \nonumber \\
        &= \hat{S} \big(l^+(g_{(1)}) \hat{S}l^-(g_{(2)})\big) J(f) 
        = \hat{S} I'(g) J(f)
    \end{align}
    and
    \begin{align}\label{eq:right linearity of J-3}
        J(f g_{(2)}) \leftarrow \hat{S}^{-1} l^+(g_{(1)}) 
        &= l^+(g_{(2)}) \hat{S}l^+(g_{(3)}) J(f) l^-(g_{(4)}) \hat{S}^{-1} l^+(g_{(1)}) \nonumber \\
        &= J(f) \hat{S}^{-1} \big(l^+(g_{(1)}) \hat{S}l^-(g_{(2)})\big) 
        = J(f) \hat{S}^{-1} I'(g).
    \end{align}
    Thus, when $R$ is a right ideal, putting $f \in R$ and $g \in \Cf^\infty(K_q)$ into \eqref{eq:right linearity of J-2}--\eqref{eq:right linearity of J-3} shows that $J(R)$ is a two-sided ideal of $F_r U_q(\gf)$ (note that $\hat{S}^{\pm 1} I'(\Cf^\infty(K_q)) = \hat{S}^{\pm 1}(F_l U_q(\gf)) = F_r U_q(\gf)$).

    Conversely, assume that $J(R)$ is a two-sided ideal of $F_r U_q(\gf)$. Let $f \in R$ and $g \in \Cf^\infty(K_q)$. Then, the right-hand side of \eqref{eq:right linearity of J-2} is contained in $J(R)$, which is invariant under the right adjoint action on $U_q(\gf)$. Acting with $\leftarrow l^-(g_{(3)})$ on the left-hand side of \eqref{eq:right linearity of J-2}, we obtain
    \[
        J(fg) \in J(R),
    \]
    which, by the injectivity of $J$, implies that $R$ is a right ideal in $\Cf^\infty(K_q)$.

    Finally, since $J\colon \Cf^\infty(K_q) \to F_r U_q(\gf)$ is a linear isomorphism, the correspondence $R \mapsto J(R)$ preserves codimension.
\end{proof}

\begin{lem}\label{lem:the subalgebra V}
    Let $V$ be the subalgebra of $U_q(\gf)$ generated by
    \[
    F_r U_q(\gf) \cup \{K_{2 \lambda} \mid \lambda \in \weights^+\}.
    \]
    Also, let $V'$ be the subalgebra of $U_q(\gf)$ generated by
    \[
    \{E_j, \, K_j F_j, \, K_{2 \lambda} \mid 1 \leq j \leq N, \, \lambda \in \weights\}.
    \]
    Then, $V = V'$.
\end{lem}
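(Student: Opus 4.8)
The plan is to prove the two inclusions $V'\subseteq V$ and $V\subseteq V'$ separately, relying on three facts already established: the isomorphism $I'\colon\Cf^\infty(K_q)\xrightarrow{\cong}F_lU_q(\gf)$ from the end of Section~\ref{subsec:Universal R-matrix}, which gives $F_lU_q(\gf)=\sum_{\mu\in\weights^+}\bigl(U_q(\gf)\rightarrow K_{2\mu}\bigr)$ and, via \eqref{eq:an evaluation of I'}, $K_{2\mu}\in F_lU_q(\gf)$ for every $\mu\in\weights^+$; the identity $\hat S^{\pm1}(F_lU_q(\gf))=F_rU_q(\gf)$ of \eqref{eq:antipode and locally finite parts}; and the compatibility $\hat S(a\rightarrow b)=\hat S(b)\leftarrow\hat S(a)$ from \eqref{eq:antipodes and adjoint actions}.

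For $V'\subseteq V$, I would first show $E_j,K_jF_j\in V$. The element $K_{2\varpi_j}$ ($\varpi_j$ the $j$-th fundamental weight) is at once a defining generator of $V$ and a member of $F_lU_q(\gf)$ by \eqref{eq:an evaluation of I'}. A short computation with $\hat\Delta(E_j),\hat\Delta(F_j)$ and the commutation relations U2 gives
\[
E_j\rightarrow K_{2\varpi_j}=(q_j^{-2}-1)K_{2\varpi_j}E_jK_j^{-1},\qquad F_j\rightarrow K_{2\varpi_j}=(q_j^{2}-1)K_{2\varpi_j}F_j,
\]
with nonzero scalar prefactors (as $0<q<1$), so $K_{2\varpi_j}E_jK_j^{-1}$ and $K_{2\varpi_j}F_j$ again lie in $F_lU_q(\gf)$. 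Applying $\hat S$ and U2, these equal, up to nonzero scalars, $E_jK_{-2\varpi_j}$ and $K_jF_jK_{-2\varpi_j}$, which therefore lie in $\hat S(F_lU_q(\gf))=F_rU_q(\gf)\subseteq V$; multiplying on the right by $K_{2\varpi_j}\in V$ recovers $E_j\in V$ and $K_jF_j\in V$. For the remaining generators $K_{2\lambda}$ ($\lambda\in\weights$) of $V'$, writing $\lambda=\sum_i n_i\varpi_i$ reduces matters to $K_{\pm2\varpi_i}\in V$: here $K_{2\varpi_i}$ is a defining generator of $V$, and $K_{-2\varpi_i}=\hat S(K_{2\varpi_i})\in\hat S(F_lU_q(\gf))=F_rU_q(\gf)\subseteq V$. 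Hence every generator of $V'$ lies in $V$.

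For $V\subseteq V'$, the generators $K_{2\mu}$ ($\mu\in\weights^+\subseteq\weights$) of $V$ obviously lie in $V'$, so it remains to prove $F_rU_q(\gf)\subseteq V'$. Using the description of $F_lU_q(\gf)$ together with $\hat S(a\rightarrow b)=\hat S(b)\leftarrow\hat S(a)$ and bijectivity of $\hat S$, one gets $F_rU_q(\gf)=\sum_{\mu\in\weights^+}\bigl(K_{-2\mu}\leftarrow U_q(\gf)\bigr)$, and each $K_{-2\mu}=K_{2(-\mu)}$ lies in $V'$. Thus it suffices to show $V'$ is stable under the right adjoint action $Y\mapsto Y\leftarrow X$; since $\leftarrow$ is a right $U_q(\gf)$-action and $U_q(\gf)$ is generated by the $E_j,F_j,K_\lambda$, I only need stability under $\leftarrow E_j$, $\leftarrow F_j$ and $\leftarrow K_\lambda$. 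The map $Y\mapsto Y\leftarrow K_\lambda=K_{-\lambda}YK_\lambda$ is an algebra automorphism sending each generator of $V'$ to a scalar multiple of itself, so it preserves $V'$; and $\leftarrow E_j$, $\leftarrow F_j$ are twisted derivations, $\delta(YW)=Y\delta(W)+\delta(Y)(K_j^{-1}WK_j)$ and $\delta'(YW)=(K_jYK_j^{-1})\delta'(W)+\delta'(Y)W$ respectively. Since $V'$ is stable under conjugation by $K_j$, an induction on word length in the generators of $V'$ reduces the claim to checking that $\leftarrow E_j$ and $\leftarrow F_j$ send each generator $E_i$, $K_iF_i$, $K_{2\lambda}$ back into $V'$ — a direct computation from U2--U4: e.g.\ $E_i\leftarrow E_j=E_iE_j-q^{-(\alpha_i,\alpha_j)}E_jE_i$ is a polynomial in the $E$'s, $K_iF_i\leftarrow E_j=-\delta_{ij}(K_j^2-1)/(q_j-q_j^{-1})$ with $K_j^2=K_{2\alpha_j}\in V'$, and $K_{2\lambda}\leftarrow E_j$ is a scalar multiple of $K_{2\lambda}E_j\in V'$ (the $F_j$-cases are analogous). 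This yields $F_rU_q(\gf)\subseteq V'$.

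The main obstacle is the structural input that $F_rU_q(\gf)$ is generated, as a module for the right adjoint action, by the group-likes $K_{-2\mu}$ ($\mu\in\weights^+$) — which is exactly what the isomorphism $I'$ supplies; granting this, both inclusions become bookkeeping. The one genuinely delicate point will be keeping the $K_j^{\pm1}(\cdot)K_j^{\mp1}$ conjugation factors in the twisted-Leibniz induction straight, together with the observation that it is precisely the extra generators $K_{2\lambda}$ (equivalently $K_j^2=K_{2\alpha_j}$) that make $V'$ large enough to absorb the $K_j^2$ terms produced by $[E_j,F_j]=\frac{K_j-K_j^{-1}}{q_j-q_j^{-1}}$; the subalgebra generated by $E_j,K_jF_j$ alone would not be closed under the adjoint action.
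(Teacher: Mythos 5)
Your proof is correct and follows essentially the same route as the paper's: both inclusions rest on the decomposition $F_rU_q(\gf)=\bigoplus_{\mu\in\weights^+}\bigl(K_{-2\mu}\leftarrow U_q(\gf)\bigr)$, with $E_j$ and $K_jF_j$ extracted from adjoint actions on group-like elements for one direction and stability of $V'$ under the right adjoint action for the other. The only differences are cosmetic: you act on $K_{2\varpi_j}$ by the left adjoint action and then apply $\hat S$, where the paper computes $K_{-2\alpha_j}\leftarrow E_j$ and $K_{-2\alpha_j}\leftarrow F_j$ directly, and your twisted-Leibniz induction can be shortcut by the paper's one-line identities $X\leftarrow E_j=XE_j-E_j(K_j^{-1}XK_j)$ and $X\leftarrow F_j=-(K_jF_j)X+(K_jXK_j^{-1})(K_jF_j)$, valid for all $X\in V'$.
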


\begin{proof}
    By \cite[Theorem~3.113]{VoigtYuncken}, \eqref{eq:antipode and locally finite parts}, and \eqref{eq:antipodes and adjoint actions}, we have
    \begin{equation}\label{eq:right locally finite part decomposed}
        F_r U_q(\gf) = \bigoplus_{\mu \in \weights^+} \big(K_{-2\mu} \leftarrow U_q(\gf)\big).
    \end{equation}
    Note that for each $1 \leq j \leq N$,
    \begin{align*}
        K_{-2 \alpha_j} \leftarrow F_j 
        = - K_j F_j K_{-2 \alpha_j} + K_j K_{-2 \alpha_j} F_j
        = K_{-2 \alpha_j} \big(1 - q^{-2(\alpha_j, \alpha_j)}\big) K_j F_j,
    \end{align*}
    and hence, by \eqref{eq:right locally finite part decomposed},
    \[
    K_j F_j \in K_{2 \alpha_j} \cdot F_r U_q(\gf) \subseteq V.
    \]
    Likewise, since
    \begin{align*}
        K_{-2 \alpha_j} \leftarrow E_j = - E_j K_j^{-1} K_{-2 \alpha_j} K_j + K_{-2\alpha_j} E_j = K_{-2 \alpha_j} \big(1 - q^{2(\alpha_j, \alpha_j)}\big) E_j,
    \end{align*}
    we conclude, again by \eqref{eq:right locally finite part decomposed},
    \[
    E_j \in K_{2 \alpha_j} \cdot F_r U_q(\gf) \subseteq V.
    \]
    Thus, $V$ contains $\{E_j, \, K_j F_j, \, K_{2 \lambda} \mid 1 \leq j \leq N, \, \lambda \in \weights\}$, i.e., $V' \subseteq V$.
    
    Conversely, note that for each $\lambda \in \weights$, $V'$ is invariant under the algebra homomorphism $\leftarrow K_\lambda$ since its generators are preserved by this map up to multiplication by a scalar. Thus, for any $X \in V'$ and $1 \leq j \leq N$,
    \[
    X \leftarrow F_j = - (K_j F_j) X + (K_j X K_j^{-1}) (K_j F_j) \in V',
    \]
    and
    \[
    X \leftarrow E_j = - E_j (K_j^{-1} X K_j) + X E_j \in V'.
    \]
    This shows that $V'$ is invariant under the right adjoint action on $U_q(\gf)$. Since $\{K_{2\mu} \mid \mu \in \weights\} \subseteq V'$, this and \eqref{eq:right locally finite part decomposed} together imply
    \[
    F_r U_q(\gf) \subseteq V',
    \]
    which shows that $V \subseteq V'$.
\end{proof}

For each $\zeta \in \frac{i}{2} \hbar^{-1} \Pbf^\lor$, define $\hat{\epsilon}_\zeta : U_q(\gf) \rightarrow \Cbb$ by
\[
\hat{\epsilon}_\zeta(K_{\lambda}) = q^{(\zeta,\lambda)}, \quad \hat{\epsilon}_\zeta(E_j) = \hat{\epsilon}_\zeta(F_j) = 0, \quad \lambda \in \weights, \, 1 \leq j \leq N.
\]
These are characters of $U_q(\gf)$, see \cite[Section~3.4]{VoigtYuncken}. Note that $\hat{\epsilon}_\zeta \hat{S} = \hat{\epsilon}_{-\zeta}$ for all $\zeta \in \frac{i}{2} \hbar^{-1} \Pbf^\lor$, and $\hat{\epsilon}_\zeta = \hat{\epsilon}$ on $V$ if and only if $\zeta \in \frac{i}{2} \hbar^{-1} \Qbf^\lor$.

\begin{lem}\label{lem:irreducible representations of V}
    The following exhaust all equivalence classes of irreducible representations of $V$ and are mutually inequivalent:
    \begin{equation}\label{eq:irreducible representations of V}
        \Big\{ (\hat{\epsilon}_\zeta \otimes \pi_\mu )|_{V} \;\Big|\; \zeta \in \frac{i}{2} \hbar^{-1} \Pbf^\lor \Big/ \frac{i}{2} \hbar^{-1} \Qbf^\lor , \, \mu \in \weights^+ \Big\}.
    \end{equation}
    Also, finite-dimensional representations of $V$ are completely reducible.

    Moreover, these representations restrict to mutually inequivalent irreducible representations of the subalgebra $F_r U_q(\gf) \subseteq V$.
\end{lem}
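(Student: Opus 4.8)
The plan is to reduce the representation theory of $V$ to the (known, generic-$q$) representation theory of $U_q(\gf)$. By Lemma~\ref{lem:the subalgebra V}, $V = V'$ is generated by $E_j$, $\tilde F_j := K_jF_j$ ($1\le j\le N$) and $K_{2\lambda}$ ($\lambda\in\weights$), and a direct computation in $U_q(\gf)$ records the relations
\begin{gather*}
K_{2\lambda}E_jK_{-2\lambda} = q^{2(\lambda,\alpha_j)}E_j,\qquad K_{2\lambda}\tilde F_jK_{-2\lambda} = q^{-2(\lambda,\alpha_j)}\tilde F_j, \\
E_j\tilde F_j - q_j^{-2}\tilde F_jE_j = q_j^{-2}\frac{K_{2\alpha_j}-1}{q_j-q_j^{-1}},\qquad E_i\tilde F_j = q^{-(\alpha_i,\alpha_j)}\tilde F_jE_i\ \ (i\ne j),
\end{gather*}
the Serre relations for the $E_j$, and (after clearing the invertible factor $K_{(1-a_{ij})\alpha_i+\alpha_j}$) the Serre relations for the $\tilde F_j$. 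The first step is to show that every $K_{2\lambda}$ acts semisimply on a finite-dimensional $V$-module $W$: for each $j$ the subalgebra $V_j=\langle E_j,\tilde F_j,K_{2\alpha_j}^{\pm1}\rangle$ is, by the relations above, isomorphic to the subalgebra $\langle E,F,K^{\pm2}\rangle\subseteq U_{q_j}(\mathfrak{sl}_2)$ (via $\tilde F_j\mapsto KF$, $K_{2\alpha_j}\mapsto K^2$), so the standard semisimplicity results for quantized $\mathfrak{sl}_2$ at generic $q$ give that $K_{2\alpha_j}$ is semisimple on $W$; as the $K_{2\alpha_j}$ commute and a suitable power of each $K_{2\varpi_i}$ lies in $\langle K_{2\alpi_j}\rangle$... (here $\weights/\Qbf$ finite, plus "$X$ invertible and $X^m$ semisimple $\Rightarrow X$ semisimple"), all the $K_{2\lambda}$ are simultaneously diagonalizable, yielding a weight decomposition $W=\bigoplus_\beta W_\beta$ indexed by $\beta\in\hf^*/\tfrac{i}{2}\hbar^{-1}\Qbf^\lor$, on which $E_j$ raises and $\tilde F_j$ lowers the weight by $\alpha_j$.

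The heart of the argument is an \emph{extension lemma}: a finite-dimensional $V$-module $W$ on which the $K_{2\lambda}$ act semisimply and whose weights lie in a single $\Qbf$-coset extends compatibly to a $U_q(\gf)$-module. Since $\Qbf\subseteq(i\tf)^*$ and $\tfrac{i}{2}\hbar^{-1}\Qbf^\lor\subseteq\tf^*$ meet only in $0$, fixing one $\hf^*$-lift $\tilde\beta_0$ of one weight of $W$ canonically lifts every weight $\beta$ to $\tilde\beta\in\hf^*$; one then defines operators $K_\lambda$ acting on $W_\beta$ by $q^{(\lambda,\tilde\beta)}$ and sets $F_j:=K_{-\alpha_j}\tilde F_j$. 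A routine but careful check of the defining relations U1--U4 — the only delicate point being that, under the substitution $\tilde F_j=K_jF_j$, the $\tilde F$-Serre relations differ from the $F$-Serre relations only by an invertible $K$-factor and the scalar $q^{\binom{m}{2}(\alpha_i,\alpha_i)+m(\alpha_i,\alpha_j)}$ with $m=1-a_{ij}$, which is \emph{constant} in the summation index — produces a $U_q(\gf)$-action restricting to the given $V$-action. Applying this to the $\Qbf$-coset blocks of an arbitrary finite-dimensional $V$-module (each being a $V$-submodule) exhibits $W$ as a direct sum of finite-dimensional $U_q(\gf)$-modules. Complete reducibility of finite-dimensional $V$-modules and the list of irreducibles then follow from the corresponding facts for $U_q(\gf)$: the irreducible $U_q(\gf)$-modules occurring are the $\hat\epsilon_\sigma\otimes\pi_\mu$ ($\mu\in\weights^+$, $\sigma$ a character), and compatibility of weight cosets forces $\sigma|_V$ to be one of the finitely many one-dimensional representations of $V$. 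A one-dimensional representation must kill $E_j,\tilde F_j$ and send $K_{2\alpha_j}\mapsto1$, so these are exactly the characters of $\weights/\Qbf$, and the cardinality count $|\tfrac{i}{2}\hbar^{-1}\Pbf^\lor/\tfrac{i}{2}\hbar^{-1}\Qbf^\lor| = |\operatorname{Hom}(\weights/\Qbf,\Cbb^\ast)|$ identifies them with the $\hat\epsilon_\zeta|_V$, $\zeta\in\tfrac{i}{2}\hbar^{-1}\Pbf^\lor/\tfrac{i}{2}\hbar^{-1}\Qbf^\lor$. Mutual inequivalence of the $(\hat\epsilon_\zeta\otimes\pi_\mu)|_V$ comes from Artin's linear independence of characters applied to $\{K_{2\lambda}\}_{\lambda\in\weights}$: the joint spectrum recovers the multiset $\{\zeta+\nu\bmod\tfrac{i}{2}\hbar^{-1}\Qbf^\lor:\nu\in\Pbf(\mu)\}$, hence via its maximal element and the differences the pair $(\zeta,\mu)$.

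For the final assertion about $F_rU_q(\gf)\subseteq V$, the key point is that $\pi_\mu$ already maps $F_rU_q(\gf)$ \emph{onto} $\End(V(\mu))$. Indeed $F_rU_q(\gf)$ contains all $K_{-2\mu'}$ ($\mu'\in\weights^+$), whose images $\pi_\mu(K_{-2\mu'})$ span the full algebra of weight-diagonal scalars of $V(\mu)$ (distinct weights are separated by some dominant $\mu'$), and it contains $K_{-2\alpha_j}E_j$ and $K_{-\alpha_j}F_j$ (Lemma~\ref{lem:the subalgebra V}); hence $\pi_\mu(F_rU_q(\gf))$ contains every $\pi_\mu(E_j),\pi_\mu(F_j),\pi_\mu(K_\lambda)$ and so equals $\pi_\mu(U_q(\gf))=\End(V(\mu))$ by irreducibility. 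The character twist $\theta_\zeta=(\hat\epsilon_\zeta\otimes\mathrm{id})\hat\Delta$ scales each of these generators of $F_rU_q(\gf)$ by a nonzero constant, so the same conclusion holds for $(\hat\epsilon_\zeta\otimes\pi_\mu)|_{F_rU_q(\gf)}$; inequivalence is then re-run exactly as above, since $F_rU_q(\gf)$ still contains all $K_{-2\mu'}$ and $\weights^+$ generates $\weights$.

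I expect the main obstacle to be the extension lemma: verifying that the $K$-twisted $F$-Serre relations coming from $\tilde F_j=K_jF_j$ are genuinely equivalent to the original ones — this rests on the constancy of the reordering exponent $\binom{m}{2}(\alpha_i,\alpha_i)+m(\alpha_i,\alpha_j)$ — and the lattice bookkeeping ($\Qbf$ versus $\tfrac{i}{2}\hbar^{-1}\Qbf^\lor$) that makes the canonical weight lift well-defined; the rank-one semisimplicity input, though standard, must also be stated with care because $V_j$ is only a proper subalgebra of $U_{q_j}(\mathfrak{sl}_2)$.
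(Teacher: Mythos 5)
Your proposal is correct and shares the paper's overall strategy --- extend a finite-dimensional $V$-module to a $U_q(\gf)$-module and then quote complete reducibility and the classification of irreducibles there --- but the extension itself is carried out by a genuinely different mechanism. The paper exploits $0<q$: the rank-one ($U_{q_j}(\mathfrak{sl}_2)$-type) analysis shows each $\pi(K_{2\alpha_j})$ is diagonalizable with strictly positive eigenvalues, and the extension is built by taking real powers $\pi^\lambda=\pi(K_{2\alpha_1})^{r_1/2}\cdots\pi(K_{2\alpha_N})^{r_N/2}$; this agrees with the given $V$-action only on $E_j$, $K_jF_j$ and the $K_{2\lambda}$ with $\lambda\in\Qbf$, and the character twist $\hat{\epsilon}_\zeta$ is recovered afterwards from the central, finite-order discrepancy between $\pi(K_{2\varpi_i})$ and $\pi^{2\varpi_i}$. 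You instead diagonalize all the $K_{2\lambda}$ simultaneously, group the weights (which live in $\hf^*/\tfrac{i}{2}\hbar^{-1}\Qbf^\lor$) into $\Qbf$-cosets, and lift them to $\hf^*$ using $\Qbf\cap\tfrac{i}{2}\hbar^{-1}\Qbf^\lor=0$; this yields an honest extension of the full $V$-action and makes the appearance of $\hat{\epsilon}_\zeta$ transparent, at the price of the bookkeeping you flag --- all of which checks out: the reordering exponent is constant in the summation index because $\binom{a}{2}+\binom{b}{2}+ab=\binom{a+b}{2}$, and the power trick does give semisimplicity of the $K_{2\varpi_i}$ since a power of each lies in the group generated by the $K_{2\alpha_j}$. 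Two points to tighten: the sentence establishing that semisimplicity is left unfinished in your text, and you should record explicitly that the rank-one analysis forces the eigenvalues of $K_{2\alpha_j}$ to lie in $q_j^{2\Zbb}$, since that is exactly what pins your lifted weights to $\weights+\tfrac{i}{2}\hbar^{-1}\Pbf^\lor$ and hence forces the irreducible constituents of the extension to be of the form $\hat{\epsilon}_\zeta\otimes\pi_\mu$ with $\zeta\in\tfrac{i}{2}\hbar^{-1}\Pbf^\lor$ (recall that $U_q(\gf)$ admits no other characters, so no further one-dimensional twists can occur). For the last assertion your argument is slightly stronger than the paper's: you prove that $F_r U_q(\gf)$ surjects onto $\End(V(\mu))$ directly (the $K_{-2\mu'}$ span the weight-diagonal algebra by linear independence of characters), whereas the paper only records irreducibility of the restriction via the twisted raising and lowering operators and extracts surjectivity later, in the proof of Theorem~\ref{thm:The classification of bicovariant FODCs over Cinfty(Kq)}, by a density argument; the inequivalence proofs then run on the joint spectrum of the $K_{-2\lambda}$ exactly as in the paper.
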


\begin{proof}
    Since $V$ contains $E_j$ and $K_j F_j$ ($1 \leq j \leq N$), which act as weight raising and lowering operators (cf. \eqref{eq:raising and lowering actions of root vectors}) on the representation spaces in the list \eqref{eq:irreducible representations of V}, it follows that each such representation remains irreducible when restricted to $V$.

    Conversely, let $\pi$ be any finite-dimensional representation of $V$. Fix $1 \leq j \leq N$. Then the actions of the operators
    \[
    \big\{ \pi(E_j), \, \pi(K_j F_j), \, \pi(K_{2\alpha_j}) \big\} \subseteq \End(V)
    \]
    can be analyzed similarly to the case of $U_q(\mathfrak{sl}(2))$. In particular, the analysis in \cite[Section~VI.3]{Kassel} shows that the matrix $\pi(K_{2\alpha_j})$ is diagonalizable with strictly positive eigenvalues, since $0 < q$.

    As the matrices $\{\pi(K_{2\alpha_j}) \mid 1 \leq j \leq N\}$ commute, they are simultaneously diagonalizable. Thus, for each $1 \leq j \leq N$ and $r \in \Rbb$, we can define $\pi(K_{2 \alpha_j})^r$ as the diagonal matrix whose entries are the $r$-th powers of the strictly positive eigenvalues of $\pi(K_{2 \alpha_j})$ in this basis.

    Let $\lambda \in \weights$ and $r_1, \dots, r_N \in \Rbb$ be such that $\lambda = r_1 \alpha_1 + \cdots + r_N \alpha_N$. Define
    \[
    \pi^\lambda = \pi(K_{2\alpha_1})^{\frac{r_1}{2}} \cdots \pi(K_{2\alpha_N})^{\frac{r_N}{2}},
    \]
    which is a diagonal matrix with strictly positive entries. One can then check that the matrices
    \[
    \big\{ \pi^\lambda, \, \pi(E_j), \, (\pi^{\alpha_j})^{-1} \pi(K_j F_j) \;|\; \lambda \in \weights, \, 1 \leq j \leq N \big\}
    \]
    satisfy the defining relations of $U_q(\gf)$ in Definition~\ref{defn:the quantized universal enveloping algebra}. Thus, $\pi$ extends to a representation of $U_q(\gf)$.
    
    By \cite[Corollary~3.100]{VoigtYuncken}, this implies that $\pi$ decomposes into a direct sum of representations from the list \eqref{eq:irreducible representations of V}. This simultaneously proves both the complete reducibility of $\pi$ and the fact that \eqref{eq:irreducible representations of V} exhausts all equivalence classes of irreducible representations of $V$.

    To show that these representations are mutually inequivalent, let $\zeta, \xi \in \frac{i}{2} \hbar^{-1} \Pbf^\lor \Big/ \frac{i}{2} \hbar^{-1} \Qbf^\lor$ and $\mu, \nu \in \weights^+$ such that
    \[
    (\hat{\epsilon}_\zeta \otimes \pi_\mu)|_V \cong (\hat{\epsilon}_\xi \otimes \pi_\nu)|_V.
    \]
    Since $E_j \in V$ for $1 \leq j \leq N$, the highest weight vectors in the two $U_q(\gf)$-representations must correspond under an equivalence. Because the actions of $\{K_{2\lambda} \mid \lambda \in \weights\} \subseteq V$ on these vectors must agree, it follows that
    \[
    q^{2 (\zeta , \lambda)} q^{2(\mu ,\lambda)} = q^{2 (\xi, \lambda)} q^{2 (\nu, \lambda)}, \quad \lambda \in \weights.
    \]
    Equating absolute values, and then phases, shows that this holds if and only if $\zeta - \xi \in \frac{i}{2} \hbar^{-1} \Qbf^\lor$ and $\mu = \nu$.

    These representations restrict to irreducible representations of $F_r U_q(\gf)$ because it contains the weight raising and lowering operators
    \[
    K_{-2 \alpha_j} \big( 1 - q^{-2 (\alpha_j, \alpha_j)} \big) K_j F_j, \quad K_{-2 \alpha_j} \big( 1 - q^{2(\alpha_j, \alpha_j)} \big) E_j, \quad 1 \leq j \leq N,
    \]
    as shown in the proof of Lemma~\ref{lem:the subalgebra V}. To see that the restrictions are inequivalent, note that $\{K_{-2 \lambda} \mid \lambda \in \weights^+\} \subseteq F_r U_q(\gf)$ and argue as in the last paragraph.
\end{proof}

\begin{proof}[Proof of Theorem~\ref{thm:The classification of bicovariant FODCs over Cinfty(Kq)}]
    According to Theorem~\ref{thm:BCFODC}, it suffices to classify finite-codimensional $\ad$-invariant right ideals of $\Cf^\infty(K_q)$ contained in $\Ker \epsilon$. Let $R$ be one such ideal.

    By Lemma~\ref{lem:J-ideal correspondence}, $J(R)$ is a finite-codimensional two-sided ideal of $F_r U_q (\gf)$ that is invariant under the right adjoint action and contained in $\Ker \hat{\epsilon}$, since $\hat{\epsilon}J = \epsilon$. By \cite[Proposition~9]{Baumann1998}, phrased in our convention, this latter set is given by
    \begin{equation}\label{eq:identity of J(R)}
        J(R) = \{ X \in F_r U_q (\gf) \mid \hat{\epsilon}(X) = 0, \, \pi(X) = 0 \}
    \end{equation}
    for some finite-dimensional representation $\pi$ of $V$. By Lemma~\ref{lem:irreducible representations of V}, we have
    \begin{equation*}
        \pi \cong \left( \hat{\epsilon}_{\zeta_1} \otimes \pi_{\mu_1} \right)\big|_V \oplus \cdots \oplus \left( \hat{\epsilon}_{\zeta_m} \otimes \pi_{\mu_m} \right)\big|_V
    \end{equation*}
    for some $\zeta_1 , \ldots , \zeta_m \in \frac{i}{2} \hbar^{-1} \Pbf^\lor / \frac{i}{2} \hbar^{-1} \Qbf^\lor$ and $\mu_1 , \ldots , \mu_m \in \weights^+$. Hence,
    \begin{align}\label{eq:identity of R}
        R &= \{ f \in \Cf^\infty (K_q) \mid J(f) \in J(R) \} \\
        &= \Big\{ f \in \Ker \epsilon \;\Big|\; \left( \hat{\epsilon}_{\zeta_1} \otimes \pi_{\mu_1} \right) (J(f)) = \cdots = \left( \hat{\epsilon}_{\zeta_m} \otimes \pi_{\mu_m} \right)(J(f)) = 0 \Big\} \nonumber.
    \end{align}
    Note that if $(\zeta_l , \mu_l) = (0,0)$ or $(\zeta_l , \mu_l) = (\zeta_k , \mu_k)$ for some $1 \leq l, k \leq m$, then we may remove $(\hat{\epsilon}_{\zeta_k} \otimes \pi_{\mu_k})$ from \eqref{eq:identity of R} without changing $R$. Thus, without loss of generality, we assume that $(\zeta_1, \mu_1), \ldots, (\zeta_m , \mu_m)$ are nonzero and pairwise distinct.
    
    By the explicit form of the universal $R$-matrix \eqref{eq:the universal R-matrix} and the actions of the root vectors \eqref{eq:raising and lowering actions of root vectors}, we compute, for $\zeta \in \frac{i}{2} \hbar^{-1} \Pbf^\lor$ and $\mu \in \weights^+$,
    \begin{align*}
        &\hat{\epsilon}_\zeta l^- (u^\mu _{ij}) = \big(\Rcal^{-1} , u^{\mu} _{ij} \otimes \hat{\epsilon}_\zeta \big) = \hat{\epsilon}_\zeta (K_{-\epsilon_j ^\mu}) \delta_{ij} = q^{-(\zeta, \epsilon_j ^\mu)} \delta_{ij} = (K_{-\zeta} , u^\mu _{ij}) \\
        &\hat{\epsilon}_\zeta \hat{S}l^+ (u^\mu _{ij}) = \big(\Rcal, \hat{\epsilon}_{-\zeta} \otimes u^\mu _{ij} \big) = \hat{\epsilon}_{-\zeta} (K_{\epsilon_j ^\mu}) \delta_{ij} = q^{-(\zeta , \epsilon_j ^\mu)} \delta_{ij} = (K_{-\zeta} , u^\mu _{ij}),
    \end{align*}
    and hence, for all $f \in \Cf^\infty(K_q)$,
    \begin{align*}
        (\hat{\epsilon}_{\zeta} \otimes \pi_\mu ) J(f) &= (\hat{\epsilon}_{\zeta} \otimes \pi_\mu) \big( \hat{S}l^+ (f_{(2)}) l^- (f_{(3)}) \otimes \hat{S}l^+ (f_{(1)}) l^- (f_{(4)}) \big) \\
        &= (K_{-\zeta}, f_{(2)} ) (K_{-\zeta}, f_{(3)}) \pi_\mu \big( \hat{S}l^+ (f_{(1)}) l^- (f_{(4)}) \big) \\
        &= \pi_\mu \big( \hat{S}l^+ ( K_{-2 \zeta} \triangleright f_{(1)} ) l^- (f_{(2)}) \big) \\
        &= \pi_\mu \big( \hat{S}l^+ ( f_{(1)} \triangleleft K_{-2 \zeta} ) l^- (f_{(2)}) \big) \\
        &= \pi_\mu \big( J( f \triangleleft K_{-2 \zeta} ) \big),
    \end{align*}
    where we used \eqref{eq:module action of Kzeta} in the second to the last equality. Thus, \eqref{eq:identity of R} becomes
    \begin{align*}
        R &= \Big\{ f \in \Ker \epsilon \;\Big|\; \pi_{\mu_1} \big( J( f \triangleleft K_{-2 \zeta_1} ) \big) = \cdots = \pi_{\mu_m} \big( J( f \triangleleft K_{-2 \zeta_m}) \big) = 0 \Big\} \\
        &= \Big\{ f \in \Ker \epsilon \;\Big|\; \big( J( f \triangleleft K_{-2 \zeta_l} ) , u^{\mu_l} _{ij}  \big) = 0 \;\; \text{for } 1 \leq l \leq m, \, 1\leq i,j \leq n_{\mu_{l}} \Big\} \\
        &= \Big\{ f \in \Ker \epsilon \;\Big|\; \big( I( u^{\mu_l} _{ij} ), f \triangleleft K_{-2 \zeta_l} \big) = 0 \;\; \text{for } 1 \leq l \leq m, \, 1\leq i,j \leq n_{\mu_{l}} \Big\} \\
        &= \Big\{ f \in \Ker \epsilon \;\Big|\; \big( K_{-2 \zeta_l} I( u^{\mu_l} _{ij} ), f  \big) = 0 \;\; \text{for } 1 \leq l \leq m, \, 1\leq i,j \leq n_{\mu_{l}} \Big\} \\
        &= \bigcap_{1 \leq l \leq m} R_{ (-2 \zeta_l) \mu_l}
    \end{align*}
    by \eqref{eq:an identity regarding I} and \eqref{eq:the right ideal for Gamma^mu}. The first assertion then follows from \eqref{eq:direct sum right ideal-Kq} in Proposition~\ref{prop:direct sum of FODCs-Kq}.

    For the second assertion, the uniqueness part of Theorem~\ref{thm:BCFODC} implies $R_{\boldsymbol{\zeta \mu}} = R_{\boldsymbol{\xi \nu}}$. Tracing the first part of the proof backward up to \eqref{eq:identity of J(R)}, we see that $J(R_{\boldsymbol{\zeta \mu}}) = J(R_{\boldsymbol{\xi \nu}})$ entails
    \begin{align*}
        &\Big\{ X \in F_r U_q (\gf) \;\Big|\; \hat{\epsilon}(X) = (\hat{\epsilon}_{-\frac{1}{2}\zeta_l} \otimes \pi_{\mu_l})(X) =0 , \, 1\leq {}^\forall l \leq m \Big\} \\
        &= \Big\{ X \in F_r U_q (\gf) \;\Big|\; \hat{\epsilon}(X) = (\hat{\epsilon}_{-\frac{1}{2}\xi_l} \otimes \pi_{\nu_l})(X) =0 , \, 1\leq {}^\forall l \leq n \Big\}.
    \end{align*}
    By the final assertion of Lemma~\ref{lem:irreducible representations of V}, we obtain
    \[
        (\pi_1 \oplus \cdots \oplus \pi_k)(F_r U_q (\gf)) = \End(V(\pi_1)) \oplus \cdots \oplus \End(V(\pi_k))
    \]
    for any family of pairwise non-equivalent irreducible representations $\pi_1 , \ldots , \pi_k$ of $V$. Therefore, the above identity implies
    \begin{equation*}
        (\hat{\epsilon}_{-\frac{1}{2}\zeta_1} \otimes \pi_{\mu_1}) \oplus \cdots \oplus (\hat{\epsilon}_{-\frac{1}{2}\zeta_m} \otimes \pi_{\mu_m}) 
        \cong 
        (\hat{\epsilon}_{-\frac{1}{2}\xi_1} \otimes \pi_{\nu_1}) \oplus \cdots \oplus (\hat{\epsilon}_{-\frac{1}{2}\xi_n} \otimes \pi_{\nu_n})
    \end{equation*}
    on $F_r U_q (\gf)$. The second assertion then follows from the inequivalence part of Lemma~\ref{lem:irreducible representations of V} for $F_r U_q(\gf)$.

    The corresponding statements for bicovariant $*$-FODCs follow from the above conclusions and Proposition~\ref{prop:direct sum of FODCs-Kq-involution}.
\end{proof}

By removing the condition $R \subseteq \Ker \epsilon$ from the previous proof, we obtain
\[
J(R) = \{X \in F_r U_q(\gf) \mid \pi(X) = 0 \}
\]
in place of \eqref{eq:identity of J(R)}, as stated in \cite[Proposition~9]{Baumann1998}. By following the remainder of the argument, we derive the following result, which will be used in Section~\ref{subsec:The center of Cf(Kq)circ} to compute the center of $\Cf^\infty(K_q)^\circ$.

\begin{thm}\label{thm:finite codimensional ad-invarant right ideals of Cf(Kq)}
    Let $R \subseteq \Cf^\infty(K_q)$ be a finite codimensional $\ad$-invariant right ideal of $\Cf^\infty(K_q)$. Then, there exist distinct $(\zeta_1, \mu_1), \dots, (\zeta_m, \mu_m) \in \Zcal \times \Pbf^+$ such that
    \begin{align*}
    R = \Big\{ f \in \Cf^\infty(K_q) \; \Big| \; \big( K_{\zeta_l} I(u^{\mu_l}_{ij}) , f \big) = 0 \text{ for } 1 \leq l \leq m,\; 1 \leq i,j \leq n_{\mu_l} \Big\} \\
    = \bigcap_{\substack{1 \leq l \leq m \\ 1 \leq i,j \leq n_{\mu_l}}} \Ker \big( K_{\zeta_l} I(u^{\mu_l}_{ij}) \big).
    \end{align*}
\end{thm}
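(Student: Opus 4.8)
The plan is to run the proof of Theorem~\ref{thm:The classification of bicovariant FODCs over Cinfty(Kq)} essentially verbatim, the sole change being that we no longer impose $R\subseteq\Ker\epsilon$, which simply deletes the counit-vanishing condition wherever it appeared. First I would apply Lemma~\ref{lem:J-ideal correspondence}: since $R$ is a finite-codimensional $\ad$-invariant right ideal of $\Cf^\infty(K_q)$, its image $J(R)$ is a finite-codimensional two-sided ideal of $F_r U_q(\gf)$ that is invariant under the right adjoint action of $U_q(\gf)$. Next I would invoke \cite[Proposition~9]{Baumann1998} in its unrestricted form, i.e. without the hypothesis $J(R)\subseteq\Ker\hat\epsilon$, which represents every such ideal as $J(R)=\{X\in F_r U_q(\gf)\mid\pi(X)=0\}$ for some finite-dimensional representation $\pi$ of the subalgebra $V$ of Lemma~\ref{lem:the subalgebra V}. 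By Lemma~\ref{lem:irreducible representations of V}, finite-dimensional representations of $V$ are completely reducible, so after collecting multiplicities we may write $\pi\cong\bigoplus_{l=1}^m(\hat\epsilon_{\zeta_l}\otimes\pi_{\mu_l})|_V$ for pairwise distinct pairs $(\zeta_l,\mu_l)\in\big(\tfrac{i}{2}\hbar^{-1}\Pbf^\lor/\tfrac{i}{2}\hbar^{-1}\Qbf^\lor\big)\times\weights^+$; finiteness of the codimension of $J(R)$ forces $\pi$ to be finite-dimensional.

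Then I would translate the membership condition $J(f)\in J(R)$ back into a statement about $\Cf^\infty(K_q)$, exactly as in the proof of Theorem~\ref{thm:The classification of bicovariant FODCs over Cinfty(Kq)}. Using the explicit universal $R$-matrix, the formula $J(f)=\hat{S}l^+(f_{(2)})l^-(f_{(3)})\otimes\hat{S}l^+(f_{(1)})l^-(f_{(4)})$, and \eqref{eq:module action of Kzeta}, one gets
\[
(\hat\epsilon_\zeta\otimes\pi_\mu)\big(J(f)\big)=\pi_\mu\big(J(f\triangleleft K_{-2\zeta})\big),\qquad f\in\Cf^\infty(K_q).
\]
Hence $f\in R$ if and only if $\big(J(f\triangleleft K_{-2\zeta_l}),u^{\mu_l}_{ij}\big)=0$ for all $l$ and all indices, which by \eqref{eq:an identity regarding I} and the $\Cf^\infty(K_q)^*$-module structure equals $\big(K_{-2\zeta_l}I(u^{\mu_l}_{ij}),f\big)$. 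Thus
\[
R=\bigcap_{1\leq l\leq m}\ \bigcap_{1\leq i,j\leq n_{\mu_l}}\Ker\big(K_{-2\zeta_l}I(u^{\mu_l}_{ij})\big).
\]

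Finally I would reindex. The map $\zeta\mapsto-2\zeta$ is a well-defined group isomorphism from $\tfrac{i}{2}\hbar^{-1}\Pbf^\lor/\tfrac{i}{2}\hbar^{-1}\Qbf^\lor$ onto $\Zcal=i\hbar^{-1}\Pbf^\lor/i\hbar^{-1}\Qbf^\lor$, so after replacing each $\zeta_l$ by $-2\zeta_l$ we obtain pairwise distinct pairs $(\zeta_l,\mu_l)\in\Zcal\times\weights^+$ for which $R=\bigcap_{l,i,j}\Ker\big(K_{\zeta_l}I(u^{\mu_l}_{ij})\big)$, the asserted form; note that, unlike in Theorem~\ref{thm:The classification of bicovariant FODCs over Cinfty(Kq)}, the pair $(0,0)$ is now permissible, contributing the factor $\Ker\epsilon$ since $K_0I(u^0_{11})=I(1)=\epsilon$, and it may be included precisely when $R\subseteq\Ker\epsilon$. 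The only genuine point requiring care is the first step: verifying that \cite[Proposition~9]{Baumann1998} still characterizes the right-adjoint-invariant two-sided ideals of $F_r U_q(\gf)$ as annihilators of finite-dimensional $V$-representations once the constraint $R\subseteq\Ker\epsilon$ is removed (as asserted in the remark following Theorem~\ref{thm:The classification of bicovariant FODCs over Cinfty(Kq)}); granting that, the $R$-matrix computation and the lattice bookkeeping are identical to those already carried out, and the proof is a routine transcription of the earlier argument.
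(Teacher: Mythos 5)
Your proposal is correct and follows exactly the route the paper itself indicates: it explicitly derives this theorem by deleting the hypothesis $R\subseteq\Ker\epsilon$ from the proof of Theorem~\ref{thm:The classification of bicovariant FODCs over Cinfty(Kq)}, invoking \cite[Proposition~9]{Baumann1998} in its unrestricted form $J(R)=\{X\in F_rU_q(\gf)\mid\pi(X)=0\}$, and rerunning the same $R$-matrix computation and reindexing $\zeta\mapsto-2\zeta$. Your one flagged concern is already settled by the paper, which notes that the unrestricted statement is what Baumann's Proposition~9 actually asserts, so nothing further is needed.
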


\section{Laplacians on \texorpdfstring{$K_q$}{TEXT}}\label{sec:Laplacians on Kq}

In this section, we apply the construction from Section~\ref{sec:Main construction} to $K_q$ to identify all linear functionals on $K_q$ that induce the finite-dimensional bicovariant FODCs listed in Theorem~\ref{thm:The classification of bicovariant FODCs over Cinfty(Kq)}. This leads to a classification of all Laplacians on $K_q$ that are associated with finite-dimensional bicovariant FODCs and arise from Theorem~\ref{thm:linear functionals as Laplacians}. We explicitly compute their eigenvalues and explore some of their operator-theoretic properties.

\subsection{FODCs induced by quantum Casimir elements}\label{subsec:FODCs induced by quantum Casimir elements}

For each $\mu \in \Pbf^+$, define
\begin{equation*}
t_\mu = \Tr_{V(\mu)} \Big((\,\cdot\,) K_{- 2 \rho} \Big) \in \Lbb(V(\mu))^* \subseteq \Cf^\infty (K_q).
\end{equation*}
Then, the element
\begin{equation*}
z_\mu = I(t_\mu) \in U_q (\gf)
\end{equation*}
is called \textit{the quantum Casimir element of $U_q (\gf)$ corresponding to $ \mu $}, and $ (z_\mu)_{\mu \in \Pbf^+} $ forms a linear basis of $ ZU_q (\gf) $, the center of $ U_q (\gf) $, see \cite[Theorem~3.120]{VoigtYuncken}.

\begin{prop}\label{prop:self-adjointness and ad-invariance of Casimir elements}
    For each $\mu \in \weights^+$, $z_\mu$ is self-adjoint, i.e., $(z_\mu)^* = z_\mu$. Also, as a linear functional on $\Cf^\infty(K_q)$, it is $\ad$-invariant.
\end{prop}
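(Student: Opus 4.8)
The plan is to prove the two assertions separately, using the explicit description $z_\mu = I(t_\mu)$ with $t_\mu = \Tr_{V(\mu)}\big((\,\cdot\,)K_{-2\rho}\big)$.

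\textbf{Self-adjointness.} First I would express $t_\mu$ in terms of the Peter--Weyl basis. Choosing the weight basis $\{e_i^\mu\}$ of $V(\mu)$, the operator $K_{-2\rho} = \hat S^{-2}$-implementing element acts diagonally, $\pi_\mu(K_{-2\rho})e_j^\mu = q^{-(2\rho,\epsilon_j^\mu)}e_j^\mu$, so that $t_\mu = \sum_{j} q^{-(2\rho,\epsilon_j^\mu)}\, u^\mu_{jj}$. Now I would apply the identity $I(u^\mu_{ij})^* = I(u^\mu_{ji})$ from \eqref{eq:I and involution}, together with the fact that $(u^\mu_{jj})^* $ involves the conjugate representation but the diagonal structure is preserved. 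Concretely, since $I$ is conjugate-linear-compatible via \eqref{eq:I and involution}, we get $z_\mu^* = I(t_\mu)^* = \sum_j \overline{q^{-(2\rho,\epsilon_j^\mu)}}\, I(u^\mu_{jj})^* = \sum_j q^{-(2\rho,\epsilon_j^\mu)}\, I(u^\mu_{jj}) = z_\mu$, using that the scalars $q^{-(2\rho,\epsilon_j^\mu)}$ are real and $I(u^\mu_{jj})^* = I(u^\mu_{jj})$. I expect this to be essentially a direct computation once the diagonal form of $t_\mu$ is in hand; the only care needed is to confirm that $I(u^\mu_{jj})^* = I(u^\mu_{jj})$ rather than picking up an off-diagonal term, which follows from \eqref{eq:I and involution} with $i=j$.

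\textbf{$\ad$-invariance.} For the second claim, recall that $z_\mu$ lies in $ZU_q(\gf)$, the center of $U_q(\gf)$, by \cite[Theorem~3.120]{VoigtYuncken}. Since $U_q^\Rbb(\kf) \hookrightarrow \Cf^\infty(K_q)^\circ$ via the nondegenerate skew-pairing \eqref{eq:skew-pairing between Uq(K) and Cinfty(Kq)}, and $U_q^\Rbb(\kf)$ separates the points of $\Cf^\infty(K_q)$, I would invoke Proposition~\ref{prop:ad-invariance criterion for a linear functional} with $\Ucal = U_q^\Rbb(\kf)$: a functional $\phi \in \Cf^\infty(K_q)^*$ is $\ad$-invariant if and only if $X\phi = \phi X$ for all $X \in U_q^\Rbb(\kf)$. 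Since $z_\mu$ is central in $U_q(\gf)$, it commutes with every $X \in U_q^\Rbb(\kf) \subseteq U_q(\gf)$, and this commutation persists after embedding into $\Cf^\infty(K_q)^\circ$ (the embedding is an algebra homomorphism by Remark~\ref{rmk:embedding via skew-pairing}). Hence $z_\mu$ is $\ad$-invariant as a linear functional on $\Cf^\infty(K_q)$.

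\textbf{Anticipated obstacle.} The routine part is the $\ad$-invariance, which reduces cleanly to centrality of $z_\mu$ in $U_q(\gf)$. The slightly more delicate point is the self-adjointness: one must be careful that $t_\mu$ is genuinely a real-linear combination of the diagonal matrix coefficients $u^\mu_{jj}$ (which hinges on $K_{-2\rho}$ acting by positive real scalars $q^{-(2\rho,\epsilon_j^\mu)}$ on the chosen weight basis) and that the involution identity \eqref{eq:I and involution} is applied with the correct indices. Alternatively, one could argue more invariantly: $t_\mu^* = t_\mu$ as an element of $\Lbb(V(\mu))^* \subseteq \Cf^\infty(K_q)$ because $K_{-2\rho}$ is positive and the trace functional is self-adjoint in the appropriate sense, and then transport this through $I$ using \eqref{eq:I and involution}. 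Either route is short; I would present the explicit basis computation as the cleanest.
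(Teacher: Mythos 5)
Your proposal is correct and follows essentially the same route as the paper: for self-adjointness, the paper likewise writes $t_\mu = \sum_i q^{-2(\rho,\epsilon_i^\mu)} u^\mu_{ii}$ and applies \eqref{eq:I and involution} with $i=j$ together with the reality of the scalars, and for $\ad$-invariance it likewise combines centrality of $z_\mu$ with Proposition~\ref{prop:ad-invariance criterion for a linear functional} applied to the separating subalgebra $U_q^\Rbb(\kf)$. No gaps.
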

\begin{proof}
    Using \eqref{eq:I and involution}, we compute
    \begin{align*}
        z_\mu ^* = I( t_\mu ) ^* = \sum_{1 \leq i \leq n_\mu} I( K_{-2\rho} \triangleright u^\mu _{ii} )^* = \sum_{1 \leq i \leq n_\mu} q^{-2 (\rho, \epsilon^\mu _i)} I(u^\mu _{ii})^* \\
        = \sum_{1 \leq i \leq n_\mu} q^{-2 (\rho, \epsilon^\mu _i)} I(u^\mu _{ii}) = I(t_\mu) = z_\mu.
    \end{align*}
    
    For the second assertion, note that $U_q ^\Rbb (\kf) \subseteq \Cf^\infty(K_q)^*$ separates $\Cf^\infty(K_q)$. Since $z_\mu$ is central in $U_q ^\Rbb(\kf)$, Proposition~\ref{prop:ad-invariance criterion for a linear functional} implies that $z_\mu$ defines an $\ad$-invariant linear functional on $\Cf^\infty(K_q)$.
\end{proof}

Recall that, for $\zeta \in \Zcal$, $K_\zeta$ is an $\ad$-invariant linear functional with eigenvalues $(q^{(\zeta,\mu)})_{\mu \in \weights^+}$, see Remark~\ref{rmk:center inside the maximal torus}.

\begin{cor}\label{cor:linear functional for BCFODC-Kq}
    Let $\zeta_1 , \cdots , \zeta_m \in \Zcal$ and $\mu_1, \cdots , \mu_m \in \weights^+$. Then, for any $\boldsymbol{a} = (a_1, \cdots, a_m) \in \Cbb^m$, the linear functional
    \begin{equation*}
    z_{\boldsymbol{\zeta\mu}} ^{\abf} = a_1 K_{\zeta_1} z_{\mu_1} + \cdots + a_m K_{\zeta_m} z_{\mu_m} \in \Cf^\infty(K_q)^\circ
    \end{equation*}
    is $\ad$-invariant.

    If $\zeta_1, \cdots , \zeta_p \in \frac{i}{2}\hbar^{-1} \Qbf^\lor$ and $\zeta_{p+1} , \cdots , \zeta_{m} \notin \frac{i}{2} \hbar^{-1} \Qbf^\lor$, then, for any $\abf = (a_1, \cdots, a_m) \in \Rbb^p \times \Cbb^{m - p}$, the functional
    \begin{align}\label{eq:linear functional for BCFODC-Kq-self-adjoint}
    \boldsymbol{z_{\zeta\mu}^{\abf}}  = &a_1 K_{\zeta_1} z_{\mu_1} + \cdots + a_p K_{\zeta_p} z_{\mu_p} \\
    &+ \frac{1}{2}( a_{p+1} K_{\zeta_{p+1}} + \overline{a_{p+1}} K_{-\zeta_{p+1}}) z_{\mu_{p+1}} + \cdots + \frac{1}{2} (a_m K_{\zeta_m} + \overline{a_m} K_{-\zeta_m}) z_{\mu_m} \nonumber
    \end{align}
    is self-adjoint and $\ad$-invariant.
\end{cor}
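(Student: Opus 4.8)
The plan is to reduce everything to two basic facts already established: that $K_\zeta$ is $\ad$-invariant for $\zeta \in \Zcal$ (Remark~\ref{rmk:center inside the maximal torus}, via \eqref{eq:Kzeta centrality}), and that each $z_\mu$ is self-adjoint and $\ad$-invariant (Proposition~\ref{prop:self-adjointness and ad-invariance of Casimir elements}). For the first statement, I would first note that $K_\zeta z_\mu$ is $\ad$-invariant for each pair $(\zeta,\mu)$. By Corollary~\ref{cor:ad-invariance criterion for a linear functional}, $\ad$-invariance of an element of $\Cf^\infty(K_q)^*$ is precisely centrality in $\Cf^\infty(K_q)^*$; alternatively, by Proposition~\ref{prop:ad-invariance criterion for a linear functional} applied with $\Ucal = U_q^\Rbb(\kf)$, it is equivalent to commuting with every $X \in U_q^\Rbb(\kf)$. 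Since $z_\mu$ is central in $U_q(\gf)$ and $K_\zeta$ commutes with all of $U_q^\Rbb(\kf)$ by \eqref{eq:Kzeta centrality}, the product $K_\zeta z_\mu$ commutes with every $X \in U_q^\Rbb(\kf)$, hence is $\ad$-invariant. Then $z_{\boldsymbol{\zeta\mu}}^{\abf}$ is a finite linear combination of such elements, and $\ad$-invariant functionals form a linear subspace of $\Cf^\infty(K_q)^*$ (being the center, or the commutant of $U_q^\Rbb(\kf)$), so $z_{\boldsymbol{\zeta\mu}}^{\abf}$ is $\ad$-invariant. One should also remark that $z_{\boldsymbol{\zeta\mu}}^{\abf} \in \Cf^\infty(K_q)^\circ$: each $K_\zeta$ and each $z_\mu = I(t_\mu)$ lies in $\Cf^\infty(K_q)^\circ$ (the former by the discussion in Section~\ref{subsec:Dual Hopf algebra of the quantized algebra of functions}, the latter since $F_l U_q(\gf) \subseteq U_q(\gf) \subseteq \Cf^\infty(K_q)^\circ$), and $\Cf^\infty(K_q)^\circ$ is a subalgebra of $\Cf^\infty(K_q)^*$.

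For the second statement, $\ad$-invariance of $\boldsymbol{z_{\zeta\mu}^{\abf}}$ follows from the first, since it too is a linear combination of terms $K_{\pm\zeta_l} z_{\mu_l}$. The new content is self-adjointness. Here I would compute the $*$-involution of each summand in $\Cf^\infty(K_q)^*$. For $1 \leq l \leq p$, we have $\zeta_l \in \frac{i}{2}\hbar^{-1}\Qbf^\lor \subseteq \tf^*$ — it is a real weight — so $\overline{\zeta_l} = \zeta_l$ and $K_{\zeta_l}^* = K_{-\overline{\zeta_l}} = K_{-\zeta_l}$; moreover $\zeta_l \in \frac{i}{2}\hbar^{-1}\Qbf^\lor$ means $2\zeta_l \in i\hbar^{-1}\Qbf^\lor$, so $K_{2\zeta_l} = 1$, i.e. $K_{-\zeta_l} = K_{\zeta_l}$, whence $K_{\zeta_l}^* = K_{\zeta_l}$. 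Combined with $(z_{\mu_l})^* = z_{\mu_l}$ and the fact that $K_{\zeta_l}$ and $z_{\mu_l}$ commute (so that $(K_{\zeta_l} z_{\mu_l})^* = (z_{\mu_l})^*(K_{\zeta_l})^* = K_{\zeta_l} z_{\mu_l}$ — using that the $*$ on $\Cf^\infty(K_q)^*$ is an algebra anti-homomorphism), each term $a_l K_{\zeta_l} z_{\mu_l}$ with $a_l \in \Rbb$ is self-adjoint. For $p+1 \leq l \leq m$, one computes $(K_{\zeta_l} z_{\mu_l})^* = (z_{\mu_l})^* (K_{\zeta_l})^* = z_{\mu_l} K_{-\overline{\zeta_l}} = K_{-\overline{\zeta_l}} z_{\mu_l}$; here $\zeta_l \in \Zcal = i\hbar^{-1}\Pbf^\lor/i\hbar^{-1}\Qbf^\lor \subseteq \tf^*$ is again a real weight, so $\overline{\zeta_l} = \zeta_l$ and this equals $K_{-\zeta_l} z_{\mu_l}$. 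Therefore
\begin{equation*}
\big(\tfrac{1}{2}(a_l K_{\zeta_l} + \overline{a_l} K_{-\zeta_l}) z_{\mu_l}\big)^*
= \tfrac{1}{2}(\overline{a_l} K_{-\zeta_l} + a_l K_{\zeta_l}) z_{\mu_l},
\end{equation*}
which is the same element, so this term is self-adjoint as well. Summing, $\boldsymbol{z_{\zeta\mu}^{\abf}}$ is self-adjoint.

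I do not anticipate a serious obstacle: the argument is a bookkeeping exercise in the $*$-algebra structure of $\Cf^\infty(K_q)^*$ together with the already-proven properties of $K_\zeta$ and $z_\mu$. The one point requiring genuine care is the interplay between the conjugation $\overline{(\cdot)}: \hf^* \to \hf^*$ and the membership conditions on the $\zeta_l$: one must use that $\frac{i}{2}\hbar^{-1}\Qbf^\lor$ and $i\hbar^{-1}\Pbf^\lor$ lie inside $\tf^*$, which is fixed by $\overline{(\cdot)}$, so that $K_{\zeta_l}^* = K_{-\zeta_l}$ in all cases, and additionally that the condition $\zeta_l \in \frac{i}{2}\hbar^{-1}\Qbf^\lor$ is exactly what forces $K_{-\zeta_l} = K_{\zeta_l}$ (hence genuine self-adjointness of $K_{\zeta_l}$ rather than merely $K_{\zeta_l}^* = K_{-\zeta_l}$), whereas for $l > p$ this fails and the symmetrization $\frac{1}{2}(a_l K_{\zeta_l} + \overline{a_l} K_{-\zeta_l})$ is precisely the device that restores self-adjointness. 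I would also double-check that $K_{\zeta_l}$ and $z_{\mu_l}$ commute in $\Cf^\infty(K_q)^*$, which is immediate from \eqref{eq:Kzeta centrality} since $z_{\mu_l} \in U_q^\Rbb(\kf)$, and that the $*$-operation on $\Cf^\infty(K_q)^*$ reverses products — a fact recorded in Section~\ref{subsec:Hopf *-algebras}.
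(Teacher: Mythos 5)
Your proposal is correct and follows essentially the same route as the paper: the paper likewise deduces $\ad$-invariance from the fact that each term $K_{\zeta_l} z_{\mu_l}$ is central (via Proposition~\ref{prop:ad-invariance criterion for a linear functional}/Corollary~\ref{cor:ad-invariance criterion for a linear functional}, using \eqref{eq:Kzeta centrality} and centrality of $z_\mu$ in $U_q(\gf)$), and obtains self-adjointness from $(K_\zeta)^* = K_{-\overline{\zeta}} = K_{-\zeta}$ for $\zeta \in i\hbar^{-1}\Pbf^\lor \subseteq \tf^*$ together with $(z_\mu)^* = z_\mu$. Your write-up merely spells out the bookkeeping (the anti-multiplicativity of $*$, the case split at $p$, and why $\zeta_l \in \frac{i}{2}\hbar^{-1}\Qbf^\lor$ forces $K_{-\zeta_l}=K_{\zeta_l}$) that the paper leaves implicit.
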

\begin{proof}
    Since all the terms appearing on the right-hand sides are central in $\Cf^\infty(K_q)^\circ$, so are $z_{\boldsymbol{\zeta\mu}} ^{\abf}$ and $\boldsymbol{z_{\zeta\mu} ^a}$.

    The self-adjointness of \eqref{eq:linear functional for BCFODC-Kq-self-adjoint} follows from the fact that $(K_{\zeta})^* = K_{-\overline{\zeta}} = K_{- \zeta}$ for all $\zeta \in i \hbar^{-1} \Pbf^\lor \subseteq \tf^*$.
\end{proof}

Thus, according to Corollary~\ref{cor:BCFODC induced by linear functionals} and Proposition~\ref{prop:finite-dimensional BCFODC induced by linear functionals}, the $\ad$-invariant linear functional $z_{\boldsymbol{\zeta \mu}} ^{\abf}$ induces a finite-dimensional bicovariant FODC on $K_q$, and the self-adjoint, $\ad$-invariant linear functional $\boldsymbol{z_{\zeta\mu}^{\abf}}$ induces a finite-dimensional bicovariant $*$-FODC on $K_q$. The following theorem identifies the corresponding FODCs.

\begin{thm}\label{thm:FODCs induced by Casimir elements}
    Let $(\zeta_1, \mu_1), \cdots, (\zeta_m, \mu_m) \in \Zcal \times \weights^+$ be mutually distinct pairs and $\abf = (a_1, \cdots , a_m) \in \Cbb^m$ be such that $a_l \neq 0$ for all $1 \leq l \leq m$. Then, the FODC $(\Omega_{\boldsymbol{\zeta\mu}}, d_{\boldsymbol{\zeta \mu}})$ is induced by the linear functional $z_{\boldsymbol{\zeta \mu}}^{\abf} \in \Cf^\infty(K_q)^\circ$. That is,
        \begin{equation}\label{eq:right ideal generated by Casimir element}
            R_{\boldsymbol{\zeta\mu}} = \big\{ f \in \Ker \epsilon \;\big|\; \forall g \in \Ker \epsilon, \, (z_{\boldsymbol{\zeta\mu}} ^{\abf} , fg) =0 \big\}.
        \end{equation}
    In particular, if in addition $\zeta_1, \cdots, \zeta_p \in \frac{i}{2} \hbar^{-1} \Qbf^\lor$, $\zeta_{p+1}, \cdots, \zeta_m \notin \frac{i}{2} \hbar^{-1} \Qbf^\lor$, and $\abf \in \Rbb^m$, then the $*$-FODC $(\boldsymbol{\Omega_{\zeta\mu}}, \boldsymbol{d_{\zeta\mu}})$ is induced by $\boldsymbol{z_{\zeta \mu}^{\abf}} $.

    Therefore, up to isomorphism, every finite-dimensional bicovariant FODC on $K_q$ is induced by an $\ad$-invariant linear functional on $\Cf^\infty(K_q)$.
\end{thm}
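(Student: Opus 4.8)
\emph{Proof plan.} The plan is to reduce the statement to a single identity of right ideals and then to one explicit coproduct computation. By Corollary~\ref{cor:linear functional for BCFODC-Kq}, $z_{\boldsymbol{\zeta\mu}}^{\abf}\in\Cf^\infty(K_q)^\circ$ is $\ad$-invariant, so by Corollary~\ref{cor:BCFODC induced by linear functionals} it induces the bicovariant FODC attached, via Theorem~\ref{thm:BCFODC}, to the right ideal $R_{z_{\boldsymbol{\zeta\mu}}^{\abf}}=\{f\in\Ker\epsilon\mid (z_{\boldsymbol{\zeta\mu}}^{\abf},fg)=0\ \forall g\in\Ker\epsilon\}$. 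Hence, by the uniqueness clause of Theorem~\ref{thm:BCFODC}, it suffices to prove \eqref{eq:right ideal generated by Casimir element}, i.e.\ $R_{z_{\boldsymbol{\zeta\mu}}^{\abf}}=R_{\boldsymbol{\zeta\mu}}$; the $*$-case and the final assertion will then follow quickly. (If some $(\zeta_l,\mu_l)=(0,0)$ it contributes $K_0z_0=1$ on the left and $R_{00}=\Ker\epsilon$ on the right, so it drops out of both sides and may be discarded.)

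The engine of the argument is an explicit formula for $\hat{\Delta}(z_{\boldsymbol{\zeta\mu}}^{\abf})$. Writing $z_\mu=I(t_\mu)=\sum_j q^{-2(\rho,\epsilon_j^{\mu})}I(u^\mu_{jj})$ and using that $l^\pm$ are Hopf algebra homomorphisms (whence $\hat{\Delta}(I(u^\mu_{ij}))=\sum_{p,q}l^-(u^\mu_{ip})\hat{S}l^+(u^\mu_{qj})\otimes I(u^\mu_{pq})$, with second leg in the left coideal $F_lU_q(\gf)$) together with $\hat{\Delta}(K_\zeta)=K_\zeta\otimes K_\zeta$, one gets $\hat{\Delta}(z_{\boldsymbol{\zeta\mu}}^{\abf})=\sum_{l}\sum_{p,q}\eta^{(l)}_{pq}\otimes K_{\zeta_l}I(u^{\mu_l}_{pq})$, where $\eta^{(l)}_{pq}=K_{\zeta_l}\sum_j a_lq^{-2(\rho,\epsilon_j^{\mu_l})}l^-(u^{\mu_l}_{jp})\hat{S}l^+(u^{\mu_l}_{qj})$. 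Consequently $(z_{\boldsymbol{\zeta\mu}}^{\abf},fg)=\sum_{l,p,q}(\eta^{(l)}_{pq},g)\,(K_{\zeta_l}I(u^{\mu_l}_{pq}),f)$ for all $f,g$. Since $R_{\boldsymbol{\zeta\mu}}=\bigcap_l R_{\zeta_l\mu_l}$ and, by \eqref{eq:the right ideal for Gamma^mu}, $f\in R_{\zeta_l\mu_l}$ forces every factor $(K_{\zeta_l}I(u^{\mu_l}_{pq}),f)$ to vanish, the inclusion $R_{\boldsymbol{\zeta\mu}}\subseteq R_{z_{\boldsymbol{\zeta\mu}}^{\abf}}$ is immediate.

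The reverse inclusion is where the real work lies, and I expect it to be the main obstacle. From the formula above, $f\in R_{z_{\boldsymbol{\zeta\mu}}^{\abf}}$ says exactly that the functional $\psi_f:=\sum_{l,p,q}(K_{\zeta_l}I(u^{\mu_l}_{pq}),f)\,\eta^{(l)}_{pq}$ vanishes on $\Ker\epsilon$, i.e.\ $\psi_f\in\Cbb\epsilon$; so to conclude $f\in R_{\boldsymbol{\zeta\mu}}$ it is enough to know that $\{\eta^{(l)}_{pq}\mid 1\le l\le m,\ 1\le p,q\le n_{\mu_l}\}$ is linearly independent modulo $\Cbb\epsilon$. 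The plan for this is to peel off the central invertible factors $K_{\zeta_l}$ and separate the distinct $\zeta_l$ using Corollary~\ref{cor:Kzeta independence}, reducing to a single pair $(\zeta,\mu)$, where one must show the $n_\mu^2$ elements $\sum_j q^{-2(\rho,\epsilon_j^{\mu})}l^-(u^\mu_{jp})\hat{S}l^+(u^\mu_{qj})$ are linearly independent in $U_q(\gf)$. I would do this by expanding $l^\pm(u^\mu_{ij})$ against the PBW basis of $U_q(\gf)$ (equivalently, exploiting the relations between the $l^\pm$ coming from the universal $R$-matrix) and comparing with the known linear independence of $\{I(u^\mu_{pq})\}$; this is the one place where the fine structure of $l^\pm$ and of the quantum Casimir elements is genuinely used.

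Finally, for the $*$-case, $\boldsymbol{z_{\zeta\mu}^{\abf}}$ of \eqref{eq:linear functional for BCFODC-Kq-self-adjoint} is self-adjoint and $\ad$-invariant by Corollary~\ref{cor:linear functional for BCFODC-Kq}, hence induces a bicovariant $*$-FODC by Corollary~\ref{cor:BCFODC induced by linear functionals}; rewriting $\tfrac12(a_lK_{\zeta_l}+\overline{a_l}K_{-\zeta_l})z_{\mu_l}=\tfrac{a_l}{2}K_{\zeta_l}z_{\mu_l}+\tfrac{\overline{a_l}}{2}K_{-\zeta_l}z_{\mu_l}$ exhibits it as a functional $z_{\boldsymbol{\zeta'\mu'}}^{\boldsymbol{a'}}$ with all coefficients nonzero for the list of pairs $(\zeta_1,\mu_1),\dots,(\zeta_p,\mu_p)$ together with $(\pm\zeta_{p+1},\mu_{p+1}),\dots,(\pm\zeta_m,\mu_m)$, so the part already proved identifies its induced FODC as the direct sum of the $(\Omega_{\zeta_l\mu_l},d_{\zeta_l\mu_l})$ for $l\le p$ and the pairs $(\Omega_{\zeta_l\mu_l})\oplus(\Omega_{-\zeta_l\mu_l})$ for $l>p$, which by Proposition~\ref{prop:direct sum of FODCs-Kq-involution} is precisely $(\boldsymbol{\Omega_{\zeta\mu}},\boldsymbol{d_{\zeta\mu}})$. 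The closing assertion is then immediate: by Theorem~\ref{thm:The classification of bicovariant FODCs over Cinfty(Kq)} every finite-dimensional bicovariant FODC on $K_q$ is isomorphic to some $(\Omega_{\boldsymbol{\zeta\mu}},d_{\boldsymbol{\zeta\mu}})$, which---taking any $\abf$ with all entries nonzero---is induced by the $\ad$-invariant functional $z_{\boldsymbol{\zeta\mu}}^{\abf}$.
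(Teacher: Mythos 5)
Your overall architecture is the same as the paper's: reduce to the identity of right ideals \eqref{eq:right ideal generated by Casimir element}, discard any $(\zeta_l,\mu_l)=(0,0)$, compute $\hat{\Delta}(z_{\boldsymbol{\zeta\mu}}^{\abf})$, observe that the second tensor legs are exactly the functionals $K_{\zeta_l}I(u^{\mu_l}_{pq})$ cutting out $R_{\boldsymbol{\zeta\mu}}$, and then win by a linear independence argument for the first legs modulo $\Cbb\epsilon$. Your raw coproduct formula $\hat{\Delta}(I(u^\mu_{ij}))=\sum_{p,q}l^-(u^\mu_{ip})\hat{S}l^+(u^\mu_{qj})\otimes I(u^\mu_{pq})$ is correct, the easy inclusion $R_{\boldsymbol{\zeta\mu}}\subseteq R_{z_{\boldsymbol{\zeta\mu}}^{\abf}}$ is fine, and your treatment of the $*$-case (via Proposition~\ref{prop:direct sum of FODCs-Kq-involution}) and of the final assertion (via Theorem~\ref{thm:The classification of bicovariant FODCs over Cinfty(Kq)}) matches the paper.

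The genuine gap is exactly where you flag it: the linear independence, modulo $\Cbb\epsilon$, of the elements $\eta^{(l)}_{pq}=a_lK_{\zeta_l}\sum_j q^{-2(\rho,\epsilon_j^{\mu_l})}l^-(u^{\mu_l}_{jp})\hat{S}l^+(u^{\mu_l}_{qj})$ is the entire technical content of the theorem, and your plan for it (``expand $l^\pm$ against the PBW basis and compare with the linear independence of $I(u^\mu_{pq})$'') is not an argument; in that raw form the first leg mixes $l^-$ and $\hat{S}l^+$ factors in a way that makes a direct PBW comparison unwieldy, and nothing you have written rules out nontrivial relations among the $\eta^{(l)}_{pq}$ for a fixed $l$. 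The paper closes this gap with Lemma~\ref{lem:comultiplication of Casimir element}, a nontrivial computation showing
\[
\hat{\Delta}(z_\mu)=\sum_{1\le i,j\le n_\mu}q^{-2(\rho,\epsilon_i^\mu)}\,\hat{S}I'(u^\mu_{ji})\otimes I(u^\mu_{ij}),
\]
i.e.\ your $\eta^{(l)}_{pq}$ equals $a_l\,q^{-2(\rho,\epsilon_p^{\mu_l})}K_{\zeta_l}\hat{S}I'(u^{\mu_l}_{qp})$. Once the first leg is recognized as the image of a Peter--Weyl basis under the injective maps $I'$ and $\hat{S}$, linear independence across distinct $(\zeta_l,\mu_l)$ follows at once from Corollary~\ref{cor:Kzeta independence}, and the independence from $\Cbb 1$ (needed to pass from ``$\psi_f\in\Cbb\epsilon$'' to ``all coefficients vanish'') also falls out. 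So your proposal is not wrong, but it is incomplete: to finish it along your lines you would essentially have to prove this identity for $\hat{\Delta}(z_\mu)$ anyway, which is the key lemma your write-up omits.
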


This theorem is a simple consequence of the following lemma, which is of interest in its own right.

\begin{lem}\label{lem:comultiplication of Casimir element}
    Let $\mu \in \weights^+$. Then,
    \begin{equation*}
        \hat{\Delta}(z_\mu) = \sum_{1 \leq i,j \leq n_\mu} q^{-2 (\rho, \epsilon^\mu _i)} \, \hat{S} I'(u^\mu _{ji}) \otimes I(u^\mu _{ij}).
    \end{equation*}
\end{lem}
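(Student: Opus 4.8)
The plan is to compute $\hat{\Delta}(z_\mu)$ directly from the definition $z_\mu = I(t_\mu)$ and the formula $I(f) = l^-(f_{(1)})\hat S l^+(f_{(2)})$, using that $l^\pm$ are Hopf algebra homomorphisms so that $\hat\Delta \circ l^\pm = (l^\pm \otimes l^\pm)\circ \Delta$. Writing $t_\mu = \sum_i q^{-2(\rho,\epsilon_i^\mu)} u^\mu_{ii}$ (since $t_\mu = \Tr_{V(\mu)}((\cdot)K_{-2\rho}) = \sum_i K_{-2\rho}\triangleright u^\mu_{ii}$ and $K_{-2\rho}\triangleright u^\mu_{ii} = q^{-2(\rho,\epsilon_i^\mu)}u^\mu_{ii}$, exactly as in the proof of Proposition~\ref{prop:self-adjointness and ad-invariance of Casimir elements}), I would expand $I(u^\mu_{ii})$ using $\Delta(u^\mu_{ii}) = \sum_k u^\mu_{ik}\otimes u^\mu_{ki}$, giving $z_\mu = \sum_{i,k} q^{-2(\rho,\epsilon_i^\mu)} l^-(u^\mu_{ik})\hat S l^+(u^\mu_{ki})$.

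Next I would apply $\hat\Delta$ term by term. Since $\hat\Delta$ is an algebra homomorphism, $\hat\Delta(l^-(u^\mu_{ik})\hat S l^+(u^\mu_{ki})) = \hat\Delta(l^-(u^\mu_{ik}))\cdot \hat\Delta(\hat S l^+(u^\mu_{ki}))$. Using $\hat\Delta \circ l^- = (l^-\otimes l^-)\Delta$ and $\hat\Delta\circ \hat S = (\hat S\otimes\hat S)\circ \tau\circ\hat\Delta$ (so $\hat\Delta(\hat S l^+(a)) = \sum \hat S l^+(a_{(2)})\otimes \hat S l^+(a_{(1)})$, composed with $\hat\Delta\circ l^+ = (l^+\otimes l^+)\Delta$), and the comultiplication rule for matrix coefficients, I would get a sum over several intermediate indices. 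The key is to then recognize the two tensor legs: the right leg should reassemble into $I(u^\mu_{ij}) = \sum_k l^-(u^\mu_{ik})\hat S l^+(u^\mu_{kj})$ and the left leg into $\hat S I'(u^\mu_{ji})$, where $I'(f) = l^+(f_{(1)})\hat S l^-(f_{(2)})$, so $\hat S I'(u^\mu_{ji}) = \hat S(\sum_k l^+(u^\mu_{jk})\hat S l^-(u^\mu_{ki})) = \sum_k \hat S^2 l^-(u^\mu_{ki})\cdot \hat S l^+(u^\mu_{jk})$; I would need to massage the intermediate expression (reversing the order of products under $\hat S$ and matching index contractions) to bring it into exactly this shape, and check the weight factor $q^{-2(\rho,\epsilon_i^\mu)}$ ends up attached to the correct summand — namely the one indexed so that after relabeling it becomes $q^{-2(\rho,\epsilon_i^\mu)}\hat S I'(u^\mu_{ji})\otimes I(u^\mu_{ij})$.

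The main obstacle I expect is bookkeeping: there will be four or five summation indices coming from expanding $\Delta$ on each of the two matrix coefficients through both $l^+$ and $l^-$, and from the flip in $\hat\Delta\circ\hat S$; keeping the order of factors straight (since $U_q(\gf)$ is noncommutative, one cannot freely permute $l^+$ and $l^-$ terms) while collapsing the sums via orthogonality/unitarity of the corepresentation $u^\mu$ and the homomorphism property of $l^\pm$ is delicate. A cleaner route I would try first, to avoid index soup, is to work abstractly with Sweedler notation: from $z_\mu = I(t_\mu)$ and $\hat\Delta(I(f)) = \hat\Delta(l^-(f_{(1)}))\hat\Delta(\hat S l^+(f_{(2)}))$, use $(l^-\otimes l^-)\Delta(f_{(1)}) = l^-(f_{(1)})\otimes l^-(f_{(2)})$ and $(\hat S\otimes\hat S)\tau(l^+\otimes l^+)\Delta(f_{(3)})$, obtaining $\hat\Delta(I(f)) = l^-(f_{(1)})\hat S l^+(f_{(4)}) \otimes l^-(f_{(2)})\hat S l^+(f_{(3)})$, i.e. $\hat\Delta(I(f)) = (\hat S\otimes\id)\big(\hat S l^+(f_{(4)})\cdot{}\ldots\big)$ — more precisely the right leg is $I(f_{(2)})$ after recognizing $l^-(f_{(2)})\hat S l^+(f_{(3)})$ with the outer indices, and the left leg is $l^-(f_{(1)})\hat S l^+(f_{(4)})$, which one then identifies with $\hat S I'(S^{-1}f_{(1)}\cdots)$ type expression; applying this with $f = t_\mu$ and using that $t_\mu$ is a trace (so $\Delta(t_\mu) = \sum_{i,j} q^{-2(\rho,\epsilon_i^\mu)} u^\mu_{ij}\otimes u^\mu_{ji}$, which distributes the weight correctly across the two Peter--Weyl legs) should produce the claimed formula after invoking the identities $\hat S^2(X) = K_{2\rho}X K_{-2\rho}$ and the evaluations \eqref{eq:an evaluation of I}--\eqref{eq:an evaluation of I'} only if needed. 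I would reconcile any leftover antipodes using \eqref{eq:antipode and universal R-matrix} and the relation $J(f) = \hat S I'(S^{-1}f) = \hat S^{-1}I(Sf)$ connecting $I$, $I'$, and the antipode.
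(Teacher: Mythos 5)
Your plan is correct, but it takes a genuinely different route from the paper's. The paper never applies $\hat{\Delta}$ directly: it evaluates $(z_\mu, fg)$ for arbitrary $f,g$, uses $(I(f),g)=\big(I(S(g)),S(f)\big)$ from \eqref{eq:an identity regarding I} to move everything onto $S(t_\mu)$, exploits the trace form of $t_\mu$ to cyclically rotate $\hat{S}l^+(g_{(1)})$ around $\Tr_{V(\mu)}$, splits the pairing through the matrix units of $V(\mu)$, and finally recognizes the two factors as $\big(I(u^\mu_{ij}),f\big)$ and $\big(K_{2\rho}I'(g), S(u^\mu_{ji})\big)$, absorbing $K_{2\rho}$ into the weight $q^{-2(\rho,\epsilon_i^\mu)}$ via $\big(K_{2\rho}X, S(x)\big)=\big(X, S(K_{-2\rho}\triangleright x)\big)$ and \eqref{eq:an identity regarding I'}. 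Your direct computation replaces the trace rotation by the four-fold coproduct of $t_\mu$, and it does close up: writing $\Delta^{(3)}(t_\mu)=\sum q^{-2(\rho,\epsilon_i^\mu)}u^\mu_{ij}\otimes u^\mu_{jk}\otimes u^\mu_{kl}\otimes u^\mu_{li}$, your identity $\hat\Delta(I(f)) = l^-(f_{(1)})\hat{S}l^+(f_{(4)})\otimes l^-(f_{(2)})\hat{S}l^+(f_{(3)})$ gives a right leg that contracts to $I(u^\mu_{jl})$ on the nose, while the left leg is $\sum_i q^{-2(\rho,\epsilon_i^\mu)}l^-(u^\mu_{ij})\hat{S}l^+(u^\mu_{li})$, with the weight sitting on the \emph{contracted} index. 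The one step you must not skip---it is the whole content of matching this to $q^{-2(\rho,\epsilon_j^\mu)}\hat{S}I'(u^\mu_{lj})$---is that $\hat{S}I'(u^\mu_{lj}) = \sum_i \hat{S}^2 l^-(u^\mu_{ij})\,\hat{S}l^+(u^\mu_{li})$ together with $\hat{S}^2 l^-(u^\mu_{ij}) = K_{2\rho}\,l^-(u^\mu_{ij})\,K_{-2\rho} = q^{-2(\rho,\,\epsilon_i^\mu-\epsilon_j^\mu)}\,l^-(u^\mu_{ij})$, which is exactly the redistribution of the weight factor from $i$ to $j$; this conjugation relation follows from \eqref{eq:relations for the root vectors} and the triangular form of $l^-(u^\mu_{ij})$ visible in \eqref{eq:the universal R-matrix}. (Orthogonality or unitarity of $u^\mu$ plays no role here, so that part of your plan can be dropped.) What the paper's route buys is that it relies only on the already-recorded pairing identities and needs no commitment to a convention for $\hat\Delta\circ l^\pm$; what yours buys is a shorter, purely algebraic derivation once one accepts that $l^\pm$ are comultiplicative and that $\hat{S}$ is anti-comultiplicative.
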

\begin{proof}
    Let $f, g \in \Cf^\infty(K_q)$. Using \eqref{eq:an identity regarding I}, we calculate
    \begin{align*}
        (z_\mu, fg) &= (I(t_\mu), fg) = \Big( I \big( S(fg) \big), S(t_\mu) \Big) \nonumber \\
        &= \Big( l^- \big( S(f_{(2)} g_{(2)}) \big) \hat{S} l^+ \big( S(f_{(1)} g_{(1)}) \big), S(t_\mu) \Big) \nonumber \\
        &= \Big( \hat{S} l^+ (f_{(1)} g_{(1)}) l^- (f_{(2)} g_{(2)}), t_\mu \Big) \nonumber \\
        &= \Big( \hat{S} l^+ (g_{(1)}) \hat{S} l^+ (f_{(1)}) l^- (f_{(2)}) l^- (g_{(2)}) K_{-2 \rho}, \operatorname{Tr}_{V(\mu)} \Big) \nonumber \\
        &= \Big( \hat{S} l^+ (f_{(1)}) l^- (f_{(2)}) l^- (g_{(2)}) K_{- 2\rho} \hat{S} l^+ (g_{(1)}), \sum_{1 \leq i \leq n_\mu} u^\mu _{ii} \Big) \nonumber \\
        &= \sum_{1 \leq i,j \leq n_\mu} \Big( \hat{S} l^+ (f_{(1)}) l^- (f_{(2)}), u^\mu _{ij} \Big) \Big( l^- (g_{(2)}) K_{-2\rho} \hat{S} l^+ (g_{(1)}), u^\mu _{ji} \Big) \nonumber \\
        &= \sum_{1 \leq i,j \leq n_\mu} \Big( \hat{S}^{-1} \Big( l^- \big(S(f_{(2)})\big) \hat{S} l^+ \big( S(f_{(1)}) \big) \Big), u^\mu _{ij} \Big) \nonumber \\
        &\hspace{5.5cm} \times \Big( l^- (g_{(2)}) \hat{S}^{-1} l^+ (g_{(1)}) K_{-2\rho}, u^\mu _{ji} \Big) \nonumber \\
        &= \sum_{1 \leq i,j \leq n_\mu} \Big( \hat{S}^{-1} \Big( I \big(S(f)\big) \Big), u^\mu _{ij} \Big) \Big( \hat{S}^{-1} \big( K_{2\rho} l^+ (g_{(1)}) \hat{S} l^- (g_{(2)}) \big), u^\mu _{ji} \Big) \nonumber \\
        &= \sum_{1 \leq i,j \leq n_\mu} \Big( I(u^\mu _{ij}), f \Big) \Big( K_{2 \rho} I'(g), S(u^\mu _{ji}) \Big).
    \end{align*}
    However, for any $X \in U_q ^\Rbb (\kf)$ and $x \in \Cf^\infty (K_q)$,
    \begin{align*}
        \big( K_{2\rho} X, S(x) \big) = \big(K_{2\rho}, S(x_{(2)}) \big) \big( X, S(x_{(1)}) \big) = (K_{-2 \rho}, x_{(2)}) \big(X, S(x_{(1)}) \big) \\
        = \big(X, S(K_{-2\rho} \triangleright x) \big)
    \end{align*}
    and hence, using \eqref{eq:an identity regarding I'},
    \begin{align*}
        (z_\mu, fg) &= \sum_{1 \leq i,j \leq n_\mu} \Big( I(u^\mu _{ij}), f \Big) \Big( I'(g), S(K_{-2\rho} \triangleright u^\mu _{ji}) \Big) \\
        &= \sum_{1 \leq i,j \leq n_\mu} \Big( I(u^\mu _{ij}), f \Big) q^{- 2 (\rho, \epsilon^\mu _i)} \Big( I'(g), S(u^\mu _{ji}) \Big) \\
        &= \sum_{1 \leq i,j \leq n_\mu} q^{-2 (\rho, \epsilon^\mu _i)} \Big( I(u^\mu _{ij}), f \Big) \Big( \hat{S} I'(u^\mu _{ji}), g \Big),
    \end{align*}
    which proves the lemma.
\end{proof}

\begin{proof}[Proof of Theorem~\ref{thm:FODCs induced by Casimir elements}]

If $(\zeta_m, \mu_m) = (0,0)$, then $1 = I(u^0_{11}) = z_0$, and hence
\[
z_{\boldsymbol{\zeta \mu}}^{\abf} - a_m K_{\zeta_m} z_{\mu_m} = z_{\boldsymbol{\zeta \mu}}^{\abf} - a_m
\]
induces the same FODC via the formula \eqref{eq:right ideal generated by Casimir element}. Also, since $\Omega_{\zeta_m \mu_m} = 0$, we have
\[
\Omega_{\boldsymbol{\zeta\mu}} = \Omega_{(\zeta_1, \cdots, \zeta_{m-1}) \, (\mu_1, \cdots, \mu_{m-1})}.
\]
These two facts show that we can exclude $(\zeta_m, \mu_m)$ from the beginning.

Thus, without loss of generality, we may assume that $(\zeta_l, \mu_l) \neq (0,0)$ for all $1 \leq l \leq m$. By Lemma~\ref{lem:comultiplication of Casimir element}, we have
\[
\hat{\Delta}(K_{\zeta_l} z_{\mu_l}) = \sum_{1 \leq i,j \leq n_\mu} q^{-2 (\rho, \epsilon^{\mu_l}_i)} K_{\zeta_l} \hat{S}I'(u^{\mu_l}_{ji}) \otimes K_{\zeta_l} I(u^{\mu_l}_{ij})
\]
for each $1 \leq l \leq m$. Hence, for each $f \in \Ker \epsilon$, the functional
\begin{equation*}
L_f : \Cf^\infty(K_q) \ni g \longmapsto \big( z_{\boldsymbol{\zeta\mu}}^{\abf}, f(g - \epsilon(g)1) \big) \in \Cbb
\end{equation*}
is equal to
\begin{equation*}
L_f = \sum_{1 \leq l \leq m} a_l \sum_{1 \leq i,j \leq n_{\mu_l}} q^{-2 (\rho, \epsilon_i^{\mu_l})} \big( K_{\zeta_l} I(u^{\mu_l}_{ij}), f \big) \big( K_{\zeta_l} \hat{S}I'(u^{\mu_l}_{ji}) - \delta_{ji} \big) \in \Cf^\infty(K_q)^\circ.
\end{equation*}

But since $(\zeta_l, \mu_l) \neq (0,0)$ for all $1 \leq l \leq m$, Corollary~\ref{cor:Kzeta independence} and the injectivity of the linear maps $\hat{S}$ and $I'$ imply that the functionals
\[
\big\{ K_{\zeta_l} \hat{S} I'(u^{\mu_l}_{ji}) - \delta_{ji} \;\big|\; 1 \leq l \leq m,\, 1 \leq i,j \leq n_{\mu_l} \big\} \subseteq \Cf^\infty(K_q)^\circ
\]
are linearly independent. Therefore, by the assumption that $a_l \neq 0$ for all $1 \leq l \leq m$, we have
\[
L_f = 0 \Leftrightarrow \big( K_{\zeta_l} I(u^{\mu_l}_{ij}), f \big) = 0 \text{ for all } 1 \leq l \leq m \text{ and } 1 \leq i,j \leq n_{\mu_l}.
\]
But, by definition, $L_f(1) = 0$, and hence
\[
L_f = 0 \Leftrightarrow L_f|_{\Ker \epsilon} = 0 \Leftrightarrow (z_{\boldsymbol{\zeta\mu}}^{\abf}, fg) = 0 \text{ for all } g \in \Ker \epsilon.
\]
Thus, by \eqref{eq:the right ideal for Gamma^mu}, \eqref{eq:direct sum right ideal-Kq}, and the preceding two equations, we have
\begin{align*}
R_{\boldsymbol{\zeta\mu}} &= \bigcap_{1 \leq l \leq m} \big\{ f \in \Ker \epsilon \mid \forall 1 \leq i,j \leq n_{\mu_l},\, \big( K_{\zeta_l} I(u^{\mu_l}_{ij}), f \big) = 0 \big\} \\
&= \big\{ f \in \Ker \epsilon \mid \forall g \in \Ker \epsilon,\, (z_{\boldsymbol{\zeta\mu}}^{\abf}, fg) = 0 \big\}.
\end{align*}
\end{proof}

\subsection{The center of \texorpdfstring{$\Cf^\infty(K_q)^\circ$}{TEXT}}\label{subsec:The center of Cf(Kq)circ}

Corollary~\ref{cor:linear functional for BCFODC-Kq} and Proposition~\ref{prop:ad-invariance criterion for a linear functional} imply
\begin{equation}\label{eq:center of the restricted dual-preliminary}
    \Span_{\Cbb} \{ K_\zeta z_\mu \mid \zeta \in \Zcal ,\, \mu \in \weights^+ \} \subseteq Z ( \Cf^\infty(K_q)^\circ ).
\end{equation}
Thus, Proposition~\ref{prop:finite-dimensional BCFODC induced by linear functionals} and Theorems~\ref{thm:The classification of bicovariant FODCs over Cinfty(Kq)} and \ref{thm:FODCs induced by Casimir elements} together imply that the correspondence
\[
\phi \longmapsto (\Omega_\phi , d_\phi)
\]
defines a surjection from $Z ( \Cf^\infty(K_q)^\circ )$ onto the set of equivalence classes of finite-dimensional bicovariant FODCs on $K_q$.

Naturally, this leads us to ask under what conditions two linear functionals in $Z ( \Cf^\infty(K_q)^\circ )$ induce the same FODC. To answer this, we require an explicit description of $Z ( \Cf^\infty(K_q)^\circ )$.

\begin{thm}\label{thm:center of the Hopf dual of Cinfty(Kq)}
    The multiplication map of $\Cf^\infty(K_q)^\circ$ yields an isomorphism
    \begin{equation}\label{eq:center of the Hopf dual of Cinfty(Kq)}
    \Span_\Cbb \{ K_\zeta \mid \zeta \in \Zcal \} \otimes Z U_q (\gf) \cong Z ( \Cf^\infty(K_q)^\circ ).
    \end{equation}
    Thus, \eqref{eq:center of the restricted dual-preliminary} is in fact an equality, and the set $\{ K_\zeta z_\mu \mid \zeta \in \Zcal ,\, \mu \in \weights^+ \}$ forms a linear basis for $Z ( \Cf^\infty(K_q)^\circ )$.
\end{thm}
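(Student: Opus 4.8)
The plan is to prove the isomorphism by establishing injectivity of the multiplication map and surjectivity onto the center separately; the second is where the real work lies. For injectivity I would observe that if $\sum_{l=1}^m K_{\zeta_l} w_l = 0$ with the $\zeta_l \in \Zcal$ distinct and $w_l \in ZU_q(\gf) \subseteq U_q(\gf)$, then the $U_q(\gf)$-independence of the $K_{\zeta_l}$ (Corollary~\ref{cor:Kzeta independence}) forces each $w_l = 0$; since $\{z_\mu\}_{\mu \in \weights^+}$ is a linear basis of $ZU_q(\gf)$, the image of the multiplication map is exactly $\Span_\Cbb\{K_\zeta z_\mu \mid \zeta \in \Zcal,\ \mu \in \weights^+\}$, which by \eqref{eq:center of the restricted dual-preliminary} is contained in $Z(\Cf^\infty(K_q)^\circ)$. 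It then remains to prove the reverse inclusion: an arbitrary $\phi \in Z(\Cf^\infty(K_q)^\circ)$, which we may take nonzero, lies in $\Span_\Cbb\{K_\zeta z_\mu\}$.

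My first step would be to pin $\phi$ down inside $\sum_{\zeta \in \Zcal} K_\zeta \cdot F_l U_q(\gf)$ using the classification of finite-codimensional $\ad$-invariant right ideals. Being central, $\phi$ commutes with $U_q^\Rbb(\kf)$ and is thus $\ad$-invariant (Proposition~\ref{prop:ad-invariance criterion for a linear functional}), so by Lemma~\ref{lem:a variation of the induction construction} the set $R_\phi' = \{a \in \Cf^\infty(K_q) \mid (\phi, ab) = 0\ \forall b\}$ is an $\ad$-invariant right ideal. Writing $\hat\Delta(\phi) = \sum_i X_i \otimes Y_i$ with $\{X_i\}$ linearly independent and $Y_i \neq 0$ (legitimate since $\phi \in \Cf^\infty(K_q)^\circ$ makes $\hat\Delta(\phi)$ a finite sum of simple tensors), Lemma~\ref{lem:cofactorization of linear functionals} identifies $R_\phi' = \bigcap_i \Ker Y_i$ as finite-codimensional, so Theorem~\ref{thm:finite codimensional ad-invarant right ideals of Cf(Kq)} produces distinct pairs $(\zeta_1,\mu_1),\dots,(\zeta_m,\mu_m) \in \Zcal \times \weights^+$ with $R_\phi' = \bigcap_{l,i,j} \Ker\big(K_{\zeta_l}I(u^{\mu_l}_{ij})\big)$. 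I would then note that the family $\{K_{\zeta_l}I(u^{\mu_l}_{ij})\}$ is linearly independent (injectivity of $I$, invertibility of the $K_{\zeta_l}$, and Corollary~\ref{cor:Kzeta independence}), so that $\Span_\Cbb\{Y_i\}$ and $\Span_\Cbb\{K_{\zeta_l}I(u^{\mu_l}_{ij})\}$ are both subspaces of $(\Cf^\infty(K_q)/R_\phi')^*$ of dimension equal to its own, namely the codimension of $R_\phi'$, and hence both equal it. Applying $(\hat\epsilon \otimes \id)$ to $\hat\Delta(\phi)$ recovers $\phi = \sum_i X_i(1)\,Y_i \in \Span_\Cbb\{Y_i\} = \Span_\Cbb\{K_{\zeta_l}I(u^{\mu_l}_{ij})\}$; regrouping the terms by $l$ gives $\phi = \sum_{l=1}^m K_{\zeta_l} I(\psi_l)$ for suitable $\psi_l \in \End(V(\mu_l))^*$.

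The second step would exploit centrality of $\phi$ once more. For every $Y \in U_q(\gf) \subseteq \Cf^\infty(K_q)^\circ$, centrality of $\phi$ and of the $K_{\zeta_l}$ (Remark~\ref{rmk:center inside the maximal torus}) give $0 = Y\phi - \phi Y = \sum_l K_{\zeta_l}\big(Y I(\psi_l) - I(\psi_l) Y\big)$; since $Y I(\psi_l) - I(\psi_l) Y \in U_q(\gf)$ and the $K_{\zeta_l}$ are $U_q(\gf)$-independent, this forces $[Y, I(\psi_l)] = 0$ for all $Y$, i.e.\ $I(\psi_l) \in ZU_q(\gf)$ for each $l$. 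Expanding $I(\psi_l)$ in the basis $\{z_\nu = I(t_\nu)\}_{\nu}$ of $ZU_q(\gf)$ and using that $I$ is injective and sends the Peter--Weyl blocks $\End(V(\nu))^*$ of $\Cf^\infty(K_q)$ to linearly independent subspaces, together with $t_\nu \in \End(V(\nu))^*$, pins down $I(\psi_l) \in \Cbb z_{\mu_l}$, whence $\phi \in \Span_\Cbb\{K_\zeta z_\mu\}$. The remaining two assertions of the theorem then follow at once. The hardest part will be the first step: invoking Theorem~\ref{thm:finite codimensional ad-invarant right ideals of Cf(Kq)} in the precise form needed (without the restriction to $\Ker\epsilon$) and carrying out the dimension count that identifies the two spanning sets for $(\Cf^\infty(K_q)/R_\phi')^*$; once $\phi$ has been confined to $\sum_\zeta K_\zeta F_l U_q(\gf)$, the rest is routine manipulation with $I$ and with the independence of the $K_\zeta$.
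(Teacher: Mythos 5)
Your proposal is correct and follows essentially the same route as the paper's proof: classify $R_\phi'$ via Theorem~\ref{thm:finite codimensional ad-invarant right ideals of Cf(Kq)} and Lemmas~\ref{lem:a variation of the induction construction}--\ref{lem:cofactorization of linear functionals}, equate $\Span_\Cbb\{Y_i\}$ with $\Span_\Cbb\{K_{\zeta_l}I(u^{\mu_l}_{ij})\}$ by the kernel-intersection argument, recover $\phi$ via $(\hat\epsilon\otimes\id)\hat\Delta(\phi)$, and finish with centrality plus the $U_q(\gf)$-independence of the $K_\zeta$ (Corollary~\ref{cor:Kzeta independence}). Your extra refinements (the explicit injectivity check and the identification $I(\psi_l)\in\Cbb z_{\mu_l}$) are correct but not needed beyond what the paper's conclusion $Z_j\in ZU_q(\gf)$ already gives.
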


Theorems~\ref{thm:FODCs induced by Casimir elements} and \ref{thm:center of the Hopf dual of Cinfty(Kq)} immediately imply:

\begin{cor}\label{cor:when two ad-invariant functionals induce the same FODC}
Define an equivalence relation $\sim$ on $Z ( \Cf^\infty(K_q)^\circ )$ by
\begin{align*}
    &\Big( \sum_{ ( \zeta, \mu) \in \Zcal \times \Pbf^+ } a_{(\zeta, \mu)} K_\zeta z_\mu \Big) \sim \Big( \sum_{ ( \zeta, \mu) \in \Zcal \times \Pbf^+ } b_{(\zeta, \mu)} K_\zeta z_\mu \Big) \quad \text{if and only if} \nonumber \\
    &\quad \big\{ (\zeta, \mu) \in \Zcal \times \Pbf^+ \setminus \{ (0,0) \} \; \big| \; a_{(\zeta,\mu)} \neq 0 \big\} = \big\{ (\zeta, \mu) \in \Zcal \times \Pbf^+ \setminus \{ (0,0) \} \; \big| \; b_{(\zeta,\mu)} \neq 0 \big\}. \nonumber
\end{align*}
Then, two linear functionals in $Z ( \Cf^\infty(K_q)^\circ )$ induce the same FODC if and only if they belong to the same equivalence class under $\sim$.
\end{cor}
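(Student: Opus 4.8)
The plan is to prove Theorem~\ref{thm:center of the Hopf dual of Cinfty(Kq)}; Corollary~\ref{cor:when two ad-invariant functionals induce the same FODC} then follows mechanically from it together with Theorems~\ref{thm:The classification of bicovariant FODCs over Cinfty(Kq)} and~\ref{thm:FODCs induced by Casimir elements}. One inclusion, $\Span_\Cbb\{K_\zeta z_\mu\}\subseteq Z(\Cf^\infty(K_q)^\circ)$, is already \eqref{eq:center of the restricted dual-preliminary}. Injectivity of the multiplication map $\Span_\Cbb\{K_\zeta\mid\zeta\in\Zcal\}\otimes ZU_q(\gf)\to\Cf^\infty(K_q)^\circ$ is immediate from Corollary~\ref{cor:Kzeta independence}: if $\sum_l K_{\zeta_l}X_l=0$ with the $\zeta_l\in\Zcal\subseteq\tf^*/i\hbar^{-1}\Qbf^\lor$ distinct and $X_l\in ZU_q(\gf)\subseteq U_q(\gf)$, then every $X_l=0$; since $\{z_\mu\}_{\mu\in\weights^+}$ is a basis of $ZU_q(\gf)$, the family $\{K_\zeta z_\mu\}$ is linearly independent. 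So the whole theorem reduces to the reverse inclusion $Z(\Cf^\infty(K_q)^\circ)\subseteq\Span_\Cbb\{K_\zeta z_\mu\}$, which I would establish in two passes — first using only $\ad$-invariance of $\phi$, then genuine centrality.

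For the first pass, let $\phi\in Z(\Cf^\infty(K_q)^\circ)$; it is $\ad$-invariant by Proposition~\ref{prop:ad-invariance criterion for a linear functional} (take $\Ucal=U_q^\Rbb(\kf)$). Form the $\ad$-invariant right ideal $R'_\phi=\{f\mid(\phi,fg)=0\text{ for all }g\}$ of Lemma~\ref{lem:a variation of the induction construction}. Writing $\hat{\Delta}(\phi)=\sum_{i=1}^{n}X_i\otimes Y_i$ with both $\{X_i\}$ and $\{Y_i\}$ linearly independent (a minimal-rank presentation), Lemma~\ref{lem:cofactorization of linear functionals} gives $R'_\phi=\bigcap_{i}\Ker Y_i$, of finite codimension $n=\dim\Span_\Cbb\{Y_i\}$; and pairing $\hat{\Delta}(\phi)$ with $1\otimes f$ (convention \eqref{eq:comultiplication of dual Hopf algebra}) yields $\phi=\sum_i(X_i,1)Y_i$, so $\phi$ lies in the annihilator of $R'_\phi$ in $\Cf^\infty(K_q)^*$, which therefore equals $\Span_\Cbb\{Y_i\}$. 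Now Theorem~\ref{thm:finite codimensional ad-invarant right ideals of Cf(Kq)} supplies distinct pairs $(\zeta_1,\mu_1),\dots,(\zeta_m,\mu_m)\in\Zcal\times\weights^+$ with $R'_\phi=\bigcap_{l,i,j}\Ker\big(K_{\zeta_l}I(u^{\mu_l}_{ij})\big)$; by Corollary~\ref{cor:Kzeta independence} and injectivity of $I$ these functionals are linearly independent, hence the annihilator of $R'_\phi$ is exactly $\Span_\Cbb\{K_{\zeta_l}I(u^{\mu_l}_{ij})\}$. Consequently $\phi=\sum_{l}K_{\zeta_l}I(g_l)$ for some $g_l\in\Lbb(V(\mu_l))^*\subseteq\Cf^\infty(K_q)$.

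For the second pass I would feed centrality back in. Each $K_{\zeta_l}$ is central in $\Cf^\infty(K_q)^\circ$ (Remark~\ref{rmk:center inside the maximal torus}), so for every $X\in U_q^\Rbb(\kf)$ we have $0=\phi X-X\phi=\sum_l K_{\zeta_l}\big(I(g_l)X-XI(g_l)\big)$, and Corollary~\ref{cor:Kzeta independence} forces $I(g_l)X=XI(g_l)$ for each $l$; thus $I(g_l)\in F_lU_q(\gf)$ is central in $U_q(\gf)$, i.e. $I(g_l)\in ZU_q(\gf)=\Span_\Cbb\{z_\nu\}$. Since $I(g_l)\in I(\Lbb(V(\mu_l))^*)$ and $F_lU_q(\gf)=\bigoplus_\nu I(\Lbb(V(\nu))^*)$ with $z_\nu=I(t_\nu)\in I(\Lbb(V(\nu))^*)$, we get $ZU_q(\gf)\cap I(\Lbb(V(\mu_l))^*)=\Cbb z_{\mu_l}$, so $I(g_l)=b_l z_{\mu_l}$ for scalars $b_l$. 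Hence $\phi=\sum_l b_l K_{\zeta_l}z_{\mu_l}\in\Span_\Cbb\{K_\zeta z_\mu\}$, finishing the reverse inclusion and the theorem. (A possible pair $(0,0)$ causes no trouble, since $K_0 I(u^0_{11})=1=K_0 z_0$.) Corollary~\ref{cor:when two ad-invariant functionals induce the same FODC} then follows: by the theorem each central functional has a unique expansion $\sum a_{(\zeta,\mu)}K_\zeta z_\mu$; by Theorem~\ref{thm:FODCs induced by Casimir elements}, together with Remark~\ref{rmk:affine invariance of induced FODC} to discard the $(0,0)$-term, it induces $(\Omega_{\boldsymbol{\zeta\mu}},d_{\boldsymbol{\zeta\mu}})$ indexed by $\{(\zeta,\mu)\neq(0,0)\mid a_{(\zeta,\mu)}\neq 0\}$; and by the uniqueness clause of Theorem~\ref{thm:The classification of bicovariant FODCs over Cinfty(Kq)} two such FODCs coincide exactly when those index sets agree.

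The hard part is the reverse inclusion, and in particular the device of extracting structure from $\phi$ in two passes: a first pass using only $\ad$-invariance (through $R'_\phi$, Lemma~\ref{lem:cofactorization of linear functionals}, and the ideal classification) to localize $\phi$ inside the finite-dimensional space $\Span_\Cbb\{K_{\zeta_l}I(u^{\mu_l}_{ij})\}$, and a second pass using full centrality to cut that down to $\Span_\Cbb\{K_{\zeta_l}z_{\mu_l}\}$ via $ZU_q(\gf)$. A more direct alternative would be a Harish--Chandra-type computation on the PBW-type decomposition of $\Cf^\infty(K_q)^\circ$ from Proposition~\ref{prop:the Hopf dual of Cinfty(Kq)} — first confining $\phi$ to the weight-zero part by commuting with all $K_\zeta$, then analyzing commutators with the $E_j$ and $F_j$ — but that route is considerably more computational and does not reuse the classification already in place, so I would prefer the two-pass argument above.
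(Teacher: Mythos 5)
Your proposal is correct and follows essentially the same route as the paper: the paper also reduces the corollary to Theorem~\ref{thm:center of the Hopf dual of Cinfty(Kq)}, whose reverse inclusion it proves by exactly your two-pass argument (localize $\phi$ in $\Span_\Cbb\{K_{\zeta_l}I(u^{\mu_l}_{ij})\}$ via $R'_\phi$, Lemma~\ref{lem:cofactorization of linear functionals}, and Theorem~\ref{thm:finite codimensional ad-invarant right ideals of Cf(Kq)}, then apply centrality and Corollary~\ref{cor:Kzeta independence} to force the $U_q(\gf)$-components into $ZU_q(\gf)$). Your extra step pinning $I(g_l)$ down to $\Cbb z_{\mu_l}$ via the direct-sum decomposition of $F_lU_q(\gf)$ is a harmless refinement of the paper's final line.
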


\begin{proof}[Proof of Theorem~\ref{thm:center of the Hopf dual of Cinfty(Kq)}]
    Let $\phi \in Z ( \Cf^\infty(K_q)^\circ )$, and consider
    \[
    R_\phi' = \{ f \in \Cf^\infty(K_q) \mid \forall g \in \Cf^\infty(K_q),\, ( \phi , fg ) = 0 \},
    \]
    which is an $\ad$-invariant right ideal of finite codimension in $\Cf^\infty(K_q)$, by Proposition~\ref{prop:ad-invariance criterion for a linear functional} and Lemmas~\ref{lem:a variation of the induction construction}--\ref{lem:cofactorization of linear functionals}. Then, by Theorem~\ref{thm:finite codimensional ad-invarant right ideals of Cf(Kq)}, there exist distinct $(\zeta_1, \mu_1), \dots, (\zeta_m, \mu_m) \in \Zcal \times \Pbf^+$ such that
    \begin{equation}\label{eq:equality of right ideals}
    R_\phi' = \bigcap_{\substack{1 \leq l \leq m \\ 1 \leq i,j \leq n_{\mu_l}}} \Ker \big( K_{\zeta_l} I( u^{\mu_l}_{ij} ) \big).
    \end{equation}

    Let $\hat{\Delta}(\phi) = \sum_{1 \leq i \leq n} X_i \otimes Y_i$, where $\{ X_i \mid 1 \leq i \leq n \}$ is linearly independent and $Y_i \neq 0$ for all $1 \leq i \leq n$. Then, by Lemma~\ref{lem:cofactorization of linear functionals}, we have
    \[
    R_\phi' = \bigcap_{1 \leq i \leq n} \Ker Y_i.
    \]
    Combining this with \eqref{eq:equality of right ideals} yields
    \[
    \Span_\Cbb \{ Y_i \mid 1 \leq i \leq n \} = \Span_\Cbb \{ K_{\zeta_l} I ( u^{\mu_l}_{ij} ) \mid 1 \leq l \leq m,\; 1 \leq i,j \leq n_{\mu_l} \},
    \]
    because if $\bigcap_{1 \leq i \leq k} \Ker f_i \subseteq \Ker f$ for linear functionals $f, f_1, \dots, f_k$ on a vector space $V$, then the linear map
    \[
    \Cbb^k \supseteq \Span_\Cbb \Big\{ \big( f_i(w) \big)_{1 \leq i \leq k} \mid w \in V \Big\} \ni \big( f_i(v) \big)_{1 \leq i \leq k} \longmapsto f(v) \in \Cbb, \quad v \in V
    \]
    is well-defined, and hence $f \in \Span_\Cbb \{ f_i \mid 1 \leq i \leq k \}$.

    Therefore, we conclude
    \[
    \phi = \sum_{1 \leq i \leq n} \hat{\epsilon}(X_i)\, Y_i \in \Span_\Cbb \{ K_{\zeta_l} I(u^{\mu_l}_{ij}) \mid 1 \leq l \leq m,\; 1 \leq i,j \leq n_{\mu_l} \}.
    \]
    This implies that there exist $Z_1, \dots, Z_k \in U_q(\gf)$ and distinct $\zeta_1', \dots, \zeta_k' \in \Zcal$ such that
    \[
    \phi = K_{\zeta_1'} Z_1 + \cdots + K_{\zeta_k'} Z_k.
    \]

    Since $\phi$ lies in the center of $\Cf^\infty(K_q)^\circ$, we have
    \[
    0 = [X, \phi] = K_{\zeta_1'} [X, Z_1] + \cdots + K_{\zeta_k'} [X, Z_k]
    \]
    for all $X \in U_q(\gf)$. By Corollary~\ref{cor:Kzeta independence}, this implies
    \[
    [X, Z_1] = \cdots = [X, Z_k] = 0, \quad X \in U_q(\gf),
    \]
    i.e., $Z_1, \dots, Z_k \in ZU_q(\gf)$, which proves the claim.
\end{proof}

Theorem~\ref{thm:center of the Hopf dual of Cinfty(Kq)} has a classical analogue. Let $G$ be the complexification of $K$. Then, each point $x \in G$ gives rise to an element $\ev_x \in \Cf^\infty(K)^\circ$ given by
\[
\Cf^\infty(K) \ni f \longmapsto \tilde{f}(x) \in \Cbb
\]
where $\tilde{f}$ is the unique holomorphic extension of $f$ to $G$, see \cite[Chapter VI]{Chevalley}. Also, we have $U(\gf) \subseteq \Cf^\infty(K)^\circ$ via \eqref{eq:classical-skew-pairing}.

\begin{prop}\label{prop:center of the Hopf dual of Cinfty(K)}
    The multiplication in $\Cf^\infty(K)^\circ$ gives an isomorphism
    \begin{equation}\label{eq:center of the Hopf dual of Cinfty(K)}
        \Span_\Cbb \{ \ev_z \mid z \in Z \} \otimes ZU(\gf) \cong Z ( \Cf^\infty(K)^\circ )
    \end{equation}
    where $Z$ denotess the center of $K$ and $Z( \Cf^\infty(K)^\circ )$ the center of $\Cf^\infty(K)^\circ$.
\end{prop}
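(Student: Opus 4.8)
The plan is to follow the proof of Theorem~\ref{thm:center of the Hopf dual of Cinfty(Kq)}, substituting geometric input about the complexification $G$ for the quantum ingredients. By Tannaka--Chevalley duality (see \cite[Chapter~VI]{Chevalley}), $\Cf^\infty(K)$ is the coordinate ring $\Ocal(G)$ of the simply connected semisimple complex algebraic group $G$, and $\Cf^\infty(K)^\circ$ is its hyperalgebra; concretely $\Cf^\infty(K)^\circ = \bigoplus_{x \in G} \ev_x\,U(\gf)$, where $\ev_x\,U(\gf)$ is the space of distributions supported at $x$ (carried onto $U(\gf)$, the distributions at $e$, by translation). In particular distributions supported at distinct points are linearly independent --- the classical analogue of Corollary~\ref{cor:Kzeta independence} --- so the multiplication map in \eqref{eq:center of the Hopf dual of Cinfty(K)} is injective (the summands $\ev_z\,X_z$ of its image, for distinct $z$ and $X_z \in ZU(\gf)$, being supported at distinct points). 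For the inclusion ``$\supseteq$'', note that conjugation by a group-like $\ev_y$ ($y \in G$) is a Hopf algebra automorphism of $\Cf^\infty(K)^\circ$ which acts on $U(\gf)$ as $\Ad(y)$ and sends $\ev_x\,U(\gf)$ to $\ev_{yxy^{-1}}\,U(\gf)$. Since $Z = Z(G)$ is fixed by conjugation, while $ZU(\gf) = U(\gf)^{\gf} = U(\gf)^G$ is fixed pointwise by $\Ad(G)$ and central in $U(\gf)$, every element of $\Span_\Cbb\{\ev_z \mid z \in Z\}\cdot ZU(\gf)$ commutes with $\bigoplus_x \ev_x\,U(\gf) = \Cf^\infty(K)^\circ$, giving $\Span_\Cbb\{\ev_z\}\cdot ZU(\gf) \subseteq Z(\Cf^\infty(K)^\circ)$.

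For the reverse inclusion, let $\phi \in Z(\Cf^\infty(K)^\circ)$. Then $\phi$ commutes with $U(\gf) \subseteq \Cf^\infty(K)^\circ$, which separates $\Cf^\infty(K)$, so $\phi$ is $\ad$-invariant by Proposition~\ref{prop:ad-invariance criterion for a linear functional}; hence Lemma~\ref{lem:a variation of the induction construction} applies and $R_\phi' = \{f \in \Cf^\infty(K) \mid \forall g,\ (\phi, fg) = 0\}$ is an $\ad$-invariant (two-sided, by commutativity) ideal. Writing $\hat\Delta(\phi) = \sum_i X_i \otimes Y_i$ with $\{X_i\}$ linearly independent and $Y_i \neq 0$, Lemma~\ref{lem:cofactorization of linear functionals} gives $R_\phi' = \bigcap_i \ker Y_i$, so $R_\phi'$ has finite codimension and $\phi = \sum_i \hat\epsilon(X_i)\,Y_i$ annihilates $R_\phi'$. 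Unwinding the right adjoint coaction shows that $\ad$-invariance of $R_\phi'$ means precisely that its zero set $V(R_\phi') \subseteq G$ is stable under conjugation by $G$ (the subgroup of $G$ preserving $R_\phi'$ is Zariski closed and contains $K$, hence is all of $G$). Being finite and $\Ad(G)$-stable, and since every noncentral conjugacy class of the semisimple group $G$ is positive-dimensional, $V(R_\phi')$ is contained in $Z(G) = Z$. Therefore $\phi$, lying in the annihilator of the cofinite ideal $R_\phi'$ with $V(R_\phi') \subseteq Z$, is a distribution concentrated on points of $Z$, i.e.\ $\phi = \sum_{z \in S} \ev_z\,X_z$ for some finite $S \subseteq Z$ and $X_z \in U(\gf)$.

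Finally, since each $z \in S \subseteq Z$ is central in $G$, the group-like $\ev_z$ commutes with $U(\gf)$, so for every $Y \in U(\gf)$,
\[
0 = [Y, \phi] = \sum_{z \in S} \ev_z\,[Y, X_z].
\]
By the linear independence of distributions supported at distinct points, $[Y, X_z] = 0$ for all $Y \in U(\gf)$ and all $z \in S$, hence each $X_z \in ZU(\gf)$, so $\phi \in \Span_\Cbb\{\ev_z \mid z \in Z\}\cdot ZU(\gf)$. Combined with the first paragraph, this proves the proposition. The main obstacle is the structural identification of $\Cf^\infty(K)^\circ$ with the hyperalgebra of $G$ and, inside it, the description of distributions concentrated at a point as $\ev_x\,U(\gf)$ together with their independence across points; this is classical (\cite{Chevalley}), but it is exactly this input --- playing the role that Baumann's theorem and Corollary~\ref{cor:Kzeta independence} played in the quantum case --- that makes the conjugacy-class geometry usable.
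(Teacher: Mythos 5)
Your proof is correct, and both directions are properly handled. The overall skeleton agrees with the paper's: both arguments rest on the decomposition $\Cf^\infty(K)^\circ \cong \Span_\Cbb\{\ev_x \mid x \in G\} \otimes U(\gf)$, on the commutation rule $\ev_x X = \Ad(x)X\,\ev_x$, and on the same final commutator step $0 = \sum_z \ev_z [Y, X_z]$ forcing $X_z \in ZU(\gf)$. Where you genuinely diverge is the middle step, namely how one shows that the support of a central $\phi$ lies in $Z$. The paper writes $\phi = \sum_i \ev_{z_i} \otimes Z_i$ at once, conjugates by $\ev_x$ to get $\sum_i \ev_{xz_ix^{-1}} \otimes \Ad(x)Z_i$, and uses Artin's linear independence of characters plus connectedness of $G$ to conclude $z_i = xz_ix^{-1}$ for all $x$. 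You instead import the machinery of the quantum proof (Theorem~\ref{thm:center of the Hopf dual of Cinfty(Kq)}): $\ad$-invariance of $\phi$ via Proposition~\ref{prop:ad-invariance criterion for a linear functional}, the cofinite $\ad$-invariant ideal $R_\phi'$ from Lemmas~\ref{lem:a variation of the induction construction}--\ref{lem:cofactorization of linear functionals}, and then a geometric substitute for Baumann's theorem: the zero locus $V(R_\phi')$ is finite and conjugation-stable, and noncentral conjugacy classes of the connected semisimple $G$ are positive-dimensional, so $V(R_\phi') \subseteq Z$. Your route is longer and requires the (standard, but compressed in your write-up) identification of the annihilator of a cofinite ideal with distributions supported on its variety, whereas the paper's direct conjugation argument avoids $R_\phi'$ entirely; on the other hand, your version makes the classical and quantum proofs structurally parallel, which is conceptually clarifying and would generalize more readily to situations where one cannot simply read off the support points from a clean tensor decomposition.
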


\begin{proof}
    By \cite[Theorem~2.1.8]{Joseph} and \cite[Section~III.8]{tomDieck}, the multiplication in $\Cf^\infty(K)^\circ$ gives an isomorphism
    \begin{equation}\label{eq:the Hopf dual of Cinfty(K)}
        \Span_\Cbb \{ \ev_x \mid x \in G \} \otimes U(\gf) \cong \Cf^\infty(K)^\circ.
    \end{equation}
    Note that for $x \in G$, $X \in \gf$, and $f \in \Cf^\infty(K)$,
    \begin{align*}
        (\ev_x X , f ) = \left. \frac{d}{dt} \right|_{t = 0} \tilde{f}( x \exp(tX) ) = \tilde{f} \Big( \exp \big(t \Ad(x) X \big) x \Big) = ( \Ad(x) X \, \ev_x , f ).
    \end{align*}
    
    Let $ \sum_{ 1 \leq i \leq n } \ev_{z_i} \otimes Z_i \in Z ( \Cf^\infty(K)^\circ ) $, the center of $\Cf^\infty(K)^\circ$, with $z_1, \cdots , z_n$ mutually distinct. Applying the conjugation $\ev_x ( \, \cdot \, ) \ev_x ^{-1}$ and using the above commutation relation, we obtain
    \[
        \sum_{ 1 \leq i \leq n } \ev_{z_i} \otimes Z_i = \sum_{1 \leq i \leq n} \ev_{x z_i x^{-1}} \otimes \Ad(x) Z_i, \quad \forall x \in G.
    \]
    By Artin’s theorem on the linear independence of characters, this implies
    \[
        \{ z_i \mid 1 \leq i \leq n \} = \{ x z_i x^{-1} \mid 1 \leq i \leq n \}, \quad \forall x \in G,
    \]
    which, by the connectedness of $G$, holds if and only if
    \[
        z_i = x z_i x^{-1}, \quad 1 \leq i \leq n, \, \forall x \in G.
    \]
    That is, $z_i \in Z$, the center of $G$, which coincides with the center of $K$ by \cite[Theorem~6.31.(e)]{Knapp}. Now, let $X \in \gf$ and apply $[ X , \, \cdot \, ]$ to $\sum_{1 \leq i \leq n } \ev_{z_i} \otimes Z_i$, yielding
    \[
        0 = \sum_{1 \leq i \leq n} \ev_{z_i} \otimes [X, Z_i].
    \]
    Again by Artin’s theorem, this implies $[X, Z_i] = 0$, i.e., $Z_i \in ZU(\gf)$ for all $1 \leq i \leq n$. This proves \eqref{eq:center of the Hopf dual of Cinfty(K)}.
\end{proof}

\begin{rmk}\label{rmk:remark on computation of Z(Cf(Kq)circ)-1}
    Recalling that $\{K_\zeta \mid \zeta \in \Zcal \}$ is the set of homomorphisms
    \[
        \Cf^\infty(K_q) \rightarrow \Cf^\infty(Z) \xrightarrow{\ev_z} \Cbb , \quad z \in Z,
    \]
    where the first map is the canonical projection identifying $Z$ as a closed quantum subgroup of $K_q$ (cf. Remark~\ref{rmk:center inside the maximal torus}), we see that \eqref{eq:center of the Hopf dual of Cinfty(Kq)} is the exact quantum analogue of the classical computation \eqref{eq:center of the Hopf dual of Cinfty(K)}. Note that the first tensor component remains unchanged under $q$-deformation.
    
    Since $ZU_q(\gf)$ and $ZU(\gf)$ are both isomorphic to the algebra of complex polynomials in $N$ generators by \cite[Theorem~3.120]{VoigtYuncken} and \cite[Ch.~VIII, \S~8, Proposition~4.(ii)]{Bourbaki_Lie_7-9}, respectively, it follows from \eqref{eq:center of the Hopf dual of Cinfty(Kq)} and \eqref{eq:center of the Hopf dual of Cinfty(K)} that
    \[
        Z( \Cf^\infty(K_q)^\circ ) \cong Z( \Cf^\infty(K)^\circ )
    \]
    as algebras for any $0 < q < 1$, which may not be the case for $\Cf^\infty(K_q)^\circ$ and $\Cf^\infty(K)^\circ$ in view of the descriptions given in \eqref{eq:the Hopf dual of Cinfty(Kq)} and \eqref{eq:the Hopf dual of Cinfty(K)}, respectively.
\end{rmk}

In this subsection, we have seen that the commutative algebra $Z ( \Cf^\infty(K_q)^\circ )$ provides a complete description of all finite-dimensional bicovariant FODCs on $K_q$ via the construction given in Corollary~\ref{cor:BCFODC induced by linear functionals} (cf. Theorem~\ref{thm:FODCs induced by Casimir elements} and Corollary~\ref{cor:when two ad-invariant functionals induce the same FODC}). In light of this, we raise the following question.

\begin{ques}\label{ques:conjecturs}
    Are all finite-dimensional bicovariant FODCs on $K$ induced by a linear functional in $Z ( \Cf^\infty(K)^\circ )$? If so, under what conditions do two such functionals induce the same FODC?
\end{ques}

\subsection{Laplacians on \texorpdfstring{$K_q$}{TEXT}}\label{subsec:Laplacians on Kq}

\begin{thm}\label{thm:classification of Laplacian functionals on Kq-preliminary}
    Let $\phi \in \Cf^\infty(K_q)^\circ$ be self-adjoint and $\ad$-invariant. Then there exist mutually distinct pairs $(\zeta_1, \mu_1), \cdots, (\zeta_m, \mu_m) \in \Zcal \times \weights^+$, with $\zeta_1, \cdots, \zeta_p \in \frac{i}{2} \hbar^{-1} \Qbf^\lor$ and $\zeta_{p+1}, \cdots, \zeta_{m} \notin \frac{i}{2} \hbar^{-1} \Qbf^\lor$, and a tuple $\abf = (a_1, \cdots, a_m) \in \Rbb^p \times \Cbb^{m-p}$ such that $a_l \neq 0$ for all $1 \leq l \leq m$, for which
    \[
    \phi = \boldsymbol{z _{\zeta \mu} ^a }.
    \]
    It is Hermitian if and only if the set $\{\mu_1, \dots, \mu_m\}$ is invariant under the involution $-w_0$, and for every pair $l, k$ such that $-w_0 \mu_l = \mu_k$, we have either:
    \begin{itemize}
        \item $\zeta_l = \zeta_k$ and $\overline{a_l} = a_k$, or
        \item $\zeta_l = -\zeta_k$ and $a_l = a_k$.
    \end{itemize}
\end{thm}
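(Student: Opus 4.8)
The proof splits into two parts: the representation $\phi = \boldsymbol{z_{\zeta\mu}^{\abf}}$, and then the Hermiticity criterion. For the first part, since $\phi$ is $\ad$-invariant and lies in $\Cf^\infty(K_q)^\circ$, Corollary~\ref{cor:ad-invariance criterion for a linear functional} places it in the center $Z(\Cf^\infty(K_q)^\circ)$, so by Theorem~\ref{thm:center of the Hopf dual of Cinfty(Kq)} we may write $\phi = \sum_{(\zeta,\mu)} a_{(\zeta,\mu)} K_\zeta z_\mu$ uniquely, for a finitely supported family $(a_{(\zeta,\mu)})_{(\zeta,\mu)\in\Zcal\times\weights^+}\subseteq\Cbb$. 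I would then compute the adjoint: $z_\mu^* = z_\mu$ by Proposition~\ref{prop:self-adjointness and ad-invariance of Casimir elements}, $K_\zeta^* = K_{-\overline\zeta} = K_{-\zeta}$ for $\zeta\in i\hbar^{-1}\Pbf^\lor$ (as recorded in the proof of Corollary~\ref{cor:linear functional for BCFODC-Kq}), and $K_\zeta$, $z_\mu$ commute (both central), so $(K_\zeta z_\mu)^* = K_{-\zeta} z_\mu$. Hence $\phi^* = \sum_{(\zeta,\mu)} \overline{a_{(-\zeta,\mu)}}\, K_\zeta z_\mu$, and by uniqueness of the basis expansion, self-adjointness of $\phi$ is equivalent to $a_{(-\zeta,\mu)} = \overline{a_{(\zeta,\mu)}}$ for all $(\zeta,\mu)$. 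Organizing the support accordingly — keeping each pair with $2\zeta\in i\hbar^{-1}\Qbf^\lor$ (for which $a_{(\zeta,\mu)}\in\Rbb$ is then forced) as a single term, and combining $(\zeta,\mu)$ with $(-\zeta,\mu)$ when $2\zeta\notin i\hbar^{-1}\Qbf^\lor$ into $\tfrac12(a K_\zeta + \overline a K_{-\zeta})z_\mu$ — puts $\phi$ into the form $\boldsymbol{z_{\zeta\mu}^{\abf}}$ of Corollary~\ref{cor:linear functional for BCFODC-Kq} with $\abf\in\Rbb^p\times\Cbb^{m-p}$ and all $a_l\neq 0$; note that this canonical representation never lists both $(\zeta_l,\mu_l)$ and $(-\zeta_l,\mu_l)$ for $l>p$.

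For the Hermiticity criterion, I would first reduce to an eigenvalue symmetry. By Corollary~\ref{cor:self-adjointness and Hermiticity for ad-invariant functional}, a self-adjoint $\ad$-invariant $\phi$ is Hermitian if and only if $\phi = \phi S$. Evaluating on a Peter--Weyl basis and using $(u^\nu_{ij})^* = S(u^\nu_{ji})$ together with the fact that $\{S(u^\nu_{ij})\}_{i,j}$ spans the isotypic component of the contragredient corepresentation, whose label is $-w_0\nu$, and that $(\phi,g) = C_\phi(\text{label})\,\epsilon(g)$ for $g$ in a single isotypic component (since $\epsilon\circ(\phi\triangleright) = \phi$ by Remark~\ref{rmk:linear functional and operator correspondence} and $\phi\triangleright$ acts by $C_\phi$ on each component by Corollary~\ref{cor:diagonalization of translation invariant linear operators}), one obtains $(\phi S, u^\nu_{ij}) = \overline{C_\phi(-w_0\nu)}\,\delta_{ij} = C_\phi(-w_0\nu)\,\delta_{ij}$, the last step because $C_\phi$ is real-valued. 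Thus $\phi$ is Hermitian $\iff$ $C_\phi(\nu) = C_\phi(-w_0\nu)$ for all $\nu\in\weights^+$.

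It remains to translate this into a condition on the data. Writing $c_\mu(\nu)$ for the eigenvalue of $z_\mu$ at $\nu$, we have $C_\phi(\nu) = \sum_{(\zeta,\mu)} a_{(\zeta,\mu)}\, q^{(\zeta,\nu)}\, c_\mu(\nu)$ (with $K_\zeta$ contributing $q^{(\zeta,\nu)}$ by Remark~\ref{rmk:center inside the maximal torus}). The essential input is the symmetry $c_\mu(-w_0\nu) = c_{-w_0\mu}(\nu)$, equivalently $\hat S(z_\mu) = z_{-w_0\mu}$, which follows from the explicit form of the quantum Casimir eigenvalues together with the observations that the weights of $V(-w_0\mu)$ are the $-w_0$-images of those of $V(\mu)$ with equal multiplicities, that $\rho$ is $-w_0$-fixed, and that the Killing form is $w_0$-invariant. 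Granting this and using $(\zeta,-w_0\nu) = (-w_0\zeta,\nu)$, a reindexing $(\zeta,\mu)\mapsto(-w_0\zeta,-w_0\mu)$ gives $C_\phi(-w_0\nu) = \sum_{(\zeta,\mu)} a_{(-w_0\zeta,-w_0\mu)}\, q^{(\zeta,\nu)}\, c_\mu(\nu)$; since the functions $\nu\mapsto q^{(\zeta,\nu)} c_\mu(\nu)$ are linearly independent (each $K_\zeta z_\mu$ is determined by its eigenvalues, and these form a basis of the center by Theorem~\ref{thm:center of the Hopf dual of Cinfty(Kq)}), Hermiticity becomes $a_{(\zeta,\mu)} = a_{(-w_0\zeta,-w_0\mu)}$ for all $(\zeta,\mu)$. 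Because every element of $W$, in particular $w_0$, acts trivially on $\Pbf^\lor/\Qbf^\lor$ (each simple reflection moves a coweight by an element of $\Qbf^\lor$), one has $-w_0\zeta = -\zeta$ in $\Zcal$, so the condition reads $a_{(\zeta,\mu)} = a_{(-\zeta,-w_0\mu)}$. Finally I would unwind this at each basis position in terms of the canonical data $(\zeta_l,\mu_l,a_l)$, splitting into the cases $\zeta_l\in\tfrac{i}{2}\hbar^{-1}\Qbf^\lor$ and $\zeta_l\notin\tfrac{i}{2}\hbar^{-1}\Qbf^\lor$ and comparing the coefficient at $(-\zeta_l,-w_0\mu_l)$ — which is nonzero precisely when $-w_0\mu_l\in\{\mu_1,\dots,\mu_m\}$ — with the required value $a_l$ or $\tfrac12 a_l$; this yields exactly the stated dichotomy. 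The main difficulties I anticipate are establishing $\hat S(z_\mu) = z_{-w_0\mu}$ (if it is not already available verbatim in the references, it must be derived from the eigenvalue formula), and executing the last bookkeeping step carefully enough that the converse implication still goes through when some of the $\mu_l$ coincide.
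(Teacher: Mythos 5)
Your proposal is correct and follows essentially the same route as the paper: the first part is Theorem~\ref{thm:center of the Hopf dual of Cinfty(Kq)} plus the self-adjointness bookkeeping of Corollary~\ref{cor:linear functional for BCFODC-Kq}, and the Hermiticity criterion rests on Corollary~\ref{cor:self-adjointness and Hermiticity for ad-invariant functional} together with the key identity $z_\mu S = z_{-w_0\mu}$, which is exactly the paper's Proposition~\ref{prop:antipode and Casimir element} (proved there, as you anticipate, from the eigenvalue formula of Proposition~\ref{prop:Casimir element multiplier}). The only cosmetic difference is that you check $\phi=\phi S$ on eigenvalues (which forces the detour through $-w_0\zeta=-\zeta$ in $\Zcal$), whereas the paper compares $\boldsymbol{z_{\zeta\mu}^{\abf}}$ and $\boldsymbol{z_{\zeta\mu}^{\abf}}S$ directly as elements of the center using $K_\zeta S=K_{-\zeta}$ and the linear independence of $\{K_\zeta z_\mu\}$; both amount to the same computation.
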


\begin{proof}
The first assertion follows from Theorem~\ref{thm:center of the Hopf dual of Cinfty(Kq)} and Corollary~\ref{cor:linear functional for BCFODC-Kq}.

For the second assertion, we apply Corollary~\ref{cor:self-adjointness and Hermiticity for ad-invariant functional}. Recall that $K_{\zeta} \circ S = K_{-\zeta}$ for $\zeta \in \hf^*$. Moreover, by Proposition~\ref{prop:antipode and Casimir element} proved below, we have $z_{\mu} S = z_{-w_0 \mu}$ for $\mu \in \weights^+$. Therefore, by the linear independence of the elements $\{K_\zeta z_\mu \mid \zeta \in \Zcal, \, \mu \in \weights^+\}$, we conclude that $\boldsymbol{z_{\zeta \mu}^{\abf}} = \boldsymbol{z_{\zeta \mu}^{\abf}} S$ if and only if the stated conditions hold.
\end{proof}

\begin{thm}\label{thm:Casimir elements as Laplacians}
    Let $(\zeta_1, \mu_1), \cdots, (\zeta_m, \mu_m) \in \Zcal \times \weights^+$ and $\abf = (a_1, \cdots, a_m) \in \Rbb^p \times \Cbb^{m-p}$ satisfy all the conditions in Theorem~\ref{thm:classification of Laplacian functionals on Kq-preliminary}. Then, for
    \begin{equation}\label{eq:Casimir elements as Laplacian functionals}
        Z_{\boldsymbol{\zeta\mu}}^{\abf} = \frac{2}{(q^{-1} - q)^2} \big( \boldsymbol{z_{\zeta \mu}^{\abf}} - \hat{\epsilon}(\boldsymbol{z_{\zeta \mu}^{\abf}}) \big) \in \Cf^\infty(K_q)^\circ,
    \end{equation}
    the operator
    \begin{equation}\label{eq:Casimir elements as Laplacians}
        Z_{\boldsymbol{\zeta\mu}}^{\abf} \triangleright : \Cf^\infty(K_q) \rightarrow \Cf^\infty(K_q)
    \end{equation}
    becomes the Laplacian associated with $\big( \boldsymbol{\Omega_{\zeta\mu}}, \boldsymbol{d_{\zeta\mu}}, \la \cdot, \cdot \ra_{\boldsymbol{\zeta \mu}} \big)$, where $\la \cdot, \cdot \ra_{\boldsymbol{\zeta \mu}} : \boldsymbol{\Omega_{\zeta\mu}} \times \boldsymbol{\Omega_{\zeta\mu}} \rightarrow \Cf^\infty(K_q)$ is a strongly nondegenerate right $\Cf^\infty(K_q)$-sesquilinear form given, for $f, g \in \Cf^\infty(K_q)$ and $x, y \in \Ker \epsilon$, by
    \begin{equation}\label{eq:strongly ndg form for q-deformed Laplacian}
        \big\la \boldsymbol{Q_{\zeta \mu}}(x) f , \boldsymbol{Q_{\zeta\mu}}(y) g \big\ra_{\boldsymbol{\zeta \mu}}^{\abf} = - \frac{1}{2} f^* g \big( Z_{\boldsymbol{\zeta \mu}}^{\abf}, S(x)^* S(y) \big).
    \end{equation}
    We will also refer to the functional $Z_{\boldsymbol{\zeta\mu}}^{\abf}$ as a Laplacian; see Remark~\ref{rmk:linear functional and operator correspondence}.
\end{thm}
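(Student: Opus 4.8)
The plan is to deduce the statement from Theorem~\ref{thm:linear functionals as Laplacians} applied to the single functional $\phi = Z_{\boldsymbol{\zeta\mu}}^{\abf}$, after first checking that $\phi$ satisfies the hypotheses of that theorem. Write $Z_{\boldsymbol{\zeta\mu}}^{\abf} = c\bigl(\boldsymbol{z_{\zeta\mu}^{\abf}} - \hat{\epsilon}(\boldsymbol{z_{\zeta\mu}^{\abf}})\,\epsilon\bigr)$ with $c = \tfrac{2}{(q^{-1}-q)^2} \in \Rbb \setminus \{0\}$; here $\hat{\epsilon}(\boldsymbol{z_{\zeta\mu}^{\abf}}) = (\boldsymbol{z_{\zeta\mu}^{\abf}},1)$ is real, since self-adjointness of $\boldsymbol{z_{\zeta\mu}^{\abf}}$ and the definition of the involution on $\Cf^\infty(K_q)^*$ give $(\boldsymbol{z_{\zeta\mu}^{\abf}},1) = \overline{(\boldsymbol{z_{\zeta\mu}^{\abf}}, S(1)^*)} = \overline{(\boldsymbol{z_{\zeta\mu}^{\abf}},1)}$. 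Thus $Z_{\boldsymbol{\zeta\mu}}^{\abf}$ is a real affine combination of $\boldsymbol{z_{\zeta\mu}^{\abf}}$ and $\epsilon$. Now $\boldsymbol{z_{\zeta\mu}^{\abf}}$ is self-adjoint, $\ad$-invariant, and (under the conditions inherited from Theorem~\ref{thm:classification of Laplacian functionals on Kq-preliminary}) Hermitian, while $\epsilon$, being the unit of $\Cf^\infty(K_q)^*$, is central — hence $\ad$-invariant by Corollary~\ref{cor:ad-invariance criterion for a linear functional} — and is also self-adjoint and Hermitian because the counit of a Hopf $*$-algebra is a $*$-homomorphism. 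Since these three properties are preserved under real linear combinations, $Z_{\boldsymbol{\zeta\mu}}^{\abf}$ enjoys all of them; moreover $(Z_{\boldsymbol{\zeta\mu}}^{\abf},1) = c\bigl(\hat{\epsilon}(\boldsymbol{z_{\zeta\mu}^{\abf}}) - \hat{\epsilon}(\boldsymbol{z_{\zeta\mu}^{\abf}})\bigr) = 0$, so $Z_{\boldsymbol{\zeta\mu}}^{\abf}$ vanishes at the unit, and it lies in $\Cf^\infty(K_q)^\circ$ so the induced FODC is finite-dimensional by Proposition~\ref{prop:finite-dimensional BCFODC induced by linear functionals}.

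Next I would identify the induced $*$-FODC. By Remark~\ref{rmk:affine invariance of induced FODC}, $R_{Z_{\boldsymbol{\zeta\mu}}^{\abf}} = R_{\boldsymbol{z_{\zeta\mu}^{\abf}}}$, and by Theorem~\ref{thm:FODCs induced by Casimir elements} this is exactly $R_{\boldsymbol{\zeta\mu}}$, the right ideal corresponding to $(\boldsymbol{\Omega_{\zeta\mu}}, \boldsymbol{d_{\zeta\mu}})$. So $(\Omega_{Z_{\boldsymbol{\zeta\mu}}^{\abf}}, d_{Z_{\boldsymbol{\zeta\mu}}^{\abf}})$ — a bicovariant $*$-FODC because $Z_{\boldsymbol{\zeta\mu}}^{\abf}$ is self-adjoint (Corollary~\ref{cor:BCFODC induced by linear functionals}) — and the bicovariant $*$-FODC $(\boldsymbol{\Omega_{\zeta\mu}}, \boldsymbol{d_{\zeta\mu}})$ correspond to the same $\ad$-invariant right ideal, hence by Theorem~\ref{thm:BCFODC} there is an isomorphism $\varphi$ of bicovariant $*$-FODCs between them with $\varphi \circ d_{Z_{\boldsymbol{\zeta\mu}}^{\abf}} = \boldsymbol{d_{\zeta\mu}}$. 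Being an $\Cf^\infty(K_q)$-bimodule map that intertwines the differentials, $\varphi$ intertwines the quantum germs maps, i.e. $\varphi \circ Q_{Z_{\boldsymbol{\zeta\mu}}^{\abf}} = \boldsymbol{Q_{\zeta\mu}}$. Applying Theorem~\ref{thm:linear functionals as Laplacians} to $\phi = Z_{\boldsymbol{\zeta\mu}}^{\abf}$ then yields the strongly nondegenerate right $\Cf^\infty(K_q)$-sesquilinear form $\bigl\la Q_{Z_{\boldsymbol{\zeta\mu}}^{\abf}}(x)f , Q_{Z_{\boldsymbol{\zeta\mu}}^{\abf}}(y)g \bigr\ra_{Z_{\boldsymbol{\zeta\mu}}^{\abf}} = -\tfrac12 f^* g \bigl(Z_{\boldsymbol{\zeta\mu}}^{\abf}, S(x)^* S(y)\bigr)$ and the identity $\hcal\bigl(f^*(Z_{\boldsymbol{\zeta\mu}}^{\abf} \triangleright g)\bigr) = \hcal\bigl(\la d_{Z_{\boldsymbol{\zeta\mu}}^{\abf}} f, d_{Z_{\boldsymbol{\zeta\mu}}^{\abf}} g \ra_{Z_{\boldsymbol{\zeta\mu}}^{\abf}}\bigr)$. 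Transporting the form along $\varphi$ — so that $Q_{Z_{\boldsymbol{\zeta\mu}}^{\abf}}(x)f$ corresponds to $\boldsymbol{Q_{\zeta\mu}}(x)f$ — and using that $\varphi$ is $\Cf^\infty(K_q)$-linear (hence preserves $\hcal$-values of the forms), the above becomes precisely $\la\cdot,\cdot\ra_{\boldsymbol{\zeta\mu}}^{\abf}$ as in \eqref{eq:strongly ndg form for q-deformed Laplacian} and $\hcal\bigl(f^*(Z_{\boldsymbol{\zeta\mu}}^{\abf} \triangleright g)\bigr) = \hcal\bigl(\la \boldsymbol{d_{\zeta\mu}} f, \boldsymbol{d_{\zeta\mu}} g \ra_{\boldsymbol{\zeta\mu}}^{\abf}\bigr)$, which is the assertion.

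The only genuinely delicate step is this transport of the strongly nondegenerate sesquilinear form along $\varphi$: one must verify carefully that $\varphi$ carries $Q_{Z_{\boldsymbol{\zeta\mu}}^{\abf}}(x)f$ to $\boldsymbol{Q_{\zeta\mu}}(x)f$ (using that $\varphi$ is a bimodule isomorphism intertwining the quantum germs maps, together with Proposition~\ref{prop:the structure representations of bicovariant bimodule}(1), which guarantees that such elements span, so that the pushed-forward form is well defined and matches the one read off from Theorem~\ref{thm:linear functionals as Laplacians} term by term). Note that the rescaling factor $\tfrac{2}{(q^{-1}-q)^2}$ and the constant shift by $\hat{\epsilon}(\boldsymbol{z_{\zeta\mu}^{\abf}})$ play no role in this identification, since both the FODC $(\boldsymbol{\Omega_{\zeta\mu}}, \boldsymbol{d_{\zeta\mu}})$ and the form in \eqref{eq:strongly ndg form for q-deformed Laplacian} are expressed directly in terms of $Z_{\boldsymbol{\zeta\mu}}^{\abf}$ itself; these normalizations are carried along only for the $q \to 1$ limit computations of the next sections. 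Everything else is routine bookkeeping combining the results of Section~\ref{sec:Main construction} with Theorems~\ref{thm:The classification of bicovariant FODCs over Cinfty(Kq)}, \ref{thm:FODCs induced by Casimir elements}, \ref{thm:center of the Hopf dual of Cinfty(Kq)}, and \ref{thm:classification of Laplacian functionals on Kq-preliminary}.
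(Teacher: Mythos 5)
Your proposal is correct and takes essentially the same route as the paper, which simply deduces the result from Theorems~\ref{thm:linear functionals as Laplacians} and \ref{thm:classification of Laplacian functionals on Kq-preliminary}; you merely fill in the details the paper leaves implicit (the verification that $Z_{\boldsymbol{\zeta\mu}}^{\abf}$ is a real affine modification of $\boldsymbol{z_{\zeta\mu}^{\abf}}$ preserving self-adjointness, $\ad$-invariance and Hermiticity while killing the value at $1$, and the identification of the induced FODC with $(\boldsymbol{\Omega_{\zeta\mu}}, \boldsymbol{d_{\zeta\mu}})$ via Theorem~\ref{thm:FODCs induced by Casimir elements} and Remark~\ref{rmk:affine invariance of induced FODC}). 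The transport of the sesquilinear form along the isomorphism $\varphi$, which you flag as the delicate step, is handled correctly and is in fact routine, since $\varphi \circ Q_{Z_{\boldsymbol{\zeta\mu}}^{\abf}} = \boldsymbol{Q_{\zeta\mu}}$ follows from the uniqueness argument in Theorem~\ref{thm:BCFODC} and the formula \eqref{eq:strongly ndg form for q-deformed Laplacian} depends only on the corresponding right ideal.
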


\begin{proof}
It follows from Theorems~\ref{thm:linear functionals as Laplacians} and \ref{thm:classification of Laplacian functionals on Kq-preliminary}.
\end{proof}

The reason for the appearance of the scalar $\frac{2}{(q^{-1} - q)^2}$ in \eqref{eq:Casimir elements as Laplacian functionals} will be made clear in Corollary~\ref{cor:eigenvalues of Casimir Laplacians}.

\begin{rmk}\label{rmk:lifting the Hermiticity to gain generality}
By the argument in Remark~\ref{rmk:linear functionals as Laplacians-non-Hermitian case}, by dropping the Hermiticity condition, one need not impose the additional conditions stated in the second sentence of Theorem~\ref{thm:Casimir elements as Laplacians} on FODCs; however, the resulting Laplacians are still required to satisfy those conditions.

Therefore, one can assert that operators of the form \eqref{eq:Casimir elements as Laplacians} exhaust \emph{all} Laplacians associated with \emph{any} finite-dimensional bicovariant $*$-FODC on $K_q$ that can be constructed by Theorem~\ref{thm:linear functionals as Laplacians}, see also Proposition~\ref{prop:finite-dimensional BCFODC induced by linear functionals}.
\end{rmk}

Now, we compute the eigenvalues of some of the Laplacians in \eqref{eq:Casimir elements as Laplacians}.

\begin{prop}\label{prop:Casimir element multiplier}
Fix $\mu \in \weights^+$. Then, the eigenvalues of the $\ad$-invariant functional $z_\mu$ are given by
\begin{equation*}
C_{z_\mu}(\lambda) = \sum_{1 \leq j \leq n_\mu} q^{-2 ( \lambda + \rho, \epsilon_j^\mu)}, \quad \lambda \in \weights^+.
\end{equation*}
\end{prop}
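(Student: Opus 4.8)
The plan is to use that $z_\mu = I(t_\mu)$ is central in $U_q(\gf)$ (\cite[Theorem~3.120]{VoigtYuncken}), so that for each $\lambda \in \weights^+$ the operator $\pi_\lambda(z_\mu)$ is a scalar on the irreducible module $V(\lambda)$; since $(X, u^\lambda_{ij}) = \langle e_i^\lambda, \pi_\lambda(X) e_j^\lambda \rangle$ for $X \in U_q^\Rbb(\kf)$ and $z_\mu$ is $\ad$-invariant (Proposition~\ref{prop:self-adjointness and ad-invariance of Casimir elements}), that scalar is exactly the eigenvalue $C_{z_\mu}(\lambda)$ of Corollary~\ref{cor:ad-invariance criterion for a linear functional}. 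Hence it suffices to evaluate $\pi_\lambda(z_\mu)$ on a fixed highest weight vector $v_\lambda \in V(\lambda)$, i.e.\ to carry out a quantum Harish--Chandra projection.

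First I would make $z_\mu$ explicit. Writing $t_\mu = \sum_j q^{-2(\rho,\epsilon_j^\mu)} u^\mu_{jj}$ and using $\Delta(u^\mu_{ij}) = \sum_k u^\mu_{ik} \otimes u^\mu_{kj}$ together with $I(f) = l^-(f_{(1)})\hat S l^+(f_{(2)})$, one gets
\[
z_\mu = \sum_{i,k} q^{-2(\rho,\epsilon_i^\mu)}\, l^-(u^\mu_{ik})\,\hat S l^+(u^\mu_{ki}).
\]
By \eqref{eq:l-functionals}, $\pi_\lambda(l^-(u^\mu_{ik}))$ is the matrix entry of $(\pi_\mu \otimes \pi_\lambda)(\Rcal^{-1})$ obtained by evaluating the $V(\mu)$-leg against $u^\mu_{ik}$; and since $(\hat S l^+(f), g) = (\Rcal^{-1}, g \otimes f)$ by \eqref{eq:antipode and universal R-matrix}, $\pi_\lambda(\hat S l^+(u^\mu_{ki}))$ is likewise the $V(\mu)$-leg $(k,i)$-entry of $(\pi_\lambda \otimes \pi_\mu)(\Rcal^{-1})$.

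The key step is then a triangularity/weight argument using the explicit form \eqref{eq:the universal R-matrix} and \eqref{eq:inverse of q-exponential}: $\Rcal^{-1}$ factors as the inverse Cartan operator $q^{-\sum_{i,j} B_{ij} H_i \otimes H_j}$ times a product of unipotent factors whose two tensor legs carry, respectively, raising expressions in the $E_{\beta_r}$ and lowering expressions in the $F_{\beta_r}$. Because $z_\mu$ acts on $v_\lambda$ with $\hat S l^+(u^\mu_{ki})$ applied first and $\pi_\lambda(E_{\beta_r}) v_\lambda = 0$, the raising part on the $V(\lambda)$-leg kills every non-Cartan term; matching weights in the $V(\mu)$-leg then forces $k=i$ and yields $\pi_\lambda(\hat S l^+(u^\mu_{ki})) v_\lambda = \delta_{ki}\, q^{-(\lambda,\epsilon_i^\mu)} v_\lambda$. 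This collapses the double sum to $i=k$; and since a diagonal $(i,i)$-entry of $(\pi_\mu \otimes \pi_\lambda)(\Rcal^{-1})$ in the $V(\mu)$-leg can only come from the Cartan factor (the unipotent contributions change the $V(\mu)$-weight), one gets $\pi_\lambda(l^-(u^\mu_{ii})) v_\lambda = q^{-(\lambda,\epsilon_i^\mu)} v_\lambda$. Substituting back,
\[
\pi_\lambda(z_\mu) v_\lambda = \sum_i q^{-2(\rho,\epsilon_i^\mu)} q^{-(\lambda,\epsilon_i^\mu)} q^{-(\lambda,\epsilon_i^\mu)} v_\lambda = \Big( \sum_{j=1}^{n_\mu} q^{-2(\lambda+\rho,\epsilon_j^\mu)} \Big) v_\lambda,
\]
which is the asserted formula. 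The main obstacle is bookkeeping rather than depth: one must keep rigorous track of which leg of $\Rcal^{-1}$ is paired with $V(\mu)$ versus retained in $U_q(\gf)$, and carefully justify the vanishing of all unipotent contributions — the point at which the highest-weight property of $v_\lambda$ and the weight grading of $V(\mu)$ are both essential.
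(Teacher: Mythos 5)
Your proof is correct, but it takes a genuinely different route from the paper's. The paper also reduces, via $\ad$-invariance and Schur's lemma, to evaluating $z_\mu$ against the highest-weight diagonal coefficient $\la v_\lambda \mid \cdot \mid v_\lambda \ra$; but instead of computing $\pi_\lambda(z_\mu)v_\lambda$ directly, it invokes the duality identity \eqref{eq:an identity regarding I} to rewrite $\big(I(t_\mu), \la v_\lambda \mid \cdot \mid v_\lambda\ra\big)$ as $\big(I(\la v_{-\lambda}\mid\cdot\mid v_{-\lambda}\ra), S(t_\mu)\big)$, where $v_{-\lambda}$ is a lowest-weight vector of $V(-w_0\lambda)$, and then uses the known evaluation \eqref{eq:an evaluation of I} to replace $I(\la v_{-\lambda}\mid\cdot\mid v_{-\lambda}\ra)$ by $K_{2\lambda}$; the answer then drops out as $\Tr_{V(\mu)}(K_{-2\lambda}K_{-2\rho})$ with no reference to the explicit form of $\Rcal$. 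Your argument instead unwinds $I(t_\mu)$ into $l^-$ and $\hat{S}l^+$ factors and runs the triangularity of \eqref{eq:the universal R-matrix} against the highest-weight vector by hand --- in effect reproving the special case of \eqref{eq:an evaluation of I} that the paper quotes. I checked the two delicate points and both are sound: the first leg of $\Rcal^{-1}$ carries raising operators, so only the Cartan term survives on $v_\lambda$ in the $\hat{S}l^+$ factor (forcing $k=i$ and producing one factor $q^{-(\lambda,\epsilon_i^\mu)}$), and the diagonal $(i,i)$ coefficient in the $V(\mu)$-leg of $l^-(u^\mu_{ii})$ kills the unipotent terms by weight orthogonality (producing the second factor $q^{-(\lambda,\epsilon_i^\mu)}$). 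What the paper's route buys is brevity and reuse of already-established identities; what yours buys is a self-contained computation that makes the Harish--Chandra-type mechanism visible, at the cost of the leg-bookkeeping you rightly flag as the main hazard.
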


\begin{proof}
Fix $\lambda \in \weights^+$ and let $v_{-\lambda} \in V(-w_0 \lambda)$ be a lowest weight unit vector. Then, by \eqref{eq:an evaluation of I},
\[
I\big(\la v_{-\lambda} \mid \cdot \mid v_{- \lambda} \ra \big) = K_{2 \lambda}.
\]
Note also that, since $\la v_{-\lambda} \mid \cdot \mid v_{- \lambda} \ra \in \End(V(-w_0 \lambda))^*$, we have
\[
S^{-1}\big(\la v_{-\lambda} \mid \cdot \mid v_{- \lambda} \ra\big) \in \End(V(\lambda))^*.
\]
Skew-pairing this with $F_{i_1} \cdots F_{i_m} K_\nu E_{j_1} \cdots E_{j_n}$ for any $1 \leq i_1, \ldots, i_m, j_1, \ldots, j_n \leq N$ and $\nu \in \weights$ shows that
\[
S^{-1}\big(\la v_{-\lambda} \mid \cdot \mid v_{-\lambda} \ra\big) = \la v_\lambda \mid \cdot \mid v_\lambda \ra \in \End(V(\lambda))^*,
\]
where $v_\lambda \in V(\lambda)$ is a highest weight unit vector. Hence, by \eqref{eq:an identity regarding I},
\begin{align*}
c_{z_\mu}(\lambda) &= \big(z_\mu, \la v_{\lambda} \mid \cdot \mid v_{\lambda} \ra \big) = \Big( I(t_\mu), S^{-1}\big(\la v_{-\lambda} \mid \cdot \mid v_{- \lambda} \ra\big) \Big) \\
&= \Big(I\big(\la v_{-\lambda} \mid \cdot \mid v_{-\lambda} \ra \big), S(t_\mu) \Big) = \big(K_{2 \lambda}, S(t_\mu)\big) \\
&= \Tr_{V(\mu)} (K_{-2 \lambda} K_{-2 \rho}) = \sum_{1 \leq j \leq n_\mu} q^{-2 (\lambda + \rho, \epsilon_j^\mu)},
\end{align*}
as required.
\end{proof}

\begin{prop}\label{prop:antipode and Casimir element}
    For $\mu \in \weights^+$, we have
    \[
    \hat{S}^{\pm 1} (z_\mu) = z_{-w_0 \mu}.
    \]
\end{prop}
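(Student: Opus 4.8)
The plan is to prove the statement for $\hat S$ (the case $\hat S^{-1}$ is obtained by noting that $\hat S^2(z_\mu) = z_\mu$ since $z_\mu$ is central, so $\hat S^{-1}(z_\mu) = \hat S(z_\mu)$, and also $-w_0$ is an involution, so $\hat S^{-1}(z_\mu) = \hat S^{-1}(z_{-w_0(-w_0\mu)}) = z_{-w_0\mu}$ once the $\hat S$ case is established for all weights). So it suffices to show $\hat S(z_\mu) = z_{-w_0\mu}$.

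The key observation is the evaluation formula $(z_\mu, \la v_\lambda \mid \cdot \mid v_\lambda\ra) = C_{z_\mu}(\lambda) = \sum_{j} q^{-2(\lambda+\rho,\epsilon_j^\mu)}$ computed in Proposition~\ref{prop:Casimir element multiplier}, together with the fact that $z_\mu$, being central, is $\ad$-invariant and hence completely determined by its eigenvalues $(C_{z_\mu}(\lambda))_{\lambda \in \weights^+}$ via Corollary~\ref{cor:ad-invariance criterion for a linear functional}. First I would check that $\hat S(z_\mu)$ is again $\ad$-invariant: by Proposition~\ref{prop:self-adjointness and ad-invariance are preserved under antipode} (applied with $\Cf^\infty(\Kcal) = \Cf^\infty(K_q)$), $\hat S(z_\mu) = z_\mu \circ S^{\pm 1}$ is $\ad$-invariant since $z_\mu$ is. Hence $\hat S(z_\mu)$ also diagonalizes over the Peter--Weyl decomposition, and it suffices to compare eigenvalues: I must show $C_{\hat S(z_\mu)}(\lambda) = C_{z_{-w_0\mu}}(\lambda)$ for all $\lambda \in \weights^+$.

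For the left-hand side, using the highest-weight unit vector $v_\lambda \in V(\lambda)$ and Corollary~\ref{cor:self-adjointness and Hermiticity for ad-invariant functional} applied to the self-adjoint $\ad$-invariant functional $z_\mu$ (Proposition~\ref{prop:self-adjointness and ad-invariance of Casimir elements}), one has $(z_\mu S, f) = \overline{(z_\mu, f^*)}$; alternatively, and more directly, I would use the identity $(z_\mu S, \la v_\lambda \mid \cdot \mid v_\lambda\ra) = (z_\mu, S(\la v_\lambda \mid \cdot \mid v_\lambda\ra))$ and the fact, already exploited in the proof of Proposition~\ref{prop:Casimir element multiplier}, that $S(\la v_\lambda \mid \cdot \mid v_\lambda\ra)$ — up to the $S$-vs-$S^{-1}$ bookkeeping that $-w_0$ resolves — is a diagonal matrix coefficient associated with the extremal weight vector $v_{w_0\lambda}$ in $V(\lambda)$, or equivalently $v_{-\lambda}$ in $V(-w_0\lambda)$. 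Concretely: $S(\la v_\lambda \mid \cdot \mid v_\lambda\ra)$ pairs with the PBW basis element $F_{i_1}\cdots K_\nu \cdots E_{j_n}$ to give $q^{-(\nu,\lambda)}$ times a delta, so $S(\la v_\lambda\mid\cdot\mid v_\lambda\ra) = \la v_{-\lambda}\mid\cdot\mid v_{-\lambda}\ra \in \End(V(-w_0\lambda))^*$, where $v_{-\lambda}$ is a lowest-weight unit vector of $V(-w_0\lambda)$. Then $C_{z_\mu S}(\lambda) = (z_\mu, \la v_{-\lambda}\mid\cdot\mid v_{-\lambda}\ra) = \Tr_{V(\mu)}\big(\pi_\mu(I^{-1}\text{-side})\big)$; carrying out the same computation as in Proposition~\ref{prop:Casimir element multiplier} but now with the lowest weight $w_0\lambda$ of $V(-w_0\lambda)$ — equivalently $-\lambda$ as an extremal weight — yields $C_{z_\mu S}(\lambda) = \sum_j q^{-2(-w_0\lambda + \rho,\,\epsilon_j^\mu)}$ after a change of variables. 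On the right-hand side, Proposition~\ref{prop:Casimir element multiplier} gives $C_{z_{-w_0\mu}}(\lambda) = \sum_k q^{-2(\lambda+\rho,\,\epsilon_k^{-w_0\mu})}$; since the weight multiset $\Pbf(-w_0\mu)$ equals $-w_0\Pbf(\mu) = \{-w_0\epsilon_j^\mu\}_j$ and $w_0$ preserves the Killing form, this sum equals $\sum_j q^{-2(\lambda+\rho,\,-w_0\epsilon_j^\mu)} = \sum_j q^{-2(-w_0\lambda - w_0\rho + w_0\rho + \rho,\, \dots)}$ — the point is that $(\lambda + \rho, -w_0\epsilon_j^\mu) = (-w_0(\lambda+\rho), \epsilon_j^\mu) = (-w_0\lambda - w_0\rho, \epsilon_j^\mu)$.

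The main obstacle is reconciling the two exponents: the left side delivers $-2(-w_0\lambda + \rho, \epsilon_j^\mu)$ while the right side delivers $-2(-w_0\lambda - w_0\rho, \epsilon_j^\mu)$. These agree precisely because $w_0\rho = -\rho$ (a standard fact, since $\rho = \sum \varpi_i$ and $-w_0$ permutes the fundamental weights, so $-w_0\rho = \rho$, i.e. $w_0\rho = -\rho$), which makes $-w_0\rho = \rho$. Thus both exponents equal $-2(-w_0\lambda + \rho, \epsilon_j^\mu)$ and the eigenvalues match. I would therefore structure the proof as: (i) reduce to comparing eigenvalues via $\ad$-invariance; (ii) identify $S(\la v_\lambda\mid\cdot\mid v_\lambda\ra)$ with a lowest-weight matrix coefficient of $V(-w_0\lambda)$ exactly as in the cited proof; (iii) evaluate $C_{z_\mu S}(\lambda)$ via the same trace computation as Proposition~\ref{prop:Casimir element multiplier}; (iv) evaluate $C_{z_{-w_0\mu}}(\lambda)$ via Proposition~\ref{prop:Casimir element multiplier} and the $-w_0$-symmetry of the weight multiset plus $W$-invariance of the Killing form; (v) invoke $w_0\rho = -\rho$ to conclude equality; (vi) deduce the $\hat S^{-1}$ case from centrality of $z_\mu$. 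The bookkeeping in step (ii)–(iii) with $S$ versus $S^{-1}$ is the only delicate point, but it is already handled verbatim in the proof of Proposition~\ref{prop:Casimir element multiplier}, so it can be cited rather than redone.
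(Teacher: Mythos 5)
Your argument is correct and follows essentially the same route as the paper's proof: reduce $\hat{S}^{\pm 1}$ to a single case via $\hat{S}^2(z_\mu)=z_\mu$, compare eigenvalues using $\ad$-invariance, identify the antipode of an extremal diagonal matrix coefficient, and conclude from the trace formula of Proposition~\ref{prop:Casimir element multiplier} together with $w_0\rho=-\rho$ and $\Pbf(-w_0\mu)=-w_0\Pbf(\mu)$. The only cosmetic difference is that you evaluate at the highest-weight coefficient of $V(\lambda)$ and recognize $C_{\hat{S}(z_\mu)}(\lambda)=C_{z_\mu}(-w_0\lambda)$, whereas the paper evaluates directly at the lowest-weight coefficient $\la v_{w_0\lambda}\mid\cdot\mid v_{w_0\lambda}\ra$; both amount to the same computation.
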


\begin{proof}
    First, note that $\hat{S}^2(z_\mu) = K_{2\rho} z_\mu K_{-2\rho} = z_\mu$, and hence $\hat{S}(z_\mu) = \hat{S}^{-1}(z_\mu)$. Thus, it suffices to consider $\hat{S}(z_\mu)$ in the proof. Also observe that $\hat{S}(z_\mu) = z_\mu S^{-1}$ is an $\ad$-invariant linear functional by Proposition~\ref{prop:self-adjointness and ad-invariance are preserved under antipode}, and therefore admits eigenvalues with respect to the Peter–Weyl decomposition of $\Cf^\infty(K_q)$.
    
    Fix $\lambda \in \weights^+$ and let $v_{w_0\lambda} \in V(\lambda)$ be a lowest weight unit vector. Then, by \eqref{eq:an identity regarding I} and \eqref{eq:an evaluation of I},
    \begin{align*}
        C_{\hat{S}(z_\mu)}(\lambda) &= \big( \hat{S}(z_\mu), \la v_{w_0 \lambda} \mid \cdot \mid v_{w_0 \lambda} \ra \big) 
        = \Big( I(t_\mu), S^{-1} \big( \la v_{w_0 \lambda} \mid \cdot \mid v_{w_0 \lambda} \ra \big) \Big) \\
        &= \Big( I\big( \la v_{w_0 \lambda} \mid \cdot \mid v_{w_0 \lambda} \ra \big), S(t_\mu) \Big) 
        = \big( K_{-2 w_0 \lambda}, S(t_\mu) \big) \\
        &= \Tr_{V(\mu)} (K_{2 w_0 \lambda} K_{-2\rho}) 
        = \sum_{1 \leq j \leq n_\mu} q^{2 (w_0 \lambda - \rho, \epsilon_j^\mu)} \\
        &= \sum_{1 \leq j \leq n_\mu} q^{-2 (\lambda + \rho, -w_0 \epsilon_j^\mu)} 
        = \sum_{1 \leq j \leq n_{-w_0 \mu}} q^{-2 (\lambda + \rho, \epsilon_j^{-w_0 \mu})} = C_{z_{-w_0 \mu}}(\lambda)
    \end{align*}
    by Proposition~\ref{prop:Casimir element multiplier}, where in moving from the third to the fourth line, we used the facts that $w_0^2 = \id$, $w_0$ preserves the Killing form, and $w_0 \rho = -\rho$. In the second to the last equality, we used the identity
    \[
    -w_0 \{ \epsilon_j^\mu \mid 1 \leq j \leq n_\mu \} = -\{ \epsilon_j^\mu \mid 1 \leq j \leq n_\mu \} = \{ \epsilon_j^{-w_0 \mu} \mid 1 \leq j \leq n_\mu \},
    \]
    see \cite[Theorem~5.5 and Problem~5.1]{Knapp}.
\end{proof}

\begin{thm}\label{thm:eigenvalues of Casimir Laplacians}
Let $\mu_1, \cdots, \mu_m \in \weights^+$ and $\abf = (a_1, \cdots, a_m) \in \Rbb^m$ be such that $\mu_1, \cdots, \mu_m$ are mutually distinct, the set $\{ \mu_1, \cdots, \mu_m \}$ is invariant under the transformation $-w_0$, and $a_l = a_k$ whenever $-w_0 \mu_l = \mu_k$. Then, for $\lambda \in \weights^+$, the eigenvalues of the Laplacian $Z_{\boldsymbol{\mu}} ^{\abf} :=  Z_{\boldsymbol{0 \mu}}^{\abf} $ are given by
\begin{align}\label{eq:eigenvalues of q-Laplacian}
    C_{Z_{\boldsymbol{\mu}}^{\abf}}(\lambda) 
    = \sum_{1 \leq l \leq m} a_l \sum_{1 \leq j \leq n_{\mu_l}} \Big( \big[(\lambda + \rho , \epsilon_j^{\mu_l})\big]_q^2 - \big[(\rho, \epsilon_j^{\mu_l})\big]_q^2 \Big) .
\end{align}
\end{thm}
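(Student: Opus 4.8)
The plan is to reduce the computation to Proposition~\ref{prop:Casimir element multiplier} and then symmetrize using the hypotheses on $-w_0$. Since $\boldsymbol{z_{0\mu}^{\abf}} = a_1 z_{\mu_1} + \cdots + a_m z_{\mu_m}$ lies in the center of $\Cf^\infty(K_q)^\circ$ (Corollary~\ref{cor:linear functional for BCFODC-Kq}), it is $\ad$-invariant, and therefore so is its affine modification $Z_{\boldsymbol{\mu}}^{\abf}$ defined in \eqref{eq:Casimir elements as Laplacian functionals}; by Corollary~\ref{cor:diagonalization of translation invariant linear operators}, $Z_{\boldsymbol{\mu}}^{\abf}\triangleright$ diagonalizes over the Peter--Weyl decomposition. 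Because $\hat{\epsilon}$ is evaluation at $1 = u^0_{11}$, the scalar $\hat{\epsilon}(\boldsymbol{z_{0\mu}^{\abf}})$ equals the eigenvalue of $\boldsymbol{z_{0\mu}^{\abf}}$ at the trivial representation $0 \in \weights^+$, so for $\lambda \in \weights^+$ we get, using Proposition~\ref{prop:Casimir element multiplier},
\[
C_{Z_{\boldsymbol{\mu}}^{\abf}}(\lambda) = \frac{2}{(q^{-1}-q)^2}\Big(C_{\boldsymbol{z_{0\mu}^{\abf}}}(\lambda) - C_{\boldsymbol{z_{0\mu}^{\abf}}}(0)\Big) = \frac{2}{(q^{-1}-q)^2}\sum_{l=1}^{m} a_l \sum_{j=1}^{n_{\mu_l}}\Big(q^{-2(\lambda+\rho,\,\epsilon_j^{\mu_l})} - q^{-2(\rho,\,\epsilon_j^{\mu_l})}\Big).
\]

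Next I would symmetrize this sum. The rule $\mu_l \mapsto -w_0\mu_l$ induces an involution $\sigma$ of $\{1,\dots,m\}$ by the assumed $-w_0$-invariance of $\{\mu_1,\dots,\mu_m\}$, and $a_{\sigma(l)} = a_l$ by hypothesis; hence $\sum_l a_l X_l = \tfrac{1}{2}\sum_l a_l(X_l + X_{\sigma(l)})$ for any family $(X_l)_l$. Apply this with $X_l = \sum_{j}\big(q^{-2(\lambda+\rho,\,\epsilon_j^{\mu_l})} - q^{-2(\rho,\,\epsilon_j^{\mu_l})}\big)$. Using the identity $-w_0\{\epsilon_j^{\mu}\mid j\} = -\{\epsilon_j^{\mu}\mid j\} = \{\epsilon_j^{-w_0\mu}\mid j\}$ invoked in the proof of Proposition~\ref{prop:antipode and Casimir element} (the first equality being $W$-invariance of weight sets), the reindexing $\epsilon_{j'}^{\mu_{\sigma(l)}} = -\epsilon_j^{\mu_l}$ is a multiplicity-preserving bijection and converts $X_{\sigma(l)}$ into $\sum_j\big(q^{2(\lambda+\rho,\,\epsilon_j^{\mu_l})} - q^{2(\rho,\,\epsilon_j^{\mu_l})}\big)$. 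This turns the displayed expression into
\[
C_{Z_{\boldsymbol{\mu}}^{\abf}}(\lambda) = \frac{1}{(q^{-1}-q)^2}\sum_{l=1}^{m} a_l \sum_{j=1}^{n_{\mu_l}}\Big[\big(q^{2(\lambda+\rho,\,\epsilon_j^{\mu_l})} + q^{-2(\lambda+\rho,\,\epsilon_j^{\mu_l})}\big) - \big(q^{2(\rho,\,\epsilon_j^{\mu_l})} + q^{-2(\rho,\,\epsilon_j^{\mu_l})}\big)\Big].
\]

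To finish, note that $[z]_q^2 = \big(q^{2z} - 2 + q^{-2z}\big)/(q-q^{-1})^2$ and $(q^{-1}-q)^2 = (q-q^{-1})^2$, so each bracketed summand above equals $(q^{-1}-q)^2\big([(\lambda+\rho,\epsilon_j^{\mu_l})]_q^2 - [(\rho,\epsilon_j^{\mu_l})]_q^2\big)$, the two ``$-2$'' constants cancelling. The prefactor then collapses and \eqref{eq:eigenvalues of q-Laplacian} follows; this is also exactly why the normalization constant $\tfrac{2}{(q^{-1}-q)^2}$ was inserted in \eqref{eq:Casimir elements as Laplacian functionals}. The only step requiring real care is the bookkeeping of the symmetrization: one must allow $\sigma(l) = l$ when $\mu_l$ is self-dual (so the averaging is over the same index set) and verify that $j \mapsto j'$ with $\epsilon_{j'}^{\mu_{\sigma(l)}} = -\epsilon_j^{\mu_l}$ is indeed a bijection; the rest is a routine manipulation of $q$-exponentials.
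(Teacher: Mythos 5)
Your proof is correct and follows essentially the same route as the paper's: both reduce to Proposition~\ref{prop:Casimir element multiplier} and then symmetrize the resulting sum using the $-w_0$-invariance of $\{\mu_1,\dots,\mu_m\}$ together with the identity $\Pbf(-w_0\mu) = -\Pbf(\mu)$ (counted with multiplicity). The only cosmetic difference is that the paper performs the symmetrization at the level of functionals, writing $Z_{\boldsymbol{\mu}}^{\abf} = \tfrac{1}{2}\big(Z_{\boldsymbol{\mu}}^{\abf} + Z_{\boldsymbol{\mu}}^{\abf} S\big)$ and invoking $\hat{S}(z_\mu) = z_{-w_0\mu}$ from Proposition~\ref{prop:antipode and Casimir element}, whereas you reindex the numerical sums directly; the two computations are identical.
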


\begin{proof}
    Note that, by assumption, we have $Z_{\boldsymbol{\mu}}^{\abf} = Z_{\boldsymbol{\mu}}^{\abf} S$, and hence
    \begin{align*}
        Z_{\boldsymbol{\mu}}^{\abf} 
        &= \frac{1}{2} \left( Z_{\boldsymbol{\mu}}^{\abf} + Z_{\boldsymbol{\mu}}^{\abf} S \right) 
        = \frac{ \boldsymbol{z}_{\boldsymbol{\mu}}^{\abf} + \boldsymbol{z}_{-w_0 \boldsymbol{\mu}}^{\abf} - \hat{\epsilon}(\boldsymbol{z}_{\boldsymbol{\mu}}^{\abf}) - \hat{\epsilon}(\boldsymbol{z}_{(-w_0 \boldsymbol{\mu})}^{\abf}) }{(q - q^{-1})^2}
    \end{align*}
    by Proposition~\ref{prop:antipode and Casimir element}, where $\boldsymbol{z_{\mu}} := \boldsymbol{z_{0 \mu} }$ and $-w_0 \boldsymbol{\mu} = (-w_0 \mu_1, \cdots, -w_0 \mu_m)$.

    However, since $\Pbf(-w_0 \mu_l) = -\Pbf(\mu_l)$ including multiplicities, this implies, by Proposition~\ref{prop:Casimir element multiplier},
    \begin{align*}
        &C_{Z_{\boldsymbol{\mu}} ^{\abf} }(\lambda) \\
        &= \frac{1}{(q^{-1} - q)^2} \sum_{1 \leq l \leq m} a_l \sum_{1 \leq j \leq n_{\mu_l}}  
        \Big( \big( q^{-2 (\lambda + \rho, \epsilon_j^{\mu_l})} + q^{2 (\lambda + \rho, \epsilon_j^{\mu_l})} \big)
        - \big( q^{-2 (\rho, \epsilon_j^{\mu_l})} + q^{2 (\rho, \epsilon_j^{\mu_l})} \big) \Big) \\
        &= \sum_{1 \leq l \leq m} a_l \sum_{1 \leq j \leq n_{\mu_l}} \Big( \big[(\lambda + \rho , \epsilon_j^{\mu_l})\big]_q^2 - \big[(\rho, \epsilon_j^{\mu_l})\big]_q^2 \Big) .
    \end{align*}
\end{proof}

The eigenvalues of any other Laplacians in \eqref{eq:Casimir elements as Laplacians} can be computed in a similar manner. However, we computed them only for this simple case, due to the following corollary, which may not hold for more general Laplacians, as the parameters in $\Zcal$ depend on $q$.

Recall that $\Cf^\infty(K_q)$ and $\Cf^\infty(K)$ admit the same Peter–Weyl decomposition, see \eqref{eq:classical matrix coefficients identification-semisimple case} and \eqref{eq:quantized algebra of functions}.

\begin{cor}\label{cor:eigenvalues of Casimir Laplacians}
    If $0 < a_1, \cdots, a_m < \infty$ and the representation
    \[
    \pi_{\boldsymbol{\mu}} := \pi_{\mu_1} \oplus \cdots \oplus \pi_{\mu_m} : \gf \rightarrow \End (V(\mu_1) \oplus \cdots \oplus V(\mu_m) )
    \]
    is faithful, in addition to the assumptions of Theorem~\ref{thm:eigenvalues of Casimir Laplacians}, then there exists a classical Laplacian $\square$ on $K$ such that
    \[
    C_\square (\lambda) = \lim_{q \rightarrow 1} C_{Z_{\boldsymbol{\mu}} ^{\abf}} (\lambda), \quad \lambda \in \weights^+.
    \]
    In other words, the Laplacian $Z_{\boldsymbol{\mu}} ^{\abf} \triangleright : \Cf^\infty(K_q) \rightarrow \Cf^\infty(K_q)$ converges to the classical Laplacian $\square$ as $q \rightarrow 1$.
\end{cor}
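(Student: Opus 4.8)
The plan is to compare the explicit eigenvalue formula \eqref{eq:eigenvalues of q-Laplacian} from Theorem~\ref{thm:eigenvalues of Casimir Laplacians} with that of a classical Laplacian, using that $\Cf^\infty(K_q)$ and $\Cf^\infty(K)$ share the same Peter--Weyl decomposition and that the operators in question are diagonal over it; then ``convergence as $q\to1$'' means pointwise convergence of the eigenvalue functions on $\weights^+$, which is exactly what the Corollary asserts. First I would let $q=e^h$ and use $[t]_q=\sinh(th)/\sinh(h)\to t$ as $h\to0$ for every fixed $t\in\Rbb$, whence $[t]_q^2\to t^2$. Since the double sum in \eqref{eq:eigenvalues of q-Laplacian} is finite, this yields
\begin{equation*}
\lim_{q\to1} C_{Z_{\boldsymbol{\mu}}^{\abf}}(\lambda) = \sum_{1\leq l\leq m} a_l \sum_{1\leq j\leq n_{\mu_l}} \Big( (\lambda+\rho,\epsilon_j^{\mu_l})^2 - (\rho,\epsilon_j^{\mu_l})^2 \Big), \quad \lambda\in\weights^+,
\end{equation*}
where $(\cdot,\cdot)$ is the Killing form fixed in Section~\ref{subsec:Semisimple Lie algebras}, restricted to $(i\tf)^*$ where all the weights and $\rho$ live.

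Next I would manufacture the target classical Laplacian. Define a symmetric bilinear form on $(i\tf)^*$ by $P_{\abf}(\nu,\nu') = \sum_{1\leq l\leq m} a_l \sum_{\epsilon\in\Pbf(\mu_l)} (\nu,\epsilon)(\nu',\epsilon)$, the inner sum taken with multiplicities. I claim $P_{\abf}$ is positive definite and $W$-invariant. For positive definiteness: $P_{\abf}(\nu,\nu)=0$ forces $(\nu,\epsilon)=0$ for every weight $\epsilon$ of every $\pi_{\mu_l}$ (using $0<a_l$), hence each $\pi_{\mu_l}$ annihilates the Killing-dual of $\nu$ in $\hf$; since $\pi_{\boldsymbol{\mu}}=\bigoplus_l \pi_{\mu_l}$ is faithful on $\gf$, we get $\nu=0$ --- this is the sole point where faithfulness is used. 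For $W$-invariance: $W$ preserves the Killing form and permutes each $\Pbf(\mu_l)$ with multiplicities. Now a $W$-invariant positive-definite form on $(i\tf)^*$ is, in the sign convention of Proposition~\ref{prop:eigenvalues of classical Casimir elements}, the dual form of a unique $\Ad$-invariant inner product $\langle\cdot,\cdot\rangle_{\abf}$ on $\kf$ (both data being parametrized by a choice of positive scalar on each simple ideal of $\kf$). Let $Z_{\abf}\in U^\Rbb(\kf)$ be the classical Casimir element of $\langle\cdot,\cdot\rangle_{\abf}$ (Definition~\ref{defn:classical Casimir elements}) and put $\square = Z_{\abf}\triangleright$, which is a classical Laplacian on $K$ by Proposition~\ref{prop:classical Laplacian characterization}.

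Finally, I would invoke Proposition~\ref{prop:eigenvalues of classical Casimir elements} to get $C_{\square}(\lambda) = P_{\abf}(\lambda+\rho,\lambda+\rho) - P_{\abf}(\rho,\rho)$, and expanding $P_{\abf}$ this equals $\sum_l a_l \sum_j ((\lambda+\rho,\epsilon_j^{\mu_l})^2 - (\rho,\epsilon_j^{\mu_l})^2) = \lim_{q\to1} C_{Z_{\boldsymbol{\mu}}^{\abf}}(\lambda)$ for all $\lambda\in\weights^+$, which is the desired statement. The main obstacle is exactly this last matching step: the $q\to1$ limit is written through \emph{Killing-form} pairings with the weights, whereas the classical Casimir eigenvalue is written through the form dual to the chosen inner product, so bridging the two requires the (standard but slightly delicate) identification of $W$-invariant positive-definite forms on the Cartan dual with $\Ad$-invariant inner products on $\kf$, together with care about the sign and normalization in Proposition~\ref{prop:eigenvalues of classical Casimir elements} --- which is precisely what the constant $\tfrac{2}{(q^{-1}-q)^2}$ in \eqref{eq:Casimir elements as Laplacian functionals} was inserted to reconcile.
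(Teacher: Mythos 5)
Your argument is correct and follows the same overall strategy as the paper: take the $q\to 1$ limit of the explicit eigenvalue formula \eqref{eq:eigenvalues of q-Laplacian} (which, as you say, reduces to $[t]_q\to t$ termwise in a finite sum) and then exhibit a classical Laplacian with exactly those eigenvalues. Where you diverge is in the second step. The paper (Theorem~\ref{thm:Eigenvalues of classical Laplacians on K}) assumes $K$ is \emph{simple}, writes each trace form $B_{\pi_{\mu_l}}$ as $b_{\mu_l}(\cdot,\cdot)$ via Proposition~\ref{prop:uniqueness of invariant Laplacians}, and reduces everything to a positive multiple of the Killing-form Casimir $Z_K$; the general semisimple case is deferred to an earlier version of the manuscript. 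You instead build the $W$-invariant positive-definite form $P_{\abf}$ on $(i\tf)^*$ directly from the weights and invoke the bijection between such forms and $\Ad$-invariant inner products on $\kf$ (scalars on each simple ideal on both sides). This is the same underlying object --- $P_{\abf}$ is just $\sum_l a_l B_{\pi_{\mu_l}}$ restricted to the Cartan --- but your packaging works uniformly in the semisimple case and isolates exactly where faithfulness of $\pi_{\boldsymbol{\mu}}$ enters (positive definiteness of $P_{\abf}$), which is a genuine gain.

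Two small caveats. First, Proposition~\ref{prop:eigenvalues of classical Casimir elements} is stated only for inner products of the form $-B_\pi|_{\kf\times\kf}$ for a single faithful $\pi$; to apply the eigenvalue formula $C_\square(\lambda)=Q(\lambda+\rho,\lambda+\rho)-Q(\rho,\rho)$ to the inner product you manufacture from $P_{\abf}$, you need the (immediate) extension of that proposition to an arbitrary invariant nondegenerate symmetric form $B$ with $-B|_{\kf\times\kf}$ positive definite --- its proof uses nothing else. You must also be careful that the eigenvalue is computed with the form $\zeta\mapsto h_\zeta$ \emph{dual} to $B|_\hf$, not with $B|_\hf$ itself transported via the Killing identification; your phrase ``dual form of a unique $\Ad$-invariant inner product'' gets this right, but it is the crux and deserves to be spelled out. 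Second, your closing remark misattributes the role of the constant $\tfrac{2}{(q^{-1}-q)^2}$ in \eqref{eq:Casimir elements as Laplacian functionals}: that factor (together with the $\tfrac12(z_\mu+z_{-w_0\mu})$ symmetrization) exists solely so that the eigenvalues assemble into $[x]_q^2$, whose limit is $x^2$; the reconciliation between Killing-form pairings and the dual form of the chosen inner product is carried entirely by the choice of inner product, not by that normalization.
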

\begin{proof}
    This follows from Theorems~\ref{thm:eigenvalues of Casimir Laplacians} and \ref{thm:Eigenvalues of classical Laplacians on K}.
\end{proof}

Corollary~\ref{cor:eigenvalues of Casimir Laplacians} leads us to the following definition:
\begin{defn}\label{defn:q-deformed Laplacians}
    Operators of the form $Z_{\boldsymbol{\mu}}^{\abf} \triangleright : \Cf^\infty(K_q) \rightarrow \Cf^\infty(K_q)$, for $\mu_1, \cdots, \mu_m \in \weights^+$ and $\abf = (a_1, \cdots, a_m) \in (0,\infty)^m$, such that $\mu_1, \cdots , \mu_m$ are mutually distinct, the representation $\pi_{\boldsymbol{\mu}}$ is faithful, the set $\{ \mu_1, \cdots, \mu_m \}$ is invariant under the transformation $-w_0$, and $a_l = a_k$ whenever $-w_0 \mu_l = \mu_k$, are called \textbf{$q$-deformed Laplacians}.
\end{defn}

The $q$-deformed Laplacians are not merely formal analogues of classical Laplacians (Theorem~\ref{thm:Casimir elements as Laplacians}), but genuine $q$-deformations of the latter (Corollary~\ref{cor:eigenvalues of Casimir Laplacians}).

\subsection{Eigenvalues of classical Laplacians on \texorpdfstring{$K$}{TEXT}}\label{subsec:Eigenvalues of classical Laplacians on K}

This subsection is devoted to the proof of the following theorem.

\begin{thm}\label{thm:Eigenvalues of classical Laplacians on K}
    Let $\mu_1, \cdots , \mu_m \in \weights^+$ and $\abf = ( a_1 , \cdots , a_m ) \in (0, \infty)^m$ be such that $\mu_1, \cdots , \mu_m$ are mutually distinct, the representation $\pi_{\boldsymbol{\mu}}$ is faithful, the set $\{ \mu_1, \cdots, \mu_m \}$ is invariant under the transformation $-w_0$, and $a_l = a_k$ whenever $-w_0 \mu_l = \mu_k$. Then, there exists a classical Laplacian $\square$ on $K$ such that
    \[
    C_\square (\lambda) = \sum_{1 \leq l \leq m} a_l \sum_{1 \leq j \leq n_{\mu_l}} \big( (\lambda + \rho , \epsilon_j^{\mu_l})^2 - (\rho, \epsilon_j^{\mu_l})^2 \big) .
    \]
\end{thm}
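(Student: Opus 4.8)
The plan is to realize the claimed eigenvalue function as that of the classical Laplacian $Z\triangleright$ attached to a carefully rescaled $\Ad$-invariant inner product. Write $\kf=\kf_1\oplus\cdots\oplus\kf_r$ for the decomposition into simple ideals, $\gf_i=\Cbb\otimes_\Rbb\kf_i$, and let $\kappa_i$ be the Killing form of $\gf_i$, so $\kappa=\bigoplus_i\kappa_i$ and each $\kappa_i|_{\kf_i}$ is negative definite. Consider the weighted trace form $B(X,Y)=\sum_l a_l\,\Tr_{V(\mu_l)}\!\big(\pi_{\mu_l}(X)\pi_{\mu_l}(Y)\big)$ on $\gf$; it is symmetric and $\ad$-invariant, hence by the structure theory of invariant forms on semisimple Lie algebras $B=\bigoplus_i c_i\kappa_i$ for scalars $c_i$. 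For $0\neq X\in\kf_i$ each $\pi_{\mu_l}(X)$ is skew-adjoint, so $\Tr_{V(\mu_l)}(\pi_{\mu_l}(X)^2)=-\|\pi_{\mu_l}(X)\|_{\mathrm{HS}}^2\le 0$, and faithfulness of $\pi_{\boldsymbol\mu}$ together with $a_l>0$ forces $B|_{\kf_i}$ to be negative definite; hence $c_i>0$ and $-c_i^{-1}\kappa_i|_{\kf_i}$ is positive definite. I would therefore set
\[
\la\cdot,\cdot\ra:=\bigoplus_{i=1}^r\big(-c_i^{-1}\big)\kappa_i\big|_{\kf_i},
\]
an $\Ad$-invariant inner product on $\kf$ ($\Ad$-invariance of $B$, hence of $\la\cdot,\cdot\ra$, comes from $\pi_{\mu_l}(\Ad(x)X)=\pi_{\mu_l}(x)\pi_{\mu_l}(X)\pi_{\mu_l}(x)^{-1}$ and cyclicity of the trace; one may also invoke connectedness of $K$ and the criterion after \eqref{eq:ad invariance of inner product}).

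Next I would compute the eigenvalues of the classical Laplacian $\square=Z\triangleright$ associated with $\la\cdot,\cdot\ra$, where $Z=-(X_1^2+\cdots+X_d^2)$ for an $\la\cdot,\cdot\ra$-orthonormal basis $\{X_j\}$ of $\kf$. Letting $\beta$ denote the $\Cbb$-bilinear extension of $\la\cdot,\cdot\ra$ to $\gf$, the basis $\{X_j\}$ is self-dual for $\beta$, so $Z=-\Omega_\beta$ is minus the Casimir element of $\beta$, and by the Casimir eigenvalue formula (Proposition~\ref{prop:eigenvalues of classical Casimir elements}) $\square$ acts on $V(\lambda)$ by $-\beta^\ast(\lambda,\lambda+2\rho)=-\beta^\ast(\lambda+\rho,\lambda+\rho)+\beta^\ast(\rho,\rho)$, where $\beta^\ast$ is the form induced on $\hf^\ast$. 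It then remains to identify $\beta^\ast$ on $(i\tf)^\ast$. On $\hf_i$ one has $B(H,H')=\sum_l a_l\sum_j\epsilon_j^{\mu_l}(H)\epsilon_j^{\mu_l}(H')=c_i\kappa_i(H,H')$; transporting this to $\hf_i^\ast$ via $\kappa_i$ and using $\epsilon_j^{\mu_l}(t^\kappa_\nu)=(\nu,\epsilon_j^{\mu_l})$ gives, for $\nu\in(i\tf_i)^\ast$, the identity $\sum_l a_l\sum_j(\nu,\epsilon_j^{\mu_l})^2=c_i\,\kappa_i^\ast(\nu,\nu)$, whereas $\beta|_{\hf_i}=-c_i^{-1}\kappa_i|_{\hf_i}$ yields $\beta^\ast|_{(i\tf_i)^\ast}=-c_i\kappa_i^\ast|_{(i\tf_i)^\ast}$. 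Both the form $\nu\mapsto\sum_l a_l\sum_j(\nu,\epsilon_j^{\mu_l})^2$ and $\beta^\ast$ are block-diagonal for $(i\tf)^\ast=\bigoplus_i(i\tf_i)^\ast$ — for $\beta^\ast$ this is manifest, and for the former it follows from its $W$-invariance (each weight system $\Pbf(\mu_l)$, counted with multiplicity, is $W$-stable) together with the absence of nonzero $W$-fixed vectors in any $(i\tf_i)^\ast$. Summing over $i$ gives $-\beta^\ast(\nu,\nu)=\sum_l a_l\sum_j(\nu,\epsilon_j^{\mu_l})^2$ for all $\nu\in(i\tf)^\ast$; substituting $\nu=\lambda+\rho$ and $\nu=\rho$ and taking the difference produces exactly the asserted formula for $C_\square(\lambda)$.

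The only genuinely non-routine step is the realization that one must use the \emph{reciprocal} of the trace form rather than the trace form itself — this is what the blocks $-c_i^{-1}\kappa_i$ record, and it is precisely where the bookkeeping between the compact real form $\kf$, its complexification $\gf$, the Cartan $\tf$ versus $i\tf$, and a bilinear form versus its induced dual/inverse form must all be tracked with correct signs; once the inner product is chosen, the rest is a direct application of standard structure theory and the Casimir eigenvalue formula. (The hypotheses relating $\{\mu_1,\ldots,\mu_m\}$ and $\boldsymbol a$ to $-w_0$ play no role in this argument; they are carried along only to match the hypotheses under which the $q$-deformed eigenvalues were computed in Theorem~\ref{thm:eigenvalues of Casimir Laplacians}, so that Corollary~\ref{cor:eigenvalues of Casimir Laplacians} follows by taking $q\to 1$.)
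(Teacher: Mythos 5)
Your proof is correct. It follows the same basic strategy as the paper --- reduce the weighted sum of squares of weight pairings to an invariant trace form, identify that form with a multiple of the Killing form, and invoke the Casimir eigenvalue formula of Proposition~\ref{prop:eigenvalues of classical Casimir elements} --- but your execution differs in two worthwhile ways. First, by working with the single weighted form $B=\sum_l a_l B_{\pi_{\mu_l}}$ and decomposing it over the simple ideals, you handle the general semisimple case directly, whereas the paper's proof assumes $K$ simple, treats each $\mu_l$ separately via $B_{\pi_{\mu_l}}=b_{\mu_l}(\cdot,\cdot)$, and sums the resulting Casimirs using Proposition~\ref{prop:uniqueness of invariant Laplacians}. (Your appeal to Proposition~\ref{prop:eigenvalues of classical Casimir elements} is slightly off-label, since it is stated only for a single trace form, but its proof uses nothing beyond invariance, nondegeneracy, and negative definiteness on $\kf$, so it does deliver the formula $C_Z(\lambda)=-\beta^*(\lambda+\rho,\lambda+\rho)+\beta^*(\rho,\rho)$ you need.) Second, the reciprocal you single out as the one non-routine point is exactly where the paper's own writeup slips: Proposition~\ref{prop:uniqueness of invariant Laplacians} asserts $Z=bZ_K$ when $\la\cdot,\cdot\ra=-b(\cdot,\cdot)$, but the induced form on $\kf^*$ scales by $b^{-1}$, so in fact $Z=b^{-1}Z_K$, and \eqref{eq:irreducible Laplacians} should read $Z_{\pi_\mu}=b_\mu^{-1}Z_K$ (for $K=SU(2)$ with the defining representation one has $b_\varpi=\tfrac14$ and $Z_{\pi_\varpi}=4Z_K$). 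This does not affect the truth of the theorem, since only the positivity of the proportionality constant matters and the first equalities of the paper's final chain are sound; it only means the inner product produced at the end is $-\big(\sum_l a_l b_{\mu_l}\big)^{-1}(\cdot,\cdot)$ rather than $-\sum_l a_l b_{\mu_l}^3(\cdot,\cdot)$, in agreement with your $-c_i^{-1}\kappa_i$ normalization. Your observation that the $-w_0$-invariance hypotheses play no role here and are carried only to match Theorem~\ref{thm:eigenvalues of Casimir Laplacians} is also accurate.
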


Throughout this subsection, we assume for simplicity that $K$ is simple. In this case, the assumption that $\pi_{\boldsymbol{\mu}}$ is faithful becomes redundant---it suffices to require that $\mu_l \neq 0$ for some $1 \leq l \leq m$. For the general case, see \cite[Section~8.4.2]{lee2024}, the first version of this manuscript.

Since terms with $\mu_k = 0$ contribute nothing, we assume from the outset that $m \geq 1$ and that $\mu_l \neq 0$ for all $1 \leq l \leq m$.

A complex bilinear form $B : \gf \times \gf \rightarrow \Cbb$ is called \textit{invariant} if
\begin{equation*}
    B\big( \ad X (Y), Z \big) = - B\big( Y, \ad X (Z) \big), \quad X, Y, Z \in \gf.
\end{equation*}
In particular, every complex bilinear form on $\gf$ extended from an $\Ad$-invariant inner product on $\kf$ is invariant; see \eqref{eq:ad invariance of inner product}.

\begin{prop}\label{prop:representations induce invariant forms}
    Let $\pi : K \rightarrow \Lbb(V)$ be a unitary representation of $K$ on a finite-dimensional Hilbert space $V$ whose induced Lie algebra representation, also denoted by $\pi : \kf \rightarrow \Lbb (V)$, is faithful. Then the symmetric bilinear form $B_\pi : \gf \times \gf \rightarrow \Cbb$ defined by
\begin{equation*}
    B_\pi(X, Y) = \Tr_V \big( \pi(X) \pi(Y) \big), \quad X, Y \in \gf
\end{equation*}
is an invariant, nondegenerate, symmetric bilinear form on $\gf$.

Moreover, the restriction $- B_\pi |_{\kf \times \kf} : \kf \times \kf \rightarrow \Rbb$ is an $\Ad$-invariant inner product on $\kf$.
\end{prop}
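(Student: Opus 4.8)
The plan is to establish each of the three stated properties of $B_\pi$ in turn---invariance, symmetry, nondegeneracy---and then deduce the final claim about $-B_\pi|_{\kf \times \kf}$ from these together with standard facts about compact Lie groups. Symmetry is immediate from the trace identity $\Tr_V(AB) = \Tr_V(BA)$ applied to $A = \pi(X)$, $B = \pi(Y)$. For invariance, I would differentiate the conjugation identity: since $\pi$ is a representation, $\pi(\Ad(\exp(tX))Y) = \pi(\exp(tX))\pi(Y)\pi(\exp(tX))^{-1}$, and differentiating at $t = 0$ gives $\pi(\ad X(Y)) = [\pi(X), \pi(Y)]$. Then
\[
B_\pi(\ad X(Y), Z) = \Tr_V\big([\pi(X),\pi(Y)]\pi(Z)\big) = \Tr_V\big(\pi(Y)[\pi(Z),\pi(X)]\big) = -B_\pi(Y, \ad X(Z)),
\]
using cyclicity of the trace, which is exactly the invariance condition.

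For nondegeneracy, the key observation is that since $\pi$ restricted to $\kf$ consists of skew-adjoint operators, for $X \in \kf$ we have $\pi(X)^* = -\pi(X)$, so
\[
B_\pi(X, X) = \Tr_V\big(\pi(X)^2\big) = -\Tr_V\big(\pi(X)^*\pi(X)\big) = -\|\pi(X)\|_{HS}^2 \leq 0,
\]
with equality if and only if $\pi(X) = 0$, i.e.\ (by faithfulness of $\pi|_\kf$) if and only if $X = 0$. Thus $-B_\pi$ restricts to a positive definite symmetric bilinear form on the real vector space $\kf$; in particular $B_\pi|_{\kf \times \kf}$ is nondegenerate, and since $\gf = \kf \oplus i\kf$ and $B_\pi$ is complex bilinear, nondegeneracy on $\kf$ forces nondegeneracy on all of $\gf$ (any null vector $X + iY$ with $X, Y \in \kf$ would pair trivially with all of $\kf$, hence $X = Y = 0$). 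This simultaneously proves that $-B_\pi|_{\kf\times\kf}$ is an inner product on $\kf$.

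It remains to check $\Ad$-invariance of this inner product, i.e.\ $B_\pi(\Ad(x)X, \Ad(x)Y) = B_\pi(X, Y)$ for $x \in K$ and $X, Y \in \kf$. This follows directly from the conjugation formula $\pi(\Ad(x)X) = \pi(x)\pi(X)\pi(x)^{-1}$ and cyclicity of the trace:
\[
B_\pi(\Ad(x)X, \Ad(x)Y) = \Tr_V\big(\pi(x)\pi(X)\pi(x)^{-1}\pi(x)\pi(Y)\pi(x)^{-1}\big) = \Tr_V\big(\pi(X)\pi(Y)\big) = B_\pi(X,Y).
\]
(Alternatively, one may note that $\Ad$-invariance of an inner product on $\kf$ is equivalent to the infinitesimal invariance condition \eqref{eq:ad invariance of inner product} when $K$ is connected---which it is, being compact and simply connected in the running setup---and this infinitesimal version is exactly the invariance of $B_\pi$ established above, restricted to $\kf$.) No step here presents a genuine obstacle; the only point requiring minor care is making sure the skew-adjointness of $\pi(X)$ for $X \in \kf$ is invoked correctly so that the sign works out and $-B_\pi$, rather than $B_\pi$, is the positive definite form.
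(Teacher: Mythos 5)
Your proof is correct and follows essentially the same route as the paper: invariance via cyclicity of the trace, and nondegeneracy plus positive definiteness of $-B_\pi|_{\kf\times\kf}$ via skew-adjointness of $\pi(X)$ for $X\in\kf$ together with faithfulness. The only (harmless) variations are that you verify $\Ad$-invariance directly from $\pi(\Ad(x)X)=\pi(x)\pi(X)\pi(x)^{-1}$ rather than deducing it from the infinitesimal identity and connectedness of $K$ as the paper does, and that you spell out the step from nondegeneracy on $\kf$ to nondegeneracy on $\gf=\kf\oplus i\kf$, which the paper leaves implicit.
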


\begin{proof}
    Throughout the proof, we write $B = B_\pi$. Observe that, for all $X, Y, Z \in \gf$, we have
\begin{align*}
    B\big( \ad X (Y), Z \big) &= \Tr_V \big( \pi([X, Y]) \pi(Z) \big) = \Tr_V \big( \big( \pi(X) \pi(Y) - \pi(Y) \pi(X) \big) \pi(Z) \big) \\
    &= - \Tr_V \big( \pi(Y) \big( \pi(X) \pi(Z) - \pi(Z) \pi(X) \big) \big) = - B\big( Y, \ad X (Z) \big),
\end{align*}
proving the invariance of $B$.

Thus, by \eqref{eq:exp ad is Ad}, we have
\[
B\big( \Ad(\exp X) Y, Z \big) = B\big( Y, \Ad(-\exp X) Z \big), \quad Y, Z \in \kf,
\]
which proves the $\Ad$-invariance of $-B|_{\kf \times \kf}$ since $K$ is connected.

Also, since $\pi(X) = \left. \frac{d}{dt} \right|_{t=0} \pi(\exp(tX))$ is a skew-adjoint operator on $V$ for $X \in \kf$, we have, by the faithfulness of $\pi$,
\[
    -B(X, X) = -\Tr_V\big( \pi(X) \pi(X) \big) > 0, \quad 0 \neq X \in \kf.
\]
This simultaneously proves the nondegeneracy of $B$ and the fact that $-B : \kf \times \kf \rightarrow \Rbb$ is an inner product on $\kf$.
\end{proof}

Recall that a \textit{root vector with root $\alpha \in \boldsymbol{\Delta}$} is an element $e_\alpha \in \gf$ satisfying
\[
[H, e_\alpha] = \alpha(H) e_\alpha, \quad \forall H \in \hf.
\]

\begin{lem}\label{lem:invariant inner products on compact semisimple Lie algebras}
    Let $B$ be an invariant, nondegenerate, symmetric bilinear form on $\gf$. Then:
    \begin{enumerate}
        \item Given a root vector $e_\alpha \in \gf$ with root $\alpha \in \boldsymbol{\Delta}$, we have, for $H \in \hf$,
        \[
        B(e_\alpha, H) = 0,
        \]
        and for any other root vector $e_\beta$ with $-\alpha \neq \beta \in \boldsymbol{\Delta}$,
        \[
        B(e_\alpha, e_\beta) = 0.
        \]
        
        \item $B|_{\hf \times \hf}$ is nondegenerate.

        \item $B(e_\alpha, e_{-\alpha}) \neq 0$ for any nonzero root vectors $e_\alpha, e_{-\alpha}$ with roots $\alpha, -\alpha \in \boldsymbol{\Delta}$, respectively.
    \end{enumerate}

    Using (2), we identify $\hf^* \cong \hf$, and for each $\zeta \in \hf^*$, denote the corresponding element in $\hf$ by $h_\zeta$, i.e.,
    \begin{equation}\label{eq:definition of hzeta}
        h_\zeta \in \hf \quad \text{and} \quad B(h_\zeta, H) = \zeta(H) \quad \text{for all } H \in \hf.
    \end{equation}
    Then, for any root vectors $e_\alpha, e_{-\alpha}$ with roots $\alpha, -\alpha \in \boldsymbol{\Delta}$, respectively,
    \begin{equation}\label{eq:hzeta as commutator}
        [e_\alpha, e_{-\alpha}] = B(e_{\alpha}, e_{-\alpha}) h_\alpha.
    \end{equation}
\end{lem}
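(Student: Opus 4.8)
The proof will rest entirely on the invariance identity $B(\ad X(Y), Z) = -B(Y, \ad X(Z))$ together with the root space decomposition $\gf = \hf \oplus \bigoplus_{\alpha \in \boldsymbol{\Delta}} \gf_\alpha$, in which each root space $\gf_\alpha$ is one-dimensional because $\gf$ is semisimple and $\hf$ is a Cartan subalgebra (see, e.g., \cite[Chapter~II]{Knapp}). The plan is to treat the three items in order, deduce \eqref{eq:hzeta as commutator} from part (2), and keep every computation at the level of evaluating $B$ against elements of $\hf$.

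For (1), I would apply invariance with $X = H' \in \hf$. Given a root vector $e_\alpha$ and any $H \in \hf$, this yields $\alpha(H')\, B(e_\alpha, H) = B([H', e_\alpha], H) = -B(e_\alpha, [H', H]) = 0$; since $\alpha \neq 0$ there is an $H'$ with $\alpha(H') \neq 0$, so $B(e_\alpha, H) = 0$. Running the same computation with a second root vector $e_\beta$ in place of $H$ gives $\big(\alpha(H') + \beta(H')\big)\, B(e_\alpha, e_\beta) = 0$, and choosing $H'$ with $(\alpha + \beta)(H') \neq 0$ (possible since $\alpha + \beta \neq 0$) forces $B(e_\alpha, e_\beta) = 0$. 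Part (2) is then immediate: by (1) the subspace $\hf$ is $B$-orthogonal to every root space, so if $H \in \hf$ satisfies $B(H, H') = 0$ for all $H' \in \hf$, then $B(H, \gf) = 0$, whence $H = 0$ by nondegeneracy of $B$; thus $B|_{\hf \times \hf}$ is nondegenerate. For (3), fix a nonzero $e_\alpha \in \gf_\alpha$; nondegeneracy of $B$ gives some $v \in \gf$ with $B(e_\alpha, v) \neq 0$, and writing $v = H + \sum_{\beta \in \boldsymbol{\Delta}} v_\beta$ along the root space decomposition and applying (1) shows $B(e_\alpha, v) = B(e_\alpha, v_{-\alpha})$, so the $\gf_{-\alpha}$-component $v_{-\alpha}$ is nonzero and pairs nontrivially with $e_\alpha$. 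Since $\dim \gf_{-\alpha} = 1$, any nonzero $e_{-\alpha}$ is a nonzero scalar multiple of $v_{-\alpha}$, and therefore $B(e_\alpha, e_{-\alpha}) \neq 0$.

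Finally, for \eqref{eq:hzeta as commutator}: since $[\gf_\alpha, \gf_{-\alpha}] \subseteq \gf_0 = \hf$, the bracket $[e_\alpha, e_{-\alpha}]$ lies in $\hf$. For any $H \in \hf$, invariance and symmetry of $B$ give
\[
B([e_\alpha, e_{-\alpha}], H) = -B(e_{-\alpha}, [e_\alpha, H]) = B(e_{-\alpha}, [H, e_\alpha]) = \alpha(H)\, B(e_\alpha, e_{-\alpha}) = B\big(B(e_\alpha, e_{-\alpha})\, h_\alpha,\, H\big),
\]
where the last equality uses the defining property \eqref{eq:definition of hzeta} of $h_\alpha$. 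As both $[e_\alpha, e_{-\alpha}]$ and $B(e_\alpha, e_{-\alpha})\, h_\alpha$ belong to $\hf$ and $B|_{\hf \times \hf}$ is nondegenerate by part (2), they must be equal. I do not anticipate any serious obstacle; the only point requiring a word of justification is the one-dimensionality of root spaces invoked in part (3), which is a standard structural fact for semisimple Lie algebras and can simply be cited.
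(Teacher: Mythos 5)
Your proof is correct and complete; the paper itself does not argue this lemma but simply cites \cite[Section~II.4]{Knapp}, and the argument you give (invariance applied with $X \in \hf$ to get orthogonality of root spaces, nondegeneracy of $B|_{\hf\times\hf}$ by elimination, and the pairing identity for $[e_\alpha,e_{-\alpha}]$ tested against $\hf$) is exactly the standard one found there. The only ingredient you invoke beyond the hypotheses, the one-dimensionality of root spaces, is indeed standard and fine to cite.
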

\begin{proof}
    See \cite[Section~II.4]{Knapp}.
\end{proof}

\begin{prop}\label{prop:eigenvalues of classical Casimir elements}
    Consider the invariant bilinear form $B_\pi$ from Proposition~\ref{prop:representations induce invariant forms}. For each $\zeta \in \hf^*$, define $h_\zeta^\pi$ as in \eqref{eq:definition of hzeta} using $B_\pi$.
    Let $Z_\pi \in U^\Rbb(\kf)$ be the classical Casimir element associated with the $\Ad$-invariant inner product $-B_\pi|_{\kf \times \kf}$. Then, its eigenvalues over the Peter-Weyl decomposition are given by
    \[
    C_{Z_\pi} (\lambda) = \Tr_V\big( \pi(h_{\lambda + \rho}^\pi)^2 - \pi(h_\rho^\pi)^2 \big), \quad \lambda \in \weights^+.
    \]
\end{prop}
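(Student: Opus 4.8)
The plan is to reduce $C_{Z_\pi}(\lambda)$ to the scalar by which $Z_\pi$ acts on a highest-weight vector, and then carry out the standard Harish--Chandra/Freudenthal computation for the Casimir element, keeping careful track of the fact that the paper's $\Ad$-invariant inner product is $-B_\pi|_{\kf\times\kf}$ while the form that is nondegenerate on all of $\gf$ (and in particular on $\hf$) is $B_\pi$ itself.

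First I would record that, since $Z_\pi$ is $\ad$-invariant (Proposition~\ref{prop:classical Casimir element is self-adjoint, ad-invariant, and Hermitian}), Corollary~\ref{cor:ad-invariance criterion for a linear functional} gives $(Z_\pi, u^\lambda_{ij}) = C_{Z_\pi}(\lambda)\,\delta_{ij}$ for any Peter--Weyl basis, so $C_{Z_\pi}(\lambda)$ is intrinsic and we are free to compute it using a basis of $V(\lambda)$ consisting of weight vectors that includes a highest-weight unit vector $v_\lambda$. By Proposition~\ref{prop:differential operators from linear functionals} together with the multiplicativity of the skew-pairing of Proposition~\ref{prop:classical skew-pairing}, one has $(A, u^\lambda_{ij}) = \la e_i^\lambda, \pi_\lambda(A)\, e_j^\lambda\ra$ for $A \in U^\Rbb(\kf)$, whence $C_{Z_\pi}(\lambda) = \la v_\lambda, \pi_\lambda(Z_\pi)\, v_\lambda\ra$. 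Next I would identify $Z_\pi$ with the coordinate-free Casimir $\Omega_{B_\pi} = \sum_a Y_a Y^a$ of the pair $(\gf, B_\pi)$: an orthonormal basis $\{X_1,\dots,X_d\}$ of $\kf$ for $-B_\pi|_{\kf\times\kf}$ satisfies $B_\pi(X_i,X_j) = -\delta_{ij}$ and is a $\Cbb$-basis of $\gf = \Cbb\otimes_\Rbb\kf$, so its $B_\pi$-dual basis is $\{-X_1,\dots,-X_d\}$ and $\Omega_{B_\pi} = \sum_i X_i(-X_i) = -\sum_i X_i^2 = Z_\pi$; in particular $Z_\pi = \Omega_{B_\pi}$ may be evaluated using any pair of $B_\pi$-dual bases of $\gf$ (the standard basis-independence of the Casimir, cf.\ the remark after Definition~\ref{defn:classical Casimir elements}).

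Then I would expand $\Omega_{B_\pi}$ along the root-space decomposition $\gf = \hf \oplus \bigoplus_{\alpha\in\boldsymbol{\Delta}}\gf_\alpha$. Fixing $B_\pi|_{\hf\times\hf}$-dual bases $\{H_k\},\{H^k\}$ of $\hf$ (legitimate by part~(2) of Lemma~\ref{lem:invariant inner products on compact semisimple Lie algebras}) and nonzero root vectors $e_\alpha$ with root $\alpha$, parts~(1) and~(3) of that lemma show that the $B_\pi$-dual basis of $\{H_k\}\cup\{e_\alpha : \alpha\in\boldsymbol{\Delta}\}$ is $\{H^k\}\cup\{\,e_{-\alpha}/B_\pi(e_\alpha,e_{-\alpha})\,\}$, so
\[
Z_\pi \;=\; \sum_k H_k H^k \;+\; \sum_{\alpha\in\boldsymbol{\Delta}^+}\frac{e_\alpha e_{-\alpha} + e_{-\alpha} e_\alpha}{B_\pi(e_\alpha,e_{-\alpha})}.
\]
Applying $\pi_\lambda$ and evaluating on $v_\lambda$: the Cartan part contributes $\sum_k \lambda(H_k)\lambda(H^k)\, v_\lambda = B_\pi(h_\lambda^\pi, h_\lambda^\pi)\, v_\lambda$ (expand $h_\lambda^\pi$ in the dual bases using \eqref{eq:definition of hzeta}); for each $\alpha\in\boldsymbol{\Delta}^+$, $\pi_\lambda(e_\alpha)v_\lambda = 0$ kills the $e_{-\alpha}e_\alpha$ term, while \eqref{eq:hzeta as commutator} rewrites $e_\alpha e_{-\alpha}$ as $B_\pi(e_\alpha,e_{-\alpha})\,h_\alpha^\pi$ modulo $e_{-\alpha}e_\alpha$, giving $\lambda(h_\alpha^\pi)\, v_\lambda$. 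Summing over $\boldsymbol{\Delta}^+$ and using linearity of $\zeta\mapsto h_\zeta^\pi$ together with $\sum_{\alpha\in\boldsymbol{\Delta}^+}\alpha = 2\rho$, I obtain $\pi_\lambda(Z_\pi)v_\lambda = \big(B_\pi(h_\lambda^\pi,h_\lambda^\pi) + 2\,B_\pi(h_\lambda^\pi,h_\rho^\pi)\big)v_\lambda$.

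Finally, completing the square, $B_\pi(h_\lambda^\pi,h_\lambda^\pi) + 2\,B_\pi(h_\lambda^\pi,h_\rho^\pi) = B_\pi(h_{\lambda+\rho}^\pi, h_{\lambda+\rho}^\pi) - B_\pi(h_\rho^\pi, h_\rho^\pi)$, and since $B_\pi(X,Y) = \Tr_V(\pi(X)\pi(Y))$ this equals $\Tr_V\big(\pi(h_{\lambda+\rho}^\pi)^2\big) - \Tr_V\big(\pi(h_\rho^\pi)^2\big)$, which is the asserted value of $C_{Z_\pi}(\lambda)$. The only genuinely delicate point — and hence the step that needs the most care — is the bookkeeping around the two forms: the inner product on $\kf$ is $-B_\pi|_{\kf\times\kf}$, whereas the object governing the root geometry and the identification $\hf^*\cong\hf$ is $B_\pi$ itself, and one must verify the signs so that $Z_\pi = +\Omega_{B_\pi}$; everything after that is the routine Freudenthal calculation.
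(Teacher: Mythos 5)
Your proof is correct and follows essentially the same route as the paper's: both reduce $C_{Z_\pi}(\lambda)$ to the scalar by which $Z_\pi$ acts on a highest-weight vector and then run the standard Freudenthal computation through the root-space decomposition, using $\pi_\lambda(e_\alpha)v_\lambda = 0$, $\sum_{\alpha\in\boldsymbol{\Delta}^+}\alpha = 2\rho$, and a completion of the square. The only difference is organizational: the paper passes from the defining orthonormal basis of $(\kf, -B_\pi|_{\kf\times\kf})$ to root vectors by explicitly constructing $x_\alpha = \tfrac{1}{2}(e_\alpha - f_\alpha)$, $y_\alpha = \tfrac{1}{2i}(e_\alpha + f_\alpha)$ with the normalization $B_\pi(e_\alpha,f_\alpha)=2$, whereas you invoke basis-independence of $\sum_a Y_a Y^a$ for $B_\pi$-dual bases of $\gf$ --- both are legitimate, and your sign check $\Omega_{B_\pi} = -\sum_i X_i^2 = Z_\pi$ is exactly the delicate point and is handled correctly.
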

\begin{proof}
    Throughout the proof, we write $B = B_\pi$ and $h_\zeta = h_\zeta^\pi$.
    For each $\alpha \in \boldsymbol{\Delta}^+$, we choose a nonzero root vector $e_\alpha \in \gf$. Then $f_\alpha = -\overline{e_\alpha} \in \gf$ is a root vector with root $-\alpha$. Let
\[
x_\alpha = \frac{e_\alpha - f_\alpha}{2}, \quad y_\alpha = \frac{e_\alpha + f_\alpha}{2i} \in \kf.
\]
Since $B(e_\alpha, \overline{e_\alpha}) = B(x_\alpha, x_\alpha) + B(y_\alpha, y_\alpha) < 0$, we can normalize $e_\alpha$ so that $-B(e_\alpha, f_\alpha) = B(e_\alpha, \overline{e_\alpha}) = -2$. Then, by \eqref{eq:hzeta as commutator}, we have
\begin{equation}\label{eq:halpha as commutator}
    [e_\alpha, f_\alpha] = 2 h_\alpha.
\end{equation}
Also, by Lemma~\ref{lem:invariant inner products on compact semisimple Lie algebras}~(1),
\begin{equation}\label{eq:orthonormality of root vectors}
    -B(x_\alpha, x_\beta) = -B(y_\alpha, y_\beta) = \delta_{\alpha \beta}, \quad -B(x_\alpha, y_\beta) = 0, \quad \alpha, \beta \in \boldsymbol{\Delta}^+.
\end{equation}

We further fix an orthonormal basis $\{h_1, \dots, h_N\}$ of $\tf = \hf \cap \kf$ with respect to the inner product $-B|_{\tf \times \tf}$ (cf. Lemma~\ref{lem:invariant inner products on compact semisimple Lie algebras}~(2)). Then, again by Lemma~\ref{lem:invariant inner products on compact semisimple Lie algebras}~(1) and \eqref{eq:orthonormality of root vectors}, we see that
\[
\{ h_j, \, x_\alpha, \, y_\alpha \mid 1 \leq j \leq N, \, \alpha \in \boldsymbol{\Delta}^+ \} \subseteq \kf
\]
is an orthonormal basis of $(\kf, -B|_{\kf \times \kf})$.

Therefore, $Z_\pi$ is given by
\begin{align*}
    Z_\pi = -\Big( \sum_{1 \leq j \leq N} h_j^2 + \sum_{\alpha \in \boldsymbol{\Delta}^+} (x_\alpha^2 + y_\alpha^2) \Big) = -\Big( \sum_{1 \leq j \leq N} h_j^2 - \frac{1}{2} \sum_{\alpha \in \boldsymbol{\Delta}^+} (e_\alpha f_\alpha + f_\alpha e_\alpha) \Big),
\end{align*}
which, by \eqref{eq:halpha as commutator}, is equal to
\[
- \Big( \sum_{1 \leq j \leq N} h_j^2 - \sum_{\alpha \in \boldsymbol{\Delta}^+} h_\alpha - \sum_{\alpha \in \boldsymbol{\Delta}^+} f_\alpha e_\alpha \Big).
\]
To compute its eigenvalues, let $\lambda \in \weights^+$ and $v_\lambda \in V(\lambda)$ be a highest weight vector. Then $e_\alpha v_\lambda = 0$ for any $\alpha \in \boldsymbol{\Delta}^+$ by definition. Hence,
\begin{align*}
    -C_{Z_\pi}(\lambda) v_\lambda = -\pi_\lambda(Z_\pi) v_\lambda &= \pi_\lambda\Big( \sum_{1 \leq j \leq N} h_j^2 - \sum_{\alpha \in \boldsymbol{\Delta}^+} h_\alpha - \sum_{\alpha \in \boldsymbol{\Delta}^+} f_\alpha e_\alpha \Big) v_\lambda \\
    &= \Big( \sum_{1 \leq j \leq N} \lambda(h_j)^2 - \sum_{\alpha \in \boldsymbol{\Delta}^+} \lambda(h_\alpha) \Big) v_\lambda \\
    &= \Big( \sum_{1 \leq j \leq N} B(h_\lambda, h_j)^2 - \sum_{\alpha \in \boldsymbol{\Delta}^+} B(h_\lambda, h_\alpha) \Big) v_\lambda \\
    &= \Big( -\sum_{1 \leq j \leq N} \big( -B(ih_\lambda, h_j) \big)^2 - 2 B(h_\lambda, h_\rho) \Big) v_\lambda \\
    &= \Big( -\big( -B(ih_\lambda, ih_\lambda) \big) - 2 B(h_\lambda, h_\rho) \Big) v_\lambda \\
    &= \big( -B(h_\lambda, h_\lambda) - 2 B(h_\lambda, h_\rho) \big) v_\lambda \\
    &= \big( -B(h_{\lambda + \rho}, h_{\lambda + \rho}) + B(h_\rho, h_\rho) \big) v_\lambda \\
    &= -\Tr_V\big( \pi(h_{\lambda + \rho})^2 - \pi(h_\rho)^2 \big) v_\lambda.
\end{align*}
\end{proof}

So far, we have not needed the simplicity assumption. However, it becomes necessary for the following.

\begin{prop}\label{prop:uniqueness of invariant Laplacians}
    Let $Z_K \in U^\Rbb(\kf)$ be the classical Casimir element associated with $-(\cdot,\cdot) : \kf \times \kf \rightarrow \Rbb$, the negative of the Killing form. If $Z \in U^\Rbb(\kf)$ is any other classical Casimir element associated with an $\Ad$-invariant inner product $\la \cdot , \cdot \ra : \kf \times \kf \rightarrow \Rbb$, then there exists a positive constant $b>0$ such that
    \[
    Z = b Z_K, \quad \la \cdot, \cdot \ra = -b ( \cdot, \cdot ).
    \]
    In particular, any summation of classical Laplacians is also a classical Laplacian.
\end{prop}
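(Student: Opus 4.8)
The plan is to reduce the statement to the rigidity of invariant bilinear forms on a \emph{simple} complex Lie algebra, which is where the hypothesis that $K$ is simple enters decisively. Since $K$ is simple, $\gf = \Cbb\otimes_\Rbb\kf$ is a simple complex Lie algebra, so its adjoint representation is irreducible. Given any invariant symmetric bilinear form $B$ on $\gf$, the map $\gf\ni X\mapsto B(X,\,\cdot\,)\in\gf^*$ intertwines the adjoint and coadjoint representations of $\gf$; composing it with the inverse of the analogous isomorphism $\gf\xrightarrow{\cong}\gf^*$ furnished by the nondegenerate Killing form $(\cdot,\cdot)$ produces a $\gf$-module endomorphism of the irreducible adjoint representation, which by Schur's lemma is a scalar. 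Hence $B = c\,(\cdot,\cdot)$ for some $c\in\Cbb$; that is, up to scalar the Killing form is the only invariant symmetric bilinear form on $\gf$. (Lemma~\ref{lem:invariant inner products on compact semisimple Lie algebras} supplies the structural facts one would otherwise invoke for a more hands-on argument.)

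First I would apply this to the invariant form attached to the given inner product: the complex-bilinear extension of $\langle\cdot,\cdot\rangle\colon\kf\times\kf\to\Rbb$ to $\gf$ is invariant (as recorded just before \eqref{eq:ad invariance of inner product}), hence equals $c\,(\cdot,\cdot)$. Evaluating on a single nonzero $X\in\kf$ pins down the sign of $c$: the left-hand side $\langle X,X\rangle$ is strictly positive, whereas the Killing form is negative definite on $\kf$ — this is Proposition~\ref{prop:representations induce invariant forms} applied to the adjoint representation, whose induced Lie algebra representation $\ad\colon\kf\to\Lbb(\kf)$ is faithful because $\kf$ is semisimple and hence centerless. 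Thus $c<0$, so $\langle\cdot,\cdot\rangle$ is a negative multiple of $(\cdot,\cdot)$ on $\kf$. Because the classical Casimir element $Z = -\sum_j X_j^2$ does not depend on the chosen orthonormal basis, a positive rescaling of the defining inner product multiplies $Z$ by the reciprocal factor (Definition~\ref{defn:classical Casimir elements}); tracking this bookkeeping yields $Z = b\,Z_K$ for a constant $b>0$, with $\langle\cdot,\cdot\rangle$ the corresponding negative multiple of the Killing form.

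For the last assertion, I would use the fact that by Definition~\ref{defn:classical Laplacian} and Proposition~\ref{prop:classical Laplacian characterization} the classical Laplacians on $K$ are exactly the operators $Z\triangleright$, where $Z$ ranges over the Casimir elements of $\Ad$-invariant inner products; by the first part each such $Z\triangleright$ equals $b\,(Z_K\triangleright)$ for some $b>0$. A finite sum of classical Laplacians therefore equals $\bigl(\sum_j b_j\bigr)(Z_K\triangleright)$ with every $b_j>0$; since $\sum_j b_j>0$, the element $\bigl(\sum_j b_j\bigr)Z_K$ is the classical Casimir element of the $\Ad$-invariant inner product $-\bigl(\sum_j b_j\bigr)^{-1}(\cdot,\cdot)$, so the sum is again a classical Laplacian. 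The strict positivity of the $b_j$, which rules out cancellation, is precisely the point where simplicity of $K$ is used in an essential way.

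I expect the main obstacle to be the first paragraph: the one-dimensionality of the space of invariant symmetric bilinear forms genuinely relies on simplicity of $\gf$ — for a merely semisimple $\gf$ each simple summand contributes an independent parameter and the uniqueness fails — which is why the subsection restricts to simple $K$ and defers the general case to \cite[Section~8.4.2]{lee2024}. Once that rigidity is in place, the remainder is elementary linear algebra together with the sign input from the Killing form.
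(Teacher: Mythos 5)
Your rigidity argument is the same as the paper's: the paper also extends the inner product complex-bilinearly to $\gf$ and invokes the one-dimensionality of the space of invariant symmetric bilinear forms on a simple Lie algebra (citing Humphreys, Exercise~6.6, which is proved by exactly the Schur's-lemma argument you sketch), then fixes the sign of the scalar from positive-definiteness of $\la\cdot,\cdot\ra$ versus negative-definiteness of the Killing form on $\kf$. Where you genuinely diverge is the identity $Z = bZ_K$: the paper obtains it by comparing the two operators through the defining property of the Laplacian, namely $\hcal\big(\overline{f}(Z\triangleright g)\big) = \hcal\big(\la df,dg\ra\big) = -b\,\hcal\big((df,dg)\big) = b\,\hcal\big(\overline{f}(Z_K\triangleright g)\big)$ followed by faithfulness of the Haar state, whereas you rescale an orthonormal basis directly. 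Your route is more elementary and does not need the Haar state.

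However, your second paragraph has a concrete gap at the constant. You correctly observe that rescaling the defining inner product by a positive factor multiplies the Casimir by the \emph{reciprocal} factor: if $\{X_j\}$ is orthonormal for $-(\cdot,\cdot)$ and $\la\cdot,\cdot\ra = -b(\cdot,\cdot)$, then $\{b^{-1/2}X_j\}$ is orthonormal for $\la\cdot,\cdot\ra$ and hence $Z = -\sum_j (b^{-1/2}X_j)^2 = b^{-1}Z_K$. So "tracking this bookkeeping" lands on $Z = b^{-1}Z_K$, not on $Z = bZ_K$ with the \emph{same} $b$ that appears in $\la\cdot,\cdot\ra = -b(\cdot,\cdot)$; the two displayed identities of the proposition cannot both hold with one constant unless $b=1$, and your write-up papers over this by calling $\la\cdot,\cdot\ra$ "the corresponding negative multiple" without committing to which multiple. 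You must either accept $Z = b^{-1}Z_K$ or invert $b$ in one of the two identities. (The same reciprocal tension is present in the paper's own chain of equalities: the relation $\la\cdot,\cdot\ra = -b(\cdot,\cdot)$ holds on $\kf$, while $df$ and $dg$ live in $\kf^*$, where the induced dual forms scale by $b^{-1}$.) This does not damage anything downstream — all that is used is that every classical Casimir element lies on the ray $\Rbb_{>0}Z_K$, which both routes establish, and which immediately gives the "in particular" clause — but as a proof of the literal statement the constant needs to be fixed. A last small quibble: in your final paragraph, the strict positivity of the coefficients comes from positive-definiteness, not from simplicity; simplicity is what forces all the Casimirs onto a single ray, which is where it is truly essential.
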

\begin{proof}
    We extend $-\la \cdot , \cdot \ra$ to a complex bilinear form $B : \gf \times \gf \rightarrow \Cbb$.
    Since $\gf$ is simple, by \cite[Exercise~6.6]{Humphreys}, there exists $0 \neq b \in \Cbb$ such that $B = b(\cdot,\cdot)$, which implies
    \[
    \la \, \cdot \, , \, \cdot \, \ra = -b ( \, \cdot \, , \, \cdot \, ).
    \]
    However, since both $\la \cdot, \cdot \ra$ and $-(\cdot,\cdot)$ are positive definite on $\kf$, $b$ must be positive. Now, observe that, by Proposition~\ref{prop:classical Laplacian characterization},
    \[
    \hcal \big( \overline{f}(Z \triangleright g) \big) = \hcal \big( \la df, dg \ra \big) = -b \hcal \big( (df,dg) \big) = b \hcal \big( \overline{f}(Z_K \triangleright g) \big)
    \]
    for all $f, g \in \Cf^\infty(K)$, where $\hcal$ is the Haar state of $\Cf^\infty(K)$. By the faithfulness of $\hcal$, this implies $Z = b Z_K$.
\end{proof}

Let $0 \neq \mu \in \weights^+$. Then, since $\gf$ is simple, by Propositions~\ref{prop:representations induce invariant forms} and \ref{prop:uniqueness of invariant Laplacians}, there exists a constant $b_\mu > 0$ such that
\begin{equation*}
\Tr_{V(\mu)} \big( \pi_\mu (X) \pi_\mu (Y) \big) = b_\mu ( X, Y ), \quad X, Y \in \kf.
\end{equation*}

For each $\zeta \in \hf^*$, let $H_\zeta \in \hf$ be the unique element satisfying
\[
(H_\zeta , H) = \zeta(H), \quad \forall H \in \hf.
\]
Then, we have
\begin{equation*}
    b_\mu^{-1} \Tr_{V(\mu)} \big( \pi_\mu( H_\zeta ) \pi_\mu( H ) \big) = (H_\zeta , H) = \zeta(H), \quad H \in \hf,
\end{equation*}
and hence $h_\zeta^{\pi_\mu}$ in Proposition~\ref{prop:eigenvalues of classical Casimir elements} is given by
\begin{equation}\label{eq:Hzeta and  hzeta}
h_\zeta^{\pi_\mu} = b_\mu^{-1} H_\zeta.
\end{equation}

Proposition~\ref{prop:uniqueness of invariant Laplacians} also implies that the classical Casimir element $Z_{\pi_\mu}$ associated with the $\Ad$-invariant inner product
\[
\kf \times \kf \ni (X, Y) \longmapsto -\Tr_{V(\mu)} \big( \pi_\mu(X) \pi_\mu(Y) \big) \in \Rbb
\]
is given by
\begin{equation}\label{eq:irreducible Laplacians}
    Z_{\pi_\mu} = b_\mu Z_K.
\end{equation}

\begin{proof}[Proof of Theorem~\ref{thm:Eigenvalues of classical Laplacians on K}]
\begin{align*}
\sum_{1 \leq l \leq m} a_l \sum_{1 \leq j \leq n_{\mu_l}} \big( (\lambda + \rho, \epsilon_j^{\mu_l})^2 - (\rho, \epsilon_j^{\mu_l})^2 \big) 
= \sum_{1 \leq l \leq m} a_l \Tr_{V(\mu_l)} \big( \pi_{\mu_l}(H_{\lambda + \rho})^2 - \pi_{\mu_l}(H_\rho)^2 \big) \\
= \sum_{1 \leq l \leq m} a_l \, b_{\mu_l}^2 \Tr_{V(\mu_l)} \big( \pi_{\mu_l}(h_{\lambda + \rho}^{\pi_{\mu_l}})^2 - \pi_{\mu_l}(h_\rho^{\pi_{\mu_l}})^2 \big) \\
= \sum_{1 \leq l \leq m} a_l \, b_{\mu_l}^2 \, C_{Z_{\pi_{\mu_l}}}(\lambda) = \sum_{1 \leq l \leq m} a_l \, b_{\mu_l}^3 \, C_{Z_K}(\lambda)
\end{align*}
by \eqref{eq:Hzeta and  hzeta}, Proposition~\ref{prop:eigenvalues of classical Casimir elements}, and \eqref{eq:irreducible Laplacians}.
However, according to the proof of Proposition~\ref{prop:uniqueness of invariant Laplacians}, this is the eigenvalue at $\lambda \in \weights^+$ of the classical Laplacian associated with the $\Ad$-invariant inner product
\[
- \sum_{1 \leq l \leq m} a_l \, b_{\mu_l}^3 (\,\cdot\,,\,\cdot\,) : \kf \times \kf \to \Rbb.
\]
\end{proof}

\begin{rmk}\label{rmk:q->1 limits of q-deformed Laplacians exhaust all classical Laplacians}
Proposition~\ref{prop:uniqueness of invariant Laplacians} and the proof of Theorem~\ref{thm:Eigenvalues of classical Laplacians on K} show that, given any classical Laplacian on $K$, there exist infinitely many $q$-deformed Laplacians on $K_q$ that converge to it as $q \to 1$; this can be achieved by choosing any $0 \neq \mu, -w_0 \mu \in \Pbf^+$ and appropriately controlling the constants $\abf$.
\end{rmk}

\section{The \texorpdfstring{$q$}{TEXT}-deformed Laplacians}\label{sec:The q-deformed Laplacians}

Throughout this section, we fix $\mu_1, \cdots, \mu_m \in \weights^+$ and $0 < a_1, \cdots, a_m < \infty$, such that $\mu_1, \cdots, \mu_m$ are mutually distinct, the representation $\pi_{\boldsymbol{\mu}}$ is faithful, the set $\{ \mu_1, \cdots, \mu_m \}$ is invariant under the transformation $-w_0$, and $a_l = a_k$ whenever $-w_0 \mu_l = \mu_k$; that is, we fix a $q$-deformed Laplacian (Definition~\ref{defn:q-deformed Laplacians})
\[
Z := Z_{\boldsymbol{\mu}}^{\abf}
\]
and explore its properties. For simplicity, we assume that $K$ is simple. For the general case, see \cite[Section~8.5]{lee2024}, the first version of this paper.

\subsection{Spectrum}\label{subsec:Spectrum}

\begin{prop}\label{prop:lower semiboundedness of Laplacian functionals}
    The eigenvalues \eqref{eq:eigenvalues of q-Laplacian} are lower semibounded; that is, there exists $L \in \Rbb$ such that
    \[
    L \leq C_{Z}(\lambda), \quad \forall \lambda \in \weights^+.
    \]
    Moreover, $C_{Z}(\lambda) \rightarrow \infty$ as $(\lambda, \lambda) \rightarrow \infty$.
\end{prop}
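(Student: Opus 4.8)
The plan is to analyze the explicit formula \eqref{eq:eigenvalues of q-Laplacian}, namely
\[
C_Z(\lambda) = \sum_{1 \leq l \leq m} a_l \sum_{1 \leq j \leq n_{\mu_l}} \Big( \big[(\lambda+\rho,\epsilon_j^{\mu_l})\big]_q^2 - \big[(\rho,\epsilon_j^{\mu_l})\big]_q^2 \Big),
\]
term by term. The key observation is that each summand is a function of the single real number $x = (\lambda+\rho,\epsilon_j^{\mu_l})$, and the ``offset'' $-[(\rho,\epsilon_j^{\mu_l})]_q^2$ is a fixed constant independent of $\lambda$. Since $[x]_q^2 = \big(\tfrac{q^x-q^{-x}}{q-q^{-1}}\big)^2 \geq 0$ for all real $x$, each term $a_l\big([x]_q^2 - [\text{const}]_q^2\big)$ is bounded below by $-a_l[(\rho,\epsilon_j^{\mu_l})]_q^2$. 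Summing over the finitely many indices $l$ and $j$ gives a finite lower bound
\[
L := -\sum_{1 \leq l \leq m} a_l \sum_{1 \leq j \leq n_{\mu_l}} \big[(\rho,\epsilon_j^{\mu_l})\big]_q^2,
\]
which proves the first assertion with essentially no work.

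For the divergence statement, I would argue as follows. By hypothesis, $\pi_{\boldsymbol\mu}$ is faithful (and $K$ is simple here, so it suffices that some $\mu_l \neq 0$), so at least one $\mu_l$ is nonzero; fix such an $l$ and recall that the highest weight $\mu_l$ itself lies in $\Pbf(\mu_l)$, so one of the weights $\epsilon_j^{\mu_l}$ equals $\mu_l \neq 0$. The plan is to show that the corresponding term $a_l[(\lambda+\rho,\mu_l)]_q^2$ already tends to infinity as $(\lambda,\lambda)\to\infty$, while every other term stays bounded below by the constant $L$ just found. For the latter, note that since $0<q<1$ one has $[x]_q^2 \geq 0$ with equality only at $x=0$, so every term $a_l([x]_q^2 - [\text{const}]_q^2)$ is $\geq -a_l[\text{const}]_q^2$, uniformly in $\lambda$; hence the sum of all terms is at least $L + a_l\big([(\lambda+\rho,\mu_l)]_q^2 - [(\rho,\mu_l)]_q^2\big)$, and it remains only to show the bracket on the right diverges.

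For that divergence, since $\lambda \in \weights^+$, the inner product $(\lambda+\rho,\mu_l)$ is a nonnegative increasing function of $\lambda$ in the dominance order: writing $\lambda = \sum_i c_i\varpi_i$ with $c_i \geq 0$, we have $(\varpi_i,\mu_l) \geq 0$ (fundamental weights pair nonnegatively with dominant weights under the Killing form on $\weights^+$), and at least one $(\varpi_i,\mu_l) > 0$ since $\mu_l\neq 0$. The condition $(\lambda,\lambda)\to\infty$ forces $\max_i c_i \to \infty$ along the relevant sequences (as $\weights^+$ is discrete and $(\cdot,\cdot)$ is positive definite on the real span), and then $(\lambda+\rho,\mu_l) \geq (\lambda,\mu_l) \geq c_i(\varpi_i,\mu_l) \to \infty$ for a suitable index $i$; here I may need to pass to a subsequence on which a fixed coordinate $c_i$ with $(\varpi_i,\mu_l)>0$ is unbounded, using finiteness of the index set $\{1,\dots,N\}$. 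Finally, since $q^x - q^{-x} \to -\infty$ as $x\to+\infty$ (because $q<1$), we get $[x]_q^2 \to +\infty$, completing the argument.

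The main obstacle I anticipate is the bookkeeping in the divergence step: making precise that $(\lambda,\lambda)\to\infty$ implies $(\lambda+\rho,\mu_l)\to\infty$ for at least one nonzero $\mu_l$. The subtlety is that a priori $(\lambda,\mu_l)$ could fail to grow if $\lambda$ moved ``away'' from $\mu_l$, but this cannot happen inside the dominant cone $\weights^+$: there, $(\lambda,\mu_l) = \sum_i c_i(\varpi_i,\mu_l)$ is a nonnegative linear combination with not-all-zero coefficients $(\varpi_i,\mu_l)$, and the Euclidean norm $(\lambda,\lambda)^{1/2}$ is comparable to $\max_i c_i$ on the lattice cone. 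A clean way to finish is to note $(\lambda,\mu_l) \geq C\,(\lambda,\lambda)^{1/2}$ for some constant $C>0$ depending only on $\mu_l$ and the root data, which follows from compactness of the set of unit vectors in the (closed, salient) cone $\weights^+\otimes\Rbb_{\geq 0}$ together with the fact that $(\cdot,\mu_l)$ is strictly positive on that set minus the origin; with that inequality in hand the divergence is immediate.
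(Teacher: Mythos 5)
Your proof of the lower bound is exactly the paper's: both drop the nonnegative terms $[(\lambda+\rho,\epsilon_j^{\mu_l})]_q^2$ and keep the constant offsets, so nothing to add there. For the divergence, however, you take a genuinely different route. The paper keeps \emph{all} the terms and exploits the invariant trace form: since $\gf$ is simple, $\sum_l a_l \Tr_{V(\mu_l)}\big(\pi_{\mu_l}(X)\pi_{\mu_l}(Y)\big) = b\,(X,Y)$ for some $b>0$, and the elementary inequality $e^{x}+e^{-x} \geq x^2$ applied to each summand yields
\[
C_Z(\lambda) \;\geq\; \frac{4h^2 b}{(q^{-1}-q)^2}\,(\lambda,\lambda) \;-\; \text{const},
\]
an explicit affine-in-$(\lambda,\lambda)$ lower bound from which divergence is immediate. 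You instead isolate the single highest-weight term of one nonzero $\mu_l$ and show $(\lambda,\mu_l) \geq C\,(\lambda,\lambda)^{1/2}$ on the dominant cone. That step is valid, but note what it secretly uses: for it to work you need $(\varpi_i,\mu_l)>0$ for at least one $i$ occurring in $\lambda$, and in the semisimple-but-not-simple case a dominant $\lambda$ supported on a simple factor not seen by $\mu_l$ would give $(\lambda,\mu_l)=0$; so your argument genuinely requires simplicity (where the inverse Cartan matrix has positive entries, hence $(\varpi_i,\varpi_j)>0$ for all $i,j$), exactly as you flag. Both proofs therefore rely on simplicity, just in different places (the paper uses it via Schur's lemma for the trace form). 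What each buys: the paper's argument is more uniform and generalizes more transparently to the semisimple case via faithfulness of $\pi_{\boldsymbol{\mu}}$, and produces a clean quantitative constant; yours avoids the trace-form computation entirely and, as a byproduct, shows the eigenvalues actually grow like $q^{-2C\sqrt{(\lambda,\lambda)}}$, i.e.\ much faster than the paper's stated quadratic bound. Your proof is correct as written under the standing simplicity assumption of that section.
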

\begin{proof}
    Observe that, for any $\lambda \in \weights^+$,
    \begin{align*}
        C_{Z}(\lambda) = \sum_{1 \leq l \leq m} a_l \sum_{1 \leq j \leq n_{\mu_l}} \Big( \big[(\lambda + \rho, \epsilon_j^{\mu_l})\big]_q^2 - \big[(\rho, \epsilon_j^{\mu_l})\big]_q^2 \Big) 
        \geq - \sum_{1 \leq l \leq m} a_l \sum_{1 \leq j \leq n_{\mu_l}} \big[(\rho, \epsilon_j^{\mu_l})\big]_q^2,
    \end{align*}
    which proves the lower semiboundedness.

    For the second assertion, note that the bilinear form
    \[
    \gf \times \gf \ni (X,Y) \longmapsto \sum_{1 \leq l \leq m} a_l \Tr_{V(\mu_l)} \big( \pi_{\mu_l}(X) \pi_{\mu_l}(Y) \big) \in \Cbb
    \]
    is $\Ad$-invariant and negative definite on $\kf$ by Proposition~\ref{prop:representations induce invariant forms}. Since $\gf$ is simple, by \cite[Exercise~6.6]{Humphreys}, there exists $b > 0$ such that
    \begin{equation*}
        \sum_{1 \leq l \leq m} a_l \Tr_{V(\mu_l)} \big( \pi_{\mu_l}(X) \pi_{\mu_l}(Y) \big) = b (X, Y), \quad X, Y \in \gf.
    \end{equation*}

    Now, let $\lambda \in \weights^+$, and note that for any $1 \leq l \leq m$ and $1 \leq j \leq n_{\mu_l}$,
    \begin{align*}
        q^{-2 (\lambda + \rho, \epsilon_j^{\mu_l})} + q^{2 (\lambda + \rho, \epsilon_j^{\mu_l})} 
        = e^{-2h (\lambda + \rho, \epsilon_j^{\mu_l})} + e^{2h (\lambda + \rho, \epsilon_j^{\mu_l})}
        \geq \big( 2h (\lambda + \rho, \epsilon_j^{\mu_l}) \big)^2,
    \end{align*}
    as can be seen by expanding the two series. Moreover,
    \begin{align*}
        \sum_{1 \leq l \leq m} a_l \sum_{1 \leq j \leq n_{\mu_l}} (\lambda + \rho, \epsilon_j^{\mu_l})^2 
        = \sum_{1 \leq l \leq m} a_l \Tr_{V(\mu_l)} \big( \pi_{\mu_l}(H_{\lambda+\rho})^2 \big) 
        = b (H_{\lambda+\rho}, H_{\lambda+\rho}) \\
        = b (\lambda + \rho, \lambda + \rho) 
        \geq b (\lambda + \rho, \lambda) \geq b(\lambda, \lambda).
    \end{align*}

    Combining the two preceding calculations, we obtain
    \begin{align*}
        C_{Z}(\lambda) &= \frac{1}{(q^{-1} - q)^2} \sum_{1 \leq l \leq m} a_l \sum_{1 \leq j \leq n_{\mu_l}} \Big( q^{-2 (\lambda + \rho, \epsilon_j^{\mu_l})} + q^{2 (\lambda + \rho, \epsilon_j^{\mu_l})} - \big( q^{-2(\rho, \epsilon_j^{\mu_l})} + q^{2(\rho, \epsilon_j^{\mu_l})} \big) \Big) \\
        &\geq \frac{4h^2 b}{(q^{-1} - q)^2} (\lambda, \lambda) - \frac{1}{(q^{-1} - q)^2} \sum_{1 \leq l \leq m} a_l \sum_{1 \leq j \leq n_{\mu_l}} \big( q^{-2(\rho, \epsilon_j^{\mu_l})} + q^{2(\rho, \epsilon_j^{\mu_l})} \big),
    \end{align*}
    from which the second assertion follows.
\end{proof}

\subsection{Heat semigroups on \texorpdfstring{$K_q$}{TEXT}}\label{subsec:Heat semigroups on Kq}

\begin{defn}\label{defn:q-deformed heat semigroups}
    We call the semigroup of operators $\big(e^{-tZ \triangleright} : \Cf^\infty(K_q) \rightarrow \Cf^\infty(K_q) \big)_{t \geq 0}$ \textbf{the heat semigroup on $K_q$ generated by $Z$}.
\end{defn}

Proposition~\ref{prop:complete positivity of semigroup} tells us that this semigroup is a quantum Markov semigroup if and only if $-Z$ is conditionally positive. We will presently prove that this is never the case.

Let $\gamma \in \boldsymbol{\Delta}^+$ be the highest root; that is, $\gamma$ is the highest weight corresponding to the adjoint representation
\[
\ad_{\gf}: \gf \rightarrow \End(\gf).
\]
Then, since
\[
-w_0 \boldsymbol{\Delta} = -\boldsymbol{\Delta} = \boldsymbol{\Delta},
\]
we see that $-w_0 \gamma$ is also the highest root, i.e., $\gamma = -w_0 \gamma$.

\begin{thm}\label{thm:conditional nonpositivity}
    Let $f = t_{\gamma} - \epsilon(t_{\gamma}) \in \Ker \epsilon$. Then, for any $0 \neq \mu \in \weights^+$, we have
    \[
    \big( z_\mu , S^{-1}(f) S^{-1}(f)^* \big) > 0.
    \]
    Thus, $\displaystyle - Z = \frac{-2}{(q^{-1}-q)^2} \sum_{1 \leq l \leq m} a_l \big(z_{\mu_l} - \hat{\epsilon}(z_{\mu_l}) \big)$ is not conditionally positive.
\end{thm}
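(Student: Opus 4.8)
The plan is to compute the quantity $\big( z_\mu , S^{-1}(f) S^{-1}(f)^* \big)$ explicitly and show it is strictly positive whenever $\mu \neq 0$. First I would observe that $t_\gamma = \Tr_{V(\gamma)}\big( (\,\cdot\,) K_{-2\rho}\big)$, where $V(\gamma) = \gf$ is the adjoint representation, and $\epsilon(t_\gamma) = \Tr_{V(\gamma)}(K_{-2\rho}) = \sum_j q^{-2(\rho,\epsilon_j^\gamma)}$, a positive scalar. Using the definition of the skew-pairing together with the identity $(\phi, ab) = (\hat\Delta(\phi), b\otimes a)$, the pairing $\big(z_\mu, S^{-1}(f)\, S^{-1}(f)^*\big)$ can be rewritten via Lemma~\ref{lem:comultiplication of Casimir element}: since $z_\mu = I(t_\mu)$ and $\hat\Delta(z_\mu) = \sum_{i,j} q^{-2(\rho,\epsilon_i^\mu)}\, \hat S I'(u^\mu_{ji}) \otimes I(u^\mu_{ij})$, and since $\gamma = -w_0\gamma$ means $f^* $ and $S^{\pm 1}(f)$ stay within the $V(\gamma)$-block, the evaluation reduces to a trace over $V(\mu)$ of a product of operators built from $\pi_\mu$ applied to matrix coefficients of the adjoint representation.

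The cleanest route I expect is to relate everything back to the classical limit heuristically but prove it exactly: the key is that $S^{-1}(f) S^{-1}(f)^*$ is, up to the constant shift, of the form $g^* g$ for a suitable $g \in \Ker\epsilon$ living in the $V(\gamma)$-isotypic component, so that $\big(z_\mu, g^* g\big)$ equals a sum of the form $\sum_k \langle \xi_k \mid \pi_\mu(X)^* \pi_\mu(X)\xi_k\rangle$-type expressions, manifestly $\geq 0$; strict positivity then follows because $\pi_\mu$ is a faithful (hence nonzero) representation of $U_q^\Rbb(\kf)$ and the relevant operator $\pi_\mu(X)$ cannot vanish when $\mu \neq 0$. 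Concretely, I would use the identity $(z_\mu, hk) = \sum_{i,j} q^{-2(\rho,\epsilon_i^\mu)}(I(u^\mu_{ij}), h)(\hat S I'(u^\mu_{ji}), k)$ from Lemma~\ref{lem:comultiplication of Casimir element}, substitute $h = S^{-1}(f)$, $k = S^{-1}(f)^* = \hat{(\cdot)}$ appropriately, and simplify using the $*$-compatibility relations $I(u^\mu_{ij})^* = I(u^\mu_{ji})$, $I'(u^\mu_{ij})^* = I'(u^\mu_{ji})$, together with \eqref{eq:an identity regarding I} and \eqref{eq:an identity regarding I'}, to collapse the double sum into a positive-definite quadratic form in the matrix $\big((I(u^\mu_{ij}), S^{-1}(f))\big)_{i,j}$ weighted by the positive scalars $q^{-2(\rho,\epsilon_i^\mu)}$.

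The main obstacle will be verifying that this matrix is nonzero for every $\mu \neq 0$ — i.e., that the adjoint-type matrix coefficient $f$ pairs nontrivially with $I(\End(V(\mu))^*)$. This should follow from the fact that $I$ is a linear isomorphism onto $F_l U_q(\gf)$ and that $I(t_\gamma)= z_\gamma$ is a nonconstant central element (the quantum Casimir for the adjoint representation), so its nonconstant part $I(f)$ pairs nontrivially with any nontrivial block $\End(V(\mu))^*$; equivalently, $\pi_\mu(z_\gamma)$ is a scalar $C_{z_\gamma}(\mu)$ which by Proposition~\ref{prop:Casimir element multiplier} is strictly larger than $\epsilon(t_\gamma) = C_{z_\gamma}(0)$ when $\mu \neq 0$, because $\lambda \mapsto \sum_j q^{-2(\lambda+\rho,\epsilon_j^\gamma)}$ is strictly monotone in the relevant sense on $\weights^+$. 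Once strict positivity of $\big(z_\mu, S^{-1}(f)S^{-1}(f)^*\big)$ is established, the final conclusion is immediate: $-Z$ is a strictly negative linear combination (with coefficients $\tfrac{-2a_l}{(q^{-1}-q)^2} < 0$) of the $(z_{\mu_l} - \hat\epsilon(z_{\mu_l}))$, and since each pairs positively with $S^{-1}(f)S^{-1}(f)^* = \big((S^{-1}(f))^*\big)^* \,(S^{-1}(f))^* \in \Ker\epsilon$-quadratic form while the scalar shift contributes nothing (as $S^{-1}(f) \in \Ker\epsilon$), we get $\big(-Z, S^{-1}(f)S^{-1}(f)^*\big) < 0$, contradicting conditional positivity.
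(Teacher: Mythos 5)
Your overall skeleton matches the paper's: expand $\big(z_\mu, S^{-1}(f)S^{-1}(f)^*\big)$ via Lemma~\ref{lem:comultiplication of Casimir element}, recognize the result as a weighted sum of squares, and then use Proposition~\ref{prop:Casimir element multiplier} together with $\Pbf(\gamma)=\{0\}\cup\boldsymbol{\Delta}$ to show that the relevant scalar $C_{z_\gamma}(-w_0\mu)-\hat{\epsilon}(z_\gamma)$ is strictly positive for $\mu\neq 0$. Your final deduction that $-Z$ then fails conditional positivity (taking $a=S^{-1}(f)^*\in\Ker\epsilon$, with the scalar shift contributing nothing) is also correct.

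However, there is a genuine gap at the central step. After substituting $h=S^{-1}(f)$, $k=S^{-1}(f)^*$ and applying the $*$-compatibility relations together with \eqref{eq:an identity regarding I} and \eqref{eq:an identity regarding I'}, what you actually obtain is
\[
\big(z_\mu, S^{-1}(f)S^{-1}(f)^*\big)=\sum_{1\leq i,j\leq n_\mu} q^{-2(\rho,\epsilon_i^\mu)}\big(I(f),S(u^\mu_{ij})\big)\,\overline{\big(I'(f),S(u^\mu_{ij})\big)},
\]
a pairing between two \emph{different} matrices, one built from $I(f)$ and one from $I'(f)$. The tools you list do not collapse this into a quadratic form: $I$ and $I'$ are genuinely different maps (for instance $I$ carries $\End(V(\nu))^*$ onto $U_q(\gf)\rightarrow K_{-2w_0\nu}$ while $I'$ carries it onto $U_q(\gf)\rightarrow K_{2\nu}$), and ``staying in the $V(\gamma)$-block'' is not enough --- for a generic $g$ in that isotypic component the formula gives no reason for $(z_\mu,g^*g)$ to be nonnegative, so your $\sum_k\langle\xi_k\mid\pi_\mu(X)^*\pi_\mu(X)\xi_k\rangle$ heuristic does not hold. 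What makes the argument work is the specific identity $I'(t_\gamma)=z_{-w_0\gamma}=z_\gamma=I(t_\gamma)$ (Lemma~\ref{lem:I and I'}, combined with $\gamma=-w_0\gamma$), which forces $I'(f)=I(f)$ and turns the sum into $\sum_{i} q^{-2(\rho,\epsilon_i^\mu)}\big|C_{z_\gamma}(-w_0\mu)-\hat{\epsilon}(z_\gamma)\big|^2$; proving $I'(t_\gamma)=z_{-w_0\gamma}$ is itself a nontrivial step (one must show $I'(t_\gamma)$ is central and compute its eigenvalues), and your proposal needs to supply it. A smaller gloss: the ``strict monotonicity'' of $\lambda\mapsto\sum_j q^{-2(\lambda+\rho,\epsilon_j^\gamma)}$ is not termwise --- individual summands over $\alpha\in\boldsymbol{\Delta}$ can decrease --- and requires grouping $\alpha$ with $-\alpha$ as in \eqref{eq:each summand}.
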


\begin{cor}\label{cor:non-Markovian process}
    The heat semigroups generated by $q$-deformed Laplacians do not form quantum Markov semigroups.
\end{cor}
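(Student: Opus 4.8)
The plan is to derive Corollary~\ref{cor:non-Markovian process} as a formal consequence of Theorem~\ref{thm:conditional nonpositivity} and Proposition~\ref{prop:complete positivity of semigroup}. Fix a $q$-deformed Laplacian $Z = Z_{\boldsymbol{\mu}}^{\abf}$ as in Definition~\ref{defn:q-deformed Laplacians}, regarded as a functional in $\Cf^\infty(K_q)^\circ$. I would first check that $Z$ satisfies the standing hypotheses of Proposition~\ref{prop:complete positivity of semigroup}: it vanishes at the unit because of the subtraction of $\hat{\epsilon}(\boldsymbol{z_{\zeta\mu}^{\abf}})$ in the defining formula \eqref{eq:Casimir elements as Laplacian functionals}, and it is Hermitian because it is self-adjoint and $\ad$-invariant (Corollary~\ref{cor:linear functional for BCFODC-Kq}) and satisfies $Z = ZS$ --- this last identity follows from $z_\mu S = z_{-w_0\mu}$ (Proposition~\ref{prop:antipode and Casimir element}) together with the $-w_0$-invariance of $\{\mu_1,\dots,\mu_m\}$ and the matching of the coefficients $\abf$ required in Definition~\ref{defn:q-deformed Laplacians}, as already recorded in the proof of Theorem~\ref{thm:eigenvalues of Casimir Laplacians}, whence Hermiticity by Corollary~\ref{cor:self-adjointness and Hermiticity for ad-invariant functional}. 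Proposition~\ref{prop:complete positivity of semigroup} then tells us that the heat semigroup $(e^{-tZ\triangleright})_{t\geq 0}$ is a quantum Markov semigroup if and only if $-Z$ is conditionally positive.

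It therefore remains to see that $-Z$ is \emph{not} conditionally positive, which is exactly the content of Theorem~\ref{thm:conditional nonpositivity}. To make the logic explicit, set $f = t_\gamma - \epsilon(t_\gamma) \in \Ker\epsilon$ with $\gamma$ the highest root, and put $a = S^{-1}(f)^*$. Since $\epsilon\circ S^{-1} = \epsilon$ and $\epsilon$ is a $*$-homomorphism, $a \in \Ker\epsilon$ and $a^* a = S^{-1}(f)\,S^{-1}(f)^*$. Because each $\hat{\epsilon}(z_{\mu_l})$ is a scalar multiple of the unit $\epsilon$ of $\Cf^\infty(K_q)^*$ and $\epsilon(a^*a) = 0$, the counit terms drop out of $(Z, a^*a)$, so
\begin{equation*}
(Z, a^*a) = \frac{2}{(q^{-1}-q)^2}\sum_{1\leq l\leq m} a_l\,\big(z_{\mu_l}, S^{-1}(f)\,S^{-1}(f)^*\big) > 0,
\end{equation*}
since every $a_l > 0$, every $\mu_l \neq 0$, and each summand is strictly positive by Theorem~\ref{thm:conditional nonpositivity}. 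Hence $(-Z, a^*a) < 0$ with $a \in \Ker\epsilon$, so $-Z$ is not conditionally positive and the heat semigroup generated by $Z$ is not a quantum Markov semigroup. As $Z = Z_{\boldsymbol{\mu}}^{\abf}$ ranges over all $q$-deformed Laplacians, the corollary follows.

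The substantive difficulty lies entirely in Theorem~\ref{thm:conditional nonpositivity} --- establishing the strict positivity of $(z_\mu, S^{-1}(f)S^{-1}(f)^*)$ for the element $f$ attached to the highest root --- rather than in the corollary itself, which is purely formal once that theorem and Proposition~\ref{prop:complete positivity of semigroup} are available. The only mild bookkeeping in the corollary's proof is the verification that $Z$ meets the Hermiticity and $Z(1)=0$ hypotheses of Proposition~\ref{prop:complete positivity of semigroup} and the passage from the $bb^*$-form in Theorem~\ref{thm:conditional nonpositivity} to the $a^*a$-form in the definition of conditional positivity, discarding the counit terms along the way.
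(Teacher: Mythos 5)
Your proposal is correct and follows exactly the route the paper intends: the paper states just before Theorem~\ref{thm:conditional nonpositivity} that, by Proposition~\ref{prop:complete positivity of semigroup}, the heat semigroup is quantum Markov if and only if $-Z$ is conditionally positive, and then Theorem~\ref{thm:conditional nonpositivity} rules this out. Your additional bookkeeping (verifying the Hermiticity and $Z(1)=0$ hypotheses, and rewriting $S^{-1}(f)S^{-1}(f)^*$ as $a^*a$ with $a=S^{-1}(f)^*\in\Ker\epsilon$) is accurate and merely makes explicit what the paper leaves implicit.
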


\begin{proof}[Proof of Theorem~\ref{thm:conditional nonpositivity}]
    Let $0 \neq \mu \in \weights^+$ and $g \in \Cf^\infty(K_q)$. Then, by Lemma~\ref{lem:comultiplication of Casimir element}, we have
    \begin{align*}
    ( z_\mu , g g^* ) &= \sum_{1 \leq i,j \leq n_\mu} q^{-2 (\rho, \epsilon_{i}^\mu)} \big( I(u^\mu_{ij}), g \big) \big( \hat{S} I'(u^\mu_{ji}), g^* \big)  \\
    &= \sum_{1 \leq i,j \leq n_\mu} q^{-2 (\rho, \epsilon_i^\mu)} \big( I(u^\mu_{ij}), g \big) \overline{\big( I'(u^\mu_{ji})^*, g \big)} \\
    &= \sum_{1 \leq i,j \leq n_\mu} q^{-2 (\rho, \epsilon_i^\mu)} \big( I(u^\mu_{ij}), g \big) \overline{\big( I'(u^\mu_{ij}), g \big)} \\
    &= \sum_{1 \leq i,j \leq n_\mu} q^{-2 (\rho, \epsilon_i^\mu)} \Big( I\big( S(g) \big), S(u^\mu_{ij}) \Big) \overline{\Big( I'\big( S(g) \big), S(u^\mu_{ij}) \Big)}.
    \end{align*}
    Thus, substituting $g = S^{-1}(f)$, we obtain
    \begin{align*}
    \big( z_\mu , S^{-1}(f) S^{-1}(f)^* \big) = \sum_{1 \leq i,j \leq n_\mu} q^{-2 (\rho, \epsilon_i^\mu)} \big( I(f), S(u^\mu_{ij}) \big) \overline{\big( I'(f), S(u^\mu_{ij}) \big)}.
    \end{align*}
    By Lemma~\ref{lem:I and I'} proved below, we have $I'(t_{\gamma}) = z_{-w_0 \gamma} = z_{\gamma} = I(t_{\gamma})$. Hence,
    \[
    I'(f) = I'\big(t_{\gamma} - \epsilon(t_{\gamma})\big) = z_{\gamma} - \epsilon(t_{\gamma}) = I(f).
    \]
    Therefore,
    \begin{align}\label{eq:identity involving zmu-2}
    \big( z_\mu , S^{-1}(f) S^{-1}(f)^* \big) &= \sum_{1 \leq i,j \leq n_\mu} q^{-2 (\rho, \epsilon_i^\mu)} \big| \big( z_{\gamma} - \hat{\epsilon}(z_{\gamma}), S(u^\mu_{ij}) \big) \big|^2 \\
    &= \sum_{1 \leq i,j \leq n_\mu} q^{-2 (\rho, \epsilon_i^\mu)} \Big| \big( C_{z_{\gamma}}(-w_0 \mu) - \hat{\epsilon}(z_{\gamma}) \big) \big(1, S(u^\mu_{ij})\big) \Big|^2 \nonumber \\
    &= \sum_{1 \leq i \leq n_\mu} q^{-2 (\rho, \epsilon_i^\mu)} \big| C_{z_{\gamma}}(-w_0 \mu) - \hat{\epsilon}(z_{\gamma}) \big|^2. \nonumber
    \end{align}
    However, $\Pbf(\gamma) = \{0\} \cup \boldsymbol{\Delta}$. Thus, by Proposition~\ref{prop:Casimir element multiplier},
\begin{align*}    
&= \sum_{1 \leq i \leq n_\mu} q^{-2 (\rho, \epsilon_i^\mu)} \Big| \sum_{\alpha \in \boldsymbol{\Delta}} \big( q^{-2(-w_0 \mu + \rho, \alpha)} - q^{-2(\rho, \alpha)} \big) \Big|^2 \nonumber \\
    &= \sum_{1 \leq i \leq n_\mu} q^{-2 (\rho, \epsilon_i^\mu)} \Big| \sum_{\alpha \in \boldsymbol{\Delta}^+} \big( q^{-2(-w_0 \mu + \rho, \alpha)} + q^{2(-w_0 \mu + \rho, \alpha)} - (q^{-2(\rho, \alpha)} + q^{2(\rho, \alpha)}) \big) \Big|^2. \nonumber
    \end{align*}
    Note that for each $\alpha \in \boldsymbol{\Delta}^+$, we have
    \begin{align}\label{eq:each summand}
    q^{-2(-w_0 \mu + \rho, \alpha)} &+ q^{2(-w_0 \mu + \rho, \alpha)} - \big( q^{-2(\rho, \alpha)} + q^{2(\rho, \alpha)} \big) \\
    &= q^{-2(\rho, \alpha)} \big( q^{-2(-w_0 \mu, \alpha)} - 1 \big) + q^{2(-w_0 \mu + \rho, \alpha)} \big( 1 - q^{-2(-w_0 \mu, \alpha)} \big) \nonumber \\
    &= \big( q^{-2(-w_0 \mu, \alpha)} - 1 \big) \big( q^{-2(\rho, \alpha)} - q^{2(-w_0 \mu + \rho, \alpha)} \big) \geq 0, \nonumber
    \end{align}
    since $(-w_0 \mu, \alpha)$, $(-w_0 \mu + \rho, \alpha)$, and $(\rho, \alpha)$ are all nonnegative, and $0 < q < 1$. Moreover, since $-w_0 \mu \neq 0$, there exists at least one $\alpha \in \boldsymbol{\Delta}^+$ such that $(-w_0 \mu, \alpha) > 0$, in which case \eqref{eq:each summand} is strictly positive.

    Therefore, we conclude that \eqref{eq:identity involving zmu-2} is positive, as claimed.
\end{proof}

\begin{lem}\label{lem:I and I'}
    For $\lambda \in \weights^+$, we have
    \[
    I' (t_\lambda) = z_{-w_0 \lambda}.
    \]
\end{lem}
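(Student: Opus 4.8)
\emph{Proof sketch.} The plan is to show that $I'(t_\lambda)$ is an $\ad$-invariant linear functional on $\Cf^\infty(K_q)$ and that it has the same eigenvalues over the Peter--Weyl decomposition as $z_{-w_0\lambda}$; since an $\ad$-invariant functional is determined by its eigenvalues (Corollary~\ref{cor:ad-invariance criterion for a linear functional}), this forces $I'(t_\lambda) = z_{-w_0\lambda}$. For the $\ad$-invariance: by the same reasoning as in Proposition~\ref{prop:self-adjointness and ad-invariance of Casimir elements} and \eqref{eq:I preservers adjoint actions}, the centrality of $z_\lambda = I(t_\lambda)$ in $U_q^\Rbb(\kf)$ together with the injectivity of $I$ shows that $t_\lambda$ is invariant under the left coadjoint action, i.e.\ $X \rightarrow t_\lambda = \hat{\epsilon}(X) t_\lambda$ for all $X \in U_q(\gf)$. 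Applying \eqref{eq:I' preserves adjoint actions} then gives $X \rightarrow I'(t_\lambda) = \hat{\epsilon}(X) I'(t_\lambda)$, and the usual Hopf-algebra manipulation upgrades this to $X I'(t_\lambda) = I'(t_\lambda) X$ for all $X$; hence $I'(t_\lambda) \in ZU_q(\gf)$, so in particular it is $\ad$-invariant and diagonalizes over the Peter--Weyl decomposition.

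To compute its eigenvalue at $\nu \in \weights^+$, fix a lowest weight unit vector $v_{w_0\nu} \in V(\nu)$ and use $C_{I'(t_\lambda)}(\nu) = \big(I'(t_\lambda),\, \la v_{w_0\nu} \mid \cdot \mid v_{w_0\nu} \ra\big)$, which is legitimate because $I'(t_\lambda)$ is $\ad$-invariant. By \eqref{eq:an identity regarding I'},
\[
C_{I'(t_\lambda)}(\nu) = \Big( I'\big( S(\la v_{w_0\nu} \mid \cdot \mid v_{w_0\nu} \ra) \big),\, S(t_\lambda) \Big).
\]
Arguing exactly as in the proof of Proposition~\ref{prop:Casimir element multiplier} (skew-pairing against a PBW basis, using that $v_{w_0\nu}$ is annihilated by all $F_\alpha$ and is a $K_\zeta$-eigenvector of weight $w_0\nu$), one identifies $S(\la v_{w_0\nu} \mid \cdot \mid v_{w_0\nu} \ra) = \la v_{-w_0\nu} \mid \cdot \mid v_{-w_0\nu} \ra \in \End(V(-w_0\nu))^*$ with $v_{-w_0\nu} \in V(-w_0\nu)$ a highest weight unit vector. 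Then \eqref{eq:an evaluation of I'} gives $I'\big( S(\la v_{w_0\nu} \mid \cdot \mid v_{w_0\nu} \ra) \big) = K_{-2 w_0 \nu}$, so that, using $t_\lambda = \sum_j q^{-2(\rho,\epsilon_j^\lambda)} u^\lambda_{jj}$ and axiom P3 of the skew-pairing,
\[
C_{I'(t_\lambda)}(\nu) = \big( \hat{S}^{-1}(K_{-2 w_0\nu}),\, t_\lambda \big) = \big( K_{2 w_0\nu},\, t_\lambda \big) = \sum_{1 \le j \le n_\lambda} q^{2(w_0\nu - \rho,\, \epsilon_j^\lambda)}.
\]

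Finally, using $w_0^2 = \id$, that $w_0$ preserves the Killing form, and $w_0\rho = -\rho$, one rewrites $2(w_0\nu - \rho, \epsilon_j^\lambda) = -2(\nu + \rho, -w_0 \epsilon_j^\lambda)$; since $-w_0$ permutes $\Pbf(-w_0\lambda) = -\Pbf(\lambda)$ with multiplicities, the multiset $\{-w_0 \epsilon_j^\lambda\}_j$ equals $\{\epsilon_{j'}^{-w_0\lambda}\}_{j'}$, whence $C_{I'(t_\lambda)}(\nu) = \sum_{j'} q^{-2(\nu+\rho,\, \epsilon_{j'}^{-w_0\lambda})} = C_{z_{-w_0\lambda}}(\nu)$ by Proposition~\ref{prop:Casimir element multiplier}. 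As both $I'(t_\lambda)$ and $z_{-w_0\lambda}$ are $\ad$-invariant with the same eigenvalues, they coincide, and since $\hat{S}^2(z_\mu) = z_\mu$ the two signs in the statement agree. I expect the only genuinely delicate point to be the identification of $S(\la v_{w_0\nu} \mid \cdot \mid v_{w_0\nu} \ra)$, which must be carried out carefully but is entirely parallel to the corresponding step in Proposition~\ref{prop:Casimir element multiplier}; the remaining steps are bookkeeping with the $w_0$-symmetries and the antipode conventions. $\qed$
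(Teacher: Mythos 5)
Your proposal is correct and follows essentially the same route as the paper: first establish that $I'(t_\lambda)$ is central in $U_q(\gf)$ (hence $\ad$-invariant) via \eqref{eq:I preservers adjoint actions} and \eqref{eq:I' preserves adjoint actions}, then match eigenvalues using \eqref{eq:an identity regarding I'}, \eqref{eq:an evaluation of I'}, the antipode's exchange of extremal-weight matrix coefficients between $V(\nu)$ and $V(-w_0\nu)$, and the $w_0$-symmetries of the weights. The only cosmetic difference is that you evaluate at $\nu$ against a lowest-weight coefficient while the paper evaluates at $-w_0\nu$ against (the antipode image of) a highest-weight coefficient, which is an equivalent parametrization; the delicate identification $S(\la v_{w_0\nu}\mid\cdot\mid v_{w_0\nu}\ra)=\la v_{-w_0\nu}\mid\cdot\mid v_{-w_0\nu}\ra$ that you flag is exactly the step the paper verifies by skew-pairing against a PBW basis.
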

\begin{proof}
    Note that, by \eqref{eq:I preservers adjoint actions}, we have, for all $X \in U_q(\gf)$,
    \[
    I (X \rightarrow t_\lambda) = X \rightarrow z_\mu = \hat{\epsilon}(X) z_\mu = I \big( \hat{\epsilon}(X) t_\lambda \big),
    \]
    and hence $X \rightarrow t_\lambda = \epsilon(X) t_\lambda$. Thus, by \eqref{eq:I' preserves adjoint actions}, we obtain
    \[
    X \rightarrow I' (t_\lambda) = I' (X \rightarrow t_\lambda) = \hat{\epsilon}(X) I'(t_\lambda),
    \]
    for all $X \in U_q(\gf)$. Hence, $I'(t_\lambda)$ is a central element of $U_q(\gf)$, which implies that $I'(t_\lambda)$ is an $\ad$-invariant linear functional on $\Cf^\infty(K_q)$ by Proposition~\ref{prop:ad-invariance criterion for a linear functional}.
    
    Let $\nu \in \weights^+$ and let $v_\nu \in V(\nu)$ be a highest weight unit vector. Then,
    \[
    I' \big( \langle v_\nu \mid \cdot \mid v_\nu \rangle \big) = K_{2 \nu}
    \]
    by \eqref{eq:an evaluation of I'}. Notice also that, since $\langle v_\nu \mid \cdot \mid v_\nu \rangle \in \End(V(\nu))^*$, we have
    \[
    S^{-1} \big( \langle v_\nu \mid \cdot \mid v_\nu \rangle \big) \in \End(V(-w_0 \nu))^*,
    \]
    and, in fact, skew-pairing this with $E_{i_1} \cdots E_{i_m} K_\eta F_{j_1} \cdots F_{j_n}$ for any $1 \leq i_1, \dots, i_m, j_1, \dots, j_n \leq N$ and $\eta \in \weights$ shows that
    \[
    S^{-1} \big( \langle v_\nu \mid \cdot \mid v_\nu \rangle \big) = \langle v_{-\nu} \mid \cdot \mid v_{-\nu} \rangle \in \End(V(-w_0 \nu))^*,
    \]
    where $v_{-\nu} \in V(-w_0 \nu)$ is a lowest weight unit vector. Hence, by \eqref{eq:an evaluation of I'}--\eqref{eq:an identity regarding I'}, we obtain
    \begin{align*}
        C_{I'(t_\mu)}(-w_0 \nu) &= \big( I'(t_\mu), \langle v_{-\nu} \mid \cdot \mid v_{-\nu} \rangle \big) = \Big( I'(t_\mu), S^{-1} \big( \langle v_\nu \mid \cdot \mid v_\nu \rangle \big) \Big) \\
        &= \Big( I \big( \langle v_\nu \mid \cdot \mid v_\nu \rangle \big), S(t_\mu) \Big) = \big( K_{2 \nu}, S(t_\mu) \big) = \Tr_{V(\mu)} (K_{-2 \nu} K_{-2 \rho}) \\
        &= \sum_{1 \leq j \leq n_\mu} q^{-2 ( \nu + \rho, \epsilon_j^\mu )} = \sum_{1 \leq j \leq n_\mu} q^{-2 (-w_0 \nu + \rho, -w_0 \epsilon_j^\mu)} \\
        &= \sum_{1 \leq j \leq n_{-w_0 \mu}} q^{-2 (-w_0 \nu + \rho, \epsilon_j^{-w_0 \mu})} = C_{z_{-w_0 \mu}}(-w_0 \nu),
    \end{align*}
    by Proposition~\ref{prop:Casimir element multiplier}, which proves $I'(t_\mu) = z_{-w_0 \mu}$.
\end{proof}

\begin{rmk}\label{rmk:non-Markovian process}
    We have seen that the $q$-deformed Laplacian $Z \triangleright$ is intimately related to the differential structure of the compact quantum group $K_q$ (Theorem~\ref{thm:Casimir elements as Laplacians}). Therefore, Corollary~\ref{cor:non-Markovian process} suggests that, on $K_q$, the stochastic processes most relevant to the geometry of $K_q$ may be the non-quantum-Markovian ones, rather than the quantum Markov processes, see \cite{Franz2014} for an extensive discussion on stochastic processes on compact quantum groups.

    Moreover, in view of Corollary~\ref{cor:heat semigroups are Markov semigroups} we conclude that \emph{the $q$-deformation removes the complete positivity of the operators in the heat semigroups on $K$.}
\end{rmk}

\subsection{Strongly nondegenerate sesquilinear form}\label{subsec:Strongly nondegenerate sesquilinear form}

Write the FODC induced by $Z$ by
\(
(\Omega_{\boldsymbol{\mu}} ,d_{\boldsymbol{\mu}}) := (\Omega_{\boldsymbol{0 \mu}} ,d_{\boldsymbol{0 \mu}} ).
\)
Recall that, with respect to the strongly nondegenerate right $\Cf^\infty(K_q)$-sesquilinear form $\la \cdot, \cdot \ra_{\boldsymbol{\mu}} ^{\abf} := \la \cdot, \cdot \ra_{\boldsymbol{0 \mu}} ^{\abf} : \Omega_{\boldsymbol{\mu}} \times \Omega_{\boldsymbol{\mu}} \rightarrow \Cf^\infty(K_q)$ given in \eqref{eq:strongly ndg form for q-deformed Laplacian}, the $q$-deformed Laplacian $Z$ is characterized by
\[
\hcal \big( f^* (Z \triangleright g) \big) = \hcal \big( \la d_{\boldsymbol{\mu}} f, d_{\boldsymbol{\mu}} g \ra_{\boldsymbol{\mu}} ^{\abf} \big).
\]
The following theorem highlights a key difference between the classical and quantum cases.

\begin{thm}\label{thm:non positive definiteness of the strong sesquilinear form}
    The nondegenerate sesquilinear form
    \begin{equation}\label{eq:scalar nondegenerate form on FODC}
    \Omega_{\boldsymbol{\mu}} \times \Omega_{\boldsymbol{\mu}} \ni (\omega, \eta) \longmapsto \hcal \big( \la \omega, \eta \ra_{\boldsymbol{\mu}} ^{\abf} \big) \in \Cbb
    \end{equation}
    is neither positive definite nor negative definite. The same is true for $\la \cdot, \cdot \ra_{\boldsymbol{\mu}} ^{\abf}$.
\end{thm}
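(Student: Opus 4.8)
The plan is to exhibit, for any $q$-deformed Laplacian $Z$, two specific elements of $\Omega_{\boldsymbol{\mu}}$ on which the form $\hcal(\la\cdot,\cdot\ra_{\boldsymbol{\mu}}^{\abf})$ takes values of opposite sign, and then to transfer this conclusion to $\la\cdot,\cdot\ra_{\boldsymbol{\mu}}^{\abf}$ itself. The cleanest way to do this is via Proposition~\ref{prop:positive definiteness and conditional positivity}, which tells us that $\hcal(\la\cdot,\cdot\ra_\phi)$ is positive definite if and only if $-\phi$ is conditionally positive. Applied with $\phi = Z$, Theorem~\ref{thm:conditional nonpositivity} already shows $-Z$ is \emph{not} conditionally positive, so \eqref{eq:scalar nondegenerate form on FODC} is not positive definite. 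For non-negative-definiteness, I would apply the same reasoning to $-Z$: since $-Z = Z_{(-a_1,\dots,-a_m)}$ is up to sign a Laplacian functional, and the computation in the proof of Theorem~\ref{thm:conditional nonpositivity} shows that $(z_\mu, S^{-1}(f)S^{-1}(f)^*) > 0$ for $f = t_\gamma - \epsilon(t_\gamma)$, it follows that $Z$ itself is not conditionally positive either (the parameters $a_l$ are strictly positive, so the sign of $(Z, S^{-1}(f)S^{-1}(f)^*)$ is the same as that of $\sum_l a_l (z_{\mu_l}, \cdots) > 0$, hence $-(-Z) = Z$ fails conditional positivity). By Proposition~\ref{prop:positive definiteness and conditional positivity} applied to $-Z$ — whose induced FODC is the same $\Omega_{\boldsymbol{\mu}}$ and whose induced form is $-\la\cdot,\cdot\ra_{\boldsymbol{\mu}}^{\abf}$ — we conclude that $\hcal(-\la\cdot,\cdot\ra_{\boldsymbol{\mu}}^{\abf})$ is not positive definite, i.e.\ $\hcal(\la\cdot,\cdot\ra_{\boldsymbol{\mu}}^{\abf})$ is not negative definite.

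The second step is to upgrade the statement from the scalar form \eqref{eq:scalar nondegenerate form on FODC} to the $\Cf^\infty(K_q)$-valued form $\la\cdot,\cdot\ra_{\boldsymbol{\mu}}^{\abf}$. Here I would argue by contradiction: if $\la\omega,\omega\ra_{\boldsymbol{\mu}}^{\abf} \in \Cf^\infty(K_q)_+$ for all $\omega \in \Omega_{\boldsymbol{\mu}}$, then applying the Haar state $\hcal$ — which is positive — would force $\hcal(\la\omega,\omega\ra_{\boldsymbol{\mu}}^{\abf}) \geq 0$ for all $\omega$, contradicting the failure of positive-definiteness of \eqref{eq:scalar nondegenerate form on FODC} just established. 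Similarly, negative semidefiniteness of $\la\cdot,\cdot\ra_{\boldsymbol{\mu}}^{\abf}$ would contradict the failure of negative-definiteness. Strictly speaking one should be slightly careful about the distinction between "not positive definite" for the scalar form (which a priori could still be positive \emph{semi}definite) — but in fact the Cauchy–Schwarz argument in the proof of Proposition~\ref{prop:positive definiteness and conditional positivity} shows that a positive semidefinite $\hcal(\la\cdot,\cdot\ra_\phi)$ is automatically definite by nondegeneracy (Proposition~\ref{prop:strong nondegeneracy}), so "not positive definite" here genuinely means there is $\omega$ with $\hcal(\la\omega,\omega\ra_{\boldsymbol{\mu}}^{\abf}) < 0$; this $\omega$ then witnesses that $\la\omega,\omega\ra_{\boldsymbol{\mu}}^{\abf}$ is not a positive element of $\Cf^\infty(K_q)$, and dually for the negative case.

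The main obstacle — or rather the main point requiring care — is verifying that $Z$ (and not just $-Z$) fails conditional positivity, since Theorem~\ref{thm:conditional nonpositivity} as stated addresses $-Z$. This is where the strict positivity $0 < a_1,\dots,a_m < \infty$ is essential: the element $(Z, S^{-1}(f)S^{-1}(f)^*)$ equals $\frac{2}{(q^{-1}-q)^2}\sum_l a_l (z_{\mu_l} - \hat\epsilon(z_{\mu_l}), S^{-1}(f)S^{-1}(f)^*)$, and each summand is strictly positive by \eqref{eq:identity involving zmu-2} and \eqref{eq:each summand} in the proof of Theorem~\ref{thm:conditional nonpositivity} (applied with $\mu = \mu_l \neq 0$). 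Hence $(Z, S^{-1}(f)S^{-1}(f)^*) > 0$, so $-Z$ takes a strictly negative value on an element of $\Ker\epsilon$ of the form $aa^*$ with $a = S^{-1}(f)^* \in \Ker\epsilon$, showing $-(-Z) = Z$ is not conditionally positive. Everything else is a formal deduction from results already in the paper, so the proof will be short: it is essentially an application of Proposition~\ref{prop:positive definiteness and conditional positivity} to both $Z$ and $-Z$, combined with positivity of $\hcal$ to pass from the scalar form to the module-valued form.
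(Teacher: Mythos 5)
Your treatment of the failure of positive definiteness is exactly the paper's (Proposition~\ref{prop:positive definiteness and conditional positivity} plus Theorem~\ref{thm:conditional nonpositivity}), but your argument for the failure of \emph{negative} definiteness has a genuine logical gap. To apply Proposition~\ref{prop:positive definiteness and conditional positivity} to $-Z$ you must show that $-(-Z)=Z$ is \emph{not} conditionally positive, i.e.\ that $(Z,c^*c)<0$ for \emph{some} $c\in\Ker\epsilon$. What you actually establish is $(Z,S^{-1}(f)S^{-1}(f)^*)>0$ for one particular $f$. A single \emph{positive} value of $(Z,c^*c)$ is perfectly consistent with conditional positivity and proves nothing in the direction you need; it only re-proves that $-Z$ is not conditionally positive, which is Theorem~\ref{thm:conditional nonpositivity} itself and was already consumed in the first half of your argument. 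The sentence ``so $-Z$ takes a strictly negative value \dots showing $-(-Z)=Z$ is not conditionally positive'' is a non sequitur: no witness with $(Z,c^*c)<0$ is produced anywhere, so the non-negative-definiteness of $\hcal\big(\la\cdot,\cdot\ra_{\boldsymbol{\mu}}^{\abf}\big)$ is left unproved.

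The paper closes this half by a different, direct route that you should adopt: Proposition~\ref{prop:lower semiboundedness of Laplacian functionals} gives $C_{Z}(\lambda)\to\infty$, so for $\lambda\in\weights^+$ with $(\lambda,\lambda)$ sufficiently large one has $C_Z(\lambda)>0$ and hence
\[
\hcal\big(\la d_{\boldsymbol{\mu}}u^{\lambda}_{ij},\,d_{\boldsymbol{\mu}}u^{\lambda}_{ij}\ra_{\boldsymbol{\mu}}^{\abf}\big)=\hcal\big((u^{\lambda}_{ij})^*(Z\triangleright u^{\lambda}_{ij})\big)=C_Z(\lambda)\,\hcal\big((u^{\lambda}_{ij})^*u^{\lambda}_{ij}\big)>0,
\]
exhibiting an element of strictly positive ``norm'' and ruling out negative definiteness immediately. (A posteriori this shows that $Z$ is indeed not conditionally positive, so your intended equivalence is true---but it is a consequence of the eigenvalue estimate, not of the computation with $f=t_\gamma-\epsilon(t_\gamma)$.) Your remaining steps---the observation that ``not positive definite'' already yields a vector of strictly negative norm by Cauchy--Schwarz and nondegeneracy, and the passage from the scalar form to the $\Cf^\infty(K_q)$-valued form---are fine and consistent with the paper's one-line conclusion.
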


\begin{proof}
    In view of Proposition~\ref{prop:positive definiteness and conditional positivity}, Theorem~\ref{thm:conditional nonpositivity} implies that \eqref{eq:scalar nondegenerate form on FODC} is not positive definite.

    Moreover, Proposition~\ref{prop:lower semiboundedness of Laplacian functionals} implies that, for $\lambda \in \weights^+$ with sufficiently large $(\lambda, \lambda)$, we have
    \begin{align*}
    \hcal \big( \la d_{\boldsymbol{\mu}} u^{\lambda}_{ij}, d_{\boldsymbol{\mu}} u^{\lambda}_{ij} \ra_{\boldsymbol{\mu}} ^{\abf} \big) &= \hcal \big( (u^{\lambda}_{ij})^* ( Z \triangleright u^{\lambda}_{ij} ) \big) \\
    &= C_{Z} (\lambda) \hcal \big( (u^{\lambda}_{ij})^* u^{\lambda}_{ij} \big) > 0
    \end{align*}
    for any $1 \leq i,j \leq n_\lambda$, which proves that \eqref{eq:scalar nondegenerate form on FODC} is not negative definite either.

    The statement about $\la \cdot , \cdot \ra_{\boldsymbol{\mu}} ^{\abf}$ follows from this.
\end{proof}

\begin{rmk}\label{rmk:non positive definiteness of the strong sesquilinear form}
In the context of differential calculus on a CQG, the necessity of working with nondegenerate forms that are neither positive definite nor negative definite was already observed in \cite{Heckenberger1997}. These forms were subsequently used extensively in \cite{Heckenberger_Laplace-Beltrami_2000, Heckenberger_Spin_2003}, see also \cite{BhowmickMukhopadhyay_Pseudo-Riemannian_2020} for a recent exposition on this subject. The preceding theorem confirms that this phenomenon is generic in the sense that, for all finite-dimensional bicovariant $*$-FODCs on $K_q$ induced by $q$-Laplacians, such forms arise naturally.
\end{rmk}

\subsection{Positive spectrum}\label{subsec:Positive spectrum}

The eigenvalues of classical Laplacians on compact smooth manifolds are always nonnegative \cite{Jost}. However, due to Theorem~\ref{thm:non positive definiteness of the strong sesquilinear form}, we cannot as readily conclude as in the classical case whether the eigenvalues of $q$-deformed Laplacians are nonnegative. We were only able to establish this for the following particular $q$-deformed Laplacians:

\begin{prop}\label{prop:positive quantum Laplacians}
    If $m =1$ and $\mu_1 = \gamma$, the highest root in $\boldsymbol{\Delta}$, then the eigenvalues of $Z \triangleright$ are given by
    \begin{equation*}
        C_{Z}(\lambda) = a_1 \sum_{\alpha \in \boldsymbol{\Delta}} \Big( \big[ (\lambda + \rho, \alpha) \big]_q^2 - \big[ (\rho, \alpha) \big]_q^2 \Big), \quad \lambda \in \weights^+,
    \end{equation*}
    which is positive for $0 \neq \lambda \in \weights^+$.
\end{prop}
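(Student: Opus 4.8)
The plan is to start from the eigenvalue formula for $q$-deformed Laplacians established in Theorem~\ref{thm:eigenvalues of Casimir Laplacians}, specialized to the case $m=1$, $\mu_1 = \gamma$, $\abf = (a_1)$. Since $\gamma$ is the highest root, it is the highest weight of the adjoint representation $\ad_\gf : \gf \to \End(\gf)$, so the multiset of weights of $\pi_\gamma$ is $\weights(\gamma) = \{0\}^{\oplus N} \cup \boldsymbol{\Delta}$ (the zero weight occurring with multiplicity $N = \dim\hf$, and each root $\alpha \in \boldsymbol{\Delta}$ with multiplicity one). Plugging this into \eqref{eq:eigenvalues of q-Laplacian} gives
\[
C_Z(\lambda) = a_1\Big( \sum_{\alpha \in \boldsymbol{\Delta}} \big( [(\lambda+\rho,\alpha)]_q^2 - [(\rho,\alpha)]_q^2 \big) + N\big( [(\lambda+\rho,0)]_q^2 - [(\rho,0)]_q^2 \big)\Big),
\]
and the zero-weight terms vanish since $[0]_q = 0$, yielding the displayed formula. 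This verifies the first assertion; one should note that $\gamma = -w_0\gamma$ (as remarked just before Theorem~\ref{thm:conditional nonpositivity}, because $-w_0$ permutes $\boldsymbol{\Delta}^+$ and hence fixes the highest root), so the hypotheses of Definition~\ref{defn:q-deformed Laplacians} are indeed met and $Z$ is a genuine $q$-deformed Laplacian.

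For the positivity claim, pair up $\alpha$ and $-\alpha$: for each $\alpha \in \boldsymbol{\Delta}^+$,
\[
[(\lambda+\rho,\alpha)]_q^2 + [(\lambda+\rho,-\alpha)]_q^2 - [(\rho,\alpha)]_q^2 - [(\rho,-\alpha)]_q^2 = 2\big( [(\lambda+\rho,\alpha)]_q^2 - [(\rho,\alpha)]_q^2 \big),
\]
using $[-z]_q = -[z]_q$. So it suffices to show $[(\lambda+\rho,\alpha)]_q^2 > [(\rho,\alpha)]_q^2$ for every $\alpha \in \boldsymbol{\Delta}^+$ whenever $0 \neq \lambda \in \weights^+$, with at least one strict inequality and all others $\geq$. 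The key point is that for $\lambda \in \weights^+$ and $\alpha \in \boldsymbol{\Delta}^+$ one has $(\lambda,\alpha) \geq 0$ and $(\rho,\alpha) > 0$ (since $\rho$ is strictly dominant), hence $(\lambda+\rho,\alpha) \geq (\rho,\alpha) > 0$; since $t \mapsto [t]_q$ is strictly increasing and positive on $(0,\infty)$ for $0 < q < 1$ (as $[t]_q = \frac{q^t - q^{-t}}{q - q^{-1}} = \frac{\sinh(h t)}{\sinh(-h)}\cdot(-1)$ with $h < 0$, i.e. a positive multiple of $\sinh(|h|t)$), we get $[(\lambda+\rho,\alpha)]_q \geq [(\rho,\alpha)]_q > 0$, and squaring preserves the inequality. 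Finally, if $\lambda \neq 0$ then $\lambda$ is a nonzero dominant weight, so $(\lambda,\alpha) > 0$ for at least one simple root $\alpha$ (otherwise $\lambda$ would be orthogonal to all simple roots, forcing $\lambda = 0$); for that $\alpha$ the inequality is strict, giving $C_Z(\lambda) > 0$.

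I expect the main obstacle to be the bookkeeping for the multiplicity of the zero weight in the adjoint representation and making the monotonicity argument for $[\cdot]_q$ clean — specifically being careful with signs since $q < 1$ means $h = \log q < 0$, so $q^z - q^{-z}$ and $q - q^{-1}$ are both negative for $z > 0$, and their ratio is positive. An alternative, perhaps cleaner, route for positivity is to go through the $q \to 1$-type bound used in Proposition~\ref{prop:lower semiboundedness of Laplacian functionals}: there the identity $q^{-2z} + q^{2z} - 2 = (q^{-z} - q^z)^2 \geq 0$ is combined with the factorization $q^{-2(\lambda+\rho,\alpha)} + q^{2(\lambda+\rho,\alpha)} - q^{-2(\rho,\alpha)} - q^{2(\rho,\alpha)} = (q^{-2(\lambda,\alpha)} - 1)(q^{-2(\rho,\alpha)} - q^{2(\lambda+\rho,\alpha)})$, exactly as in \eqref{eq:each summand} in the proof of Theorem~\ref{thm:conditional nonpositivity}; each factor is $\geq 0$ because $(\lambda,\alpha), (\rho,\alpha), (\lambda+\rho,\alpha) \geq 0$ and $0 < q < 1$, and both are strictly positive for suitable $\alpha$ when $\lambda \neq 0$. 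Since $C_Z(\lambda) = \frac{a_1}{(q^{-1}-q)^2}\sum_{\alpha \in \boldsymbol{\Delta}^+}\big(\text{this expression}\big)$ after pairing $\pm\alpha$, positivity follows immediately. I would present the proof via this factorization since it reuses machinery already in place.
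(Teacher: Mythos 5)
Your proof is correct and, in the form you say you would actually present it (pairing $\alpha$ with $-\alpha$ and invoking the factorization \eqref{eq:each summand} with $-w_0\mu$ replaced by $\lambda$), it is essentially identical to the paper's proof; the monotonicity-of-$[\,\cdot\,]_q$ argument you sketch first is a valid alternative but is not the route the paper takes. The only slip is harmless: after pairing $\pm\alpha$ the prefactor should be $\frac{2a_1}{(q^{-1}-q)^2}$ rather than $\frac{a_1}{(q^{-1}-q)^2}$, which does not affect positivity.
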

\begin{proof}
Let $0 \neq \lambda \in \weights^+$. Since $\Pbf(\gamma) = \{ 0 \} \cup \boldsymbol{\Delta}$, \eqref{eq:eigenvalues of q-Laplacian} becomes
\begin{align*}
C_{Z_{K_q}}(\lambda) &= a_1 \sum_{\alpha \in \boldsymbol{\Delta}} \Big( \big[ (\lambda + \rho, \alpha) \big]_q^2 - \big[ (\rho, \alpha) \big]_q^2 \Big) \\
&= \frac{a_1}{(q^{-1}-q)^2} \sum_{\alpha \in \boldsymbol{\Delta}} \Big( q^{-2(\lambda+\rho, \alpha)} + q^{2(\lambda+\rho, \alpha)} - \big( q^{-2(\rho, \alpha)} + q^{2(\rho, \alpha)} \big) \Big) \\
&= \frac{2a_1}{(q^{-1}-q)^2} \sum_{\alpha \in \boldsymbol{\Delta}^+} \Big( q^{-2(\lambda+\rho, \alpha)} + q^{2(\lambda+\rho, \alpha)} - \big( q^{-2(\rho, \alpha)} + q^{2(\rho, \alpha)} \big) \Big).
\end{align*}
However, in \eqref{eq:each summand}, we have already observed that each summand in the final expression is nonnegative, and that at least one of them is positive (we just need to replace $-w_0 \mu$ by $\lambda$ there).
\end{proof}

We leave the following as a conjecture.

\begin{ques}\label{ques:are the eigenvalues of q-deformed Laplacians nonnegative}
    Are the eigenvalues of other $q$-deformed Laplacians nonnegative?
\end{ques}

\section{The \texorpdfstring{$q \rightarrow 1$}{TEXT} limit of FODCs associated with \texorpdfstring{$q$}{TEXT}-deformed Laplacians}\label{sec:the q->1 limit of FODCs}

Corollary~\ref{cor:eigenvalues of Casimir Laplacians} showed that $q$-deformed Laplacians serve as $q$-deformations of classical Laplacians. In view of Theorem~\ref{thm:Casimir elements as Laplacians}, it is natural to ask what happens to their associated FODCs $(\Omega_{\boldsymbol{\mu}}, d_{\boldsymbol{\mu}})$ in the $q \rightarrow 1$ limit. In this section, we show that they likewise converge to the classical FODC.

\subsection{More on semisimple Lie algebras}\label{subsec:More on semisimple Lie algebras}

Recall that $\hf \cong \hf^*$ via the Killing form $(\cdot,\cdot)$. For $\lambda \in \hf^*$, let $H_\lambda$ denote the corresponding element in $\hf$ under this identification. For $\alpha \in \boldsymbol{\Delta}^+$, define
\begin{equation*}
    H'_\alpha = \frac{2}{(\alpha, \alpha)} H_\alpha.
\end{equation*}

Let $\overline{(\cdot)} : \gf \rightarrow \gf$ denote the complex conjugation with respect to the real form $\kf$. For each $\alpha \in \boldsymbol{\Delta}^+$, choose a root vector $E_\alpha$ for $\alpha$ and define $F_\alpha = -\overline{E_\alpha}$, which is a root vector for $-\alpha$, since $\boldsymbol{\Delta} \subseteq (i\kf)^*$. Define
\begin{equation*}
    X_\alpha = \frac{1}{2} (E_\alpha - F_\alpha), \quad Y_\alpha = \frac{1}{2i} (E_\alpha + F_\alpha).
\end{equation*}
Then $X_\alpha, Y_\alpha \in \kf$, and we have $E_\alpha = X_\alpha + i Y_\alpha$ and $F_\alpha = -X_\alpha + i Y_\alpha$. Since the Killing form is negative definite on $\kf$, we obtain
\[
- (E_\alpha , F_\alpha) = (E_\alpha, \overline{E_\alpha}) = (X_\alpha, X_\alpha) + (Y_\alpha, Y_\alpha) < 0.
\]
Thus, we can normalize $E_\alpha$ (and hence $F_\alpha = -\overline{E_\alpha}$) so that
\[
(E_\alpha, F_\alpha) = \frac{2}{(\alpha,\alpha)}.
\]
Because $[E_\alpha, F_\alpha] = (E_\alpha, F_\alpha) H_\alpha$, we deduce
\begin{equation}\label{eq:relations between E F H}
    [H'_\alpha, E_\alpha] = 2 E_\alpha, \quad [H'_\alpha, F_\alpha] = -2 F_\alpha, \quad [E_\alpha, F_\alpha] = H'_\alpha, \quad \alpha \in \boldsymbol{\Delta}^+.
\end{equation}

For $ 1 \leq j \leq N$, define
\[
H'_j = H'_{\alpha_j}, \quad E_j = E_{\alpha_j}, \quad F_j = F_{\alpha_j}.
\]
Then, the set $\{ H'_j, E_\alpha, F_\alpha \mid 1 \leq j \leq N, \, \alpha \in \boldsymbol{\Delta}^+ \}$ forms a linear basis of $\gf$. The following is \cite[Theorem~2.98]{Knapp}.

\begin{thm}\label{thm:Lie algebra presentation}
The Lie algebra $\gf$ is a universal complex Lie algebra generated by $\{ H'_j, E_j, F_j \mid 1 \leq j \leq N \}$ with the following relations:
\begin{enumerate}
    \item $[H'_i, H'_j] = 0$
    \item $[H'_i, E_j] = A_{ij} E_j$ and $[H'_i, F_j] = -A_{ij} F_j$
    \item $[E_i, F_j] = \delta_{ij} H'_i$
    \item For $i \neq j$, $(\ad E_i)^{1 - A_{ij}} E_j = 0$ and $(\ad F_i)^{1 - A_{ij}} F_j = 0$.
\end{enumerate}
\end{thm}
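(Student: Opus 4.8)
This is the Serre presentation of the semisimple Lie algebra $\gf$, and the plan is to follow the standard proof of Serre's theorem. First I would verify that relations (1)--(4) actually hold in $\gf$ for the chosen elements: relations (1)--(3) are precisely \eqref{eq:relations between E F H} together with the orthogonality of distinct simple roots, while the Serre relations (4) follow from the representation theory of the $\mathfrak{sl}_2$-triple $\{H'_i,E_i,F_i\}$ acting on $\gf$ by $\ad$ — the element $E_j$ sits inside a finite-dimensional $\ad\mathfrak{sl}_2$-submodule as a highest weight vector of $\ad H'_i$-weight $-A_{ij}$, so $(\ad E_i)^{1-A_{ij}}E_j=0$, and symmetrically for the $F$'s. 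Hence there is a canonical surjective Lie algebra homomorphism $\gf(A)\to\gf$ from the abstract Lie algebra $\gf(A)$ presented by generators $\hat h_j,\hat e_j,\hat f_j$ subject to (1)--(4), sending $\hat h_j\mapsto H'_j$ etc. The entire content of the theorem is that this surjection is injective; equivalently, that $\gf$ has the stated universal property, i.e. for any complex Lie algebra $\mathfrak a$ and elements $h_j,e_j,f_j\in\mathfrak a$ satisfying (1)--(4) there is a unique homomorphism $\gf\to\mathfrak a$ with $H'_j\mapsto h_j$, $E_j\mapsto e_j$, $F_j\mapsto f_j$.

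To prove injectivity I would first study the intermediate algebra $\tilde\gf(A)$ defined by the same generators but only relations (1)--(3). The key structural lemma is a triangular decomposition
\[
\tilde\gf(A)=\tilde\nf_-\oplus\tilde\hf\oplus\tilde\nf_+,
\]
where $\tilde\hf=\bigoplus_j\Cbb\hat h_j$ is abelian of dimension $N$, $\tilde\nf_+$ is the free Lie algebra on $\hat e_1,\dots,\hat e_N$, and $\tilde\nf_-$ the free Lie algebra on $\hat f_1,\dots,\hat f_N$. This is established by producing a faithful representation of $\tilde\gf(A)$ on the tensor algebra $T(V)$ of an $N$-dimensional space $V=\Span_\Cbb\{v_1,\dots,v_N\}$: let $\hat f_j$ act by left multiplication by $v_j$, let $\hat h_j$ act diagonally according to the weight grading, and define the action of $\hat e_j$ recursively so that relations (1)--(3) are forced; checking that this defines a representation and then separating weight components yields the decomposition, with $\tilde\hf$ acting on $\tilde\nf_\pm$ with weights in $\pm\Qbf^+$.

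Next I would pass to the Serre quotient. Writing $\theta_{ij}^\pm$ for the left-hand sides of the relations in (4), viewed as elements of $\tilde\gf(A)$, a direct $\mathfrak{sl}_2$-computation shows $[\hat f_k,\theta_{ij}^+]=0$ and $[\hat e_k,\theta_{ij}^-]=0$ for all $k$; hence the ideal of $\tilde\gf(A)$ generated by the $\theta_{ij}^+$ lies in $\tilde\nf_+$ and the ideal generated by the $\theta_{ij}^-$ lies in $\tilde\nf_-$. Therefore $\gf(A)=\tilde\gf(A)/(\text{these two ideals})$ retains a triangular decomposition $\gf(A)=\nf_-\oplus\hf\oplus\nf_+$ with $\dim\hf=N$. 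Finally, specializing to $A$ the Cartan matrix of the root system $\boldsymbol{\Delta}$, which is of finite type (the symmetrized matrix is positive definite), I would show that $\gf(A)$ is finite-dimensional with root system exactly $\boldsymbol{\Delta}$: integrability of the $\ad$-action of each $\mathfrak{sl}_2$-triple produces a $W$-action permuting the $\hf$-weight spaces of $\gf(A)$, and since all $W$-orbits of roots are finite and each simple root space has multiplicity one, one bounds $\dim\nf_\pm\le|\boldsymbol{\Delta}^+|$. Then $\gf(A)$ is a finite-dimensional Lie algebra carrying a nondegenerate invariant bilinear form and the same root data as $\gf$, so the isomorphism theorem for semisimple Lie algebras gives $\gf(A)\cong\gf$ carrying $\hat h_j,\hat e_j,\hat f_j$ to $H'_j,E_j,F_j$; uniqueness of the homomorphism out of $\gf$ is immediate since $\{H'_j,E_j,F_j\}$ generates $\gf$.

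The main obstacle is the triangular decomposition of $\tilde\gf(A)$ — constructing and verifying the faithful representation on $T(V)$ — together with the finite-dimensionality step for finite type, where the positive-definiteness of the Cartan matrix is genuinely used; the assertion that the $\theta_{ij}^\pm$ are killed by the opposite nilpotent generators is the other delicate point but is a routine, if careful, application of $\mathfrak{sl}_2$-theory. Since all of this is worked out in detail in \cite[Chapter~II]{Knapp}, in practice I would simply invoke that reference, as the excerpt does.
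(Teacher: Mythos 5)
Your proposal is correct and matches the paper, which gives no proof of its own and simply cites \cite[Theorem~2.98]{Knapp}; your sketch is exactly the standard Serre-presentation argument (triangular decomposition of the algebra with relations (1)--(3), Serre ideals contained in $\tilde\nf_\pm$, finite type, isomorphism theorem) carried out in that reference. One small inaccuracy worth fixing: the vanishing of $[E_i,F_j]$ for $i\neq j$ is not a consequence of orthogonality of distinct simple roots (they are generally not orthogonal) but of the fact that $\alpha_i-\alpha_j$ is neither zero nor a root, so the bracket has nowhere nonzero to live.
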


The last condition is equivalent to the following condition for $i \neq j$:
    \[
    \sum_{k=0}^{1 - A_{ij}} \binom{1 - A_{ij}}{k} E_i^{1 - A_{ij} - k} E_j E_i^k = \sum_{k=0}^{1 - A_{ij}} \binom{1 - A_{ij}}{k} F_i^{1 - A_{ij} - k} F_j E_i^k = 0
    \]
in $U(\gf)$, where $\binom{*}{*}$ denotes the binomial coefficient.

\begin{prop}\label{prop:the classical differential in terms of H E F}
    Consider $\Cf^\infty(K)$, the space of matrix coefficients of the compact Lie group $K$, and let $f \in \Cf^\infty(K)$. Then, under the identification $\gf^* \cong \gf$ via the Killing form, we have
    \begin{equation*}
        df = \sum_{1 \leq j \leq N} (H_j' \triangleright f) H_{\varpi_j} + \sum_{\alpha \in \boldsymbol{\Delta}^+} \frac{(\alpha,\alpha)}{2} \Big( (E_\alpha \triangleright f) F_\alpha + (F_\alpha \triangleright f) E_\alpha \Big).
    \end{equation*}
\end{prop}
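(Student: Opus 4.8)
The plan is to evaluate the classical differential $d = D|_{\Cf^\infty(K)}$ from Proposition~\ref{prop:classical FODC} (recall D1 identifies $d$ with the exterior derivative) by re-expressing the formula $df = \sum_{k=1}^d (X_k \triangleright f)\, \varepsilon^k$ from \eqref{eq:classical differential} and \eqref{eq:differential and LIVF} in terms of the basis $\{H'_j, E_\alpha, F_\alpha\}$ rather than an orthonormal basis of $\kf$. The key point is that $df$ is basis-independent: if $\{v_k\}$ is any $\Cbb$-basis of $\gf$ with dual basis $\{v^k\} \subseteq \gf^*$, then $df = \sum_k (v_k \triangleright f)\, v^k$, because both sides are the unique element $\omega \in \Cf^\infty(K) \otimes \gf^*$ with $\omega(\,\cdot\,)(X) = (X\triangleright f)$ for $X \in \gf$ (here I use that $X \triangleright f = Xf$ from Proposition~\ref{prop:differential operators from linear functionals}, so $d f_x(X) = (X\triangleright f)(x)$). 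Concretely I would take $v_k$ to range over $\{H'_1,\dots,H'_N\} \cup \{E_\alpha, F_\alpha : \alpha \in \boldsymbol{\Delta}^+\}$, which is a $\Cbb$-basis of $\gf$ by Theorem~\ref{thm:Lie algebra presentation}, compute the dual basis, and transport everything to $\gf$ via the Killing form identification $\gf^* \cong \gf$.

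So the crux is identifying the dual basis. First I would record the pairings among the chosen vectors under the Killing form: by Lemma~\ref{lem:invariant inner products on compact semisimple Lie algebras}(1), $(H, E_\alpha) = (H, F_\alpha) = 0$ for $H \in \hf$ and $(E_\alpha, E_\beta) = (F_\alpha, F_\beta) = 0$, while $(E_\alpha, F_\beta) = 0$ unless $\alpha = \beta$; the normalization $(E_\alpha, F_\alpha) = \tfrac{2}{(\alpha,\alpha)}$ is exactly the one fixed just before \eqref{eq:relations between E F H}. On the Cartan part, $(H'_i, H_{\varpi_j}) = \tfrac{2}{(\alpha_i,\alpha_i)}(H_{\alpha_i}, H_{\varpi_j}) = \tfrac{2(\alpha_i,\varpi_j)}{(\alpha_i,\alpha_i)} = (\alpha_i^\lor, \varpi_j) = \delta_{ij}$ (using that $(\alpha_i,\alpha_i) = 2d_i$ and the dual-basis relation $(\alpha_i^\lor,\varpi_j)=\delta_{ij}$ from Section~\ref{subsec:Semisimple Lie algebras}); and $(H_{\varpi_j}, E_\alpha) = (H_{\varpi_j}, F_\alpha)=0$. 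Hence, transporting $\gf^* \cong \gf$, the basis of $\gf$ dual to $\{H'_j\} \cup \{E_\alpha\} \cup \{F_\alpha\}$ is precisely $\{H_{\varpi_j}\} \cup \{\tfrac{(\alpha,\alpha)}{2}F_\alpha\} \cup \{\tfrac{(\alpha,\alpha)}{2}E_\alpha\}$: indeed $(H_{\varpi_j}, H'_i) = \delta_{ij}$, $(\tfrac{(\alpha,\alpha)}{2}F_\alpha, E_\beta) = \tfrac{(\alpha,\alpha)}{2}(F_\alpha,E_\beta) = \delta_{\alpha\beta}$, and all cross-terms vanish. Substituting into $df = \sum_{v} (v \triangleright f)\, v^\vee$ gives exactly the claimed formula.

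There is one routine but necessary check: confirming that the basis-independence statement for $df$ is legitimate, i.e., that $df = \sum_k (v_k \triangleright f)\, v^k$ genuinely holds for an arbitrary basis. This follows because under the identification $\Omega_K \cong \Cf^\infty(K)\otimes\gf^*$ the element $df$ is characterized by $(\ev_x \otimes \mathrm{ev}_X)(df) = (Xf)(x) = (X\triangleright f)(x)$ for all $X \in \kf$ (stated right after \eqref{eq:classical differential}), hence for all $X \in \gf$ by complex-linear extension, and the map $\gf \ni X \mapsto (X\triangleright f) \in \Cf^\infty(K)$ is linear, so its "matrix" in dual bases is $\sum_k (v_k \triangleright f)\otimes v^k$ regardless of which basis $\{v_k\}$ is used. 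I expect the main obstacle — really the only substantive computation — to be bookkeeping the normalization constants and verifying the Cartan-part pairing $(H'_i, H_{\varpi_j}) = \delta_{ij}$ correctly, since it interlocks the conventions $\alpha_j^\lor = d_j^{-1}\alpha_j$, $d_j = \tfrac{(\alpha_j,\alpha_j)}{2}$, and the normalization of $E_\alpha, F_\alpha$ from two different parts of the paper; once these are pinned down the proof is a one-line substitution.
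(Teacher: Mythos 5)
Your proof is correct and follows essentially the same route as the paper's: both expand $df$ in the basis of $\gf$ that is dual, with respect to the Killing form, to a root-adapted basis, using the orthogonality relations of Lemma~\ref{lem:invariant inner products on compact semisimple Lie algebras} together with the normalization $(E_\alpha,F_\alpha)=\tfrac{2}{(\alpha,\alpha)}$ and the pairing $(H_i',H_{\varpi_j})=\delta_{ij}$. The only difference is cosmetic: the paper first computes the Killing-dual of the real basis $\{iH'_j,\,X_\alpha,\,Y_\alpha\}$ of $\kf$ and then converts $-2(X_\alpha\triangleright f)X_\alpha-2(Y_\alpha\triangleright f)Y_\alpha$ into $(E_\alpha\triangleright f)F_\alpha+(F_\alpha\triangleright f)E_\alpha$ at the end, whereas you work directly with the complex basis $\{H'_j,\,E_\alpha,\,F_\alpha\}$ of $\gf$, which is a slightly cleaner bookkeeping of the same computation.
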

\begin{proof}
Since $\{ H'_j, \, E_\alpha, \, F_\alpha \mid 1 \leq j \leq N, \, \alpha \in \boldsymbol{\Delta}^+ \}$ is a $\Cbb$-linear basis of $\gf$, we have
\begin{equation}\label{eq:linear basis of kf}
\{ i H'_j, \, X_\alpha, \, Y_\alpha \mid 1 \leq j \leq N, \, \alpha \in \boldsymbol{\Delta}^+ \} \subseteq \kf
\end{equation}
as an $\Rbb$-linear basis of $\kf$. Let $\alpha \neq \beta \in \boldsymbol{\Delta}^+$. Since $(E_\alpha, E_\beta) = (F_\alpha, F_\beta) = 0$ by Lemma~\ref{lem:invariant inner products on compact semisimple Lie algebras}~(1), we have
\begin{equation}\label{eq:norms of Xalpha and Yalpha}
    (X_\alpha, X_\alpha) = (Y_\alpha, Y_\alpha) = - \frac{1}{2} (E_\alpha, F_\alpha) = \frac{1}{(\alpha,\alpha)}, \quad
    (X_\alpha, X_\beta) = 0 = (Y_\alpha, Y_\beta).
\end{equation}
Also, by the same lemma,
\begin{equation*}
    (X_\alpha, Y_\alpha) = \frac{1}{4i} \big( (E_\alpha, E_\alpha) - (F_\alpha, F_\alpha) \big) = 0, \quad (X_\alpha, Y_\beta) = 0.
\end{equation*}
Thus, we see that
    \[
    \kf = \tf \oplus \big( \bigoplus_{\alpha \in \Delta^+} \Rbb X_\alpha \big) \oplus \big( \bigoplus_{\alpha \in \Delta^+} \Rbb Y_\alpha \big)
    \]
is an orthogonal decomposition with respect to the Killing form. Now, using \eqref{eq:norms of Xalpha and Yalpha} and the defining relations $\frac{2(\varpi_i,\alpha_j)}{(\alpha_j, \alpha_j)} = \delta_{ij}$ ($1\leq i,j \leq N$) for the fundamental weights, we can check that
    \[
    \{ -i H_{\varpi_j}, \, -(\alpha,\alpha) X_\alpha, \, -(\alpha,\alpha) Y_\alpha \mid 1 \leq j \leq N, \, \alpha \in \boldsymbol{\Delta}^+ \}
    \]
is a dual basis of \eqref{eq:linear basis of kf} with respect to the Killing form.

Therefore, \eqref{eq:classical differential} and Proposition~\ref{prop:differential operators from linear functionals} imply
    \begin{align*}
    df = \sum_{1 \leq j \leq N} (i H'_j \triangleright f) (-i H_{\varpi_j} ) + \sum_{\alpha \in \boldsymbol{\Delta}^+} \Big( (X_\alpha \triangleright f ) ( - (\alpha,\alpha) X_\alpha) + (Y_\alpha \triangleright f ) ( - (\alpha, \alpha) Y_\alpha) \Big) \\
    = \sum_{1 \leq j \leq N} (H'_j \triangleright f) H_{\varpi_j} - \sum_{\alpha \in \boldsymbol{\Delta}^+} (\alpha, \alpha) \Big( (X_\alpha \triangleright f ) X_\alpha + (Y_\alpha \triangleright f ) Y_\alpha \Big)
    \end{align*}
    under the identification $\gf^* \cong \gf$ via the Killing form.
However, for each $\alpha \in \boldsymbol{\Delta}^+$, we have
    \begin{align*}
         (E_\alpha \triangleright f) F_\alpha + (F_\alpha \triangleright f) E_\alpha = - 2(X_\alpha \triangleright f) X_\alpha - 2(Y_\alpha \triangleright f) Y_\alpha.
    \end{align*}
Combining the two preceding identities, we obtain
\begin{equation*}
    df = \sum_{1 \leq j \leq N} (H'_j \triangleright f) H_{\varpi_j} + \sum_{\alpha \in \boldsymbol{\Delta}^+} \frac{(\alpha,\alpha)}{2} \Big( (E_\alpha \triangleright f) F_\alpha + (F_\alpha \triangleright f) E_\alpha \Big).
\end{equation*}
\end{proof}

\subsection{The \texorpdfstring{$q \rightarrow 1$}{TEXT} behavior of irreducible representations}\label{subsec:The q->1 behavior of irreducible representations}

When considering the $q \rightarrow 1$ limits of objects defined on $K_q$, it is convenient to disregard the $*$-structures of $U_q^\Rbb(\kf)$ for $0 < q < 1$ and view them purely as Hopf algebras. We denote these by $U_q(\gf)$ when doing so.

We distinguish objects depending on $q$ by adding a superscript $q$ on the left. For example, the irreducible representation of the algebra $U_q(\gf)$ corresponding to $\mu \in \weights^+$ is denoted by ${}^q\pi_\mu$. We also set $U_1(\gf) := U(\gf)$, $\Cf^\infty(K_1) := \Cf^\infty(K)$, and ${}^1\pi_\mu := \pi_\mu$, the irreducible representation of $\gf$ (and its extension to $U(\gf)$).

According to \cite[Section~5.1.1]{VoigtYuncken}, the sets of weights for $\pi_\lambda$ and ${}^q\pi_\lambda$ are identical, including multiplicities, for any $0 < q < 1$. We denote this common set by $\Pbf(\lambda)$ and let $\epsilon_1^\lambda, \dots, \epsilon_{n_\lambda}^\lambda$ be an enumeration of the weights counted with multiplicity.

\begin{thm}\label{thm:collecting all q}
    For each $\lambda \in \gf$, there exists a finite-dimensional vector space $V(\lambda)$, on which all the irreducible representations $( {}^q \pi_\lambda )_{0 < q \leq 1}$ are realized, such that the following hold:
    \begin{enumerate}
    \item For all $\mu \in \weights$ and $\alpha \in \boldsymbol{\Delta}^+$,
    \begin{gather*}
        \id_{V(\lambda)} = \lim_{q \rightarrow 1} {}^q \pi_\lambda ({}^q K_\mu), \\
        {}^1 \pi_\lambda (H_\mu) = \lim_{q \rightarrow 1} {}^q \pi_\lambda \bigg( \frac{{}^q K_\mu - {}^q K_\mu^{-1}}{q - q^{-1}} \bigg), \\
        {}^1 \pi_\lambda (c_\alpha^+ E_\alpha) = \lim_{q \rightarrow 1} {}^q \pi_\lambda ({}^q E_\alpha) = - \lim_{q \rightarrow 1} {}^q \pi_\lambda \, {}^q \hat{S}({}^q E_\alpha), \\
        {}^1 \pi_\lambda (c_\alpha^- F_\alpha) = \lim_{q \rightarrow 1} {}^q \pi_\lambda ({}^q F_\alpha) = - \lim_{q \rightarrow 1} {}^q \pi_\lambda \, {}^q \hat{S}({}^q F_\alpha)
    \end{gather*}
    for some $c_\alpha^\pm \in \Cbb$ with $c_\alpha^+ c_\alpha^- = 1$, which do not depend on $\lambda$.
    
    \item There exists a continuous family of inner products $\big( \langle \cdot , \cdot \rangle_q \big)_{0 < q \leq 1}$ on $V(\lambda)$ such that, for each $0 < q \leq 1$, ${}^q \pi_\lambda$ is a $*$-representation of $U_q^\Rbb(\kf)$ on the Hilbert space $\big(V(\lambda), \langle \cdot, \cdot \rangle_q\big)$.
    
    \item There exists an orthonormal basis $\{ e_j^\lambda \mid 1 \leq j \leq n_\lambda \}$ of $\big(V(\lambda), \langle \cdot, \cdot \rangle_1\big)$ such that, for each $1 \leq j \leq n_\lambda$, $e_j^\lambda$ is a weight vector of ${}^q \pi_\lambda$ with weight $\epsilon_j^\lambda$ for any $0 < q \leq 1$.
    \end{enumerate}
\end{thm}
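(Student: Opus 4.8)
## Proof Proposal for Theorem~\ref{thm:collecting all q}

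\textbf{Overall strategy.} The plan is to pass to the rational function field and build a single module over a Lusztig-type integral form, then specialize. Concretely, I would work with the quantized enveloping algebra $U_s(\gf)$ over $\Qbb(s)$ (with $s$ an indeterminate playing the role of $q$), take the irreducible highest weight module ${}^sV(\lambda)$, and descend to an $\mathcal{A}$-lattice $V_{\mathcal{A}}(\lambda)$ inside it, where $\mathcal{A} = \Qbb[s,s^{-1}]$ localized at $s-1$ — or, more robustly, where $\mathcal{A}$ is the ring of functions on a neighborhood of $q=1$ analytic in $q$ (so that the continuity claims in (2) make sense). The crucial point is that in $U_s(\gf)$ one should use the renormalized generators $\widetilde{E}_j = E_j$, $\widetilde{F}_j = F_j$ together with the divided-difference element $[K_j; 0] := \frac{K_j - K_j^{-1}}{s_j - s_j^{-1}}$; these act on a suitable $\mathcal{A}$-lattice with limits at $s=1$ recovering $\pi_\lambda(H_{\alpha_j})$ and (up to the scalars $c_\alpha^\pm$) the classical root-vector actions. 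Then $V(\lambda)$ of the theorem is $\mathcal{A}/(s-1) \otimes_{\mathcal{A}} V_{\mathcal{A}}(\lambda)$, fixed once and for all, and ${}^q\pi_\lambda$ is the specialization $s \mapsto q$ for each $0 < q \le 1$. The content of Appendix~\ref{sec:proof of collecting all q} (where this theorem is proved) presumably carries this out; I would follow that route.

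\textbf{Key steps in order.} First, set up the integral form: fix a highest weight vector $v_\lambda$, let $V_{\mathcal{A}}(\lambda)$ be the $\mathcal{A}$-span of the PBW-type monomials $F_{\beta_1}^{b_1}\cdots F_{\beta_t}^{b_t} v_\lambda$, and check it is stable under $E_j$, $F_j$, $K_\mu$, $[K_j;0]$ — this is where the quantum Serre relations and the commutation formulas \eqref{eq:relations for the root vectors}, \eqref{eq:raising and lowering actions of root vectors} get used, and one must verify no pole at $s=1$ appears. Second, establish the structure statements in (1): the identity $\id_{V(\lambda)} = \lim_{q\to1}{}^q\pi_\lambda({}^qK_\mu)$ is immediate from $K_\mu$ acting as $q^{(\mu,\nu)}$ on the weight-$\nu$ space; the limit $\lim_{q\to1}\frac{K_\mu - K_\mu^{-1}}{q-q^{-1}}$ computes to multiplication by $(\mu,\nu)$ on that space, which is exactly ${}^1\pi_\lambda(H_\mu)$; and for the root vectors, compare the $U_q(\mathfrak{sl}_2)$-triple generated by $\{E_\alpha, F_\alpha, K_\alpha\}$ with the classical triple $\{E_\alpha, F_\alpha, H'_\alpha\}$ of \eqref{eq:relations between E F H} — the scalars $c_\alpha^\pm$ arise precisely from the normalization mismatch between the quantum commutator $[E_\alpha,F_\alpha] = \frac{K_\alpha - K_\alpha^{-1}}{q_\alpha - q_\alpha^{-1}}$ and the classical $[E_\alpha,F_\alpha] = H'_\alpha$, and the antipode formulas ${}^q\hat{S}(E_\alpha)$, ${}^q\hat{S}(F_\alpha)$ of Definition~\ref{defn:the quantized universal enveloping algebra} together with $\lim_{q\to1}K_j^{\pm1}=\id$ give the $-\lim {}^q\pi_\lambda\,{}^q\hat{S}$ equalities. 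That $c_\alpha^+ c_\alpha^- = 1$ and independence of $\lambda$ follows from the braid group action $\Tcal_{i}$: each $E_\alpha$, $F_\alpha$ is obtained from a simple $E_j$, $F_j$ by applying $\Tcal$'s, and the $\Tcal$'s specialize at $s=1$ to the classical braid automorphisms, so the scalars propagate uniformly.

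\textbf{The inner products and the orthonormal basis.} For (2), recall each ${}^qV(\lambda)$ carries an inner product making ${}^q\pi_\lambda$ a $*$-representation of $U_q^\Rbb(\kf)$, unique up to positive scalar once normalized on $v_\lambda$; I would realize this as a Shapovalov-type form — defined by a contravariant pairing on $V_{\mathcal{A}}(\lambda)$ whose Gram matrix in the PBW basis has entries in $\mathcal{A}$, with diagonal (and hence determinant) nonzero at $s=1$ (Shapovalov determinant formula). This gives a continuous family $\langle\cdot,\cdot\rangle_q$ and, at $q=1$, a nondegenerate form that is positive definite on the compact real form, i.e.\ the classical $*$-representation inner product. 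For (3), apply Gram–Schmidt to a fixed basis of weight vectors of $V_{\mathcal{A}}(\lambda)$: since weight spaces are $\mathcal{A}$-direct summands and the restriction of the form to each weight space is nondegenerate near $s=1$, one can orthonormalize \emph{within each weight space over $\mathcal{A}$}, and specializing at $s=1$ yields the desired $\{e_j^\lambda\}$, each of which remains a weight vector of ${}^q\pi_\lambda$ for all $0<q\le1$ because the lattice decomposition into weight spaces is independent of $q$.

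\textbf{Main obstacle.} The delicate part is \textbf{verifying integrality with no pole at $s=1$ and exact compatibility of the various limits}: one must check simultaneously that the same lattice $V_{\mathcal{A}}(\lambda)$ is stable under all of $E_j, F_j, [K_j;0], K_\mu$, that the Shapovalov form restricts to an $\mathcal{A}$-valued form on it whose reduction mod $(s-1)$ is the classical contravariant form (not merely nondegenerate), and that the braid operators used to define $E_\alpha, F_\alpha$ preserve this lattice so that the scalars $c_\alpha^\pm$ are genuinely $\lambda$-independent. This is essentially the construction of Lusztig's integral form adapted to a $q\to1$ (rather than $q\to$ root of unity) specialization, combined with a careful tracking of the commutator-normalization constants; I expect this to occupy the bulk of Appendix~\ref{sec:proof of collecting all q}, while statements (1)--(3) then follow by direct computation on weight vectors.
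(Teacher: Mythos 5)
Your proposal follows essentially the same route as the paper's Appendix~\ref{sec:proof of collecting all q}: an integral form of $\Ubf_\qbf(\gf)$ over $\Qbb(s)$ acting on an $\Acal$-lattice in the highest weight module, specialization $s \mapsto q$ to realize all ${}^q\pi_\lambda$ on one space, direct weight-vector computations for the $K_\mu$ and $H_\mu$ limits, the braid operators $\Tcal_i$ versus their classical counterparts $\tilde{s}_i^{\ad}$ to produce the constants $c_\alpha^\pm$ (with $c_\alpha^+c_\alpha^-=1$ forced by the commutator relation), an $\Acal$-valued contravariant form for the continuous family of inner products, and Gram--Schmidt within weight spaces for part (3). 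The only cosmetic differences are your choice of coefficient ring (the paper uses $\Zbb[s,s^{-1}]$ with divided powers, i.e.\ the Lusztig form, rather than a localization at $s-1$) and that the paper proves the antipode identities for non-simple root vectors by an explicit induction on the length of the braid word, which you gloss over but whose starting point you correctly identify.
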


\begin{proof}
    See Appendix~\ref{sec:proof of collecting all q}.
\end{proof}

Fix $\lambda \in \weights^+$. For each $0 < q \leq 1$, we apply the Gram–Schmidt orthonormalization to $\{ e_j^\lambda \mid 1 \leq j \leq n_\lambda \}$ to obtain an orthonormal basis $\{ {}^q e_j^\lambda \mid 1 \leq j \leq n_\lambda \}$ for $\big(V(\lambda), \langle \cdot, \cdot \rangle_q\big)$ such that, for each $1 \leq j \leq n_\lambda$, ${}^q e_j^\lambda$ depends continuously on $0 < q \leq 1$. Also, since two weight vectors of ${}^q \pi_\lambda$ having different weights are orthogonal with respect to $\langle \cdot, \cdot \rangle_q$ by Theorem~\ref{thm:collecting all q}~(2), the Gram–Schmidt process does not alter the weights of the vectors $\{ e_j^\lambda \mid 1 \leq j \leq n_\lambda \}$ by Theorem~\ref{thm:collecting all q}~(3). Thus, ${}^q e_j^\lambda$ remains a weight vector of ${}^q \pi_\lambda$ with weight $\epsilon_j^\lambda$ for each $1 \leq j \leq n_\lambda$.

Note that the elements
\begin{equation}\label{eq:Peter-Weyl basis for all q}
    {}^q u_{ij}^\lambda = \langle {}^q e_i^\lambda , (\,\cdot\,) \, {}^q e_j^\lambda \rangle_q \in \End(V(\lambda))^*, \quad 1 \leq i,j \leq n_\lambda
\end{equation}
depend continuously on $0 < q \leq 1$ and form a unitary corepresentation of $\Cf^\infty(K_q)$. Also, $\{ {}^q u_{ij}^\lambda \mid \lambda \in \weights^+,\, 1 \leq i,j \leq n_\lambda \}$ is a Peter–Weyl basis of $\Cf^\infty(K_q)$. For $q = 1$, we sometimes simply write $u_{ij} ^\lambda := {}^1 u_{ij}^\lambda $.

\begin{prop}\label{prop:the q->1 behavior of I}
    Let $\lambda, \mu \in \weights^+$. Then, for each $1 \leq i, j \leq n_\mu$,
    \begin{equation*}
    \lim_{q \rightarrow 1 }{}^q \pi_\lambda \bigg( \frac{{}^q I ( {}^q u^\mu _{ij} ) - \delta_{ij} }{q^{-1} - q} \bigg) = \begin{cases}
        \pi_\lambda ( H_{\epsilon_i ^\mu} ) & \text{if } i = j, \\
        \frac{(\alpha, \alpha)}{2} (E_\alpha , u^\mu _{ij} ) \pi_\lambda(F_\alpha) & \text{if } \epsilon_i ^\mu - \epsilon_j ^\mu = \alpha \in \boldsymbol{\Delta}^+, \\
        \frac{(\alpha, \alpha)}{2} (F_\alpha , u^\mu _{ij} ) \pi_\lambda(E_\alpha) & \text{if } \epsilon_j ^\mu - \epsilon_i ^\mu = \alpha \in \boldsymbol{\Delta}^+, \\
        0 & \text{otherwise}.
    \end{cases}
    \end{equation*}
\end{prop}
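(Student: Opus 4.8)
The plan is to reduce the statement to a computation of the $q\to1$ limit of the coadjoint action of ${}^qI({}^qu^\mu_{ij})$ on weight vectors, using the explicit formula \eqref{eq:I-isomorphism} for $I$ together with the explicit form \eqref{eq:the universal R-matrix} of the universal $R$-matrix and the limit identities in Theorem~\ref{thm:collecting all q}. First I would recall that $I(f) = l^-(f_{(1)})\hat{S}l^+(f_{(2)})$ and that, via \eqref{eq:l-functionals}, the functionals $l^\pm$ are obtained by pairing against $\Rcal^{\mp 1}$; moreover, by \eqref{eq:I preservers adjoint actions}, $I$ intertwines the left coadjoint action, so that $\pi_\lambda(I(u^\mu_{ij}))$ can be read off from how ${}^qI({}^qu^\mu_{ij})$ acts in the representation ${}^q\pi_\lambda$. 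Concretely, one evaluates ${}^q\pi_\lambda\big({}^qI({}^qu^\mu_{ij})\big)$ by pairing the two tensor legs of $\Rcal$ and $\Rcal^{-1}$ against the corepresentation matrix elements ${}^qu^\mu_{ij}$, yielding a sum over weight-raising/lowering contributions governed by \eqref{eq:raising and lowering actions of root vectors}.

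The key structural point is that, because of the factorization \eqref{eq:the universal R-matrix} of $\Rcal$ as a product of the ``Cartan part'' $q^{\sum B_{ij}H_i\otimes H_j}$ and the $q$-exponentials in $E_{\beta_r}\otimes F_{\beta_r}$, the action of ${}^qI({}^qu^\mu_{ij})$ in ${}^q\pi_\lambda$ splits according to the weight difference $\epsilon^\mu_i - \epsilon^\mu_j$. When $i=j$ (so the weight difference is $0$), only the Cartan part contributes at leading order, and ${}^q\pi_\lambda\big({}^qI({}^qu^\mu_{ii})\big)$ is (up to the automorphism $\hat{S}^2 = K_{2\rho}(\cdot)K_{-2\rho}$ bookkeeping) of the form ${}^q\pi_\lambda({}^qK_{-2\epsilon^\mu_i})$ or a closely related product of $K$'s; dividing by $q^{-1}-q$ and using the second limit in Theorem~\ref{thm:collecting all q}~(1), namely ${}^1\pi_\lambda(H_\mu) = \lim_{q\to1}{}^q\pi_\lambda\big(({}^qK_\mu - {}^qK_\mu^{-1})/(q-q^{-1})\big)$, produces $\pi_\lambda(H_{\epsilon^\mu_i})$ up to the sign coming from $q^{-1}-q$ versus $q-q^{-1}$ and the factor $2$. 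When $\epsilon^\mu_i - \epsilon^\mu_j = \alpha\in\boldsymbol\Delta^+$, a single factor $\exp_{q_\alpha}\!\big((q_\alpha - q_\alpha^{-1})(E_\alpha\otimes F_\alpha)\big)$ contributes its linear term $(q_\alpha - q_\alpha^{-1})E_\alpha\otimes F_\alpha$; pairing the first leg against the matrix coefficient extracts the scalar $(E_\alpha, u^\mu_{ij})$ (in the $q=1$ normalization), the prefactor $q_\alpha - q_\alpha^{-1} = q^{(\alpha,\alpha)/2} - q^{-(\alpha,\alpha)/2}$ cancels the $q^{-1}-q$ in the denominator up to the factor $(\alpha,\alpha)/2$ after a Taylor expansion, and the third limit in Theorem~\ref{thm:collecting all q}~(1) gives $\pi_\lambda(F_\alpha)$ (with the $c^\pm_\alpha$ constants canceling since $c^+_\alpha c^-_\alpha = 1$ and the pairing $(E_\alpha,u^\mu_{ij})$ absorbs the other factor). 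The case $\epsilon^\mu_j - \epsilon^\mu_i = \alpha$ is symmetric, producing $\pi_\lambda(E_\alpha)$, and all other weight differences give $0$ because no product of root vectors in \eqref{eq:the universal R-matrix} can shift the weight by a non-root (and contributions of degree $\geq 2$ in $E\otimes F$ vanish after dividing by $q^{-1}-q$ and taking $q\to1$).

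The main obstacle I anticipate is the careful bookkeeping of normalizations and signs: the root vectors $E_\alpha, F_\alpha$ in $U_q(\gf)$ (defined via the braid group operators $\Tcal_i$) are normalized differently from the classical $E_\alpha, F_\alpha$ of Section~\ref{subsec:More on semisimple Lie algebras}, which is exactly why the constants $c^\pm_\alpha$ appear in Theorem~\ref{thm:collecting all q}; one must verify that the scalar $(E_\alpha, u^\mu_{ij})$ appearing in the statement is computed with respect to the \emph{classical} normalization, and that the product of the $q$-normalization factor inside ${}^qE_\alpha$ with the pairing against ${}^qu^\mu_{ij}$ reassembles into exactly $(\alpha,\alpha)/2\cdot(E_\alpha,u^\mu_{ij})$ in the limit. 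A secondary subtlety is justifying the interchange of limit and the (finite) sums defining the pairing with $\Rcal$; this is harmless since everything happens in finite-dimensional spaces and, by Theorem~\ref{thm:collecting all q}, the matrix entries of ${}^q\pi_\lambda$ on the $q$-dependent orthonormal basis ${}^qe^\lambda_j$ vary continuously down to $q=1$, so the limit may be taken entrywise. Once these normalizations are pinned down, the four cases follow by matching the leading-order term in the $q\to1$ expansion of $\big({}^qI({}^qu^\mu_{ij}) - \delta_{ij}\big)/(q^{-1}-q)$ against the classical operators, and comparing with Proposition~\ref{prop:the classical differential in terms of H E F} confirms consistency.
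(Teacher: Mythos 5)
Your proposal is correct and follows essentially the same route as the paper: expand ${}^qI({}^qu^\mu_{ij})=\sum_k {}^ql^-({}^qu^\mu_{ik})\,{}^q\hat S\,{}^ql^+({}^qu^\mu_{kj})$ using the explicit form of $\Rcal^{\pm1}$, observe that every root-vector factor $(q_\alpha^{-1}-q_\alpha)\,{}^qF_\alpha$ or $(q_\alpha^{-1}-q_\alpha)\,{}^q\hat S({}^qE_\alpha)$ is of order $q^{-1}-q$ so only terms with at most one such factor survive the division, and then invoke Theorem~\ref{thm:collecting all q}(1) together with $c_\alpha^+c_\alpha^-=1$ and $\lim_{q\to1}\frac{q_\alpha^{-1}-q_\alpha}{q^{-1}-q}=\frac{(\alpha,\alpha)}{2}$ to identify the four cases. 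The normalization issues you flag are exactly the ones the paper resolves, and your handling of them is consistent with its proof.
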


\begin{proof}
    Note that, by \eqref{eq:the universal R-matrix}, \eqref{eq:inverse of q-exponential}, and \eqref{eq:l-functionals}, the expression
    \[
    {}^q l^- ({}^q u^\mu _{ij}) = \big({}^q \Rcal^{-1}, {}^q u^\mu _{ij} \otimes (\,\cdot\,) \big)
    \]
    vanishes when $\epsilon_i ^\mu - \epsilon_j ^\mu \in - \Qbf^+$ with $i \neq j$, is equal to ${}^q K_{- \epsilon_j ^\mu}$ when $i = j$, and is a nonconstant polynomial on $\big\{ (q_\alpha^{-1} - q_\alpha) \, {}^q F_\alpha \mid \alpha \in \boldsymbol{\Delta}^+ \big\}$ multiplied by ${}^q K_{- \epsilon_j ^\mu}$ from the right, whose coefficients converge as $q \rightarrow 1$ by Theorem~\ref{thm:collecting all q}~(1) and the continuity of the family \eqref{eq:Peter-Weyl basis for all q}, otherwise. Note that, for any $\alpha \in \boldsymbol{\Delta}^+$, the term ${}^q \pi_\lambda\big((q_\alpha^{-1} - q_\alpha)\, {}^q F_\alpha\big)$ has order $(q^{-1} - q)$ as $q \rightarrow 1$ by Theorem~\ref{thm:collecting all q}~(1).
    
    Analogously, the expression
    \[
    {}^q \hat{S} \, {}^q l^+ ({}^q u^\mu _{ij}) = {}^q \hat{S} \big({}^q \Rcal, (\,\cdot\,) \otimes {}^q u^\mu _{ij} \big)
    \]
    vanishes when $\epsilon_i ^\mu - \epsilon_j ^\mu \in \Qbf^+$ with $i \neq j$, is equal to ${}^q K_{- \epsilon_i ^\mu}$ when $i = j$, and is a nonconstant polynomial on $\big\{ (q_\alpha^{-1} - q_\alpha)\, {}^q \hat{S}({}^q E_\alpha) \mid \alpha \in \boldsymbol{\Delta}^+ \big\}$ multiplied by ${}^q K_{- \epsilon_i ^\mu}$ from the right, whose coefficients converge as $q \rightarrow 1$ by Theorem~\ref{thm:collecting all q}~(1) and the continuity of the family \eqref{eq:Peter-Weyl basis for all q}, otherwise. For the same reason, the term ${}^q \pi_\lambda\big((q_\alpha^{-1} - q_\alpha)\, {}^q \hat{S}({}^q E_\alpha)\big)$ has order $(q^{-1} - q)$ as $q \rightarrow 1$ for any $\alpha \in \boldsymbol{\Delta}^+$.
    
    Now, let $i = j$ and note that, in the summation
    \[
    {}^q I({}^q u^\mu _{ii}) = \sum_{1 \leq k \leq n_\mu} {}^q l^- ({}^q u^\mu _{ik}) \, {}^q \hat{S}\big({}^q l^+ ({}^q u^\mu _{ki})\big),
    \]
    the summands corresponding to $k \neq i$ must be either zero or contain a factor
    \[
    \cdots \Big((q_\alpha^{-1} - q_\alpha)\, {}^q F_\alpha {}^q K_{- \epsilon_k ^\mu}\Big) \Big((q_\beta^{-1} - q_\beta)\, {}^q \hat{S}({}^q E_\beta) {}^q K_{- \epsilon_k ^\mu}\Big) \cdots
    \]
    for some $\alpha, \beta \in \boldsymbol{\Delta}^+$ with convergent coefficients, all of which vanish when taking $\frac{1}{q^{-1} - q}\lim_{q \rightarrow 1} \pi_\lambda(\,\cdot\,)$ by the two preceding paragraphs. Therefore, again by Theorem~\ref{thm:collecting all q}~(1), we have
    \begin{align*}
    \lim_{q \rightarrow 1} {}^q \pi_\lambda \bigg( \frac{{}^q I({}^q u^\mu _{ii}) - 1}{q^{-1} - q} \bigg) &= \lim_{q \rightarrow 1} {}^q \pi_\lambda \bigg( \frac{{}^q l^-({}^q u^\mu _{ii}) \hat{S}l^+({}^q u^\mu _{ii}) - 1}{q^{-1} - q} \bigg) \\
    &= \lim_{q \rightarrow 1} {}^q \pi_\lambda \bigg( \frac{{}^q K_{-2\epsilon_i^\mu} - 1}{q^{-1} - q} \bigg) = \pi_\lambda(H_{\epsilon_i^\mu}),
    \end{align*}
    proving the first case.

    For the second case, let $\epsilon_i^\mu - \epsilon_j^\mu = \alpha$ for some $\alpha \in \boldsymbol{\Delta}^+$. Then, in the summation
    \[
    {}^q I({}^q u^\mu_{ij}) = \sum_{1 \leq k \leq n_\mu} {}^q l^-({}^q u^\mu_{ik})\, {}^q \hat{S}\big({}^q l^+({}^q u^\mu_{kj})\big),
    \]
    all the nonzero summands except the one corresponding to $k = j$ consist of terms containing at least two factors of the form
    \[
    (q_\beta^{-1} - q_\beta)\, {}^q F_\beta {}^q K_{-\epsilon_k^\mu} \quad \text{or} \quad (q_\beta^{-1} - q_\beta)\, {}^q \hat{S}({}^q E_\beta) {}^q K_{-\epsilon_k^\mu}, \quad \beta \in \boldsymbol{\Delta}^+,
    \]
    with convergent coefficients, since the weight raising and lowering processes producing a nonzero term in other summands cannot succeed in ``one shot." Also, in the summand corresponding to $k = j$, the only term having exactly one such factor is $({}^q E_\alpha, {}^q u^\mu_{ij})(q_\alpha^{-1} - q_\alpha)\, {}^q F_\alpha\, {}^q K_{-\epsilon_j^\mu}\, {}^q \hat{S}({}^q K_{\epsilon_j^\mu})$. Therefore, again by Theorem~\ref{thm:collecting all q}~(1),
    \begin{align*}
    \lim_{q \rightarrow 1} {}^q \pi_\lambda \bigg( \frac{{}^q I({}^q u^\mu_{ij}) - \delta_{ij}}{q^{-1} - q} \bigg) &= \lim_{q \rightarrow 1} {}^q \pi_\lambda \bigg( \frac{{}^q l^-({}^q u^\mu_{ij})\, {}^q \hat{S}\big({}^q l^+({}^q u^\mu_{jj})\big)}{q^{-1} - q} \bigg) \\
    &= \lim_{q \rightarrow 1} {}^q \pi_\lambda \bigg( \frac{({}^q E_\alpha, {}^q u^\mu_{ij})(q_\alpha^{-1} - q_\alpha)\, {}^q F_\alpha {}^q K_{-\epsilon_j^\mu}\, {}^q K_{-\epsilon_j^\mu}}{q^{-1} - q} \bigg) \\
    &= (c_\alpha^+ E_\alpha, u^\mu_{ij}) \frac{(\alpha, \alpha)}{2} c_\alpha^- \pi_\lambda(F_\alpha) = (E_\alpha, u^\mu_{ij}) \frac{(\alpha, \alpha)}{2} \pi_\lambda(F_\alpha).
    \end{align*}

    A similar reasoning as in the second case proves the third case.

    Finally, if $i \neq j$ and $\epsilon_i^\mu - \epsilon_j^\mu$ is not contained in $\boldsymbol{\Delta}$, then either $\epsilon_i^\mu = \epsilon_j^\mu$ or $\epsilon_i^\mu - \epsilon_j^\mu$ is ``big." In both cases, every nonzero term in the summation
    \[
    {}^q I({}^q u^\mu_{ij}) = \sum_{1 \leq k \leq n_\mu} {}^q l^-({}^q u^\mu_{ik})\, {}^q \hat{S}\big({}^q l^+({}^q u^\mu_{kj})\big)
    \]
    contains at least two factors of the form
    \[
    (q_\alpha^{-1} - q_\alpha)\, {}^q F_\alpha {}^q K_{-\epsilon_k^\mu} \quad \text{or} \quad (q_\alpha^{-1} - q_\alpha)\, {}^q \hat{S}({}^q E_\alpha) {}^q K_{-\epsilon_k^\mu}, \quad \alpha \in \boldsymbol{\Delta}^+,
    \]
    with convergent coefficients, all of which vanish under $\frac{1}{q^{-1} - q}\lim_{q \rightarrow 1} {}^q \pi_\lambda(\,\cdot\,)$.
\end{proof}

Because of Theorem~\ref{thm:collecting all q}, the family of algebras $\big(U_q(\gf)\big)_{0 < q \leq 1}$ can be embedded into the fixed algebra $\prod_{\lambda \in \weights^+} \End(V(\lambda))$. Also, note that the vector spaces in the family $\big( \Cf^\infty(K_q) \big)_{0 < q \leq 1}$ can all be identified with the fixed space $\bigoplus_{\lambda \in \weights^+} \End(V(\lambda))^*$, by definition for $0 < q < 1$ and by \eqref{eq:classical matrix coefficients identification-semisimple case} for $q = 1$. Under these identifications, the natural pairing
\begin{equation*}
\Big(\prod_{\lambda \in \weights^+} \End(V(\lambda))\Big) \times \Big(\bigoplus_{\lambda \in \weights^+} \End(V(\lambda))^*\Big) \ni (x, f) \longmapsto \sum_{\lambda \in \weights^+} f_\lambda(x_\lambda) \in \Cbb
\end{equation*}
comprises all the skew-pairings in \eqref{eq:skew-pairing between Uq(K) and Cinfty(Kq)} and Proposition~\ref{prop:classical skew-pairing}. Therefore, the comultiplications of $\Cf^\infty(K_q)$ are all equal regardless of $0 < q \leq 1$, which we denote by
\begin{equation*}
\Delta : \bigoplus_{\lambda \in \weights^+} \End(V(\lambda))^* \rightarrow \Big( \bigoplus_{\lambda \in \weights^+} \End(V(\lambda))^* \Big) \otimes \Big( \bigoplus_{\lambda \in \weights^+} \End(V(\lambda))^* \Big).
\end{equation*}
I.e., the family of coalgebras $\big(\Cf^\infty(K_q)\big)_{0 < q \leq 1}$ are all isomorphic to the coalgebra $\bigoplus_{\lambda \in \weights^+} \End(V(\lambda))^*$. Note that this gives rise to left and right $\prod_{\lambda \in \weights^+} \End(V(\lambda))$-module multiplications on $\bigoplus_{\lambda \in \weights^+} \End(V(\lambda))^*$ via
\begin{equation*}
x \triangleright f = (\id \otimes x) \Delta(f), \quad f \triangleleft x = (x \otimes \id) \Delta(f)
\end{equation*}
for $x \in \prod_{\lambda \in \weights^+} \End(V(\lambda))$ and $f \in \bigoplus_{\lambda \in \weights^+} \End(V(\lambda))^*$, respectively.

Fix $\lambda \in \weights^+$. Let $\{ e_{ij}^\lambda \mid 1 \leq i,j \leq n_\lambda \}$ be the matrix units of $\End(V(\lambda))$ with respect to the basis $\{ e_j^\lambda \mid 1 \leq j \leq n_\lambda \}$ in Theorem~\ref{thm:collecting all q}~(3), i.e.,
\begin{equation*}
e_{ij}^\lambda e_k^\lambda = \delta_{jk} e_i^\lambda
\end{equation*}
for all $1 \leq i,j,k \leq n_\lambda$. Then, $\{ u_{ij}^\lambda = {}^1 u_{ij}^\lambda \mid 1 \leq i,j \leq n_\lambda \} \subseteq \End(V(\lambda))^*$, constructed in \eqref{eq:Peter-Weyl basis for all q}, becomes its dual basis and satisfies
\begin{equation*}
\Delta(u_{ij}^\lambda) = \sum_{1 \leq k \leq n_\lambda} u_{ik}^\lambda \otimes u_{kj}^\lambda, \quad 1 \leq i,j \leq n_\lambda.
\end{equation*}

\subsection{The \texorpdfstring{$q\rightarrow 1$}{TEXT} limit of FODCs}\label{subsec:the q->1 limit of FODCs}

Let $\mu_1, \cdots, \mu_m \in \Pbf^+$ be pairwise distinct elements, and let $\pi_{\boldsymbol{\mu}} := \pi_{\mu_1} \oplus \cdots \oplus \pi_{\mu_m} : \gf \rightarrow \End(V(\mu_1)) \oplus \cdots \oplus \End(V(\mu_m))$.

For each $0 < q < 1$, identify
\[
\prescript{q}{\operatorname{inv}} \Omega_{\boldsymbol{\mu}} \cong \End(V(\mu_1)) \oplus \cdots \oplus \End(V(\mu_m))
\]
via the linear map sending ${}^q \omega_{ij}^{\mu_l}$ to $e_{ij}^{\mu_l}$ for $1 \leq l \leq m$ and $1 \leq i,j \leq n_{\mu_l}$.

Recall that the FODC $({}^q \Omega_{\boldsymbol{\mu}}, {}^q d_{\boldsymbol{\mu}})$ is isomorphic to $({}^q \Omega_{\boldsymbol{\mu}}, \frac{1}{q^{-1} - q} {}^q d_{\boldsymbol{\mu}})$, see Remark~\ref{rmk:constant multiple of FODC}.

\begin{thm}\label{thm:the q->1 behavior of FODCs}
    Let $f \in \bigoplus_{\lambda \in \weights^+} \End(V(\lambda))^*$. Then, in the vector space
    \[
    \Big(\bigoplus_{\lambda \in \weights^+} \End(V(\lambda))^* \Big) \otimes \Big( \End(V(\mu_1)) \oplus \cdots \oplus \End(V(\mu_m)) \Big),
    \]
    which is isomorphic to ${}^q \Omega_{\boldsymbol{\mu}}$ for all $0<q<1$ as vector spaces, we have
    \begin{equation}\label{eq:q->1 limit of differential}
    \lim_{q \rightarrow 1} \frac{{}^q d_{\boldsymbol{\mu}} f}{q^{-1} - q} = (\id \otimes \pi_{\boldsymbol{\mu}} ) df,
    \end{equation}
    where $d: \bigoplus_{\lambda \in \weights^+} \End(V(\lambda))^* \rightarrow \bigoplus_{\lambda \in \weights^+} \End(V(\lambda))^* \otimes \gf$ is the classical differential, and $\gf^*$ has been identified with $\gf$ via the Killing form.
\end{thm}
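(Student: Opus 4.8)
The plan is to compute the left-hand side of \eqref{eq:q->1 limit of differential} by evaluating the differential $d_{\boldsymbol\mu}$ on a Peter--Weyl basis element and then passing to the limit term by term, using Proposition~\ref{prop:the q->1 behavior of I} as the main input. Concretely, it suffices to prove \eqref{eq:q->1 limit of differential} for $f = u^\lambda_{ab} = {}^1u^\lambda_{ab}$ (equivalently ${}^qu^\lambda_{ab}$ under our fixed identification of the underlying coalgebras), since these span $\bigoplus_{\lambda}\End(V(\lambda))^*$ and the maps on both sides are linear. First I would recall from \eqref{eq:differential of Gamma^mu} and Proposition~\ref{prop:direct sum of FODCs} that, under the identification ${}^q_{\operatorname{inv}}\Omega_{\boldsymbol\mu}\cong\bigoplus_l\End(V(\mu_l))$ sending ${}^q\omega^{\mu_l}_{ij}\mapsto e^{\mu_l}_{ij}$,
\[
{}^qd_{\boldsymbol\mu} f = \sum_{1\le l\le m}\ \sum_{1\le i,k\le n_{\mu_l}} \Bigl(\bigl({}^qI({}^qu^{\mu_l}_{ik}) - \delta_{ik}\bigr)\triangleright f\Bigr)\, e^{\mu_l}_{ik},
\]
so that
\[
\frac{{}^qd_{\boldsymbol\mu} f}{q^{-1}-q} = \sum_{1\le l\le m}\ \sum_{1\le i,k\le n_{\mu_l}} \left(\left(\frac{{}^qI({}^qu^{\mu_l}_{ik}) - \delta_{ik}}{q^{-1}-q}\right)\triangleright f\right) e^{\mu_l}_{ik}.
\]
Since $f = {}^qu^\lambda_{ab}$ lies in the fixed finite-dimensional component $\End(V(\lambda))^*$, the action $\phi\triangleright f$ is determined entirely by the values $({}^q\pi_\lambda(\phi))$ via \eqref{eq:Acirc action on A}; more precisely, $\phi\triangleright {}^qu^\lambda_{ab} = \sum_c {}^qu^\lambda_{ac}\,({}^q\pi_\lambda(\phi))_{cb}$, and by Theorem~\ref{thm:collecting all q} together with the continuity of the Peter--Weyl basis \eqref{eq:Peter-Weyl basis for all q} this converges as $q\to1$ to $\sum_c u^\lambda_{ac}\,\bigl(\lim_{q\to1}{}^q\pi_\lambda(\tfrac{{}^qI({}^qu^{\mu_l}_{ik})-\delta_{ik}}{q^{-1}-q})\bigr)_{cb}$.

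Next I would substitute the four-case formula of Proposition~\ref{prop:the q->1 behavior of I} for $\lim_{q\to1}{}^q\pi_\lambda\bigl(\tfrac{{}^qI({}^qu^{\mu}_{ik})-\delta_{ik}}{q^{-1}-q}\bigr)$ into the sum. The diagonal terms $i=k$ contribute $\sum_{1\le l\le m}\sum_i \bigl(\pi_{\mu_l}(H_{\epsilon^{\mu_l}_i})\triangleright$-type action$\bigr) e^{\mu_l}_{ii}$; the off-diagonal terms with $\epsilon^{\mu_l}_i-\epsilon^{\mu_l}_k=\alpha\in\boldsymbol\Delta^+$ contribute $\tfrac{(\alpha,\alpha)}2(E_\alpha,u^{\mu_l}_{ik})\,\pi_{\mu_l}(F_\alpha)$-type terms, and symmetrically for $\epsilon^{\mu_l}_k-\epsilon^{\mu_l}_i=\alpha$; all other off-diagonal terms vanish. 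The goal is then to recognize the resulting expression as $(\id\otimes\pi_{\boldsymbol\mu})\,df$. Here I would invoke Proposition~\ref{prop:the classical differential in terms of H E F}, which expresses the classical differential as
\[
df = \sum_{1\le j\le N}(H'_j\triangleright f)\,H_{\varpi_j} + \sum_{\alpha\in\boldsymbol\Delta^+}\frac{(\alpha,\alpha)}2\Bigl((E_\alpha\triangleright f)F_\alpha + (F_\alpha\triangleright f)E_\alpha\Bigr),
\]
and apply $(\id\otimes\pi_{\boldsymbol\mu})$. The matching then reduces to a purely representation-theoretic bookkeeping identity: for each $\mu_l$ the operator $\sum_i \pi_{\mu_l}(H_{\epsilon^{\mu_l}_i})\otimes e^{\mu_l}_{ii}$ (acting correctly on $\End(V(\mu_l))$ via its matrix units) equals $\sum_j \pi_{\mu_l}(H'_j)\otimes\pi_{\mu_l}(H_{\varpi_j})$ in the appropriate sense, using that the weight of $e^{\mu_l}_i$ is $\epsilon^{\mu_l}_i$ and expanding $H_{\epsilon^{\mu_l}_i}$ in the $H_{\varpi_j}$-basis via $(\varpi_j,\alpha_i)=\tfrac{(\alpha_i,\alpha_i)}2\delta_{ij}$; and similarly, the $E_\alpha$- and $F_\alpha$-terms match because $(E_\alpha,u^{\mu_l}_{ik})$ is precisely the $(i,k)$-matrix entry of $\pi_{\mu_l}(E_\alpha)$ in the weight basis (nonzero exactly when $\epsilon^{\mu_l}_i-\epsilon^{\mu_l}_k=\alpha$), so $\sum_{i,k}(E_\alpha,u^{\mu_l}_{ik})\,e^{\mu_l}_{ik} = \pi_{\mu_l}(E_\alpha)$ under the identification ${}_{\operatorname{inv}}\Omega\cong\End(V(\mu_l))$, and $(E_\alpha\triangleright f)$ hits the left-hand tensor factor correctly.

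The main obstacle I anticipate is the careful bookkeeping in the last step: one must be scrupulous about which module action ($\triangleright$ on the $\Cf^\infty(K_q)$-factor versus matrix multiplication on the $\End(V(\mu_l))$-factor) each occurrence of $\pi_{\mu_l}(E_\alpha)$, $\pi_{\mu_l}(F_\alpha)$, $\pi_{\mu_l}(H)$ is playing, because the quantum germs map formula \eqref{eq:quantum germs map of Gamma^mu}/\eqref{eq:differential of Gamma^mu} produces the structure functional $K_\zeta I(u^\mu_{ik})$ acting by $\triangleright$ while in the classical formula the ``same'' Lie algebra element appears once as a differential operator and once as a cotangent vector. A secondary subtlety is justifying the interchange of $\lim_{q\to1}$ with the finite sums and with the $\triangleright$-action: this is legitimate because $f$ has finite Peter--Weyl support so all sums are finite and all limits are limits of matrix entries that converge by Theorem~\ref{thm:collecting all q} and the continuity of \eqref{eq:Peter-Weyl basis for all q}. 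I would also note explicitly that the off-diagonal ``otherwise'' case of Proposition~\ref{prop:the q->1 behavior of I} is what kills the terms $(E_\alpha\triangleright f)F_\alpha$ coming from pairs of weights differing by a non-root, so the limiting differential genuinely only involves the simple combinations appearing in Proposition~\ref{prop:the classical differential in terms of H E F}. Once the identification of tensor factors is pinned down, the equality \eqref{eq:q->1 limit of differential} is immediate.
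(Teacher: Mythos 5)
Your proposal is correct and follows essentially the same route as the paper's proof: reduce to Peter--Weyl basis elements, expand ${}^q d_{\boldsymbol{\mu}}$ via the functionals ${}^q I({}^q u^{\mu_l}_{ij})-\delta_{ij}$, pass to the limit termwise using Proposition~\ref{prop:the q->1 behavior of I}, and then regroup the diagonal terms via the expansion $(\eta,\nu)=\sum_j(\alpha_j^\lor,\nu)(\varpi_j,\eta)$ and the off-diagonal terms via $\sum_{i,k}(E_\alpha,u^{\mu_l}_{ik})e^{\mu_l}_{ik}=\pi_{\mu_l}(E_\alpha)$ so as to match Proposition~\ref{prop:the classical differential in terms of H E F}. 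The bookkeeping subtlety you flag (which tensor factor carries the $\triangleright$-action versus the matrix unit) is resolved exactly as you anticipate, so no gap remains.
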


\begin{proof}
    Let $0<q<1$. By \eqref{eq:differential of Gamma^mu} and \eqref{eq:direct sum of differentials},
    \[
    {}^q d_{\boldsymbol{\mu}} f = \sum_{1\leq l \leq m} \sum_{1 \leq i,j \leq n_{\mu_l}} \Big( \big( {}^q I( {}^q u^{\mu_l}_{ij} ) - \delta_{ij} \big) \triangleright f \Big) \otimes e_{ij}^{\mu_l}
    \]
    for all $f \in \Cf^\infty(K_q) = \bigoplus_{\lambda \in \weights^+} \End(V(\lambda))^*$. Thus, by Proposition~\ref{prop:the q->1 behavior of I}, we have, for all $\lambda \in \weights^+$ and $1 \leq r,s \leq n_\lambda$,
    \begin{align*}
    \lim_{q \rightarrow 1} \frac{{}^q d_{\boldsymbol{\mu}} u^\lambda_{rs}}{q^{-1} - q} &= \lim_{q \rightarrow 1} \sum_{1\leq l \leq m} \sum_{1 \leq i,j \leq n_{\mu_l}} \bigg( \frac{{}^q I( {}^q u^{\mu_l}_{ij} ) - \delta_{ij}}{q^{-1} - q} \triangleright u^\lambda_{rs} \bigg) \otimes e_{ij}^{\mu_l} \\
    &= \begin{multlined}[t] \sum_{1\leq l \leq m} \bigg( \sum_{1\leq i \leq n_{\mu_l}} (H_{\epsilon_i^{\mu_l}} \triangleright u^\lambda_{rs}) \otimes e_{ii}^{\mu_l} + \sum_{\alpha \in \boldsymbol{\Delta}^+} \frac{(\alpha,\alpha)}{2} \\
    \sum_{\substack{1 \leq i,j \leq n_{\mu_l} \\ \epsilon_i^{\mu_l} - \epsilon_j^{\mu_l} = \alpha}} \Big( (F_\alpha \triangleright u^\lambda_{rs}) \otimes (E_\alpha, u^{\mu_l}_{ij}) e_{ij}^{\mu_l} + (E_\alpha \triangleright u^\lambda_{rs}) \otimes (F_\alpha, u^{\mu_l}_{ji}) e_{ji}^{\mu_l} \Big) \bigg) \end{multlined} \\
    &= \begin{multlined}[t] \sum_{1\leq l \leq m} \bigg( \sum_{1\leq i \leq n_{\mu_l}} (H_{\epsilon_i^{\mu_l}} \triangleright u^\lambda_{rs}) \otimes e_{ii}^{\mu_l} \\
    + \sum_{\alpha \in \boldsymbol{\Delta}^+} \frac{(\alpha,\alpha)}{2} \Big( (F_\alpha \triangleright u^\lambda_{rs}) \otimes \pi_{\mu_l}(E_\alpha) + (E_\alpha \triangleright u^\lambda_{rs}) \otimes \pi_{\mu_l}(F_\alpha) \Big) \bigg). \end{multlined}
    \end{align*}
    However, since $(\eta, \nu) = \sum_{1\leq j\leq N} (\alpha_j^\lor, \nu)(\varpi_j, \eta)$ for any $\nu, \eta \in \Pbf$, we have
    \begin{align*}
    \sum_{1 \leq i \leq n_{\mu_l}} (H_{\epsilon_i^{\mu_l}} \triangleright u^\lambda_{rs}) \otimes e_{ii}^{\mu_l} &= \sum_{1\leq i \leq n_{\mu_l}} (\epsilon_i^{\mu_l}, \epsilon_s^\lambda) u^\lambda_{rs} \otimes e_{ii}^{\mu_l} \\
    &= \sum_{1\leq i \leq n_{\mu_l}} \sum_{1\leq j\leq N} (\alpha_j^\lor, \epsilon_s^\lambda) u^\lambda_{rs} \otimes (\varpi_j, \epsilon_i^{\mu_l}) e_{ii}^{\mu_l} \\
    &= \sum_{1\leq j\leq N} (H_j' \triangleright u^\lambda_{rs}) \otimes \pi_{\mu_l}(H_{\varpi_j})
    \end{align*}
    for all $1\leq l \leq m$. Hence, for all $\lambda \in \weights^+$ and $1\leq r,s \leq n_\lambda$,
    \begin{align*}
    \lim_{q \rightarrow 1} \frac{{}^q d_{\boldsymbol{\mu}} u^\lambda_{rs}}{q^{-1} - q} 
    &= \begin{multlined}[t] \sum_{1\leq l\leq m} \bigg( \sum_{1\leq j\leq N} (H_j' \triangleright u^\lambda_{rs}) \otimes \pi_{\mu_l}(H_{\varpi_j}) \\
    + \sum_{\alpha \in \boldsymbol{\Delta}^+} \frac{(\alpha,\alpha)}{2} \Big( (F_\alpha \triangleright u^\lambda_{rs}) \otimes \pi_{\mu_l}(E_\alpha) + (E_\alpha \triangleright u^\lambda_{rs}) \otimes \pi_{\mu_l}(F_\alpha) \Big) \bigg) \end{multlined} \\
    &= \sum_{1\leq l\leq m} (\id \otimes \pi_{\mu_l}) d u^\lambda_{rs} = (\id \otimes \pi_{\boldsymbol{\mu}}) d u^\lambda_{rs}
    \end{align*}
    by Proposition~\ref{prop:the classical differential in terms of H E F}. As $\{ u^\lambda_{ij} \mid \lambda \in \weights^+,\, 1\leq i,j\leq n_\lambda \}$ forms a linear basis of $\bigoplus_{\lambda \in \weights^+} \End(V(\lambda))^*$, this proves the claim.
\end{proof}

\begin{rmk}\label{rmk:q->1 limit of FODCs}
In addition to the hypothesis of Theorem~\ref{thm:the q->1 behavior of FODCs}, assume further that $\pi_{\boldsymbol{\mu}}$ is faithful, which is always the case if $\gf$ is simple and $\boldsymbol{\mu} \neq 0$. Then, the right-hand side of \eqref{eq:q->1 limit of differential} becomes a matrix realization of the classical differential.

In this matrix realization, the right coaction formula \eqref{eq:right coaction on Omega(K)-invariant part-matrix group} for the classical FODC $(\Omega_K, d)$ becomes
\[
e_{jl}^{\mu_p} \longmapsto \sum_{1 \leq i,k \leq n_{\mu_p}} e_{ik}^{\mu_p} \otimes \big( {}^1 u^{\mu_p}_{ij} \, {}^1 S({}^1 u^{\mu_p}_{lk}) \big)
\]
on the matrix units $\{e_{jl}^{\mu_p} \mid 1 \leq p \leq m, \, 1 \leq j,l \leq n_{\mu_p} \}$. However, because of \eqref{eq:definition of R_ij}, \eqref{eq:structure representation-Kq}, and the identification ${}^q \omega_{jl}^{\mu_p} \cong e_{jl}^{\mu_p}$, the right coaction for the FODC $({}^q \Omega_{\boldsymbol{\mu}}, {}^q d_{\boldsymbol{\mu}})$ is given by
\[
e_{jl}^{\mu_p} \longmapsto \sum_{1 \leq i,k \leq n_{\mu_p}} e_{ik}^{\mu_p} \otimes \big( {}^q u^{\mu_p}_{ij} \, {}^q S({}^q u^{\mu_p}_{lk}) \big)
\]
for each $0 < q < 1$. Hence, we see that the right coaction on ${}^q \Omega_{\boldsymbol{\mu}}$ also converges to the classical right coaction in a precise sense. On the other hand, the left coactions for all these FODCs are equal by definition.

Moreover, as $q \rightarrow 1$, the elements of the first family
\[
\big( {}^q l^-({}^q u^\mu_{ji}) \, {}^q \hat{S} \, {}^q l^+({}^q u^\mu_{kl}) \big)_{1 \leq i,j,k,l \leq n_\mu} \subseteq \Cf^\infty(K_q)^\circ \subseteq \prod_{\lambda \in \weights^+} \End(V(\lambda))
\]
in the structure representations of ${}^q \Omega_{\mu}$ (Definition~\ref{defn:definition of Omega zeta mu}) converge to
\[
(\delta_{ji} \delta_{kl})_{1 \leq i,j,k,l \leq n_\mu} \subseteq \prod_{\lambda \in \weights^+} \End(V(\lambda))
\]
in each component $\End(V(\lambda))$. Thus, as $q \rightarrow 1$, the right multiplication by an element $f \in \bigoplus_{\lambda \in \weights^+} \End(V(\lambda))^*$ on $\inv {}^q \Omega_{\boldsymbol{\mu}}$
\[
e_{ik}^{\mu_p} f = \sum_{1 \leq j,l \leq n_{\mu_p}} \big( {}^q l^-({}^q u^{\mu_p}_{ji}) \, {}^q \hat{S} \, {}^q l^+({}^q u^{\mu_p}_{kl}) \triangleright f \big) e_{jl}^{\mu_p}
\]
converges to
\[
e_{ik}^{\mu_p} f = f e_{ik}^{\mu_p},
\]
which gives the right multiplication by $f$ on $\inv \Omega_K \cong \gf^* \cong \gf$ in the matrix realization $\pi_{\boldsymbol{\mu}}$. On the other hand, the left multiplications by an element $f \in \bigoplus_{\lambda \in \weights^+} \End(V(\lambda))^*$ on $\inv {}^q \Omega_{\boldsymbol{\zeta\mu}}$ are all equal for any $0 < q \leq 1$ by definition.

Therefore, as long as $\pi_{\boldsymbol{\mu}}$ is a matrix realization of the Lie algebra $\gf$, the bicovariant FODC $(\Omega_{\boldsymbol{\mu}}, d_{\boldsymbol{\mu}}) \cong \big(\Omega_{\boldsymbol{\mu}}, \frac{1}{q^{-1} - q} d_{\boldsymbol{\mu}}\big)$ in its entirety converges to the classical FODC $(\Omega_K, d)$ in the matrix realization $\pi_{\boldsymbol{\mu}}$ as $q \rightarrow 1$.

However, whereas the $\Cf^\infty(K)$-dimension of $\Omega_K$ is always equal to $d = \dim \gf$ independent of the matrix realization $\pi_{\boldsymbol{\mu}}$ of $\gf$, the $\Cf^\infty(K_q)$-dimension of ${}^q \Omega_{\boldsymbol{\mu}}$ for any $0 < q < 1$ is given by
\[
n_{\mu_1}^2 + \cdots + n_{\mu_m}^2.
\]
It may be phrased as follows: \textit{The $q$-deformation makes the classical FODC ``fill up" each irreducible block of the matrix realization $\pi_{\boldsymbol{\mu}}$.}
\end{rmk}

\section*{Funding}
This work began while H.~Lee was affiliated with Seoul National University, during which he was supported by the Basic Science Research Program through the National Research Foundation of Korea (NRF) under Grant No.~NRF-2022R1A2C1092320, the National Research Foundation of Korea (NRF) grant funded by the Ministry of Science and ICT (MSIT) (No.~2020R1C1C1A01009681), and the Samsung Science and Technology Foundation under Project Number SSTF-BA2002-01.

This work was completed while H.~Lee was affiliated with the Institute for Advanced Study in Mathematics of Harbin Institute of Technology, during which he was supported by the National Natural Science Foundation of China, Grant No.~12371138.

This work underwent significant improvement while the author was visiting the Institut Henri Poincaré. The author acknowledges support of the Institut Henri Poincaré (UAR 839 CNRS-Sorbonne Université), and LabEx CARMIN (ANR-10-LABX-59-01).

\section*{Acknowledgements}
First and foremost, I wish to express my deepest gratitude to my God, Jesus Christ, for His endless grace and unceasing love shown to me throughout my entire life.

I am deeply thankful to Christian Voigt, who devoted a great deal of time to discussing the early ideas of this work with me, helping to improve those that were originally at a rudimentary stage, and even introducing me to researchers who had worked on related topics. I would also like to thank Ulrich Krähmer for offering many valuable suggestions regarding this work.

I am grateful as well to many of the participants of the trimester Representation Theory and Noncommutative Geometry, held at the Institut Henri Poincaré from 6 January 2025 to 4 April 2025. Their interest and the questions they raised during the presentation of this work have been very helpful in revising it.
\appendix

\section{Proof of Theorem~\ref{thm:collecting all q}}\label{sec:proof of collecting all q}

\renewcommand{\theequation}{\thesection.\arabic{equation}}
\renewcommand{\thethm}{\thesection.\arabic{thm}}

\subsection*{Preliminaries}

To prove the theorem, we need to consider the quantized universal enveloping algebra of $\gf$ over the field of rational polynomials $\Qbb(s)$ and an integral form inside it. All the necessary materials can be found in \cite[Chapter~3]{VoigtYuncken}.

Let $1 \leq L \in \Nbb$ be such that $L\frac{(\varpi_i, \varpi_j)}{2} \in \Zbb$ for all $1 \leq i,j \leq N$. Let $\qbf = s^L \in \Qbb(s)$ and define, for each $1 \leq j \leq N$, $\qbf_j = \qbf^{\frac{(\alpha,\alpha)}{2}}$, which are well-defined elements of $\Qbb(s)$ by our choice of $L$ and $\qbf$. Let $\Ubf_\qbf(\gf)$ be the quantized universal enveloping algebra of $\gf$ over the field $\Qbb(s)$, whose generators will be denoted by $\Kbf_\lambda$, $\Ebf_j$, and $\Fbf_j$ ($\lambda \in \weights$, $1 \leq j \leq N$).

Then, except for those that involve the $*$-structure of $U_q^\Rbb(\kf)$, all the statements of Section~\ref{sec:The q-deformation} also hold for $\Ubf_\qbf(\gf)$. In particular, the irreducible finite-dimensional integrable representations are classified by $\Pbf^+$ via the correspondence that associates to each such representation its highest weight. We denote the irreducible representation corresponding to $\lambda \in \weights^+$ by $\boldsymbol{\pi}_\lambda : \Ubf_\qbf(\gf) \rightarrow \End_{\Qbb(s)}(\Vbf(\lambda))$. Also, we denote the root vectors of $\Ubf_\qbf(\gf)$ by $\Ebf_\alpha$ and $\Fbf_\alpha$ for $\alpha \in \boldsymbol{\Delta}^+$.

Let $\Acal = \Zbb[s, s^{-1}] \subseteq \Qbb(s)$ and define $\Ubf_\qbf^\Acal(\gf)$ as the $\Acal$-subalgebra of $\Ubf_\qbf(\gf)$ generated by
\[
\Kbf_\lambda, \quad \frac{\Kbf_j - \Kbf_j^{-1}}{\qbf_j - \qbf_j^{-1}}, \quad \frac{1}{[r]_{\qbf_j}!} \Ebf_j^r, \quad \frac{1}{[r]_{\qbf_j}!} \Fbf_j^r
\]
for $\lambda \in \weights$, $1 \leq j \leq N$, and $r \in \Nbb$.

Fix $\lambda \in \weights^+$. Let $v_\lambda \in \Vbf(\lambda)$ be a highest weight vector. Define
\[
\Vbf(\lambda)_\Acal = \Ubf_\qbf^\Acal(\gf) v_\lambda \subseteq \Vbf(\lambda).
\]
Then, $\Vbf(\lambda)_\Acal$ is a free $\Acal$-module and $\Qbb(s) \otimes_\Acal \Vbf(\lambda)_\Acal = \Vbf(\lambda)$. Fix $0 < q < 1$ and let $\Cbb_q$ be the space $\Cbb$ equipped with the $\Acal$-module structure provided by the ring homomorphism $\ev_q : \Acal \rightarrow \Cbb$ sending $s$ to $q^{\frac{1}{L}}$. Consider the $\Cbb$-vector space
\begin{equation*}
{}^q V(\lambda) = \Cbb_q \otimes_\Acal \Vbf(\lambda)_\Acal
\end{equation*}
which is also a left $\Acal$-module. Then, the following map is $\Acal$-linear:
\begin{equation*}
    \Ubf_\qbf^\Acal(\gf) \ni \Xbf \longmapsto \id_{\Cbb_q} \otimes \boldsymbol{\pi}_\lambda (\Xbf) \in \End({}^q V(\lambda))
\end{equation*}
The defining relations for $\Ubf_\qbf(\gf)$ satisfied by the operators
\[
\boldsymbol{\pi}_\lambda (\Kbf_\mu), \,\, \boldsymbol{\pi}_\lambda (\Ebf_j), \,\, \boldsymbol{\pi}_\lambda (\Fbf_j), \quad \mu \in \weights, \, 1 \leq j \leq N
\]
become the defining relations for $U_q(\gf)$ satisfied by
\[
\id_{\Cbb_q} \otimes \boldsymbol{\pi}_\lambda (\Kbf_\mu), \,\, \id_{\Cbb_q} \otimes \boldsymbol{\pi}_\lambda (\Ebf_j), \,\, \id_{\Cbb_q} \otimes \boldsymbol{\pi}_\lambda (\Fbf_j), \quad \mu \in \weights, \, 1 \leq j \leq N.
\]
Hence, there is a representation ${}^q \pi_\lambda : U_q(\gf) \rightarrow \End({}^q V(\lambda))$ given by
\begin{equation*}
{}^q \pi_\lambda({}^q K_\mu) = \id \otimes \boldsymbol{\pi}_\lambda (\Kbf_\lambda), \, {}^q \pi_\lambda({}^q E_j) = \id \otimes \boldsymbol{\pi}_\lambda (\Ebf_j), \, {}^q \pi_\lambda({}^q F_j) = \id \otimes \boldsymbol{\pi}_\lambda (\Fbf_j)
\end{equation*}
for $\mu \in \weights$ and $1 \leq j \leq N$. The discussion between \cite[Theorems~3.145--3.146]{VoigtYuncken} shows that ${}^q \pi_\lambda$ is in fact the irreducible representation of $U_q(\gf)$ corresponding to $\lambda \in \weights^+$.

According to \cite[Section~5.1.1]{VoigtYuncken}, the sets of weights counted with multiplicity for the representations $\boldsymbol{\pi}_\lambda$ and ${}^q \pi_\lambda$ for any $0 < q < 1$ are all equal. We denote this set by $\Pbf(\lambda)$ and let $\epsilon_1^\lambda, \cdots , \epsilon_{n_\lambda}^\lambda$ be an enumeration of the weights counted with multiplicity.

Fix an $\Acal$-linear basis $\{ \gbf_j \mid 1 \leq j \leq n_\lambda \} \subseteq \Vbf(\lambda)_\Acal$ given in \cite[Theorems~3.145--3.146]{VoigtYuncken}, called the \textit{global basis}. Then,
\[
\{ 1_{\Cbb_q} \otimes \gbf_j \mid 1 \leq j \leq n_\lambda \}
\]
is a $\Cbb$-basis of ${}^q V(\lambda)$ for each $0 < q < 1$.

On the other hand, by definition, $\Vbf(\lambda)_\Acal$ contains an $\Acal$-basis consisting of weight vectors for $\boldsymbol{\pi}_\lambda$. Let $\{\vbf_j \mid 1 \leq j \leq n_\lambda \} \subseteq \Vbf(\lambda)_\Acal$ be an $\Acal$-basis such that for each $1 \leq j \leq n_\lambda$, $\vbf_j$ is a weight vector for $\boldsymbol{\pi}_\lambda$ with weight $\epsilon_j^\lambda$. Let $T \in M_{n_\lambda}(\Acal)$ be an invertible matrix defined by
\[
\sum_{1 \leq i \leq n_\lambda} T_{ij} \gbf_i = \vbf_j \quad \text{for} \quad 1 \leq j \leq n_\lambda.
\]
Note that for each $0 < q < 1$, the matrix $\ev_q(T) \in M_{n_\lambda}(\Cbb)$ is also invertible, and
\[
\sum_{1 \leq i \leq n_\lambda} \ev_q(T_{ij}) (1_{\Cbb_q} \otimes \gbf_i) = 1_{\Cbb_q} \otimes \sum_{1 \leq i \leq n_\lambda} T_{ij} \gbf_i = 1_{\Cbb_q} \otimes \vbf_j \quad \text{for} \quad 1 \leq j \leq n_\lambda.
\]
Hence,
\[
\{ 1_{\Cbb_q} \otimes \vbf_j \mid 1 \leq j \leq n_\lambda \}
\]
is a $\Cbb$-basis for ${}^q V(\lambda)$ such that $1_{\Cbb_q} \otimes \vbf_j$ is a weight vector of ${}^q \pi_\lambda$ with weight $\epsilon_j^\lambda$ for each $1 \leq j \leq n_\lambda$.

\subsection*{Proof of (1)}

Choose an $n_\lambda$-dimensional $\Cbb$-vector space $V(\lambda)$ with a fixed basis $\{v_j \mid 1 \leq j \leq n_\lambda\}$ and identify the representation space ${}^q V(\lambda)$ for any $0 < q < 1$ with this one via the isomorphism that maps $1_{\Cbb_q} \otimes \vbf_j$ to $v_j$ for $1 \leq j \leq n_\lambda$. Hence, all the representations $\{{}^q \pi_\lambda \mid 0 < q < 1 \}$ are defined on $V(\lambda)$ and, for each $1 \leq j \leq n_\lambda$, $v_j$ is a weight vector of ${}^q \pi_\lambda$ with weight $\epsilon_j^\lambda$ for all $0 < q < 1$.

\begin{lem}\label{lem:q->1 limits of defining relations}
    Let $\lambda \in \weights^+$. Then, $\lim_{q \rightarrow 1} {}^q \pi_\lambda ({}^q K_\mu ) = \id_{V(\lambda)}$ for all $\mu \in \weights$, and also the following operators converge in $\End(V(\lambda))$ as $q \rightarrow 1$:
    \begin{equation*}
        \bigg\{ {}^q \pi_\lambda \Big( \frac{{}^q K_\mu - {}^q K_\mu^{-1}}{q - q^{-1}} \Big), \;\; {}^q \pi_\lambda ({}^q E_\alpha), \;\; {}^q \pi_\lambda ({}^q F_\alpha) \; \bigg| \; \mu \in \weights , \, \alpha \in \boldsymbol{\Delta}^+ \bigg\}
    \end{equation*}
\end{lem}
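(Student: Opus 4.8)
\textbf{Proof strategy for Lemma~\ref{lem:q->1 limits of defining relations}.}
The plan is to work entirely inside the integral form $\Ubf_\qbf^\Acal(\gf)$ over $\Acal = \Zbb[s,s^{-1}]$ and exploit the fact that the representation $\boldsymbol{\pi}_\lambda$ restricts to an $\Acal$-linear action on $\Vbf(\lambda)_\Acal$, expressed in the $\Acal$-basis $\{\vbf_j\}$ of weight vectors. Concretely, for each of the relevant elements $\Xbf \in \Ubf_\qbf^\Acal(\gf)$ the operator $\boldsymbol{\pi}_\lambda(\Xbf)$ has a matrix $\big(\boldsymbol{\pi}_\lambda(\Xbf)_{ij}\big)$ with entries in $\Acal$, i.e.\ Laurent polynomials in $s$; specializing at $s = q^{1/L}$ recovers ${}^q\pi_\lambda(\Xbf)$ on $V(\lambda)$ under our fixed identification. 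Since Laurent polynomials are continuous functions of $s$ near $s=1$ (equivalently, of $q$ near $1$), each matrix entry converges as $q\to 1$, and hence so does the operator. Thus the entire lemma reduces to checking that the four families of elements in question all lie in $\Ubf_\qbf^\Acal(\gf)$ (so that their matrices have $\Acal$-entries) and computing the limiting value in the one case where a value is claimed.

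First I would treat $\Kbf_\mu$. It is a generator of $\Ubf_\qbf^\Acal(\gf)$, and on a weight vector $\vbf_j$ of weight $\epsilon_j^\lambda$ it acts by the scalar $\qbf^{(\mu,\epsilon_j^\lambda)} = s^{L(\mu,\epsilon_j^\lambda)}$, which lies in $\Acal$ provided $L(\mu,\epsilon_j^\lambda)\in\Zbb$; this is guaranteed by the choice of $L$ since $\epsilon_j^\lambda\in\weights$ and the pairing of two weights lies in $\frac12\Zbb\cdot\frac{2}{L}$ after clearing denominators—more precisely one invokes $L\tfrac{(\varpi_i,\varpi_j)}{2}\in\Zbb$ together with $\mu,\epsilon_j^\lambda\in\Span_\Zbb\{\varpi_i\}$. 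Specializing at $s=q^{1/L}$ gives $q^{(\mu,\epsilon_j^\lambda)}$, which tends to $1$ as $q\to 1$; hence $\lim_{q\to1}{}^q\pi_\lambda({}^qK_\mu)=\id_{V(\lambda)}$. For $\tfrac{\Kbf_\mu-\Kbf_\mu^{-1}}{\qbf_\mu-\qbf_\mu^{-1}}$ one notes that it acts on $\vbf_j$ by $\tfrac{s^{L(\mu,\epsilon_j^\lambda)}-s^{-L(\mu,\epsilon_j^\lambda)}}{s^{L d_\mu}-s^{-L d_\mu}}$ (with the appropriate $d_\mu$), which is again a Laurent polynomial in $s$ after cancellation—precisely because the quantum integer $[z]_\qbf$ is a Laurent polynomial whenever $z\in\Zbb$—so it lies in $\Acal$, and its specialization converges as $q\to1$ (the limit being $(\mu,\epsilon_j^\lambda)/d_\mu$ times the identity on the $\epsilon_j^\lambda$-weight space, which is what $H_\mu$ does). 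For the root vectors $\Ebf_\alpha,\Fbf_\alpha$, the point is that by \cite[Section~3.5 or equivalent]{VoigtYuncken} the braid-group operators $\Tcal_j$ preserve the integral form $\Ubf_\qbf^\Acal(\gf)$, so $\Ebf_\alpha = \Tcal_{i_1}\cdots\Tcal_{i_{r-1}}\Ebf_{i_r}\in\Ubf_\qbf^\Acal(\gf)$ and likewise for $\Fbf_\alpha$; hence $\boldsymbol{\pi}_\lambda(\Ebf_\alpha)$ and $\boldsymbol{\pi}_\lambda(\Fbf_\alpha)$ have matrices over $\Acal$ in the basis $\{\vbf_j\}$, and specialization-and-limit applies verbatim.

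The main obstacle is the integrality of the root vectors: while $\Ebf_j^r/[r]_{\qbf_j}!$ is by definition in $\Ubf_\qbf^\Acal(\gf)$, the \emph{single} root vector $\Ebf_\alpha$ for a non-simple $\alpha$ is obtained via the $\Tcal_i$'s, and one needs the (standard but nontrivial) fact that Lusztig's braid operators map the integral form into itself—this is where the bulk of the technical citation weight sits. Granting that, everything else is a routine appeal to continuity of Laurent polynomials: I would phrase the argument once as a general principle (``any $\Xbf\in\Ubf_\qbf^\Acal(\gf)$ has $\boldsymbol{\pi}_\lambda(\Xbf)\in M_{n_\lambda}(\Acal)$ in the basis $\{\vbf_j\}$, hence ${}^q\pi_\lambda(\Xbf)$ depends continuously on $q$ and converges as $q\to1$'') and then apply it to the finite list of elements in the statement, recording only the one explicit limit $\lim_{q\to1}{}^q\pi_\lambda({}^qK_\mu)=\id_{V(\lambda)}$.
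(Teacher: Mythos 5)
Your overall strategy---realize everything inside the integral form $\Ubf_\qbf^\Acal(\gf)$ acting on the $\Acal$-lattice $\Vbf(\lambda)_\Acal$, observe that matrix entries in the weight basis $\{\vbf_j\}$ lie in $\Acal=\Zbb[s,s^{-1}]$, and conclude convergence from continuity of Laurent polynomials at $s=1$---is exactly the skeleton of the paper's proof, and your treatment of ${}^qK_\mu$ coincides with it. The one genuinely different choice is how you handle the root vectors $E_\alpha,F_\alpha$ for non-simple $\alpha$: you invoke Lusztig's theorem that the braid operators $\Tcal_i$ preserve the integral form, so that $\Ebf_\alpha,\Fbf_\alpha\in\Ubf_\qbf^\Acal(\gf)$ and your general principle applies verbatim. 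The paper instead restricts $\pibf_\lambda$ to the subalgebra generated by $\Kbf_\alpha,\Ebf_\alpha,\Fbf_\alpha$, identifies it with a subalgebra of $\Ubf_{\qbf_\alpha}(\mathfrak{sl}_2)$ via \eqref{eq:relations for the root vectors}, decomposes the restriction into irreducibles, and reads off from the explicit $\mathfrak{sl}_2$ weight-vector formulas that the matrix entries are $\Acal$-combinations of products of quantum integers $[k_1]_{\qbf_\alpha}\cdots[k_r]_{\qbf_\alpha}$, which manifestly converge as $q\to1$. Your route is shorter and more uniform but rests on a heavier external citation (integrality of the braid operators); the paper's stays within the toolkit it has already assembled from \cite{VoigtYuncken}. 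Both are valid.

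One step does need repair. The element $\frac{{}^qK_\mu-{}^qK_\mu^{-1}}{q-q^{-1}}$ for general $\mu\in\weights$ does not fall under your Laurent-polynomial principle: on $v_j$ it acts by $\frac{q^{(\mu,\epsilon_j^\lambda)}-q^{-(\mu,\epsilon_j^\lambda)}}{q-q^{-1}}$, and since $(\mu,\epsilon_j^\lambda)$ need not be an integer (only $L(\mu,\epsilon_j^\lambda)\in\Zbb$ is guaranteed by the choice of $L$), this is in general a genuine ratio of Laurent polynomials in $s=q^{1/L}$, not a Laurent polynomial ``after cancellation.'' The convergence is nonetheless immediate from the elementary limit $\lim_{h\to0}\frac{e^{hz}-e^{-hz}}{e^{h}-e^{-h}}=z$, which is how the paper argues: it writes the operator as an explicit diagonal matrix in the weight basis and passes to the limit entrywise. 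Note also that the lemma's denominator is $q-q^{-1}$, not $\qbf_\mu-\qbf_\mu^{-1}$, so the limit on the $\epsilon_j^\lambda$-weight space is $(\mu,\epsilon_j^\lambda)$ itself, without the extra factor $d_\mu^{-1}$ appearing in your sketch.
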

\begin{proof}
Let $\mu \in \weights$. By our choice of $\{ v_j \mid 1 \leq j \leq n_\lambda \}$, ${}^q \pi_\lambda ({}^q K_\mu )$ is given in this basis by
\[
\diag \Big(q^{(\mu, \epsilon_1^\lambda)} , \cdots , q^{(\mu, \epsilon_{n_\lambda}^\lambda )} \Big)
\]
where $\diag(a_1 , \cdots , a_m )$ denotes a diagonal matrix with entries $a_1, \cdots , a_m$ on the diagonal. Hence, we have
\[
\lim_{q \rightarrow 1} {}^q \pi_\lambda ({}^q K_\mu) = \id_{V(\lambda)}.
\]

Likewise, since the operator ${}^q \pi_\lambda \Big( \frac{{}^q K_\mu - {}^q K_\mu^{-1}}{q - q^{-1}} \Big)$ is represented by
\begin{equation*}
\diag \bigg( \frac{ q^{(\mu , \epsilon_1^\lambda)} - q^{- (\mu, \epsilon_1^\lambda)}}{q - q^{-1}} \,,\,\cdots\,,\, \frac{ q^{(\mu , \epsilon_{n_\lambda}^\lambda)} - q^{- (\mu, \epsilon_{n_\lambda}^\lambda)}}{q - q^{-1}} \bigg)
\end{equation*}
in the basis $\{ v_j \mid 1 \leq j \leq n_\lambda \}$, it converges to
\begin{equation}\label{eq:K - K in global basis}
    \diag\big( (\mu, \epsilon_1^\lambda ) , \cdots , (\mu, \epsilon_{n_\lambda}^\lambda ) \big).
\end{equation}

For each $0 < q < 1$, the $\Cbb$-valued matrix entries of the operators
\[
{}^q \pi_\lambda ({}^q E_\alpha), \, {}^q \pi_\lambda ({}^q F_\alpha), \quad \alpha \in \boldsymbol{\Delta}^+
\]
in the basis $\{ v_j \mid 1 \leq j \leq n_\lambda \}$ are by definition given by the evaluations at $q$ of the corresponding $\Acal$-valued matrix entries of
\[
\boldsymbol{\pi}_\lambda (\Ebf_\alpha), \,\boldsymbol{\pi}_\lambda (\Fbf_\alpha), \quad \alpha \in \boldsymbol{\Delta}^+
\]
in the basis $\{ \vbf_j \mid 1 \leq j \leq n_\lambda \}$.

Let $\alpha \in \boldsymbol{\Delta}^+$. Note that $\qbf_\alpha = \qbf^{\frac{(\alpha,\alpha)}{2}} = s^{L \frac{(\alpha,\alpha)}{2}} \in \Qbb(s)$. The relations \eqref{eq:relations for the root vectors} imply that the subalgebra of $\Ubf_\qbf (\gf)$ generated by the three elements $\Kbf_\alpha$, $\Ebf_\alpha$, and $\Fbf_\alpha$ is isomorphic to the subalgebra of $\Ubf_{\qbf_\alpha} (\mathfrak{sl}_2)$ generated by $\Kbf^{\pm 2}$, $\Ebf$, and $\Fbf$. Thus, the restriction of $\pibf_\lambda$ to this subalgebra decomposes into irreducible integrable representations of it, which, according to \cite[Section~3.6.1]{VoigtYuncken}, are of the form given in \cite[Lemma~3.38]{VoigtYuncken}. Since the weight vectors $\{\vbf_j \mid 1 \leq j \leq n_\lambda \}$ are also weight vectors for these irreducible integrable representations, we see that the matrix entries of the operators $\boldsymbol{\pi}_\lambda (\Ebf_\alpha)$ and $\boldsymbol{\pi}_\lambda (\Fbf_\alpha)$ in the basis $\{ \vbf_j \mid 1 \leq j \leq n_\lambda \}$ are given by $\Acal$-linear combinations of monomials of the form $[k_1]_{\qbf_\alpha} \cdots [k_r]_{\qbf_\alpha}$ with $k_1, \cdots, k_r \in \Nbb$. However, the evaluations of these quantities at $0 < q < 1$ all converge as $q \rightarrow 1$.
\end{proof}

For the next proposition, we need to introduce the operators
\begin{equation}\label{eq:c alpha pm}
\tilde{s}_i^{\ad} = \big[ \exp (\ad E_{i}) \exp( - \ad F_{i} ) \exp(\ad E_{i}) \big]^{-1} \in \End(\gf), \quad 1 \leq i \leq N.
\end{equation}
These operators are Lie algebra automorphisms, and by \cite[Section~21.2~(6)]{Humphreys}, $\tilde{s}_i^{\ad}$ maps a root vector with root $\beta \in \boldsymbol{\Delta}$ to a root vector with root $s_i^{-1} \beta = s_i \beta$. Thus, if $\alpha = s_{i_1} \cdots s_{i_{r-1}} \alpha_{i_r}$ for some $ 1 \leq r \leq t$ (cf. \eqref{eq:the longest Weyl group element}), then
\[
\tilde{s}_{i_1}^{\ad} \cdots \tilde{s}_{i_{r-1}}^{\ad} E_{i_r} = c_\alpha^+ E_{\alpha}, \quad \tilde{s}_{i_1}^{\ad} \cdots \tilde{s}_{i_{r-1}}^{\ad} F_{i_r} = c_\alpha^- F_{\alpha}
\]
for some nonzero constants $c_\alpha^\pm \in \Cbb$.

\begin{prop}\label{prop:Lie algebra homomorphism as q->1 limit}
    For each $\lambda \in \weights^+$, there exists an irreducible complex Lie algebra homomorphism $\pi_\lambda = {}^1 \pi_\lambda : \gf \rightarrow \End(V(\lambda))$ that has $\lambda$ as its highest weight and satisfies
    \begin{gather*}
    \pi_\lambda (H_\mu ) = \lim_{q \rightarrow 1} {}^q \pi_\lambda \bigg( \frac{{}^q K_\mu - {}^q K_\mu^{-1}}{q - q^{-1}} \bigg) \\
    c_\alpha^+ \pi_\lambda (E_\alpha) = \lim_{q \rightarrow 1} {}^q \pi_\lambda ({}^q E_\alpha), \quad c_\alpha^- \pi_\lambda (F_\alpha) = \lim_{q \rightarrow 1} {}^q \pi_\lambda ({}^q F_\alpha)
    \end{gather*}
    for all $\mu \in \weights$ and $\alpha \in \boldsymbol{\Delta}^+$. Also, we have $c_\alpha^+ c_\alpha^- = 1$ for all $\alpha \in \boldsymbol{\Delta}^+$ and $c_{\alpha_j}^\pm = 1$ for all $ 1 \leq j \leq N$.
\end{prop}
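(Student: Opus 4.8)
The plan is to define the candidate operators on $V(\lambda)$ as the $q\to 1$ limits appearing in the statement — which exist by Lemma~\ref{lem:q->1 limits of defining relations} — and then verify that they satisfy the defining relations of $\gf$ given in Theorem~\ref{thm:Lie algebra presentation}, so that by the universal property they assemble into a Lie algebra homomorphism $\pi_\lambda$. First I would set $h_j := \lim_{q\to 1} {}^q\pi_\lambda\bigl(\tfrac{{}^q K_{\alpha_j} - {}^q K_{\alpha_j}^{-1}}{q-q^{-1}}\bigr)$, together with operators attached to the $\Ebf_j,\Fbf_j$, and check the four families of relations. Relation~(1), commutativity of the $h_j$, is immediate since each ${}^q\pi_\lambda({}^q K_\mu)$ is diagonal in the fixed weight basis $\{v_j\}$, so these limits are diagonal. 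Relations~(2) follow by applying the commutation rules U2 of Definition~\ref{defn:the quantized universal enveloping algebra} inside ${}^q\pi_\lambda$, writing ${}^q K_{\alpha_i}\,{}^q E_j\,{}^q K_{\alpha_i}^{-1} = q^{(\alpha_i,\alpha_j)}\,{}^q E_j$, dividing by $q-q^{-1}$, and letting $q\to 1$ (using $q^{(\alpha_i,\alpha_j)} \to 1$ with first-order term $(\alpha_i,\alpha_j)h = A_{ij}d_j h$ after the appropriate normalization). Relation~(3), $[E_i,F_j] = \delta_{ij}H'_i$, comes from relation~U3: ${}^q\pi_\lambda([{}^q E_i,{}^q F_j]) = \delta_{ij}\,{}^q\pi_\lambda\bigl(\tfrac{{}^q K_i - {}^q K_i^{-1}}{q_i - q_i^{-1}}\bigr)$, and passing to the limit gives $\delta_{ij}$ times the corresponding $h$-operator, which one identifies with (a scalar multiple of) $\pi_\lambda(H'_{\alpha_i})$ via \eqref{eq:relations between E F H}. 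Relation~(4), the quantum Serre relations, degenerate to the classical Serre relations as $q\to 1$ because the $q$-binomial coefficients $\left[\begin{smallmatrix}1-a_{ij}\\ k\end{smallmatrix}\right]_{q_i}$ converge to the ordinary binomial coefficients $\binom{1-A_{ij}}{k}$.

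Having obtained a homomorphism from the ``$E_j,F_j$'' generators, the next step is to identify the limits of the higher root vectors ${}^q E_\alpha, {}^q F_\alpha$ with scalar multiples $c_\alpha^\pm$ of $\pi_\lambda(E_\alpha),\pi_\lambda(F_\alpha)$. The key point is that the braid automorphisms $\Tcal_{i_1}\cdots\Tcal_{i_{r-1}}$ defining ${}^q E_\alpha$ (Section~\ref{subsec:Quantized universal enveloping algebra}) degenerate in the $q\to 1$ limit — after dividing by the appropriate powers of $q-q^{-1}$ built into their definition — to the classical Lie algebra automorphisms $\tilde s_i^{\ad}$ of \eqref{eq:c alpha pm}. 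One way to make this precise without re-deriving the degeneration is to argue representation-theoretically: the restriction of ${}^q\pi_\lambda$ to the $\mathfrak{sl}_2$-triple generated by $\Kbf_\alpha,\Ebf_\alpha,\Fbf_\alpha$ is, by the relations \eqref{eq:relations for the root vectors}, an integrable $U_{q_\alpha}(\mathfrak{sl}_2)$-representation, whose structure constants in the fixed weight basis are $q$-integers (as in the proof of Lemma~\ref{lem:q->1 limits of defining relations}); these converge to the classical structure constants of the $\mathfrak{sl}_2$-triple $(H'_\alpha,E_\alpha,F_\alpha)$ up to a common scalar, which we call $c_\alpha^\pm$. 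That the scalars are reciprocal, $c_\alpha^+ c_\alpha^- = 1$, then follows from the limit of relation~\eqref{eq:relations for the root vectors}: $[{}^q E_\alpha,{}^q F_\alpha] = \tfrac{{}^q K_\alpha - {}^q K_\alpha^{-1}}{q_\alpha - q_\alpha^{-1}} \to \pi_\lambda(H'_\alpha)$ after normalization, matched against $c_\alpha^+ c_\alpha^- [E_\alpha,F_\alpha] = c_\alpha^+ c_\alpha^- \pi_\lambda(H'_\alpha)$, and $\pi_\lambda(H'_\alpha)\neq 0$ since $\pi_\lambda$ is nontrivial. For simple roots $\alpha = \alpha_j$ one has ${}^q E_{\alpha_j} = {}^q E_j$ with no braid operators applied, so $c_{\alpha_j}^\pm = 1$ directly. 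Finally, irreducibility of $\pi_\lambda$ and the fact that $\lambda$ is its highest weight: the highest weight vector $v_\lambda$ is killed by all ${}^q E_j$ for every $q$, hence by all $E_j$ in the limit, and generates $V(\lambda)$ over $\gf$ (the cyclicity survives the limit because it can be witnessed by finitely many monomials in the generators whose matrix coefficients in the $\{v_j\}$ basis are continuous in $q$ and span at $q<1$, hence span at $q=1$).

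The main obstacle I anticipate is the bookkeeping around the normalization constants $c_\alpha^\pm$ and the precise sense in which the braid operators $\Tcal_i$ degenerate: one must be careful that the $q$-dependent rescalings implicit in ${}^q E_\alpha$ (coming from the $(q_{\beta_r}-q_{\beta_r}^{-1})$ factors in the $R$-matrix expansion \eqref{eq:the universal R-matrix} and in the $\Tcal_i$ formulas) are tracked consistently, so that the limit is a nonzero finite operator rather than $0$ or $\infty$. Rather than unwinding Lusztig's braid operators explicitly, I would prefer to lean on the $\mathfrak{sl}_2$-reduction argument above, which isolates the analysis to one positive root at a time and reduces everything to the elementary representation theory of $U_{q_\alpha}(\mathfrak{sl}_2)$ recorded in \cite[Lemma~3.38]{VoigtYuncken}. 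The remaining verifications — the Serre relations, commutativity, and the $[E_i,F_j]$ relation — are routine limits of the defining relations and should not pose difficulties.
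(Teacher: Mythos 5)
Your overall skeleton --- take the limits guaranteed by Lemma~\ref{lem:q->1 limits of defining relations}, verify the presentation of Theorem~\ref{thm:Lie algebra presentation} in the limit, and then identify the limits of the higher root vectors up to scalars --- is the same as the paper's, and your verification of relations (1)--(4) matches the paper's essentially line by line. The gap is in the step you explicitly chose to avoid: identifying $\lim_{q\to1}{}^q\pi_\lambda({}^qE_\alpha)$ with $c_\alpha^+\pi_\lambda(E_\alpha)$ for a non-simple root $\alpha$. The $\mathfrak{sl}_2$-reduction does not deliver this. The decomposition of $V(\lambda)$ into irreducible modules for the triple generated by $\Kbf_\alpha,\Ebf_\alpha,\Fbf_\alpha$ is $q$-dependent (only the $K_\alpha$-weight spaces are pinned down by the fixed basis $\{v_j\}$), so even though the structure constants inside each irreducible block are $q$-integers, you cannot conclude that the limit operator is proportional to $\pi_\lambda(E_\alpha)$: on a weight space of multiplicity greater than one, two operators that both raise weights by $\alpha$ need not be proportional, and the relative position of the isotypic components for the limiting triple versus the classical triple $(H'_\alpha,E_\alpha,F_\alpha)$ is precisely what is at stake. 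Moreover the statement requires $c_\alpha^\pm$ to be independent of $\lambda$ (this independence is used later, e.g.\ in Proposition~\ref{prop:the q->1 behavior of I}), and your argument produces at best one scalar per $\lambda$ with no mechanism to compare them across representations. The paper resolves both points by computing the $q\to1$ limit of Lusztig's braid operators ${}^q\Tcal_i^\lambda$ on $V(\lambda)$ directly: the truncated $q$-exponential formula \eqref{eq:q-braid automorphisms-rep} converges to $\tilde s_i^\lambda=\big[\exp\pi_\lambda(E_i)\exp(-\pi_\lambda(F_i))\exp\pi_\lambda(E_i)\big]^{-1}$, whence $\lim_{q\to1}{}^q\pi_\lambda({}^qE_\alpha)=\pi_\lambda\big(\tilde s_{i_1}^{\ad}\cdots\tilde s_{i_{r-1}}^{\ad}E_{i_r}\big)$; since the $\tilde s_i^{\ad}$ are Lie algebra automorphisms of $\gf$ permuting root vectors, the element $\tilde s_{i_1}^{\ad}\cdots\tilde s_{i_{r-1}}^{\ad}E_{i_r}=c_\alpha^+E_\alpha$ lives in $\gf$ itself, which makes $c_\alpha^+$ manifestly $\lambda$-independent. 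Your argument for $c_\alpha^+c_\alpha^-=1$ via the commutator relation is the paper's, but it only becomes available once the scalars are known to exist and to be $\lambda$-independent.

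A second, smaller issue: your irreducibility argument (``span at $q<1$, hence span at $q=1$'') goes the wrong way --- spanning is an open condition, so a family of vectors spanning for all $q<1$ can perfectly well degenerate at $q=1$. The paper instead notes that the weights of the limit representation, counted with multiplicity, coincide with those of ${}^q\pi_\lambda$ (read off from the diagonal limit \eqref{eq:K - K in global basis}), so the limit contains an irreducible summand of highest weight $\lambda$ whose dimension already equals $\dim V(\lambda)$.
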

\begin{proof}
    We prove that the operators
    \begin{equation*}
        \bigg\{ \lim_{q \rightarrow 1} {}^q \pi_\lambda \Big( \frac{{}^q K_j - {}^q K_j ^{-1}}{q_j - q_j ^{-1}} \Big), \, \lim_{q \rightarrow 1} {}^q \pi_\lambda ({}^q E_j), \, \lim_{q \rightarrow 1} {}^q \pi_\lambda ({}^q F_j) \, \bigg| \, 1 \leq j \leq N \bigg\}
    \end{equation*}
    in $\End(V(\lambda))$, which are well-defined by Lemma~\ref{lem:q->1 limits of defining relations}, satisfy the relations (1)--(4) of Theorem~\ref{thm:Lie algebra presentation}.
    
    First, since $\{K_\mu \mid \mu \in \weights \}$ are mutually commuting, (1) is satisfied.
    
    By U2 of Definition~\ref{defn:the quantized universal enveloping algebra}, we have
    \begin{align*}
    &{}^q \pi_\lambda \bigg( \frac{ ({}^q K_i - {}^q K_i ^{-1} )\, {}^qE_j - {}^qE_j ( {}^qK_i - {}^qK_i ^{-1})}{q_i - q_i ^{-1}} \bigg) \\
    &\hspace{3cm} = {}^q \pi_\lambda \bigg( \frac{  {}^qE_j {}^q K_i (q^{(\alpha_i, \alpha_j)} - 1) - {}^q K_i ^{-1} \, {}^qE_j ( 1 - q^{(\alpha_i, \alpha_j)} )}{q_i - q_i ^{-1}} \bigg) \\
    &\hspace{3cm}\longrightarrow 2 \frac{(\alpha_i, \alpha_j)}{(\alpha_i, \alpha_i)} \lim_{q \rightarrow 1} {}^q \pi_\lambda ( {}^q E_j ) = a_{ij} \lim_{q \rightarrow 1} {}^q \pi_\lambda ({}^q E_j)
    \end{align*}
    since $\lim_{q \rightarrow 1} {}^q \pi_\lambda ({}^q K_\mu ) = \id$. The same reasoning with ${}^q F_j$ will show
    \begin{align*}
    &{}^q \pi_\lambda \bigg( \frac{ ({}^q K_i - {}^q K_i ^{-1} )\, {}^qF_j - {}^qF_j ( {}^qK_i - {}^qK_i ^{-1})}{q_i - q_i ^{-1}} \bigg)
    \longrightarrow - a_{ij} \lim_{q \rightarrow 1} {}^q \pi_\lambda ( {}^q F_j ).
    \end{align*}
    These two prove (2) of Theorem~\ref{thm:Lie algebra presentation}.
    
    The two conditions (3)--(4) are simple consequences of U3--U4 of Definition~\ref{defn:the quantized universal enveloping algebra} placed inside $\lim_{q \rightarrow 1}{}^q \pi_\lambda (\,\cdot\,)$ and the identity
    \[
    \lim_{q \rightarrow 1} \left[ \begin{array}{c} n \\ k \end{array} \right]_{q_i} = \begin{pmatrix} n \\ k \end{pmatrix},
    \]
    which holds for all $ n, k \in \Nbb $ and $ 1 \leq i \leq N$.
    
    Thus, by Theorem~\ref{thm:Lie algebra presentation}, there exists a unique complex Lie algebra homomorphism $\pi_\lambda : \gf \rightarrow \End(V(\lambda))$ satisfying
    \begin{gather*}
    \pi_\lambda (H_j ' ) = \lim_{q \rightarrow 1} {}^q \pi_\lambda \Big( \frac{{}^q K_j - {}^q K_j ^{-1}}{q_j - q_j ^{-1}} \Big) \\
    \pi_\lambda (E_j) = \lim_{q \rightarrow 1} {}^q \pi_\lambda ({}^q E_j), \quad \pi_\lambda (F_j) = \lim_{q \rightarrow 1} {}^q \pi_\lambda ({}^q F_j)
    \end{gather*}
    for all $1 \leq j \leq N$.
    
    Note that \eqref{eq:K - K in global basis} for $\mu = \alpha_1, \cdots , \alpha_N$ implies that
    \[
    \pi_\lambda (H_j ') = \diag \big( (\alpha_j ^\lor, \epsilon_1 ^\lambda) , \cdots , (\alpha_j ^\lor , \epsilon_{n_\lambda} ^\lambda) \big), \quad 1 \leq j \leq N
    \]
    in the basis $\{ v_i \mid 1 \leq i \leq n_\lambda \}$. Therefore, the set of weights counted with multiplicity for the Lie algebra representation $\pi_\lambda$ is equal to the set of weights counted with multiplicity for ${}^q \pi_\lambda$ for any $ 0 <q < 1$. In particular, their highest weights are equal, namely $\lambda$. Thus, $\pi_\lambda$ contains an irreducible Lie algebra subrepresentation whose highest weight is $\lambda$. However, the dimension of that subrepresentation is equal to the dimension of $V(\lambda)$ by \cite[Section~5.1.1]{VoigtYuncken}, which proves that $\pi_\lambda$ is the irreducible representation of $\gf$ with highest weight $\lambda$.
    
    Now, it only remains to check the identities of the proposition, which have already been checked for simple roots. Let $\mu = r_1 \alpha_1 ^\lor + \cdots r_N \alpha_N ^\lor \in \weights$ with $r_1, \cdots , r_N \in \Rbb$. By \eqref{eq:K - K in global basis}, we have
    \begin{align*}
    &\lim_{q \rightarrow 1} {}^q \pi_\lambda \bigg( \frac{{}^q K_\mu - {}^q K_\mu ^{-1}}{q - q^{-1}} \bigg) = \diag\big( (\mu, \epsilon_1 ^\lambda) , \cdots, (\mu, \epsilon_{n_\lambda} ^\lambda) \big) \\
    &\hspace{2cm} = \sum_{j=1} ^N r_j \diag\big( (\alpha_j ^\lor, \epsilon_1 ^\lambda) , \cdots, (\alpha_j ^\lor, \epsilon_{n_\lambda} ^\lambda) \big) = \sum_{j=1} ^N r_j \pi_\lambda (H_j ') = \pi_\lambda (H_\mu),
    \end{align*}
    proving the first identity.

    Now, let $\alpha \in \boldsymbol{\Delta}^+$. To check the identities for $E_\alpha$ and $F_\alpha$, we first need to look more closely at the definitions of ${}^q E_\alpha$ and ${}^q F_\alpha$. Note that the definition of the algebra automorphisms ${}^q \Tcal_1, \cdots, {}^q \Tcal_N$ given in \cite[p.76 and Theorem~3.58]{VoigtYuncken} shows that, for each $X \in U_q (\gf)$ and $1 \leq i \leq N$,
    \begin{align*}
        {}^q \pi_\lambda \big( {}^q \Tcal_i (X) \big) &= {}^q \Tcal_i ^\lambda \, {}^q \pi_\mu (X) ({}^q \Tcal_i ^\lambda) ^{-1}
    \end{align*}
    where $({}^q \Tcal_i ^\lambda)^{\pm 1} : V(\lambda) \rightarrow V(\lambda)$ are given by
    \begin{align}\label{eq:q-braid automorphisms-rep}
        {}^q \Tcal_i ^\lambda v &= \sum_{\substack{r, s, t \geq 0 \\ r-s + t = m}} (-1)^s q_i ^{s- rt} \frac{ {}^q \pi_\lambda( {}^q F_i) ^r}{[r]_{q_i} !} \, \frac{{}^q \pi_\lambda( {}^q E_i) ^s}{[s]_{q_i} !} \, \frac{{}^q \pi_\lambda({}^q F_i) ^t}{[t]_{q_i} !}v \nonumber \\
        \big({}^q \Tcal_i ^\lambda\big)^{-1} v &= \sum_{\substack{r, s, t \geq 0 \\ -r+s -t = m}} (-1)^s q_i ^{rt - s} \frac{ {}^q \pi_\lambda( {}^q E_i) ^r}{[r]_{q_i} !} \, \frac{{}^q \pi_\lambda( {}^q F_i) ^s}{[s]_{q_i} !} \, \frac{{}^q \pi_\lambda({}^q E_i) ^t}{[t]_{q_i} !}v
    \end{align}
    when $v \in V(\lambda)$ is a weight vector for the representation ${}^q \pi_\lambda$ with weight $\nu \in \Pbf(\lambda)$ and $m = ( \alpha_i ^\lor , \nu) \in \Zbb$ (see \cite[Corollary~3.50]{VoigtYuncken} for $({}^q \Tcal_i ^\lambda)^{-1}$). But, we have seen that, for each $1 \leq i \leq N$, ${}^q \pi_\lambda( F_i)$ and ${}^q \pi_\lambda (E_i)$ converge to $\pi_\lambda (E_i)$ and $\pi_\lambda(F_i)$, respectively. Thus,
    \begin{align*}
        \lim_{q \rightarrow 1}{}^q \Tcal_i ^\lambda v &= \sum_{\substack{r, s, t \geq 0 \\ r-s + t = m}} (-1)^s \frac{ \pi_\lambda(F_i) ^r}{r !} \frac{\pi_\lambda (E_i) ^s}{s !} \frac{\pi_\lambda ( F_i) ^t}{t !} v \\
        \lim_{q \rightarrow 1} \big({}^q \Tcal_i ^\lambda \big)^{-1} v &= \sum_{\substack{r, s, t \geq 0 \\ - r + s - t = m}} (-1)^s \frac{ \pi_\lambda(E_i) ^r}{r !} \frac{\pi_\lambda (F_i) ^s}{s !} \frac{\pi_\lambda ( E_i) ^t}{t !} v.
    \end{align*}
    Note that the latter summation is a part of the series
    \begin{align*}
        \exp(\pi_\lambda(E_i) ) \exp( - \pi_\lambda(F_i) ) \exp( \pi_\lambda (E_i)) v,
    \end{align*}
    which, by \cite[Section~21.2~(6)]{Humphreys} and the fact that $v$ is a weight vector for $\pi_\lambda$ with weight $\nu$, yields a weight vector with weight $s_i \nu$. Since the index $r-s + t = m$ in the summation of \eqref{eq:q-braid automorphisms-rep} was added only to ensure that we only get the terms whose weights are $s_i \nu$ (cf. \cite[Section~3.7.1]{VoigtYuncken}), this peculiar property of Lie algebra representation enables us to conclude
    \[
    \lim_{q \rightarrow 1} {}^q \Tcal_i ^\lambda = \big[ \exp(\pi_\lambda(E_i) ) \exp( - \pi_\lambda(F_i) ) \exp( \pi_\lambda (E_i)) \big]^{-1} \in \End(V(\lambda)),
    \]
    which will be denoted by $\tilde{s}_i ^\lambda$.

    Thus, if $\alpha = s_{i_1} \cdots s_{i_{r-1}} \alpha_{i_r} \in \boldsymbol{\Delta}^+$ for some $ 1 \leq r \leq t$, then by \eqref{eq:c alpha pm},
    \begin{align*}
        \lim_{q \rightarrow 1} {}^q\pi_\lambda({}^q E_\alpha ) = \lim_{q \rightarrow 1} {}^q \Tcal_{i_1} ^\lambda \cdots {}^q \Tcal_{i_{r-1}} ^\lambda  {}^q\pi_\lambda ({}^q E_{i_r} ) \big( {}^q \Tcal_{i_{r-1}} ^\lambda \big) ^{-1} \cdots \big( {}^q \Tcal_{i_{1}} ^\lambda \big) ^{-1} \\
        = \tilde{s}_{i_1} ^\lambda \cdots \tilde{s}_{i_{r-1}} ^\lambda \pi_\lambda (E_{i_r}) (\tilde{s}_{i_{r-1}} ^\lambda)^{-1} \cdots (\tilde{s}_{i_{1}} ^\lambda)^{-1}  \\
        = \pi_\lambda \big( \tilde{s}_{i_1} ^{\ad} \cdots \tilde{s}_{i_{r-1}} ^{\ad} E_{i_r} \big) = c_\alpha ^+ \pi_\lambda (E_\alpha).
    \end{align*}

    Exactly the same reasoning with $F_\alpha$ in place of $E_\alpha$ gives us
    \[
    \lim_{q \rightarrow 1} {}^q\pi_\lambda({}^q F_\alpha) = c_\alpha ^- \pi_\lambda (F_\alpha).
    \]
    Note that \cite[Lemma~3.61]{VoigtYuncken} implies $c_{\alpha_j} ^\pm = 1$ for any $ 1 \leq j \leq N$.

    Finally, \eqref{eq:relations between E F H} and the last identity of \eqref{eq:relations for the root vectors} show
    \begin{align*}
        c_\alpha ^+ c_\alpha ^- \pi_\lambda (H_\alpha ') = c_\alpha ^+ c_\alpha ^- \pi_\lambda \big( [E_\alpha , F_\alpha] \big) = c_\alpha ^+ \pi_\lambda (E_\alpha) c_\alpha ^- \pi_\lambda (F_\alpha) - c_\alpha ^+ \pi_\lambda (F_\alpha) c_\alpha ^- \pi_\lambda (E_\alpha) \\
        = \lim_{q \rightarrow 1} {}^q \pi_\lambda \big( [{}^q E_\alpha , {}^q F_\alpha] \big) = \lim_{q \rightarrow 1} {}^q \pi_\lambda \bigg( \frac{ {}^q K_\alpha - {}^q K_\alpha ^{-1} }{q_\alpha - q_\alpha ^{-1}} \bigg) = \pi_\lambda( H_\alpha ').
    \end{align*}
    Since this expression holds for all $\lambda \in \weights^+$, we see $c_\alpha ^+ c_\alpha ^- = 1$.
\end{proof}

To finish the proof of Theorem~\ref{thm:collecting all q}~(1), we need to show, for $\alpha \in \boldsymbol{\Delta}^+$,
\[
\lim_{q \rightarrow 1} {}^q \pi_\lambda {}^q \hat{S} ({}^q E_\alpha) = - \lim_{q \rightarrow 1} {}^q \pi_\lambda ({}^q E_\alpha), \quad \lim_{q \rightarrow 1} {}^q \pi_\lambda {}^q \hat{S} ({}^q F_\alpha) = - \lim_{q \rightarrow 1} {}^q \pi_\lambda ({}^q F_\alpha)
\]

For this, we use induction as follows. Note that, by \cite[Theorem~3.58]{VoigtYuncken}, we have, for each $ 1 \leq j \leq N$,
\begin{gather*}
{}^q \Tcal_j ( K_\mu ) = K_{s_j \mu} , \quad \mu \in \weights \\
    {}^q \Tcal_j ({}^q E_j) = - {}^q K_j \, {}^q F_j , \quad {}^q \Tcal_j (F_j) = - {}^q E_j \, {}^q K_j ^{-1} \\
{}^q \Tcal_i ({}^q E_j) = \sum_{k=0} ^{-a_{ij}} (-1)^k q_i ^k \frac{{}^qE_i ^{k}}{[k]_{q_i} !} {}^qE_j \frac{{}^qE_i ^{-a_{ij} - k}}{[-a_{ij} -k]_{q_i} !} , \quad i \neq j  \\
{}^q \Tcal_i ({}^q F_j) = \sum_{k=0} ^{-a_{ij}} (-1)^k q_i ^{-k} \frac{{}^qF_i ^{-a_{ij} - k}}{[-a_{ij} -k]_{q_i} !} {}^qF_j \frac{{}^qF_i ^{k}}{[k]_{q_i} !} , \quad i \neq j.
\end{gather*}
Thus, by Lemma~\ref{lem:q->1 limits of defining relations}, we see that, for any $1 \leq j \leq N$,
\begin{align*}
    \lim_{q \rightarrow 1} {}^q\pi_\lambda {}^q \hat{S}\, {}^q \Tcal_j ({}^q E_j) = \lim_{q \rightarrow 1} {}^q \pi_\lambda ( {}^q K_j {}^q F_j {}^q K_j ^{-1} ) = \lim_{q \rightarrow 1} {}^q \pi_\lambda ({}^q F_j) = - \lim_{q \rightarrow 1} {}^q \pi_\lambda ( {}^q \Tcal_j ({}^q E_j) )
\end{align*}
and similarly $\lim_{q \rightarrow 1} {}^q\pi_\lambda {}^q \hat{S} \, {}^q \Tcal_j ({}^q F_j) = - \lim_{q \rightarrow 1} {}^q \pi_\lambda ( {}^q \Tcal_j ({}^q F_j) )$. Also, for any $ 1 \leq  i \neq j \leq N$,
\begin{align*}
    \lim_{q \rightarrow 1} {}^q\pi_\lambda {}^q \hat{S} {}^q \Tcal_i ({}^q E_j) = \lim_{q \rightarrow 1} {}^q \pi_\lambda \Big( \sum_{k=0} ^{-a_{ij}} (-1)^k q_i ^{k} \frac{{}^q \hat{S} ({}^q E_i) ^{- a_{ij} - k} }{[-a_{ij} - k]_{q_i} !} {}^q \hat{S} ({}^q E_j) \frac{{}^q \hat{S} ({}^q E_i) ^{k} }{[k]_{q_i} !} \Big) \\
    = - \lim_{q \rightarrow 1} {}^q \pi_\lambda \Big( \sum_{k=0} ^{-a_{ij}} (-1)^{ - a_{ij} - k } q_i ^{k} \frac{{}^q E_i ^{- a_{ij} - k} }{[-a_{ij} - k]_{q_i} !} {}^q E_j \frac{{}^q E_i ^{k}}{[k]_{q_i} !} \Big) \\
    = - \lim_{q \rightarrow 1} {}^q \pi_\lambda {}^q \Tcal_i ( {}^q E_j )
\end{align*}
and similarly $\lim_{q \rightarrow 1} {}^q \pi_\lambda {}^q \hat{S} \, {}^q \Tcal_i ({}^q F_j) = - \lim_{q \rightarrow 1} {}^q \pi_\lambda {}^q \Tcal_i ( {}^q F_j ) $. Thus, we conclude that
\begin{align*}
    \lim_{q \rightarrow 1} {}^q \pi_\lambda {}^q \hat{S} \, {}^q \Tcal_i ({}^q E_j) &= - \lim_{q \rightarrow 1} {}^q \pi_\lambda {}^q \Tcal_i ({}^q E_j) \nonumber \\
    \lim_{q \rightarrow 1} {}^q \pi_\lambda {}^q \hat{S} \, {}^q \Tcal_i ({}^q F_j) &= - \lim_{q \rightarrow 1} {}^q \pi_\lambda {}^q \Tcal_i ({}^q F_j)
\end{align*}
for all $ 1 \leq i , j \leq N$. Using this as the base case, we fix $n \geq 2$ and assume
\begin{align*}
\lim_{q \rightarrow 1} {}^q \pi_\lambda {}^q \hat{S} \, {}^q \Tcal_{j_1} \cdots {}^q \Tcal_{j_{n-1}} (E_{j_n}) &= - \lim_{q \rightarrow 1} {}^q \pi_\lambda {}^q \Tcal_{j_1} \cdots {}^q \Tcal_{j_{n-1}} (E_{j_n}) \\
\lim_{q \rightarrow 1} {}^q \pi_\lambda {}^q \hat{S} \, {}^q \Tcal_{j_1} \cdots {}^q \Tcal_{j_{n-1}} (F_{j_n}) &= - \lim_{q \rightarrow 1} {}^q \pi_\lambda {}^q \Tcal_{j_1} \cdots {}^q \Tcal_{j_{n-1}} (F_{j_n})
\end{align*}
for all $1 \leq j_1 , \cdots , j_n \leq N$. Choose $ 1 \leq j_1 , \cdots , j_{n+1} \leq N$. If $j_n = j_{n+1}$, then by the identity ${}^q \Tcal_{j_n} ({}^q E_{j_n}) = - {}^q K_{j_n} \, {}^q F_{j_n} $ and the induction hypothesis,
\begin{align*}
    \lim_{q \rightarrow 1} {}^q \pi_\lambda {}^q \hat{S} \, {}^q \Tcal_{j_1} \cdots {}^q \Tcal_{j_{n}} ({}^q E_{j_{n+1}} ) = \lim_{q \rightarrow 1} {}^q \pi_\lambda {}^q \hat{S} \, {}^q \Tcal_{j_1} \cdots {}^q \Tcal_{j_{n-1}} ( - {}^q K_{j_{n+1}} {}^q F_{j_{n+1}} ) \\
    = \lim_{q \rightarrow 1} {}^q \pi_\lambda {}^q \hat{S} \, {}^q \Tcal_{j_1} \cdots {}^q \Tcal_{j_{n-1}} ( - {}^q F_{j_{n+1}} ) \\
    = - \lim_{q \rightarrow 1} {}^q \pi_\lambda {}^q \Tcal_{j_1} \cdots {}^q \Tcal_{j_{n-1}} ( - {}^q F_{j_{n+1}} ) \\
    = - \lim_{q \rightarrow 1} {}^q \pi_\lambda {}^q \Tcal_{j_1} \cdots {}^q \Tcal_{j_{n-1}} {}^q \Tcal_{j_n} ( {}^q E_{j_{n+1}} )
\end{align*}
and similarly
\[
\lim_{q \rightarrow 1} {}^q \pi_\lambda {}^q \hat{S} \, {}^q \Tcal_{j_1} \cdots {}^q \Tcal_{j_{n}} ({}^q F_{j_{n+1}} )
    = - \lim_{q \rightarrow 1} {}^q \pi_\lambda {}^q \Tcal_{j_1} \cdots {}^q \Tcal_{j_n} ( {}^q F_{j_{n+1}} ).
\]
Now, let $j_{n} \neq j_{n+1}$. Temporarily, we denote $j_n = i$ and $j_{n+1} = j$. Then, take $\lim_{q \rightarrow 1} {}^q \pi_\lambda {}^q \hat{S} {}^q \Tcal_{j_1} \cdots {}^q \Tcal_{j_{n-1}}$ on both sides of
\[
{}^q \Tcal_i ({}^q E_j) = \sum_{k=0} ^{-a_{ij}} (-1)^k q_i ^k \frac{{}^qE_i ^{k}}{[k]_{q_i} !} {}^qE_j \frac{{}^qE_i ^{-a_{ij} - k}}{[-a_{ij} -k]_{q_i} !}
\]
and apply the induction hypothesis to each of the three resulting factors inside the summation sign to obtain
\begin{align*}
    \lim_{q \rightarrow 1} {}^q \pi_\lambda {}^q &\hat{S} \, {}^q \Tcal_{j_1} \cdots {}^q \Tcal_{j_{n}} ({}^q E_{j_{n+1}} ) \\
    &= \lim_{q \rightarrow 1} {}^q \pi_\lambda {}^q \Tcal_{j_1} \cdots {}^q \Tcal_{j_{n-1}} \Big( \sum_{k=0} ^{- a_{ij}} (-1)^{k} q_i ^{k} \frac{ (- \, {}^qE_i) ^{-a_{ij} - k}}{[-a_{ij} - k]_{q_i} !} (- \, {}^qE_j) \frac{(-\, {}^qE_i)^k}{[k]_{q_i} !} \Big) \\
    &= - \lim_{q \rightarrow 1} {}^q \pi_\lambda {}^q \Tcal_{j_1} \cdots {}^q \Tcal_{j_{n-1}} \Big( \sum_{k=0} ^{- a_{ij}} (-1)^{-a_{ij} - k} q_i ^{k} \frac{{}^qE_i ^{-a_{ij} - k}}{[-a_{ij} - k]_{q_i} !} {}^qE_j \frac{{}^qE_i ^{k}}{[k]_{q_i} !} \Big) \\
    &= - \lim_{q \rightarrow 1} {}^q \pi_\lambda {}^q \Tcal_{j_1} \cdots {}^q \Tcal_{j_{n-1}} ( {}^q \Tcal_{j_n} {}^q E_{j_{n+1}} ).
\end{align*}
In the same way, we get
\begin{align*}
    \lim_{q \rightarrow 1} {}^q \pi_\lambda {}^q \hat{S} \, {}^q \Tcal_{j_1} \cdots {}^q \Tcal_{j_{n}} ({}^q F_{j_{n+1}} ) = - \lim_{q \rightarrow 1} {}^q \pi_\lambda {}^q \Tcal_{j_1} \cdots {}^q \Tcal_{j_{n}} ({}^q F_{j_{n+1}} ),
\end{align*}
completing the induction.

Now, let $\alpha = s_{i_1} \cdots s_{i_{r-1}} \alpha_{i_r} \in \boldsymbol{\Delta}^+$ for some $ 1 \leq r \leq t$. By what has just been proved, we conclude
\begin{align*}
    \lim_{q \rightarrow 1} {}^q \pi_\lambda {}^q \hat{S} ({}^q E_\alpha) &= \lim_{q \rightarrow 1} {}^q \pi_\lambda {}^q \hat{S} \, {}^q \Tcal_{i_1} \cdots {}^q \Tcal_{i_{r-1}} ({}^q E_{i_r} ) = - \lim_{q \rightarrow 1} {}^q \pi_\lambda ({}^q E_\alpha), \\
    \lim_{q \rightarrow 1} {}^q \pi_\lambda {}^q \hat{S} ({}^q F_\alpha) &= \lim_{q \rightarrow 1} {}^q \pi_\lambda {}^q \hat{S} \, {}^q \Tcal_{i_1} \cdots {}^q \Tcal_{i_{r-1}} ({}^q F_{i_r} ) = - \lim_{q \rightarrow 1} {}^q \pi_\lambda ({}^q F_\alpha).
\end{align*}

\subsection*{Proof of (2)}

This fact was asserted in the proof of \cite[Proposition~4.16]{VoigtYuncken}, and we supply here a proof for it.

Fix $0 < q \leq 1$ and $\lambda \in \weights^+$. Let $\epsilon_1^\lambda$ be the highest weight in $\Pbf(\lambda)$. Since the representation ${}^q \overline{\pi}_\lambda : U_q(\gf) \rightarrow \End\big( \overline{V(\lambda)}^* \big)$ (where $\overline{V(\lambda)}$ is the vector space $V(\lambda)$ equipped with a new scalar multiplication $\cdot$ given by $a \cdot v = \overline{a} v$ for $a \in \Cbb$ and $v \in V$) defined by, for $X \in U_q(\gf)$,
\[
{}^q \overline{\pi}_\lambda(X) f = f \circ {}^q \pi_\lambda(X^*), \quad f \in \overline{V(\lambda)}^*,
\]
is an irreducible representation with highest weight $\lambda$, we see there exists a unique nondegenerate sesquilinear form $\la \cdot, \cdot \ra_q : V(\lambda) \times V(\lambda) \rightarrow \Cbb$ such that, for all $X \in U_q(\gf)$ and $v, w \in V(\lambda)$,
\begin{equation}\label{eq:*-representation property}
\la {}^q \pi_\lambda(X) v , w \ra_q = \la v, {}^q \pi_\lambda(X^*) w \ra_q \quad \text{and} \quad \la v_1 , v_1 \ra_q = 1.
\end{equation}
Note that, since $v_1$ has a weight different from the weights of all the other $v_j$s, these two conditions imply
\begin{equation}\label{eq:orthogonality of e1}
\la v_1 , v_j \ra_q = \delta_{1j}, \quad 1 \leq j \leq n_\lambda.
\end{equation}
Since the representation $\pi_\lambda$ of $U_q(\gf) = U_q^\Rbb(\kf)$ can always be made into a $*$-representation on a Hilbert space, we see that $\la \cdot , \cdot \ra_q$ must be positive definite; see \cite[Proposition~4.16]{VoigtYuncken}.

To finish the proof of (2), we need to prove that the family $\big( \la \cdot, \cdot \ra_q \big)_{0 < q \leq 1}$ of inner products depends continuously on $0 < q \leq 1$. For that, we introduce a formal adjoint $*: \Ubf_\qbf(\gf) \rightarrow \Ubf_\qbf(\gf)$ defined by
\[
(\Kbf_\mu)^* = \Kbf_\mu, \quad (\Ebf_j)^* = \Kbf_j \Fbf_j, \quad (\Fbf_j)^* = \Ebf_j \Kbf_j^{-1}, \quad \mu \in \weights, \, 1 \leq j \leq N,
\]
which is a well-defined $\Qbb(s)$-linear antihomomorphism by \cite[Lemma~3.16]{VoigtYuncken}. Just as in the case of $U_q(\gf)$, the representation $\overline{\pibf}_\lambda : \Ubf_\qbf(\gf) \rightarrow \End_{\Qbb(s)}( \Vbf(\lambda)^* )$ defined by
\[
\overline{\pibf}_\lambda(\Xbf) f = f \circ \pibf_\lambda(\Xbf^*), \quad f \in \End_{\Qbb(s)}( \Vbf(\lambda)^* )
\]
for $\Xbf \in \Ubf_\qbf(\gf)$ is an irreducible representation with highest weight $\lambda$, and hence there exists a unique nondegenerate $\Qbb(s)$-bilinear map $\La \cdot , \cdot \Ra : \Vbf(\lambda) \times \Vbf(\lambda) \rightarrow \Qbb(s)$ such that for all $\Xbf \in \Ubf_\qbf(\gf)$ and $\vbf, \wbf \in \Vbf(\lambda)$,
\begin{equation}\label{eq:*-representation property for integral form}
\La \pibf_\lambda(\Xbf) \vbf , \wbf \Ra = \La \vbf, \pibf_\lambda(\Xbf^*) \wbf \Ra \quad \text{and} \quad \La \vbf_1 , \vbf_1 \Ra = 1.
\end{equation}
Likewise, we also have
\[
\La \vbf_1, \vbf_j \Ra = \delta_{1j}, \quad 1 \leq j \leq n_\lambda.
\]

Since $\Vbf(\lambda)_\Acal = \Ubf_\qbf^\Acal(\gf) \vbf_1$, we can find $\Xbf_j \in \Ubf_\qbf^\Acal(\gf)$ for each $1 \leq j \leq n_\lambda$ such that
\begin{equation*}
\pibf_\lambda(\Xbf_j) \vbf_1 = \vbf_j.
\end{equation*}
Thus, for all $1 \leq i, j \leq n_\lambda$, we have
\[
\La \vbf_i, \vbf_j \Ra = \La \vbf_1, \pibf_\lambda(\Xbf_i^* \Xbf_j) \vbf_1 \Ra,
\]
which is the $\vbf_1$-component of the expansion of $\pibf(\Xbf_i^* \Xbf_j) \vbf_1$ in the basis $\{ \vbf_j \mid 1 \leq j \leq n_\lambda \}$, and thus an element of $\Acal \subseteq \Qbb(s)$. Since $\{ \vbf_j \mid 1 \leq j \leq n_\lambda \}$ is an $\Acal$-basis of $\Vbf(\lambda)_\Acal$, we conclude
\[
\La \Vbf(\lambda)_\Acal, \Vbf(\lambda)_\Acal \Ra \subseteq \Acal.
\]
Hence, for each $0 < q < 1$, the map $\la \cdot, \cdot \ra_q' : \big( \Cbb_q \otimes_\Acal \Vbf(\lambda)_\Acal \big) \times \big( \Cbb_q \otimes_\Acal \Vbf(\lambda)_\Acal \big) \to \Cbb$ defined by
\[
\la a \otimes \vbf, b \otimes \wbf \ra_q' = \overline{a} b\, \ev_q\big( \La \vbf, \wbf \Ra \big)
\]
is a well-defined sesquilinear form on $\Cbb_q \otimes_\Acal \Vbf(\lambda)_\Acal$. Note that, by \eqref{eq:*-representation property for integral form}, we have
\[
\Big\la \big( \id_{\Cbb_q} \otimes \pibf_\lambda(\Xbf) \big) \xi, \eta \Big\ra_q' = \Big\la \xi, \big( \id_{\Cbb_q} \otimes \pibf_\lambda(\Xbf^*) \big) \eta \Big\ra_q'
\]
for all $\Xbf \in \Ubf_\qbf^\Acal(\gf)$ and $\xi, \eta \in \Cbb_q \otimes_\Acal \Vbf(\lambda)_\Acal$, and also,
\[
\la 1_{\Cbb_q} \otimes \vbf_1, 1_{\Cbb_q} \otimes \vbf_1 \ra_q' = 1.
\]
However, under the identifications $\Cbb_q \otimes_\Acal \Vbf(\lambda)_\Acal \cong V(\lambda)$ described in the paragraph preceding Lemma~\ref{lem:q->1 limits of defining relations}, the preceding two conditions precisely become \eqref{eq:*-representation property}, which implies that $\la \cdot, \cdot \ra_q = \la \cdot, \cdot \ra_q'$. Thus, for all $0 < q < 1$ and $1 \leq i, j \leq n_\lambda$, we have
\begin{equation}\label{eq:q-inner product as an evaluation}
\la v_i, v_j \ra_q = \la 1_{\Cbb_q} \otimes \vbf_i, 1_{\Cbb_q} \otimes \vbf_j \ra_q' = \ev_q \La \vbf_i, \vbf_j \Ra,
\end{equation}
which enables us to conclude that the family of inner products $\big( \la \cdot, \cdot \ra_q \big)_{0 < q < 1}$ on $V(\lambda)$ depends continuously on $0 < q < 1$.

Now, it remains to prove the continuity at $q = 1$. Note that, since $\Ubf_\qbf^\Acal(\gf)$ is generated by
\[
\Kbf_\lambda, \quad \frac{\Kbf_j - \Kbf_j^{-1}}{\qbf_j - \qbf_j^{-1}}, \quad \frac{1}{[r]_{\qbf_j}!} \Ebf_j^r, \quad \frac{1}{[r]_{\qbf_j}!} \Fbf_j^r
\]
for $\lambda \in \weights$, $1 \leq j \leq N$, and $r \in \Nbb$, Proposition~\ref{prop:Lie algebra homomorphism as q->1 limit} implies that
\begin{equation}\label{eq:q->1 limit of elements in integral form}
\lim_{q \to 1} \big( \id_{\Cbb_q} \otimes \pibf_\lambda(\Xbf) \big) = \pi_\lambda\big( p( 1_{\Cbb_1} \otimes \Xbf ) \big), \quad \Xbf \in \Ubf_\qbf^\Acal(\gf),
\end{equation}
where $\Cbb_1$ is the space $\Cbb$ equipped with the $\Acal$-module structure provided by the ring homomorphism $\ev_1 : \Acal \to \Cbb$ sending $s$ to $1$, and $p : \Cbb_1 \otimes_\Acal \Ubf_\qbf^\Acal(\gf) \to U(\gf)$ is the surjective $*$-preserving algebra homomorphism given in \cite[Proposition~3.25]{VoigtYuncken}, characterized by $p(1_{\Cbb_1} \otimes \Kbf_\mu) = 1$ for $\mu \in \weights$ and
\[
p\bigg( 1_{\Cbb_1} \otimes \frac{\Kbf_j - \Kbf_j^{-1}}{\qbf_j - \qbf_j^{-1}} \bigg) = H_j', \quad p(1_{\Cbb_1} \otimes \Ebf_j) = E_j, \quad p(1_{\Cbb_1} \otimes \Fbf_j) = F_j
\]
for $1 \leq j \leq N$. In particular, \eqref{eq:q->1 limit of elements in integral form} implies that, for each $1 \leq j \leq n_\lambda$,
\begin{equation*}
\pi_\lambda\big( p(1_{\Cbb_1} \otimes \Xbf_j) \big) v_1 = \lim_{q \to 1} \big( \id_{\Cbb_q} \otimes \pibf_\lambda(\Xbf_j) \big)(1_{\Cbb_q} \otimes \vbf_1) = \lim_{q \to 1}(1_{\Cbb_q} \otimes \vbf_j) = v_j.
\end{equation*}
Thus, for all $1 \leq i, j \leq n_\lambda$, we have
\begin{align*}
\la v_i, v_j \ra_1 = \Big\la \pi_\lambda\big( p(1_{\Cbb_1} \otimes \Xbf_i) \big) v_1, \pi_\lambda\big( p(1_{\Cbb_1} \otimes \Xbf_j) \big) v_1 \Big\ra_1 = \Big\la v_1, \pi_\lambda\big( p(1_{\Cbb_1} \otimes \Xbf_i^* \Xbf_j) \big) v_1 \Big\ra_1,
\end{align*}
which is equal to the $v_1$-component of $\pi_\lambda\big( p(1_{\Cbb_1} \otimes \Xbf_i^* \Xbf_j) \big) v_1$ in the basis $\{ v_j \mid 1 \leq j \leq n_\lambda \}$ by \eqref{eq:orthogonality of e1}. By \eqref{eq:q->1 limit of elements in integral form}, this is equal to the $q \to 1$ limit of the $v_1$-component of the expression $\big( \id_{\Cbb_q} \otimes \pibf_\lambda(\Xbf_i^* \Xbf_j) \big) v_1$ in the basis $\{ v_j \mid 1 \leq j \leq n_\lambda \}$, which, by \eqref{eq:orthogonality of e1} and \eqref{eq:q-inner product as an evaluation}, is
\begin{align*}
\Big\la v_1, \big( \id_{\Cbb_q} \otimes \pibf_\lambda(\Xbf_i^* \Xbf_j) \big) v_1 \Big\ra_q = \big\la 1_{\Cbb_q} \otimes \vbf_1, 1_{\Cbb_q} \otimes \pibf_\lambda(\Xbf_i^* \Xbf_j) \vbf_1 \big\ra_q' \\
= \ev_q \big\la \vbf_1, \pibf_\lambda(\Xbf_i^* \Xbf_j) \vbf_1 \big\ra \\
= \ev_q \La \vbf_i, \vbf_j \Ra = \la v_i, v_j \ra_q.
\end{align*}
That is, we have
\begin{equation*}
\la v_i, v_j \ra_1 = \lim_{q \to 1} \la v_i, v_j \ra_q, \quad 1 \leq i, j \leq n_\lambda.
\end{equation*}
This shows that $\la \cdot, \cdot \ra_q$ is continuous at $q = 1$ as well.

\subsection*{Proof of (3)}

We apply the Gram-Schmidt orthonormalization with respect to the inner product $\la \cdot, \cdot \ra_1$ to the basis $\{v_j \mid 1 \leq j \leq n_\lambda\}$ to obtain an orthonormal basis $\{e_j^\lambda \mid 1 \leq j \leq n_\lambda\}$ of $(V(\lambda), \la \cdot, \cdot \ra_1)$. Fix $1 \leq j \leq n_\lambda$. If $\epsilon_j^\lambda \neq \epsilon_k^\lambda$ for some $1 \leq k \leq n_\lambda$, then $\la v_j, v_k \ra_1 = 0$ by the first property of $\la \cdot, \cdot \ra_1$ in \eqref{eq:*-representation property}. Hence, in the formula
\[
c \, e_j^\lambda = v_j - \sum_{1 \leq i \leq j-1} \la v_i, v_j \ra_1 v_i,
\]
where $c \in (0, \infty)$ is the norm of the right-hand side with respect to $\la \cdot, \cdot \ra_1$, the vector $v_k$ does not appear. In other words, $e_j^\lambda$ is a linear combination of $\{v_i \mid \epsilon_i^\lambda = \epsilon_j^\lambda\}$, which implies that $e_j^\lambda$ is still a weight vector of ${}^q \pi_\lambda$ with weight $\epsilon_j^\lambda$ for any $0 < q \leq 1$.

\renewcommand{\theequation}{\thesubsection.\arabic{equation}}
\renewcommand{\thethm}{\thesubsection.\arabic{thm}}

\printbibliography

\end{document}